\documentclass[titlepage,twoside]{report}
\usepackage{titlesec}
\usepackage{amsmath, amssymb, latexsym, amsthm, mathdots, tikz, caption, subcaption}
\usepackage{geometry}
\usetikzlibrary{cd, decorations.pathreplacing}
\usepackage{hyperref}
\usepackage{fancyhdr}
\pagestyle{fancy}
\usepackage{setspace}
\setstretch{1.5}

\geometry{
 inner=3cm,
 outer=3cm,
 top=3cm,
 bottom=3cm,
 }

\hypersetup{
    colorlinks=true,
     citecolor=blue
}

\theoremstyle{plain}
\newtheorem{thm}{Theorem}[subsection]
\newtheorem{cor}[thm]{Corollary}
\newtheorem{lem}[thm]{Lemma}
\newtheorem{prop}[thm]{Proposition}
\newtheorem{conj}[thm]{Conjecture}

\newtheorem*{thm*}{Theorem}
\newtheorem*{cor*}{Corollary}
\newtheorem*{lem*}{Lemma}
\newtheorem*{prop*}{Proposition}
\newtheorem*{conj*}{Conjecture}

\newtheorem{thm4}{Theorem}[chapter]
\newtheorem{cor4}[thm4]{Corollary}

\newtheorem{prop4}[thm4]{Proposition}

\theoremstyle{definition}
\newtheorem{defn}[thm]{Definition}
\newtheorem{ex}[thm]{Example}
\newtheorem{rem}[thm]{Remark}

\newtheorem*{defn*}{Definition}
\newtheorem*{ex*}{Example}
\newtheorem*{rem*}{Remark}
\newtheorem*{alg*}{Algorithm}
\newtheorem*{asi*}{Aside}

\newtheorem{ex4}[thm4]{Example}

\theoremstyle{remark}
\newtheorem*{claim}{Claim}

\makeatletter
\def\blfootnote{\xdef\@thefnmark{}\@footnotetext}
\makeatother

\title{ {}
{\includegraphics[scale=0.2]{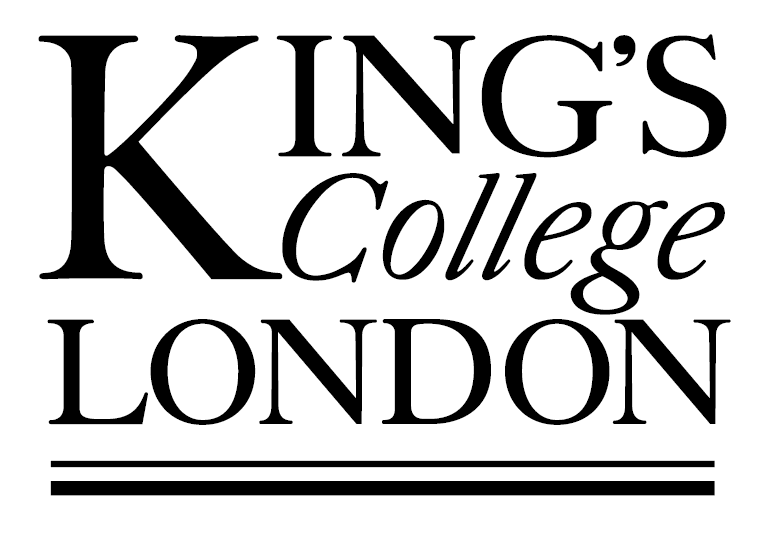}} \\
\vspace{0.5cm} Mirror symmetry, tropical geometry \\and representation theory \\
\vspace{2.3cm} \Large Teresa L\"udenbach \\
\vspace{0.5cm} \large Supervised by Prof.~Konstanze Rietsch \\
\vspace{2.3cm} A thesis submitted for the degree of \\
        Doctor of Philosophy in Pure Mathematics \\
\vspace{0.5cm} January 2023
}
\date{}
\author{}

\begin{document}

\pagenumbering{Alph}
\maketitle
\thispagestyle{empty}
\pagenumbering{arabic}

\fancyhead[L]{}
\fancyhead[R]{\nouppercase{\leftmark}}

\begin{abstract}
\thispagestyle{plain}
\setcounter{page}{2}

An ideal filling is a combinatorial object introduced by Judd 
that amounts to expressing a dominant weight $\lambda$ of $SL_n$ as a rational sum of the positive roots in a canonical way, such that the coefficients satisfy a $\max$ relation.
He proved that whenever an ideal filling has integral coefficients it corresponds to a lattice point in the interior of the string polytope which parametrises the canonical basis of the representation with highest weight $\lambda$.
The work of Judd makes use of a construction of string polytopes via the theory of geometric crystals, and involves tropicalising the superpotential of the flag variety $SL_n/B$ in certain `string' coordinates.
He shows that each ideal filling relates to a positive critical point of the superpotential over the field of Puiseux series, through a careful analysis of the critical point conditions.

In this thesis we give a new interpretation of ideal fillings, together with a parabolic generalisation.
For every dominant weight $\lambda$ of $GL_n$, we also define a new family of polytopes in $\mathbb{R}^{R_+}$, where $R_+$ denotes the positive roots of $GL_n$, with one polytope for each reduced expression of the longest element of the Weyl group.
These polytopes are related by piecewise-linear transformations which fix the ideal filling associated to $\lambda$ as a point in the interior of each of these polytopes.

Our main technical tool is a new coordinate system in which to express the superpotential, which we call the `ideal' coordinates. We describe explicit transformations between these coordinates and string coordinates in the $GL_n/B$ case.

Finally, we demonstrate a close relation between our new interpretation of ideal fillings and factorisations of Toeplitz matrices into simple root subgroups.

\end{abstract}

\newpage
\setcounter{page}{3}
\tableofcontents
\listoffigures
\listoftables

\newpage
\chapter*{Acknowledgements}
\addcontentsline{toc}{chapter}{Acknowledgements}

I wish to thank my supervisor Konni Rietsch for suggesting this idea to me, and particularly for her helpful explanations and patience throughout. Her wisdom, generosity and good humour have helped make these years a time of growth for me, personally as well as mathematically, for which I will always be grateful.

I also wish to thank my family and friends for their unwavering support and encouragement, without which this thesis would not have been possible.
Particular thanks go to Susie Russell, Manuel Crepin, Yll Buzoku and Mar\'ia Fl\'orez Martin.

This work was supported by the Engineering and Physical Sciences Research Council {[EP/S021590/1]}, the EPSRC Centre for Doctoral Training in Geometry and Number Theory {(The London School of Geometry and Number Theory)}, University College London, and King's College London.

\newpage

\chapter*{Introduction}
\addcontentsline{toc}{chapter}{Introduction}
\fancyhead[R]{Introduction}
\fancyhead[L]{}

Representations of Lie groups are often described in terms of their weights - the characters arising in the action of a maximal torus. A standard way to depict these weights is by embedding the character lattice into a real vector space and viewing the weights as lattice points in their convex hull, the so-called `weight polytope’ of the representation. For an irreducible representation the weights along the boundary of the weight polytope (including the highest weight) all have one-dimensional weight spaces. The weight spaces corresponding to interior points can be higher-dimensional. Accordingly, a better `picture’ of the representation may be given by a higher-dimensional polytope that projects onto the weight polytope, such that the lattice points in a fibre parametrise a basis of the corresponding weight space.

A famous example of such a construction is given by the Gelfand--Tsetlin polytope of a representation of $GL_n$, given first in \cite{GelfandTsetlin1950} (e.g. Figure \ref{fig A Gelfand-Tsetlin polytope}).
More recent examples relate to Lusztig's canonical basis and its combinatorial and geometric construction (\cite{Luszig1990}, \cite{Lusztig1990_2}), as well as Kashiwara's crystal basis operators (\cite{Kashiwara1991}). Of particular interest on the crystal basis side are the string polytopes introduced by Littelmann in \cite{Littelmann1998}. On the canonical basis side there is another parametrisation due to Lusztig (\cite{Lusztig1994}). His parametrisation uses coordinate charts on the Langlands dual flag variety and the notion of tropicalisation that he introduced.
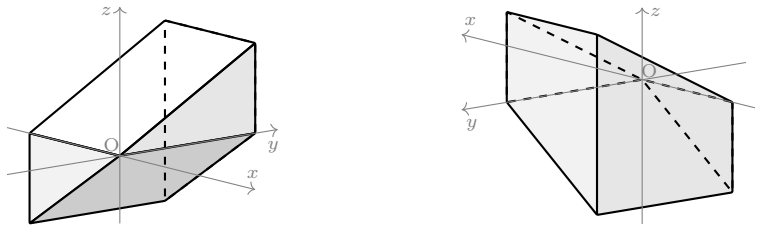
\begin{figure}[ht]
\centering
\begin{tikzpicture}[scale=0.6]
    \filldraw[fill=black!10, thick, rounded corners=0.5] (0,0) -- (3,2.5) -- (3,0.5) -- cycle; 
    \filldraw[fill=black!5, thick, rounded corners=0.5] (0,0) -- (-2,-1.5) -- (-2,0.5) -- cycle; 
    \filldraw[fill=black!20, thick, rounded corners=0.5] (0,0) -- (3,0.5) -- (1,-1) -- (-2,-1.5) -- cycle; 
    \draw[thick, dashed, rounded corners=0.5] (3,0.5) -- (3,2.5) -- (1,3) -- (1,-1) -- cycle; 
    \draw[thick, rounded corners=0.5] (0,0) -- (-2,0.5) -- (1,3) -- (3,2.5) -- cycle; 
    \draw[black!50, ->] (-2.5,0.625) -- (3,-0.75); 
        \node[black!50] at (2.95,-0.4) {\scriptsize{$x$}};
    \draw[black!50, ->] (-2.5,-0.417) -- (3.5,0.583); 
        \node[black!50] at (3.4,0.2) {\scriptsize{$y$}};
    \draw[black!50, ->] (0,-1.5) -- (0,3.3); 
        \node[black!50] at (-0.3,3.2) {\scriptsize{$z$}};
        \node[black!50] at (-0.18, 0.25) {\scriptsize{O}};
\end{tikzpicture}
\hspace{2cm}
\begin{tikzpicture}[scale=0.6]
    \filldraw[fill=black!5, thick, rounded corners=0.5] (0,-2) -- (0,2) -- (-2,2.5) -- (-2,0.5) -- cycle; 
    \filldraw[fill=black!10, thick, rounded corners=0.5] (0,-2) -- (0,2) -- (3,0.5) -- (3,-1.5) -- cycle; 
    \draw[thick, dashed, rounded corners=0.5] (3,0.5) -- (1,1) -- (3,-1.5) -- cycle; 
    \draw[thick, dashed, rounded corners=0.5] (1,1) -- (-2,0.5) -- (-2,2.5) -- cycle; 
    \draw[black!50, ->] (3.5,0.375) -- (-3,2); 
        \node[black!50] at (-2.8,2.3) {\scriptsize{$x$}};
    \draw[black!50, ->] (3.3,1.38) -- (-3,0.33); 
        \node[black!50] at (-2.77,0) {\scriptsize{$y$}};
    \draw[black!50, ->] (1,-2.2) -- (1,2.6); 
        \node[black!50] at (1.3,2.5) {\scriptsize{$z$}};
        \node[black!50] at (1.16, 1.2) {\scriptsize{O}};
\end{tikzpicture}
\caption{A Gelfand--Tsetlin polytope} \label{fig A Gelfand-Tsetlin polytope}
\end{figure}

Building on the work of Lusztig and Kashiwara, Berenstein and Kazhdan in \cite{BerensteinKazhdan2000}, \cite{BerensteinKazhdan2007}, `geometrised' such polytopes via their theory of geometric crystals.
Their construction includes a function that encodes all of the walls of these polytopes.
This function turns out to agree with the superpotential of a flag variety, which was later independently constructed by Rietsch in the the context of mirror symmetry (\cite{Rietsch2008}, see also Chhaibi's work in \cite{ChhaibiThesis13}).
Another example of this are the polytopes constructed using the mirror symmetry of Grassmannians by Rietsch and Williams in \cite{RietschWilliams2019}, which relate to fundamental representations of $GL_n$ and cluster duality (\cite{FockGoncharov2009}).

The main objects of study in this thesis are the superpotentials of full and partial flag varieties $GL_n/B$ and $GL_n/P$, and polytopes we can construct using them. We denote these superpotentials by $\mathcal{W}$ and $\mathcal{W}_P$ respectively.

Out of a superpotential function, say $\mathcal{W}$, together with a choice of dominant weight $\lambda$ of $GL_n$, we can construct a multitude of polytopes in $\mathbb{R}^{N}$, where $N=\binom{n}{2}$, which depend on the choice of torus chart. Such polytopes are related by subtraction-free, rational transformations, which preserve the integral lattice. Naturally, different choices of torus chart have different advantages, and, for example, can recover familiar polytopes, such as the string polytopes mentioned above.

All of these polytopes obtain an additional structure when constructed out of a function like the superpotential. Namely, a special point in the interior of the polytope, which we call the tropical critical point. This point arises from the valuations of the coordinates of a `positive' critical point of the function, when considered over the field of Puiseux series (see Judd's work in \cite{Judd2018}). There is a special relevance of the lattice points of these polytopes and so Judd looked when at when the tropical critical point is integral (\cite{Judd2018}). To do so, he introduced a combinatorial object called an ideal filling, and formed a close connection to the tropical critical point (compare Proposition \ref{prop GLn version of Jamie's 6.2}). An ideal filling is an assignment of rational coordinates $n_{ij}$ to boxes in an upper triangular arrangement, obeying the $\max$ relation $n_{ij}=\max\{n_{i+1, j}, n_{i, j-1} \} $ for $j-i\geq 2$. For example if $n=4$, the arrangement looks like Figure \ref{fig Ideal filling arrangement for n=4}.
\begin{figure}[ht!]
\centering
\begin{tikzpicture}
    \draw (0,0) -- (2.4,0);
    \draw (0,-0.8) -- (2.4,-0.8);
    \draw (0.8,-1.6) -- (2.4,-1.6);
    \draw (1.6,-2.4) -- (2.4,-2.4);

    \draw (0,0) -- (0,-0.8);
    \draw (0.8,0) -- (0.8,-1.6);
    \draw (1.6,0) -- (1.6,-2.4);
    \draw (2.4,0) -- (2.4,-2.4);

    \node at (0.4,-0.4) {\small{$n_{12}$}};

    \node at (1.2,-0.4) {\small{$n_{13}$}};
    \node at (1.2,-1.2) {\small{$n_{23}$}};

    \node at (2,-0.4) {\small{$n_{14}$}};
    \node at (2,-1.2) {\small{$n_{24}$}};
    \node at (2,-2) {\small{$n_{34}$}};
\end{tikzpicture}
\caption{Ideal filling arrangement for $n=4$} \label{fig Ideal filling arrangement for n=4}
\end{figure}
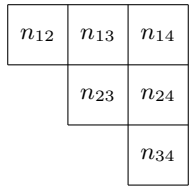

We find an alternative approach to ideal fillings via Toeplitz matrices, and illustrate it in the following example:

\begin{ex*} \label{ex Toeplitz mat and ideal fillings}
Let $m_1, m_2, m_3$  be Puiseux series with positive leading coefficients and non-negative valuations $\mu_i=\mathrm{Val}_{\mathbf{K}}(m_i)$ (defined in Section \ref{subsec The basics of tropicalisation} as the exponent of the first non-zero term). 
Suppose that we are given the following matrix (which appears as a factor in Examples \ref{ex b and maps in ideal coords dim 3} and \ref{ex ideal coords in dim 3 run through})
    \begin{equation} \label{eqn intro simple rt subgps}
    \begin{pmatrix} 1 & & \\ \frac{1}{m_1} & 1 & \\ 0 & 0 & 1 \end{pmatrix}
        \begin{pmatrix} 1 & & \\ 0 & 1 & \\ 0 & \frac{1}{m_2} & 1 \end{pmatrix}
        \begin{pmatrix} 1 & & \\ \frac{1}{m_3} & 1 & \\ 0 & 0 & 1 \end{pmatrix}
    = \begin{pmatrix}  1 & & \\ \frac{1}{m_1}+\frac{1}{m_3} & 1 & \\ \frac{1}{m_2m_3} & \frac{1}{m_2} & 1 \end{pmatrix}
    \end{equation}
and ask for it to be a Toeplitz matrix, namely that the entries on any given diagonal take the same value. This imposes the following condition on our coordinates:
    $$\frac{1}{m_2}=\frac{1}{m_1}+\frac{1}{m_3}.
    $$
Applying the valuation $\mathrm{Val}_{\mathbf{K}}$ we obtain
    $$\mu_2 = \max \{ \mu_1, \mu_3 \}.
    $$
Indeed we see that when our matrix is a Toeplitz matrix, then the valuations $\mu_i$ form ideal filling, as given in Figure \ref{fig Ideal filling in dimension 3 intro ex}.
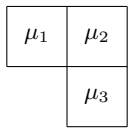
\begin{figure}[ht]
\centering
    \begin{tikzpicture}
        \draw (0,0) -- (1.6,0);
        \draw (0,-0.8) -- (1.6,-0.8);
        \draw (0.8,-1.6) -- (1.6,-1.6);

        \draw (0,0) -- (0,-0.8);
        \draw (0.8,0) -- (0.8,-1.6);
        \draw (1.6,0) -- (1.6,-1.6);

        \node at (0.4,-0.4) {\small{$\mu_1$}};

        \node at (1.2,-0.4) {\small{$\mu_2$}};
        \node at (1.2,-1.2) {\small{$\mu_3$}};
    \end{tikzpicture}
\caption{Ideal filling in dimension $3$} \label{fig Ideal filling in dimension 3 intro ex}
\end{figure}
\end{ex*}

The generalisation of this example is given in Theorem \ref{thm G/P Toeplitz mat and ideal fillings}, and is the main result of this thesis.
It gives an interpretation of ideal fillings using Toeplitz matrices (factored into simple root subgroups) over generalised Puiseux series.
The proof of the theorem makes use of the connection between critical points of the superpotential and Toeplitz matrices, given by Rietsch in \cite{Rietsch2008}.

The remaining results of this thesis are closely related to this theorem, and are the following; firstly, we construct a new torus coordinate chart for $B_-$.
This coordinate chart is very natural; it is obtained by multiplying simple root subgroups together in the style of Lusztig (\cite{Lusztig1994}), according to a choice of reduced expression for the longest element of the Weyl group (as in Equation \ref{eqn intro simple rt subgps}) together with a maximal torus factor. See Example \ref{ex b and maps in ideal coords dim 3} and Sections \ref{sec The ideal coordinates} and \ref{sec G/P The ideal coordinates} for the precise definitions in the $GL_n/B$ and $GL_n/P$ cases respectively.

We restrict the superpotential to this chart in order to obtain a polytope, and then show that the tropical critical point in this polytope exactly recovers the ideal filling given by Judd (see Proposition \ref{prop GLn version of Jamie's 6.2} and Corollary \ref{cor nu'_i coords ideal filling for lambda} in the $GL_n/B$ case, and Proposition \ref{prop G/P version of Jamie's 6.2} for the analogous result in the $GL_n/P$ setting). Consequently we call this chart the `ideal coordinate chart'.

We give explicit transformations between the ideal coordinate chart and previously studied charts, namely the coordinate system which gives rise to string polytopes, and a coordinate system arising from Givental-type quivers \cite{Givental1997}.
In the $GL_n/B$ case our main result here is Theorem \ref{thm coord change}. This theorem shows that the string and ideal tori, for a particular reduced expression, coincide. In particular, it gives an explicit transformation between the string and ideal coordinates. In its proof we use a sequence of lemmas to describe the transformations first from the string to the quiver coordinates, and then to the ideal coordinates. 
In the $GL_n/P$ setting, the main result here is Theorem \ref{thm b- in terms of y_i}. In this theorem we generalise the transformation between the quiver and ideal coordinates, using the quiver decoration detailed in section \ref{subsec G/P Quiver decoration}.

Finally, we show that our ideal coordinate torus is part of a family of tori, where we go from one to the next by changing reduced expression. If we index our coordinates in the appropriate way by the positive roots, $R_+$, of $GL_n$, each of these alternative tori, together with the superpotential function, gives a polytope in $\mathbb{R}^{R_+}$ (see Section \ref{subsec A family of ideal polytopes}). Moreover, we show that all of these transformations preserve the tropical critical point (Proposition \ref{prop trop crit pt independent of i}).

\section*{Structure of thesis}
\addcontentsline{toc}{section}{Structure of thesis}

This thesis is divided into two parts. The first, based on a stand-alone paper (\cite{Ludenbach2022}), treats the case of full flag varieties $GL_n/B$. In the second part we consider how this generalises to partial flag varieties $GL_n/P$.

\fancyhead[L]{Structure of thesis}

Both of these parts have a similar outlook; in each case we construct the respective `ideal coordinates', and consider quivers, polytopes and critical points of the superpotential. The setting of partial flag varieties is more intricate however, and as such, is not a like-for-like generalisation of the first part of the thesis, indeed the structures and content of the two parts are subtly different. Consequently, to aid clarity, we now outline the structures of the two parts.

The case of full flag varieties is structured as follows: in Section \ref{sec Mirror symmetry for G/B applied to representation theory} we introduce the mirror to the flag variety $GL_n/B$ and present the first of two coordinate systems. We do this since the key to constructing a polytope using the superpotential, is to express the superpotential in some torus coordinate chart. The first system, studied by Berenstein and Kazhdan in \cite{BerensteinKazhdan2000}, \cite{BerensteinKazhdan2007}, is an important coordinate system in this context since it gives rise to string polytopes. The second coordinate system, the `ideal' coordinate system, is new and will be best suited to the tropical critical point. We begin Section \ref{sec The ideal coordinates} by defining this system for the reduced expression given by 
    $$\mathbf{i}_0 = (i_1, \ldots, i_N) := (1,2, \ldots, n-1, 1,2 \ldots, n-2, \ldots, 1, 2, 1).
    $$
The body of the section is devoted to proving Theorem \ref{thm coord change} by constructing the ideal coordinates from the string coordinates. We do this through a sequence of transformations between the string, quiver and ideal coordinate charts, which are described explicitly in Lemmas \ref{lem form of u_1 and b factorisations}, \ref{lem coord change for u_1 p_i in terms of z_i} and \ref{lem coord change for b m_i in terms of p_i}.

Then, following Judd (\cite{Judd2018}), in Section \ref{sec Givental-type quivers and critical points} we introduce quivers as defined by Givental in \cite{Givental1997}, whose arrows and vertices can be decorated with our coordinate charts. From this decoration we can read off information such as the superpotential and the critical point conditions, giving us a combinatorial description of each. The main result of this section is Proposition \ref{prop crit points, sum at vertex is nu_i}, which gives a very simple formula for recovering the ideal coordinates of a critical point from the arrow decoration.

Finally, in Section \ref{sec The tropical viewpoint} we consider everything we have developed up until this point through the lens of tropical geometry. This is where we will discuss the polytopes mentioned above and prove, for a given highest weight, that the ideal filling and tropical critical point coincide (Proposition \ref{prop GLn version of Jamie's 6.2}).
We conclude by generalising the ideal coordinates to arbitrary reduced expressions and presenting our new family of polytopes.

The second half of this thesis, the case of partial flag varieties, is structured as follows: in Sections \ref{sec G/P Notation and definitions} and \ref{sec G/P Landau-Ginzburg models} we introduce notation and the mirror to the $G/P$, where $G=GL_n$. 
Since the `ideal coordinate system' in the $G/B$ case is best suited to the tropical critical point, we wish to generalise this to the $G/P$ setting. To do so, we first consider a generalisation due to Batyrev, Ciocan-Fontanine, Kim and van Straten (\cite{BatyrevEtAl2000}) of Givental-type quivers (defined in \cite{Givental1997}). Just as in the case of full flag varieties, these generalised quivers, which are the focus of Section \ref{sec Quivers for partial flag varieties}, are defined such that they will also succinctly hold the information of the toric charts, highest weight and superpotential, as well as the critical point conditions for a given highest weight.

In Section \ref{sec A conjecture on the form of elements of ZP} we further develop the connection between the mirror to $G/P$ and the generalised quivers. Namely we present a conjecture, together with supporting evidence, for a more complete interpretation of the quiver coordinates (see Conjecture \ref{conj b lies in Z}).

The goal of Section \ref{sec G/P The ideal coordinates} is to generalise the ideal coordinate system to the case of partial flag varieties. To do this we begin by defining a particular quiver decoration generalising the one from the full flag variety setting. In Proposition \ref{prop G/P crit points, sum at vertex is nu_i} we prove a generalisation of Proposition \ref{prop crit points, sum at vertex is nu_i} which shows that these coordinates are again very suited for studying critical points. The main result of this section is Theorem \ref{thm b- in terms of y_i} which gives an interpretation of these coordinates in terms of factorisations analogous to the definition of the ideal coordinates in the $G/B$ case. We devote the body of Section \ref{sec G/P The ideal coordinates} to the proof of this theorem.

Finally, in Section \ref{sec G/P The tropical viewpoint} we return to tropical geometry; we first construct polytopes using the ideal coordinate chart and then generalise the notion of ideal fillings to the setting of partial flag varieties. This enables us to extend our earlier result (Proposition \ref{prop GLn version of Jamie's 6.2}) that for a given highest weight $\lambda$, the ideal filling and tropical critical point coincide. We conclude with our main theorem which gives an interpretation of ideal fillings using Toeplitz matrices over generalised Puiseux series, as mentioned in the introduction (Theorem \ref{thm G/P Toeplitz mat and ideal fillings}).

In Appendix \ref{append Recovering our coordinates} we show how to recover our ideal coordinates from a general element in the $G/B$ case. Appendix \ref{append Example of complete quiver labelling} is the continuation of a running example in the setting of partial flag varieties.

\newpage

\chapter*{Full flag varieties}
\addcontentsline{toc}{chapter}{Full flag varieties}
\fancyhead[R]{Full flag varieties}
\fancyhead[L]{}

\section[Mirror symmetry for \texorpdfstring{$G/B$}{G/B} applied to representation theory]{Mirror symmetry for $G/B$ applied to representation theory} \label{sec Mirror symmetry for G/B applied to representation theory}

\subsection{Notation and definitions} \label{subsec Notation and definitions}
\fancyhead[L]{1.1 \ \ Notation and definitions}

Let $\mathbb{K}$ be a field of characteristic $0$, containing a positive semifield, that is, a subset closed under multiplication, addition and division by non-zero elements.
A classic example of a positive semifield
is the set $\mathbb{R}_{\geq0} \subset \mathbb{R}$ of non-negative real numbers with the standard operations. 

Unless otherwise stated we take $G=GL_n(\mathbb{K})$ with $B$, $B_-$ the Borel subgroups of upper and lower triangular matrices. Let $U$, $U_-$ be their respective unipotent radicals, that is the subgroups of upper and lower triangular matrices with all diagonal entries equal to $1$, and let $T=B \cap B_-$ be the diagonal matrices in $G$. The Langlands dual group to $G$ is denoted $G^{\vee}=GL_n(\mathbb{K})$, and may be taken together with the corresponding subgroups $B^{\vee}$, $B^{\vee}_-$, $U^{\vee}$, $U^{\vee}_-$ and $T^{\vee}$ in $G^{\vee}$.

For $i=1,\ldots , n$, we write $\epsilon_i$, $\epsilon^{\vee}_i$ for the standard characters and cocharacters of $T$, corresponding to diagonal matrix entries. Let $X^*(T) = \mathrm{Hom}(T,\mathbb{K}^*)$, $X_*(T)= \mathrm{Hom}(\mathbb{K}^*, T)$ be the respective character and cocharacter lattices, written additively. These are dually paired in the standard way by
    \begin{equation*} \label{eqn bilinear form pair lattices}
    \langle \ , \ \rangle : X^*(T) \times X_*(T) \to \mathrm{Hom}(\mathbb{K}^*, \mathbb{K}^*) \cong \mathbb{Z},
    \end{equation*}
with $\{\epsilon_i\}$ and $\{\epsilon^{\vee}_i\}$, $i=1,\ldots , n$, forming dual bases.

We take $\alpha_{ij} = \epsilon_i - \epsilon_j \in X^*(T)$, $\alpha_{ij}^{\vee} = \epsilon^{\vee}_i - \epsilon^{\vee}_j \in X^*(T^{\vee})= X_*(T)$. Additionally for each $i \in I=\{1, \ldots, n-1\}$ we write $\alpha_i=\alpha_{i, i+1}$, $\alpha_i^{\vee}=\alpha_{i, i+1}^{\vee}$. Then the roots and positive roots of $G$ are
    $$R = \{\alpha_{ij} \ \vert \ i \neq j\} \text{ and } R_+ = \{\alpha_{ij} \ \vert \ i < j\}
    $$
respectively and the simple roots of $G$ are $\{ \alpha_i \ \vert \ i \in I\}$. The Cartan matrix is $A=(a_{ij})$ defined by $a_{ij}=\langle \alpha_j, \alpha_i^{\vee}\rangle$.

The fundamental weights of $G$ are given by $\omega_i = \epsilon_1 + \cdots + \epsilon_i$. Additionally we denote the set of dominant integral weights by
    $${X^*(T)}^+=\{\lambda \in X^*(T) \ \vert \ \langle \lambda, \alpha_{ij}^{\vee} \rangle \geq 0 \ \forall i <j\}.
    $$
For $\lambda \in {X^*(T)}^+$, let $V_{\lambda}$ denote the irreducible representation with highest weight $\lambda$.

Note that we may identify $X^*(T)\otimes \mathbb{R} = X_*(T^{\vee})\otimes \mathbb{R}$ with the dual, $\mathfrak{h}^*_{\mathbb{R}}$, of the Lie algebra of the split real torus of $G$.

The Weyl group of $G$ is the symmetric group, $W=N_G(T)/T = S_n$, generated by the simple reflections $s_i$ for $i \in I$.
Each simple reflection $s_i\in W$ acts as a reflection on the character lattice $X^*(T)$ as follows:
    $$s_i\gamma = \gamma - \langle \gamma, \alpha^{\vee}_i \rangle \alpha_i \quad \text{for} \ \ \gamma \in X^*(T).
    $$
In particular, the action on the simple roots $\alpha_j$, simple coroots $\alpha^{\vee}_j$ and fundamental weights $\omega_j$ is given by
    $$s_i\alpha_j = \alpha_j - a_{ij} \alpha_i, \quad s_i\alpha^{\vee}_j = \alpha^{\vee}_j - a_{ji} \alpha^{\vee}_i, \quad s_i\omega_j = \omega_j - \delta_{ij} \alpha_i.
    $$
Concretely, in terms of the standard characters $\epsilon_i$, $s_i$ permutes $\epsilon_i$ and $\epsilon_{i+1}$, and fixes all other $\epsilon_j$, thus action preserves $R$, the roots of $G$.

Associated to each simple root $\alpha_i$ there is a homomorphism $\phi_i: SL_2 \to G$, explicitly
    $$\phi_i : \begin{pmatrix} a & b \\ c & d \end{pmatrix} \mapsto
    \begin{pmatrix} 1 & & & & & \\ & \ddots & & & & \\ & & a & b & & \\ & & c & d & & \\ & & & & \ddots & \\ & & & & & 1\end{pmatrix}
    \quad \text{with } a \text{ in position } (i,i),
    $$
and we have a number of $1$-parameter subgroups of $G$ respectively defined by
    $$\begin{aligned}
    & \mathbf{x}_i(z) = \phi_i\begin{pmatrix} 1 & z \\ 0 & 1 \end{pmatrix}, \quad &&
        \mathbf{y}_i(z) = \phi_i\begin{pmatrix} 1 & 0 \\ z & 1 \end{pmatrix}, \\
    & \mathbf{x}_{-i}(z) = \phi_i\begin{pmatrix} z^{-1} & 0 \\ 1 & z \end{pmatrix}, \quad &&
        \mathbf{t}_i(t) = \phi_i\begin{pmatrix} t & 0 \\ 0 & t^{-1} \end{pmatrix}
    \end{aligned}$$
for $z \in \mathbb{K}$, $t \in \mathbb{K}^*$ and $i \in I$.
The simple reflections in the Weyl group, $s_i \in W$, are given explicitly by $s_i=\bar{s}_i T$ where
    $$\bar{s}_i = \mathbf{x}_i(-1)\mathbf{y}_i(1)\mathbf{x}_i(-1) = \phi_i\begin{pmatrix} 0 & -1 \\ 1 & 0 \end{pmatrix}.
    $$
More generally we may write each $w \in W$ as a product with a minimal number of factors, $w=s_{i_1}\cdots s_{i_m}$. We get a representative of $w$ in $N_G(T)$ by taking $\bar{w}=\bar{s}_{i_1}\cdots \bar{s}_{i_m}$. Here $m$ is called the length of $w$, denoted $l(w)$, and the choice of expression $s_{i_1}\cdots s_{i_m}$ is said to be reduced. In particular it is well known that $\bar{w}$ is independent of this choice, \cite{BourbakiCh1-3}. For ease of notation we will often let $\mathbf{i}=(i_1, \ldots, i_m)$ stand for the reduced expression ${s}_{i_1}\cdots {s}_{i_m}$.

In a similar way for $G^{\vee}$ we have, for each $i \in I$, a homomorphism $\phi_i^{\vee}: SL_2 \to G^{\vee}$ and $\mathbf{x}^{\vee}_i(z)$, $\mathbf{y}^{\vee}_i(z)$, $\mathbf{x}^{\vee}_{-i}(z)$, $\mathbf{t}^{\vee}_i(t)$ defined analogously. The Weyl group of $G^{\vee}$ is again the symmetric group and we use the same notation as above.

With this in mind, we make an observation which will be used frequently; given a reduced expression, say $s_{i_1}\cdots s_{i_m}$, we can construct matrices in $G^{\vee}$ which are indexed by $\mathbf{i}=(i_1,\ldots,i_m)$. We do this by taking products of the matrices defined above. An explicit example is given by the following map:
    $$\mathbf{x}^{\vee}_{\mathbf{i}} : (\mathbb{K}^*)^N \to U^{\vee} \cap B_-^{\vee} \bar{w}_0 B_-^{\vee}, \quad (z_1, \ldots, z_N)\mapsto \mathbf{x}^{\vee}_{i_1}(z_1) \cdots \mathbf{x}^{\vee}_{i_N}(z_N).
     $$

\subsection{Landau--Ginzburg models} \label{subsec Landau-Ginzburg models}
\fancyhead[L]{1.2 \ \ Landau--Ginzburg models}

The mirror to the flag variety $G/B$ is a pair $(Z,\mathcal{W})$, called a Landau--Ginzburg model, where $Z \subset G^{\vee}$ is an affine variety and $\mathcal{W}:Z\to \mathbb{K}^*$ is a holomorphic function called the superpotential. In order to give a more precise description we first recall Bruhat decomposition, namely that $G$ may be written as a disjoint union of Bruhat cells $B \bar{w} B$ (see \cite[Theorem 8.3.8]{SpringerLAG}):
    $$G= \bigsqcup_{w \in W} B \bar{w} B.
    $$
Similarly we may write $G/B$ as
    $$G/B= \bigsqcup_{w \in W} B \bar{w} B/B \quad \text{with} \quad \mathrm{dim}(B \bar{w} B/B)= l(w).
    $$
We note that the cells $B \bar{w} B$ do not depend on the choice of representative $\bar{w}$.

These Bruhat cells give rise to a partial ordering of Weyl group elements, known as the Bruhat order (see \cite[Theorem 8.5.4]{SpringerLAG}); for $v,w \in W$ we say $v\leq w$ if $B \bar{v} B \subseteq \overline{B \bar{w} B}$. With respect to this ordering there is a unique maximal element $w_0 \in W$ and we set $N=l(w_0)$.

Additionally, we use the Bruhat order to define open Richardson varieties. These are given by intersecting opposite Bruhat cells; for $v,w \in W$ such that $v\leq w$ we have
    $$\mathcal{R}_{v,w}:=(B_- \bar{v}B \cap B \bar{w} B)/B \subset G/B.
    $$
It is well known that $\mathcal{R}_{v,w}$ is smooth, irreducible and has dimension $l(w)-l(v)$, \cite{KazhdanLustig1980}. On the dual side we have
    $$\mathcal{R}^{\vee}_{v,w}:= (B^{\vee}_- \bar{v}B^{\vee} \cap B^{\vee} \bar{w} B^{\vee})/B^{\vee} \subset G^{\vee}/B^{\vee}.
    $$

We now return to  $(Z, \mathcal{W})$, the Landau--Ginzburg model for $G/B$, and define the subvariety
    $$Z:= B_-^{\vee}\cap B^{\vee}\bar{w}_0B^{\vee}\subset G^{\vee}.
    $$
In order to define the superpotential $\mathcal{W}$, we let $\chi:U^{\vee} \to \mathbb{K}$ be the sum of above-diagonal elements
    $$\chi(u) := \sum_{i=1}^{n-1} u_{i\, i+1}, \quad u=(u_{ij})\in U^{\vee}.
    $$
Then the superpotential is given by
    $$\mathcal{W}:Z \to \mathbb{K}^*, \quad u_1d\bar{w}_0u_2 \mapsto \chi(u_1)+ \chi(u_2)
    $$
where $u_1, u_2 \in U^{\vee}$ and $d \in T^{\vee}$.
This map will appear frequently in subsequent sections.

The motivation for introducing the Landau--Ginzburg model is to study the representation theory of $G$ using the mirror to $G/B$. It is natural then to equip $Z$ with highest weight and weight maps. The highest weight map recovers the original torus factor, $d$, as follows:
    $$\mathrm{hw}:Z \to T^{\vee}, \quad u_1d\bar{w}_0u_2 \mapsto d.
    $$
For the weight map we first note that each element $b \in Z$ may be written as $b=[b]_-[b]_0$ with $[b]_- \in U_-^{\vee}$, $[b]_0\in T^{\vee}$. Then the weight map is given by the projection
    $$
    \mathrm{wt}:Z \to T^{\vee}, \quad b \mapsto [b]_0.
    $$
We will often write the above decomposition of $b$ as $b=[b]_-t_R$ to remind us that the torus factor is taken on the right.
While $\mathrm{hw}$ is defined on all of $B^{\vee} \bar{w}_0 B^{\vee}$ and $\mathrm{wt}$ is defined on all of $B_-^{\vee}$, these maps will only be of relevance to us as maps on $Z$.

\subsection{The string coordinates} \label{subsec The string coordinates}
\fancyhead[L]{1.3 \ \ The string coordinates}

In order to make the connection with representation theory we restrict our attention to various toric charts $T^{\vee} \times (\mathbb{K}^*)^N \to Z$, indexed by reduced expressions for $w_0$. The first chart we want to consider is useful for reconstructing the string polytope via the superpotential.
These `string coordinates', which we introduce in this section, were used by Judd \cite{Judd2018} following Chhaibi \cite{ChhaibiThesis13}, who was in turn inspired by the work of Berenstein and Kazhdan \cite{BerensteinKazhdan2000}, \cite{BerensteinKazhdan2007}.

The toric chart in question is defined by the composition of a number of maps which we write here for overview and then define in detail:
\begin{center}
\begin{tikzcd}[column sep=1.25cm]
    T^{\vee} \times (\mathbb{K}^*)^N \arrow[r, " {\left(id, \, \mathbf{x}_{-\mathbf{i}}^{\vee}\right)} "] & T^{\vee}\times (B_-^{\vee} \cap U^{\vee} \bar{w}_0 U^{\vee}) \arrow[r, " \tau "] & T^{\vee}\times (U^{\vee} \cap B_-^{\vee} \bar{w}_0 B_-^{\vee}) \arrow[r, " \Phi "] & Z \\
\end{tikzcd}
\end{center}

\vspace{-0.7cm}
The first map constructs a matrix parametrised by a torus. This parametrisation is dependent on $\mathbf{i}=(i_1, \ldots, i_N)$, which we take to stand for a reduced expression $s_{i_1} \cdots s_{i_N}$ for $w_0$. The map is given as follows:
    $$\mathbf{x}_{-\mathbf{i}}^{\vee} : (\mathbb{K}^*)^N \to B_-^{\vee}\cap U^{\vee}\bar{w}_0 U^{\vee}\,, \quad (z_1, \ldots, z_N)\mapsto \mathbf{x}_{-i_1}^{\vee}(z_1) \cdots \mathbf{x}_{-i_N}^{\vee}(z_N).
    $$

The second map, $\tau$, may be written as the composition of a twist map $\eta^{w_0,e}$ and an involution $\iota$.  We present $\tau$ in this way since the involution will be helpful later. The twist map is defined to be
    $$\eta^{w_0,e}:B_-^{\vee}\cap U^{\vee}\bar{w}_0 U^{\vee} \to U^{\vee} \cap B_-^{\vee} \bar{w}_0 B_-^{\vee}\,, \quad b \mapsto [(\bar{w}_0b^T)^{-1}]_+.
    $$
Here $b^T$ is the transpose of $b$ and $[g]_+$ is given by the LDU decomposition of $g$, namely $g=[g]_- [g]_0 [g]_+ $ where $[g]_- \in U_-^{\vee}$, $[g]_0\in T^{\vee} $ and $[g]_+ \in U^{\vee}$. The involution is given by
    $$\iota:G^{\vee} \to G^{\vee}, \quad g\mapsto (\bar{w}_0g^{-1}\bar{w}_0^{-1})^T.
    $$
We note that this map preserves $U^{\vee}$. It remains to define $\tau$ by applying the composition $\iota \circ \eta^{w_0,e}$ to the second factor:
    $$\tau : T^{\vee}\times (B_-^{\vee} \cap U^{\vee} \bar{w}_0 U^{\vee}) \to T^{\vee}\times (U^{\vee} \cap B_-^{\vee} \bar{w}_0 B_-^{\vee}), \ (d,u) \mapsto \left(d,\iota(\eta^{w_0,e}(u))\right).
    $$

The final map in the definition of the string toric chart is an isomorphism which allows us to factorise elements of $Z$:
    $$\Phi :T^{\vee}\times (U^{\vee} \cap B_-^{\vee} \bar{w}_0 B_-^{\vee}) \to Z, \quad (d,u_1) \mapsto u_1 d \bar{w}_0 u_2.$$
Here $u_2 \in U^{\vee}$ is the unique element such that $u_1 d\bar{w}_0 u_2 \in Z$.
To see that such a $u_2$ exists, we take $u_1 \in U^{\vee}$, $d \in T^{\vee}$ and consider the following LDU decomposition:
    $$u_1 d \bar{w}_0 = \left[ u_1 d \bar{w}_0 \right]_- \left[ u_1 d \bar{w}_0 \right]_0 \left[ u_1 d \bar{w}_0 \right]_+.
    $$
Since $\left[ u_1 d \bar{w}_0 \right]_+ \in U^{\vee}$, it has a well-defined inverse in $U^{\vee}$ and thus $u_1 d \bar{w}_0 \left[ u_1 d \bar{w}_0 \right]_+^{-1} \in Z$, as desired.
To see that $u_2=\left[ u_1 d \bar{w}_0 \right]_+^{-1}$ is unique, we take some non-trivial $u' \in U^{\vee}$ and observe that since $u_1 d\bar{w}_0 u_2 \in B_-^{\vee}$ we must have $u_1 d\bar{w}_0 u_2 u' \notin B_-^{\vee}$, and consequently $u_1 d\bar{w}_0 u_2 u' \notin Z$.

With the above notation, the string toric chart on $Z$ (corresponding to $\mathbf{i}$) is defined to be
    \begin{equation} \label{eqn string coord chart defn}
    \varphi_{\mathbf{i}}:T^{\vee} \times (\mathbb{K}^*)^N \to Z, \quad \varphi_{\mathbf{i}}(d,\boldsymbol{z}) = \Phi\circ \tau\left(d, \mathbf{x}_{-\mathbf{i}}^{\vee}(\boldsymbol{z})\right) = \Phi\left(d, \iota \left(\eta^{w_0,e}\left(\mathbf{x}_{-\mathbf{i}}^{\vee}(\boldsymbol{z})\right)\right)\right).
    \end{equation}

Later in this work we will need the specific toric chart corresponding to
    $$\mathbf{i}_0 = (i_1, \ldots, i_N) := (1,2, \ldots, n-1, 1,2 \ldots, n-2, \ldots, 1, 2, 1)
    $$
so, unless otherwise stated, from now on we will take
    $$
    u:=\mathbf{x}_{-\mathbf{i}_0}^{\vee}(\boldsymbol{z}), \quad u_1 := \iota(\eta^{w_0,e}(u)), \quad b:= u_1 d \bar{w}_0 u_2.
    $$
Using this, the composition of maps defining $\varphi_{\mathbf{i}_0}$, the string toric chart corresponding to $\mathbf{i}_0$, may be visualised as follows:
\begin{center}
\begin{tikzcd}[row sep=0.3em, column sep=1.35cm]
    T^{\vee} \times (\mathbb{K}^*)^N \arrow[r, " {\left(id, \, \mathbf{x}_{-\mathbf{i}_0}^{\vee}\right)} "] & T^{\vee}\times (B_-^{\vee} \cap U^{\vee} \bar{w}_0 U^{\vee}) \arrow[r, " \tau "] & T^{\vee}\times (U^{\vee} \cap B_-^{\vee} \bar{w}_0 B_-^{\vee}) \arrow[r, " \Phi "] & Z \\
    (d,\boldsymbol{z}) \arrow[r, mapsto] & (d,u) \arrow[r, mapsto] & (d,u_1) \arrow[r, mapsto] & b \\
\end{tikzcd}
\end{center}

\begin{ex}[Dimension 3] \label{ex dim 3 z coords} The reduced expression is $\mathbf{i}_0=(1,2,1)$.
We will start with $\left(d, (z_1, z_2, z_3 )\right)$ and apply the sequence of maps defined above.

Applying $\mathbf{x}_{-\mathbf{i}_0}^{\vee}$ to $(z_1,z_2,z_3)$ gives the matrix $u$:
    $$u=\mathbf{x}_{-\mathbf{i}_0}^{\vee}(z_1, z_2, z_3) = \begin{pmatrix}
        \frac{1}{z_1z_3} & & \\ \frac{1}{z_3}+\frac{z_1}{z_2} & \frac{z_1z_3}{z_2} & \\ 1 & z_3 & z_2
    \end{pmatrix}, \quad
    \bar{w}_0=\begin{pmatrix} & & 1 \\ & -1 & \\ 1 & & \end{pmatrix}.
    $$
We recall the definition of second map: $\tau\left(d, (z_1, z_2, z_3 )\right) = \left(d, \iota \circ \eta^{w_0,e}(u)\right)$. To see this in action we first evaluate $\eta^{w_0,e}(u)$ and then apply $\iota$ to obtain the matrix $u_1$:
    $$\begin{aligned}
    \eta^{w_0,e}(u)
        &= \left[ \left( \bar{w}_0 \begin{pmatrix} \frac{1}{z_1z_3} & \frac{z_1}{z_2}+\frac{1}{z_3} & 1 \\ & \frac{z_1z_3}{z_2} & z_3 \\ & & z_2  \end{pmatrix} \right)^{-1} \right]_+
        = \left[ \begin{pmatrix} 1 & z_1 +\frac{z_2}{z_3} & z_1z_3 \\ -\frac{1}{z_1} & -\frac{z_2}{z_1z_3} & \\ \frac{1}{z_2} & & \end{pmatrix} \right]_+ \\
        &= \begin{pmatrix} 1 & z_1 +\frac{z_2}{z_3} & z_1z_3 \\ 0 & 1 & z_3 \\ 0 & 0 & 1 \end{pmatrix}
    \end{aligned}
    $$
    $$u_1=\iota\left(\eta^{w_0,e}(\mathbf{x}_{-\mathbf{i}_0}^{\vee}(\boldsymbol{z}))\right) = \left(\bar{w}_0 \begin{pmatrix} 1 & z_1 +\frac{z_2}{z_3} & z_1z_3 \\ 0 & 1 & z_3 \\ 0 & 0 & 1 \end{pmatrix}^{-1} \bar{w}_0^{-1}\right)^T
        = \begin{pmatrix} 1 & z_3 & z_2 \\ & 1 & z_1 +\frac{z_2}{z_3} \\ & & 1\end{pmatrix}
    $$
Of note, we can factorise this matrix using a different reduced expression; $\mathbf{i}'_0=(2,1,2)$. We obtain
    \begin{equation*}
    \tau\left(d, \mathbf{x}_{-\mathbf{i}_0}^{\vee}\left(z_1, z_2, z_3 \right)\right)
    = \left(d, u_1\right) = \left(d, \mathbf{x}_{\mathbf{i}'_0}^{\vee}\left(z_1, z_3, \frac{z_2}{z_3}\right)\right).
    \end{equation*}

It remains to apply the map $\Phi$. We recall that $u_2 \in U^{\vee}$ will be the unique element such that $b=u_1 d\bar{w}_0 u_2 \in Z$.
$$\begin{aligned}
    b&= \Phi\left(d, u_1\right) = u_1 d \bar{w}_0u_2 \\
    &= \begin{pmatrix} 1 & z_3 & z_2 \\ & 1 & z_1 +\frac{z_2}{z_3} \\ & & 1\end{pmatrix}
        \begin{pmatrix} d_1& &  \\ & d_2 & \\ & & d_3 \end{pmatrix}
        \begin{pmatrix} & & 1 \\ & -1 & \\ 1 & & \end{pmatrix}
        \begin{pmatrix} 1 & \frac{d_2}{d_3} \frac{z_3}{z_2} & \frac{d_1}{d_3}\frac{1}{z_1z_3} \\ & 1 & \frac{d_1}{d_2}\left(\frac{1}{z_3}+\frac{z_2}{z_1z_3^2}\right) \\ & & 1\end{pmatrix} \\
    &=\begin{pmatrix} d_3 z_2 & & \\ d_3 \left(z_1+\frac{z_2}{z_3} \right) & d_2 \frac{z_1z_3}{z_2} & \\ d_3 & d_2 \frac{z_3}{z_2} & d_1 \frac{1}{z_1z_3} \end{pmatrix} \\
    \end{aligned}$$
From our construction of $b$ we see that the superpotential is
    $$\mathcal{W}(d,\boldsymbol{z}) = z_1 +\frac{z_2}{z_3} + z_3 + \frac{d_1}{d_2}\left(\frac{1}{z_3} + \frac{z_2}{z_1z_3^2}\right) + \frac{d_2}{d_3}\frac{z_3}{z_2}
    $$
and the weight matrix is
    \begin{equation} \label{eqn dim 3 wt matrix z coords}
    \mathrm{wt}(b)=\begin{pmatrix} d_3z_2 & & \\ & d_2\frac{z_1z_3}{z_2} & \\ & & d_1\frac{1}{z_1z_3} \end{pmatrix}.
    \end{equation}
\end{ex}

\newpage
\subsection{The form of the weight matrix} \label{subsec The form of the weight matrix}
\fancyhead[L]{1.4 \ \ The form of the weight matrix}

In this section we generalise the formula for the weight matrix $\mathrm{wt}(b)=t_R$ in terms of the string coordinates, $(d, \boldsymbol{z})$, as in the above example, Equation (\ref{eqn dim 3 wt matrix z coords}). We begin with the two factorisations of $b$ that we have already seen:
    $$b=u_1d \bar{w}_0 u_2 = [b]_- t_R \quad \text{where} \quad [b]_- \in U^{\vee}_-.
    $$
Now recalling the involution $\iota$, we see this acts on elements $b\in Z$ as
    $$\iota(b)=\iota(u_1) \bar{w}_0 d^{-1} \iota(u_2).
    $$
Defining $\tilde{u}_i:=\iota(u_i)$ and $\tilde{b}:=\iota(b)$ gives $\tilde{b}=\tilde{u}_1 \bar{w}_0 d^{-1} \tilde{u}_2$. Moreover we can write
    $$\tilde{b}= \left[\tilde{b}\right]_-\tilde{t}_R \quad \text{where} \quad \left[\tilde{b}\right]_- \in U^{\vee}_-, \ \tilde{t}_R \in T^{\vee}.
    $$
Then
    $$\tilde{b}=\iota(b) = \left(\bar{w}_0 b^{-1}\bar{w}_0^{-1}\right)^T
        = \left(\bar{w}_0 \left([b]_- t_R \right)^{-1}\bar{w}_0^{-1}\right)^T
        = \left(\bar{w}_0[b]_-^{-1}\bar{w}_0^{-1}\right)^T \bar{w}_0t_R^{-1}\bar{w}_0^{-1}.
    $$
Thus $\tilde{t}_R = \bar{w}_0t_R^{-1}\bar{w}_0^{-1}$ or equivalently, $t_R = \bar{w}_0\tilde{t}_R^{-1}\bar{w}_0^{-1}$.

\begin{prop} \label{prop form of t_R using d,u}
Let $N=\binom{n}{2}$ and recall the definition $u:=\mathbf{x}_{-\mathbf{i}_0}^{\vee}(\boldsymbol{z})$ where
    $$\mathbf{i}_0 = (i_1, \ldots, i_N) := (1,2, \ldots, n-1, 1,2 \ldots, n-2, \ldots, 1, 2, 1).
    $$
Then $t_R = \bar{w}_0d[u]_0\bar{w}_0^{-1}$.
\end{prop}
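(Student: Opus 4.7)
The plan is to build on the identity $t_R = \bar{w}_0\tilde{t}_R^{-1}\bar{w}_0^{-1}$ already derived above the proposition, which reduces the task to computing $\tilde{t}_R = [\tilde{b}]_0$, where $\tilde{b}=\tilde{u}_1\bar{w}_0 d^{-1}\tilde{u}_2$. Since $\tilde{b}\in B_-^{\vee}$, uniqueness of the LDU decomposition forces $\tilde{u}_2$ to cancel the upper unipotent factor of $\tilde{u}_1\bar{w}_0 d^{-1}$, so that $\tilde{t}_R=[\tilde{u}_1\bar{w}_0 d^{-1}]_0$. Since right multiplication by a diagonal element only rescales the torus factor of an LDU decomposition, this further simplifies to $\tilde{t}_R=[\tilde{u}_1\bar{w}_0]_0\cdot d^{-1}$. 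Combined with $t_R=\bar{w}_0\tilde{t}_R^{-1}\bar{w}_0^{-1}$, the proposition reduces to proving the $d$-free identity
    $$[\tilde{u}_1\bar{w}_0]_0=[u]_0^{-1}.$$

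To prove this identity I would invoke the Bruhat factorisation of $u$. Since $u\in B_-^{\vee}\cap U^{\vee}\bar{w}_0 U^{\vee}$, I can write $u=u_+\bar{w}_0 u_-'$ with $u_+,u_-'\in U^{\vee}$. Transposing and using $\bar{w}_0^T=\bar{w}_0^{-1}$ yields
    $$\bar{w}_0 u^T=\bigl(\bar{w}_0(u_-')^T\bar{w}_0^{-1}\bigr)\cdot u_+^T,$$
which lies in $U^{\vee}\cdot U_-^{\vee}$ since conjugation by $\bar{w}_0$ swaps $U^{\vee}$ and $U_-^{\vee}$. Inverting places $(\bar{w}_0 u^T)^{-1}$ in $U_-^{\vee}\cdot U^{\vee}$, so its LDU decomposition has trivial torus factor. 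Reading $\tilde{u}_1=\eta^{w_0,e}(u)=[(\bar{w}_0 u^T)^{-1}]_+$ off this factorisation gives $\tilde{u}_1=\bar{w}_0(u_-')^{-T}\bar{w}_0^{-1}$, hence $\tilde{u}_1\bar{w}_0=\bar{w}_0(u_-')^{-T}$.

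The crux of the argument is to establish $[\bar{w}_0(u_-')^{-T}]_0=[u]_0^{-1}$. For this I would transpose the rearrangement $u(u_-')^{-1}=u_+\bar{w}_0$ to obtain
    $$\bar{w}_0(u_-')^{-T}\cdot u^T=u_+^T\in U_-^{\vee},$$
which has trivial torus factor. Then, using the rule $[MN]_0=[M]_0[N]_0$ for $N\in B^{\vee}$ (an easy consequence of rearranging LDU decompositions) applied with $N=u^T\in B^{\vee}$ (for which $[N]_0=[u]_0$), comparing torus factors yields $[\bar{w}_0(u_-')^{-T}]_0\cdot[u]_0=I$, as required. The main obstacle is simply keeping track of the various Bruhat cell containments and LDU decompositions through these manipulations; beyond this bookkeeping, no deeper input is needed than the Bruhat decomposition and elementary properties of the twist map.
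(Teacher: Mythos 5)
Your proof is correct and takes a genuinely different route from the paper's. Both start with the reduction to $\tilde{t}_R = d^{-1}[u]_0^{-1}$, but the paper then passes through representation theory twice: once in the auxiliary Lemma \ref{lem [(bar w_0u^T)^-1]_0=I}, which proves $[(\bar{w}_0 u^T)^{-1}]_0=I$ by pairing against fundamental lowest-weight vectors, and once more in the main proof, which computes the coefficient of $v^+_\lambda$ in $u^{-1}\cdot v^+_\lambda$ in two ways to deduce $\lambda([u]_0^{-1})=\lambda(\tilde{t}_R)\lambda(d)$ for all dominant $\lambda$. Your argument avoids representations entirely: you extract the explicit Bruhat factorisation $u=u_+\bar{w}_0 u_-'$ (available since $u\in U^{\vee}\bar{w}_0 U^{\vee}$ by construction of $\mathbf{x}^{\vee}_{-\mathbf{i}_0}$), solve for $\tilde{u}_1=\bar{w}_0(u_-')^{-T}\bar{w}_0^{-1}$ using only $\bar{w}_0^T=\bar{w}_0^{-1}$ and the behaviour of LDU decompositions under conjugation by $\bar{w}_0$, and then pin down the torus factor of $\tilde{u}_1\bar{w}_0$ with the multiplicativity rule $[MN]_0=[M]_0[N]_0$ for $N\in B^{\vee}$. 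This has the pleasant feature of recovering the content of Lemma \ref{lem [(bar w_0u^T)^-1]_0=I} in passing (once you observe $(\bar{w}_0 u^T)^{-1}\in U^{\vee}_- U^{\vee}$), so your proof is more self-contained and elementary; the paper's route, while longer here, deploys the highest-weight pairing technique that also drives the separate lemma, keeping the style uniform across both statements.
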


The proof of Proposition \ref{prop form of t_R using d,u} will require the following lemma:
\begin{lem} \label{lem [(bar w_0u^T)^-1]_0=I}
Let $\mathbf{i}=(i_1, \ldots, i_N)$ stand for a reduced expression $s_{i_1} \cdots s_{i_N}$ for $w_0$ and take $\boldsymbol{z}\in \mathbb{K}^N$. If $A= \mathbf{x}_{-i_1}^{\vee}(z_1) \cdots \mathbf{x}_{-i_N}^{\vee}(z_N)$ and $I$ denotes the identity matrix, then
    $$\left[\left(\bar{w}_0A^T\right)^{-1}\right]_0=I.$$
\end{lem}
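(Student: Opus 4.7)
The plan is to produce a factorisation $\bar{w}_0 A^T = v\cdot u^-$ with $v\in U^\vee$ and $u^-\in U_-^\vee$; inverting will then give $(\bar{w}_0 A^T)^{-1}\in U_-^\vee\cdot U^\vee$, which is automatically its LDU decomposition with $D=I$, i.e.\ exactly the claim.

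First I would use the observation (made when the map $\mathbf{x}_{-\mathbf{i}}^{\vee}$ is introduced) that the image of $\mathbf{x}_{-\mathbf{i}}^\vee$ lies in $U^\vee \bar{w}_0 U^\vee$, so there exist $u_1,u_2\in U^\vee$ with $A=u_1\bar{w}_0 u_2$. Taking transposes, $A^T=u_2^T\bar{w}_0^{\,T}u_1^T$, where $u_1^T,u_2^T\in U_-^\vee$.

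The key identity will then be $\bar{w}_0^{\,T}=\bar{w}_0^{-1}$. This follows from the corresponding statement $\bar{s}_i^{\,T}=\bar{s}_i^{-1}$ for each simple reflection (an immediate $SL_2$ check, since $\bigl(\begin{smallmatrix}0&-1\\1&0\end{smallmatrix}\bigr)^{T}=\bigl(\begin{smallmatrix}0&1\\-1&0\end{smallmatrix}\bigr)=\bigl(\begin{smallmatrix}0&-1\\1&0\end{smallmatrix}\bigr)^{-1}$), together with the fact that transposition reverses the order of the factors in a product, and $\bar{w}_0$ is independent of the choice of reduced expression. Hence $\bar{w}_0\bar{w}_0^{\,T}=I$ and
\begin{equation*}
\bar{w}_0 A^T \;=\; \bigl(\bar{w}_0\, u_2^T\,\bar{w}_0^{-1}\bigr)\cdot u_1^T.
\end{equation*}
The first factor lies in $U^\vee$, since $\bar{w}_0$ conjugates $U_-^\vee$ onto $U^\vee$ (it sends negative root subgroups to positive ones); the second lies in $U_-^\vee$. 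Taking inverses gives
\begin{equation*}
(\bar{w}_0 A^T)^{-1} \;=\; (u_1^T)^{-1}\,\bigl(\bar{w}_0\, u_2^T\,\bar{w}_0^{-1}\bigr)^{-1} \;\in\; U_-^\vee\cdot U^\vee,
\end{equation*}
so the LDU decomposition of $(\bar{w}_0 A^T)^{-1}$ has trivial torus factor, which is precisely $[(\bar{w}_0 A^T)^{-1}]_0=I$.

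I don't anticipate a substantial obstacle: the entire content of the lemma is the identity $\bar{w}_0\bar{w}_0^{\,T}=I$, which causes the middle factor in $\bar{w}_0\, u_2^T\bar{w}_0^{\,T} u_1^T$ to collapse, after which the unipotent pieces sort themselves into the required $U_-^\vee\cdot U^\vee$ form.
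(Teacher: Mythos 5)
Your argument is formally correct, and the observation $\bar{w}_0^T = \bar{w}_0^{-1}$ is a nice clean reduction. But there is a subtle circularity you should watch for. The entire argument rests on the first step, citing that $A=\mathbf{x}_{-\mathbf{i}}^{\vee}(\boldsymbol{z})$ lies in $U^{\vee}\bar{w}_0 U^{\vee}$, i.e.\ that the Bruhat factorisation $A = u_1 t \bar{w}_0 u_2$ has trivial torus factor $t=I$. Using exactly the manipulations in your proof, one sees that for a general $A = u_1 t \bar{w}_0 u_2$ with $u_1,u_2\in U^{\vee}$, one gets $\left[\left(\bar{w}_0 A^T\right)^{-1}\right]_0 = t^{-1}$. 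So the lemma is \emph{precisely} the assertion $t=I$, i.e.\ precisely the torus-triviality part of the codomain claim you invoked. The paper writes $\mathbf{x}_{-\mathbf{i}}^{\vee} : (\mathbb{K}^*)^N \to B_-^{\vee}\cap U^{\vee}\bar{w}_0 U^{\vee}$ as a statement but never proves the codomain; the lemma is, in effect, the paper's justification for it. Citing the codomain to prove the lemma is therefore not an independent proof but a reformulation --- valuable for understanding what the lemma means, but not self-contained.

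The paper's proof genuinely avoids this. It passes to the decomposition $A^T = \bar{w}_0^{-1}UDL$ and evaluates the corner minor $\langle A^T\cdot v^-_{\omega_i}, v^+_{\omega_i}\rangle$ in two ways: directly, by expanding $A^T = \mathbf{x}_{-i_N}^{\vee}(z_N)^T\cdots\mathbf{x}_{-i_1}^{\vee}(z_1)^T$ as a product of explicit upper-triangular $2\times 2$ blocks and tracking how the lowest weight vector reaches the highest weight vector (this gives $1$); and via $A^T = \bar{w}_0^{-1}UDL$, which gives $\omega_i(D)$. Equating yields $\omega_i(D)=1$ for every fundamental weight, hence $D=I$. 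This computation does the honest work that your citation skips. If you want your shorter proof to stand, you need to supply an independent reference for the factorisation statement --- e.g.\ the factorisation parametrisations of double Bruhat cells due to Lusztig or Fomin--Zelevinsky --- rather than quoting the paper's own unproved codomain.
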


\begin{proof}[Proof of Proposition \ref{prop form of t_R using d,u}]

Since $t_R = \bar{w}_0\tilde{t}_R^{-1}\bar{w}_0^{-1}$ we will in fact prove that $\tilde{t}_R= d^{-1}[u]_0^{-1}$.
Recalling the definition
        $$u_1 := \iota(\eta^{w_0,e}(u))
        $$
we see that
        $$\tilde{u}_1 := \iota(u_1)=\eta^{w_0,e}(u)=\left[\left(\bar{w}_0u^T\right)^{-1}\right]_+.$$
We have
    $$\left(u^T\right)^{-1}  = \left(\bar{w}_0u^T\right)^{-1}\bar{w}_0= \left[\left(\bar{w}_0 u^T\right)^{-1}\right]_- \left[\left(\bar{w}_0u^T\right)^{-1}\right]_0 \tilde{u}_1 \bar{w}_0.
    $$
In particular by Lemma \ref{lem [(bar w_0u^T)^-1]_0=I} we see that
    $$\left(u^T\right)^{-1} \in U^{\vee}_- \tilde{u}_1\bar{w}_0 \quad
    \Rightarrow \quad u^{-1} \in \bar{w}_0^{-1} \tilde{u}_1^T U^{\vee}_+.
    $$

Thus if $\lambda\in {X^*(T)}^+$ is a dominant integral weight and we denote a corresponding highest weight vector by $v^+_{\lambda} \in V_{\lambda}$, then
    $$ u^{-1} \cdot v^+_{\lambda} = \bar{w}_0^{-1} \tilde{u}_1^T \cdot v^+_{\lambda}.
    $$
This expression allows for two computations of the coefficient of the highest weight vector in $u^{-1} \cdot v^+_{\lambda}$. Firstly, since $u=[u]_0[u]_-$ we have
    $$\left\langle u^{-1}\cdot v^+_{\lambda}, v^+_{\lambda} \right\rangle = \lambda\left([u]_0^{-1}\right).
    $$
Secondly, we rewrite $\bar{w}_0^{-1} \tilde{u}_1^T$ using $\tilde{b}=\tilde{u}_1 \bar{w}_0 d^{-1} \tilde{u}_2=\left[\tilde{b}\right]_-\tilde{t}_R$;
    $$\bar{w}_0^{-1} \tilde{u}_1^T = \bar{w}_0^{-1} \left(\tilde{b}\tilde{u}_2^{-1}d\bar{w}_0^{-1}\right)^T = \bar{w}_0^{-1} \bar{w}_0 d^T \left(\tilde{u}_2^{-1}\right)^T \tilde{b}^T = d \left(\tilde{u}_2^{-1}\right)^T \tilde{t}_R \left[\tilde{b}\right]_-^T.
    $$
Then we see that the result follows from the second computation;
    $$\begin{aligned} \lambda\left([u]_0^{-1}\right)=\left\langle u^{-1}\cdot v^+_{\lambda}, v^+_{\lambda} \right\rangle =
    \left\langle \bar{w}_0^{-1}\tilde{u}_1^T\cdot v^+_{\lambda}, \ v^+_{\lambda} \right\rangle
        &=\big\langle d \left(\tilde{u}_2^{-1}\right)^T \tilde{t}_R \left[\tilde{b}\right]_-^T  \cdot v^+_{\lambda}, \ v^+_{\lambda} \big\rangle \\
        &=\lambda(\tilde{t}_R) \lambda(d) \big\langle \left(\tilde{u}_2^{-1}\right)^T \left[\tilde{b}\right]_-^T\cdot v^+_{\lambda}, \ v^+_{\lambda} \big\rangle =\lambda(\tilde{t}_R) \lambda(d).
    \end{aligned}
    $$
\end{proof}

\begin{proof}[Proof of Lemma \ref{lem [(bar w_0u^T)^-1]_0=I}]
Taking the decomposition
    $$\left(\bar{w}_0A^T\right)^{-1} = \left[\left(\bar{w}_0 A^T\right)^{-1}\right]_- \left[\left(\bar{w}_0A^T\right)^{-1}\right]_0 \left[\left(\bar{w}_0 A^T\right)^{-1}\right]_+
    $$
and defining
    $$U:=\left[\left(\bar{w}_0 A^T\right)^{-1}\right]_+^{-1} \in U^{\vee}_+ \ , \quad D:= \left[\left(\bar{w}_0A^T\right)^{-1}\right]_0^{-1} \in T^{\vee} \ , \quad L:= \left[\left(\bar{w}_0 A^T\right)^{-1}\right]_-^{-1} \in U^{\vee}_-
    $$
we see that
    $$
    \left(\bar{w}_0A^T\right)^{-1} = L^{-1} D^{-1} U^{-1} \quad \Rightarrow \quad A^T = \bar{w}_0^{-1} UDL.
    $$
We will show that $D=I$.

We recall
    $$\mathbf{x}_{-i}^{\vee}(z)^T = \phi^{\vee}_i\begin{pmatrix} z^{-1} & 1 \\ 0 & z \end{pmatrix} \ , \quad \bar{s}_i^{-1} = \phi_i\begin{pmatrix} 0 & 1 \\ -1 & 0 \end{pmatrix}.
    $$

For any fundamental representation $V_{\omega_i}=\bigwedge^i\mathbb{C}^n$, applying $\bar{w}_0^{-1}$ to a lowest weight vector $v^-_{\omega_i} = v_{n-i+1}\wedge \cdots \wedge v_n $ gives the corresponding highest weight vector $v^+_{\omega_i} = v_1\wedge \cdots \wedge v_i$, so
    $$\left\langle \bar{w}_0^{-1} \cdot v^-_{\omega_i} , \ v^+_{\omega_i} \right\rangle = 1.
    $$
Similarly applying $A^T=\mathbf{x}_{-i_N}^{\vee}(z_N)^T \cdots \mathbf{x}_{-i_1}^{\vee}(z_1)^T $ gives
    $$\left\langle A^T \cdot v^-_{\omega_i} , \ v^+_{\omega_i} \right\rangle = \left\langle \mathbf{x}_{-i_N}^{\vee}(z_N)^T \cdots \mathbf{x}_{-i_1}^{\vee}(z_1)^T \cdot v^-_{\omega_i} , \ v^+_{\omega_i} \right\rangle = 1.
    $$

We may also evaluate the coefficient of the highest weight vector in $A^T \cdot v^-_{\lambda}$ using the expression $A^T = \bar{w}_0^{-1} UDL$;
    $$\begin{aligned}
    \left\langle A^T \cdot v^-_{\omega_i} , \ v^+_{\omega_i} \right\rangle
    = \left\langle \bar{w}_0^{-1} UDL \cdot v^-_{\omega_i} , \ v^+_{\omega_i} \right\rangle
    = \left\langle \bar{w}_0^{-1} UD \cdot v^-_{\omega_i} , \ v^+_{\omega_i} \right\rangle
        &= \omega_i(D) \left\langle \bar{w}_0^{-1} U \cdot v^-_{\omega_i} , \ v^+_{\omega_i} \right\rangle \\
        &= \omega_i(D) \left\langle \bar{w}_0^{-1} \cdot v^-_{\omega_i} , \ v^+_{\omega_i} \right\rangle = \omega_i(D).
    \end{aligned}
    $$
Thus $\omega_i(D)=1$ for all $i$, so $D=I$.
\end{proof}

Now that we better understand the weight matrix $\mathrm{wt}(b) = t_R$, we complete this section by expressing it directly in terms of $(d,\boldsymbol{z})$.

\begin{cor} \label{cor wt matrix in z string coords}
The weight matrix $t_R$, given in terms of the string coordinates $(d,\boldsymbol{z})$, is the diagonal matrix with entries
\begin{equation} \label{eqn general formula wt matrix z coords}
    \left( t_R \right)_{n-j+1, n-j+1} =  \frac{d_j \prod_{\substack{1\leq m \leq N \\ i_m=j-1}} z_m}{\prod_{\substack{1\leq m \leq N \\ i_m=j}} z_m}, \quad j=1, \ldots, n.
    \end{equation}
\end{cor}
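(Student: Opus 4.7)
The plan is to apply Proposition \ref{prop form of t_R using d,u}, which states that $t_R = \bar{w}_0\, d\, [u]_0\, \bar{w}_0^{-1}$, and then to compute $[u]_0$ together with the effect of conjugation by $\bar{w}_0$ directly.

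The key observation is that each factor $\mathbf{x}_{-i_m}^{\vee}(z_m)$ is already lower triangular: it has $z_m^{-1}$ in the $(i_m, i_m)$ entry, $z_m$ in the $(i_m+1, i_m+1)$ entry, $1$'s in the remaining diagonal positions, and a single nonzero subdiagonal entry. Consequently $u = \mathbf{x}_{-i_1}^{\vee}(z_1) \cdots \mathbf{x}_{-i_N}^{\vee}(z_N)$ is itself lower triangular, so its torus factor $[u]_0$ is simply the product of the diagonal parts of the individual factors. Collecting contributions at diagonal position $j$ from indices $m$ with $i_m = j$ (supplying $z_m^{-1}$) and from indices $m$ with $i_m = j-1$ (supplying $z_m$), I would obtain
\[
([u]_0)_{j,j} \;=\; \frac{\prod_{m\,:\,i_m = j-1} z_m}{\prod_{m\,:\,i_m = j} z_m}.
\]

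To finish, I would multiply on the left by $d$ and then conjugate by $\bar{w}_0$. Since $w_0 \in S_n$ is the order-reversing permutation $j \mapsto n-j+1$, conjugation by $\bar{w}_0$ reverses the order of the diagonal entries of any diagonal matrix. Thus the $(j,j)$ entry of $d\,[u]_0$ becomes the $(n-j+1,\, n-j+1)$ entry of $t_R$, producing precisely Equation (\ref{eqn general formula wt matrix z coords}). The argument carries no substantial obstacle; the one point worth emphasising is that the lower-triangularity of each $\mathbf{x}_{-i}^{\vee}(z)$ is exactly what makes $[\,\cdot\,]_0$ multiplicative on the product $u$, which in general it is not.
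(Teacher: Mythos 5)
Your proposal is correct and follows essentially the same route as the paper: invoke Proposition \ref{prop form of t_R using d,u} to write $t_R = \bar{w}_0\, d\, [u]_0\, \bar{w}_0^{-1}$, read off $([u]_0)_{jj}$ as a product of the diagonal entries $z_m^{\pm 1}$ of the lower-triangular factors $\mathbf{x}_{-i_m}^{\vee}(z_m)$, and observe that conjugation by $\bar{w}_0$ reverses the diagonal. The only cosmetic difference is that you spell out the lower-triangularity explicitly as the reason $[\,\cdot\,]_0$ is multiplicative on the product $u$, which the paper leaves implicit.
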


\begin{proof}
We recall the matrix $u = \mathbf{x}_{-i_1}^{\vee}(z_1) \cdots \mathbf{x}_{-i_N}^{\vee}(z_N)$ where
    $$\mathbf{i}_0 = (i_1, \ldots, i_N) := (1,2, \ldots, n-1, 1,2 \ldots, n-2, \ldots, 1, 2, 1)
    $$
and
    $$\mathbf{x}_{-i}^{\vee}(z) = \phi_i\begin{pmatrix} z^{-1} & 0 \\ 1 & z \end{pmatrix}.
    $$
We see that
    \begin{align}
    \left( [u]_0 \right)_{jk}
    & = \begin{cases}
        \prod_{m=1}^{N} \left( \mathbf{x}_{-i_m}^{\vee}(z_m) \right)_{jj} & \text{if} \ j=k\\
        0 & \text{if} \ j\neq k
    \end{cases} \nonumber \\
    & = \begin{cases}
        \frac{\prod_{\substack{1\leq m \leq N \\ i_m=j-1}} z_m}{\prod_{\substack{1\leq m \leq N \\ i_m=j}} z_m} & \text{if} \ j=k\\
        0 & \text{if} \ j\neq k \label{eqn wt matrix z coords}
    \end{cases}
    \end{align}
noting that $\prod_{\substack{1\leq m \leq N \\ i_m=0}} z_m = 1 = \prod_{\substack{1\leq m \leq N \\ i_m=n}} z_m$.
Thus since $t_R = \bar{w}_0d[u]_0\bar{w}_0^{-1}$ we see that it is the diagonal matrix with entries
    $$\left( t_R \right)_{n-j+1, n-j+1} =  \frac{d_j \prod_{\substack{1\leq m \leq N \\ i_m=j-1}} z_m}{\prod_{\substack{1\leq m \leq N \\ i_m=j}} z_m}, \quad j=1, \ldots, n.
    $$
\end{proof}

\section{The ideal coordinates} \label{sec The ideal coordinates}
\fancyhead[L]{2 \ \ The ideal coordinates}

Our preferred coordinate system on $Z$, which we will call the ideal coordinate system, is far more natural than the string toric chart since it is easier to define. We begin by recalling the reduced expression
    $$\mathbf{i}_0 = (i_1, \ldots, i_N) := (1,2, \ldots, n-1, 1,2 \ldots, n-2, \ldots, 1, 2, 1)$$
and consider the map
    \begin{equation*}
    \left(\mathbb{K}^*\right)^N \times T^{\vee} \to Z\,, \quad \left((m_1, \ldots, m_N), t_R\right) \mapsto \mathbf{y}_{i_1}^{\vee}\left(\frac{1}{m_1}\right) \cdots \mathbf{y}_{i_N}^{\vee}\left(\frac{1}{m_N}\right) t_R.
    \end{equation*}
We recall that $Z$ has two projections to $T^{\vee}$, given by the highest weight and weight maps. In the previous coordinate system the highest weight map was obvious due to the form of $b=u_1d\bar{w}_0 u_2$, whereas the weight required more effort to compute. In this new system the weight is much more straightforward.

We could consider this system with coordinates $(\boldsymbol{m},t_R)$, but instead, for ease of later application, we wish to work with coordinates $(d, \boldsymbol{m})$ which we call the ideal coordinates\footnote{
    The choice to work with the inverted coordinates $\frac{1}{m_i}$ is motivated by the main result of Section \ref{sec Givental-type quivers and critical points}, Proposition \ref{prop crit points, sum at vertex is nu_i}, which also gives rise to the name for this coordinate system.
}:
    $$\psi : T^{\vee} \times \left(\mathbb{K}^*\right)^N \to Z\,, \quad \left(d, (m_1, \ldots, m_N)\right) \mapsto \mathbf{y}_{i_1}^{\vee}\left(\frac{1}{m_1}\right) \cdots \mathbf{y}_{i_N}^{\vee}\left(\frac{1}{m_N}\right) t_R(d,\boldsymbol{m}).
    $$
Here $t_R(d,\boldsymbol{m})$ is the weight matrix given now in terms of the coordinates $(d, \boldsymbol{m})$. An explicit description of the map
    $$T^{\vee} \times \left(\mathbb{K}^*\right)^N \to T^{\vee} \,, \quad \left(d,\boldsymbol{m}\right) \mapsto t_R(d,\boldsymbol{m})
    $$
will be given shortly.

First, we present the main theorem of this section. For a given highest weight matrix, $d$, this theorem describes the coordinate change from the string coordinates to the ideal coordinates, allowing us to move freely between the two systems:
\begin{thm} \label{thm coord change}
To change from the string coordinates, $(d,\boldsymbol{z})$, to the ideal coordinates, $(d,\boldsymbol{m})$, we first let
    $$s_k := \sum_{j=1}^{k-1}(n-j)
    $$
then for $k=1,\ldots, n-1$, $a=1,\ldots, n-k$ we have
    $$m_{s_k+a} = \begin{cases}
            z_{1+s_{n-a}} &\text{if } k=1, \\
            \frac{z_{k+s_{n-k-a+1}}}{z_{k-1+s_{n-k-a+1}}} &\text{otherwise}.
            \end{cases}
    $$
\end{thm}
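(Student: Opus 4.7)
My approach is to prove the theorem by composing two explicit coordinate changes through an intermediate set of ``quiver'' coordinates $\boldsymbol{p}=(p_1,\ldots,p_N)$, defined as the parameters of an alternative upper-triangular factorisation of $u_1 = \iota(\eta^{w_0,e}(u))$. The key observation, already visible in Example \ref{ex dim 3 z coords} for $n=3$ (where $u_1 = \mathbf{x}^{\vee}_{\mathbf{i}_0'}(z_1,z_3,z_2/z_3)$ with $\mathbf{i}_0'=(2,1,2)$), is that $u_1$ admits a factorisation $u_1 = \mathbf{x}_{\mathbf{i}_0'}^{\vee}(\boldsymbol{p})$ for a suitable second reduced expression $\mathbf{i}_0'$, with each $p_i$ a monomial in the $z_j$'s. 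Passing through these intermediate coordinates separates the full transformation into two simpler pieces: first a purely upper-triangular re-factorisation of $u_1$ in terms of $\mathbf{i}_0'$, and then a step that turns the upper-triangular factorisation of $b$ into the lower-triangular one used by the ideal coordinates via conjugation by $\bar{w}_0$.

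First, I would prove a structural lemma recording the simultaneous factorisations of $u_1$ and $b$. The existence of $u_1 = \mathbf{x}_{\mathbf{i}_0'}^{\vee}(\boldsymbol{p})$ is immediate from the fact that every element of $U^{\vee} \cap B_-^{\vee}\bar{w}_0 B_-^{\vee}$ admits a unique factorisation indexed by any reduced expression for $w_0$; identifying the precise $\mathbf{i}_0'$ requires comparing the positions of the leading entries of $\iota \circ \eta^{w_0,e}$ against the staircase structure of $\mathbf{i}_0$.

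Second, in a lemma describing the transformation $\boldsymbol{z} \mapsto \boldsymbol{p}$, I would compute $u_1$ directly from $u = \mathbf{x}_{-\mathbf{i}_0}^{\vee}(\boldsymbol{z})$. The main input is Lemma \ref{lem [(bar w_0u^T)^-1]_0=I}: the diagonal factor in the LDU decomposition defining $\eta^{w_0,e}$ is trivial, so the map reduces to reading off the upper-triangular factor of $(\bar{w}_0 u^T)^{-1}$ and then applying $\iota$, which is a combined transpose and conjugation by $\bar{w}_0$. An induction on the block index $k$ in the staircase for $\mathbf{i}_0$, with blocks demarcated by the shifts $s_k$, recovers each $p_i$ as an explicit monomial in the $z_j$.

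Third, in a lemma describing $\boldsymbol{p} \mapsto \boldsymbol{m}$, I would compare the two expressions for $b$, namely $\mathbf{x}_{\mathbf{i}_0'}^{\vee}(\boldsymbol{p}) \cdot d \bar{w}_0 \cdot u_2$ and $\prod_k \mathbf{y}^{\vee}_{i_k}(1/m_k) \cdot t_R$. The bridge is to push $\bar{w}_0$ through the upper-triangular product: conjugation by $\bar{w}_0$ intertwines $\mathbf{x}^{\vee}_i$ with $\mathbf{y}^{\vee}_{n-i}$ up to a torus factor, converting the upper-triangular factorisation into a lower-triangular one, while $d$, $u_2$ and the leftover torus factors are absorbed into $t_R$. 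Composing this with the previous lemma then produces the explicit formula of the theorem. The main obstacle will be bookkeeping: both $\iota$ and conjugation by $\bar{w}_0$ reverse the reduced expression in ways that interact subtly with the staircase shape of $\mathbf{i}_0$, and it is precisely this interaction which accounts for the case split in the statement, where $k = 1$ yields a single $z_j$ (there is no entry ``below'' the bottom of the staircase) while $k \geq 2$ yields a ratio $z_j/z_{j'}$ of the entries on either side of a staircase step.
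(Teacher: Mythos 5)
Your overall plan coincides with the paper's: pass from $(d,\boldsymbol{z})$ to $(d,\boldsymbol{m})$ through the intermediate coordinates $\boldsymbol{p}$ given by $u_1 = \mathbf{x}_{\mathbf{i}'_0}^{\vee}(\boldsymbol{p})$, and you have correctly identified that the first two pieces correspond to the content of Lemma~\ref{lem form of u_1 and b factorisations} and Lemma~\ref{lem coord change for u_1 p_i in terms of z_i}. However, the third step as you describe it does not work.

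You propose to obtain the $\boldsymbol{p}\mapsto\boldsymbol{m}$ transformation by ``pushing $\bar{w}_0$ through the upper-triangular product,'' using the fact that $\bar{w}_0 \mathbf{x}^{\vee}_i(z) \bar{w}_0^{-1}$ is (up to sign) $\mathbf{y}^{\vee}_{n-i}(z)$. That intertwining identity is correct, but it answers the wrong question. Conjugating $u_1$ by $\bar{w}_0$ produces a specific lower-unipotent matrix $\bar{w}_0^{-1} u_1 \bar{w}_0$; what the ideal coordinates require is the factor $[b]_-$ in the Gauss decomposition of $b = u_1 d \bar{w}_0 u_2 \in B_-^{\vee}$. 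These are different elements. The factor $[b]_-$ is characterised by the coset equation $[b]_- B_+^{\vee} = u_1 \bar{w}_0 B_+^{\vee}$, not by conjugation, and extracting its $\mathbf{y}^{\vee}$-parameters from $\boldsymbol{p}$ is genuinely a Gaussian-elimination problem: that is exactly why Lemma~\ref{lem coord change for b m_i in terms of p_i} gives each $1/m_{s_k+a}$ as a ratio of \emph{products} of $p_j$'s, rather than a single $\pm p_j$ as your conjugation argument would produce. You can see the failure already in dimension $3$: there $\bar{w}_0^{-1} u_1 \bar{w}_0$ has $(3,1)$-entry $z_2$ and $(2,1)$-entry $-(z_1 + z_2/z_3)$, whereas $[b]_-$ has $(3,1)$-entry $1/z_2$ and $(2,1)$-entry $z_1/z_2 + 1/z_3$; these are not related by reindexing or a torus twist. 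Relatedly, the phrase ``$u_2$ is absorbed into $t_R$'' glosses over the fact that $u_2$ is the element of $U^{\vee}$ required to make $u_1 d \bar{w}_0 u_2$ lie in $B_-^{\vee}$ at all; it cannot simply be discarded when rewriting the product, and how $u_2$ interacts with the $\mathbf{x}^{\vee}$ factors under the $\bar{w}_0$-move is precisely what needs to be controlled.

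The paper resolves this by applying the Chamber Ansatz (Theorem~\ref{thm chamber ansatz}) to the coset $u_1 \bar{w}_0 B_+^{\vee}$, which expresses each $1/m_k$ as a quotient of flag minors of $u_1$, and then uses the planar-network description of minors (Lemma~\ref{lem our Chamber Ansatz minors are monomial}) to see that these minors are monomial in $\boldsymbol{p}$ with explicit telescoping structure. If you want a proof along your lines you would need to replace the conjugation step with something equivalent to this Gaussian elimination, which is the actual source of the case split in the theorem statement.
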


The string and ideal coordinate systems are related by repeated application of a theorem known as the Chamber Ansatz, which we will discuss in Section \ref{subsec The Chamber Ansatz}. We will then develop our understanding of the relation between the string and ideal coordinates in Sections \ref{subsec Chamber Ansatz minors} and \ref{subsec The coordinate change}, culminating in the proof of Theorem \ref{thm coord change}. To complete the current section we present an example followed by a further application of this theorem.

\begin{ex} \label{ex b and maps in ideal coords dim 3}
In dimension 3 the coordinate change is
    $$m_1=z_3, \quad m_2 = z_1, \quad m_3= \frac{z_2}{z_1}   \quad \qquad z_1=m_2, \quad z_2=m_2m_3, \quad z_3=m_1.
    $$
In Example \ref{ex dim 3 z coords} we saw that the matrix $b$ was given by
    $$b= \Phi\left(d, \mathbf{x}_{\mathbf{i}'_0}^{\vee}\left(z_1, z_3, \frac{z_2}{z_3}\right)\right) = \begin{pmatrix} d_3 z_2 & & \\ d_3 \left(z_1+\frac{z_2}{z_3} \right) & d_2 \frac{z_1z_3}{z_2} & \\ d_3 & d_2 \frac{z_3}{z_2} & d_1 \frac{1}{z_1z_3} \end{pmatrix}.
    $$
With the new coordinates we have
    $$b =\begin{pmatrix} d_3 m_2m_3 & & \\ d_3 \left(m_2+\frac{m_2m_3}{m_1} \right) & d_2 \frac{m_1}{m_3} & \\ d_3  & d_2 \frac{m_1}{m_2m_3} & d_1 \frac{1}{m_1m_2} \end{pmatrix}
    = \mathbf{y}_{\mathbf{i}_0}^{\vee}\left(\frac{1}{m_1}, \frac{1}{m_2}, \frac{1}{m_3} \right)
        \begin{pmatrix} d_3 m_2m_3 & & \\ & d_2 \frac{m_1}{m_3} & \\ & & d_1 \frac{1}{m_1m_2} \end{pmatrix}.
    $$
Additionally the superpotential is now given by
    $$\mathcal{W}(d,\boldsymbol{m}) = m_1 + m_2 + \frac{m_2m_3}{m_1} + \frac{d_2}{d_3} \frac{m_1}{m_2m_3} + \frac{d_1}{d_2} \left(\frac{m_3}{{m_1}^2}+\frac{1}{m_1}\right).
    $$
We again also give the weight matrix:
    $$\mathrm{wt}(b) = \begin{pmatrix} d_3 m_2m_3 & & \\ & d_2 \frac{m_1}{m_3} & \\ & & d_1 \frac{1}{m_1m_2} \end{pmatrix}.
    $$
\end{ex}

We now present an application of Theorem \ref{thm coord change} on the weight matrix, namely we describe $\mathrm{wt}(b)=t_R$ in terms of the ideal coordinates.

\begin{cor} \label{cor wt matrix in m ideal coords}
The weight matrix $t_R$, given in terms of the ideal coordinates $(d, \boldsymbol{m})$, is the diagonal matrix with entries
\begin{equation} \label{eqn general formula wt matrix m coords}
    \left( t_R \right)_{n-j+1, n-j+1} =  \frac{d_j \prod\limits_{k=1, \ldots,j-1} m_{s_k+(j-k)}}{\prod\limits_{r=1, \ldots, n-j} m_{s_j+r}}, \quad j=1, \ldots, n, \ \text{with} \ r=k-j.
    \end{equation}
\end{cor}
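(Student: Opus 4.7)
The natural strategy is to combine two previously established results: Corollary \ref{cor wt matrix in z string coords}, which gives $t_R$ in terms of the string coordinates $(d, \boldsymbol{z})$, and Theorem \ref{thm coord change}, which supplies the explicit change of coordinates from $\boldsymbol{z}$ to $\boldsymbol{m}$. Together they reduce the corollary to a substitution followed by simplification via telescoping. Making the positions in $\mathbf{i}_0$ explicit, the indices $p$ with $i_p = j$ are exactly $s_k + j$ for $k = 1, \ldots, n-j$, and those with $i_p = j-1$ are $s_k + (j-1)$ for $k = 1, \ldots, n-j+1$, so Corollary \ref{cor wt matrix in z string coords} becomes
$$
(t_R)_{n-j+1, n-j+1} \;=\; d_j\, \frac{\prod_{k=1}^{n-j+1} z_{s_k+(j-1)}}{\prod_{k=1}^{n-j} z_{s_k+j}}.
$$

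The core technical step is to invert Theorem \ref{thm coord change} and express a single $z_{s_b + \ell}$ as a product of $m$-coordinates. Setting $a = 1$ in the $k = 1$ branch of Theorem \ref{thm coord change} gives $z_{s_b + 1} = m_{n-b}$. For $\ell \geq 2$, the $k \geq 2$ branch, after the reindexing $b := n - k - a + 1$, takes the form $z_{s_b + k}/z_{s_b + (k-1)} = m_{s_k + (n-k-b+1)}$, and telescoping inside the $b$-th block then yields the single-block formula
$$
z_{s_b + \ell} \;=\; m_{n-b} \prod_{k=2}^{\ell} m_{s_k + (n-k-b+1)}, \qquad 1 \le \ell \le n-b.
$$

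The conclusion follows by telescoping at the outer level. Splitting the $k = n-j+1$ factor off the numerator, the big ratio becomes $z_{s_{n-j+1} + (j-1)}$ divided by $\prod_{k=1}^{n-j} \bigl(z_{s_k+j}/z_{s_k+(j-1)}\bigr)$; each ratio in the latter product is the single $m$-coordinate $m_{s_j + (n-j+1-k)}$, so reindexing $r = n-j+1-k$ reproduces the claimed denominator $\prod_{r=1}^{n-j} m_{s_j + r}$. The surviving numerator factor, evaluated via the single-block formula at $b = n-j+1$ and $\ell = j-1$, equals $m_{j-1} \prod_{k=2}^{j-1} m_{s_k + (j-k)} = \prod_{k=1}^{j-1} m_{s_k + (j-k)}$, which is precisely the claimed numerator. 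The boundary cases $j = 1$ and $j = n$ follow from the same argument with one of the two products empty. The main obstacle is purely notational: three different indexing systems (position within a block, block index, and overall position in the reduced word) are in play at once, and keeping them aligned requires some care — but once the single-block telescoping formula above is established, the rest is mechanical.
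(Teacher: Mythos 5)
Your proof is correct and follows essentially the same strategy as the paper's: start from Corollary \ref{cor wt matrix in z string coords}, peel off a single $z$-factor (the paper's $r=0$ term, your $k=n-j+1$ term), match the remaining ratio to the claimed denominator by direct application of Theorem \ref{thm coord change}, and telescope the surviving $z$-factor into the claimed numerator. The difference is cosmetic: you first isolate a reusable ``single-block'' expansion $z_{s_b+\ell} = m_{n-b}\prod_{k=2}^{\ell} m_{s_k+(n-k-b+1)}$ as a lemma and then specialize it, whereas the paper runs the telescoping inline; also the paper indexes the $z$-products by an $r$ with $m = j + s_{n-j-r+1}$ while you index by the block number $k$, which is arguably cleaner. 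One small slip: ``Setting $a=1$ in the $k=1$ branch'' does not yield $z_{s_b+1}=m_{n-b}$ for general $b$ — you need $a = n-b$ there — but the formula you then use is correct, so this is a phrasing error rather than a gap. Likewise, your re-indexed numerator $\prod_{k=1}^{n-j+1} z_{s_k+(j-1)}$ is not literally valid at $j=1$ (position $s_1 = 0$ does not exist), but you flag the boundary cases separately, matching the paper's own case split.
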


\begin{proof}
Recalling Corollary \ref{cor wt matrix in z string coords}, we see that we need to show
    \begin{equation} \label{eqn t_R quotients of z and nu'}
    \frac{\prod_{\substack{1\leq m \leq N \\ i_m=j-1}} z_m}{\prod_{\substack{1\leq m \leq N \\ i_m=j}} z_m}
        = \frac{\prod\limits_{k=1, \ldots,j-1} m_{s_k+(j-k)}}{\prod\limits_{r=1, \ldots, n-j} m_{s_j+r}}, \quad \text{where} \ r=k-j.
    \end{equation}

The denominator of the left hand side of (\ref{eqn t_R quotients of z and nu'}) is
    $$\prod_{\substack{1\leq m \leq N \\ i_m=j}} z_m = \prod_{\substack{m=j+s_r \\ r=1, \ldots, n-j}} z_m = \prod_{r=1, \ldots, n-j} z_{j+s_{n-j-r+1}}.
    $$
Similarly the numerator is
    $$\begin{aligned}
    \prod_{\substack{1\leq m \leq N \\ i_m=j-1}} z_m
        &= \begin{cases}
            1 & \text{if } j=1 \\
            \prod\limits_{\substack{m=j-1+s_r \\ r=1, \ldots, n-j+1}} z_m & \text{otherwise}
        \end{cases} \\
        &= \begin{cases}
            1 & \text{if } j=1 \\
            \prod\limits_{r=0, \ldots, n-j} z_{j-1+s_{n-j-r+1}} & \text{otherwise}.
        \end{cases}
    \end{aligned}
    $$
Consequently if $j=1$ the left hand side of (\ref{eqn t_R quotients of z and nu'}) becomes
    $$\frac{\prod_{\substack{1\leq m \leq N \\ i_m=j-1}} z_m}{\prod_{\substack{1\leq m \leq N \\ i_m=j}} z_m} = \frac{1}{\prod\limits_{r=1, \ldots, n-j} z_{j+s_{n-j-r+1}}} = \frac{1}{\prod\limits_{r=1, \ldots, n-j} m_{s_j+r}}
    $$
as desired.

If $j\geq 2$ then the left hand side of (\ref{eqn t_R quotients of z and nu'}) is
    $$\frac{\prod_{\substack{1\leq m \leq N \\ i_m=j-1}} z_m}{\prod_{\substack{1\leq m \leq N \\ i_m=j}} z_m} = \frac{\prod\limits_{r=0, \ldots, n-j} z_{j-1+s_{n-j-r+1}}}{\prod\limits_{r=1, \ldots, n-j} z_{j+s_{n-j-r+1}}} \\
        = z_{j-1+s_{n-j+1}} \prod_{r=1, \ldots, n-j} \frac{1}{m_{s_j+r}}.
    $$
It remains to show that
    $$\prod\limits_{k=1, \ldots,j-1} m_{s_k+(j-k)} = z_{j-1+s_{n-j+1}}.
    $$
Indeed from the coordinate change formula in Theorem \ref{thm coord change}, for $j\geq 2$ we see
    $$m_{s_k+(j-k)} = \frac{z_{k+s_{n-k-(j-k)+1}}}{z_{k-1+s_{n-k-(j-k)+1}}} = \frac{z_{k+s_{n-j+1}}}{z_{k-1+s_{n-j+1}}}.
    $$
So the product becomes telescopic and, as desired, we obtain
    $$\prod\limits_{k=1, \ldots,j-1} m_{s_k+(j-k)} = z_{1+s_{n-j+1}}\prod\limits_{k=2, \ldots,j-1} \frac{z_{k+s_{n-j+1}}}{z_{k-1+s_{n-j+1}}}
    = z_{j-1+s_{n-j+1}}.
    $$
\end{proof}

\subsection{The Chamber Ansatz} \label{subsec The Chamber Ansatz}
\fancyhead[L]{2.1 \ \ The Chamber Ansatz}

In order to prove Theorem \ref{thm coord change} we require a sequence of lemmas, the first of which (Lemma \ref{lem form of u_1 and b factorisations}) makes use of the afore mentioned Chamber Ansatz. In this section we introduce the Chamber Ansatz and then state and prove Lemma \ref{lem form of u_1 and b factorisations}.

\begin{defn}
Let $J=\{j_1 < \cdots < j_l\}\subseteq [1,n]$ and $K=\{k_1 < \cdots < k_l\}\subseteq [1,n]$. The pair $(J,K)$ is called admissible if $j_s\leq k_s$ for $s=1, \ldots, l$.
\end{defn}

For such an admissible pair $(J,K)$, we denote by $\Delta^{J}_K$ the $l \times l$ minor with row and column sets defined by $J$ and $K$ respectively.

We now state a specific case of the Generalised Chamber Ansatz presented by Marsh and Rietsch in \cite[Theorem 7.1]{MarshRietsch2004}:
\begin{thm}[Chamber Ansatz] \label{thm chamber ansatz}
Consider $z\bar{w}_0 B^{\vee}_+\in \mathcal{R}^{\vee}_{e,w_0}$, where $z \in U^{\vee}_+$. Let $\mathbf{w}=(w_{(0)}, w_{(1)}, \ldots, w_{(n)})$ be a sequence of partial products for $w_0$ defined by its sequence of factors
    $$\left(w_{(1)}, w_{(1)}^{-1}w_{(2)}, \ldots, w_{(N-1)}^{-1}w_{(N)}\right) = (s_{i_1}, \ldots, s_{i_N}).
    $$
Then there is an element
    $$g= \mathbf{y}_{i_1}(t_1) \mathbf{y}_{i_2}(t_2)\cdots \mathbf{y}_{i_N}(t_N) \in U^{\vee}_- \cap B^{\vee}_+ \bar{w}_0 B^{\vee}_+$$
such that $z\bar{w}_0 B^{\vee}_+ = g B^{\vee}_+$. Moreover for $k=1, \ldots, N$ we have
    $$t_k = \frac{\prod_{j\neq i_k} \Delta^{\omega^{\vee}_j}_{w_{(k)}\omega^{\vee}_j}(z)^{-a_{j,i_k}}}{\Delta^{\omega^{\vee}_{i_k}}_{w_{(k)}\omega^{\vee}_{i_k}}(z) \Delta^{\omega^{\vee}_{i_k}}_{w_{(k-1)}\omega^{\vee}_{i_k}}(z)}.
    $$
\end{thm}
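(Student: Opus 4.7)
My plan is to proceed in three stages: establish existence and uniqueness of the factorisation; express the problem as a system of identities on generalised minors; and then solve that system explicitly.

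First, the parameterisation
\[
(t_1, \ldots, t_N) \longmapsto \mathbf{y}_{i_1}(t_1) \cdots \mathbf{y}_{i_N}(t_N) B^{\vee}_+
\]
is an open embedding of $(\mathbb{K}^*)^N$ into the open cell $B^{\vee}_+ \bar{w}_0 B^{\vee}_+ / B^{\vee}_+$ in $G^{\vee}/B^{\vee}_+$. This follows by iterating the rank-one observation that $\mathbf{y}_i(t) B^{\vee}_+$ traces out $\bar{s}_i B^{\vee}_+$ as $t$ varies, together with the Bruhat decomposition. The hypothesis $z \bar{w}_0 B^{\vee}_+ \in \mathcal{R}^{\vee}_{e, w_0}$ places the coset in the image of the dense torus part, so a unique $N$-tuple $(t_1, \ldots, t_N) \in (\mathbb{K}^*)^N$ representing it exists; the substantive task is to exhibit the explicit formula.

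Second, for each fundamental coweight $\omega^{\vee}_j$ and element $v \in W$, the generalised minor $\Delta^{\omega^{\vee}_j}_{v \omega^{\vee}_j}$ is right $B^{\vee}_+$-invariant. Consequently $\Delta^{\omega^{\vee}_j}_{w_{(k)} \omega^{\vee}_j}(g) = \Delta^{\omega^{\vee}_j}_{w_{(k)} \omega^{\vee}_j}(z \bar{w}_0)$, and after re-labelling the column index to absorb the action of $\bar{w}_0$ the right-hand side becomes the expression $\Delta^{\omega^{\vee}_j}_{w_{(k)} \omega^{\vee}_j}(z)$ appearing in the theorem (up to an overall sign that will cancel in ratios). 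This yields, for each $k$, an identity linking a monomial in $t_1, \ldots, t_k$ to an explicit minor of $z$.

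Third, the technical heart is a transformation rule for how $\Delta^{\omega^{\vee}_j}_{v \omega^{\vee}_j}$ changes under right multiplication by $\mathbf{y}_i(t)$: it is invariant when $v s_i < v$, and otherwise gains an additive term proportional to $t \cdot \Delta^{\omega^{\vee}_j}_{v s_i \omega^{\vee}_j}$. Iterating this rule along $g = \mathbf{y}_{i_1}(t_1) \cdots \mathbf{y}_{i_N}(t_N)$ by induction on the prefix length, one checks that $\Delta^{\omega^{\vee}_{i_k}}_{w_{(k)} \omega^{\vee}_{i_k}}(g)$ is a monomial in $t_1, \ldots, t_k$ of degree one in $t_k$, that $\Delta^{\omega^{\vee}_{i_k}}_{w_{(k-1)} \omega^{\vee}_{i_k}}(g)$ has degree zero in $t_k$, and that the minors for $j \neq i_k$ shift between consecutive partial products by a factor carrying exponent $-a_{j, i_k}$. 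Combining these ratios algebraically and substituting the identities from the second step isolates $t_k$ and recovers the stated formula. The Cartan-matrix exponents arise precisely because $s_{i_k} \omega^{\vee}_j = \omega^{\vee}_j - a_{j, i_k} \alpha^{\vee}_{i_k}$ governs how the chamber weights shift as the partial product advances from $w_{(k-1)}$ to $w_{(k)} = w_{(k-1)} s_{i_k}$.

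The main obstacle will be the monomial bookkeeping in the third stage. Tracking how the exponent of each $t_m$ enters a given generalised minor across the entire reduced expression is intricate, and verifying that the resulting closed formula is independent of the chosen reduced word (equivalently, is invariant under braid moves) is what distinguishes the generalised Chamber Ansatz of Marsh and Rietsch from its predecessors and is the genuinely hard content of the theorem.
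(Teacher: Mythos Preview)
The paper does not prove this theorem. It is stated explicitly as ``a specific case of the Generalised Chamber Ansatz presented by Marsh and Rietsch in \cite[Theorem 7.1]{MarshRietsch2004}'' and is used as a black box throughout (in Lemmas~\ref{lem form of u_1 and b factorisations} and~\ref{lem b- in terms of y_i}). There is therefore no in-paper proof to compare your proposal against.

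That said, a few remarks on your sketch. Your three-stage plan is broadly the right shape for a proof in the style of Berenstein--Fomin--Zelevinsky, but two points in your outline would need repair before it could be carried through. In stage~2 you claim that $\Delta^{\omega^{\vee}_j}_{w_{(k)}\omega^{\vee}_j}$ is right $B^{\vee}_+$-invariant; in fact such generalised minors transform by a character of $B^{\vee}_+$, so passing from $g$ to $z\bar{w}_0$ introduces a torus factor that must be tracked (it does eventually cancel in the ratio, but the bookkeeping is part of the work, not something one can sidestep by ``absorbing'' $\bar{w}_0$ into the column index). In stage~3 you describe the effect of right-multiplying by $\mathbf{y}_i(t)$ as introducing an \emph{additive} term; the crucial point of the Chamber Ansatz is that the relevant chamber minors of $z$ are actually \emph{monomials} in the $t_k$, not polynomials, and this monomiality is what makes the inversion possible. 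Your description would produce a triangular linear system rather than a monomial one. The standard route is instead to show directly that $\Delta^{\omega^{\vee}_j}_{w_{(k)}\omega^{\vee}_j}(z)$ equals a specific product of $t_m$'s (those $m$ with $i_m = j$ and $m \le k$, roughly speaking), and then to take the appropriate ratio.
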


Each $\Delta^{\omega^{\vee}_{i_k}}_{w_{(k)}\omega^{\vee}_{i_k}}$, where $\omega^{\vee}_{i_k}$ ranges through the set of fundamental weights, is called a (standard) chamber minor. As above, it is given by the $i_k\times i_k$ minor with $\omega^{\vee}_{i_k}$ encoding the row set and $w_{(k)}\omega^{\vee}_{i_k}$ encoding the column set. We note that these row and column sets form admissible pairs.

Much of the information in the Chamber Ansatz may be read from an associated pseudoline arrangement; it may be viewed as a singular braid diagram and is called an ansatz arrangement. In dimension $n$, for the case we are considering, the ansatz arrangement consists of $n$ pseudolines. These are numbered from bottom to top on the left side of the arrangement.

Each factor $g_k = \mathbf{y}_{i_k}^{\vee}(t_k)$ of $g$ gives rise to a crossing of the pseudolines at level $i_k$. We label each chamber in the diagram with the labels of the strands passing below it and associate to the chamber with label $S$ the flag minor $\Delta^{\left[1,|S|\right]}_{S}$.

If $A_k$, $B_k$, $C_k$ and $D_k$ are the minors corresponding to the chambers surrounding the $k$-th singular point, with $A_k$ and $D_k$ above and below it and $B_k$ and $C_k$ to the left and right, then the Chamber Ansatz gives
    $$
\begin{tikzpicture}[baseline=0.43cm]
    \node at (1,0.5) {$\bullet$};

    \draw (0,1) -- (0.75,1) -- (1.25,0) -- (2,0);
    \draw (0,0) -- (0.75,0) -- (1.25,1) -- (2,1);
    \node at (1,-0.25) {\scriptsize{$D_k$}};
    \node at (0.3,0.5) {\scriptsize{$B_k$}};
    \node at (1.7,0.5) {\scriptsize{$C_k$}};
    \node at (1,1.25) {\scriptsize{$A_k$}};
\end{tikzpicture}
    \qquad \quad
    t_k = \frac{A_k D_k}{B_k C_k}.
    $$

Let $N=\binom{n}{2}$ and again take
    $$\begin{gathered}
    u:=\mathbf{x}_{-\mathbf{i}_0}^{\vee}(\boldsymbol{z}), \quad u_1 := \iota(\eta^{w_0,e}(u)), \quad b:= u_1 d \bar{w}_0 u_2, \\
    \mathbf{i}_0 = (i_1, \ldots, i_N) := (1,2, \ldots, n-1, 1,2 \ldots, n-2, \ldots, 1, 2, 1).
    \end{gathered}
    $$
We will also need
    $$\mathbf{i}'_0 = (i'_1, \ldots, i'_N) := (n-1, n-2, \ldots, 1, n-1, n-2, \ldots, 2, \ldots, n-1, n-2, n-1)
    $$
and we use the superscript `$\mathrm{op}$' to denote taking such an expression in reverse, for example
    $$ {\mathbf{i}'_0}^{\mathrm{op}}:=(i'_N, \ldots, i'_1) = (n-1, n-2, n-1, \ldots, n-3, n-2, n-1).
    $$

\begin{ex}[Ansatz arrangements for {$\mathbf{i}_0$}, {${\mathbf{i}'_0}^{\mathrm{op}}$} in dimension {$4$}] Since $n=4$, we have $N=6$. For $\mathbf{i}_0 = (1,2,3,1,2,1)$, the sequence of partial products for $w_0$ is given by
    $$\mathbf{w} = (w_{(0)}, w_{(1)}, \ldots, w_{(6)}) = (e, s_1, s_1s_2, s_1s_2s_3, s_1s_2s_3s_1, s_1s_2s_3s_1s_2, s_1s_2s_3s_1s_2s_1).
    $$
The ansatz arrangement for $\mathbf{i}_0$ is given in Figure \ref{fig ansatz arrangement i_0 dim 4}.

For ${\mathbf{i}'_0}^{\mathrm{op}} = (3,2,3,1,2,3)$, the sequence of partial products for $w_0$ is
    $$\mathbf{w} = (w_{(0)}, w_{(1)}, \ldots, w_{(6)}) = (e, s_3, s_3s_2, s_3s_2s_3, s_3s_2s_3s_1, s_3s_2s_3s_1s_2, s_3s_2s_3s_1s_2s_3).
    $$
The ansatz arrangement for ${\mathbf{i}'_0}^{\mathrm{op}}$ is given in Figure \ref{fig ansatz arrangement i'_0^op dim 4}.

\begin{figure}[hb]
\centering
\begin{minipage}[b]{0.47\textwidth}
    \centering
\begin{tikzpicture}[scale=0.85]
    \draw (0,4) -- (2.75,4) -- (3.25,3) -- (4.75,3) -- (5.25,2) -- (5.75,2) -- (6.25,1) -- (7,1);
    \draw (0,3) -- (1.75,3) -- (2.25,2) -- (3.75,2) -- (4.25,1) -- (5.75,1) -- (6.25,2) -- (7,2);
    \draw (0,2) -- (0.75,2) -- (1.25,1) -- (3.75,1) -- (4.25,2) -- (4.75,2) -- (5.25,3) -- (7,3);
    \draw (0,1) -- (0.75,1) -- (1.25,2) -- (1.75,2) -- (2.25,3) -- (2.75,3) -- (3.25,4) -- (7,4);

    \node at (1,1.5) {$\bullet$};
    \node at (2,2.5) {$\bullet$};
    \node at (3,3.5) {$\bullet$};
    \node at (4,1.5) {$\bullet$};
    \node at (5,2.5) {$\bullet$};
    \node at (6,1.5) {$\bullet$};

    \node at (-0.3,4) {$4$};
    \node at (-0.3,3) {$3$};
    \node at (-0.3,2) {$2$};
    \node at (-0.3,1) {$1$};

    \node at (7.3,4) {$1$};
    \node at (7.3,3) {$2$};
    \node at (7.3,2) {$3$};
    \node at (7.3,1) {$4$};

    \node at (1.5,3.5) {$123$};
    \node at (5,3.5) {$234$};

    \node at (1,2.5) {$12$};
    \node at (3.5,2.5) {$23$};
    \node at (6,2.5) {$34$};

    \node at (0.5,1.5) {$1$};
    \node at (2.5,1.5) {$2$};
    \node at (5,1.5) {$3$};
    \node at (6.5,1.5) {$4$};
\end{tikzpicture}
\caption{The ansatz arrangement for $\mathbf{i}_0$ in dimension 4} \label{fig ansatz arrangement i_0 dim 4}
\end{minipage}
    \hfill
\begin{minipage}[b]{0.47\textwidth}
    \centering
\begin{tikzpicture}[scale=0.85]
    \draw (0,4) -- (0.75,4) -- (1.25,3) -- (1.75,3) -- (2.25,2) -- (3.75,2) -- (4.25,1) -- (7,1);
    \draw (0,3) -- (0.75,3) -- (1.25,4) -- (2.75,4) -- (3.25,3) -- (4.75,3) -- (5.25,2) -- (7,2);
    \draw (0,2) -- (1.75,2) -- (2.25,3) -- (2.75,3) -- (3.25,4) -- (5.75,4) -- (6.25,3) -- (7,3);
    \draw (0,1) -- (3.75,1) -- (4.25,2) -- (4.75,2) -- (5.25,3) -- (5.75,3) -- (6.25,4) -- (7,4);

    \node at (1,3.5) {$\bullet$};
    \node at (2,2.5) {$\bullet$};
    \node at (3,3.5) {$\bullet$};
    \node at (4,1.5) {$\bullet$};
    \node at (5,2.5) {$\bullet$};
    \node at (6,3.5) {$\bullet$};

    \node at (-0.3,4) {$4$};
    \node at (-0.3,3) {$3$};
    \node at (-0.3,2) {$2$};
    \node at (-0.3,1) {$1$};

    \node at (7.3,4) {$1$};
    \node at (7.3,3) {$2$};
    \node at (7.3,2) {$3$};
    \node at (7.3,1) {$4$};

    \node at (0.5,3.5) {$123$};
    \node at (2,3.5) {$124$};
    \node at (4.5,3.5) {$134$};
    \node at (6.5,3.5) {$234$};

    \node at (1,2.5) {$12$};
    \node at (3.5,2.5) {$14$};
    \node at (6,2.5) {$34$};

    \node at (2,1.5) {$1$};
    \node at (5.5,1.5) {$4$};
\end{tikzpicture}
\caption{The ansatz arrangement for ${\mathbf{i}'_0}^{\mathrm{op}}$ in dimension 4} \label{fig ansatz arrangement i'_0^op dim 4}
\end{minipage}
\end{figure}

\end{ex}

We are now ready to state the first lemma needed for the proof of Theorem \ref{thm coord change}:

\begin{lem} \label{lem form of u_1 and b factorisations}
We can factorise $u_1$ and $b$ as follows:
        \begin{align}
        u_1 &= \mathbf{x}_{i'_1}^{\vee}(p_1) \cdots \mathbf{x}_{i'_N}^{\vee}(p_N) \label{eqn form of u_1}, \\
        b &=\mathbf{y}_{i_1}^{\vee}\left(\frac{1}{m_1}\right) \cdots \mathbf{y}_{i_N}^{\vee}\left(\frac{1}{m_N}\right) t_R. \label{eqn form of b}
        \end{align}
The $p_i$ and $m_i$ are given by the Chamber Ansatz in terms of chamber minors:
    \begin{align}
    p_{N-k+1} &= \frac{\prod_{j\neq i'_{N-k+1}} \Delta^{\omega^{\vee}_j}_{w_{(k)}\omega^{\vee}_j}(u^T)^{-a_{j,i'_{N-k+1}}}}{\Delta^{\omega^{\vee}_{i'_{N-k+1}}}_{w_{(k)}\omega^{\vee}_{i'_{N-k+1}}}(u^T) \Delta^{\omega^{\vee}_{i'_{N-k+1}}}_{w_{(k-1)}\omega^{\vee}_{i'_{N-k+1}}}(u^T)}, \quad k=1, \ldots, N, \label{eqn p_i in terms of chamber minors} \\
    \frac{1}{m_k} &= \frac{\prod_{j\neq i_k} \Delta^{\omega^{\vee}_j}_{w_{(k)}\omega^{\vee}_j}(u_1)^{-a_{j,i_k}}}{\Delta^{\omega^{\vee}_{i_k}}_{w_{(k)}\omega^{\vee}_{i_k}}(u_1) \Delta^{\omega^{\vee}_{i_k}}_{w_{(k-1)}\omega^{\vee}_{i_k}}(u_1)}, \quad k=1, \ldots, N. \label{eqn 1/m_i in terms of chamber minors}
    \end{align}
\end{lem}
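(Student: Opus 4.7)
The plan is to apply the Chamber Ansatz (Theorem \ref{thm chamber ansatz}) twice, by first establishing two coset identifications in $G^{\vee}/B^{\vee}_+$. The first,
\[
u_1 \bar{w}_0 B^{\vee}_+ = [b]_- B^{\vee}_+,
\]
is immediate: commuting $d \in T^{\vee}$ past $\bar{w}_0$ into $B^{\vee}_+$ and absorbing $u_2 \in U^{\vee}$ gives $u_1 \bar{w}_0 B^{\vee}_+ = b B^{\vee}_+$, and writing $b = [b]_- t_R$ with $t_R \in T^{\vee} \subset B^{\vee}_+$ completes the identification. The second,
\[
u^T \bar{w}_0 B^{\vee}_+ = u_1^T B^{\vee}_+,
\]
is more delicate. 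To prove it I would unfold $\tilde{u}_1 = [(\bar{w}_0 u^T)^{-1}]_+$ and invoke Lemma \ref{lem [(bar w_0u^T)^-1]_0=I} to write $(\bar{w}_0 u^T)^{-1} = L \tilde{u}_1$ for some $L \in U^{\vee}_-$. Rearranging gives $u^T \bar{w}_0 = \bar{w}_0^{-1} \tilde{u}_1^{-1} L^{-1} \bar{w}_0$, and since $\bar{w}_0^{-1} L^{-1} \bar{w}_0 \in U^{\vee}_+$ this reduces to $u^T \bar{w}_0 B^{\vee}_+ = \bar{w}_0^{-1} \tilde{u}_1^{-1} \bar{w}_0 B^{\vee}_+$. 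The centrality of $\bar{w}_0^2$ in $GL_n$ together with $\bar{w}_0^T = \bar{w}_0^{-1}$ in $GL_n$ then gives $\bar{w}_0^{-1} \tilde{u}_1^{-1} \bar{w}_0 = \bar{w}_0 \tilde{u}_1^{-1} \bar{w}_0^{-1} = u_1^T$, using the definition $u_1 = \iota(\tilde{u}_1)$.

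Given the first coset identification, applying the Chamber Ansatz with $z = u_1$ and reduced expression $\mathbf{i}_0$ produces $g = \mathbf{y}_{i_1}^{\vee}(t_1) \cdots \mathbf{y}_{i_N}^{\vee}(t_N) \in U^{\vee}_-$ with $u_1 \bar{w}_0 B^{\vee}_+ = g B^{\vee}_+$ and $t_k$ given by the ansatz formula in chamber minors of $u_1$. Since $g$ and $[b]_-$ both lie in $U^{\vee}_-$ and represent the same $B^{\vee}_+$-coset, $g = [b]_-$; setting $1/m_k = t_k$ gives (\ref{eqn form of b}) and (\ref{eqn 1/m_i in terms of chamber minors}) at once.

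For (\ref{eqn form of u_1}) and (\ref{eqn p_i in terms of chamber minors}) I would apply the Chamber Ansatz with reduced expression ${\mathbf{i}'_0}^{\mathrm{op}}$. Since $u^T$ lies in $B^{\vee}_+$ rather than $U^{\vee}_+$, I apply the ansatz to $[u^T]_+ \in U^{\vee}_+$ to obtain $g' \in U^{\vee}_-$, then conjugate by $[u^T]_0$ to recover the $U^{\vee}_-$ representative of $u^T \bar{w}_0 B^{\vee}_+$. A short calculation using the scaling identity $\Delta^{\omega^{\vee}_j}_{w \omega^{\vee}_j}(d v) = \omega_j(d)\Delta^{\omega^{\vee}_j}_{w \omega^{\vee}_j}(v)$ for $d \in T^{\vee}$, together with the relation $\sum_j a_{j,i}\omega_j = \alpha_i$, shows that the resulting $k$-th parameter is precisely the Chamber Ansatz formula evaluated at $u^T$, that is, the right-hand side of (\ref{eqn p_i in terms of chamber minors}). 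Combined with the second coset identification this $\mathbf{y}^{\vee}$-factorisation is exactly that of $u_1^T$, giving $u_1^T = \mathbf{y}^{\vee}_{i'_N}(t_1) \cdots \mathbf{y}^{\vee}_{i'_1}(t_N)$ with $t_k = p_{N-k+1}$. Transposing via $\mathbf{y}_i^{\vee}(t)^T = \mathbf{x}_i^{\vee}(t)$ and reversing the order of factors yields (\ref{eqn form of u_1}).

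The main obstacle is the second coset identification $u^T \bar{w}_0 B^{\vee}_+ = u_1^T B^{\vee}_+$: it depends essentially on the centrality of $\bar{w}_0^2$ and on $\bar{w}_0^T = \bar{w}_0^{-1}$, both particular to $GL_n$ and to the chosen representative $\bar{w}_0$. A secondary subtlety is that $u^T \in B^{\vee}_+$ rather than $U^{\vee}_+$, so in applying the Chamber Ansatz one must carefully track the torus factor $[u^T]_0$ and check that its rescaling effect on the $\mathbf{y}^{\vee}$-coefficients cancels the natural rescaling of the chamber minors; this cancellation is what makes (\ref{eqn p_i in terms of chamber minors}) natural in terms of the minors of $u^T$ itself.
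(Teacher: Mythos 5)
Your proposal is correct and follows essentially the same route as the paper: two applications of the Chamber Ansatz, to $u_1\bar{w}_0B^{\vee}_+$ (with $\mathbf{i}_0$) for $[b]_-$, and to $u^T\bar{w}_0B^{\vee}_+=u_1^TB^{\vee}_+$ (with ${\mathbf{i}'_0}^{\mathrm{op}}$) for $u_1$, with your derivation of the second coset identity being a direct algebraic reworking of the paper's argument via $\iota$ and $\eta^{w_0,e}$ (both rest on $\bar{w}_0^T=\bar{w}_0^{-1}$ and the centrality of $\bar{w}_0^2$). You are, however, more careful than the paper on one point: since $u^T\in B^{\vee}_+$ but $u^T\notin U^{\vee}_+$, Theorem \ref{thm chamber ansatz} as stated does not literally apply with $z=u^T$, and your observation that the torus factor $[u^T]_0$ rescales the chamber minors and the desired $\mathbf{y}^{\vee}$-parameters by exactly compatible $\alpha_{i_k}$-twists (via $\Delta^{\omega^{\vee}_j}_{w\omega^{\vee}_j}(dv)=\omega_j(d)\,\Delta^{\omega^{\vee}_j}_{w\omega^{\vee}_j}(v)$ together with $\sum_j a_{ji}\omega_j=\alpha_i$) quietly closes a gap that the paper leaves implicit.
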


\begin{proof}
We will use the Chamber Ansatz to prove that $u_1$ and $b$ have the factorisations given in (\ref{eqn form of u_1}) and (\ref{eqn form of b}) respectively.

To show (\ref{eqn form of u_1}) we first note that if $x= \mathbf{x}_{\mathbf{i}}^{\vee}(z_1, \ldots, z_N)$ then $x^T= \mathbf{y}_{\mathbf{i}^{\mathrm{op}}}^{\vee}(z_N, \ldots, z_1)$. Thus to apply the Chamber Ansatz we need a matrix $A \in U^{\vee}_+$ such that $u_1^T B^{\vee}_+ =  A \bar{w}_0 B^{\vee}_+$. We will extract this matrix from the definition of $u_1$:
    $$u_1 = \iota(\eta^{w_0,e}(u)) = \iota\left([(\bar{w}_0u^T)^{-1}]_+\right).
    $$
After applying the involution $\iota$ we see that
    $$B^{\vee}_-\left(\bar{w}_0u^T\right)^{-1} = B^{\vee}_- \eta^{w_0,e}(u) = B^{\vee}_- \iota(u_1) = B^{\vee}_- \left(\bar{w}_0u_1^{-1}\bar{w}_0^{-1}\right)^T = B^{\vee}_- \left(\bar{w}_0u_1^T\bar{w}_0^{-1} \right)^{-1}.
    $$
Taking the inverse gives
    $$\bar{w}_0u_1^T\bar{w}_0^{-1} B^{\vee}_- = \bar{w}_0 u^T B^{\vee}_- \quad \Rightarrow \quad u_1^T\bar{w}_0^{-1} B^{\vee}_- =  u^T B^{\vee}_-.
    $$
Then using the relation $\bar{w}_0 B^{\vee}_+ \bar{w}_0^{-1} = B^{\vee}_-$ we obtain the desired form:
    $$ u_1^T\bar{w}_0^{-1} \bar{w}_0 B^{\vee}_+ \bar{w}_0^{-1} =  u^T \bar{w}_0 B^{\vee}_+ \bar{w}_0^{-1} \quad \Rightarrow \quad u_1^T B^{\vee}_+ =  u^T \bar{w}_0 B^{\vee}_+.
    $$

We take the reduced expression for $w_0$ defined by ${\mathbf{i}'_0}^{\mathrm{op}}$ and let $\mathbf{w} = (w_{(0)}, w_{(1)}, \ldots, w_{(N)}) $ be the sequence of partial products for $w_0$ given by its sequence of factors
    $$\left(w_{(1)}, w_{(1)}^{-1}w_{(2)}, \ldots, w_{(N-1)}^{-1}w_{(N)}\right) = (s_{i'_N}, \ldots, s_{i'_1}).
    $$
Then by the Chamber Ansatz we have $u_1^T B^{\vee}_+ = u^T \bar{w}_0 B^{\vee}_+ = \mathbf{y}_{i'_N}^{\vee}(p_N) \cdots \mathbf{y}_{i'_1}^{\vee}(p_1) B^{\vee}_+$ with
    $$p_{N-k+1} = \frac{\prod_{j\neq i'_{N-k+1}} \Delta^{\omega^{\vee}_j}_{w_{(k)}\omega^{\vee}_j}(u^T)^{-a_{j,i'_{N-k+1}}}}{\Delta^{\omega^{\vee}_{i'_{N-k+1}}}_{w_{(k)}\omega^{\vee}_{i'_{N-k+1}}}(u^T) \Delta^{\omega^{\vee}_{i'_{N-k+1}}}_{w_{(k-1)}\omega^{\vee}_{i'_{N-k+1}}}(u^T)}, \quad k=1, \ldots, N
    $$
which is exactly in the form of (\ref{eqn p_i in terms of chamber minors}). Moreover since both $u_1^T \in U^{\vee}_- $ and $\mathbf{y}_{i'_N}^{\vee}(t'_1) \cdots \mathbf{y}_{i'_1}^{\vee}(t'_N) \in U^{\vee}_-$ this determines $u_1$ completely; $u_1 = \mathbf{x}_{i'_1}^{\vee}(p_1) \cdots \mathbf{x}_{i'_N}^{\vee}(p_N)$ as desired.

To give the factorisation in (\ref{eqn form of b}) we note that by definition $b B^{\vee}_+ = u_1 \bar{w}_0 B^{\vee}_+$ with $u_1 \in U^{\vee}_+$. For this second application of the Chamber Ansatz we let $w_0$ be described by $\mathbf{i}_0$. We again take $\mathbf{w} = (w_{(0)}, w_{(1)}, \ldots, w_{(N)})$ to be the respective sequence of partial products for $w_0$ defined by its sequence of factors
    $$\left(w_{(1)}, w_{(1)}^{-1}w_{(2)}, \ldots, w_{(N-1)}^{-1}w_{(N)}\right) = (s_{i_1}, \ldots, s_{i_N}).
    $$
Then by the Chamber Ansatz we have $b B^{\vee}_+ = u_1 \bar{w}_0 B^{\vee}_+ = \mathbf{y}_{i_1}^{\vee}\left(\frac{1}{m_1}\right) \cdots \mathbf{y}_{i_N}^{\vee}\left(\frac{1}{m_N}\right)B^{\vee}_+$ with
    $$\frac{1}{m_k} = \frac{\prod_{j\neq i_k} \Delta^{\omega^{\vee}_j}_{w_{(k)}\omega^{\vee}_j}(u_1)^{-a_{j,i_k}}}{\Delta^{\omega^{\vee}_{i_k}}_{w_{(k)}\omega^{\vee}_{i_k}}(u_1) \Delta^{\omega^{\vee}_{i_k}}_{w_{(k-1)}\omega^{\vee}_{i_k}}(u_1)}, \quad k=1, \ldots, N
    $$
which proves (\ref{eqn 1/m_i in terms of chamber minors}).
\end{proof}

\subsection{Chamber Ansatz minors} \label{subsec Chamber Ansatz minors}
\fancyhead[L]{2.2 \ \ Chamber Ansatz minors}

We wish to further describe the coordinate changes defined by our two applications of the Chamber Ansatz. In this section we show they are monomial by considering the required minors of $u^T$ and $u_1$. Similar to how the ansatz arrangement tells us which quotients of minors to take when applying Chamber Ansatz, we may use a planar acyclic directed graph to easily compute these minors, and in particular to confirm that they are all monomial (see \cite[Proposition 4.2]{FominZelevinsky1999}, generalising \cite[Theorem 2.4.4]{BerensteinFominZelevinsky1996}). Note that the following description is slightly different to that given by Fomin and Zelevinsky in \cite{FominZelevinsky1999}, since we do not need the same level of generality.

Let $\mathbf{i}= (i_1, \ldots, i_N)$ define some reduced expression for $w_0$ and consider
    $$\mathbf{x}_{\mathbf{i}}^{\vee}(\boldsymbol{z}) = \mathbf{x}_{i_1}^{\vee}(z_1) \cdots \mathbf{x}_{i_N}^{\vee}(z_N).
    $$
For particular choices of admissible pairs $(J,K)$, we wish to compute the 
minors
    $$\Delta^J_K\left(\mathbf{x}_{\mathbf{i}}^{\vee}(\boldsymbol{z})\right) \text{ or } \ \Delta^J_K\left(\mathbf{x}_{-\mathbf{i}}^{\vee}(\boldsymbol{z})^T\right).
    $$
In the second case it will be helpful to express $\mathbf{x}_{-\mathbf{i}}^{\vee}(\boldsymbol{z})^T$ as a product of matrices $\mathbf{x}_i$ and $\mathbf{t}_i$. To do this we notice
    $$\mathbf{x}_{-i}^{\vee}(z)^T = \phi_i^{\vee}\begin{pmatrix} z^{-1} & 1 \\ 0 & z \end{pmatrix} = \phi_i^{\vee} \left(\begin{pmatrix} z^{-1} & 0 \\ 0 & z \end{pmatrix} \begin{pmatrix} 1 & z \\ 0 & 1 \end{pmatrix} \right)
    = \mathbf{t}_{i}^{\vee}(z^{-1}) \mathbf{x}_{i}^{\vee}(z).
    $$
In particular we may rewrite $\mathbf{x}_{-\mathbf{i}}^{\vee}(\boldsymbol{z})^T $ as follows:
    $$\mathbf{x}_{-\mathbf{i}}^{\vee}(\boldsymbol{z})^T =  \mathbf{x}_{-i_N}^{\vee}(z_N)^T \cdots \mathbf{x}_{-i_1}^{\vee}(z_1)^T
    = \mathbf{t}_{i_N}^{\vee}(z_N^{-1}) \mathbf{x}_{i_N}^{\vee}(z_N) \cdots \mathbf{t}_{i_1}^{\vee}(z_1^{-1}) \mathbf{x}_{i_1}^{\vee}(z_1).
    $$

To construct the graph, $\Gamma$, corresponding to a matrix $x= \mathbf{x}_{\mathbf{i}}^{\vee}(\boldsymbol{z})$ or $\mathbf{x}_{-\mathbf{i}}^{\vee}(\boldsymbol{z})^T$ we begin with $n$ parallel horizontal lines. We add vertices to the ends of each line and number them from bottom to top on both sides. Then for each factor $\mathbf{t}_{i_k}^{\vee}(z_k^{-1})$, $\mathbf{x}_{i_k}^{\vee}(z_k)$ of $x$ we include a labelled line segment and vertices at height $i_k$ defined in Figure \ref{Labelled line segments in graphs for computing Chamber Ansatz minors}.
\begin{figure}[hb!]
\centering
\begin{minipage}[b]{0.3\textwidth}
\centering
    \begin{tikzpicture}[scale=0.85]
        \node at (0.75,2) {$\bullet$};
        \node at (1.25,2) {$\bullet$};
        \node at (0.75,1) {$\bullet$};
        \node at (1.25,1) {$\bullet$};
        \draw (0,3) -- (2,3);
        \draw (0,2) -- (2,2);
        \draw (0,1) -- (2,1);
        \draw (0,0) -- (2,0);
        \node at (1,1.35) {$\frac{1}{z_k}$};
        \node at (1,2.25) {$z_k$};
        \node at (1,-0.6) {For factors $\mathbf{t}_{i_k}^{\vee}(z_k^{-1})$};
    \end{tikzpicture}
\end{minipage}
\begin{minipage}[b]{0.3\textwidth}
\centering
    \begin{tikzpicture}[scale=0.85]
        \node at (0.75,1) {$\bullet$};
        \node at (1.25,2) {$\bullet$};
        \draw (0,3) -- (2,3);
        \draw (0,2) -- (2,2);
        \draw (0,1) -- (2,1);
        \draw (0,0) -- (2,0);
        \draw (0.75,1) -- (1.25,2);
        \node at (1.35,1.5) {$z_k$};
        \node at (1,-0.6) {For factors $\mathbf{x}_{i_k}^{\vee}(z_k)$};
    \end{tikzpicture}
\end{minipage}
\caption{Labelled line segments in graphs for computing Chamber Ansatz minors} \label{Labelled line segments in graphs for computing Chamber Ansatz minors}
\end{figure}
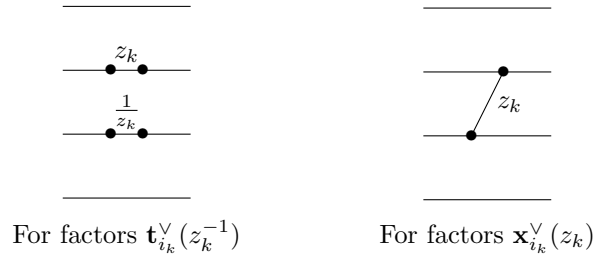

Each line segment is viewed as a labelled edge of $\Gamma$, oriented left to right. For an edge $e$, the labelling, called the weight of $e$ and denoted $w(e)$, is given by the diagrams above and taken to be $1$ if left unspecified. The weight $w(\pi)$ of an oriented path $\pi$ is defined to be the product of weights $w(e)$ taken over all edges $e$ in $\pi$.

The set of vertices of the graph $\Gamma$ is given by the endpoints of all line segments. Those vertices appearing as the leftmost (resp. rightmost) endpoints of the horizontal lines are the sources (resp. sinks) of $\Gamma$.

With this notation, \cite[Proposition 4.2]{FominZelevinsky1999} becomes the following:
\begin{thm} \label{thm minors from paths in graph}
For an admissible pair $(J,K)$ of size $l$
    $$ \Delta^J_K\left(\mathbf{x}_{\mathbf{i}'}^{\vee}(\boldsymbol{z})\right) = \sum_{\pi_1,\ldots, \pi_l} w(\pi_1) \cdots w(\pi_l)
    $$
where the sum is taken over all families of l vertex-disjoint paths $\{\pi_1,\ldots, \pi_l\}$ connecting the sources labelled by $J$ with the sinks labelled by $K$.
\end{thm}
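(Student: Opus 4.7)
The plan is to prove this as a direct application of the Lindström--Gessel--Viennot (LGV) lemma, by identifying $\mathbf{x}_{\mathbf{i}'}^{\vee}(\boldsymbol{z})$ with the weighted source-to-sink path matrix of $\Gamma$.

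First I would verify the base case: each elementary factor of $\mathbf{x}_{\mathbf{i}'}^{\vee}(\boldsymbol{z})$ (or, in the transposed setting, of $\mathbf{x}_{-\mathbf{i}}^{\vee}(\boldsymbol{z})^T$, using the decomposition into $\mathbf{x}_i^{\vee}$ and $\mathbf{t}_i^{\vee}$ from the excerpt) equals the weighted path matrix of its corresponding block in Figure \ref{Labelled line segments in graphs for computing Chamber Ansatz minors}. This is a direct inspection: for $\mathbf{t}_{i_k}^{\vee}(z_k^{-1})$ all edges of the block are horizontal with the two nontrivial weights $z_k^{-1}$ and $z_k$ at heights $i_k$ and $i_k+1$, so the path matrix is the correct diagonal matrix; for $\mathbf{x}_{i_k}^{\vee}(z_k)$ the single slanted edge of weight $z_k$ contributes exactly the off-diagonal entry $z_k$ in position $(i_k, i_k+1)$, and all other source-to-sink paths are trivial horizontals of weight $1$.

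Next I would argue that matrix multiplication corresponds to graph concatenation. When two graph blocks are placed side by side so that the sinks of the first are identified with the sources of the second, every source-to-sink path in the concatenation decomposes uniquely as a path through the first block followed by a path through the second; summing weights then shows that the weighted path matrix of the concatenation equals the product of the two individual matrices. Iterating over all factors realises $\mathbf{x}_{\mathbf{i}'}^{\vee}(\boldsymbol{z})$ as the weighted source-to-sink path matrix of $\Gamma$. By the Leibniz expansion the minor $\Delta^J_K(\mathbf{x}_{\mathbf{i}'}^{\vee}(\boldsymbol{z}))$ is then
    $$\sum_{\sigma \in S_l} \mathrm{sgn}(\sigma) \sum_{(\pi_1, \ldots, \pi_l)} w(\pi_1) \cdots w(\pi_l),$$
where the inner sum runs over all tuples with $\pi_s$ a path from source $j_s$ to sink $k_{\sigma(s)}$.

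Finally I would invoke the standard LGV involution, which swaps the tails of two paths past their first intersection: this involution is sign-reversing and cancels all contributions from path families sharing a vertex, leaving only the vertex-disjoint families. The main subtle point, and the place where I expect most care is needed, is the sign. Since $\Gamma$ is planar acyclic with sources and sinks both numbered bottom-to-top, two vertex-disjoint paths in $\Gamma$ cannot cross, so the induced bijection between the sources and their endpoints must preserve the vertical ordering; this forces $\sigma = \mathrm{id}$ for every surviving term. Every contribution therefore has positive sign, giving the stated unsigned formula. The admissibility condition $j_s \leq k_s$ plays no role in the sign argument but ensures that such paths from $j_s$ to $k_s$ can actually exist, since the slanted edges of $\Gamma$ only travel upwards.
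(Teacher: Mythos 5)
Your proposal is correct, and it is essentially the standard Lindström--Gessel--Viennot argument that Fomin and Zelevinsky use in the cited reference. The paper itself does not supply a proof of this theorem: it simply records it as a restatement of \cite[Proposition 4.2]{FominZelevinsky1999}, so there is no in-paper proof to compare against, but your outline matches the one in the reference.

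One small point worth spelling out more carefully is the step ``vertex-disjoint paths in $\Gamma$ cannot cross.'' In an arbitrary planar graph, two vertex-disjoint paths may cross transversally at a point that is not a vertex. What saves you here is the specific structure of $\Gamma$: every non-horizontal edge is a short diagonal joining two adjacent horizontal levels within a single slot of the concatenation, and all paths are strictly left-to-right. Under these constraints a transversal crossing of two paths would force them to share the vertex at the end of the slot in which they cross, so vertex-disjointness does rule out crossings. Making that explicit would close the only gap in the sign argument; with it, $\sigma = \mathrm{id}$ is indeed forced and all surviving terms carry positive sign, as you say. Your observation that admissibility ($j_s \leq k_s$) is a nonvacuousness condition rather than a sign condition is also correct, since the diagonal edges in $\Gamma$ only move upward.
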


To prove that the minors of $u^T$, $u_1$ appearing in our applications of the Chamber Ansatz are monomial we must show that in each case there is only one possible family of paths $\{\pi_1,\ldots, \pi_l\}$. Before showing this we give two examples to clarify the above construction.

\begin{ex}
We take $n=4$, so $N=6$, and we wish to compute the minors $\Delta^J_K\left(u_1\right)$ where
    $$u_1=\mathbf{x}_{\mathbf{i}'_0}^{\vee}\left(z_1, z_4, z_6, \frac{z_2}{z_4}, \frac{z_5}{z_6}, \frac{z_3}{z_5}\right) = \begin{pmatrix}
        1 & z_6 & z_5 & z_3 \\
         & 1 & z_4+\frac{z_5}{z_6} & z_2+\frac{z_3z_4}{z_5}+\frac{z_3}{z_6} \\
         &  & 1 & z_1+\frac{z_2}{z_4}+\frac{z_3}{z_5} \\
         &  &  & 1
    \end{pmatrix}
    $$
with $\mathbf{i}'_0 = (3,2,1,3,2,3)$. The graph for $u_1$ is given in Figure \ref{The graph for u_1 when n=4}.
\begin{figure}[ht!]
\centering
\begin{tikzpicture}
    \draw (0,4) -- (7,4);
    \draw (0,3) -- (7,3);
    \draw (0,2) -- (7,2);
    \draw (0,1) -- (7,1);

    \draw (0.75,3) -- (1.25,4);
    \draw (1.75,2) -- (2.25,3);
    \draw (2.75,1) -- (3.25,2);
    \draw (3.75,3) -- (4.25,4);
    \draw (4.75,2) -- (5.25,3);
    \draw (5.75,3) -- (6.25,4);
    \node at (0,1) {$\bullet$};
    \node at (0,2) {$\bullet$};
    \node at (0,3) {$\bullet$};
    \node at (0,4) {$\bullet$};
    \node at (7,1) {$\bullet$};
    \node at (7,2) {$\bullet$};
    \node at (7,3) {$\bullet$};
    \node at (7,4) {$\bullet$};

    \node at (0.75,3) {$\bullet$};
    \node at (1.25,4) {$\bullet$};
    \node at (1.75,2) {$\bullet$};
    \node at (2.25,3) {$\bullet$};
    \node at (2.75,1) {$\bullet$};
    \node at (3.25,2) {$\bullet$};

    \node at (3.75,3) {$\bullet$};
    \node at (4.25,4) {$\bullet$};
    \node at (4.75,2) {$\bullet$};
    \node at (5.25,3) {$\bullet$};
    \node at (5.75,3) {$\bullet$};
    \node at (6.25,4) {$\bullet$};

    \node at (-0.3,4) {$4$};
    \node at (-0.3,3) {$3$};
    \node at (-0.3,2) {$2$};
    \node at (-0.3,1) {$1$};

    \node at (7.3,4) {$4$};
    \node at (7.3,3) {$3$};
    \node at (7.3,2) {$2$};
    \node at (7.3,1) {$1$};

    \node at (1.3,3.5) {$z_1$};
    \node at (2.3,2.5) {$z_4$};
    \node at (3.3,1.5) {$z_6$};
    \node at (4.3,3.5) {$\frac{z_2}{z_4}$};
    \node at (5.3,2.5) {$\frac{z_5}{z_6}$};
    \node at (6.3,3.5) {$\frac{z_3}{z_5}$};
\end{tikzpicture}
\caption{The graph for $u_1$ when $n=4$} \label{The graph for u_1 when n=4}
\end{figure}
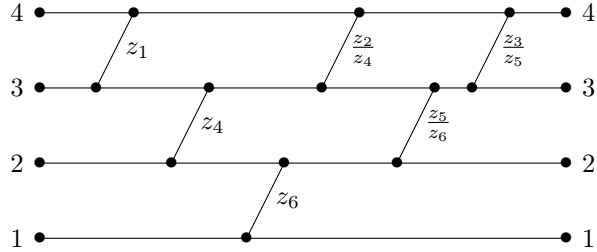

Computing the minor $\Delta^1_4(u_1)$ directly from the matrix $u_1$ is trivial. If we use the graph instead, we see that it is given by the weight of one path with three diagonal edges:
    $$\Delta^1_4(u_1) = z_6 \frac{z_5}{z_6} \frac{z_3}{z_5} = z_3.
    $$
The minor $\Delta^{\{1,2,3\}}_{\{2,3,4\}}(u_1)$ takes far more effort to compute directly, but using the graph makes the computation simple. This minor is the product of weights of three paths, each with one diagonal edge. We highlight the paths with parentheses:
    $$\Delta^{\{1,2,3\}}_{\{2,3,4\}}(u_1) = (z_6) (z_4) (z_1) = z_1z_4z_6.
    $$
\end{ex}

\begin{ex}
Again we take $n=4$, so $N=6$ and we wish to compute the minors $\Delta^J_K\left(u^T\right)$ where
    $$u^T= \left(\mathbf{x}_{-\mathbf{i}_0}^{\vee}(z_1, \ldots, z_6)\right)^T
        = \begin{pmatrix} \frac{1}{z_1z_4z_6} & \frac{z_1z_4(z_4z_6+z_5) + z_2z_5}{z_2z_4z_5z_6} & \frac{z_3z_5 +z_6(z_2z_5+z_3z_4)}{z_3z_5z_6} & 1 \\  & \frac{z_1z_4z_6}{z_2z_5} & \frac{z_6(z_2z_5+z_3z_4)}{z_3z_5} & z_6 \\ & & \frac{z_2z_5}{z_3} & z_5 \\ & & & z_3  \end{pmatrix}
    $$
with $\mathbf{i}_0 = (1,2,3,1,2,1)$. The graph for
    $$u^T = \mathbf{t}_{1}^{\vee}(z_6^{-1})\mathbf{x}_{1}^{\vee}(z_6) \mathbf{t}_{2}^{\vee}(z_5^{-1})\mathbf{x}_{2}^{\vee}(z_5) \mathbf{t}_{1}^{\vee}(z_4^{-1})\mathbf{x}_{1}^{\vee}(z_4) \mathbf{t}_{3}^{\vee}(z_3^{-1})\mathbf{x}_{3}^{\vee}(z_3) \mathbf{t}_{2}^{\vee}(z_2^{-1})\mathbf{x}_{2}^{\vee}(z_2) \mathbf{t}_{1}^{\vee}(z_1^{-1}) \mathbf{x}_{1}^{\vee}(z_1)
    $$
is given in Figure \ref{The graph for u^T when n=4}.

\begin{figure}[ht!]
\centering
\begin{tikzpicture}
    \draw (0,4) -- (10,4);
    \draw (0,3) -- (10,3);
    \draw (0,2) -- (10,2);
    \draw (0,1) -- (10,1);

    \draw (1.25,1) -- (1.75,2);
    \draw (2.75,2) -- (3.25,3);
    \draw (4.25,1) -- (4.75,2);
    \draw (5.75,3) -- (6.25,4);
    \draw (7.25,2) -- (7.75,3);
    \draw (8.75,1) -- (9.25,2);
    \node at (0,1) {$\bullet$};
    \node at (0,2) {$\bullet$};
    \node at (0,3) {$\bullet$};
    \node at (0,4) {$\bullet$};
    \node at (10,1) {$\bullet$};
    \node at (10,2) {$\bullet$};
    \node at (10,3) {$\bullet$};
    \node at (10,4) {$\bullet$};

    \node at (0.75,1) {$\bullet$};
    \node at (1.25,1) {$\bullet$};
    \node at (0.75,2) {$\bullet$};
    \node at (1.25,2) {$\bullet$};
    \node at (1.75,2) {$\bullet$};
    \node at (2.25,2) {$\bullet$};
    \node at (2.75,2) {$\bullet$};
    \node at (2.25,3) {$\bullet$};
    \node at (2.75,3) {$\bullet$};
    \node at (3.25,3) {$\bullet$};

    \node at (3.75,1) {$\bullet$};
    \node at (4.25,1) {$\bullet$};
    \node at (3.75,2) {$\bullet$};
    \node at (4.25,2) {$\bullet$};
    \node at (4.75,2) {$\bullet$};
    \node at (5.25,3) {$\bullet$};
    \node at (5.75,3) {$\bullet$};
    \node at (5.25,4) {$\bullet$};
    \node at (5.75,4) {$\bullet$};
    \node at (6.25,4) {$\bullet$};

    \node at (6.75,2) {$\bullet$};
    \node at (7.25,2) {$\bullet$};
    \node at (6.75,3) {$\bullet$};
    \node at (7.25,3) {$\bullet$};
    \node at (7.75,3) {$\bullet$};
    \node at (8.25,1) {$\bullet$};
    \node at (8.75,1) {$\bullet$};
    \node at (8.25,2) {$\bullet$};
    \node at (8.75,2) {$\bullet$};
    \node at (9.25,2) {$\bullet$};

    \node at (-0.3,4) {$4$};
    \node at (-0.3,3) {$3$};
    \node at (-0.3,2) {$2$};
    \node at (-0.3,1) {$1$};

    \node at (10.3,4) {$4$};
    \node at (10.3,3) {$3$};
    \node at (10.3,2) {$2$};
    \node at (10.3,1) {$1$};

    \node at (1,1.3) {$\frac{1}{z_6}$};
    \node at (1,2.2) {$z_6$};
    \node at (1.8,1.5) {$z_6$};
    \node at (2.5,2.3) {$\frac{1}{z_5}$};
    \node at (2.5,3.2) {$z_5$};
    \node at (3.3,2.5) {$z_5$};
    \node at (4,1.3) {$\frac{1}{z_4}$};
    \node at (4,2.2) {$z_4$};
    \node at (4.8,1.5) {$z_4$};
    \node at (5.5,3.3) {$\frac{1}{z_3}$};
    \node at (5.5,4.2) {$z_3$};
    \node at (6.3,3.5) {$z_3$};
    \node at (7,2.3) {$\frac{1}{z_2}$};
    \node at (7,3.2) {$z_2$};
    \node at (7.8,2.5) {$z_2$};
    \node at (8.5,1.3) {$\frac{1}{z_1}$};
    \node at (8.5,2.2) {$z_1$};
    \node at (9.3,1.5) {$z_1$};
\end{tikzpicture}
\caption{The graph for $u^T$ when $n=4$} \label{The graph for u^T when n=4}
\end{figure}

We again give example computations, with parentheses highlighting products of multiple paths:
    $$\begin{aligned}
    &\Delta^{\{1,2\}}_{\{1,4\}}(u^T) = \left( \frac{1}{z_6} \frac{1}{z_4} \frac{1}{z_1} \right) \left( z_6 \frac{1}{z_5} z_5 \frac{1}{z_3} z_3 \right) = \frac{1}{z_1z_4},  \\
    &\Delta^{\{1,2\}}_{\{3,4\}}(u^T) = \left( \frac{1}{z_6} \frac{1}{z_4} z_4 \frac{1}{z_2} z_2 \right) \left( z_6 \frac{1}{z_5} z_5 \frac{1}{z_3} z_3 \right) = 1.
    \end{aligned}
    $$
\end{ex}

\begin{lem} \label{lem our Chamber Ansatz minors are monomial}
All minors in both applications of the Chamber Ansatz in the proof of Lemma \ref{lem form of u_1 and b factorisations} are monomial and consequently the resulting coordinate changes are monomial.
\end{lem}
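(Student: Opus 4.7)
The plan is to apply Theorem \ref{thm minors from paths in graph} to each minor of $u^T$ and of $u_1$ that arises in the quotients (\ref{eqn p_i in terms of chamber minors}) and (\ref{eqn 1/m_i in terms of chamber minors}), and to show that in each case exactly one family of vertex-disjoint paths contributes to the sum. Since the weight of each such family is a Laurent monomial in the coordinates $\boldsymbol{z}$ (respectively the $p_i$'s), a one-term sum produces a monomial minor; once the minors are monomial, the quotients in (\ref{eqn p_i in terms of chamber minors}) and (\ref{eqn 1/m_i in terms of chamber minors}) are monomial, giving the monomial coordinate change.

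I would begin with the second application of the Chamber Ansatz, where $u_1 = \mathbf{x}^\vee_{\mathbf{i}'_0}(p_1, \ldots, p_N)$. The graph $\Gamma_{u_1}$ contains only crossing edges arising from the factors $\mathbf{x}^\vee_{i'_k}(p_k)$ (no $\mathbf{t}$-type horizontal edges appear). The chamber minors are $\Delta^{\omega^\vee_{j}}_{w_{(k)}\omega^\vee_{j}}(u_1)$, so the source set is always $[1,j]$ and the sink set $w_{(k)}\omega^\vee_{j}$ is the specific $j$-element subset read off from the ansatz arrangement for $\mathbf{i}_0$. I would argue by a top-down greedy procedure: the source labelled $j$ must reach the largest element of $w_{(k)}\omega^\vee_{j}$ via the unique rightward path that uses each admissible crossing exactly once (no alternative is available once the higher paths are fixed), and vertex-disjointness then forces the path from source $j-1$, and so on inductively. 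The admissibility of the pair $([1,j], w_{(k)}\omega^\vee_j)$ guarantees that this greedy assignment never fails, and the staircase structure of $\mathbf{i}'_0$ ensures that no alternative family exists.

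For the first application, applied to $u^T$, the graph $\Gamma_{u^T}$ is built from the decomposition $\mathbf{x}^\vee_{-i}(z)^T = \mathbf{t}^\vee_i(z^{-1})\mathbf{x}^\vee_i(z)$, so besides diagonal crossings it also has weighted horizontal edges coming from the $\mathbf{t}$-factors. The key observation is that horizontal edges do not create new combinatorial choices: any family of vertex-disjoint paths is determined, up to the choice of which horizontal edges it traverses at a given height, and since every path must enter and leave each $\mathbf{t}$-block at the same height, the sequence of horizontal traversals is forced by the sources and sinks. Thus the combinatorial counting of families reduces to the underlying wiring diagram of $\mathbf{x}^\vee_{\mathbf{i}_0}$, to which the same top-down greedy argument applies for the chamber sets prescribed by $({\mathbf{i}'_0})^{\mathrm{op}}$.

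The main obstacle I expect is the bookkeeping: one must verify that for every $k$ and every fundamental weight $\omega^\vee_j$ appearing in the Chamber Ansatz formulas, the column set $w_{(k)}\omega^\vee_j$ (for the appropriate partial products) together with the row set $[1,j]$ really admits a unique vertex-disjoint path family in the relevant graph. This is essentially a uniform check that the chamber minors for the reduced words $\mathbf{i}_0$ and $({\mathbf{i}'_0})^{\mathrm{op}}$ are \emph{solid} minors in the sense of Berenstein--Fomin--Zelevinsky, which is where the staircase structure of these particular reduced expressions is doing the real work. Once this uniqueness is established, each chamber minor is a single monomial in $\boldsymbol{z}$ (and the matrix entries $[u]_0$ coming from the $\mathbf{t}$-factors), so each $p_i$ in (\ref{eqn p_i in terms of chamber minors}) and each $1/m_k$ in (\ref{eqn 1/m_i in terms of chamber minors}) is a Laurent monomial, as claimed.
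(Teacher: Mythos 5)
Your overall strategy — invoke Theorem \ref{thm minors from paths in graph} and show that each chamber minor is computed by a unique vertex-disjoint path family, so it is monomial — is the same one the paper follows, and your handling of the $\mathbf{t}^{\vee}$-factors as weight-carrying but combinatorially inert is correct. The gap is that you never actually establish the uniqueness; you yourself defer it as ``essentially a uniform check,'' but that check \emph{is} the lemma. Moreover, your stated justification — ``the admissibility of the pair $([1,j], w_{(k)}\omega^{\vee}_j)$ guarantees that this greedy assignment never fails'' — is false as a general claim: for $u=\mathbf{x}^{\vee}_1(a)\mathbf{x}^{\vee}_2(b)\mathbf{x}^{\vee}_1(c)$ the admissible flag minor $\Delta^{\{1\}}_{\{2\}}(u)=a+c$ is a two-term sum, and $\{2\}$ is a perfectly good chamber label. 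What makes the lemma true is not admissibility but the interaction between the \emph{specific} column sets that appear (determined by the arrangements for $\mathbf{i}_0$ and ${\mathbf{i}'_0}^{\mathrm{op}}$) and the staircase geometry of the \emph{specific} graphs (built from $\mathbf{i}'_0$ and $\mathbf{i}_0^{\mathrm{op}}$), and neither ingredient has been pinned down in your outline.

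The paper supplies exactly these two missing pieces. It first proves a Claim characterizing the chamber labels: every label of the $\mathbf{i}_0$-arrangement is an interval $\{a,\ldots,b\}$, and every label of the ${\mathbf{i}'_0}^{\mathrm{op}}$-arrangement is a prefix $\{1,\ldots,a\}$, a suffix $\{b,\ldots,n\}$, or a disjoint union of one of each (the proof peels off the top string inductively, then uses the reflection symmetry relating the two reduced words). Only after that does it run the forcing argument you are calling ``greedy,'' once for each of those restricted column-set shapes; e.g.\ for an interval $\{a,\ldots,b\}$ in the graph for $u_1$, the single edge joining the bottom two horizontal lines forces the path from source~$1$ to sink~$a$, and vertex-disjointness propagates the forcing upward (note this is a bottom-up argument, not the top-down sweep you describe). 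Without first identifying the column-set shapes, the forcing step has nothing to grip; your outline names the right tools but leaves the load-bearing step unproved.
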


\begin{proof}
In each application of the Chamber Ansatz, the relevant minors are those flag minors with column sets given by the chamber labels of the corresponding ansatz arrangements.
\begin{claim}
Let
    $$\begin{gathered}
    \mathbf{i}_0 = (i_1, \ldots, i_N) := (1,2, \ldots, n-1, 1,2 \ldots, n-2, \ldots, 1, 2, 1), \\
    \mathbf{i}'_0 = (i'_1, \ldots, i'_N) := (n-1, n-2, \ldots, 1, n-1, n-2, \ldots, 2, \ldots, n-1, n-2, n-1), \\
    {\mathbf{i}'_0}^{\text{op}} := (i'_N, \ldots, i'_1).
    \end{gathered}
    $$
Then
\begin{enumerate}
    \item Chamber labels of the ansatz arrangement for $\mathbf{i}_0$ are of the form $\{a, \ldots, b\}$.

    \item Chamber labels of the ansatz arrangement for ${\mathbf{i}'_0}^{\text{op}}$ are of the form $\{1, \ldots, a\}$, $\{b, \ldots, n\}$ or $\{1, \ldots, a\} \cup \{b, \ldots, n\}$.
\end{enumerate}
\end{claim}

Note that flag minors of $u_1$ and $u^T$ correspond to chamber labels of the ansatz arrangement for $\mathbf{i}_0$ and ${\mathbf{i}'_0}^{\text{op}}$ respectively.

\begin{proof}[Proof of Claim]
We may construct the reduced expression ${\mathbf{i}'_0}^{\text{op}}$ from $\mathbf{i}_0$ in two steps; first replace each $i_k$ in $\mathbf{i}_0$ with $n-i_k$ (this gives ${\mathbf{i}'_0}$) and then reverse the order. In terms of the ansatz arrangement, both of these operations result in reflections. Viewing the ansatz arrangement for $\mathbf{i}_0$ in the the plane, with the origin at the centre of the arrangement, we see that the first step above reflects the ansatz arrangement in the horizontal axis. Note that this causes each chamber label $S$ to change in the following way
    $$S \mapsto \{1, \ldots, n\}\setminus S.
    $$
In the second step we reverse the order of the reduced expression which gives a refection of the arrangement in the vertical axis and in particular there is no further change to the chamber labels. It follows that the two statements in the claim are equivalent and so we will consider only the $\mathbf{i}_0$ case.

Given the ansatz arrangement for $\mathbf{i}_0$ in dimension $n$, if we ignore the first $n-1$ crossings and the top pseudoline after this point (i.e. we remove the 1-string), the remaining graph has the form of the ansatz arrangement in dimension $n-1$, with labelling $2, \ldots n$ rather than $1, \ldots, n-1$. This is a consequence of the form of $\mathbf{i}_0$, namely that the reduced expression $\mathbf{i}_0$ in dimension $n-1$ is given by the last $\binom{n-1}{2}$ entries of the expression $\mathbf{i}_0$ in dimension $n$.

Since, by definition of the ansatz arrangement, the leftmost chamber labels are always given by sets of consecutive integers, it follows by induction that all chamber labels are of this form.
\end{proof}

We now use the graphs for $u_1$ and $u^T$ to see that the relevant flag minors are all monomial, namely by using Theorem \ref{thm minors from paths in graph} and showing that there is only one possible family of paths in each case.
\begin{enumerate}
    \item Minors of $u_1$:
    \begin{enumerate}
        \item Column sets of the form $\{1, \ldots, b\}$: Since $u_1 \in U^{\vee}_+$ these minors always equal $1$. We can see this from the graph for $u_1$ since the paths must be horizontal and the lack of non-trivial torus factors means that all horizontal edges have weight $1$.

        \item Column sets of the form $\{a, \ldots, b\}$ with $a>1$: Note that there is only one edge connecting the bottom two horizontal lines. After travelling up this edge there is only one possible path to the third line and so on. Thus there is only one path from the source $1$ to the sink $a$.

        In order for the paths in our family to remain vertex disjoint, the path from the source $2$ to the sink $a+1$ must take the first opportunity to travel upwards and indeed every possible opportunity to travel upwards until it reaches the line at height $a+1$. This imposes the same restriction on the path from the source $3$ to sink $a+2$ and so on for all paths in this family. In particular there is only one possible family of paths, thus these minors are monomial.
        For example see Figure \ref{Example family of paths for the proof of Lem CA minors monomial, u_1 case}.
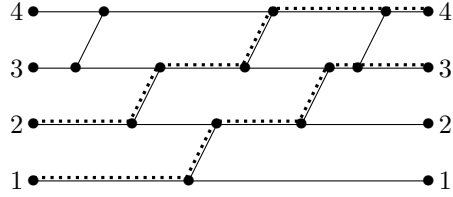
\begin{figure}[ht!]
\centering
\begin{tikzpicture}[scale=0.75]
    \draw[very thick, dotted] (0,2.06) -- (1.71,2.06) -- (2.21,3.06) -- (3.71,3.06) -- (4.21,4.06) -- (7,4.06);
    \draw[very thick, dotted] (0,1.06) -- (2.71,1.06) -- (3.21,2.06) -- (4.71,2.06) -- (5.21,3.06) -- (7,3.06);

    \draw (0,4) -- (7,4);
    \draw (0,3) -- (7,3);
    \draw (0,2) -- (7,2);
    \draw (0,1) -- (7,1);

    \draw (0.75,3) -- (1.25,4);
    \draw (1.75,2) -- (2.25,3);
    \draw (2.75,1) -- (3.25,2);
    \draw (3.75,3) -- (4.25,4);
    \draw (4.75,2) -- (5.25,3);
    \draw (5.75,3) -- (6.25,4);
    \node at (0,1) {$\bullet$};
    \node at (0,2) {$\bullet$};
    \node at (0,3) {$\bullet$};
    \node at (0,4) {$\bullet$};
    \node at (7,1) {$\bullet$};
    \node at (7,2) {$\bullet$};
    \node at (7,3) {$\bullet$};
    \node at (7,4) {$\bullet$};

    \node at (0.75,3) {$\bullet$};
    \node at (1.25,4) {$\bullet$};
    \node at (1.75,2) {$\bullet$};
    \node at (2.25,3) {$\bullet$};
    \node at (2.75,1) {$\bullet$};
    \node at (3.25,2) {$\bullet$};

    \node at (3.75,3) {$\bullet$};
    \node at (4.25,4) {$\bullet$};
    \node at (4.75,2) {$\bullet$};
    \node at (5.25,3) {$\bullet$};
    \node at (5.75,3) {$\bullet$};
    \node at (6.25,4) {$\bullet$};

    \node at (-0.3,4) {$4$};
    \node at (-0.3,3) {$3$};
    \node at (-0.3,2) {$2$};
    \node at (-0.3,1) {$1$};

    \node at (7.3,4) {$4$};
    \node at (7.3,3) {$3$};
    \node at (7.3,2) {$2$};
    \node at (7.3,1) {$1$};

\end{tikzpicture}
\caption{Example family of paths for the proof of Lemma \ref{lem our Chamber Ansatz minors are monomial}, $u_1$ case} \label{Example family of paths for the proof of Lem CA minors monomial, u_1 case}
\end{figure}
    \end{enumerate}

\newpage
    \item Minors of $u^T$:
    \begin{enumerate}
        \item Column sets of the form $\{1, \ldots, a\}$: These minors are monomial since they correspond to horizontal paths in the graph for $u^T$.

        \item Column sets of the form $\{b, \ldots, n\}$:
        We first note that there is only one edge from the line at height $n-1$ to the $n$-th horizontal line. Before this point there is only one edge from the line at height $n-2$ to the line at height $n-1$. Working backwards in this way we see there is only one possible path from each source which ends at the sink $n$, and in particular only one such path from the source $n-b+1$.

        Similarly, in order to have vertex distinct paths there is now only one possible way to reach the sink $n-1$ from the source $n-b$. Continuing in this way we see that there is only one possible family of paths and so these minors are monomial.
        For example see Figure \ref{Example family of paths for the proof of Lem CA minors monomial, u^T case}.
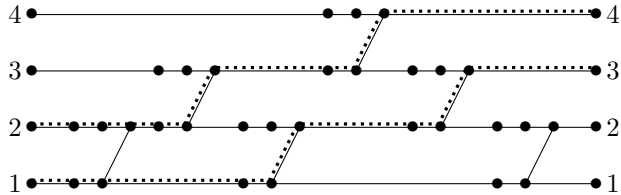
\begin{figure}[ht!]
\centering
\begin{tikzpicture}[scale=0.75]
    \draw[very thick, dotted] (0,2.06) -- (2.71,2.06) -- (3.21,3.06) -- (5.71,3.06) -- (6.21,4.06) -- (10,4.06);
    \draw[very thick, dotted] (0,1.06) -- (4.21,1.06) -- (4.71,2.06) -- (7.21,2.06) -- (7.71,3.06) -- (10,3.06);

    \draw (0,4) -- (10,4);
    \draw (0,3) -- (10,3);
    \draw (0,2) -- (10,2);
    \draw (0,1) -- (10,1);

    \draw (1.25,1) -- (1.75,2);
    \draw (2.75,2) -- (3.25,3);
    \draw (4.25,1) -- (4.75,2);
    \draw (5.75,3) -- (6.25,4);
    \draw (7.25,2) -- (7.75,3);
    \draw (8.75,1) -- (9.25,2);
    \node at (0,1) {$\bullet$};
    \node at (0,2) {$\bullet$};
    \node at (0,3) {$\bullet$};
    \node at (0,4) {$\bullet$};
    \node at (10,1) {$\bullet$};
    \node at (10,2) {$\bullet$};
    \node at (10,3) {$\bullet$};
    \node at (10,4) {$\bullet$};

    \node at (0.75,1) {$\bullet$};
    \node at (1.25,1) {$\bullet$};
    \node at (0.75,2) {$\bullet$};
    \node at (1.25,2) {$\bullet$};
    \node at (1.75,2) {$\bullet$};
    \node at (2.25,2) {$\bullet$};
    \node at (2.75,2) {$\bullet$};
    \node at (2.25,3) {$\bullet$};
    \node at (2.75,3) {$\bullet$};
    \node at (3.25,3) {$\bullet$};

    \node at (3.75,1) {$\bullet$};
    \node at (4.25,1) {$\bullet$};
    \node at (3.75,2) {$\bullet$};
    \node at (4.25,2) {$\bullet$};
    \node at (4.75,2) {$\bullet$};
    \node at (5.25,3) {$\bullet$};
    \node at (5.75,3) {$\bullet$};
    \node at (5.25,4) {$\bullet$};
    \node at (5.75,4) {$\bullet$};
    \node at (6.25,4) {$\bullet$};

    \node at (6.75,2) {$\bullet$};
    \node at (7.25,2) {$\bullet$};
    \node at (6.75,3) {$\bullet$};
    \node at (7.25,3) {$\bullet$};
    \node at (7.75,3) {$\bullet$};
    \node at (8.25,1) {$\bullet$};
    \node at (8.75,1) {$\bullet$};
    \node at (8.25,2) {$\bullet$};
    \node at (8.75,2) {$\bullet$};
    \node at (9.25,2) {$\bullet$};

    \node at (-0.3,4) {$4$};
    \node at (-0.3,3) {$3$};
    \node at (-0.3,2) {$2$};
    \node at (-0.3,1) {$1$};

    \node at (10.3,4) {$4$};
    \node at (10.3,3) {$3$};
    \node at (10.3,2) {$2$};
    \node at (10.3,1) {$1$};
\end{tikzpicture}
\caption{Example family of paths for the proof of Lemma \ref{lem our Chamber Ansatz minors are monomial}, $u^T$ case} \label{Example family of paths for the proof of Lem CA minors monomial, u^T case}
\end{figure}

        \item Column sets of the form $\{1, \ldots, a\} \cup \{b, \ldots, n\}$: We see that these minors are all monomial by combining the previous two cases.
    \end{enumerate}
\end{enumerate}
\end{proof}

\subsection{The coordinate change} \label{subsec The coordinate change}
\fancyhead[L]{2.3 \ \ The coordinate change}

In this section we address two final lemmas needed for the proof of Theorem \ref{thm coord change}, both detailing coordinate changes. We then recall and prove this theorem.

\begin{lem} \label{lem coord change for u_1 p_i in terms of z_i}
For $k=1,\ldots, n-1$, $a=1,\ldots, n-k$ we have
    $$p_{s_k+a} =\begin{cases}
        z_{1+s_{a}} & \text{if } k=1 \\
        \frac{z_{k+s_{a}}}{z_{k-1+s_{a+1}}} & \text{otherwise}
        \end{cases}
    \qquad \text{where} \ \
    s_k := \sum_{j=1}^{k-1}(n-j).
    $$
\end{lem}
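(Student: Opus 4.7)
The plan is to apply the Chamber Ansatz formula (\ref{eqn p_i in terms of chamber minors}) directly and evaluate the resulting flag minors of $u^T$ using the graph-theoretic description in Theorem \ref{thm minors from paths in graph}. By Lemma \ref{lem our Chamber Ansatz minors are monomial} we already know that each flag minor that appears is a monomial in the $z_i$'s, equal to the weight of the unique family of vertex-disjoint paths in the graph for $u^T$ whose sinks form the relevant chamber label. Writing the Chamber Ansatz in its pictorial form as $p_{N-m+1}=A_m D_m/(B_m C_m)$ for the $m$-th crossing, where $A_m,B_m,C_m,D_m$ are the minors attached to the chambers above, to the left, to the right, and below the crossing, the proof reduces to computing four monomials and taking their ratio.

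First, I would identify the crossings of the ansatz arrangement for ${\mathbf{i}'_0}^{\mathrm{op}}=(n-1,n-2,n-1,\ldots,n-3,n-2,n-1)$ explicitly. The $N$ letters group naturally into $n-1$ blocks indexed by $k=1,\ldots,n-1$, each block of length $n-k$, and this grouping matches bijectively the $(k,a)$ parametrisation of the statement via $p_{s_k+a}=p_{N-m+1}$ for an appropriate crossing index $m=m(k,a)$. For each position $(k,a)$, I would read off the four surrounding chamber labels using the inductive structure of the arrangement noted in the proof of Lemma \ref{lem our Chamber Ansatz minors are monomial} (the arrangement in dimension $n$ contains a shifted copy of the arrangement in dimension $n-1$). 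By that same proof, every label has the form $\{1,\ldots,\alpha\}$, $\{\beta,\ldots,n\}$, or $\{1,\ldots,\alpha\}\cup\{\beta,\ldots,n\}$, so in particular the list of labels surrounding a crossing can be written down explicitly in terms of $k$ and $a$.

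Next, I would compute each of the required monomials using the graph for $u^T$. Since the unique path family for a label of the above form is obtained by the greedy rule described in the proof of Lemma \ref{lem our Chamber Ansatz minors are monomial}, each minor is a concrete product of the edge weights $z_i^{\pm 1}$ (contributed by the torus factors $\mathbf{t}_{i}^{\vee}(z^{-1})$) and the vertical edge weights $z_i$ (contributed by the $\mathbf{x}_{i}^{\vee}(z)$ factors). Substituting the four resulting monomials into $A_m D_m/(B_m C_m)$ then yields the claimed formula after cancellation, which is essentially telescopic.

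The main obstacle is the combinatorial bookkeeping: matching the crossing index $m$ to the position $s_k+a$, tracking which $z_i$'s appear in each of the four surrounding monomials, and verifying that the resulting ratio simplifies to the stated form. The case split at $k=1$ is forced by geometry: these crossings lie on the outer boundary of the arrangement, so one of the side chambers is degenerate and its minor collapses, leaving only a single $z$-factor and giving $p_{s_1+a}=z_{1+s_a}$. For $k\geq 2$ all four minors are genuinely nontrivial, and the telescoping of their ratio produces the quotient $z_{k+s_a}/z_{k-1+s_{a+1}}$. Once the chamber labels around each crossing are pinned down explicitly, the remaining verification is routine and amounts to matching indices on the $z_i$'s appearing in the numerator and denominator.
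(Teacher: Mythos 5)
Your proposal follows the same route as the paper: apply the generalised Chamber Ansatz to the reduced word $\mathbf{i}'^{\mathrm{op}}_0$, read the four chamber minors surrounding each crossing off the ansatz arrangement, evaluate each as a monomial via the planar graph for $u^T$ (using Lemma \ref{lem our Chamber Ansatz minors are monomial} and Theorem \ref{thm minors from paths in graph}), and then simplify the ratio $A_m D_m/(B_m C_m)$. The only detail worth flagging is that in the paper's bookkeeping the case that actually requires separate treatment \emph{in the minor computation} is $a=1$ (when the chamber above the crossing sits outside the pseudoline arrangement and its minor degenerates to $\Delta^{\{1,\ldots,n\}}_{\{1,\ldots,n\}}(u^T)=1$), whereas the $k=1$ split in the final formula arises because both the chamber to the right and the chamber below then carry a $(0,\cdot)$ label whose minors are empty products; your description attributes the $k=1$ split to a single degenerate side chamber, which is close but not quite the mechanism — it is two chambers, and separately the $a=1$ boundary case is what forces a separate computation. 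Provided you track both of these, the rest is routine index matching as you say.
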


\begin{proof}
In the proof of Lemma \ref{lem form of u_1 and b factorisations} we used the Chamber Ansatz with ${\mathbf{i}'_0}^{\mathrm{op}}$ to show that $u_1 = \mathbf{x}_{i'_1}^{\vee}(p_1) \cdots \mathbf{x}_{i'_N}^{\vee}(p_N)$ where
    $$
    p_k = \frac{\prod_{j\neq i'_k} \Delta^{\omega^{\vee}_j}_{w_{(N-k+1)}\omega^{\vee}_j}(u^T)^{-a_{j,i'_k}}}{\Delta^{\omega^{\vee}_{i_k}}_{w_{(N-k+1)}\omega^{\vee}_{i'_k}}(u^T) \Delta^{\omega^{\vee}_{i'_k}}_{w_{(N-k)}\omega^{\vee}_{i'_k}}(u^T)}, \quad k=1, \ldots, N.
    $$
Since this expression is quite unpleasant, we instead work diagrammatically. In particular, we will use the ansatz arrangement for ${\mathbf{i}'_0}^{\mathrm{op}}$ and the graph for $u^T$.

To give a visual aid we recall Figure \ref{fig ansatz arrangement i'_0^op dim 4}, the dimension $4$ example of the ansatz arrangement for ${\mathbf{i}'_0}^{\mathrm{op}} = (i'_N, \ldots, i'_1)=(3,2,3,1,2,3)$:
\begin{center}
    \begin{tikzpicture}[scale=0.85]
            \draw (0,4) -- (0.75,4) -- (1.25,3) -- (1.75,3) -- (2.25,2) -- (3.75,2) -- (4.25,1) -- (7,1);
            \draw (0,3) -- (0.75,3) -- (1.25,4) -- (2.75,4) -- (3.25,3) -- (4.75,3) -- (5.25,2) -- (7,2);
            \draw (0,2) -- (1.75,2) -- (2.25,3) -- (2.75,3) -- (3.25,4) -- (5.75,4) -- (6.25,3) -- (7,3);
            \draw (0,1) -- (3.75,1) -- (4.25,2) -- (4.75,2) -- (5.25,3) -- (5.75,3) -- (6.25,4) -- (7,4);

            \node at (1,3.5) {$\bullet$};
            \node at (2,2.5) {$\bullet$};
            \node at (3,3.5) {$\bullet$};
            \node at (4,1.5) {$\bullet$};
            \node at (5,2.5) {$\bullet$};
            \node at (6,3.5) {$\bullet$};

            \node at (-0.3,4) {$4$};
            \node at (-0.3,3) {$3$};
            \node at (-0.3,2) {$2$};
            \node at (-0.3,1) {$1$};

            \node at (7.3,4) {$1$};
            \node at (7.3,3) {$2$};
            \node at (7.3,2) {$3$};
            \node at (7.3,1) {$4$};

            \node at (0.5,3.5) {$123$};
            \node at (2,3.5) {$124$};
            \node at (4.5,3.5) {$134$};
            \node at (6.5,3.5) {$234$};

            \node at (1,2.5) {$12$};
            \node at (3.5,2.5) {$14$};
            \node at (6,2.5) {$34$};

            \node at (2,1.5) {$1$};
            \node at (5.5,1.5) {$4$};
    \end{tikzpicture}
\end{center}

We now define a new labelling of the chambers of the ansatz arrangement for ${\mathbf{i}'_0}^{\mathrm{op}}$ in terms of pairs $(k,a)$. This is motivated by two facts:
\begin{enumerate}
    \item Any integer $1 \leq m \leq N:=\binom{n}{2}$ may be written as
        $$m = s_k +a
        $$
    for some unique pair $(k,a)$, with $k=1,\ldots, n-1$, $a=1,\ldots, n-k$.
    \item For such a pair $(k,a)$, the label of the chamber to the \emph{left} of the $\left(N-s_k-a+1\right)$-th crossing is given by
        $$\begin{cases}
        \{1, \ldots, k \} & \text{if } k+a=n, \\
        \{1, \ldots, k \}\cup\{k+a+1, \ldots, n \} & \text{if } k+a\neq n.
        \end{cases}
        $$
\end{enumerate}
It follows that we assign the pair $(k,a)$ to the chamber on the left of the $\left(N-s_k-a+1\right)$-th crossing. The rightmost chambers are labelled consistently, taking $k=0$. We leave the chambers above and below the pseudoline arrangement unlabelled.

Continuing with our dimension $4$ example, the chamber pairs $(k,a)$ are shown in Figure \ref{fig ansatz arrangement i'_0^op dim 4 (k,a) labels}. In general we consider the $(N-s_k-a+1)$-th crossing. The pairs $(k,a)$ for each surrounding chamber are given, where they are defined, by the diagram in Figure \ref{fig chamber pair labels i'_0^op ansatz arrangement}.
\begin{figure}[hb]
\centering
\begin{minipage}[b]{0.47\textwidth}
    \centering
\begin{tikzpicture}[scale=0.85]
    \draw (0,4) -- (0.75,4) -- (1.25,3) -- (1.75,3) -- (2.25,2) -- (3.75,2) -- (4.25,1) -- (7,1);
    \draw (0,3) -- (0.75,3) -- (1.25,4) -- (2.75,4) -- (3.25,3) -- (4.75,3) -- (5.25,2) -- (7,2);
    \draw (0,2) -- (1.75,2) -- (2.25,3) -- (2.75,3) -- (3.25,4) -- (5.75,4) -- (6.25,3) -- (7,3);
    \draw (0,1) -- (3.75,1) -- (4.25,2) -- (4.75,2) -- (5.25,3) -- (5.75,3) -- (6.25,4) -- (7,4);

    \node at (1,3.5) {$\bullet$};
    \node at (2,2.5) {$\bullet$};
    \node at (3,3.5) {$\bullet$};
    \node at (4,1.5) {$\bullet$};
    \node at (5,2.5) {$\bullet$};
    \node at (6,3.5) {$\bullet$};

    \node at (-0.3,4) {$4$};
    \node at (-0.3,3) {$3$};
    \node at (-0.3,2) {$2$};
    \node at (-0.3,1) {$1$};

    \node at (7.3,4) {$1$};
    \node at (7.3,3) {$2$};
    \node at (7.3,2) {$3$};
    \node at (7.3,1) {$4$};

    \node at (0.3,3.5) {$(3,1)$};
    \node at (2,3.5) {$(2,1)$};
    \node at (4.5,3.5) {$(1,1)$};
    \node at (6.7,3.5) {$(0,1)$};

    \node at (1,2.5) {$(2,2)$};
    \node at (3.5,2.5) {$(1,2)$};
    \node at (6,2.5) {$(0,2)$};

    \node at (2,1.5) {$(1,3)$};
    \node at (5.5,1.5) {$(0,3)$};
\end{tikzpicture}
\caption{The ansatz arrangement for ${\mathbf{i}'_0}^{\mathrm{op}}$ in dimension 4 with $(k,a)$ labelling} \label{fig ansatz arrangement i'_0^op dim 4 (k,a) labels}
\end{minipage}
    \hfill
\begin{minipage}[b]{0.47\textwidth}
    \centering
\begin{tikzpicture}
    \node at (1,0.5) {$\bullet$};

    \draw[<-, >=stealth, densely dashed, rounded corners=15] (1.08,0.52) -- (1.7, 0.9) -- (2.2,1.85);
    \draw (-0.3,1) -- (0.75,1) -- (1.25,0) -- (2.3,0);
    \draw (-0.3,0) -- (0.75,0) -- (1.25,1) -- (2.3,1);
    \node at (1,-0.3) {{$(k-1, a+1)$}};
    \node at (0.2,0.5) {{$(k, a)$}};
    \node at (2.1,0.5) {{$(k-1, a)$}};
    \node at (1,1.3) {{$(k, a-1)$}};
    \node at (1,2) {{$\left(N-s_k-a+1\right)$-th crossing}};
\end{tikzpicture}
\caption{Labelling of chamber pairs $(k,a)$ for ${\mathbf{i}'_0}^{\mathrm{op}}$ ansatz arrangement} \label{fig chamber pair labels i'_0^op ansatz arrangement}
\end{minipage}
\end{figure}

With this notation we return to the coordinate change given by the Chamber Ansatz. It requires us to compute the minors corresponding to the chamber labels, which we will do in terms of the pairs $(k,a)$. To help us with this we recall Figure \ref{The graph for u^T when n=4}, namely that in dimension $4$ the graph for
    $$u^T = \mathbf{t}_{1}^{\vee}(z_6^{-1})\mathbf{x}_{1}^{\vee}(z_6) \mathbf{t}_{2}^{\vee}(z_5^{-1})\mathbf{x}_{2}^{\vee}(z_5) \mathbf{t}_{1}^{\vee}(z_4^{-1})\mathbf{x}_{1}^{\vee}(z_4) \mathbf{t}_{3}^{\vee}(z_3^{-1})\mathbf{x}_{3}^{\vee}(z_3) \mathbf{t}_{2}^{\vee}(z_2^{-1})\mathbf{x}_{2}^{\vee}(z_2) \mathbf{t}_{1}^{\vee}(z_1^{-1}) \mathbf{x}_{1}^{\vee}(z_1)
    $$
is given by
\begin{center}
\begin{tikzpicture}[scale=0.85]
    \draw (0,4) -- (10,4);
    \draw (0,3) -- (10,3);
    \draw (0,2) -- (10,2);
    \draw (0,1) -- (10,1);

    \draw (1.25,1) -- (1.75,2);
    \draw (2.75,2) -- (3.25,3);
    \draw (4.25,1) -- (4.75,2);
    \draw (5.75,3) -- (6.25,4);
    \draw (7.25,2) -- (7.75,3);
    \draw (8.75,1) -- (9.25,2);
    \node at (0,1) {$\bullet$};
    \node at (0,2) {$\bullet$};
    \node at (0,3) {$\bullet$};
    \node at (0,4) {$\bullet$};
    \node at (10,1) {$\bullet$};
    \node at (10,2) {$\bullet$};
    \node at (10,3) {$\bullet$};
    \node at (10,4) {$\bullet$};

    \node at (0.75,1) {$\bullet$};
    \node at (1.25,1) {$\bullet$};
    \node at (0.75,2) {$\bullet$};
    \node at (1.25,2) {$\bullet$};
    \node at (1.75,2) {$\bullet$};
    \node at (2.25,2) {$\bullet$};
    \node at (2.75,2) {$\bullet$};
    \node at (2.25,3) {$\bullet$};
    \node at (2.75,3) {$\bullet$};
    \node at (3.25,3) {$\bullet$};

    \node at (3.75,1) {$\bullet$};
    \node at (4.25,1) {$\bullet$};
    \node at (3.75,2) {$\bullet$};
    \node at (4.25,2) {$\bullet$};
    \node at (4.75,2) {$\bullet$};
    \node at (5.25,3) {$\bullet$};
    \node at (5.75,3) {$\bullet$};
    \node at (5.25,4) {$\bullet$};
    \node at (5.75,4) {$\bullet$};
    \node at (6.25,4) {$\bullet$};

    \node at (6.75,2) {$\bullet$};
    \node at (7.25,2) {$\bullet$};
    \node at (6.75,3) {$\bullet$};
    \node at (7.25,3) {$\bullet$};
    \node at (7.75,3) {$\bullet$};
    \node at (8.25,1) {$\bullet$};
    \node at (8.75,1) {$\bullet$};
    \node at (8.25,2) {$\bullet$};
    \node at (8.75,2) {$\bullet$};
    \node at (9.25,2) {$\bullet$};

    \node at (-0.3,4) {$4$};
    \node at (-0.3,3) {$3$};
    \node at (-0.3,2) {$2$};
    \node at (-0.3,1) {$1$};

    \node at (10.3,4) {$4$};
    \node at (10.3,3) {$3$};
    \node at (10.3,2) {$2$};
    \node at (10.3,1) {$1$};

    \node at (1,1.3) {$\frac{1}{z_6}$};
    \node at (1,2.2) {$z_6$};
    \node at (1.8,1.5) {$z_6$};
    \node at (2.5,2.3) {$\frac{1}{z_5}$};
    \node at (2.5,3.2) {$z_5$};
    \node at (3.3,2.5) {$z_5$};
    \node at (4,1.3) {$\frac{1}{z_4}$};
    \node at (4,2.2) {$z_4$};
    \node at (4.8,1.5) {$z_4$};
    \node at (5.5,3.3) {$\frac{1}{z_3}$};
    \node at (5.5,4.2) {$z_3$};
    \node at (6.3,3.5) {$z_3$};
    \node at (7,2.3) {$\frac{1}{z_2}$};
    \node at (7,3.2) {$z_2$};
    \node at (7.8,2.5) {$z_2$};
    \node at (8.5,1.3) {$\frac{1}{z_1}$};
    \node at (8.5,2.2) {$z_1$};
    \node at (9.3,1.5) {$z_1$};
\end{tikzpicture}
\end{center}

To compute the minors corresponding to the chamber labels, we first use (\ref{eqn wt matrix z coords}) from Section \ref{subsec The form of the weight matrix} to see that for $k=1,\ldots, n-1$ we have
    $$\Delta^{\{1, \ldots, k\}}_{\{1, \ldots, k\}}(u^T) = \left(\prod_{\substack{1\leq m \leq N \\ i_m \in \{1, \ldots, k\}}} \frac{1}{z_m} \right) \left( \prod_{\substack{1\leq m \leq N \\ i_m \in \{1, \ldots, k-1\}}} z_m \right)
        = \prod_{\substack{1\leq m \leq N \\ i_m = k}} \frac{1}{z_m}.
    $$
Note that if $a=1$ then the chamber in the ansatz arrangement above the relevant crossings has label $\{1, \ldots, n\}$, with corresponding minor
    $$\Delta^{\{1, \ldots, n\}}_{\{1, \ldots, n\}}(u^T) = \left(\prod_{\substack{1\leq m \leq N \\ i_m \in \{1, \ldots, n-1\}}} \frac{1}{z_m} \right) \left( \prod_{\substack{1\leq m \leq N \\ i_m \in \{1, \ldots, n-1\}}} z_m \right)
        = 1.
    $$

For the remaining minors, using the graph for $u^T$ and Theorem \ref{thm minors from paths in graph} we see that
    $$\Delta^{\{1, \ldots, n-a\}}_{\{1, \ldots, k\}\cup\{k+a+1, \ldots, n \}}(u^T) = \Delta^{\{1, \ldots, k\}}_{\{1, \ldots, k\}}(u^T) \Delta^{\{k+1, \ldots, n-a\}}_{\{k+a+1, \ldots, n \}}(u^T).
    $$

The minors $\Delta^{\{k+1, \ldots, n-a\}}_{\{k+a+1, \ldots, n \}}(u^T)$ correspond to paths in the graph for $u^T$ which form `staircases' so their weights have contributions from both horizontal and diagonal edges.

The proof of Lemma \ref{lem our Chamber Ansatz minors are monomial} implies that on each path there are no horizontal edges with non-trivial weight after the last diagonal edge has been traversed. Additionally there is only one horizontal edge with non-trivial weight between each diagonal `step', namely the edge directly preceding the diagonal in Figure \ref{Labelled line segments in graphs in the proof of Lem coord change for u_1 p_i in terms of z_i}.
\begin{figure}[ht!]
\centering
\begin{tikzpicture}[scale=0.85]
    \draw (0,4) -- (3,4);
    \draw (0,3) -- (3,3);
    \draw (0,2) -- (3,2);
    \draw (0,1) -- (3,1);

    \draw (1.5,2) -- (2,3);

    \node at (1,2) {$\bullet$};
    \node at (1.5,2) {$\bullet$};
    \node at (1,3) {$\bullet$};
    \node at (1.5,3) {$\bullet$};
    \node at (2,3) {$\bullet$};

    \node at (1.25,2.35) {$\frac{1}{z_m}$};
    \node at (1.25,3.25) {$z_m$};
    \node at (2.1,2.5) {$z_m$};
\end{tikzpicture}
\caption{Labelled line segments in graphs in the proof of Lemma \ref{lem coord change for u_1 p_i in terms of z_i}} \label{Labelled line segments in graphs in the proof of Lem coord change for u_1 p_i in terms of z_i}
\end{figure}
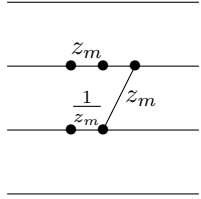

In particular, on each path the contributions from the diagonal edges and the horizontal edges directly preceding them will always cancel. So, roughly speaking, for each path we only need to consider the contributions from the horizontal edges which occur well before the first diagonal `step'; that is, if the first diagonal edge in the path occurs with weight $z_p$ then the only factors in the evaluation of the minor will be $\frac{1}{z_m}$, ${z_m}$ with $m>p$ (note that the ordering of the $i_m$ is reversed in the graph of $u^T$ due to taking the transpose):
    $$\begin{aligned}
    \Delta^{\{k+1, \ldots, n-a\}}_{\{k+a+1, \ldots, n \}}(u^T)
        &= \left( \prod_{\substack{m > i_m+\sum_{j=1}^{a-1} (n-j) \\ i_m\in \{k+1, \ldots, n-a\}}} \frac{1}{z_m} \right) \left( \prod_{\substack{m > i_m+\sum_{j=1}^{a-1} (n-j) \\  i_m\in \{k, \ldots, n-a-1\}}} z_m \right) \\
        &= \left( \prod_{\substack{m > n-a+s_a \\ i_m=n-a}} \frac{1}{z_m} \right) \left( \prod_{\substack{m > k+s_a \\ i_m=k}} z_m \right).
    \end{aligned}
    $$
Noticing that the last occurrence of $i_m=n-a$ in the reduced expression $\mathbf{i}_0$, is exactly when $m=n-a+s_{a}$, we obtain
    $$\Delta^{\{k+1, \ldots, n-a\}}_{\{k+a+1, \ldots, n \}}(u^T)
        = \prod_{\substack{m > k+s_{a} \\ i_m=k}} z_m.
    $$
Additionally we see that this minor is equal to $1$ if $k=0$, since $i_m\geq 1$ by definition.

Combining the above we obtain
    $$\begin{aligned}
    \Delta^{\{1, \ldots, n-a\}}_{\{1, \ldots, k\}\cup\{k+a+1, \ldots, n \}}(u^T)
        &= \Delta^{\{1, \ldots, k\}}_{\{1, \ldots, k\}}(u^T) \Delta^{\{k+1, \ldots, n-a\}}_{\{k+a+1, \ldots, n \}}(u^T) \\
        &= \left( \prod_{\substack{1\leq m \leq N \\ i_m = k}} \frac{1}{z_m} \right) \left( \prod_{\substack{m > k+s_{a} \\ i_m=k}} z_m \right) =  \prod_{\substack{m \leq k+s_{a} \\ i_m=k}} \frac{1}{z_m}.
    \end{aligned}
    $$

We now use the Chamber Ansatz to compute the $p_{s_k+a}$ coordinates, considering separately the case when $a=1$.

For $k=1,\ldots, n-1$, $a=2,\ldots, n-k$ we have
    \begin{equation}
    p_{s_k+a}
        = \frac{\left(\prod\limits_{\substack{m\leq k+s_{a-1} \\ i_{m}=k}} \frac{1}{z_{m}}\right) \left(\prod\limits_{\substack{m\leq k-1+s_{a+1} \\ i_{m}=k-1}} \frac{1}{z_{m}}\right)}{\left(\prod\limits_{\substack{m\leq k+s_{a} \\ i_m=k}} \frac{1}{z_m}\right) \left(\prod\limits_{\substack{ m\leq k-1+s_{a} \\ i_{m}=k-1}} \frac{1}{z_{m}}\right)}
        = \frac{\prod\limits_{\substack{k+s_{a-1} < m\leq k+s_{a} \\ i_m=k}} z_m }{\prod\limits_{\substack{k-1+s_{a} < m\leq k-1+s_{a+1} \\ i_{m}=k-1}} z_m}, \label{eqn t' coord general case}
    \end{equation}
We note that each of the products on the last line has exactly one term, namely when $m$ is equal to the upper bound. Thus we obtain
    $$p_{s_k+a} = \begin{cases}
        z_{k+s_{a}} & \text{if } k=1, \\
        \frac{z_{k+s_{a}}}{z_{k-1+s_{a+1}}} & \text{otherwise}.
        \end{cases}
    $$

We apply a similar argument in the case when $a=1$. For $k=1,\ldots, n-1$ the minor corresponding to the chamber above the pseudoline arrangement is equal to $1$ so we have
    $$\begin{aligned}
    p_{s_k+a}
        = \frac{\prod\limits_{\substack{m\leq k-1+s_{a+1} \\ i_{m}=k-1}} \frac{1}{z_{m}}}{\left(\prod\limits_{\substack{m\leq k+s_{a} \\ i_m=k}} \frac{1}{z_m}\right) \left(\prod\limits_{\substack{ m\leq k-1+s_{a} \\ i_{m}=k-1}} \frac{1}{z_{m}}\right)}
        &= \frac{\prod\limits_{\substack{m\leq k \\ i_m=k}} z_m }{\prod\limits_{\substack{k-1< m\leq k-1+(n-1) \\ i_{m}=k-1}} z_m} \\
        &= \begin{cases}
            z_{k} & \text{if } k=1 \\
            \frac{z_{k}}{z_{k-1+(n-1)}} & \text{otherwise}
            \end{cases}\\
        &= \begin{cases}
            z_{k+s_{a}} & \text{if } k=1 \\
            \frac{z_{k+s_{a}}}{z_{k-1+s_{a+1}}} & \text{otherwise}
            \end{cases}
    \end{aligned}
    $$
since $s_{a+1}=s_2=n-1$ and $s_{a}=s_1=0$.

Note that the need to consider the $a=1$ case separately is highlighted in (\ref{eqn t' coord general case}); if $a=1$ the numerator would be
    $$\prod\limits_{\substack{k+s_{0} < m\leq k+s_{1} \\ i_m=k}} z_m = \prod\limits_{\substack{k < m\leq k \\ i_m=k}} z_m =1
    $$
whereas in fact we should have $z_k=z_{k+s_{a}}$ in the numerator.

Combining the above we obtain the desired coordinate change:
    $$p_{s_k+a} =\begin{cases}
        z_{k+s_{a}} & \text{if } k=1, \\
        \frac{z_{k+s_{a}}}{z_{k-1+s_{a+1}}} & \text{otherwise}.
    \end{cases}
    $$
\end{proof}

\begin{lem} \label{lem coord change for b m_i in terms of p_i}
For $k=1,\ldots, n-1$, $a=1,\ldots, n-k$ we have
    $$\frac{1}{m_{s_k+a}}= \begin{cases}
        \prod\limits_{\substack{r=1,\ldots, k}} \frac{1}{p_{s_{r+1}-a+1}} &\text{if } k=1 \\
        \frac{\prod\limits_{\substack{r=1,\ldots, k-1}} p_{s_{r+1}-a} }{\prod\limits_{\substack{r=1,\ldots, k}} p_{s_{r+1}-a+1}} &\text{otherwise}
    \end{cases}
    \qquad \text{where} \ \
    s_k := \sum_{j=1}^{k-1}(n-j).
    $$
\end{lem}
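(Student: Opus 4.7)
The plan is to follow the strategy of the proof of Lemma \ref{lem coord change for u_1 p_i in terms of z_i}, applying the Chamber Ansatz formula (\ref{eqn 1/m_i in terms of chamber minors}) to $u_1$ rather than $u^T$. For each pair $(k,a)$ with $k=1,\ldots,n-1$ and $a=1,\ldots,n-k$, the Chamber Ansatz expresses
$$\frac{1}{m_{s_k+a}} = \frac{A \cdot D}{B \cdot C},$$
where $A$, $B$, $C$, $D$ are the flag minors of $u_1$ whose column sets are the labels of the chambers above, left, right, and below the $(s_k+a)$-th crossing in the ansatz arrangement for $\mathbf{i}_0$.

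First, I would identify these chamber labels, generalising the pattern visible in Figure \ref{fig ansatz arrangement i_0 dim 4}: the left chamber is $\{k,\ldots,k+a-1\}$, the right is $\{k+1,\ldots,k+a\}$, the chamber above is $\{k,\ldots,k+a\}$ (taken to be trivial when $a=n-k$, i.e.\ outside the arrangement), and the chamber below is $\{k+a-1\}$ (trivial when $a=1$). Each is of the form $\{c,\ldots,d\}$, consistent with the claim inside the proof of Lemma \ref{lem our Chamber Ansatz minors are monomial}.

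Next, I would evaluate each of the four chamber minors using the planar graph for $u_1 = \mathbf{x}^{\vee}_{i'_1}(p_1) \cdots \mathbf{x}^{\vee}_{i'_N}(p_N)$. This graph has only diagonal edges weighted by the $p_i$'s, together with unit-weight horizontal edges (since $u_1 \in U^{\vee}_+$); the $r$-th block of $\mathbf{i}'_0$ contributes diagonal edges at levels $n-1, n-2, \ldots, r$ with respective weights $p_{s_r+1}, \ldots, p_{s_{r+1}}$. By Lemma \ref{lem our Chamber Ansatz minors are monomial} and Theorem \ref{thm minors from paths in graph}, each of $A,B,C,D$ is monomial, equal to the weight of the unique vertex-disjoint family of paths realising it. Explicit inspection of these path families, block by block, identifies which diagonal edges each of the four minors uses.

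Finally, I would substitute these monomial expressions into $\frac{A \cdot D}{B \cdot C}$ and track the cancellations. Most diagonal edges are traversed by path families for more than one of $A,B,C,D$, producing massive telescoping across the blocks of $\mathbf{i}'_0$. After cancellation, only the ``boundary'' $p_i$'s survive: those at positions $s_{r+1}-a+1$ for $r=1,\ldots,k$ (contributing to the denominator via $B$ and $C$) and those at positions $s_{r+1}-a$ for $r=1,\ldots,k-1$ (contributing to the numerator via $A$). This yields the stated formula. The $k=1$ case specialises cleanly: the numerator product is empty, leaving $\frac{1}{m_a} = \frac{1}{p_{s_2-a+1}}$. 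The principal obstacle is the careful combinatorial bookkeeping needed to pin down which diagonal edges each path family traverses, and then to verify that the telescoping is precisely as described; this parallels the careful block-by-block argument used in the proof of Lemma \ref{lem coord change for u_1 p_i in terms of z_i}, but now on the simpler graph for $u_1$.
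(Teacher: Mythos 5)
Your overall strategy matches the paper's exactly: apply the Chamber Ansatz with the ansatz arrangement for $\mathbf{i}_0$, identify the four chamber labels surrounding the $(s_k+a)$-th crossing, evaluate the corresponding flag minors of $u_1$ via the planar graph, and form the quotient $\tfrac{A\cdot D}{B\cdot C}$. The identifications you give for the left, right and above chambers are correct. However, the chamber directly below the $(s_k+a)$-th crossing is the interval $\{k+1,\ldots,k+a-1\}$ of size $a-1$, not the singleton $\{k+a-1\}$. These coincide only when $a\le 2$; already at $n=4$, $(k,a)=(1,3)$, the chamber below the third crossing in Figure \ref{fig ansatz arrangement i_0 dim 4} is $\{2,3\}$, not $\{3\}$. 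With $D=\Delta^{\{1\}}_{\{k+a-1\}}(u_1)$ in place of $D=\Delta^{\{1,\ldots,a-1\}}_{\{k+1,\ldots,k+a-1\}}(u_1)$, the quotient $\tfrac{AD}{BC}$ does not telescope to the claimed formula, so this error is material if the computation is carried through. (Your triviality condition for the above chamber is also slightly off: it lies outside the arrangement only when $k=1$ and $a=n-1$, not whenever $a=n-k$; e.g.\ for $(k,a)=(2,2)$, $n=4$, the chamber above is $\{234\}$, well inside.)

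Once the below chamber is corrected, everything else you describe goes through, but the ``careful bookkeeping'' you postpone is in fact cleanly packaged by the explicit monomial formula for these flag minors that the paper isolates:
$$\Delta^{\{1,\ldots,a\}}_{\{k+1,\ldots,k+a\}}(u_1) = \prod_{\substack{r=1,\ldots,k \\ b=1,\ldots,a}} p_{s_{r+1}-b+1},$$
coming from the unique vertex-disjoint path family in which the strand from source $b$ to sink $k+b$ has weight $\prod_{r=1,\ldots,k} p_{s_{r+1}-b+1}$. Once $A$, $B$, $C$, $D$ (the corrected $D$) are written in this form, the cancellation is immediate: $A/B$ leaves the $b=a+1$ factors, $\prod_{r=1}^{k-1} p_{s_{r+1}-a}$, while $C/D$ leaves the $b=a$ factors, $\prod_{r=1}^{k} p_{s_{r+1}-a+1}$, with no further block-by-block inspection required. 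I would recommend making this minor formula explicit rather than appealing to ``explicit inspection, block by block''.
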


\begin{proof}

In the proof of Lemma \ref{lem form of u_1 and b factorisations} we used the Chamber Ansatz with $\mathbf{i}_0$ to show that  $b B^{\vee}_+ = \mathbf{y}_{i_1}\left(\frac{1}{m_1}\right) \cdots \mathbf{y}_{i_N}\left(\frac{1}{m_N}\right)B^{\vee}_+$ where
    $$
    \frac{1}{m_k} = \frac{\prod_{j\neq i_k} \Delta^{\omega_j}_{w_{(k)}\omega_j}(u_1)^{-a_{j,i_k}}}{\Delta^{\omega_{i_k}}_{w_{(k)}\omega_{i_k}}(u_1) \Delta^{\omega_{i_k}}_{w_{(k-1)}\omega_{i_k}}(u_1)}, \quad k=1, \ldots, N.
    $$
Similar to the proof of the first coordinate change, since this expression is unpleasant we again work diagrammatically. In particular, we will use the ansatz arrangement for $\mathbf{i}'_0$ and the graph for $u_1$.

As before, to give a visual aid we recall Figure \ref{fig ansatz arrangement i_0 dim 4}, the dimension $4$ example of the ansatz arrangement for $\mathbf{i}_0=(1,2,3,1,2,1)$:
\begin{center}
    \begin{tikzpicture}[scale=0.85]
        \draw (0,4) -- (2.75,4) -- (3.25,3) -- (4.75,3) -- (5.25,2) -- (5.75,2) -- (6.25,1) -- (7,1);
        \draw (0,3) -- (1.75,3) -- (2.25,2) -- (3.75,2) -- (4.25,1) -- (5.75,1) -- (6.25,2) -- (7,2);
        \draw (0,2) -- (0.75,2) -- (1.25,1) -- (3.75,1) -- (4.25,2) -- (4.75,2) -- (5.25,3) -- (7,3);
        \draw (0,1) -- (0.75,1) -- (1.25,2) -- (1.75,2) -- (2.25,3) -- (2.75,3) -- (3.25,4) -- (7,4);

        \node at (1,1.5) {$\bullet$};
        \node at (2,2.5) {$\bullet$};
        \node at (3,3.5) {$\bullet$};
        \node at (4,1.5) {$\bullet$};
        \node at (5,2.5) {$\bullet$};
        \node at (6,1.5) {$\bullet$};

        \node at (-0.3,4) {$4$};
        \node at (-0.3,3) {$3$};
        \node at (-0.3,2) {$2$};
        \node at (-0.3,1) {$1$};

        \node at (7.3,4) {$1$};
        \node at (7.3,3) {$2$};
        \node at (7.3,2) {$3$};
        \node at (7.3,1) {$4$};

        \node at (1.5,3.5) {$123$};
        \node at (5,3.5) {$234$};

        \node at (1,2.5) {$12$};
        \node at (3.5,2.5) {$23$};
        \node at (6,2.5) {$34$};

        \node at (0.5,1.5) {$1$};
        \node at (2.5,1.5) {$2$};
        \node at (5,1.5) {$3$};
        \node at (6.5,1.5) {$4$};
    \end{tikzpicture}
\end{center}

Similar to the $u^T$ case, we give new labelling of the chambers of the ansatz arrangement in terms of pairs $(k,a)$. This time, for $k=1,\ldots, n-1$, $a=1,\ldots, n-k$, the label of the chamber to the \emph{right} of the $(s_k+a)$-th crossing is given by
    $$\{ k+1, \ldots, k+a \}.
    $$
It is then natural to assign the pair $(k,a)$ to the chamber on the right of the $(s_k+a)$-th crossing. The leftmost chambers will be labelled consistently, taking $k=0$. Again we leave the chambers above and below the pseudoline arrangement unlabelled.

Continuing our dimension $4$ example, the chamber pairs $(k,a)$ are shown in Figure \ref{fig ansatz arrangement i_0 dim 4 (k,a) labels}. In general, the pairs $(k,a)$ for each chamber surrounding the $(s_k+a)$-th crossing, where there are defined, are given by the diagram in Figure \ref{fig chamber pair labels i_0 ansatz arrangement}.
\begin{figure}[hb]
\centering
\begin{minipage}[b]{0.47\textwidth}
    \centering
\begin{tikzpicture}[scale=0.85]
    \draw (0,4) -- (2.75,4) -- (3.25,3) -- (4.75,3) -- (5.25,2) -- (5.75,2) -- (6.25,1) -- (7,1);
    \draw (0,3) -- (1.75,3) -- (2.25,2) -- (3.75,2) -- (4.25,1) -- (5.75,1) -- (6.25,2) -- (7,2);
    \draw (0,2) -- (0.75,2) -- (1.25,1) -- (3.75,1) -- (4.25,2) -- (4.75,2) -- (5.25,3) -- (7,3);
    \draw (0,1) -- (0.75,1) -- (1.25,2) -- (1.75,2) -- (2.25,3) -- (2.75,3) -- (3.25,4) -- (7,4);

    \node at (1,1.5) {$\bullet$};
    \node at (2,2.5) {$\bullet$};
    \node at (3,3.5) {$\bullet$};
    \node at (4,1.5) {$\bullet$};
    \node at (5,2.5) {$\bullet$};
    \node at (6,1.5) {$\bullet$};

    \node at (-0.3,4) {$4$};
    \node at (-0.3,3) {$3$};
    \node at (-0.3,2) {$2$};
    \node at (-0.3,1) {$1$};

    \node at (7.3,4) {$1$};
    \node at (7.3,3) {$2$};
    \node at (7.3,2) {$3$};
    \node at (7.3,1) {$4$};

    \node at (1.5,3.5) {$(0,3)$};
    \node at (5,3.5) {$(1,3)$};

    \node at (1,2.5) {$(0,2)$};
    \node at (3.5,2.5) {$(1,2)$};
    \node at (6,2.5) {$(2,2)$};

    \node at (0.3,1.5) {$(0,1)$};
    \node at (2.5,1.5) {$(1,1)$};
    \node at (5,1.5) {$(2,1)$};
    \node at (6.7,1.5) {$(3,1)$};
\end{tikzpicture}
\caption{The ansatz arrangement for $\mathbf{i}_0$ in dimension 4 with $(k,a)$ labelling} \label{fig ansatz arrangement i_0 dim 4 (k,a) labels}
\end{minipage}
    \hfill
\begin{minipage}[b]{0.47\textwidth}
    \centering
\begin{tikzpicture}
    \node at (1,0.5) {$\bullet$};

    \draw[<-, >=stealth, densely dashed, rounded corners=15] (1.08,0.52) -- (2, 0.9) -- (2.2,1.85);
    \draw (-0.3,1) -- (0.75,1) -- (1.25,0) -- (2.3,0);
    \draw (-0.3,0) -- (0.75,0) -- (1.25,1) -- (2.3,1);
    \node at (1,-0.3) {{$(k, a-1)$}};
    \node at (-0.2,0.5) {{$(k-1, a)$}};
    \node at (1.9,0.5) {{$(k, a)$}};
    \node at (1,1.3) {{$(k-1, a+1)$}};
    \node at (1,2) {{$(s_k+a)$-th crossing}};
\end{tikzpicture}
\caption{Labelling of chamber pairs $(k,a)$ for $\mathbf{i}_0$ ansatz arrangement} \label{fig chamber pair labels i_0 ansatz arrangement}
\end{minipage}
\end{figure}

Before applying the Chamber Ansatz we again compute minors in terms of the pairs $(k,a)$. We have just seen that in dimension $4$ the matrix $u_1$ is given by
    $$u_1=\mathbf{x}_{\mathbf{i}'_0}^{\vee}\left(z_1, z_4, z_6, \frac{z_2}{z_4}, \frac{z_5}{z_6}, \frac{z_3}{z_5}\right).
    $$
This is quite messy however, so for ease of computation we will continue to work for with the $p_j$ coordinates; $u_1=\mathbf{x}_{\mathbf{i}'_0}^{\vee}(p_1, \ldots, p_6)$. The corresponding graph for $u_1$ is given in Figure \ref{The graph for u_1 when n=4, p coords}.
\begin{figure}[ht!]
\centering
    \begin{tikzpicture}[scale=0.85]
        \draw (0,4) -- (7,4);
        \draw (0,3) -- (7,3);
        \draw (0,2) -- (7,2);
        \draw (0,1) -- (7,1);

        \draw (0.75,3) -- (1.25,4);
        \draw (1.75,2) -- (2.25,3);
        \draw (2.75,1) -- (3.25,2);
        \draw (3.75,3) -- (4.25,4);
        \draw (4.75,2) -- (5.25,3);
        \draw (5.75,3) -- (6.25,4);
        \node at (0,1) {$\bullet$};
        \node at (0,2) {$\bullet$};
        \node at (0,3) {$\bullet$};
        \node at (0,4) {$\bullet$};
        \node at (7,1) {$\bullet$};
        \node at (7,2) {$\bullet$};
        \node at (7,3) {$\bullet$};
        \node at (7,4) {$\bullet$};

        \node at (0.75,3) {$\bullet$};
        \node at (1.25,4) {$\bullet$};
        \node at (1.75,2) {$\bullet$};
        \node at (2.25,3) {$\bullet$};
        \node at (2.75,1) {$\bullet$};
        \node at (3.25,2) {$\bullet$};

        \node at (3.75,3) {$\bullet$};
        \node at (4.25,4) {$\bullet$};
        \node at (4.75,2) {$\bullet$};
        \node at (5.25,3) {$\bullet$};
        \node at (5.75,3) {$\bullet$};
        \node at (6.25,4) {$\bullet$};

        \node at (-0.3,4) {$4$};
        \node at (-0.3,3) {$3$};
        \node at (-0.3,2) {$2$};
        \node at (-0.3,1) {$1$};

        \node at (7.3,4) {$4$};
        \node at (7.3,3) {$3$};
        \node at (7.3,2) {$2$};
        \node at (7.3,1) {$1$};

        \node at (1.3,3.5) {$p_1$};
        \node at (2.3,2.5) {$p_2$};
        \node at (3.3,1.5) {$p_3$};
        \node at (4.3,3.5) {$p_4$};
        \node at (5.3,2.5) {$p_5$};
        \node at (6.3,3.5) {$p_6$};
    \end{tikzpicture}
\caption{The graph for $u_1$ when $n=4$, $\boldsymbol{p}$ coordinates} \label{The graph for u_1 when n=4, p coords}
\end{figure}

From the proof of Lemma \ref{lem our Chamber Ansatz minors are monomial} we recall that there is only one family of vertex disjoint paths in the graph for $u_1$ from the set of sources $\{1, \ldots, a\}$ to the set of sinks $\{k+1, \ldots, k+a\}$. Within this family, the weight of the path from $b \in \{1,\ldots,a\}$ to $k+b$ is given by
    $$\prod_{\substack{m= (n-b)+\sum_{j=2}^r (n-j) \\ r=1,\ldots, k}} p_m = \prod_{\substack{r=1,\ldots, k}} p_{s_{r+1}-b+1}.
    $$

Taking the product over this family, that is, the product of the weights of the paths from $\{1, \ldots, a \}$ to $\{k+1, \ldots, k+a\}$, we obtain the desired minor
    $$\Delta^{\{1, \ldots, a \}}_{\{k+1, \ldots, k+a\}}(u_1)
        = \prod_{\substack{r=1,\ldots, k \\ b=1, \ldots, a}} p_{s_{r+1}-b+1}.
    $$

Note that since $u_1 \in U^{\vee}_+$, the minors corresponding to the leftmost chambers and the chamber above the pseudoline arrangement are all equal to $1$.

We now use the Chamber Ansatz to compute the $\frac{1}{m_j}$ coordinates.
For $k=1,\ldots, n-1$, $a=1,\ldots, n-k$ we have
    \begin{align}
    \frac{1}{m_{s_k+a}}
        &= \frac{\left(\prod\limits_{\substack{r=1,\ldots, k-1 \\ b=1, \ldots, a+1}} p_{s_{r+1}-b+1}\right) \left(\prod\limits_{\substack{r=1,\ldots, k \\ b=1, \ldots, a-1}} p_{s_{r+1}-b+1}\right)}{\left(\prod\limits_{\substack{r=1,\ldots, k-1 \\ b=1, \ldots, a}} p_{s_{r+1}-b+1}\right) \left(\prod\limits_{\substack{r=1,\ldots, k \\ b=1, \ldots, a}} p_{s_{r+1}-b+1}\right)} \notag \\
        &= \begin{cases}
        \prod\limits_{\substack{r=1,\ldots, k}} \frac{1}{p_{s_{r+1}-a+1}} &\text{if } k=1, \\
        \frac{\prod\limits_{\substack{r=1,\ldots, k-1}} p_{s_{r+1}-a} }{\prod\limits_{\substack{r=1,\ldots, k}} p_{s_{r+1}-a+1}} &\text{otherwise.}
    \end{cases} \label{eqn m coords in terms of p}
    \end{align}

\end{proof}

We now ready to prove Theorem \ref{thm coord change}, the statement of which we recall here:
\begin{thm*}
For $k=1,\ldots, n-1$, $a=1,\ldots, n-k$ we have
    $$m_{s_k+a} = \begin{cases}
            z_{1+s_{n-a}} &\text{if } k=1 \\
            \frac{z_{k+s_{n-k-a+1}}}{z_{k-1+s_{n-k-a+1}}} &\text{otherwise}
            \end{cases}
    \qquad \text{where} \ \
    s_k := \sum_{j=1}^{k-1}(n-j).
    $$
\end{thm*}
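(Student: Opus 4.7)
The plan is to combine the two intermediate coordinate changes provided by Lemmas \ref{lem coord change for u_1 p_i in terms of z_i} and \ref{lem coord change for b m_i in terms of p_i}: the first expresses the quiver coordinates $p_j$ as monomials in the string coordinates $z_i$, while the second expresses $\frac{1}{m_{s_k+a}}$ as a monomial in the $p_j$. Substituting the first into the second, and simplifying via telescoping, should recover the claimed formula.

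The case $k=1$ is immediate: Lemma \ref{lem coord change for b m_i in terms of p_i} gives $\frac{1}{m_a} = \frac{1}{p_{s_2-a+1}}$, and since $s_1=0$ and $s_2=n-1$ we have $s_2-a+1 = n-a = s_1+(n-a)$. The $k=1$ clause of Lemma \ref{lem coord change for u_1 p_i in terms of z_i} then evaluates $p_{s_1+(n-a)} = z_{1+s_{n-a}}$, giving $m_a = z_{1+s_{n-a}}$ as required.

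For $k \geq 2$, the first task is to rewrite each index appearing in Lemma \ref{lem coord change for b m_i in terms of p_i} into the form $s_r+a''$ compatible with Lemma \ref{lem coord change for u_1 p_i in terms of z_i}, using the identities $s_{r+1}-a+1 = s_r+(n-r-a+1)$ and $s_{r+1}-a = s_r+(n-r-a)$. We then substitute each $p$-factor, treating the $r=1$ factors separately (via the first clause of Lemma \ref{lem coord change for u_1 p_i in terms of z_i}, yielding single $z$-variables $z_{1+s_{n-a}}$ and $z_{1+s_{n-a-1}}$) from the $r\geq 2$ factors (via the second clause, yielding $z$-ratios).

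Once substituted, the numerator product $\prod_{r=2}^{k-1}\frac{z_{r+s_{n-r-a}}}{z_{r-1+s_{n-r-a+1}}}$ and the denominator product $\prod_{r=2}^{k}\frac{z_{r+s_{n-r-a+1}}}{z_{r-1+s_{n-r-a+2}}}$ both collapse telescopically, since the numerator of each factor cancels the denominator of the next. What survives are just the extremal endpoints; I expect the lower endpoints $z_{1+s_{n-a-1}}$ and $z_{1+s_{n-a}}$ to cancel exactly against the $r=1$ factors extracted separately, while the upper endpoints $z_{k-1+s_{n-k-a+1}}$ and $z_{k+s_{n-k-a+1}}$ remain, yielding $\frac{1}{m_{s_k+a}} = \frac{z_{k-1+s_{n-k-a+1}}}{z_{k+s_{n-k-a+1}}}$, which inverts to the claimed formula. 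The main obstacle will not be any conceptual step but rather the careful index bookkeeping required to confirm that the boundary terms from the two telescopings align correctly with the separately handled $r=1$ contributions, and to verify the edge cases (such as $k=2$ where the numerator has only an $r=1$ factor and the telescoping range is empty).
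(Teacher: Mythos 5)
Your proposal is correct and follows essentially the same route as the paper: compose Lemmas \ref{lem coord change for u_1 p_i in terms of z_i} and \ref{lem coord change for b m_i in terms of p_i}, rewrite the indices $s_{r+1}-a$ and $s_{r+1}-a+1$ in the form $s_r + a'$, and collapse both products by telescoping. The only cosmetic difference is that you split off the $r=1$ factor explicitly before telescoping the $r\geq 2$ part, whereas the paper exhibits the cancellation by computing consecutive pairwise products $p_{s_{r+1}-a}p_{s_{r+2}-a}$; the underlying bookkeeping and the resulting boundary terms are identical.
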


\begin{proof}

To obtain the $\frac{1}{m_{j}}$ coordinates in terms of the $z_j$, we need to compose the two coordinate transformations from Lemmas \ref{lem coord change for u_1 p_i in terms of z_i} and \ref{lem coord change for b m_i in terms of p_i}. To do this we write the factors $p_{s_{r+1}-a}$, $p_{s_{r+1}-a+1}$ of the products in (\ref{eqn m coords in terms of p}) in the form $p_{s_{k'}+{a'}}$ for a suitable pairs $(k',a')$;
\begin{enumerate}
    \item Firstly, given a pair $(k,a)$ with $k>1$, for each $r=1,\ldots, k-1$, we wish to find the pair $(k',a')$, $k'\in \{1, \ldots, n-1\}$, $a' \in \{1,\ldots, n-k'\}$ such that
        $$s_{k'}+a'=s_{r+1}-a.
        $$
    Indeed since
        $$s_{r+1}-a = s_{r}+n-r-a
        $$
    we take $k'=r$, $a'=n-r-a$, noting that $k'$ and $a'$ satisfy the necessary conditions
        $$ \begin{aligned}
        k'&=r \in \{1, \ldots, k-1\}\subseteq \{1, \ldots, n-1\}, \\
        a'&=n-r-a = n-k'-a \in \{k-k', \ldots, n-k'-1\} \subseteq \{1, \ldots, n-k'\}.
        \end{aligned}
        $$

    \item Similarly given a pair $(k,a)$ for any $k$, i.e. $1 \leq k \leq n-1$, for each $r=1,\ldots, k$, we wish to find these pairs $(k',a')$, $k'\in \{1, \ldots, n-1\}$, $a' \in \{1,\ldots, n-k'\}$ such that
        $$s_{k'}+a'=s_{r+1}-a+1 = s_{r}+n-r-a+1.
        $$
    We take $k'=r$, $a'=n-r-a+1$, again noting that $k'$ and $a'$ satisfy the necessary conditions
        $$ \begin{aligned}
        k'&=r \in \{1, \ldots, k\}\subseteq \{1, \ldots, n-1\}, \\
        a'&=n-r-a+1 = n-k'-a+1 \in \{k-k'+1, \ldots, n-k'\} \subseteq \{1, \ldots, n-k'\}.
        \end{aligned}
        $$
\end{enumerate}

In order to evaluate the products in (\ref{eqn m coords in terms of p}) we need to consider what happens to the pair $(k',a')$ when we increase $r$ by $1$.
\begin{enumerate}
    \item In the first case: for $r\in\{1, \ldots, k-2\}$
        $$s_{r+2}-a = s_{r+1}+n-(r+1)-a = s_{k'}+a'+ n-k'-1 = s_{k'+1}+a'-1.
        $$
    Note that $k'+1$ and $a'-1$ satisfy the necessary conditions
        $$ \begin{aligned}
        k'+1 &=r+1 \in \{2, \ldots, k-1\}\subseteq \{1, \ldots, n-1\}, \\
        a'-1 & =n-r-a-1 \in \{k-(k'+1), \ldots, n-(k'+1)-1\} \subseteq \{1, \ldots, n-(k'+1)\}.
        \end{aligned}
        $$

    \item Similarly in the second case: for $r \in\{1, \ldots, k-1\}$
        $$s_{r+2}-a+1 = s_{r+1}+n-(r+1)-a+1 = s_{k'}+a'+ n-k'-1 = s_{k'+1}+a'-1.
        $$
    Again we note that $k'+1$ and $a'-1$ satisfy the necessary conditions
        $$ \begin{aligned}
        k'+1 &=r+1 \in \{2, \ldots, k\}\subseteq \{1, \ldots, n-1\}, \\
        a'-1 &=n-r-a \in \{k-(k'+1)+1, \ldots, n-(k'+1)\} \subseteq \{1, \ldots, n-(k'+1)\}.
        \end{aligned}
        $$
\end{enumerate}

Now recalling
    $$p_{s_k+a} =\begin{cases}
        z_{k+s_{a}} & \text{if } k=1 \\
        \frac{z_{k+s_{a}}}{z_{k-1+s_{a+1}}} & \text{otherwise}
    \end{cases}
    $$
we have, for $r>1$ (so that $k'>1$)
    $$\begin{aligned}
    p_{s_{r+1}-a}p_{s_{r+2}-a}
        &= p_{s_{k'}+a'}p_{s_{k'+1}+a'-1} &\quad \text{where } k'=r,\ a'=n-r-a\\
        &= \frac{z_{k'+s_{a'}}}{z_{k'-1+s_{a'+1}}}\frac{z_{k'+1+s_{a'-1}}}{z_{k'+s_{a'}}} \\
        &= \frac{z_{k'+1+s_{a'-1}}}{z_{k'-1+s_{a'+1}}}.
    \end{aligned}
    $$
Consequently the following product becomes telescopic, so we have
    $$\prod\limits_{\substack{r=1,\ldots, k-1}} p_{s_{r+1}-a} = z_{k-1+s_{n-(k-1)-a}}.
    $$

Similarly in the second case
    $$\begin{aligned}
    p_{s_{r+1}-a+1}p_{s_{r+2}-a+1}
        &= p_{s_{k'}+a'}p_{s_{k'+1}+a'-1} &\quad \text{where } k'=r-1,\ a'=n-r-a+1\\
        &= \frac{z_{k'+1+s_{a'-1}}}{z_{k'-1+s_{a'+1}}}.
    \end{aligned}
    $$
Again we have a telescopic product, giving
    $$\prod\limits_{\substack{r=1,\ldots, k}} p_{s_{r+1}-a+1} = z_{k+s_{n-k-a+1}}.
    $$

Thus we have
    $$\frac{1}{m_{s_k+a}} = \begin{cases}
        \frac{1}{z_{k+s_{n-k-a+1}}} &\text{if } k=1 \\
        \frac{z_{k-1+s_{n-k-a+1}}}{z_{k+s_{n-k-a+1}}} &\text{otherwise}
    \end{cases}
    $$
which gives the desired coordinate change: for $k=1,\ldots, n-1$, $a=1,\ldots, n-k$
    $$m_{s_k+a} = \begin{cases}
            z_{1+s_{n-a}} &\text{if } k=1 \\
            \frac{z_{k+s_{n-k-a+1}}}{z_{k-1+s_{n-k-a+1}}} &\text{otherwise.}
            \end{cases}
    $$
\end{proof}

\section{Givental-type quivers and critical points} \label{sec Givental-type quivers and critical points}
\fancyhead[L]{3 \ \ Givental-type quivers and critical points}

In this section we recall an earlier Landau--Ginzburg model for the full flag variety, defined on a torus by Givental \cite{Givental1997}. We relate our tori from Sections \ref{sec Mirror symmetry for G/B applied to representation theory} and \ref{sec The ideal coordinates}, as well as the superpotential defined in Section \ref{subsec Landau-Ginzburg models}, to Givental's torus and his formulation of the superpotential. We then use this to start describing the critical points of the superpotential. The key result in this section is Proposition \ref{prop crit points, sum at vertex is nu_i}, describing the $\boldsymbol{m}$ coordinates of such a critical point. This formula was conjectured by Konstanze Rietsch and checked in a particular case by Zainab 
Al-Sultani \cite{ZainabMastersThesis}.

\subsection{The Givental superpotential} \label{subsec The Givental superpotential}
\fancyhead[L]{3.1 \ \ The Givental superpotential}

In this section we recall Givental's construction from \cite{Givental1997}. We begin by considering a quiver, which consists of $n(n+1)/2$ vertices in lower triangular form together with arrows going up and left. We label the vertices with $v_{ij}$ for $1\leq j \leq i \leq n$ in the same way as we would for matrix entries and denote the set of such vertices by $\mathcal{V}$. The vertices $v_{ii}$ are star vertices and all others are dot vertices. We denote the sets of star and dot vertices respectively by
    $$\mathcal{V}^* = \left\{ v_{ii} \ | \ 1 \leq i \leq n \right\}, \quad \mathcal{V}^{\bullet} = \left\{ v_{ij} \ | \ 1 \leq j < i \leq n \right\}.
    $$

The set of arrows of the quiver is denoted $\mathcal{A}=\mathcal{A}_{\mathrm{v}} \cup \mathcal{A}_{\mathrm{h}}$. The vertical arrows $a_{ij}\in \mathcal{A}_{\mathrm{v}}$ are labelled such that $h(a_{ij})=v_{ij}$ where $h(a) \in \mathcal{V}$ denotes the head of the arrow $a$. The horizontal arrows $b_{ij} \in \mathcal{A}_{\mathrm{h}}$ are labelled such that $t(b_{ij})=v_{ij}$ where $t(a)\in \mathcal{V}$ denotes the tail of the arrow $a$. For example when $n=4$ the quiver is given in Figure \ref{fig Quiver for n=4}.
\begin{figure}[ht]
\centering
\begin{minipage}[b]{0.47\textwidth}
    \centering
\begin{tikzpicture}
    \node (41) at (0,0) {$\bullet$};
    \node (31) at (0,1.5) {$\bullet$};
    \node (21) at (0,3) {$\bullet$};
    \node (11) at (0,4.5) {$\boldsymbol{*}$};
        \node at (0.2,4.8) {$v_{11}$};
    \node (42) at (1.5,0) {$\bullet$};
    \node (32) at (1.5,1.5) {$\bullet$};
    \node (22) at (1.5,3) {$\boldsymbol{*}$};
        \node at (1.7,3.3) {$v_{22}$};
    \node (43) at (3,0) {$\bullet$};
    \node (33) at (3,1.5) {$\boldsymbol{*}$};
        \node at (3.2,1.8) {$v_{33}$};
    \node (44) at (4.5,0) {$\boldsymbol{*}$};
        \node at (4.7,0.3) {$v_{44}$};

    \draw[->] (41) -- (31);
    \draw[->] (31) -- (21);
    \draw[->] (21) -- (11);
    \draw[->] (42) -- (32);
    \draw[->] (32) -- (22);
    \draw[->] (43) -- (33);

    \draw[->] (22) -- (21);
    \draw[->] (33) -- (32);
    \draw[->] (32) -- (31);
    \draw[->] (44) -- (43);
    \draw[->] (43) -- (42);
    \draw[->] (42) -- (41);

    \node at (-0.4,3) {$v_{21}$};
    \node at (-0.4,1.5) {$v_{31}$};
    \node at (-0.4,0) {$v_{41}$};

    \node at (1.1,1.8) {$v_{32}$};
    \node at (1.5,-0.3) {$v_{42}$};
    \node at (3,-0.3) {$v_{43}$};

    \node at (-0.3,3.75) {$a_{11}$};
    \node at (-0.3,2.25) {$a_{21}$};
    \node at (-0.3,0.75) {$a_{31}$};

    \node at (0.75,2.7) {$b_{22}$};
    \node at (0.75,1.2) {$b_{32}$};
    \node at (0.75,-0.3) {$b_{42}$};

    \node at (1.2,2.25) {$a_{22}$};
    \node at (1.2,0.75) {$a_{32}$};
    \node at (2.7,0.75) {$a_{33}$};

    \node at (2.25,1.2) {$b_{33}$};
    \node at (2.25,-0.3) {$b_{43}$};
    \node at (3.75,-0.3) {$b_{44}$};
\end{tikzpicture}
\caption{Quiver when $n=4$} \label{fig Quiver for n=4}
\end{minipage}
    \hfill
\begin{minipage}[b]{0.47\textwidth}
    \centering
\begin{tikzpicture}
    \node (41) at (0,0) {};
    \node (31) at (0,1.5) {};
    \node (42) at (1.5,0) {};
    \node (32) at (1.5,1.5) {};

    \draw[->] (41) -- (31);
    \draw[->] (42) -- (32);

    \draw[->] (32) -- (31);
    \draw[->] (42) -- (41);

    \node at (-0.4,0.75) {$a_2$};
    \node at (1.9,0.75) {$a_3$};

    \node at (0.75,1.75) {$a_4$};
    \node at (0.75,-0.3) {$a_1$};
\end{tikzpicture}
\caption{Subquiver for box relations} \label{fig quiver box relations}
\end{minipage}
\end{figure}

We consider three tori which are defined in terms of the quiver, of which we introduce the first two now. The first torus is $(\mathbb{K}^*)^{\mathcal{V}}$ with coordinates $x_v$ for $v \in \mathcal{V}$, we call this the vertex torus. The second torus, $\bar{\mathcal{M}} \subset (\mathbb{K}^*)^{\mathcal{A}}$, corresponds to the arrows of the quiver and so will be called the arrow torus. It is given by
    $$ \bar{\mathcal{M}}:= \left\{ (r_a)_{a \in \mathcal{A}} \in (\mathbb{K}^*)^{\mathcal{A}} \ | \ r_{a_1}r_{a_2}=r_{a_3}r_{a_4} \text{ when } a_1,a_2,a_3,a_4 \text{ form a square as in Figure \ref{fig quiver box relations}} \right\}.
    $$
These two tori are related by the following surjection, given coordinate-wise:
    \begin{equation} \label{eqn quiver tori relation map}
    (\mathbb{K}^*)^{\mathcal{V}} \to \bar{\mathcal{M}}, \quad r_a = \frac{x_{h(a)}}{x_{t(a)}}.
    \end{equation}
Note that we get a point in the preimage of $\boldsymbol{r}_{\mathcal{A}}:=(r_a)_{a \in \mathcal{A}}\in \bar{\mathcal{M}}$ by first setting $x_{v_{nn}}=1$. Then for $v\neq v_{nn}$ we take $x_v=\prod_{a \in P_v} r_a$ where $P_v$ is any path from $v_{nn}$ to $v$. This map is well-defined since the preimage of $(1)_{a \in \mathcal{A}}$ is the set $\{ (c)_{v\in \mathcal{V}} \ | \ c \in \mathbb{K}^* \}$.

An analogy of this surjection is the map
    $$T^{\vee} \to (\mathbb{K}^*)^{n-1}
    $$
given by the simple roots $\alpha^{\vee}_1, \ldots, \alpha^{\vee}_{n-1}$ of $G^{\vee}$.

It will be more convenient for our purposes to use arrow coordinates rather than vertex coordinates. If we were to work with $SL_n$ then the arrow torus, $\bar{\mathcal{M}}$, would suffice, however we wish to work with $GL_n$ and so need to keep track of which fibre of the map (\ref{eqn quiver tori relation map}) we are in. Consequently we now introduce our third torus, $\mathcal{M}$, which we call the quiver torus:
    $$\mathcal{M} := \left\{ ( \boldsymbol{x}_{\mathcal{V}^*} , \boldsymbol{r}_{\mathcal{A}_{\mathrm{v}}}) \in (\mathbb{K}^*)^{\mathcal{V}^*} \times (\mathbb{K}^*)^{\mathcal{A}_{\mathrm{v}}} \right\}
    \hookrightarrow (\mathbb{K}^*)^{\mathcal{V}} \times \bar{\mathcal{M}}.
    $$
The quiver torus is isomorphic to the vertex torus, $(\mathbb{K}^*)^{\mathcal{V}}$, and we will work with these tori interchangeably. This isomorphism is a consequence of the following observations:

We note that if we choose the star vertex coordinate $x_{v_{nn}}$, then we are taking a particular lift of $\boldsymbol{r}_{\mathcal{A}} \in \bar{\mathcal{M}}$ such that in this fibre the map (\ref{eqn quiver tori relation map}) restricts to an isomorphism. Moreover, if we describe the coordinates of the star vertices and vertical arrows, then the coordinates of the horizontal arrows, and thus also of the dot vertices, are given uniquely using (\ref{eqn quiver tori relation map}) and the box relations; namely we use the fact that
    \begin{equation} \label{eqn relation in quiver from stars}
    r_{b_{i+1,i+1}} = \frac{x_{v_{ii}}}{x_{v_{i+1, i+1}}}\frac{1}{r_{a_{ii}}}, \quad i=1, \ldots, n-1
    \end{equation}
together with the relations $r_{a_1} r_{a_2} = r_{a_3} r_{a_4}$ when the arrows $a_1$, $a_2$, $a_3$, $a_4$ form a square as in Figure \ref{fig quiver box relations}.

We now recall the definition of Givental's superpotential. On the vertex torus this can be defined as
    $$\mathcal{F} :(\mathbb{K}^*)^{\mathcal{V}} \to \mathbb{K}, \quad \boldsymbol{x}_{\mathcal{V}} \mapsto \sum_{a \in \mathcal{A}} \frac{x_{h(a)}}{x_{t(a)}}.
    $$
This factors naturally through the arrow torus via (\ref{eqn quiver tori relation map}) and the following map:
    $$\bar{\mathcal{F}} : \bar{\mathcal{M}} \to \mathbb{K}, \quad \boldsymbol{r}_{\mathcal{A}} \mapsto \sum_{a \in \mathcal{A}} r_a.
    $$

We can now define the highest weight and weight maps on the vertex torus. The highest weight map is given by
    $$\kappa :(\mathbb{K}^*)^{\mathcal{V}} \to T^{\vee}, \quad \boldsymbol{x}_{\mathcal{V}} \mapsto (x_{v_{ii}})_{i=1, \ldots, n}.
    $$
The weight map \cite{JoeKim2003} on the vertex torus is defined in two steps; firstly, for $i=1, \ldots,n$ we let $\mathcal{D}_i := \{ v_{i,1}, v_{i+1,2}, \ldots, v_{n, n-i+1} \}$ be the $i$-th diagonal and let
    \begin{equation} \label{eqn xi for wt map defn}
    \Xi_i := \prod_{v\in \mathcal{D}_i} x_v \quad \text{with} \quad \Xi_{n+1}:=1.
    \end{equation}
Then the weight map is given by
    \begin{equation}\label{eqn defn gamma and t}
    \gamma :(\mathbb{K}^*)^{\mathcal{V}}\to T^{\vee}, \quad \boldsymbol{x}_{\mathcal{V}} \mapsto (t_i)_{i=1, \ldots, n} \quad \text{where} \quad t_i = \frac{\Xi_i}{\Xi_{i+1}}.
    \end{equation}

Unlike the superpotential $\mathcal{F}$, the maps $\kappa$ and $\gamma$ do not directly factor through $\bar{\mathcal{M}}$ but thanks to the map (\ref{eqn quiver tori relation map}) we have commutative diagrams:
\begin{equation*}
\begin{tikzcd}
(\mathbb{K}^*)^{\mathcal{V}} \arrow[d] \arrow[r, "\kappa"] & T^{\vee} \arrow[d, "{(\alpha^{\vee}_1, \ldots, \alpha^{\vee}_{n-1})}"] \\
\bar{\mathcal{M}} \arrow[r, "q"] & (\mathbb{K}^*)^{n-1}
\end{tikzcd}
\hspace{1.5cm}
\begin{tikzcd}
(\mathbb{K}^*)^{\mathcal{V}} \arrow[d] \arrow[r, "\gamma"] & T^{\vee} \arrow[d, "{(\alpha^{\vee}_1, \ldots, \alpha^{\vee}_{n-1})}"] \\
\bar{\mathcal{M}} \arrow[r] & (\mathbb{K}^*)^{n-1}
\end{tikzcd}
\end{equation*}
The maps along the bottom making the diagrams commute exist and are unique. For example, the map on $\bar{\mathcal{M}}$ corresponding to $\kappa$ is the map called $q$ defined by Givental as follows:
    $$q : \bar{\mathcal{M}} \to (\mathbb{K}^*)^{n-1}, \quad \boldsymbol{r}_{\mathcal{A}} \mapsto \left( r_{a_{11}}r_{b_{22}}, \, \ldots, \, r_{a_{n-1, n-1}}r_{b_{nn}} \right).
    $$

\begin{rem}
In Givental's version of (Fano) mirror symmetry, the arrow torus $\bar{\mathcal{M}}$ (taken over $\mathbb{C}$), is viewed as a family of varieties via the map $q$. Each fibre $\bar{\mathcal{M}}_{\boldsymbol{q}}$, being a torus, comes equipped with a natural holomorphic volume form $\omega_{\boldsymbol{q}}$. He proves a version of mirror symmetry relating the A-model connection, built out of Gromov-Witten invariants of the flag variety, to period integrals $S(\boldsymbol{q})$ on the family $(\bar{\mathcal{M}}_{\boldsymbol{q}}, \bar{\mathcal{F}}_{\boldsymbol{q}}, \omega_{\boldsymbol{q}} )$, defined using the superpotential:
    $$S(\boldsymbol{q}):= \int_{\Gamma} e^{\bar{\mathcal{F}}_{\boldsymbol{q}}} \omega_{\boldsymbol{q}}.
    $$
\end{rem}

\subsection[The quiver torus as another toric chart on \texorpdfstring{$Z$}{Z}]{The quiver torus as another toric chart on $Z$} \label{subsec The quiver torus as another toric chart on Z}
\fancyhead[L]{3.2 \ \ The quiver torus as another toric chart on $Z$}

In this section we recall a map $\mathcal{M} \to Z$ from \cite{Rietsch2008} which allows us to relate the Givental superpotential on the arrow torus, as well as the highest weight and weight maps on the vertex torus, to their analogues on $Z$. In particular our toric charts from the previous sections factor through this map.

A nice way to describe how the string and ideal toric charts factor through $\mathcal{M}$ is by decorating the arrows and vertices of the quiver. Indeed, using our previous work relating the string and ideal coordinate systems, we may easily describe a monomial map from the torus $T^{\vee}\times (\mathbb{K}^*)^N$ of string coordinates to the quiver torus $\mathcal{M}$. Composing this with the map $\mathcal{M} \to Z$ will recover our string toric chart.
We then use the coordinate change given in Theorem \ref{thm coord change} to decorate the quiver with the ideal coordinates.

In order to make sense of the way the string toric chart factors through $\mathcal{M}$, we choose an ordering of the $N$ vertical arrow coordinates. Starting at the lower left corner of the quiver and moving up each column in succession we obtain
    $$\boldsymbol{r}_{\mathcal{A}_{\mathrm{v}}} := \left(r_{a_{n-1, 1}}, r_{a_{n-2, 1}}, \ldots, r_{a_{1, 1}}, r_{a_{n-1, 2}}, r_{a_{n-2, 2}}, r_{a_{2, 2}}, \ldots, r_{a_{n-1, n-2}}, r_{a_{n-2, n-2}}, r_{a_{n-1, n-1}}\right).
    $$
Similarly, we give an ordering of the star vertex coordinates:
    $$\boldsymbol{x}_{\mathcal{V}^*}:=\left(x_{v_{11}}, x_{v_{22}}, \ldots, x_{v_{nn}}\right).
    $$
Then for the reduced expression
    $$\mathbf{i}'_0 = (i'_1, \ldots, i'_N) := (n-1, n-2, \ldots, 1, n-1, n-2, \ldots, 2, \ldots, n-1, n-2, n-1)
    $$
we have a map $\theta_{\mathbf{i}'_0}:\mathcal{M} \to Z$ given by
    $$\left(\boldsymbol{x}_{\mathcal{V}^*} , \boldsymbol{r}_{\mathcal{A}_{\mathrm{v}}} \right)
    \mapsto
    \Phi\left(\kappa_{\vert_{\mathcal{V}^*}}\left(\boldsymbol{x}_{\mathcal{V}^*}\right) , \mathbf{x}^{\vee}_{\mathbf{i}'_0}\left(\boldsymbol{r}_{\mathcal{A}_{\mathrm{v}}}\right)\right)
    = \mathbf{x}^{\vee}_{\mathbf{i}'_0}\left(\boldsymbol{r}_{\mathcal{A}_{\mathrm{v}}}\right) \kappa_{\vert_{\mathcal{V}^*}}\left(\boldsymbol{x}_{\mathcal{V}^*}\right) \bar{w}_0 u_2
    $$
where $\kappa_{\vert_{\mathcal{V}^*}}$ is the restriction of $\tilde{\kappa}$ to the star vertex coordinates, $u_2 \in U^{\vee}$ is the unique element such that $\theta_{\mathbf{i}'_0}\left(\boldsymbol{r}_{\mathcal{A}_{\mathrm{v}}}\right) \in Z$ and $\mathbf{x}^{\vee}_{\mathbf{i}}$, $\Phi$ are the maps from Section \ref{subsec The string coordinates}.

Next we recall the following definitions, also from Section \ref{subsec The string coordinates}:
    $$b:=u_1d\bar{w}_0 u_2 \in Z, \quad u_1:=\iota(\eta^{w_0,e}(u)), \quad u:=\mathbf{x}^{\vee}_{-\mathbf{i}_0}(\boldsymbol{z}),
    $$
together with the fact that by Lemmas \ref{lem form of u_1 and b factorisations} and \ref{lem coord change for u_1 p_i in terms of z_i} we may factorise $u_1$ as
    $$
    u_1 = \mathbf{x}_{i'_1}^{\vee}(p_1) \cdots \mathbf{x}_{i'_N}^{\vee}(p_N) \\
    $$
where, for $k=1,\ldots, n-1$, $a=1,\ldots, n-k$, we have
    $$p_{s_k+a} =\begin{cases}
        z_{1+s_{a}} & \text{if } k=1 \\
        \frac{z_{k+s_{a}}}{z_{k-1+s_{a+1}}} & \text{otherwise}
        \end{cases}
    \qquad \text{where} \ \
    s_k := \sum_{j=1}^{k-1}(n-j).
    $$
Thus taking $\boldsymbol{r}_{\mathcal{A}_{\mathrm{v}}}=(p_1, \ldots, p_N)$ we see that $\mathbf{x}^{\vee}_{\mathbf{i}'_0}\left(\boldsymbol{r}_{\mathcal{A}_{\mathrm{v}}}\right)=u_1$ and letting the star vertex coordinates be given by
    $$x_{v_{ii}}=d_i, \quad i=1,\ldots, n
    $$
we obtain $\kappa_{\vert_{\mathcal{V}^*}}\left(\boldsymbol{x}_{\mathcal{V}^*}\right) = d$. This quiver decoration in dimension $4$ is given in Figure \ref{fig Quiver decoration, n=4, z coordinates}.

Due to the relations in the quiver, namely (\ref{eqn relation in quiver from stars}) and the box relations, this decoration extends to all the vertices and arrows of the quiver. In particular, we can extend $\theta_{\mathbf{i}'_0}$ to a map
    $$\bar{\theta}_{\mathbf{i}'_0}: (\mathbb{K}^*)^{\mathcal{V}} \times \bar{\mathcal{M}} \to Z
    $$
given by first taking the projection onto $\mathcal{M}$ and then applying $\theta_{\mathbf{i}'_0}$. Thus decorating the quiver in this way and then applying the map $\bar{\theta}_{\mathbf{i}'_0}$ (or $\theta_{\mathbf{i}'_0}$) gives the string toric chart factored through $(\mathbb{K}^*)^{\mathcal{V}} \times \bar{\mathcal{M}}$ (respectively $\mathcal{M}$), that is
    $$
    T^{\vee} \times (\mathbb{K}^*)^N \to (\mathbb{K}^*)^{\mathcal{V}} \times \bar{\mathcal{M}} \to Z.
    $$

We summarise the results from this section:
\begin{lem}[{\cite[Theorem 9.2 and Lemma 9.3]{Rietsch2008}}] \label{lem factor three maps through quiver torus}
With the above notation, we have the following:
    $$\mathcal{W} \circ \bar{\theta}_{\mathbf{i}'_0} = \mathcal{F} \circ \mathrm{pr}, \quad \mathrm{hw} \circ \bar{\theta}_{\mathbf{i}'_0} = \kappa \circ \mathrm{pr}, \quad \mathrm{wt} \circ \bar{\theta}_{\mathbf{i}'_0} = \gamma \circ \mathrm{pr}.
    $$
where $\mathrm{pr}$ is the projection of $(\mathbb{K}^*)^{\mathcal{V}} \times \bar{\mathcal{M}}$ onto the first factor.
\end{lem}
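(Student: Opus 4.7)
The plan is to verify the three equalities separately. The first, $\mathrm{hw} \circ \bar{\theta}_{\mathbf{i}'_0} = \kappa \circ \mathrm{pr}$, is essentially a tautology: by construction we place the star vertex coordinates into the diagonal factor $d = \kappa|_{\mathcal{V}^*}(\boldsymbol{x}_{\mathcal{V}^*})$ of the decomposition $b = u_1 d \bar{w}_0 u_2$, while $\mathrm{hw}$ reads off this same $d$. Since $\kappa$ depends only on the star vertex coordinates and factors through $\mathrm{pr}$, the identity is immediate.

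For the weight identity $\mathrm{wt} \circ \bar{\theta}_{\mathbf{i}'_0} = \gamma \circ \mathrm{pr}$, the strategy is to unfold both sides in terms of the chosen coordinates and compare. On the $Z$ side, Corollary \ref{cor wt matrix in z string coords} (equivalently Corollary \ref{cor wt matrix in m ideal coords}) gives $t_R$ explicitly as a monomial in $d$ and the $z_k$'s (or $p_k$'s, via Lemma \ref{lem coord change for u_1 p_i in terms of z_i}). On the quiver side, the dot vertex coordinates are determined by the star vertex coordinates together with the vertical arrow coordinates $p_k$, via equation (\ref{eqn relation in quiver from stars}) and the box relations. Substituting these into $t_i = \Xi_i/\Xi_{i+1}$ and simplifying diagonal-by-diagonal will reproduce the formula in the corollary. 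I expect this to reduce to a clean telescoping identity once one observes that crossing from $\mathcal{D}_i$ to $\mathcal{D}_{i+1}$ in the quiver corresponds precisely to traversing one row of horizontal-arrow labels.

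The superpotential identity $\mathcal{W} \circ \bar{\theta}_{\mathbf{i}'_0} = \mathcal{F} \circ \mathrm{pr}$ is the main content. Splitting $\mathcal{W}(b) = \chi(u_1) + \chi(u_2)$, I will match $\chi(u_1)$ with the sum of vertical-arrow coordinates $\sum_{a \in \mathcal{A}_{\mathrm{v}}} r_a$ and $\chi(u_2)$ with the sum of horizontal-arrow coordinates $\sum_{a \in \mathcal{A}_{\mathrm{h}}} r_a$. For $\chi(u_1)$, the factorisation $u_1 = \mathbf{x}^{\vee}_{i'_1}(p_1)\cdots \mathbf{x}^{\vee}_{i'_N}(p_N)$ from Lemma \ref{lem form of u_1 and b factorisations} reduces the claim to the observation that, for the specific reduced expression $\mathbf{i}'_0$, the superdiagonal entry $(u_1)_{i,i+1}$ collects exactly those $p_k$ with $i'_k = i$, so that summing the superdiagonal yields $p_1 + \cdots + p_N$; one sees this transparently from the fact that the $p_k$ appear without cross-multiplication on the superdiagonal of products of simple root subgroups. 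The main obstacle is the $\chi(u_2)$ half: $u_2$ is only defined implicitly as the unique element making $u_1 d\bar{w}_0 u_2 \in Z$, so there is no ready-made factorisation. My plan is to obtain one by applying the Chamber Ansatz a third time (now on the opposite side), expressing the horizontal-arrow coordinates as ratios of chamber minors of $d\bar{w}_0^{-1}$-translated data, and then repeating the graph/minor analysis of Section \ref{subsec Chamber Ansatz minors} to identify the resulting sum on the superdiagonal of $u_2$ with $\sum_{a\in \mathcal{A}_{\mathrm{h}}} r_a$. The combinatorics should mirror that of $u_1$ but with horizontal and vertical arrows swapped, reflecting the transpose/involution symmetry built into the construction of $\bar{\theta}_{\mathbf{i}'_0}$; following \cite{Rietsch2008}, this can alternatively be verified by induction on $n$ using the block structure of the factorisation.
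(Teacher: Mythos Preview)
The paper does not give its own proof of this lemma: it is stated with attribution to \cite[Theorem 9.2 and Lemma 9.3]{Rietsch2008} and then used without further argument. So there is no in-paper proof to compare your proposal against directly.

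That said, the thesis does prove the $G/P$ generalisation of the weight identity (Lemma~\ref{lem b0 is gamma wt matrix}), and the method there is quite different from what you outline. Rather than expanding both $t_R$ and $\gamma$ in coordinates and matching monomials, the paper computes the matrix coefficient $\langle b_P\cdot v^+_{\omega_k}, v^+_{\omega_k}\rangle$ in each fundamental representation $V(\omega_k)$: on one side this gives $\prod_{i=1}^k (\left[b_P\right]_0)_{ii}$, and on the other side the factorisation $b_P = g_L \dot{w}_{L_{l+1}} \kappa_P \bar{w}_0 u_R$ lets one track how $v^-_{\omega_k}$ is pushed to $v^+_{\omega_k}$, picking up exactly the vertex-coordinate ratios that assemble into $\Xi_{P,1}/\Xi_{P,k+1}$. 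Your coordinate-comparison plan via Corollary~\ref{cor wt matrix in z string coords} would also work in the $G/B$ case and is arguably more elementary there, but the representation-theoretic argument scales more cleanly to the parabolic setting where an explicit closed formula for $t_R$ in ideal coordinates is not readily available.

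For the superpotential identity, your identification $\chi(u_1) = \sum_{a\in\mathcal{A}_{\mathrm{v}}} r_a$ is correct and immediate from the factorisation of $u_1$. Your plan for $\chi(u_2)$ via a third Chamber Ansatz application is plausible but is only sketched; the paper does not carry this out either, deferring entirely to \cite{Rietsch2008}.
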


\begin{figure}
\centering
\begin{minipage}[b]{0.47\textwidth}
    \centering
\begin{tikzpicture}[scale=0.85]
    \node (41) at (0,0) {$\bullet$};
    \node (31) at (0,2) {$\bullet$};
    \node (21) at (0,4) {$\bullet$};
    \node (11) at (0,6) {$\boldsymbol{*}$};
        \node at (0.3,6.3) {$d_{1}$};
    \node (42) at (2,0) {$\bullet$};
    \node (32) at (2,2) {$\bullet$};
    \node (22) at (2,4) {$\boldsymbol{*}$};
        \node at (2.3,4.3) {$d_{2}$};
    \node (43) at (4,0) {$\bullet$};
    \node (33) at (4,2) {$\boldsymbol{*}$};
        \node at (4.3,2.3) {$d_{3}$};
    \node (44) at (6,0) {$\boldsymbol{*}$};
        \node at (6.3,0.3) {$d_{4}$};

    \draw[->] (41) -- (31);
    \draw[->] (31) -- (21);
    \draw[->] (21) -- (11);
    \draw[->] (42) -- (32);
    \draw[->] (32) -- (22);
    \draw[->] (43) -- (33);

    \draw[->] (22) -- (21);
    \draw[->] (33) -- (32);
    \draw[->] (32) -- (31);
    \draw[->] (44) -- (43);
    \draw[->] (43) -- (42);
    \draw[->] (42) -- (41);

    \node at (-0.3,4.8) {\scriptsize{$z_6$}};
    \node at (-0.3,2.8) {\scriptsize{$z_4$}};
    \node at (-0.3,0.8) {\scriptsize{$z_1$}};

    \node at (1.7,2.8) {\scriptsize{$\frac{z_5}{z_6}$}};
    \node at (1.7,0.8) {\scriptsize{$\frac{z_2}{z_4}$}};
    \node at (3.7,0.8) {\scriptsize{$\frac{z_3}{z_5}$}};

    \node at (1,3.6) {\scriptsize{$\frac{d_1}{d_2} \frac{1}{z_6}$}};
    \node at (1,1.6) {\scriptsize{$\frac{d_1}{d_2} \frac{z_5}{z_4z_6^2}$}};
    \node at (1,-0.4) {\scriptsize{$\frac{d_1}{d_2} \frac{z_2z_5}{z_1z_4^2z_6^2}$}};

    \node at (3,1.6) {\scriptsize{$\frac{d_2}{d_3} \frac{z_6}{z_5}$}};
    \node at (3,-0.4) {\scriptsize{$\frac{d_2}{d_3} \frac{z_3z_4z_6}{z_2z_5^2}$}};
    \node at (5,-0.4) {\scriptsize{$\frac{d_3}{d_4} \frac{z_5}{z_3}$}};
\end{tikzpicture}
\caption{Quiver decoration when $n=4$, \\ string coordinates} \label{fig Quiver decoration, n=4, z coordinates}
\end{minipage}\hfill
\begin{minipage}[b]{0.47\textwidth}
    \centering
\begin{tikzpicture}[scale=0.85]
    \node (41) at (0,0) {$\bullet$};
    \node (31) at (0,2) {$\bullet$};
    \node (21) at (0,4) {$\bullet$};
    \node (11) at (0,6) {$\boldsymbol{*}$};
        \node at (0.3,6.3) {$d_1$};
    \node (42) at (2,0) {$\bullet$};
    \node (32) at (2,2) {$\bullet$};
    \node (22) at (2,4) {$\boldsymbol{*}$};
        \node at (2.3,4.3) {$d_2$};
    \node (43) at (4,0) {$\bullet$};
    \node (33) at (4,2) {$\boldsymbol{*}$};
        \node at (4.3,2.3) {$d_3$};
    \node (44) at (6,0) {$\boldsymbol{*}$};
        \node at (6.3,0.3) {$d_4$};

    \draw[->] (41) -- (31);
    \draw[->] (31) -- (21);
    \draw[->] (21) -- (11);
    \draw[->] (42) -- (32);
    \draw[->] (32) -- (22);
    \draw[->] (43) -- (33);

    \draw[->] (22) -- (21);
    \draw[->] (33) -- (32);
    \draw[->] (32) -- (31);
    \draw[->] (44) -- (43);
    \draw[->] (43) -- (42);
    \draw[->] (42) -- (41);

    \node at (-0.3,0.8) {\scriptsize{$m_3$}};
    \node at (-0.3,2.8) {\scriptsize{$m_2$}};
    \node at (-0.3,4.8) {\scriptsize{$m_1$}};

    \node at (1.5,0.8) {\scriptsize{$\frac{m_3m_5}{m_2}$}};
    \node at (1.5,2.8) {\scriptsize{$\frac{m_2m_4}{m_1}$}};
    \node at (3.3,0.8) {\scriptsize{$\frac{m_3m_5m_6}{m_2m_4}$}};

    \node at (1,-0.4) {\scriptsize{$\frac{d_1}{d_2} \frac{m_4m_5}{{m_1}^2m_2}$}};
    \node at (3,-0.4) {\scriptsize{$\frac{d_2}{d_3} \frac{m_1m_6}{m_2{m_4}^2}$}};
    \node at (5,-0.4) {\scriptsize{$\frac{d_3}{d_4} \frac{m_2m_4}{m_3m_5m_6}$}};
    \node at (1,1.6) {\scriptsize{$\frac{d_1}{d_2} \frac{m_4}{{m_1}^2}$}};
    \node at (3,1.6) {\scriptsize{$\frac{d_2}{d_3} \frac{m_1}{m_2m_4}$}};
    \node at (1,3.6) {\scriptsize{$\frac{d_1}{d_2} \frac{1}{m_1}$}};
\end{tikzpicture}
\caption{Quiver decoration when $n=4$, \\ ideal coordinates} \label{fig Quiver decoration, n=4, m coordinates}
\end{minipage}
\end{figure}

To decorate the quiver with the ideal coordinates, we begin by recalling Theorem \ref{thm coord change}, namely that for $k=1,\ldots, n-1$, $a=1,\ldots, n-k$ we have
    $$m_{s_k+a} = \begin{cases}
            z_{1+s_{n-a}} &\text{if } k=1, \\
            \frac{z_{k+s_{n-k-a+1}}}{z_{k-1+s_{n-k-a+1}}} &\text{otherwise}.
            \end{cases}
    $$

In the above quiver decoration we have $\boldsymbol{r}_{\mathcal{A}}=(p_1, \ldots, p_N)$, that is
    $$r_{a_{ij}}=p_{s_j+n-i}=\begin{cases}
        z_{1+s_{n-i}} & \text{if } j=1, \\
        \frac{z_{j+s_{n-i}}}{z_{j-1+s_{n-i+1}}} & \text{otherwise}.
    \end{cases}
    $$
We notice that if $j=1$ then
    $$r_{a_{i1}}=p_{s_1+n-i}=z_{1+s_{n-i}}=m_{s_1+i}.
    $$
We also want to write $r_{a_{ij}}$ in terms of the $m_{s_k+a}$ coordinates when $j>2$. To do this we first note that for $k=2$ we have
    $$m_{s_k+a}m_{s_{k-1}+a+1} = \frac{z_{k+s_{n-k-a+1}}}{z_{k-1+s_{n-k-a+1}}}z_{k-1+s_{n-(k-1)-(a+1)+1}}
    = z_{k+s_{n-k-a+1}}.
    $$
Similarly for $j>2$ we have
    $$m_{s_k+a}m_{s_{k-1}+a+1}
        = \frac{z_{k+s_{n-k-a+1}}}{z_{k-1+s_{n-k-a+1}}}\frac{z_{k-1+s_{n-k-a+1}}}{z_{k-2+s_{n-k-a+1}}} \\
        = \frac{z_{k+s_{n-k-a+1}}}{z_{k-2+s_{n-k-a+1}}}.
    $$
Thus the following product is telescopic:
    $$\prod_{r=0}^{k-1} m_{s_{k-r}+a+r} = z_{k+s_{n-k-a+1}}.
    $$
Now taking quotients of these products allows us to write the vertical arrow coordinates, $r_{a_{ij}}=p_{s_j+n-i}$, in terms of the $m_j$; for $j>2$ we have
    $$r_{a_{ij}}=p_{s_j+n-i}
        =\frac{z_{j+s_{n-i}}}{z_{j-1+s_{n-i+1}}}
        =\frac{z_{j+s_{n-j-(i-j+1)+1}}}{z_{j-1+s_{n-(j-1)-(i-j+1)+1}}}
        =\frac{\prod_{r=0}^{j-1} m_{s_{j-r}+i-j+1+r}}{\prod_{r=0}^{j-2} m_{s_{j-1-r}+i-j+1+r}}.
    $$
This doesn't seem particularly helpful at first glance, however it leads to an iterative description of the new quiver decoration which will be very useful in the proof of the main result in Section \ref{sec Givental-type quivers and critical points}, Proposition \ref{prop crit points, sum at vertex is nu_i}. Denoting the numerator of $r_{a_{ij}}$ by $n(r_{a_{ij}})$, for $j=1, \ldots, n-1$ we have
    \begin{equation} \label{eqn rai,j+1 in terms of m's}
    r_{a_{i,j+1}}
        =\frac{\prod_{r=0}^{j} m_{s_{j+1-r}+i-j+r}}{\prod_{r=0}^{j-1} m_{s_{j-r}+i-j+r}}
        =m_{s_{j+1}+i-j}\frac{\prod_{r=0}^{j-1} m_{s_{j-r}+i+1-j+r}}{\prod_{r=0}^{j-1} m_{s_{j-r}+i-j+r}}
        =m_{s_{j+1}+i-j}\frac{n(r_{a_{ij}})}{n(r_{a_{i-1,j}})}. \\
    \end{equation}
For example, in dimension $4$ the quiver decoration is given in Figure \ref{fig Quiver decoration, n=4, m coordinates}.

\subsection{Critical points of the superpotential} \label{subsec Critical points on the superpotential}
\fancyhead[L]{3.3 \ \ Critical points of the superpotential}

We begin by recalling the highest weight map on the vertex torus:
    $$\kappa :(\mathbb{K}^*)^{\mathcal{V}} \to T^{\vee}, \quad \boldsymbol{x}_{\mathcal{V}} \mapsto \left(x_{v_{ii}}\right)_{i=1, \ldots, n}.
    $$
Now in the fibre over $d\in T^{\vee}$ we have
    $$x_{v_{ii}}=d_i \quad \text{for} \ i=1,\ldots, n.
    $$
The remaining $x_v$ for $v \in \mathcal{V}^{\bullet}$ form a coordinate system on this fibre. In particular we can use these coordinates to compute critical points of the superpotential, as follows:
    $$x_v \frac{\partial \mathcal{F}}{\partial x_v} = \sum_{a:h(a)=v} \frac{x_{h(a)}}{x_{t(a)}} - \sum_{a:t(a)=v} \frac{x_{h(a)}}{x_{t(a)}}.
    $$
Thus the critical point conditions are
    $$\sum_{a:h(a)=v} \frac{x_{h(a)}}{x_{t(a)}} = \sum_{a:t(a)=v} \frac{x_{h(a)}}{x_{t(a)}} \quad \text{for} \ v \in \mathcal{V}^{\bullet}.
    $$
Since we favour working with arrow coordinates, we rewrite these equations:
    \begin{equation}\label{eqn crit pt conditions} \sum_{a:h(a)=v} r_a = \sum_{a:t(a)=v} r_a \quad \text{for} \ v \in \mathcal{V}^{\bullet}.
    \end{equation}
This means we can now use the arrow coordinates in the quiver, and thus the ideal coordinates, to give simple descriptions of both the superpotential and the defining equations of its critical points. In fact, using the quiver decoration in terms of the ideal coordinates, we can take this a step further:

\begin{prop} \label{prop crit points, sum at vertex is nu_i}
If the critical point conditions hold at every dot vertex $v \in \mathcal{V}^{\bullet}$, then the sum of the outgoing arrow coordinates at each dot vertex $v_{ik}$ is given in terms of the ideal coordinates by
    $$ \varpi(v_{ik}) := \sum_{a:t(a)=v_{ik}} r_a = m_{s_k +i-k}.
    $$
\end{prop}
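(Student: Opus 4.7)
The plan is to prove the proposition by induction on the column index $k$ of the dot vertex. The base case $k = 1$ will follow directly from the quiver decoration with no use of the critical point condition, while the inductive step turns on pairing a single box relation with a single application of the critical point condition at one neighbouring dot.

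For the base case, the vertex $v_{i,1}$ sits in the leftmost column, so its only outgoing arrow is the vertical $a_{i-1,1}$. Reading off the ideal-coordinate decoration (established in the discussion preceding equation \ref{eqn rai,j+1 in terms of m's}) gives $r_{a_{i-1,1}} = m_{i-1}$, which equals $m_{s_1 + i - 1}$ since $s_1 = 0$, so no critical point hypothesis is needed.

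For the inductive step $k \mapsto k+1$, fix a dot vertex $v_{i,k+1}$, so that $i \geq k+2$, and consider the square with corners $v_{i,k+1}, v_{i,k}, v_{i-1,k}, v_{i-1,k+1}$. Its box relation $r_{b_{i,k+1}}\,r_{a_{i-1,k}} = r_{a_{i-1,k+1}}\,r_{b_{i-1,k+1}}$ lets us factorise
\[
\varpi(v_{i,k+1}) \,=\, r_{a_{i-1,k+1}} + r_{b_{i,k+1}}
 \,=\, \frac{r_{a_{i-1,k+1}}}{r_{a_{i-1,k}}}\,\bigl(\,r_{a_{i-1,k}} + r_{b_{i-1,k+1}}\,\bigr).
\]
The bracketed sum is exactly the incoming arrow sum at $v_{i-1,k}$, which is a dot since $i-1 \geq k+1 > k$; so by the critical point condition and the inductive hypothesis it equals $\varpi(v_{i-1,k}) = m_{s_k + i - k - 1}$. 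It remains only to identify the prefactor. Substituting the iterative formula \ref{eqn rai,j+1 in terms of m's}, writing each $r_{a_{ij}}$ as the ratio $n(r_{a_{ij}})/n(r_{a_{i-1,j-1}})$, and applying the single-step recursion $n(r_{a_{i,j}}) = n(r_{a_{i,j-1}})\,m_{s_j + i - j + 1}$ twice makes the factors $n(r_{a_{i-1,k}})$ and $n(r_{a_{i-2,k-1}})$ cancel, leaving
\[
\frac{r_{a_{i-1,k+1}}}{r_{a_{i-1,k}}} \,=\, \frac{m_{s_{k+1} + i - k - 1}}{m_{s_k + i - k - 1}}.
\]
Multiplying in gives $\varpi(v_{i,k+1}) = m_{s_{k+1} + i - k - 1}$, closing the induction.

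The main point requiring care is the boundary case $i = k+2$, where the top-right corner $v_{i-1,k+1} = v_{k+1,k+1}$ is a star vertex rather than a dot. The box relation is unaffected, and the critical point condition at the adjacent dot $v_{k+1,k}$ continues to apply, once $r_{b_{k+1,k+1}}$ is interpreted as the anchor value furnished by equation \ref{eqn relation in quiver from stars}. The boundary case $i = n$ is similarly benign: the coordinate $r_{a_{n,k}}$ does not appear in our argument, and the critical point vertex used is $v_{n-1,k}$, which remains a dot.
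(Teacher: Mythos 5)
Your proof is correct and takes essentially the same route as the paper's: the base case is identical, and the inductive step is the paper's argument reindexed (the paper runs the induction diagonally from $v_{ik}$ to $v_{i+1,k+1}$, whereas you fix the target $v_{i,k+1}$ and run over columns, but after the substitution $i\mapsto i+1$ the square, the box relation, the critical point condition at the dot $v_{ik}$, and the ratio computation $\frac{r_{a_{i,k+1}}}{r_{a_{ik}}} = \frac{m_{s_{k+1}+i-k}}{m_{s_k+i-k}}$ all coincide with the paper's). Your factorisation of $\varpi(v_{i,k+1})$ into prefactor times incoming sum, and your use of the single-step recursion $n(r_{a_{i,j}})=n(r_{a_{i,j-1}})\,m_{s_j+i-j+1}$ to evaluate the prefactor, are a slightly tidier presentation of the same cancellation the paper performs via the three displayed arrow-coordinate formulas.
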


\begin{proof}
By construction of the quiver labelling there is only one outgoing arrow at each $v_{i1}$ for $i=2, \ldots, n$, namely $a_{i-1,1}$, and indeed $r_{a_{i-1,1}}= m_{i-1} = m_{s_1+i-1}$. Since we have the desired property for the vertices $v_{i1}$, $i=2, \ldots, n-1$, we proceed by an inductive argument increasing both vertex subscripts simultaneously.

We consider the subquiver given in Figure \ref{fig Critical point conditions in the quiver} and suppose the sum of outgoing arrows at $v_{ik}$ is $m_{s_k +i-k}$. Then by the critical point condition at this vertex we have $m_{s_k +i-k} = r_{a_{ik}}+r_{b_{i,k+1}}$.
\begin{figure}[ht]
\centering
\begin{tikzpicture}[scale=0.9]
    \node (31) at (0,0) {$\bullet$};
    \node (21) at (0,1.5) {$\bullet$};
    \node (32) at (1.5,0) {$\bullet$};
    \node (22) at (1.5,1.5) {$\bullet$};
    \node (11) at (0,3) {$\bullet$};
    \node (20) at (-1.5,1.5) {$\bullet$};

    \draw[->] (21) -- (11);
    \draw[->] (31) -- (21);
    \draw[->] (32) -- (22);

    \draw[->] (21) -- (20);
    \draw[->] (22) -- (21);
    \draw[->] (32) -- (31);

    \node at (-0.48,2.25) {\scriptsize{$a_{i-1,k}$}};
    \node at (-0.3,0.75) {\scriptsize{$a_{ik}$}};
    \node at (2,0.75) {\scriptsize{$a_{i,k+1}$}};

    \node at (0.75,-0.3) {\scriptsize{$b_{i+1,k+1}$}};
    \node at (0.75,1.7) {\scriptsize{$b_{i,k+1}$}};
    \node at (-0.75,1.7) {\scriptsize{$b_{ik}$}};

    \node at (-0.55,0) {\scriptsize{$v_{i+1,k}$}};
    \node at (0.35,1.25) {\scriptsize{$v_{ik}$}};
    \node at (2.19,0) {\scriptsize{$v_{i+1,k+1}$}};
    \node at (2.07,1.5) {\scriptsize{$v_{i,k+1}$}};
\end{tikzpicture}
\caption{Critical point conditions in the quiver} \label{fig Critical point conditions in the quiver}
\end{figure}
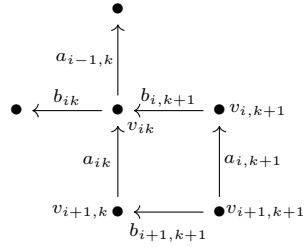

By the definition of the arrow coordinates we have
    $$r_{a_{i-1,k}} = m_{s_k +i-k} \frac{n(r_{a_{i-1,k-1}})}{n(r_{a_{i-2,k-1}})},  \quad
    r_{a_{i,k+1}} = m_{s_{k+1} +i-k} \frac{n(r_{a_{ik}})}{n(r_{a_{i-1,k}})}, \quad
    r_{a_{ik}} = m_{s_k +i+1-k} \frac{n(r_{a_{i,k-1}})}{n(r_{a_{i-1,k-1}})}.
    $$
In particular we see that
    \begin{equation} \label{eqn quotient of arrow coords, crit point in quiver}
    \frac{r_{a_{i,k+1}}}{r_{a_{ik}}} = m_{s_{k+1} +i-k} \frac{n(r_{a_{i-1,k-1}})}{n(r_{a_{i-1,k}})}.
    \end{equation}
The sum of outgoing arrows at the vertex $v_{i+1,k+1}$ is given by
    $$\begin{aligned}
    \varpi(v_{i+1,k+1}) :=\sum_{a:t(a)=v_{i+1,k+1}} r_a &=  r_{a_{i,k+1}} + r_{b_{i+1,k+1}} &\\
        &= r_{a_{i,k+1}} + \frac{r_{a_{i,k+1}}r_{b_{i,k+1}}}{r_{a_{ik}}} & \begin{aligned}[t]\text{by the box relation} \\ r_{a_{ik}}r_{b_{i+1,k+1}} = r_{a_{i,k+1}}r_{b_{i,k+1}} \end{aligned} \\
        &= r_{a_{i,k+1}} + \frac{r_{a_{i,k+1}}}{r_{a_{ik}}}(m_{s_k +i-k} - r_{a_{ik}}) & \text{by the inductive hypothesis}\\
        &= \frac{r_{a_{i,k+1}}}{r_{a_{ik}}}m_{s_k +i-k} & \\
        &= \frac{m_{s_{k+1} +i-k} n(r_{a_{i-1,k-1}})}{n(r_{a_{i-1,k}})} m_{s_k +i-k} & \text{using} \ (\ref{eqn quotient of arrow coords, crit point in quiver}) \\
        &= m_{s_{k+1} +i-k} & \text{by definition of} \ n(r_{a_{i-1,k}}) \\
        &= m_{s_{k+1}+(i+1)-(k+1)}.
    \end{aligned}$$
\end{proof}

\begin{rem}
At first glance, in the above theorem we seem to have lost the information about the highest weight element $d \in T^{\vee}$. However at a critical point this information can be partially recovered from the $\boldsymbol{m}$-coordinates; considering the dot vertices on the bottom wall of the quiver, for each $j=1, \ldots, n-1$ we have
    $$\begin{aligned}
    r_{b_{n, j+1}} &= \varpi(v_{nj}) & \text{by the critical point conditions} \\
        &= m_{s_j+n-j} & \text{by Proposition \ref{prop crit points, sum at vertex is nu_i}}.
    \end{aligned}
    $$
By the quiver decoration, each arrow coordinate $r_{b_{n, j+1}}$ is given by
    $$r_{b_{n, j+1}} = \frac{d_j}{d_{j+1}} \frac{\prod_{i \in I_{j_1}} m_i}{\prod_{i \in I_{j_2}} m_i}
    $$
for some multisets $I_{j_1}, I_{j_2}$ of the integers $1, \ldots, N$. Thus we have
    $$\alpha^{\vee}_j(d)=\frac{d_j}{d_{j+1}} = m_{s_j+n-j} \frac{\prod_{i \in I_{j_2}} m_i}{\prod_{i \in I_{j_1}} m_i}, \quad j=1, \ldots, n-1.
    $$
\end{rem}

We complete this section by tying together the quiver, critical points and our interest in the form of the weight matrix from Sections \ref{subsec The form of the weight matrix} and \ref{sec The ideal coordinates}. Namely it is natural to ask what happens to the weight matrix at critical points.

\begin{prop} \label{prop weight at crit point non trop}
At a critical point in the fibre over $d\in T^{\vee}$, the weight matrix is an $n\times n$ matrix $\mathrm{diag}(c, \ldots, c)$ where
    $$c^n = \prod_{i=1}^n d_i.
    $$
\end{prop}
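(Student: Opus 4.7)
The plan is to pass to Givental's picture via Lemma~\ref{lem factor three maps through quiver torus}, in which the weight map is $\gamma(\boldsymbol{x})_i = \Xi_i/\Xi_{i+1}$ with $\Xi_i = \prod_{v\in\mathcal{D}_i} x_v$. Since on the fibre $\kappa^{-1}(d)$ we have $\Xi_1 = \prod_i x_{v_{ii}} = \prod_i d_i$ and $\Xi_{n+1}=1$, the product $\prod_{i=1}^n \gamma(\boldsymbol{x})_i$ telescopes to $\prod_i d_i$. Consequently, once the diagonal entries $t_1,\ldots,t_n$ of $t_R$ are shown to be equal to some common value $c$, the identity $c^n = \prod_i d_i$ is immediate; the only real content is thus to prove that all diagonal entries of $t_R$ coincide at a critical point.

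To establish $t_i = t_{i+1}$, I would combine the explicit formula of Corollary~\ref{cor wt matrix in m ideal coords}, which writes each $(t_R)_{n-j+1,n-j+1} = w_j$ as an L-shaped product $d_j\prod_{k=1}^{j-1} m_{s_k+j-k}/\prod_{r=1}^{n-j} m_{s_j+r}$ around the star vertex $v_{jj}$, with Proposition~\ref{prop crit points, sum at vertex is nu_i}, which identifies each $m_{s_k+i-k}$ with the outgoing-arrow sum $\varpi(v_{ik})$ at a critical point. Starting from the left column and bottom row of the quiver, where Proposition~\ref{prop crit points, sum at vertex is nu_i} directly yields one outgoing arrow coordinate, the critical point equations $\sum_{a:h(a)=v} r_a = \sum_{a:t(a)=v} r_a$ propagate into the interior and, together with the box relations of $\bar{\mathcal{M}}$, determine every $r_a$ as an explicit (no longer monomial) rational expression in the $d_i$'s and $m_j$'s; in particular each ratio $d_j/d_{j+1}$ is pinned down in terms of $m$'s, as noted in the remark following Proposition~\ref{prop crit points, sum at vertex is nu_i}. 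Substituting these expressions into $w_{j+1}/w_j$ and applying the box-relation identities yields, after telescoping cancellation, $w_{j+1}/w_j = 1$; the $n = 3$ case of Example~\ref{ex b and maps in ideal coords dim 3} illustrates how the required identity reduces exactly to the single box relation present in that dimension.

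The principal obstacle I anticipate is keeping the combinatorial description of this propagation under control for general $n$, as the intermediate arrow coordinates are polynomial rather than monomial in the $m$'s (for instance one finds $r_{b_{33}} = m_3 + m_4 - m_5$ when $n = 4$), and the required cancellation pattern must match the imbalance between the L-shape at $v_{jj}$ and the L-shape at $v_{j+1,j+1}$. A conceptually cleaner alternative, which I would state as a remark at the end of the proof, is to appeal to Rietsch's result from \cite{Rietsch2008} identifying positive critical points of $\mathcal{W}$ with totally positive lower triangular Toeplitz matrices $b$: such matrices have constant diagonal, so $t_R = [b]_0 = cI$ automatically, and the computation $\det(b) = \det(u_1)\det(d)\det(\bar{w}_0)\det(u_2) = \prod_i d_i$ gives $c^n = \prod_i d_i$ immediately.
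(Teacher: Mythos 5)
Your second argument (via Rietsch's Toeplitz theorem) is correct and self-contained, but it is a genuinely different route from the paper's own proof. The paper establishes $t_{i-1}/t_i = 1$ purely combinatorially: it applies Judd's Lemma~\ref{lem Jamie's lemma 5.9} to the diagonal subquiver supported on $\mathcal{D}_i$, obtaining $\prod_j O_j \cdot r_{a_{\mathrm{out}}}/r_{a_{\mathrm{in}}} = K_t$ from the box relations and critical point conditions, which then forces $\Xi_{i+1}\Xi_{i-1}/\Xi_i^2 = 1$. Your approach instead quotes Rietsch's identification of positive critical points with Toeplitz matrices, reads off the constant diagonal $[b]_0 = cI$ directly, and computes $c^n = \det(b) = \det(u_1)\det(d)\det(\bar w_0)\det(u_2) = \prod_i d_i$ (using $\det \bar s_i = 1$). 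This is arguably cleaner — it avoids the quiver combinatorics entirely — and in fact it is exactly the argument the paper later deploys for the parabolic analogue, Proposition~\ref{prop G/P weight at crit point non trop}. What the paper's quiver argument buys in the $G/B$ case is a proof internal to the combinatorial framework of Section~\ref{sec Givental-type quivers and critical points}, without invoking the mirror-symmetry theorem.

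Your first sketch — propagating the critical point equations and box relations from the boundary inward and then verifying $w_{j+1}/w_j = 1$ by ``telescoping cancellation'' — is not actually carried out, and you correctly flag this yourself. The intermediate arrow coordinates do become genuinely polynomial in the $m_i$, and without an organizing identity along each diagonal (which is precisely what Judd's Lemma~\ref{lem Jamie's lemma 5.9} supplies) the cancellation pattern is not transparent. If you wanted to make that first route rigorous, you would in effect rediscover Lemma~\ref{lem Jamie's lemma 5.9}; as it stands, the Toeplitz argument is the complete proof here.
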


In order to prove this we require the following lemma:
\begin{lem}[{\cite[Lemma 5.9]{Judd2018}}] \label{lem Jamie's lemma 5.9}
Suppose we have a quiver like the one given in Figure \ref{fig Diagonal subquiver}. We attach a variable $r_a$ to each arrow such that the box relations $r_{a_1}r_{a_2}=r_{a_3}r_{a_4}$ hold whenever $a_1, a_2, a_3, a_4$ form a square (see Figure \ref{fig Arrows forming a box}) and the critical point conditions hold at each black vertex. For each box $B_j$, $1\leq j \leq t$, let $O_j = r_{a_1}r_{a_3}$ and $I_j=r_{a_2}r_{a_4}$. Additionally let $K_t=\prod r_a$ where the product is over a (any) path from $v_t$ to $v_0$. Then we have
    $$\prod_{j=1}^t O_j \frac{r_{a_{\mathrm{out}}}}{r_{a_{\mathrm{in}}}} = K_t \quad \text{and} \quad \prod_{j=1}^t I_j \frac{r_{a_{\mathrm{in}}}}{r_{a_{\mathrm{out}}}} = K_t .
    $$
Note this agrees with $\prod_{j=1}^t O_j \prod_{j=1}^t I_j = K_t^2$.
\end{lem}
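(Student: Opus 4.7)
The natural approach is induction on $t$, the number of boxes in the diagonal subquiver. For the base case $t=1$ we have a single box with the four arrows $a_1, a_2, a_3, a_4$ and (at least) one black vertex on which the critical point condition is imposed. Writing down the critical point equation at that vertex and combining it with the single box relation $r_{a_1}r_{a_2}=r_{a_3}r_{a_4}$ should already give both identities: the equation $\sum_{a:h(a)=v} r_a = \sum_{a:t(a)=v} r_a$ at the vertex shared with $a_{\mathrm{in}}$, $a_{\mathrm{out}}$ expresses a ratio of adjacent arrow coordinates in terms of $O_1$ and $I_1$, and the box relation provides the necessary cross-identification.

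For the inductive step, I would isolate the $t$-th box from the diagonal consisting of the first $t-1$ boxes, so that they share a single black vertex $v$. The inductive hypothesis gives the identity for the truncated diagonal of length $t-1$, with its own ``$a_{\mathrm{in}}$'' and ``$a_{\mathrm{out}}$'' arrows; one of these will be an interior arrow of the full diagonal incident to $v$. The critical point condition at $v$ then rewrites the inductive ratio in terms of the two arrows of the $t$-th box meeting $v$, and applying the box relation for this $t$-th box converts the product of these two arrows into $O_t$ (respectively $I_t$). Multiplying by $O_t$ (respectively $I_t$) and telescoping through the ratio gives the identity for $t$ boxes, after noting that the new $K_t$ differs from $K_{t-1}$ exactly by the arrow of the new box lying on the chosen path from $v_t$ to $v_0$.

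The second identity can either be proved in parallel by swapping the roles of the two ``sides'' of each box throughout the induction, or deduced from the first by the symmetry of reversing orientation along the diagonal. As a consistency check, the stated relation $\prod_{j=1}^t O_j \prod_{j=1}^t I_j = K_t^2$ follows immediately by multiplying the two identities together; this is useful as a sanity check at the inductive step, since both sides grow by the same factor when a box is added.

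The main obstacle will be purely bookkeeping: identifying which of the four arrows of each added box plays the role of $a_1, a_3$ versus $a_2, a_4$, and matching the ``in'' and ``out'' arrows of the truncated subdiagram with arrows of the new box so that the critical point condition at $v$ expresses the correct ratio. Because this is a known result of Judd, I would expect the proof to follow \cite[Lemma 5.9]{Judd2018} directly once the diagonal orientation conventions of Figure \ref{fig Diagonal subquiver} are fixed, so the bulk of the work is pinning down the notation rather than any new mathematical content.
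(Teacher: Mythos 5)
The paper itself does not prove this lemma: it is quoted verbatim from \cite[Lemma 5.9]{Judd2018} and used as a black box, so there is no proof in the thesis to compare against. Your inductive outline is nevertheless a correct reconstruction. The key single-box computation that both your base case and your inductive step rely on does work: with $a_1,a_3$ the two arrows leaving the lower black corner $v_j$ of box $B_j$ and $a_2,a_4$ the two entering the upper black corner $v_{j-1}$, the box relation $r_{a_1}r_{a_2}=r_{a_3}r_{a_4}$ gives
\[
r_{a_1}r_{a_3}\cdot\frac{r_{a_2}+r_{a_4}}{r_{a_1}+r_{a_3}}
= \frac{r_{a_1}r_{a_2}(r_{a_3}+r_{a_1})}{r_{a_1}+r_{a_3}}
= r_{a_1}r_{a_2},
\]
which is precisely $O_j\cdot(\text{incoming sum at }v_{j-1})/(\text{outgoing sum at }v_j) = K_j^B$. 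The one imprecision in your write-up is the phrase ``one of these will be an interior arrow of the full diagonal incident to $v$'': the truncated subdiagram's $a_{\mathrm{in}}$ is not a single interior arrow of the original quiver but a formal arrow whose coordinate is the \emph{sum} $r_{a_2(B_t)}+r_{a_4(B_t)}$ of the two arrows of the removed box $B_t$ that enter $v_{t-1}$; the critical point condition at $v_{t-1}$ is exactly what makes this substitution consistent with the truncated diagram. Once that is said precisely, your induction closes. You could also avoid induction altogether by letting $S_j$ denote the common value of the outgoing arrow sum at $v_j$ and the incoming arrow sum at $v_j$ (with $S_0=r_{a_{\mathrm{out}}}$ and $S_t=r_{a_{\mathrm{in}}}$), noting $O_j\,S_{j-1}/S_j=K_j^B$ box by box, and multiplying so that the $S_j$ telescope — this is your inductive argument stripped to its essentials.
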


\begin{figure}[ht]
\centering
\begin{minipage}[b]{0.47\textwidth}
    \centering
    \begin{tikzpicture}[scale=0.9]
        \node (11) at (0,6) {$\boldsymbol{\circ}$};
            \node at (0.5,6) {$v_{-1}$};
        \node (22) at (1,5) {$\bullet$};
            \node at (0.6,4.9) {$v_0$};
        \node (33) at (2,4) {$\bullet$};
        \node (44) at (3,3) {$\bullet$};
            \node at (3.5,2.6) {$\ddots$};
        \node (55) at (4,2) {$\bullet$};
        \node (66) at (5,1) {$\bullet$};
            \node at (4.9,0.7) {$v_t$};
        \node (77) at (6,0) {$\boldsymbol{\circ}$};
            \node at (6.5,0) {$v_{t+1}$};

        \node (32) at (1,4) {$\boldsymbol{\circ}$};
        \node (43) at (2,3) {$\boldsymbol{\circ}$};
        \node (65) at (4,1) {$\boldsymbol{\circ}$};

        \node (23) at (2,5) {$\boldsymbol{\circ}$};
        \node (34) at (3,4) {$\boldsymbol{\circ}$};
        \node (56) at (5,2) {$\boldsymbol{\circ}$};

        \draw[->] (22) -- (11);
        \draw[->] (77) -- (66);

        \draw[->] (33) -- (32);
        \draw[->] (32) -- (22);
        \draw[->] (33) -- (23);
        \draw[->] (23) -- (22);

        \draw[->] (44) -- (43);
        \draw[->] (43) -- (33);
        \draw[->] (44) -- (34);
        \draw[->] (34) -- (33);

        \draw[->] (66) -- (65);
        \draw[->] (65) -- (55);
        \draw[->] (66) -- (56);
        \draw[->] (56) -- (55);

        \node at (1.5,4.5) {$B_1$};
        \node at (2.5,3.5) {$B_2$};
        \node at (4.5,1.5) {$B_t$};

        \node at (1,5.55) {$a_{\mathrm{out}}$};
        \node at (5.9,0.55) {$a_{\mathrm{in}}$};
    \end{tikzpicture}
\caption{Diagonal subquiver} \label{fig Diagonal subquiver}
\end{minipage} \hfill
\begin{minipage}[b]{0.47\textwidth}
    \centering
    \begin{tikzpicture}
        \node (41) at (0,0) {};
        \node (31) at (0,1) {};
        \node (42) at (1,0) {};
        \node (32) at (1,1) {};

        \draw[->] (41) -- (31);
        \draw[->] (42) -- (32);

        \draw[->] (32) -- (31);
        \draw[->] (42) -- (41);

        \node at (-0.4,0.5) {$a_2$};
        \node at (1.4,0.5) {$a_3$};

        \node at (0.5,1.2) {$a_4$};
        \node at (0.5,-0.3) {$a_1$};
    \end{tikzpicture}
\caption{Arrows forming a box} \label{fig Arrows forming a box}
\end{minipage}
\end{figure}

\begin{proof}[Proof of Proposition {\ref{prop weight at crit point non trop}}]
We consider the set of all arrows with either head or tail on the $i$th diagonal $\mathcal{D}_i$. These arrows form a subquiver like in Figure \ref{fig Diagonal subquiver}, with $v_t=v_{n, n-i+1}$. Moreover we have
    $$K_t=\frac{x_{v_0}}{x_{v_t}}, \quad r_{a_{\mathrm{in}}} = \frac{x_{v_t}}{x_{v_{t+1}}} , \quad r_{a_{\mathrm{out}}} = \frac{x_{v_{-1}}}{x_{v_0}}, \quad \text{so} \quad \frac{1}{K_t} \frac{r_{a_{\mathrm{out}}}}{r_{a_{\mathrm{in}}}} = \frac{x_{v_{-1}}x_{v_{t+1}}}{x_{v_0}^2}.
    $$
Thus, using Lemma \ref{lem Jamie's lemma 5.9}, we see that
    $$\frac{t_{i-1}}{t_i}=\frac{\Xi_{i+1}\Xi_{i-1}}{\Xi_i^2} = \frac{x_{v_{-1}}x_{v_{t+1}}}{x_{v_0}^2} \prod O_j = \frac{1}{K_t} \prod_{j=1}^t O_j \frac{r_{a_{\mathrm{out}}}}{r_{a_{\mathrm{in}}}} = 1.
    $$

So at a critical point, the weight matrix $\mathrm{diag}(t_1, t_2, \ldots, t_n)$ is given by $\mathrm{diag}(c,c, \ldots, c)$ for some $c$. By taking the determinant we obtain
    $$\prod_{i=1}^n t_i = \prod_{i=1}^n \frac{\Xi_i}{\Xi_{i+1}} = \frac{\Xi_1}{\Xi_{n+1}} = \Xi_1
    $$
recalling $\Xi_{n+1} =1$ by definition. This gives the desired value of $c$ as follows:
    $$c^n = \prod_{i=1}^n t_i = \Xi_1 = \prod_{i=1}^n d_i.
    $$
\end{proof}

\section{The tropical viewpoint} \label{sec The tropical viewpoint}
\fancyhead[L]{4 \ \ The tropical viewpoint}

In this section we recall how, by tropicalisation, we can use the superpotential to obtain polytopes associated to a given highest weight. These polytopes depend on the choice of positive toric chart. The goal in this section is to describe the polytope we get from the ideal coordinates. Additionally we show that for each choice of highest weight, the associated critical point of the superpotential gives rise to a point inside this polytope, which is Judd's tropical critical point \cite{Judd2018} and has a beautiful description in terms of so called ideal fillings.

\subsection{The basics of tropicalisation} \label{subsec The basics of tropicalisation}
\fancyhead[L]{4.1 \ \ The basics of tropicalisation}

In this section we explain the concept of tropicalisation, following an original construction due to Lusztig \cite{Lusztig1994}. In order to do this we work over the field of Generalised Puiseux series, which we will denote by $\mathbf{K}$.

A generalised Puiseux series in a variable $t$ is a series with an exponent set $(\mu_k) = (\mu_0, \mu_1, \mu_2, \ldots ) \in \mathbb{R}$ which is strictly monotone and either finite, or countable and tending to infinity. That is,
    $$(\mu_k) \in \mathrm{MonSeq} = \left\{ A \subset \mathbb{R} \ | \ \mathrm{Cardinality}(A\cap \mathbb{R}_{\leq x}) < \infty \ \text{for arbitrarily large} \ x \in \mathbb{R} \right\} .
    $$
Thus we have
    $$\mathbf{K} = \left\{ c(t) = \sum_{(\mu_k) \in \mathrm{MonSeq}} c_{\mu_k}t^{\mu_k} \ | \ c_{\mu_k} \in \mathbb{C} \right\}.
    $$
The positive part of the field $\mathbf{K}$ is given by
    $$\mathbf{K}_{>0}:= \left\{ c(t)\in \mathbf{K} \ | \ c_{\mu_0} \in \mathbb{R}_{>0}\right\}
    $$
where we may assume that the lowest order term has a non-zero coefficient.

Given a torus $\mathcal{T}$, we note that we may identify $\mathcal{T}(\mathbf{K})=\mathrm{Hom}(M, \mathbf{K}^*)$, where we write $M$ for the character group of $\mathcal{T}$, $X^*(\mathcal{T})$, viewed as an abstract group and written additively. The positive part of $\mathcal{T}(\mathbf{K})$ is defined by those homomorphisms which take values in $\mathbf{K}_{>0}$, namely $\mathcal{T}(\mathbf{K}_{>0})=\mathrm{Hom}(M, \mathbf{K}_{>0})$. For $v \in M$ and $h \in \mathcal{T}(\mathbf{K})$ we will write $\chi^v(h)$ for the associated evaluation $h(v)$ in $\mathbf{K}^*$. We call $\chi^v$ the character associated to $v$.

We call a $\mathbf{K}$-linear combination of the characters $\chi^v$, a Laurent polynomial on $\mathcal{T}$. In addition, a Laurent polynomial is said to be positive if the coefficients of the characters lie in $\mathbf{K}_{>0}$. Now let $\mathcal{T}^{(1)}$, $\mathcal{T}^{(2)}$ be two tori over $\mathbf{K}$. We say that a rational map
    $$\psi: \mathcal{T}^{(1)} \dashrightarrow \mathcal{T}^{(2)}
    $$
is a positive rational map if, for any character $\chi$ of $\mathcal{T}^{(2)}$, the composition $\chi \circ \psi : \mathcal{T}^{(1)} \to \mathbf{K}$ is given by a quotient of positive Laurent polynomials on $\mathcal{T}^{(1)}$.

We now define the tropicalisation of these positive rational maps. Roughly speaking, it captures what happens to the leading term exponents. In order to define tropicalisation we use the natural valuation on $\mathbf{K}$ given by
    $$\mathrm{Val}_{\mathbf{K}} : \mathbf{K} \to \mathbb{R} \cup \{\infty\}, \quad
    \mathrm{Val}_{\mathbf{K}}\left(c(t)\right) = \begin{cases}
    \mu_0 & \text{if} \ c(t)=\sum_{(\mu_k) \in \mathrm{MonSeq}} c_{\mu_k}t^{\mu_k} \neq 0, \\
    \infty & \text{if} \ c(t)=0.
    \end{cases}
    $$

We define an equivalence relation $\sim$ on $\mathcal{T}(\mathbf{K}_{>0})$ using this valuation: we say $h \sim h'$ if and only if $\mathrm{Val}_{\mathbf{K}}(\chi(h))=\mathrm{Val}_{\mathbf{K}}(\chi(h'))$ for all characters $\chi$ of $\mathcal{T}$. Then the tropicalisation of the torus $\mathcal{T}$ is defined to be
    $$\mathrm{Trop}(\mathcal{T}):= T(\mathbf{K}_{>0})/\sim.
    $$
This set inherits the structure of an abelian group from the group structure of $\mathcal{\mathcal{T}}(\mathbf{K}_{>0})$, we denote this as addition.

In practical terms, when $\mathcal{T}=(\mathbf{K}^*)^r$, so that $\mathcal{T}(\mathbf{K}_{>0})=(\mathbf{K}_{>0})^r$, the valuation $\mathrm{Val}_{\mathbf{K}}$ on each coordinate gives an identification
    $$\mathrm{Trop}(\mathcal{T}) \to \mathbb{R}^r, \quad
    \left[ \left( c_1(t), \ldots, c_r(t) \right) \right] \mapsto \left( \mathrm{Val}_{\mathbf{K}}\left( c_1(t) \right), \ldots, \mathrm{Val}_{\mathbf{K}}\left( c_r(t) \right) \right).
    $$
To state this in a coordinate-free way, if $\mathcal{T}$ is a torus with cocharacter lattice $N:=X_*(\mathcal{T})$, then $\mathrm{Trop}(\mathcal{T})$ is identified with $N_{\mathbb{R}}=N\otimes \mathbb{R}$, see for example \cite{JuddRietsch2019}. We note that $N_{\mathbb{R}}$ is also identified with the Lie algebra of the torus taken over $\mathbb{R}$, for example $\mathrm{Trop}(T^{\vee})=\mathfrak{h}^*_{\mathbb{R}}$ (c.f. Section \ref{subsec Notation and definitions}).

We make the convention that if the coordinates of our torus $\mathcal{T}\cong(\mathbf{K}^*)^r$ are labelled by Roman letters, then the corresponding coordinates on $\mathrm{Trop}(\mathcal{T}) \cong \mathbb{R}^r$ are labelled by the associated Greek letters. In addition, by $(\mathbf{K}^*)^r_{\boldsymbol{b}}$ we mean $(\mathbf{K}^*)^r$ with coordinates $(b_1, \ldots, b_r)$, and similarly for $\mathbb{R}^N_{\boldsymbol{\zeta}}$, etc.

Suppose that $\mathcal{T}^{(1)}$, $\mathcal{T}^{(2)}$ are two tori over $\mathbf{K}$ and $\psi: \mathcal{T}^{(1)} \dashrightarrow \mathcal{T}^{(2)}$ is a positive rational map. The map
    $$\psi(\mathbf{K}_{>0}) : \mathcal{T}^{(1)}(\mathbf{K}_{>0}) \to \mathcal{T}^{(2)}(\mathbf{K}_{>0})
    $$
is well-defined and compatible with the equivalence relation $\sim$ (using the positivity of the leading terms). The tropicalisation $\mathrm{Trop}(\psi)$ is then defined to be the resulting map
    $$\mathrm{Trop}(\psi) : \mathrm{Trop}(\mathcal{T}^{(1)}) \to \mathrm{Trop}(\mathcal{T}^{(2)})
    $$
between equivalence classes. It is piecewise-linear with respect to the linear structures on the $\mathrm{Trop}(\mathcal{T}^{(i)})$.

In the case of a variety $X$ with a `positive atlas' consisting of torus charts related by positive birational maps (see \cite{FockGoncharov2006}, \cite{BerensteinKazhdan2007}), there is a well-defined positive part $X(\mathbf{K}_{>0})$ and tropical version $\mathrm{Trop}(X)$, which comes with a tropical atlas whose tropical charts $\mathrm{Trop}(X)\to \mathbb{R}^r$ are related by piecewise-linear maps. $\mathrm{Trop}(X)$ in this more general setting is a space with a piecewise-linear structure.

\begin{ex}
Let $\mathcal{T}=(\mathbf{K}^*)^2_{\boldsymbol{b}}$ and consider the following map\footnote{
This is the superpotential for $\mathbb{CP}^2$ (see \cite{EguchiHoriXiong1997}).
}:
    $$\psi:\mathcal{T} \to \mathbf{K}, \quad \psi(b_1,b_2) = b_1 + b_2 + \frac{t^3}{b_1b_2}.
    $$
We may consider $\psi$ as a positive birational map $\mathcal{T} \dashrightarrow \mathbf{K}^*$, and the corresponding map $\mathrm{Trop}(\psi) : \mathrm{Trop}(\mathcal{T}) \to \mathbb{R}$ is given in terms of the natural coordinates $\beta_1, \beta_2$ on $\mathrm{Trop}(\mathcal{T}) \cong \mathbb{R}^2$ by
    $$\mathrm{Trop}(\psi)(\beta_1, \beta_2) = \min\{ \beta_1, \beta_2, 3-\beta_1-\beta_2 \}.
    $$
In practice, we may think of tropicalisation as replacing addition by $\min$ and replacing multiplication by addition.
\end{ex}

\subsection{Constructing polytopes} \label{subsec Constructing polytopes}
\fancyhead[L]{4.2 \ \ Constructing polytopes}

In this section we return to the Landau--Ginzburg model for $G/B$, defined in Section \ref{subsec Landau-Ginzburg models} as the pair $(Z, \mathcal{W})$. Working now over the field of generalised Puiseux series, $\mathbf{K}$, there is a well-defined notion of the totally positive part of $Z(\mathbf{K})$, denoted by $Z(\mathbf{K}_{>0})$. It is defined, for a given torus chart on $Z(\mathbf{K})$, by the subset where the characters take values in $\mathbf{K}_{>0}$. Moreover, each of the string, ideal and quiver torus charts mentioned in previous sections, gives an isomorphism
    \begin{equation} \label{eqn toric chart isom}
    T^{\vee}(\mathbf{K}_{>0}) \times (\mathbf{K}_{>0})^N \xrightarrow{\sim} Z(\mathbf{K}_{>0})
    \end{equation}
where we consider $T^{\vee}(\mathbf{K}_{>0})$ to be the highest weight torus.

We will now restrict our attention to a fibre of the highest weight map (see Section \ref{subsec Landau-Ginzburg models}). To do so, we observe that since a dominant integral weight $\lambda \in X^*(T)^+$ is a cocharacter of $T^{\vee}$, we can define $t^{\lambda} \in T^{\vee}(\mathbf{K}_{>0})$ via the condition $\chi(t^{\lambda}) = t^{\langle \chi, \lambda\rangle}$ for $\chi \in X^*(T^{\vee})$.
Extending 
$\mathbb{R}$-bilinearly to the perfect pairing
    $$ \langle \ , \ \rangle :  X^*(T)_{\mathbb{R}} \times  X^*(T^{\vee})_{\mathbb{R}} \to \mathbb{R},
    $$
we have that $t^{\lambda}$ is well defined for all $\lambda \in X^*(T)_{\mathbb{R}}$, by the same formula. We therefore do not require that $\lambda$ be integral, though we continue to be interested in those $\lambda$ which are dominant, that is $\lambda\in X^*(T)^+_{\mathbb{R}}$.
This allows us, for a dominant weight $\lambda\in X^*(T)^+_{\mathbb{R}}$, to define
    $$Z_{t^\lambda}(\mathbf{K}):= \left\{b \in Z(\mathbf{K}) \ | \ \mathrm{hw}(b)=t^\lambda \right\}.
    $$
We denote the restriction of the superpotential to this fibre by
    $$\mathcal{W}_{t^{\lambda}}: Z_{t^\lambda}(\mathbf{K}) \to \mathbf{K}.
    $$

For a fixed element $t^{\lambda} \in T^{\vee}(\mathbf{K}_{>0})$ of the highest weight torus, the isomorphisms (\ref{eqn toric chart isom}) for the string and ideal toric charts restrict to
    $$\begin{aligned}
    \phi_{t^{\lambda},\boldsymbol{z}} &: (\mathbf{K}_{>0})^N \to Z_{t^{\lambda}}(\mathbf{K}_{>0}), \\
    \phi_{t^{\lambda},\boldsymbol{m}} &: (\mathbf{K}_{>0})^N \to Z_{t^{\lambda}}(\mathbf{K}_{>0}) \\
    \end{aligned}
    $$
respectively, with coordinates denoted by $(z_1, \ldots, z_N)$ and $(m_1, \ldots, m_N)$.
These toric charts may be considered as defining a positive atlas for $Z_{t^{\lambda}}(\mathbf{K}_{>0})$.
We denote the respective compositions of $\phi_{t^{\lambda},\boldsymbol{z}}$ and $\phi_{t^{\lambda},\boldsymbol{m}}$ with the superpotential $\mathcal{W}_{t^{\lambda}}$, by
    $$\begin{aligned}
    \mathcal{W}_{t^{\lambda},\boldsymbol{z}} &: (\mathbf{K}_{>0})^N \to \mathbf{K}_{>0}, \\
    \mathcal{W}_{t^{\lambda},\boldsymbol{m}} &: (\mathbf{K}_{>0})^N \to \mathbf{K}_{>0} \\
    \end{aligned}
    $$
and observe that both are positive rational maps. We denote their tropicalisations respectively by
    $$\begin{aligned}
    \mathrm{Trop}\left(\mathcal{W}_{t^{\lambda},\boldsymbol{z}}\right) &: \mathbb{R}^N_{\boldsymbol{\zeta}} \to \mathbb{R}, \\
    \mathrm{Trop}\left(\mathcal{W}_{t^{\lambda},\boldsymbol{m}}\right) &: \mathbb{R}^N_{\boldsymbol{\mu}} \to \mathbb{R}. \\
    \end{aligned}
    $$

We may associate convex polytopes to our tropical superpotentials, defined as follows:
    $$\begin{aligned}
    \mathcal{P}_{\lambda,\boldsymbol{\zeta}} &:= \left\{ \boldsymbol{\alpha} \in \mathbb{R}^N_{\boldsymbol{\zeta}} \ | \ \mathrm{Trop}\left(\mathcal{W}_{t^{\lambda},\boldsymbol{z}}\right)(\boldsymbol{\alpha}) \geq 0 \right\}, \\
    \mathcal{P}_{\lambda,\boldsymbol{\mu}} &:= \left\{ \boldsymbol{\alpha} \in \mathbb{R}^N_{\boldsymbol{\mu}} \ | \ \mathrm{Trop}\left(\mathcal{W}_{t^{\lambda},\boldsymbol{m}}\right)(\boldsymbol{\alpha}) \geq 0 \right\}. \\
    \end{aligned}
    $$

To motivate the definition of these polytopes, first recall the string toric chart, $\varphi_{\mathbf{i}}$, for an arbitrary reduced expression $\mathbf{i}$, defined by (\ref{eqn string coord chart defn}) in Section \ref{subsec The string coordinates}. Using this we have generalisations
    $$\begin{aligned}
    \phi^{\mathbf{i}}_{t^{\lambda},\boldsymbol{z}} &: (\mathbf{K}_{>0})^N \to Z_{t^{\lambda}}(\mathbf{K}_{>0}), \\
    \mathcal{W}^{\mathbf{i}}_{t^{\lambda},\boldsymbol{z}} &: (\mathbf{K}_{>0})^N \to \mathbf{K}_{>0}
    \end{aligned}
    $$
of the maps above, such that $\phi_{t^{\lambda},\boldsymbol{z}} = \phi^{\mathbf{i}_0}_{t^{\lambda},\boldsymbol{z}}$ and $\mathcal{W}_{t^{\lambda},\boldsymbol{z}}=\mathcal{W}^{\mathbf{i}_0}_{t^{\lambda},\boldsymbol{z}}$. With this notation we have the following theorem:

\begin{thm}[{\cite[Theorem 4.1]{Judd2018}}] \label{thm string polytope from string coords}
Consider a general reduced expression $\mathbf{i}$ for $\bar{w}_0$, and the superpotential for $GL_n/B$ written in the associated string coordinates, namely $\mathcal{W}^{\mathbf{i}}_{t^{\lambda},\boldsymbol{z}}$. Then the polytope
    $$\mathcal{P}^{\mathbf{i}}_{\lambda,\boldsymbol{\zeta}} = \left\{ \boldsymbol{\alpha} \in \mathbb{R}^N_{\boldsymbol{\zeta}} \ \big| \ \mathrm{Trop}\left(\mathcal{W}^{\mathbf{i}}_{t^{\lambda},\boldsymbol{z}}\right)(\boldsymbol{\alpha}) \geq 0 \right\}
    $$
is the string polytope associated to $\mathbf{i}$, $\mathrm{String}_{\mathbf{i}}(\lambda)$.
\end{thm}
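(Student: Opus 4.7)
The plan is to match the tropical inequalities $\mathrm{Trop}(\mathcal{W}^{\mathbf{i}}_{t^\lambda,\boldsymbol{z}})(\boldsymbol{\alpha}) \geq 0$ defining $\mathcal{P}^{\mathbf{i}}_{\lambda, \boldsymbol{\zeta}}$ with a known system of defining inequalities for $\mathrm{String}_{\mathbf{i}}(\lambda)$, following the Berenstein--Kazhdan/Chhaibi strategy via geometric crystals. The starting point is the natural decomposition $\mathcal{W}(u_1 d \bar{w}_0 u_2) = \chi(u_1) + \chi(u_2)$, which splits the superpotential into a piece $\chi(u_1)$ depending only on $\boldsymbol{z}$ and a piece $\chi(u_2)$ carrying all the dependence on $d = t^\lambda$. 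Since tropicalising a sum of positive Laurent monomials converts the positivity condition into a conjunction of affine inequalities (one per monomial), the two summands should produce, respectively, the $\lambda$-independent Berenstein--Zelevinsky string cone inequalities and the $\lambda$-dependent inequalities that cut the cone down to $\mathrm{String}_{\mathbf{i}}(\lambda)$.

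First I would write $\chi(u_1) = \sum_{i=1}^{n-1}(u_1)_{i,i+1}$ as an explicit positive Laurent polynomial in $\boldsymbol{z}$, using the factorisation $u_1 = \mathbf{x}^\vee_{i'_1}(p_1) \cdots \mathbf{x}^\vee_{i'_N}(p_N)$ from Lemma \ref{lem form of u_1 and b factorisations} together with the Chamber Ansatz expressions for the $p_k$. Tropicalisation then yields a $\min$ of $\mathbb{Z}$-linear functions of $\boldsymbol{\zeta}$, each of whose non-negativity is a string cone inequality for the cone $C_{\mathbf{i}}$. Next I would compute $\chi(u_2)$ from the defining condition $u_2^{-1} = [u_1 d \bar{w}_0]_+$: as illustrated in Example \ref{ex dim 3 z coords}, each above-diagonal entry of $u_2$ decomposes as a sum of monomials, each carrying exactly one factor $\alpha_i^\vee(d) = t^{\langle \alpha_i^\vee, \lambda\rangle}$ multiplied by a Laurent monomial in $\boldsymbol{z}$. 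Its tropicalisation therefore gives inequalities of the form $\langle \alpha_i^\vee, \lambda\rangle + (\text{affine in }\boldsymbol{\zeta}) \geq 0$.

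The main obstacle is verifying that the conjunction of these two families of inequalities is precisely $\mathrm{String}_{\mathbf{i}}(\lambda)$, rather than merely some polytope containing or contained in it. Rather than rederive the full combinatorial match, I would invoke the theorem of Berenstein and Kazhdan \cite{BerensteinKazhdan2000,BerensteinKazhdan2007} that the tropicalised decoration on the geometric crystal $B_-^\vee \cap U^\vee \bar{w}_0 U^\vee$ cuts out the Littelmann string cone, and check that our toric chart $\varphi_{\mathbf{i}}$ agrees with their parametrisation up to the monomial coordinate change induced by $\iota$ and $\eta^{w_0,e}$. Because $\iota$ and $\eta^{w_0,e}$ are positive isomorphisms, they preserve the tropical polytope up to a piecewise-linear relabelling of coordinates that acts as the identity on $\boldsymbol{\zeta}$ in this parametrisation. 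The $\lambda$-dependent inequalities from $\chi(u_2)$ then reduce to the fundamental weight inequalities of Littelmann that cut $\mathrm{String}_{\mathbf{i}}(\lambda)$ out of $C_{\mathbf{i}}$, completing the identification.
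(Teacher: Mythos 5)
The paper does not prove this statement: it is recalled verbatim from Judd (\cite[Theorem 4.1]{Judd2018}) and used as a black box, so there is no ``paper's own proof'' to compare against. Your proposal is a reasonable reconstruction of the strategy Judd (following Berenstein--Kazhdan and Chhaibi) actually uses: split $\mathcal{W} = \chi(u_1) + \chi(u_2)$, get the $\lambda$-independent Berenstein--Zelevinsky cone from $\chi(u_1)$ since $u_1$ is built purely from $\boldsymbol{z}$, and get the $\lambda$-dependent truncating inequalities from $\chi(u_2)$ since each superdiagonal entry of $u_2$ carries a single $\alpha_i^\vee(d)$ factor (as the dimension-$3$ example confirms). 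So in outline the approach is correct and the same as the cited source.

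Two caveats about the last paragraph. First, you defer the genuine content of the theorem — that the tropical inequalities from $\chi(u_1)$ are \emph{exactly} the BZ string cone inequalities for $\mathbf{i}$, not merely some cone containing or contained in it — to ``invoking'' Berenstein--Kazhdan, without pinning down which of their statements is being invoked and how the chart $\varphi_{\mathbf{i}}$ matches theirs. That match is where the work is. Second, the claim that $\iota$ and $\eta^{w_0,e}$ ``preserve the tropical polytope up to a piecewise-linear relabelling that acts as the identity on $\boldsymbol{\zeta}$'' is not quite the right thing to say: $\iota \circ \eta^{w_0,e}$ is not a change of coordinates being undone, it is part of the \emph{definition} of $\varphi_{\mathbf{i}}$, i.e.\ part of what makes the parameters $\boldsymbol{z}$ the string coordinates in the first place. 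What needs checking is that this composite takes $\mathbf{x}_{-\mathbf{i}}^\vee(\boldsymbol{z})$ to the element whose string parametrisation (in the Littelmann/Berenstein--Zelevinsky sense) is $\boldsymbol{z}$, not that it ``acts as the identity'' on anything. As it stands the argument would need that identification spelled out before it becomes a proof rather than a plausibility sketch.
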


\begin{rem} \label{rem string and ideal polytopes related for i_0}
The polytopes $\mathcal{P}_{\lambda,\boldsymbol{\zeta}}$ and $\mathcal{P}_{\lambda,\boldsymbol{\mu}}$ are simply linear transformations of each other.

If instead we were to take the toric chart given by the vertex torus, then the resulting polytope would be the respective Gelfand--Tsetlin polytope for $\lambda$. Since the quiver torus is so closely related to the vertex torus, the coordinates of the quiver toric chart provide a bridge between the string and Gelfand--Tsetlin polytopes. In particular, we see that the tropicalisation of the coordinate change from the string to the vertex coordinates defines an affine map between these two polytopes.
\end{rem}

\subsection{Tropical critical points and the weight map} \label{subsec Tropical critical points and the weight map}
\fancyhead[L]{4.3 \ \ Tropical critical points and the weight map}

We recall the critical point conditions of the superpotential given in (\ref{eqn crit pt conditions}) which, working over $\mathbf{K}_{>0}$, define critical points of $\mathcal{W}$ in the fibres $Z_{t^\lambda}(\mathbf{K}_{>0})$. Judd showed in the $SL_n$ case that for each dominant weight $\lambda$, there is in fact only one critical point that lies in $Z_{t^{\lambda}}(\mathbf{K}_{>0})$ (see \cite[Section 5]{Judd2018}). We refer to this unique point as the positive critical point of $\mathcal{W}_{t^{\lambda}}$, denoted $p_{\lambda}$. Judd's statement extends to the $GL_n$ case that we are considering here, with the same proof. This also follows from the more general result of Judd and Rietsch in \cite{JuddRietsch2019}, moreover the assumption on $\lambda$ to be integral can be dropped, therefore we also have a unique positive critical point of $\mathcal{W}_{t^{\lambda}}$ in $Z_{t^{\lambda}}(\mathbf{K}_{>0})$ for any dominant $\lambda \in \mathbb{R}^n$. We use the same notation, $p_{\lambda}$, for this point. In addition we will use the term dominant weight loosely, to mean $\lambda \in \mathbb{R}^n$ such that $\lambda_1 \geq \lambda_2 \geq \cdots \geq \lambda_n$, and say dominant integral weight if, in addition, $\lambda \in \mathbb{Z}^n$.

This critical point $p_{\lambda} \in Z_{t^{\lambda}}(\mathbf{K}_{>0})$ defines a point $p_{\lambda}^{\mathrm{trop}} \in \mathrm{Trop}(Z_{t^{\lambda}})$, called the tropical critical point of $\mathcal{W}_{t^{\lambda}}$. Explicitly, using a positive chart (such as $\phi_{t^{\lambda},\boldsymbol{z}}$ or $\phi_{t^{\lambda},\boldsymbol{m}}$) we apply the valuation $\mathrm{Val}_{\mathbf{K}}$ to every coordinate of $p_{\lambda}$. This gives rise to the corresponding point ($p_{\lambda, \boldsymbol{\zeta}}^{\mathrm{trop}}$ or $p_{\lambda, \boldsymbol{\mu}}^{\mathrm{trop}}$ respectively) in the associated tropical chart $\mathrm{Trop}(Z_{t^{\lambda}}) \to \mathbb{R}^N$.
Moreover, for a choice of positive chart the tropical critical point lies in the interior of the respective superpotential polytope, for example, $p_{\lambda, \boldsymbol{\zeta}}^{\mathrm{trop}} \in \mathcal{P}_{\lambda,\boldsymbol{\zeta}}$ and $p_{\lambda, \boldsymbol{\mu}}^{\mathrm{trop}} \in \mathcal{P}_{\lambda,\boldsymbol{\mu}}$. This is implicit in Judd's work in \cite{Judd2018} but is also true more generally, with an explicit statement given by Judd and Rietsch in \cite[Theorem 1.2]{JuddRietsch2019}.

We also have the tropicalisation of the weight map, $\mathrm{wt}:Z_{t^{\lambda}}\to T^{\vee}$ defined in Section \ref{subsec Landau-Ginzburg models}, which can be interpreted as a kind of projection
    $$\mathrm{Trop}(\mathrm{wt}) : \mathrm{Trop}(Z_{t^{\lambda}}) \to \mathfrak{h}^*_{\mathbb{R}}.
    $$
In particular, in the case of integral $\lambda$, the image under this projection of either superpotential polytope, $\mathcal{P}_{\lambda,\boldsymbol{\zeta}}$ or $\mathcal{P}_{\lambda,\boldsymbol{\mu}}$, is exactly the weight polytope. We therefore generalise the standard definition of the weight polytope to be the projection of the superpotential polytope under $\mathrm{Trop}(\mathrm{wt})$. This extended definition holds for all dominant weights $\lambda$.

In the $SL_n$ case, Judd proved that $\mathrm{Trop}(\mathrm{wt})\left(p_{\lambda}^{\mathrm{trop}}\right)=0$ (see \cite[Theorem 5.1]{Judd2018}). Working more generally in the $GL_n$ case, we obtain that $\mathrm{Trop}(\mathrm{wt})\left(p_{\lambda}^{\mathrm{trop}}\right)$, the image of the tropical critical point under this weight projection, is in fact the centre of mass of the weight polytope:
\begin{cor}[Corollary of Proposition {\ref{prop weight at crit point non trop}}] \label{cor weight matrix at trop crit point}
Given a dominant weight $\lambda$, the weight matrix at the critical point in the fibre over $t^{\lambda} \in T^{\vee}(\mathbf{K}_{>0})$ is an $n\times n$ matrix $\mathrm{diag}\left(t^{\ell}, \ldots, t^{\ell}\right)$ where
    $$\ell = \frac{1}{n}\sum_{i=1}^n \lambda_i.
    $$
\end{cor}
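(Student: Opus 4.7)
The plan is to apply Proposition \ref{prop weight at crit point non trop} directly with the choice $d = t^{\lambda} \in T^{\vee}(\mathbf{K}_{>0})$. Since the statement of that proposition holds over any field (including $\mathbf{K}$), and the positive critical point $p_\lambda$ in the fibre over $t^\lambda$ is a genuine critical point in the sense of Section \ref{subsec Critical points on the superpotential}, the proposition applies and tells us that
\[
\mathrm{wt}(p_\lambda) = \mathrm{diag}(c,\ldots,c)
\]
for some $c \in \mathbf{K}_{>0}$ satisfying $c^n = \prod_{i=1}^n d_i$.

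The only thing left is to identify $c$ explicitly in terms of $\lambda$. By definition of $t^{\lambda}$ via $\chi^{\epsilon_i}(t^\lambda) = t^{\langle \epsilon_i, \lambda\rangle} = t^{\lambda_i}$, the $i$-th diagonal entry of $t^\lambda$ is $t^{\lambda_i}$. Therefore
\[
c^n = \prod_{i=1}^n t^{\lambda_i} = t^{\sum_{i=1}^n \lambda_i},
\]
and since we are working in $\mathbf{K}_{>0}$ (so an $n$-th root is uniquely determined by taking the positive real leading coefficient), we conclude $c = t^{\ell}$ with $\ell = \frac{1}{n} \sum_{i=1}^n \lambda_i$, as claimed.

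The only delicate point is the mild extension from integral to real dominant $\lambda$: the exponent $\ell$ may not be an integer, but $t^\ell$ still makes sense as an element of $\mathbf{K}_{>0}$ because $\mathbf{K}$ is the field of generalised Puiseux series, whose exponent sets are arbitrary monotone sequences in $\mathbb{R}$. So no obstacle arises here, and the corollary follows immediately. There is no significant obstacle in the proof; the work has already been done in establishing Proposition \ref{prop weight at crit point non trop} and in the existence/uniqueness of the positive critical point $p_\lambda$ for general dominant $\lambda \in \mathbb{R}^n$ (as cited from \cite{JuddRietsch2019}).
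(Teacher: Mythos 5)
Your proposal is correct and is exactly the argument the paper intends — the paper marks this corollary with a bare $\qedsymbol$, treating it as an immediate consequence of Proposition~\ref{prop weight at crit point non trop} applied to $d = t^\lambda$, just as you do. Your observation that the unique positive $n$-th root exists in $\mathbf{K}_{>0}$ and that $t^\ell$ makes sense for real $\ell$ is the right point to flag and matches the framework set up in Sections~\ref{subsec The basics of tropicalisation} and \ref{subsec Constructing polytopes}.
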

\hfill\qedsymbol

\begin{ex}[Dimension 3] \label{ex nu'_i coords in dim 3} Recalling Example \ref{ex dim 3 z coords} and working now over the field of generalised Puiseux series, $\mathbf{K}$, we have
    $$\begin{aligned}
    b &= \begin{pmatrix} 1 & z_3 & z_2 \\ & 1 & z_1 +\frac{z_2}{z_3} \\ & & 1\end{pmatrix}
        \begin{pmatrix} d_1& &  \\ & d_2 & \\ & & d_3 \end{pmatrix}
        \begin{pmatrix} & & 1 \\ & -1 & \\ 1 & & \end{pmatrix}
        \begin{pmatrix} 1 & \frac{d_2}{d_3} \frac{z_3}{z_2} & \frac{d_1}{d_3}\frac{1}{z_1z_3} \\ & 1 & \frac{d_1}{d_2}\left(\frac{1}{z_3}+\frac{z_2}{z_1z_3^2}\right) \\ & & 1\end{pmatrix} \\
    &=\begin{pmatrix} d_3 z_2 & & \\ d_3 \left(z_1+\frac{z_2}{z_3} \right) & d_2 \frac{z_1z_3}{z_2} & \\ d_3 & d_2 \frac{z_3}{z_2} & d_1 \frac{1}{z_1z_3} \end{pmatrix}.
    \end{aligned}$$
Additionally, in reference to the previous section, we give the quiver for this coordinate system in Figure \ref{fig quiver n=3 z coords}.
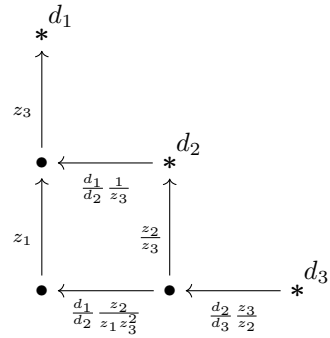
\begin{figure}[ht]
    \centering
    \begin{tikzpicture}[scale=0.85]
        \node (31) at (0,0) {$\bullet$};
        \node (21) at (0,2) {$\bullet$};
        \node (11) at (0,4) {$\boldsymbol{*}$};
            \node at (0.3,4.3) {$d_1$};
        \node (32) at (2,0) {$\bullet$};
        \node (22) at (2,2) {$\boldsymbol{*}$};
            \node at (2.3,2.3) {$d_2$};
        \node (33) at (4,0) {$\boldsymbol{*}$};
            \node at (4.3,0.3) {$d_3$};

        \draw[->] (31) -- (21);
        \draw[->] (21) -- (11);
        \draw[->] (32) -- (22);

        \draw[->] (22) -- (21);
        \draw[->] (33) -- (32);
        \draw[->] (32) -- (31);

        \node at (-0.3,0.8) {\scriptsize{$z_1$}};
        \node at (-0.3,2.8) {\scriptsize{$z_3$}};
        \node at (1.7,0.8) {\scriptsize{$\frac{z_2}{z_3}$}};

        \node at (1,-0.4) {\scriptsize{$\frac{d_1}{d_2}\frac{z_2}{z_1z_3^2}$}};
        \node at (3,-0.4) {\scriptsize{$\frac{d_2}{d_3}\frac{z_3}{z_2}$}};
        \node at (1,1.6) {\scriptsize{$\frac{d_1}{d_2}\frac{1}{z_3}$}};
    \end{tikzpicture}
\caption{Quiver decoration when $n=3$, string coordinates} \label{fig quiver n=3 z coords}
\end{figure}
We recall that we can use the quiver to read off the superpotential. It is the sum of the arrow coordinates and is the same map as given in Example \ref{ex dim 3 z coords}, obtained from $b$ via the formula in that section:
    $$\mathcal{W}(d,\boldsymbol{z}) = z_1 +\frac{z_2}{z_3} + z_3 + \frac{d_1}{d_2}\left(\frac{1}{z_3} + \frac{z_2}{z_1z_3^2}\right) + \frac{d_2}{d_3}\frac{z_3}{z_2}.
    $$

Now in order to obtain a polytope from $\mathcal{W}$, we need to tropicalise. To do so we take our highest weight torus element $d$ to be $t^{\lambda}\in T^{\vee}(\mathbf{K}_{>0})$, with $\lambda=(\lambda_1 \geq \lambda_2 \geq \lambda_3)$, that is $t^{\lambda}=\mathrm{diag}(t^{\lambda_1}, t^{\lambda_2}, t^{\lambda_3})$. Then our tropical superpotential is
    \begin{multline*}
    \mathrm{Trop}\left(\mathcal{W}_{t^{\lambda},\boldsymbol{z}}\right) (\zeta_1,\zeta_2,\zeta_3)
    = \min\left\{\zeta_1, \zeta_2-\zeta_3, \zeta_3, \lambda_1 - \lambda_2 - \zeta_3, \lambda_1 - \lambda_2 - \zeta_1 + \zeta_2 - 2\zeta_3, \right. \\ \left. \lambda_2 - \lambda_3 - \zeta_2 + \zeta_3 \right\}.
    \end{multline*}
The corresponding polytope, $\mathcal{P}_{\lambda,\boldsymbol{\zeta}} = \left\{ \boldsymbol{\zeta} \in \mathbb{R}^N \ | \ \mathrm{Trop}\left(\mathcal{W}_{t^{\lambda},\boldsymbol{z}}\right)(\boldsymbol{\zeta}) \geq 0 \right\}$, is then cut out by the following inequalities:
    $$\begin{aligned} 0 \leq &\zeta_1 \leq \lambda_1 - \lambda_2 + \zeta_2 - 2\zeta_3, \\
    \zeta_3 \leq &\zeta_2 \leq \lambda_2 - \lambda_3 + \zeta_3, \\
    0 \leq &\zeta_3 \leq \lambda_1 - \lambda_2.
    \end{aligned}$$
This polytope is given in Figure \ref{fig polytope n=3 z coords} for $\lambda=(2,1,-1)$.
\begin{figure}[ht]
\centering
\begin{minipage}[b]{0.46\textwidth}
    \centering
    \begin{tikzpicture}[scale=1]
        \draw[black!50, ->] (-0.4,0.2) -- (4,-2); 
            \node[black!50] at (4,-1.72) {\scriptsize{$\zeta_1$}};
        \draw[black!50, ->] (-0.5,-0.1) -- (5,1); 
            \node[black!50] at (4.9,1.21) {\scriptsize{$\zeta_2$}};
        \draw[black!50, ->] (0,-0.4) -- (0,2); 
            \node[black!50] at (-0.28,1.9) {\scriptsize{$\zeta_3$}};

        \draw[thick, dashed, rounded corners=0.5] (0,0) -- (2,0.4) -- (3,1.6) -- (1,1.2) -- cycle;
        \draw[thick, dashed, rounded corners=0.5] (2,0.4) -- (4.4,-0.8) -- (4.6,0.8) -- (3,1.6) --  cycle;
        \draw[thick, rounded corners=0.5] (1,1.2) -- (3,1.6) -- (4.6,0.8) -- cycle;
        \draw[thick, rounded corners=0.5] (0,0) -- (0.8,-0.4) -- (1,1.2) -- cycle;
        \draw[thick, rounded corners=0.5] (0.8,-0.4) -- (4.4,-0.8) -- (4.6,0.8) -- (1,1.2) -- cycle;

        \filldraw[blue] (0,0) circle (1pt);
        \filldraw (2,0.4) circle (1pt);
        \filldraw (3,1.6) circle (1pt);
        \filldraw (1,1.2) circle (1pt);
        \filldraw (0.8,-0.4) circle (1pt);
        \filldraw (2.8,0) circle (1pt);
        \filldraw (3.6,-0.4) circle (1pt);
        \filldraw (4.4,-0.8) circle (1pt);
        \filldraw[red] (4.6,0.8) circle (1pt);

        \node at (0,-1.19) {\tiny$\begin{aligned} &(0,0,0) \\ &(\lambda_1-\lambda_2,0,0) \\ &(0,\lambda_2-\lambda_3,0) \\ &(\lambda_1-\lambda_2,\lambda_2-\lambda_3,0) \\ &(\lambda_2-\lambda_3, \lambda_2-\lambda_3, 0) \\ &(\lambda_1-\lambda_3,\lambda_2-\lambda_3,0) \end{aligned}$};
            \draw[<-, >=stealth, densely dashed, black!50, rounded corners=4] (-0.03,-0.05) -- (-0.14,-0.3) -- (-0.35,-0.4);
            \draw[<-, >=stealth, densely dashed, black!50, rounded corners=5] (0.77,-0.45) -- (0.66,-0.65) -- (0.41,-0.72);
            \draw[<-, >=stealth, densely dashed, black!50, rounded corners=25] (1.98,0.35) -- (1.6,-0.7) -- (0.4,-1.03);
            \draw[<-, >=stealth, densely dashed, black!50, rounded corners=21] (2.77,-0.05) -- (2.2,-1) -- (1.15,-1.35);
            \draw[<-, >=stealth, densely dashed, black!50, rounded corners=26] (3.56,-0.45) -- (2.8,-1.25) -- (1.16,-1.66);
            \draw[<-, >=stealth, densely dashed, black!50, rounded corners=28] (4.34,-0.84) -- (3,-1.65) -- (1.16,-1.97);
        \node at (1.73,2) {\tiny$\begin{aligned} &(\lambda_2-\lambda_3,\lambda_1-\lambda_3,\lambda_1-\lambda_2) \\ &(0,\lambda_1-\lambda_3,\lambda_1-\lambda_2) \\ &(0,\lambda_1-\lambda_2,\lambda_1-\lambda_2) \end{aligned}$};
            \draw[<-, >=stealth, densely dashed, black!50, rounded corners=18] (4.57,0.86) -- (4,1.8) -- (3.25,2.3);
            \draw[<-, >=stealth, densely dashed, black!50, rounded corners=5] (2.97,1.65) -- (2.76,1.9) -- (2.5,2);
            \draw[<-, >=stealth, densely dashed, black!50, rounded corners=7] (0.95,1.22) -- (0.4,1.27) -- (-0.05,1.5) -- (0.18,1.66);
    \end{tikzpicture}
\caption{Superpotential polytope $\mathcal{P}_{\lambda,\boldsymbol{\zeta}}$ for $n=3$ and $\lambda=(2,1,-1)$ (string coordinates)} \label{fig polytope n=3 z coords}
\end{minipage} \hfill
\begin{minipage}[b]{0.47\textwidth}
    \centering
    \begin{tikzpicture}[scale=0.55]
        \draw[black!50, densely dotted] (-3,0) -- (3,0);
        \draw[black!50, densely dotted] (-3,0.87) -- (3,0.87);
        \draw[black!50, densely dotted] (-3,-0.87) -- (3,-0.87);
        \draw[black!50, densely dotted] (-3,1.74) -- (3,1.74);
        \draw[black!50, densely dotted] (-3,-1.74) -- (3,-1.74);
        \draw[black!50, densely dotted] (-3,2.61) -- (3,2.61);
        \draw[black!50, densely dotted] (-3,-2.61) -- (3,-2.61);
        \draw[black!50, densely dotted] (-3,1.74) -- (-2.5,2.61);
        \draw[black!50, densely dotted] (-3,0) -- (-1.5,2.61);
        \draw[black!50, densely dotted] (-3,-1.74) -- (-0.5,2.61);
        \draw[black!50, densely dotted] (-2.5,-2.61) -- (0.5,2.61);
        \draw[black!50, densely dotted] (-1.5,-2.61) -- (1.5,2.61);
        \draw[black!50, densely dotted] (-0.5,-2.61) -- (2.5,2.61);
        \draw[black!50, densely dotted] (0.5,-2.61) -- (3,1.74);
        \draw[black!50, densely dotted] (1.5,-2.61) -- (3,0);
        \draw[black!50, densely dotted] (2.5,-2.61) -- (3,-1.74);
        \draw[black!50, densely dotted] (-3,-1.74) -- (-2.5,-2.61);
        \draw[black!50, densely dotted] (-3,0) -- (-1.5,-2.61);
        \draw[black!50, densely dotted] (-3,1.74) -- (-0.5,-2.61);
        \draw[black!50, densely dotted] (-2.5,2.61) -- (0.5,-2.61);
        \draw[black!50, densely dotted] (-1.5,2.61) -- (1.5,-2.61);
        \draw[black!50, densely dotted] (-0.5,2.61) -- (2.5,-2.61);
        \draw[black!50, densely dotted] (0.5,2.61) -- (3,-1.74);
        \draw[black!50, densely dotted] (1.5,2.61) -- (3,0);
        \draw[black!50, densely dotted] (2.5,2.61) -- (3,1.74);

            \node[black!60] at (0.15,0.09) {\tiny{$0$}};

        \draw (-2.5,-0.87) -- (-2.5,0.87);
        \draw (-2.5,-0.87) -- (2,1.74);
        \draw (-2.5,-0.87) -- (0.5,-2.61);
        \draw (-2.5,0.87) -- (0.5,2.61);
        \draw (-2.5,0.87) -- (2,-1.74);
        \draw (0.5,2.61) -- (0.5,-2.61);
        \draw (0.5,2.61) -- (2,1.74);
        \draw (2,1.74) -- (2,-1.74);
        \draw (0.5,-2.61) -- (2,-1.74);

        \filldraw[blue] (-2.5,-0.87) circle (1.5pt);
        \filldraw (-2.5,0.87) circle (1.5pt);
        \filldraw (-1,0) circle (1.5pt);
        \filldraw (0.5,0.87) circle (1.5pt);
        \filldraw (0.5,-0.87) circle (1.5pt);
        \filldraw (0.5,2.61) circle (1.5pt);
        \filldraw (0.5,-2.61) circle (1.5pt);
        \filldraw (2,-1.74) circle (1.5pt);
        \filldraw[red] (2,1.74) circle (1.5pt);

        \node at (5.25,0.5) {\tiny$(\lambda_2,\lambda_2,\lambda_1-\lambda_2+\lambda_3)$};
            \draw[<-, >=stealth, densely dashed, black!50, rounded corners=13] (0.6,0.8) -- (1.5,0.45) -- (3,0.5);
        \node at (5.25,0) {\tiny$(\lambda_1-\lambda_2+\lambda_3,\lambda_2,\lambda_2)$};
            \draw[<-, >=stealth, densely dashed, black!50, rounded corners=20] (-0.89,0.005) -- (0.5,-0.25) -- (3,0);
        \node at (5.25,-0.5) {\tiny$(\lambda_2,\lambda_1-\lambda_2+\lambda_3,\lambda_2)$};
            \draw[<-, >=stealth, densely dashed, black!50, rounded corners=13] (0.6,-0.8) -- (1.75,-0.45) -- (3,-0.5);

        \node at (4.2,2.5) {\tiny$(\lambda_2,\lambda_1,\lambda_3)$};
            \draw[<-, >=stealth, densely dashed, black!50, rounded corners=12] (0.7,2.55) -- (1.8,2.35) -- (3,2.5);
        \node at (4.2,2) {\tiny$(\lambda_1,\lambda_2,\lambda_3)$};
            \draw[<-, >=stealth, densely dashed, black!50, rounded corners=4] (2.1,1.8) -- (2.7,2) -- (2.9,2);
        \node at (4.2,1.5) {\tiny$(\lambda_3,\lambda_1,\lambda_2)$};
            \draw[<-, >=stealth, densely dashed, black!50, rounded corners=27] (-2.37,0.86) -- (0.5,1.6) -- (3,1.5);
        \node at (4.2,-1.5) {\tiny$(\lambda_3,\lambda_2,\lambda_1)$};
            \draw[<-, >=stealth, densely dashed, black!50, rounded corners=27] (-2.37,-0.86) -- (0.5,-1.6) -- (3,-1.5);
        \node at (4.2,-2) {\tiny$(\lambda_1,\lambda_3,\lambda_2)$};
            \draw[<-, >=stealth, densely dashed, black!50, rounded corners=4] (2.1,-1.8) -- (2.7,-2) -- (2.9,-2);
        \node at (4.2,-2.5) {\tiny$(\lambda_2,\lambda_3,\lambda_1)$};
            \draw[<-, >=stealth, densely dashed, black!50, rounded corners=12] (0.7,-2.55) -- (1.8,-2.35) -- (3,-2.5);
    \end{tikzpicture}
\caption{Projection of superpotential polytope $\mathcal{P}_{\lambda,\boldsymbol{\zeta}}$ onto weight lattice, $\lambda=(2,1,-1)$} \label{fig Projection of superpotential polytope onto weight lattice lambda=(2,1,-1)}
\end{minipage}
\end{figure}

Finally, recalling the weight matrix
    $$\begin{pmatrix} d_3z_2 & & \\ & d_2\frac{z_1z_3}{z_2} & \\ & & d_1\frac{1}{z_1z_3} \end{pmatrix}
    $$
we see that a point $\left(\zeta_1, \zeta_2, \zeta_3\right)$ in the polytope has weight
    $$\left(\lambda_3 + \zeta_2, \lambda_2 + \zeta_1 - \zeta_2 + \zeta_3, \lambda_1 - \zeta_1 - \zeta_3 \right).
    $$
In particular the weight projection, given for $\lambda=(2,1,-1)$ in Figure \ref{fig Projection of superpotential polytope onto weight lattice lambda=(2,1,-1)}, acts on the vertices and distinguished points as described in Table \ref{tab Vertices and distinguished points, and their corresponding weights, string coordinates}.
\begin{table}[ht!]
\centering
{\renewcommand{\arraystretch}{1.5}
\begin{tabular}{c | c}
    Regular vertices & Weight \\
    \hline\hline
    $(0, 0, 0)$
        & $(\lambda_3, \lambda_2, \lambda_1)$ \\ \hline
    $(0, \lambda_2-\lambda_3, 0)$
        & $(\lambda_2, \lambda_3, \lambda_1)$ \\ \hline
    $(\lambda_1-\lambda_2, 0, 0)$
        & $(\lambda_3, \lambda_1, \lambda_2)$ \\ \hline
    $(\lambda_1-\lambda_3, \lambda_2-\lambda_3, 0)$
        & $(\lambda_2, \lambda_1, \lambda_3)$ \\ \hline
    $(0, \lambda_1-\lambda_3, \lambda_1-\lambda_2)$
        & $(\lambda_1, \lambda_3, \lambda_2)$ \\ \hline \vspace{0.3cm}
    $(\lambda_2-\lambda_3, \lambda_1-\lambda_3, \lambda_1-\lambda_2)$
        & $(\lambda_1, \lambda_2, \lambda_3)$ \\
    Irregular vertex and distinguished points & Weight \\
    \hline\hline
    $(0, \lambda_1-\lambda_2, \lambda_1-\lambda_2)$
        & $(\lambda_1-\lambda_2+\lambda_3, \lambda_2, \lambda_2)$ \\ \hline
    $(\lambda_1-\lambda_2, \lambda_2-\lambda_3, 0)$
        & $(\lambda_2, \lambda_1-\lambda_2+\lambda_3, \lambda_2)$ \\ \hline
    $(\lambda_2-\lambda_3, \lambda_2-\lambda_3, 0)$
        & $(\lambda_2, \lambda_2, \lambda_1-\lambda_2+\lambda_3)$ \\ \hline
\end{tabular}}
\caption{Vertices and distinguished points, and their corresponding weights, string coordinates} \label{tab Vertices and distinguished points, and their corresponding weights, string coordinates}
\end{table}

\end{ex}

\bigskip

\begin{ex} \label{ex ideal coords in dim 3 run through}
For comparison, we now run through the previous example using the ideal coordinates instead. We begin by recalling the matrix $b$:
    $$b=\mathbf{y}_{\mathbf{i}_0}^{\vee}\left(\frac{1}{m_1}, \frac{1}{m_2}, \frac{1}{m_3} \right)
        \begin{pmatrix} d_3 m_2m_3 & & \\ & d_2 \frac{m_1}{m_3} & \\ & & d_1 \frac{1}{m_1m_2} \end{pmatrix}.
    $$
In this case, the quiver is given in Figure \ref{fig quiver n=3 m coords}.
\begin{figure}[ht]
\centering
\begin{tikzpicture}[scale=0.85]
    \node (31) at (0,0) {$\bullet$};
    \node (21) at (0,2) {$\bullet$};
    \node (11) at (0,4) {$\boldsymbol{*}$};
        \node at (0.3,4.3) {$d_1$};
    \node (32) at (2,0) {$\bullet$};
    \node (22) at (2,2) {$\boldsymbol{*}$};
        \node at (2.3,2.3) {$d_2$};
    \node (33) at (4,0) {$\boldsymbol{*}$};
        \node at (4.3,0.3) {$d_3$};

    \draw[->] (31) -- (21);
    \draw[->] (21) -- (11);
    \draw[->] (32) -- (22);

    \draw[->] (22) -- (21);
    \draw[->] (33) -- (32);
    \draw[->] (32) -- (31);

    \node at (-0.3,0.8) {\scriptsize{$m_2$}};
    \node at (-0.3,2.8) {\scriptsize{$m_1$}};
    \node at (1.5,0.8) {\scriptsize{$\frac{m_2m_3}{m_1}$}};

    \node at (1,-0.4) {\scriptsize{$\frac{d_1}{d_2}\frac{m_3}{m_1^2}$}};
    \node at (3,-0.4) {\scriptsize{$\frac{d_2}{d_3}\frac{m_1}{m_2m_3}$}};
    \node at (1,1.6) {\scriptsize{$\frac{d_1}{d_2}\frac{1}{m_1}$}};
\end{tikzpicture}
\caption{Quiver decoration when $n=3$, ideal coordinates} \label{fig quiver n=3 m coords}
\end{figure}

The superpotential is given by
    $$\mathcal{W}(d, \boldsymbol{m}) = m_1 + m_2 + \frac{m_2m_3}{m_1} + \frac{d_2}{d_3} \frac{m_1}{m_2m_3} + \frac{d_1}{d_2} \left(\frac{m_3}{{m_1}^2}+\frac{1}{m_1}\right).
    $$
We again take our torus element $d$ to be $t^{\lambda}\in T^{\vee}(\mathbf{K}_{>0})$, with $\lambda=(\lambda_1 \geq \lambda_2 \geq \lambda_3)$. Our tropical superpotential is given by
    \begin{multline*}\mathrm{Trop}\left(\mathcal{W}_{t^{\lambda},\boldsymbol{m}}\right) (\mu_1,\mu_2,\mu_3)
    = \min\left\{\mu_1, \mu_2, -\mu_1+\mu_2+\mu_3, \lambda_2-\lambda_3 +\mu_1-\mu_2-\mu_3, \right. \\ \left. \lambda_1-\lambda_2 -2\mu_1+\mu_3, \lambda_1-\lambda_2 -\mu_1 \right\}.
    \end{multline*}
The corresponding polytope $\mathcal{P}_{\lambda,\boldsymbol{\mu}}$ is cut out by
    $$\begin{aligned} 0 &\leq \mu_1 \leq \lambda_1-\lambda_2, \\
    0 &\leq \mu_2, \\
    0 &\leq  -\mu_1 +\mu_2 +\mu_3 \leq \lambda_2-\lambda_3, \\
    2\mu_1-\mu_3 &\leq \lambda_1-\lambda_2.
    \end{aligned}$$
This polytope is given in Figure \ref{fig polytope n=3 m coords} for $\lambda=(2,1,-1)$.
\begin{figure}[ht]
\centering
\begin{tikzpicture}[scale=1.3]
    \draw[black!50, ->] (-0.8,0.4) -- (2.8,-1.4); 
        \node[black!50] at (2.8,-1.22) {\scriptsize{$\mu_1$}};
    \draw[black!50, ->] (-1,-0.2) -- (4,0.8); 
        \node[black!50] at (4,0.6) {\scriptsize{$\mu_2$}};
    \draw[black!50, ->] (0,-1.3) -- (0,3); 
        \node[black!50] at (-0.2,2.9) {\scriptsize{$\mu_3$}};

    \draw[thick, dashed, rounded corners=0.5] (0,2) -- (3,-0.4);

    \draw[thick, rounded corners=0.5] (0,0) -- (0,2) -- (0.8,2.6) -- (2.8,1) -- (3,-0.4) --  (1,-0.8) -- cycle;
    \draw[thick, rounded corners=0.5] (0.8,0.6) -- (0,0) -- (1,-0.8) -- cycle;
    \draw[thick, rounded corners=0.5] (0.8,0.6) -- (2.8,1) -- (0.8,2.6) -- cycle;

    \filldraw[blue] (0,0) circle (1pt);
    \filldraw (0.8,0.6) circle (0.8pt);
    \filldraw (0,2) circle (0.8pt);
    \filldraw (0.8,2.6) circle (0.8pt);
    \filldraw (1,-0.8) circle (0.8pt);
    \filldraw (1,1.2) circle (0.8pt);
    \filldraw (2,0.4) circle (0.8pt);
    \filldraw (3,-0.4) circle (0.8pt);
    \filldraw[red] (2.8,1) circle (1pt);

    \node at (-2.2,0.9) {\tiny$\begin{aligned} &(\lambda_1-\lambda_2, 0, \lambda_1-\lambda_3) \\ \\  &(0, 0, \lambda_2-\lambda_3) \\ \\ &(\lambda_1-\lambda_2, 0, \lambda_1-\lambda_2) \\ \\ &(0,0,0) \\ \\ &(0, \lambda_1-\lambda_2, -\lambda_1+\lambda_2) \end{aligned}$};
        \draw[<-, >=stealth, densely dashed, black!50, rounded corners=25] (0.73,2.6) -- (-0.41,2.35) -- (-1.36,1.9);
        \draw[<-, >=stealth, densely dashed, black!50, rounded corners=19] (-0.06,2) -- (-0.75,1.9) -- (-1.95,1.4);
        \draw[<-, >=stealth, densely dashed, black!50, rounded corners=26] (0.76,0.62) -- (-0.4,0.9) -- (-1.36,0.9);
        \draw[<-, >=stealth, densely dashed, black!50, rounded corners=27] (-0.1,0.015) -- (-1.4,0.1) -- (-2.5,0.4);
        \draw[<-, >=stealth, densely dashed, black!50, rounded corners=18] (0.95,-0.8) -- (-0.4,-0.5) -- (-1.2,-0.1);
    \node at (4.2,1.9) {\tiny$\begin{aligned} &(0,\lambda_1-\lambda_2, -\lambda_1+2\lambda_2-\lambda_3) \\ \\  &(0, \lambda_2-\lambda_3,0) \\ \\ &\hspace{1.2cm}(\lambda_1-\lambda_2, \lambda_2-\lambda_3, \lambda_1-\lambda_2) \\ \\ &\hspace{1.2cm}(0, \lambda_1-\lambda_3, -\lambda_1+\lambda_2) \end{aligned}$};
        \draw[<-, >=stealth, densely dashed, black!50, rounded corners=24] (1.04,1.25) -- (1.9,2.4) -- (2.55,2.62);
        \draw[<-, >=stealth, densely dashed, black!50, rounded corners=10] (2.02,0.45) -- (2.35,1.9) -- (2.55,2.12);
        \draw[<-, >=stealth, densely dashed, black!50, rounded corners=10] (2.82,1.04) -- (3.2,1.6) -- (3.45,1.65);
        \draw[<-, >=stealth, densely dashed, black!50, rounded corners=10] (3.02,-0.34) -- (3.3,0.9) -- (3.48,1.15);
\end{tikzpicture}
\caption{Superpotential polytope $\mathcal{P}_{\lambda,\boldsymbol{\mu}}$ for $n=3$ and $\lambda=(2,1,-1)$ (ideal coordinates)} \label{fig polytope n=3 m coords}
\end{figure}
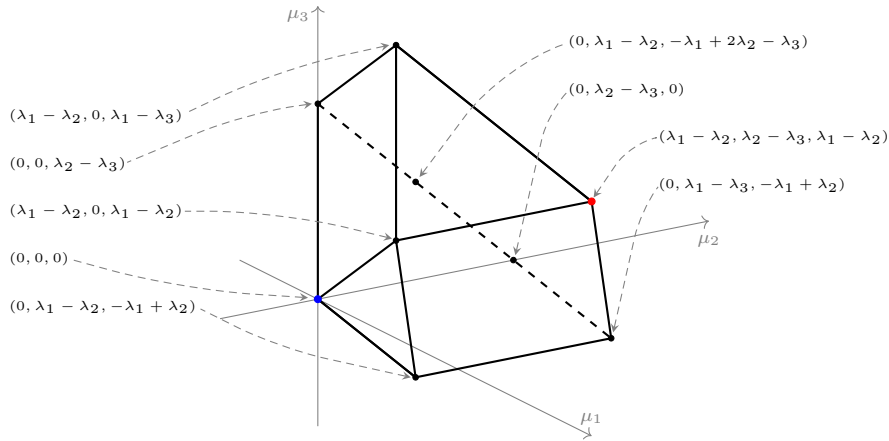

Similar to the previous example, using the weight matrix
    $$\begin{pmatrix} d_3 m_2m_3 & & \\ & d_2 \frac{m_1}{m_3} & \\ & & d_1 \frac{1}{m_1m_2} \end{pmatrix}
    $$
we see that a point $(\mu_1, \mu_2, \mu_3)$ in the polytope has weight
    $$\left(\lambda_3 + \mu_2 + \mu_3, \lambda_2 + \mu_1 - \mu_3, \lambda_1 - \mu_1 - \mu_2 \right).
    $$
The weight projection acts on the vertices and distinguished points as described in Table \ref{tab Vertices and distinguished points, and their corresponding weights, ideal coordinates}.
\begin{table}[ht!]
\centering
{\renewcommand{\arraystretch}{1.5}
\begin{tabular}{c | c}
    Regular vertices & Weight \\
    \hline\hline
    $(0,0,0)$ & $(\lambda_3, \lambda_2, \lambda_1)$ \\ \hline
    $(0, 0, \lambda_2-\lambda_3)$ & $(\lambda_2, \lambda_3, \lambda_1)$ \\ \hline
    $(0, \lambda_1-\lambda_2, -\lambda_1+\lambda_2)$ & $(\lambda_3, \lambda_1, \lambda_2)$ \\ \hline
    $(0, \lambda_1-\lambda_3, -\lambda_1+\lambda_2)$ & $(\lambda_2, \lambda_1, \lambda_3)$ \\ \hline
    $(\lambda_1-\lambda_2, 0, \lambda_1-\lambda_3)$ & $(\lambda_1, \lambda_3, \lambda_2)$ \\ \hline\vspace{0.3cm}
    $(\lambda_1-\lambda_2, \lambda_2-\lambda_3, \lambda_1-\lambda_2)$ & $(\lambda_1, \lambda_2, \lambda_3)$ \\
    Irregular vertex and distinguished points & Weight \\
    \hline\hline
    $(\lambda_1-\lambda_2, 0, \lambda_1-\lambda_2)$ & $(\lambda_1-\lambda_2+\lambda_3, \lambda_2, \lambda_2)$ \\ \hline
    $(0, \lambda_1-\lambda_2, -\lambda_1+2\lambda_2-\lambda_3)$ & $(\lambda_2, \lambda_1-\lambda_2+\lambda_3, \lambda_2)$ \\ \hline
    $(0, \lambda_2-\lambda_3, 0)$ & $(\lambda_2, \lambda_2, \lambda_1-\lambda_2+\lambda_3)$ \\
\end{tabular}}
\caption{Vertices and distinguished points, and their corresponding weights, ideal coordinates} \label{tab Vertices and distinguished points, and their corresponding weights, ideal coordinates}
\end{table}
\end{ex}

\subsection{Ideal fillings} \label{subsec Ideal fillings}

In the previous section we saw that we could tropicalise the critical point to obtain a unique point in the superpotential polytope. In \cite[Proposition 5.6]{Judd2018}, Judd shows that we obtain the same point by first tropicalising the critical point conditions and then looking for solutions of this new system. In the same paper he relates this point to a new combinatorial object he introduces: ideal fillings.

In this section we generalise this relation from the $SL_n$ case to the $GL_n$ case. In order to do so we first extend Judd's definition of ideal fillings to be suitable for working with $GL_n$ and then describe the tropical critical point conditions.

The benefit of considering ideal fillings will be a better description of the tropical critical point, and thus also the preimage of the weight polytope centre of mass under the weight projection.

\fancyhead[L]{4.4 \ \ Ideal fillings}

\begin{defn}
Take a grid of $n(n-1)/2$ boxes in upper triangular form and assign a non-negative real number to each box. This is called a filling and written as $\{n_{ij}\}_{1\leq i<j \leq n}$.

A filling is said to be ideal if $n_{ij}=\max\{n_{i+1, j}, n_{i, j-1} \} $ for $j-i\geq 2$ and is integral if all the $n_{ij}$ are integral.
\end{defn}

For an example with $n=4$, see Figure \ref{Ideal filling for n 4}. We note that an ideal filling is completely determined by the entries in the first diagonal since $n_{ij}= \max_{i\leq k\leq j-1}\{n_{k , k+1}\}$.

\begin{figure}[ht]
\centering
\begin{tikzpicture}
    \draw (0,0) -- (2.4,0);
    \draw (0,-0.8) -- (2.4,-0.8);
    \draw (0.8,-1.6) -- (2.4,-1.6);
    \draw (1.6,-2.4) -- (2.4,-2.4);



    \draw (0,0) -- (0,-0.8);
    \draw (0.8,0) -- (0.8,-1.6);
    \draw (1.6,0) -- (1.6,-2.4);
    \draw (2.4,0) -- (2.4,-2.4);



    \node at (0.4,-0.4) {\small{$n_{12}$}};

    \node at (1.2,-0.4) {\small{$n_{13}$}};
    \node at (1.2,-1.2) {\small{$n_{23}$}};

    \node at (2,-0.4) {\small{$n_{14}$}};
    \node at (2,-1.2) {\small{$n_{24}$}};
    \node at (2,-2) {\small{$n_{34}$}};

\end{tikzpicture}
\caption{Ideal filling for $n=4$} \label{Ideal filling for n 4}
\end{figure}

For a dominant integral weight $\lambda$ of $SL_n$, Judd in \cite{Judd2018} defined an ideal filling for $\lambda$ to be an ideal filling $\{n_{ij}\}_{1\leq i<j \leq n}$ such that $\sum n_{ij}\alpha_{ij} = \lambda$. Unfortunately, although this definition is suitable when working with $SL_n$, it is not sufficient for $GL_n$. We take the following generalisation:

\begin{defn}\label{def ideal filling for lambda GLn version}
We say that $\{n_{ij}\}_{1\leq i<j \leq n}$ is an ideal filling for a dominant weight $\lambda$ of $GL_n$, if it is an ideal filling and $\sum n_{ij}\alpha_{ij} + \ell \sum \epsilon_i  = \lambda$, where $\ell := \frac1n \sum \lambda_i$.
\end{defn}

It is worth noting that this is the same $\ell$ comes up in the weight matrix at a critical point (see Corollary \ref{cor weight matrix at trop crit point}).

Returning our attention to critical points, we recall the conditions given in (\ref{eqn crit pt conditions}), which define critical points in the fibre over some $d\in T^{\vee}$:
    $$ \sum_{a:h(a)=v} r_a = \sum_{a:t(a)=v} r_a \quad \text{for} \ v \in \mathcal{V}^{\bullet}.
    $$
Working over the field of generalised Puiseux series we consider critical points of the superpotential in the fibre over some $t^{\lambda}\in T^{\vee}(\mathbf{K}_{>0})$ for a dominant weight $\lambda$. 
Tropicalising the above expression, and writing $\rho_a:=\mathrm{Val}_{\mathbf{K}}(r_a)$ following our notational convention, we obtain the tropical critical point conditions:
    \begin{equation} \label{eqn trop crit point conds}
    \min_{a:h(a)=v}\{\rho_a\} = \min_{a:t(a)=v}\{\rho_a\} \quad \text{for} \ v \in \mathcal{V}^{\bullet}.
    \end{equation}
For fixed $\lambda$ this system has a unique solution (\cite[Proposition 5.6]{Judd2018}), given by the valuation of the critical point. We will often refer to (\ref{eqn trop crit point conds}) as the tropical critical point conditions for $\lambda$, or with highest weight $\lambda$, to highlight the representation theoretic connection.

With these definitions in mind we now give the main result of this section. It is an adaptation of a proposition given by Judd in \cite{Judd2018}, extending it from the $SL_n$ case to the $GL_n$ case.

\begin{prop}[Generalisation of {\cite[Proposition 6.2]{Judd2018}}] \label{prop GLn version of Jamie's 6.2}
Let $\lambda$ be a dominant weight of $GL_n$ and $\ell := \frac1n \sum \lambda_i$.
For any solution $(\rho_a)$ to the tropical critical point conditions for $\lambda$, (\ref{eqn trop crit point conds}), the formula
    $$n_{ij}= \min_{a:h(a)=v_{ji}}\{\rho_a\} = \min_{a:t(a)=v_{ji}}\{\rho_a\}
    $$
defines an ideal filling for $\lambda$ (see Definition \ref{def ideal filling for lambda GLn version}), and every ideal filling arises in this way. In particular we see that for a given $\lambda$, the ideal filling for $\lambda$ exists and is unique.
\end{prop}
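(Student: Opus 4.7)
The plan is to adapt Judd's proof of the corresponding $SL_n$ result to the $GL_n$ setting, following the same overall strategy but with a modification forced by the non-trivial centre. The proof splits naturally into three parts: (i) verifying that the formula defines an ideal filling, (ii) verifying the weight condition $\sum n_{ij}\alpha_{ij} + \ell\sum \epsilon_i = \lambda$, and (iii) establishing the converse direction together with uniqueness.

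For (i), well-definedness of $n_{ij}$ as the common value of the two minima at $v_{ji}$ is immediate from the tropical critical point condition (\ref{eqn trop crit point conds}). To prove the ideal max relation $n_{ij} = \max\{n_{i+1,j}, n_{i,j-1}\}$ when $j-i \geq 2$, I would work with the three dot vertices $v_{ji}$, $v_{j-1,i}$, $v_{j,i+1}$ together with the tropicalised box relation on the square they form with $v_{j-1,i+1}$. Writing $\alpha = \rho_{b_{j,i+1}}$, $\beta = \rho_{a_{j-1,i}}$, $\gamma = \rho_{a_{j-1,i+1}}$, $\delta = \rho_{b_{j-1,i+1}}$, the box relation tropicalises to $\alpha+\beta = \gamma+\delta$, while the critical point conditions give $n_{i,j-1} = \min\{\beta,\delta\}$, $n_{i+1,j} = \min\{\alpha,\gamma\}$, and both expressions for $n_{ij}$ are bounded above by $\min\{\alpha,\beta\}$. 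A case analysis on which incoming and outgoing arrows at $v_{ji}$ attain the minimum, combined with the linear box relation, should force the equality; the delicate case is when the minimum at $v_{ji}$ is attained by $\rho_{a_{ji}}$ on the incoming side and $\rho_{b_{ji}}$ on the outgoing side, where one must invoke the box relation (possibly propagated through neighbouring squares) to rule out the strict inequality $n_{ij} < \max\{n_{i+1,j}, n_{i,j-1}\}$.

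For (ii), the $\ell$-shift is the essential new feature compared with the $SL_n$ case. Its source is Corollary \ref{cor weight matrix at trop crit point}, which identifies the tropical weight at the critical point as $(\ell,\ldots,\ell)$. Unwinding the tropicalised weight map $\gamma$ from Section \ref{subsec The Givental superpotential} in terms of the arrow coordinates, and applying the tropicalisation of Proposition \ref{prop crit points, sum at vertex is nu_i} together with the identification $n_{ij} = \varpi(v_{ji})$ of the filling entry with the sum-at-vertex, translates the tropical weight condition into the claimed identity $\sum n_{ij}\alpha_{ij} + \ell\sum \epsilon_i = \lambda$ on the level of fillings.

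For (iii), the converse proceeds by reconstructing the $\rho_a$ from a given ideal filling $\{n_{ij}\}$: the max property of the filling, combined with the box relations, uniquely determines the $\rho_a$ up to the freedom parameterised by the centre of $GL_n$, which is then pinned down by the weight condition via Corollary \ref{cor weight matrix at trop crit point}. Existence and uniqueness of the ideal filling for a given $\lambda$ then follow from the uniqueness of the positive critical point $p_\lambda \in Z_{t^\lambda}(\mathbf{K}_{>0})$ cited in Section \ref{subsec Tropical critical points and the weight map}. I expect the main obstacle to lie in the case analysis of step (i), which carries over from Judd's $SL_n$ argument with only notational modification, whereas step (ii) is where the genuinely new ingredient, the tropical weight computation of Corollary \ref{cor weight matrix at trop crit point}, enters.
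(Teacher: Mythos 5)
Your overall strategy matches the paper: both set up a bijective correspondence between solutions of the tropical critical conditions and ideal fillings for $\lambda$, and both correctly identify Corollary \ref{cor weight matrix at trop crit point} (the tropical weight at the critical point equalling $(\ell,\ldots,\ell)$) as the genuinely new ingredient needed to pass from $SL_n$ to $GL_n$. Your part (ii) takes a slightly more external route (through Proposition \ref{prop crit points, sum at vertex is nu_i} and the $\boldsymbol m$-coordinates, then tropicalising), where the paper stays purely combinatorial via a generalisation of Judd's Lemma 6.9 (Lemma \ref{lem Generalisation of Judd's Lemma 6.9}), but both are workable. Your part (iii) is vaguer than the paper's explicit reconstruction $\delta_{v_{ji}}=H^h_{ij}-H^v_{ij}+\ell$ but the strategy is the same.

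The real gap is in part (i). Your plan to establish $n_{ij}=\max\{n_{i+1,j},n_{i,j-1}\}$ by a case analysis local to the single box with corners $v_{ji},v_{j-1,i},v_{j,i+1},v_{j-1,i+1}$ does not close. With your notation, the box relation $\alpha+\beta=\gamma+\delta$ and the critical point conditions give $n_{ij}\le\min\{\alpha,\beta\}$, $n_{i,j-1}=\min\{\beta,\delta\}$, $n_{i+1,j}=\min\{\alpha,\gamma\}$; but the two minima at $v_{ji}$ can each be attained on an arrow \emph{outside} that box (the other incoming arrow $a_{ji}$ from below or the other outgoing arrow $b_{ji}$ to the left), and no amount of local shuffling with $\alpha,\beta,\gamma,\delta$ forces $n_{ij}\ge\max\{n_{i,j-1},n_{i+1,j}\}$. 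This is exactly what the paper's Lemma \ref{lem trop crit pt is an ideal filling} handles: it establishes, by a \emph{global} induction starting from the bottom and left walls of the quiver, the monotonicity $\pi(h(a))\le\pi(t(a))$ for every vertical arrow and $\pi(t(b))\le\pi(h(b))$ for every horizontal arrow, propagating through neighbouring boxes one at a time. Once that monotonicity holds, it immediately gives $n_{i,j-1}\le n_{ij}$ and $n_{i+1,j}\le n_{ij}$, which combined with the local box relation yields the desired equality. You hint at needing propagation, but the propagation is not a patch applied in a special case; it is the structure of the whole argument, and without explicitly formulating and proving the monotonicity inductively, part (i) is incomplete.
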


\begin{proof} Judd, in his proof of \cite[Proposition 6.2]{Judd2018}, defines a pair of maps between two sets which are inverse to each other. We will follow the majority of his proof, in order to form a bijective correspondence:
    $$\begin{Bmatrix} \text{solutions to the tropical critical}\\ \text{conditions (\ref{eqn trop crit point conds}) with highest weight } \lambda \end{Bmatrix} \leftrightarrow \begin{Bmatrix} \text{ideal fillings } \{n_{ij}\}_{1\leq i<j \leq n} \text{ for }\lambda, \\ \text{i.e. such that } \sum n_{ij}\alpha_{ij} + \ell \sum \epsilon_i  = \lambda \end{Bmatrix}
    $$
Consequently, it suffices to simply give an outline of the proof which highlights the necessary generalisations. The exception to this is a new proof that the filling we construct (from the solutions to the tropical critical point conditions) is indeed an ideal filling.

\medskip

\noindent
\textbf{Map from ideal fillings for $\lambda$ to solutions to the tropical critical conditions.}

Let $\{n_{ij}\}_{1\leq i<j\leq n}$ be an ideal filling for $\lambda$. For each pair $(i,j)$ such that $1\leq i\leq j\leq n$, we define two sums of entries of the ideal filling; roughly speaking, those $n_{il}$ strictly to the right of $n_{ij}$ and those $n_{lj}$ strictly above $n_{ij}$ respectively:
    \begin{equation} \label{eqn Hh Hv definitions}
    H^h_{ij}:= \sum_{l>j}n_{il}, \quad H^v_{ij}:= \sum_{l<i}n_{lj}.
    \end{equation}
Making the first adaption of Judd's proof; for $\ell = \frac{1}{n} \sum \lambda_i$, we define a map from ideal fillings for $\lambda$ to tropical vertex coordinates of the quiver as follows:
    $$\delta_{v_{ji}}:=H^h_{ij}-H^v_{ij}+\ell.
    $$
We need to show that this defines a solution to the tropical critical conditions for $\lambda$.

Of note, the addition of $\ell$ in the above definition doesn't affect the tropical arrow coordinates. Indeed, computing the corresponding vertical arrow coordinates for $1\leq i \leq j < n$, and the horizontal arrow coordinates for $1\leq i < j \leq n$, we respectively obtain
    $$\delta_{v_{ji}}-\delta_{v_{j+1,i}}=H^v_{i+1,j+1} - H^v_{ij}
    \quad \text{and} \quad
    \delta_{v_{ji}}-\delta_{v_{j,i+1}}=H^h_{i,j-1} - H^h_{i+1,j}.
    $$
Both of these are $\geq 0$, so it follows that the point lies in $\left\{\mathrm{Trop}(\mathcal{W}_{t^{\lambda}})\geq 0\right\}$. Additionally, we see it will lie in the fibre over $\lambda$ as follows: for $\epsilon^{\vee}_k \in X^*(T^{\vee})$ we have
    $$\begin{aligned}
    \lambda_k = \left\langle \lambda, \epsilon^{\vee}_k \right\rangle
        &= \bigg\langle \sum_{1\leq i < j \leq n} n_{ij}(\epsilon_i-\epsilon_j) + \ell \sum_{1\leq i \leq n}\epsilon_i , \ \epsilon^{\vee}_k \ \bigg\rangle & \text{since } \{n_{ij}\} \text{ is an ideal filling for } \lambda \\
    &= H^h_{kk}-H^v_{kk}+\ell = \delta_{v_{kk}}.
    \end{aligned}
    $$

It remains to show that the point we have defined satisfies the tropical critical point conditions. Following Judd, we require a lemma:
\begin{lem}[{\cite[Lemma 6.7]{Judd2018}}] \label{lem Hh is Hh or Hv is Hv}
For $1 \leq i < j \leq n$, write $\bar{H}^h_{ij}:= H^h_{ij}+n_{ij}$, $\bar{H}^v_{ij}:= H^v_{ij} + n_{ij}$.
Then if $j-i\geq 1$, either
    $$\bar{H}^v_{i,j+1}=\bar{H}^v_{ij} \quad \text{or} \quad \bar{H}^h_{i,j+1}=\bar{H}^h_{i+1,j+1}
    $$
or both are true. Hence we have $\min\left\{ \bar{H}^v_{i,j+1} - \bar{H}^v_{ij}, \bar{H}^h_{i,j+1}-\bar{H}^h_{i+1,j+1} \right\} = 0$.
\end{lem}
We may use this lemma directly and so omit the proof. Let $v_{ji} \in \mathcal{V}^{\bullet}$ with $1<i<j<n$, that is, $v_{ji}$ doesn't lie on either wall of the quiver. Then the minimum over incoming arrows at $v_{ji}$ is
    $$\min\left\{ H^v_{i+1,j+1} - H^v_{ij}, H^h_{i,j-1}-H^h_{i+1,j} \right\} = n_{ij} + \min\left\{ \bar{H}^v_{i,j+1} - \bar{H}^v_{ij}, \bar{H}^h_{i,j+1}-\bar{H}^h_{i+1,j+1} \right\} = n_{ij}.
    $$
Similarly the minimum over outgoing arrows at $v_{ji}$ is
    $$\min\left\{ H^v_{i+1,j} - H^v_{i,j-1}, H^h_{i-1,j-1}-H^h_{ij} \right\} = n_{ij} + \min\left\{ \bar{H}^v_{i-1,j} - \bar{H}^v_{i-1,j-1}, \bar{H}^h_{i-1,j}-\bar{H}^h_{ij} \right\} = n_{ij}.
    $$
Thus the tropical critical point conditions are satisfied in this case. Finally, if $v_{ij}$ lies on the left wall there is only one outgoing arrow, $H^v_{2,j}-H^v_{1,j-1}=n_{1j}$, and if it lies on the bottom wall there is only one incoming arrow, $H^h_{i,n-1}-H^h_{i+1,n}=n_{in}$. Thus our point is indeed a tropical critical point for $\lambda$, as required.

\medskip

\noindent
\textbf{Map from solutions to the tropical critical conditions to ideal fillings.}

Suppose $(\rho_a)_{\ \in \mathcal{A}}$ is a solution to the tropical critical conditions for $\lambda$. Then for $v \in \mathcal{V}^{\bullet}$ we define the map
    $$\pi : \mathcal{V}^{\bullet} \to \mathbb{R}, \quad \pi(v):=\min_{a:h(a)=v}\{\rho_a\}
    .
    $$
We will first deviate from Judd's work to give an alternative proof that $\{ n_{ij}=\pi(v_{ij}) \}_{1\leq i < j \leq n}$ defines an ideal filling. Then we follow his proof to see that this is an ideal filling for $\lambda$.

\begin{lem} \label{lem trop crit pt is an ideal filling}
At a tropical critical point, the filling $\{n_{ij} = \pi(v_{ji})\}$ is an ideal filling. That is, if we have the following sub-diagram
\begin{center}
\begin{tikzpicture}
    \node (31) at (0,0) {$\bullet$};
        \node at (-0.2,-0.2) {\scriptsize{$v$}};
    \node (21) at (0,1) {$\bullet$};
        \node at (-0.2,1.2) {\scriptsize{$w$}};
    \node (32) at (1,0) {$\bullet$};
        \node at (1.2,-0.2) {\scriptsize{$u$}};

    \draw[->] (31) -- (21);
    \draw[->] (32) -- (31);

    \node at (-0.2,0.5) {\scriptsize{$a$}};
    \node at (0.5,-0.2) {\scriptsize{$b$}};
\end{tikzpicture}
\end{center}
then we must have $\pi(v)=\max\{ \pi(u), \pi(w)\}$.
\end{lem}

\begin{proof}
We show by induction that $\pi(t(b))\leq \pi(h(b))$ for each horizontal arrow $b$ and we have $\pi(h(a))\leq \pi(t(a))$ for each vertical arrow $a$.

First we consider the arrows in the bottom and left hand walls, described in Figure \ref{fig Arrows in the bottom and left hand walls of the quiver}.
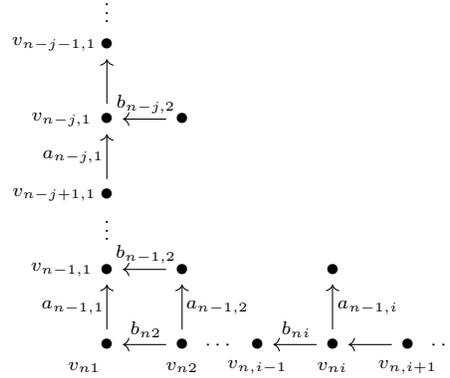
\begin{figure}[ht!]
\centering
\begin{tikzpicture}
    \node (n1) at (0,0) {$\bullet$};
        \node at (-0.3,-0.3) {\scriptsize{$v_{n1}$}};
    \node (n2) at (1,0) {$\bullet$};
        \node at (1,-0.3) {\scriptsize{$v_{n2}$}};
            \node at (1.5,0) {\scriptsize{$\cdots$}};
    \node (ni-1) at (2,0) {$\bullet$};
        \node at (2,-0.3) {\scriptsize{$v_{n,i-1}$}};
    \node (ni) at (3,0) {$\bullet$};
        \node at (3,-0.3) {\scriptsize{$v_{ni}$}};
    \node (ni+1) at (4,0) {$\bullet$};
        \node at (4,-0.3) {\scriptsize{$v_{n,i+1}$}};
        \node at (4.5,0) {\scriptsize{$\cdots$}};

    \node (n-11) at (0,1) {$\bullet$};
        \node at (-0.6,1) {\scriptsize{$v_{n-1,1}$}};
        \node at (0,1.6) {\scriptsize{$\vdots$}};
    \node (n-j+11) at (0,2) {$\bullet$};
        \node at (-0.7,2) {\scriptsize{$v_{n-j+1,1}$}};
    \node (n-j1) at (0,3) {$\bullet$};
        \node at (-0.6,3) {\scriptsize{$v_{n-j,1}$}};
    \node (n-j-11) at (0,4) {$\bullet$};
        \node at (-0.7,4) {\scriptsize{$v_{n-j-1,1}$}};
        \node at (0,4.5) {\scriptsize{$\vdots$}};

    \node (n-12) at (1,1) {$\bullet$};
    \node (n-1i) at (3,1) {$\bullet$};

    \node (n-j2) at (1,3) {$\bullet$};

    \draw[->] (ni+1) -- (ni);
    \draw[->] (ni) -- (ni-1);
    \draw[->] (n2) -- (n1);

    \draw[->] (n-12) -- (n-11);
    \draw[->] (n-j2) -- (n-j1);

    \draw[->] (n1) -- (n-11);
    \draw[->] (n-j+11) -- (n-j1);
    \draw[->] (n-j1) -- (n-j-11);

    \draw[->] (n2) -- (n-12);
    \draw[->] (ni) -- (n-1i);

    \node at (-0.45,0.5) {\scriptsize{$a_{n-1,1}$}};
    \node at (1.47,0.5) {\scriptsize{$a_{n-1,2}$}};
    \node at (3.47,0.5) {\scriptsize{$a_{n-1,i}$}};

    \node at (-0.45,2.5) {\scriptsize{$a_{n-j,1}$}};

    \node at (0.52,0.2) {\scriptsize{$b_{n2}$}};
    \node at (0.52,1.2) {\scriptsize{$b_{n-1,2}$}};
    \node at (0.52,3.2) {\scriptsize{$b_{n-j,2}$}};

    \node at (2.52,0.2) {\scriptsize{$b_{ni}$}};
\end{tikzpicture}
\caption{Arrows in the bottom and left hand walls of the quiver} \label{fig Arrows in the bottom and left hand walls of the quiver}
\end{figure}

We recall that the tropical critical point conditions hold, namely $\min_{a:h(a)=v}\{\rho_a\} = \min_{a:t(a)=v}\{\rho_a\}$ for all dot vertices $v \in \mathcal{V}^{\bullet}$. Then by considering the outgoing arrows at $v_{ni}$ for $i=2, \ldots, n-1$, we see that
    $$\pi(v_{ni}) = \min\{ \rho_{a_{n-1,i}}, \rho_{b_{ni}} \} \leq \rho_{b_{ni}} = \pi(v_{n,i-1}).
    $$
Similarly considering the incoming arrows at $v_{n-j,1}$ for $j=1, \ldots, n-2$, we have
    $$\pi(v_{n-j,1}) = \min\{ \rho_{a_{n-j,1}}, \rho_{b_{n-j,2}} \} \leq \rho_{a_{n-j,1}} = \pi(v_{n-j+1,1}).
    $$

For the inductive step we will show that if we have a sub-diagram like the one in Figure \ref{fig Subquiver for the proof of Lem trop crit pt is an ideal filling},
such that $\pi(u)\leq\pi(v)$ and $\pi(w) \leq \pi(v)$, then $\pi(x)\leq\pi(w)$ and $\pi(x) \leq \pi(u)$.
\begin{figure}[hb!]
\centering
\begin{tikzpicture}
    \node (31) at (0,0) {$\bullet$};
        \node at (-0.2,-0.2) {\scriptsize{$v$}};
    \node (21) at (0,1) {$\bullet$};
        \node at (-0.2,1.2) {\scriptsize{$w$}};
    \node (32) at (1,0) {$\bullet$};
        \node at (1.2,-0.2) {\scriptsize{$u$}};
    \node (22) at (1,1) {$\bullet$};
        \node at (1.2,1.2) {\scriptsize{$x$}};

    \draw[->] (31) -- (21);
    \draw[->] (32) -- (22);

    \draw[->] (22) -- (21);
    \draw[->] (32) -- (31);

    \node at (-0.2,0.5) {\scriptsize{$a$}};
    \node at (1.2,0.5) {\scriptsize{$d$}};

    \node at (0.5,-0.2) {\scriptsize{$b$}};
    \node at (0.5,1.2) {\scriptsize{$c$}};
\end{tikzpicture}
\caption{Subquiver for the proof of Lemma \ref{lem trop crit pt is an ideal filling}} \label{fig Subquiver for the proof of Lem trop crit pt is an ideal filling}
\end{figure}
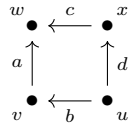

If $\rho_a \leq \rho_c$ then by the tropical box relation $\rho_a+\rho_b=\rho_c+\rho_d$ we have $\rho_d \leq \rho_b$. This means that $\pi(w)=\rho_a$ and $\pi(u)=\rho_d$, so
    \begin{equation} \label{eqn a leq c trop crit pt is ideal filling} \pi(x) \leq \rho_d = \pi(u) \leq \pi(v) \leq \rho_a = \pi(w).
    \end{equation}
Similarly if $\rho_a\geq \rho_c$ then we have $\rho_d \geq \rho_b$. This means that $\pi(w)=\rho_c$ and $\pi(u)=\rho_b$, so
    \begin{equation} \label{eqn a geq c trop crit pt is ideal filling} \pi(x) \leq \rho_c = \pi(w) \leq \pi(v) \leq \rho_b = \pi(u).
    \end{equation}
In both cases we have $\pi(x)\leq\pi(w)$ and $\pi(x) \leq \pi(u)$ as desired.

Finally we note that $\rho_a\leq \rho_c$ implies $\pi(v)=\pi(w) \geq \pi(u)$ since by (\ref{eqn a leq c trop crit pt is ideal filling}) and the inductive assumption we have
    $$ \pi(u)\leq \pi(v) \leq \pi(w) \leq \pi(v).
    $$
Similarly if $\rho_a \geq \rho_c$ then $\pi(v)=\pi(u) \geq \pi(w)$ as a consequence of (\ref{eqn a geq c trop crit pt is ideal filling}).
This completes the proof of Lemma \ref{lem trop crit pt is an ideal filling}.
\end{proof}

Now, again following Judd, we will show that $\{ n_{ij}=\pi(v_{ij}) \}_{1\leq i < j \leq n}$ is an ideal filling for $\lambda$. To do this, we need the vertex coordinates of the quiver at the tropical critical point, which we denote by $(\delta_v)_{v \in \mathcal{V}}$. In particular we notice that at the bottom left vertex we have
    \begin{align*}
    \delta_{v_{n1}}
    &= \mathrm{Val}_{\mathbf{K}}\left(\frac{\Xi_n}{\Xi_{n+1}}\right) &\text{by definition of } \Xi_i \text{ given in (\ref{eqn xi for wt map defn}), and noting } \Xi_{n+1}=1 \\
    &= \mathrm{Val}_{\mathbf{K}}(t_n) & \text{recalling the $t_i$ defined in (\ref{eqn defn gamma and t})} \\
    &= \ell & \text{by Corollary \ref{cor weight matrix at trop crit point}.}
    \end{align*}
We require a slight generalisation here:
\begin{lem}[Generalisation of {\cite[Lemma 6.9]{Judd2018}}] \label{lem Generalisation of Judd's Lemma 6.9}
For $v \in \mathcal{V}$ we write $\mathrm{bel}(v)$ and $\mathrm{lef}(v)$ for the sets of vertices directly below and directly to the left of $v$ respectively. Then at a tropical critical point we have
    $$\delta_v=\sum_{w \in \mathrm{bel}(v)} \pi(w) - \sum_{w \in \mathrm{lef}(v)} \pi(w) +\ell.
    $$
\end{lem}

\begin{proof}
The proof is by induction on the horizontal and vertical arrows. The initial case is the bottom left vertex, which we have already seen to take the value $\ell$ at critical points, as required. Since the horizontal and vertical inductive steps are similar it suffices to only consider the horizontal case; if we take the subquiver in Figure \ref{fig one arrow quiver}
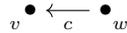
\begin{figure}[hb!]
\centering
\begin{tikzpicture}
    \node (31) at (0,0) {$\bullet$};
        \node at (-0.2,-0.2) {\scriptsize{$v$}};
    \node (32) at (1,0) {$\bullet$};
        \node at (1.2,-0.2) {\scriptsize{$w$}};

    \draw[->] (32) -- (31);

    \node at (0.5,-0.2) {\scriptsize{$c$}};
\end{tikzpicture}
\caption{Subquiver for induction, horizontal case} \label{fig one arrow quiver}
\end{figure}
such that the relation in the statement of the lemma holds for $v$, then we need to show it also holds for $w$. To do this, we consider the subquiver in Figure \ref{fig Subquiver for the proof of Lem Generalisation of Judd's Lemma 6.9}.
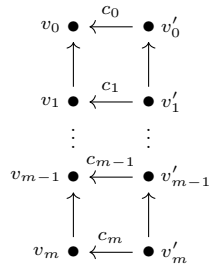
\begin{figure}[hb!]
\centering
\begin{tikzpicture}
    \node (n-j1) at (0,3) {$\bullet$};
        \node at (-0.3,3) {\scriptsize{$v_0$}};
    \node (n-j+11) at (0,2) {$\bullet$};
        \node at (-0.3,2) {\scriptsize{$v_1$}};
        \node at (0,1.6) {\scriptsize{$\vdots$}};
    \node (n-11) at (0,1) {$\bullet$};
        \node at (-0.5,1) {\scriptsize{$v_{m-1}$}};
    \node (n1) at (0,0) {$\bullet$};
        \node at (-0.35,0) {\scriptsize{$v_m$}};

    \node (n-j2) at (1,3) {$\bullet$};
        \node at (1.3,3) {\scriptsize{$v'_0$}};
    \node (n-j+12) at (1,2) {$\bullet$};
        \node at (1.3,2) {\scriptsize{$v'_1$}};
    \node at (1,1.6) {\scriptsize{$\vdots$}};
    \node (n-12) at (1,1) {$\bullet$};
        \node at (1.5,1) {\scriptsize{$v'_{m-1}$}};
    \node (n2) at (1,0) {$\bullet$};
        \node at (1.35,0) {\scriptsize{$v'_m$}};

    \draw[->] (n-j2) -- (n-j1);
    \draw[->] (n-j+12) -- (n-j+11);
    \draw[->] (n-12) -- (n-11);
    \draw[->] (n2) -- (n1);

    \draw[->] (n-j+11) -- (n-j1);
    \draw[->] (n1) -- (n-11);

    \draw[->] (n-j+12) -- (n-j2);
    \draw[->] (n2) -- (n-12);

    \node at (0.5,3.2) {\scriptsize{$c_0$}};
    \node at (0.5,2.2) {\scriptsize{$c_1$}};
    \node at (0.5,1.2) {\scriptsize{$c_{m-1}$}};
    \node at (0.5,0.2) {\scriptsize{$c_m$}};
\end{tikzpicture}
\caption{Subquiver for the proof of Lemma \ref{lem Generalisation of Judd's Lemma 6.9}} \label{fig Subquiver for the proof of Lem Generalisation of Judd's Lemma 6.9}
\end{figure}

We suppose this is part of the full diagram which depicts a solution to the tropical critical conditions, and that $c_m$ lies on the bottom wall of this full quiver. Then, following Judd, we claim:
    \begin{equation}\label{eqn sum pi quiver vertices claim}
    \pi(v_0)+\pi(v_1)+ \cdots +\pi(v_m) = \rho_{c_0} +\pi(v'_1)+ \cdots +\pi(v'_m).
    \end{equation}
This can be proved by induction and, since it is unaffected by our addition of $\ell$ in the statement of Lemma \ref{lem Generalisation of Judd's Lemma 6.9}, we refer the reader to \cite{Judd2018} for the details.

Now, returning to Figure \ref{fig one arrow quiver}, we use the identity (\ref{eqn sum pi quiver vertices claim}) with $v_0=v$, $c_0=c$ and $v_0'=w$ to see that we have
    $$\pi(v)+ \sum_{u \in \mathrm{bel}(v)} \pi(u) = \rho_c + \sum_{u \in \mathrm{bel}(w)} \pi(u).
    $$
Using this we obtain the desired result, namely that the relation in the lemma holds for $w$:
    \begin{align*}
    \sum_{u \in \mathrm{bel}(w)} \pi(u) - \sum_{u \in \mathrm{lef}(w)} \pi(u) +\ell
    &= -\rho_c + \sum_{u \in \mathrm{bel}(v)} \pi(u) +\pi(v) - \sum_{u \in \mathrm{lef}(w)} \pi(u) +\ell \\
    &= -\rho_c + \sum_{u \in \mathrm{bel}(v)} \pi(u) - \sum_{u \in \mathrm{lef}(v)} \pi(u) +\ell \\
    &= -\rho_c + \delta_v \\
    &= \delta_w.
    \end{align*}
\end{proof}

Using this lemma we see that at a tropical critical point, the ideal filling $\{ n_{ij}=\pi(v_{ij}) \}$ is an ideal filling for $\lambda$:
    $$\begin{aligned}
    \bigg\langle \sum_{1\leq i < j \leq n} n_{ij}\alpha_{ij} +\ell \sum_{1\leq i \leq n}\epsilon_i, \ \epsilon^{\vee}_k \bigg\rangle
    &=  \sum_{k < j \leq n} n_{kj} - \sum_{1\leq i < k} n_{ik} +\ell \\
    &= \sum_{k < l \leq n} \pi(v_{lk}) - \sum_{1\leq l < k} \pi(v_{kl}) + \ell \\
    &= \sum_{w \in \mathrm{bel}(v_{kk})} \pi(w) - \sum_{w \in \mathrm{lef}(v_{kk})} \pi(w) + \ell \\
    &= \delta_{v_{kk}} = \lambda_k.
    \end{aligned}$$

To complete the proof of Proposition \ref{prop GLn version of Jamie's 6.2}, we note that the maps defined above are inverse to each other by construction.
\end{proof}

\begin{rem}
The correspondence in the above proposition (\ref{prop GLn version of Jamie's 6.2}) preserves integrality if $\lambda$ and $\ell$ are both integral.
This proposition also implies that there is a unique ideal filling for $\lambda$ due to the uniqueness of the tropical critical point.
\end{rem}

\begin{cor} \label{cor nu'_i coords ideal filling for lambda}
For a dominant weight $\lambda$, let the positive critical point $p_{\lambda} \in Z_{t^{\lambda}}(\mathbf{K}_{>0})$ of $\mathcal{W}_{t^{\lambda}}$ be written in the ideal coordinates $(m_1, \ldots, m_N)$. Then the valuations $\mu_k=\mathrm{Val}_{\mathbf{K}}(m_k)$ defining the tropical critical point, $p_{\lambda, \boldsymbol{\mu}}^{\mathrm{trop}}$, give rise to an ideal filling $\left\{n_{ij}= \mu_{s_i+j-i} \right\}_{1\leq i < j \leq n}$ for $\lambda$ (where we recall the definition of $s_i$ given in Section \ref{sec The ideal coordinates}).
\end{cor}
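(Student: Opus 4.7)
The plan is to deduce the corollary by combining Proposition \ref{prop crit points, sum at vertex is nu_i} with Proposition \ref{prop GLn version of Jamie's 6.2}, using the elementary fact that over $\mathbf{K}_{>0}$ the valuation of a sum of positive Puiseux series equals the minimum of the valuations (no cancellation of leading terms can occur).

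First, I would note that Proposition \ref{prop crit points, sum at vertex is nu_i} applies to the positive critical point $p_{\lambda}\in Z_{t^\lambda}(\mathbf{K}_{>0})$, since its proof only manipulates the critical point equations and the box relations of the quiver, which are valid identities in $\mathbf{K}_{>0}$. For each dot vertex $v_{ji}$ of the quiver, with $1\le i<j\le n$, this yields
\[
\sum_{a:t(a)=v_{ji}} r_a \;=\; m_{s_i+j-i}
\]
in $\mathbf{K}_{>0}$. Applying $\mathrm{Val}_{\mathbf{K}}$ to both sides, and using positivity of the summands on the left to exchange sum and minimum under the valuation, I obtain
\[
\min_{a:t(a)=v_{ji}} \rho_a \;=\; \mu_{s_i+j-i},
\]
where as usual $\rho_a=\mathrm{Val}_{\mathbf{K}}(r_a)$ and $\mu_k=\mathrm{Val}_{\mathbf{K}}(m_k)$.

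Next I would invoke Proposition \ref{prop GLn version of Jamie's 6.2}. The valuations $(\rho_a)$ of the arrow coordinates at $p_\lambda$ form a solution to the tropical critical point conditions \eqref{eqn trop crit point conds} for $\lambda$ (this is the standard fact that tropicalising the critical point conditions produces conditions satisfied by the valuation of the positive critical point, which is implicit in the discussion preceding Proposition \ref{prop GLn version of Jamie's 6.2} and in Judd's work). Therefore the formula
\[
n_{ij} \;=\; \min_{a:t(a)=v_{ji}} \rho_a
\]
defines an ideal filling for $\lambda$ in the sense of Definition \ref{def ideal filling for lambda GLn version}. Combining this with the identification of the minimum above gives $n_{ij}=\mu_{s_i+j-i}$, which is exactly the desired statement.

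I expect no serious obstacle: the main content has already been done in Propositions \ref{prop crit points, sum at vertex is nu_i} and \ref{prop GLn version of Jamie's 6.2}, and the corollary is essentially a bookkeeping step that matches the quiver indexing $v_{ji}$ (with $j>i$) against the upper-triangular ideal-filling indexing $n_{ij}$ (with $i<j$), together with the tropicalisation identity. The only point to double-check carefully is the index matching $(i,k)\leftrightarrow (j,i)$ between Proposition \ref{prop crit points, sum at vertex is nu_i} and the labelling of boxes in the filling; this is precisely what produces the shift $s_i+j-i$ appearing in the statement.
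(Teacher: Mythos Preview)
Your proposal is correct and follows essentially the same route as the paper: apply Proposition~\ref{prop crit points, sum at vertex is nu_i} to obtain $\varpi(v_{ji})=m_{s_i+j-i}$, take valuations (using positivity so that $\mathrm{Val}_{\mathbf{K}}$ of the sum is the minimum of the $\rho_a$), and then invoke Proposition~\ref{prop GLn version of Jamie's 6.2} to conclude that $n_{ij}=\mu_{s_i+j-i}$ is an ideal filling for $\lambda$. The paper's proof is terser but the content is identical.
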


\begin{proof}
By Proposition \ref{prop crit points, sum at vertex is nu_i}, at a critical point we have
    $$ \varpi(v_{ji}) = \sum_{a:t(a)=v_{ji}} r_a = m_{s_i +j-i}.$$
Thus by Proposition \ref{prop GLn version of Jamie's 6.2} we see that
    $$n_{ij} = \pi(v_{ji}) =\mathrm{Val}_{\mathbf{K}}(\varpi(v_{ji})) = \mathrm{Val}_{\mathbf{K}}(m_{s_i +j-i}) = \mu_{s_i+j-i}
    $$
defines an ideal filling for $\lambda$.
\end{proof}

\begin{ex} \label{ex trop crit pt from ideal filling dim 3}

By Proposition \ref{prop GLn version of Jamie's 6.2} we have a one to one correspondence between solutions to the tropical critical conditions and ideal fillings for $\lambda = (\lambda_1\geq\lambda_2\geq\lambda_3)$. In this example we show that given an ideal filling in dimension 3, imposing the condition that this ideal filling is an ideal filling for $\lambda$ is the same as restricting our attention to those points with weight $(\ell, \ell, \ell)$ where $\ell=\frac13 \sum \lambda_i$ (see Corollary \ref{cor weight matrix at trop crit point}). Moreover, this will aid our geometric intuition.

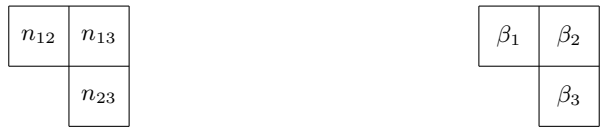
\begin{figure}[ht!]
\centering
    \begin{minipage}[b]{0.45\linewidth}
    \centering
    \begin{tikzpicture}
        \draw (0,0) -- (1.6,0);
        \draw (0,-0.8) -- (1.6,-0.8);
        \draw (0.8,-1.6) -- (1.6,-1.6);

        \draw (0,0) -- (0,-0.8);
        \draw (0.8,0) -- (0.8,-1.6);
        \draw (1.6,0) -- (1.6,-1.6);

        \node at (0.4,-0.4) {\small{$n_{12}$}};

        \node at (1.2,-0.4) {\small{$n_{13}$}};
        \node at (1.2,-1.2) {\small{$n_{23}$}};
    \end{tikzpicture}
    \end{minipage}
\hspace{-1cm}
    \begin{minipage}[b]{0.45\linewidth}
    \centering
    \begin{tikzpicture}
        \draw (0,0) -- (1.6,0);
        \draw (0,-0.8) -- (1.6,-0.8);
        \draw (0.8,-1.6) -- (1.6,-1.6);

        \draw (0,0) -- (0,-0.8);
        \draw (0.8,0) -- (0.8,-1.6);
        \draw (1.6,0) -- (1.6,-1.6);

        \node at (0.4,-0.4) {\small{$\beta_1$}};

        \node at (1.2,-0.4) {\small{$\beta_2$}};
        \node at (1.2,-1.2) {\small{$\beta_3$}};
    \end{tikzpicture}
    \end{minipage}
\caption{Fillings in dimension $3$} \label{fig Fillings in dimension 3}
\end{figure}

The filling in dimension $3$ is given in Figure \ref{fig Fillings in dimension 3}, we will write $\beta_1=n_{12}$, $\beta_2=n_{13}$, $\beta_3=n_{23}$.
The ideal filling condition $\beta_2 = \max\{\beta_1, \beta_3\}$ defines the following piecewise-linear subspace:
    \begin{equation} \label{eqn dim 3 ideal filling subpace}
    \{\beta_2=\beta_1\geq \beta_3\}\cup \{\beta_2=\beta_3\geq \beta_1\}.
    \end{equation}
The condition that the ideal filling is an ideal filling for $\lambda$ is the following:
    $$
    \lambda = \sum n_{ij}\alpha_{ij} + \ell \sum \epsilon_i \nonumber = \beta_1 \alpha_{12} + \beta_2 \alpha_{13} + \beta_3\alpha_{23} + \ell  \sum \epsilon_i \nonumber = \left( \ell + \beta_1+\beta_2, \ell -\beta_1+\beta_3, \ell -\beta_2-\beta_3 \right).
    $$
This gives a further set of constraints on the $c_i$. These additional constraints may also be obtained by setting the weight of a point $(\beta_1,\beta_2,\beta_3)$ equal to the weight of the positive critical point;
    $$(\lambda_3 + \beta_2 + \beta_3, \lambda_2 + \beta_1 - \beta_3, \lambda_1 - \beta_1 - \beta_2 )=( \ell, \ell, \ell ).$$
Intersecting this condition with (\ref{eqn dim 3 ideal filling subpace}) we find exactly two possibilities for the tropical critical point, depending on which of $\lambda_1-\lambda_2$ or $\lambda_2-\lambda_3$ is greater. By Proposition \ref{prop GLn version of Jamie's 6.2} this point lies within the superpotential polytope. We obtain:
\begin{itemize}
    \item If $\beta_1=\beta_2$ then $\lambda_1- 2\beta_1 = \ell$ and $\lambda_3+\beta_1+\beta_3 = \lambda_2 +\beta_1 - \beta_3$. So
    $$(\beta_1,\beta_2,\beta_3)=\left( \frac16 \left( 2\lambda_1 -\lambda_2-\lambda_3 \right), \frac16 \left( 2\lambda_1 -\lambda_2-\lambda_3 \right), \frac12 \left(\lambda_2-\lambda_3\right) \right).$$

    \item If $\beta_2=\beta_3$ then $\lambda_3 + 2\beta_2 = \ell $ and $ \lambda_2+\beta_1-\beta_2 = \lambda_1 -\beta_1 - \beta_2$. So
    $$(\beta_1,\beta_2,\beta_3)=\left( \frac12 \left(\lambda_1-\lambda_2\right), \frac16 \left( \lambda_1 +\lambda_2-2\lambda_3 \right), \frac16 \left( \lambda_1 +\lambda_2-2\lambda_3 \right) \right).
    $$
\end{itemize}

Computing the tropical critical point coordinates in this way is much quicker and easier than solving the simultaneous equations given by the tropical critical point conditions. Additionally it is now unsurprising that there is only one ideal filling for $\lambda$ in dimension $3$, since we are intersecting a the piecewise-linear 2-dimensional subspace and a line.

Alternatively we could obtain the same result using the proof of Proposition \ref{prop GLn version of Jamie's 6.2}.
The benefit is that, in addition to the tropical critical point, we would also gain vertex coordinates (and thus arrow coordinates) for the quiver. Unfortunately however, since this approach is algorithmic we lose some of the more visual interpretation.
\end{ex}

\subsection{A family of ideal polytopes} \label{subsec A family of ideal polytopes}
\fancyhead[L]{4.5 \ \ A family of ideal polytopes}

In this section we define a family of polytopes which contains the ideal polytope $\mathcal{P}_{\lambda,\boldsymbol{\mu}}$. We do this by extending the definition of $\mathcal{P}_{\lambda,\boldsymbol{\mu}}$ to general reduced expressions $\mathbf{i}$ for $\bar{w}_0$.

Firstly, recalling the construction of the ideal coordinates defined in Section \ref{sec The ideal coordinates}, we begin our generalisation by by taking an arbitrary reduced expression $\mathbf{i}=(i_1, \ldots, i_N)$ for $w_0$ and considering the map
    \begin{equation*}
    \tilde{\psi}_{\mathbf{i}} : \left(\mathbb{K}^*\right)^N \times T^{\vee} \to Z\,, \quad \left(\left(m'_1, \ldots, m'_N\right), t_R\right) \mapsto \mathbf{y}_{i_1}^{\vee}\left(\frac{1}{m'_1}\right) \cdots \mathbf{y}_{i_N}^{\vee}\left(\frac{1}{m'_N}\right) t_R.
    \end{equation*}
We again wish to work with the highest weight as opposed to the weight, that is, coordinates $(d,\boldsymbol{m}')$ instead of $(\boldsymbol{m}',t_R)$, however this requires us to develop our description of the weight map. To do so it will be better to index the coordinates $\boldsymbol{m}'$ by positive roots as follows:

We recall that any reduced expression $\mathbf{i}=(i_1, \ldots, i_N)$ for $w_0$ determines an ordering on the set of positive roots $R_+$ by
    \begin{equation} \label{eqn ordering on R plus}
    \alpha^{\mathbf{i}}_j=\begin{cases}
    \alpha_{i_1} & \text{for } j=1, \\
    s_{i_1}\cdots s_{i_{j-1}}\alpha_{i_{j}} & \text{for } j=2, \ldots N.
    \end{cases}
    \end{equation}
This ordering has the property that whenever $\alpha,  \beta \in R_+$ are positive roots such that $\alpha + \beta \in R_+$, then $\alpha + \beta$ must occur in between $\alpha$ and $\beta$.
We use the ordering on $R_+$ defined by $\mathbf{i}$ to identify $\left(\mathbb{K}^*\right)^N$ with $\left(\mathbb{K}^*\right)^{R_+}$, namely $m'_j=m'_{\alpha^{\mathbf{i}}_j}$. We will write $\left(\mathbb{K}^*\right)^N_{\mathbf{i}}$, $\left(\mathbb{K}^*\right)^{R_+}_{\mathbf{i}}$ when we need to explicitly state which reduced expression we are using.

We also recall the classic arrangement of positive roots $\alpha_{ij}=\epsilon_i-\epsilon_j$, $i<j$, similar to a strictly upper triangular matrix:
    \begin{equation} \label{eqn alpha ij arrangement}
    \begin{matrix}
    \alpha_{12} & \alpha_{13} & \cdots & \alpha_{1,n-1} & \alpha_{1n} \\
    & \alpha_{23} & \cdots & \alpha_{2,n-1} & \alpha_{2n} \\
    & & \ddots & \vdots & \vdots \\
    & & & \alpha_{n-2,n-1} & \alpha_{n-2,n} \\
    & & & & \alpha_{n-1,n}
    \end{matrix}
    \end{equation}
In particular, this takes the same form as (ideal) fillings, for which we have the natural bijective correspondence
    \begin{equation} \label{eqn rel nij alpha ij}
    n_{ij} \leftrightarrow \alpha_{ij}.
    \end{equation}
We note that if $\alpha,  \beta \in R_+$ are positive roots such that $\alpha + \beta \in R_+$, then we must have $\alpha+\beta$ appearing either to the right of $\alpha$ and above $\beta$, or to the right of $\beta$ and above $\alpha$. This is a consequence of the fact that if $\alpha=\alpha_{ij}$ and $\beta=\alpha_{kl}$ then, in order for their sum $\alpha+\beta=\alpha_{ij}+\alpha_{kl}$ to be a positive root, we must have either $i=l$ or $j=k$ (resulting in $\alpha+\beta=\alpha_{kj}$ or $\alpha+\beta=\alpha_{il}$ respectively).

\begin{defn}[Universal weight map] \label{defn universal weight}
We define a map $t_R : T^{\vee} \times \left(\mathbb{K}^*\right)^{R_+} \to T^{\vee}$ by taking $t_R(d,\boldsymbol{m}')$ to be the $n\times n$ diagonal matrix with entries
    $$ \left(t_R(d,\boldsymbol{m}')\right)_{n-j+1,n-j+1} :
        = \frac{d_{j} \prod_{l=1}^{j-1} m'_{\alpha_{lj}}}{\prod_{l=j}^{n-1} m'_{\alpha_{j,l+1}}}.
    $$
We will refer to the matrix $t_R(d,\boldsymbol{m}')$ as the universal weight matrix.
\end{defn}

\begin{ex} \label{ex univ wt matrix n4}
When $n=4$ we have
    $$t_R(d,\boldsymbol{m}')=\begin{pmatrix}
    d_4 m'_{\alpha_{14}}m'_{\alpha_{24}}m'_{\alpha_{34}} & & & \\
    & d_3 \frac{m'_{\alpha_{13}}m'_{\alpha_{23}}}{m'_{\alpha_{34}}} & & \\
    & & d_2 \frac{m'_{\alpha_{12}}}{m'_{\alpha_{23}}m'_{\alpha_{24}}} & \\
    & & & d_1 \frac{1}{m'_{\alpha_{12}}m'_{\alpha_{13}}m'_{\alpha_{14}}}
    \end{pmatrix}.
    $$
\end{ex}

Now returning to our generalisation of the ideal coordinates, and recalling the construction given at the start Section \ref{sec The ideal coordinates}, we take a reduced expression $\mathbf{i}=(i_1, \ldots, i_N)$ for $w_0$ and define the ideal chart for $\mathbf{i}$ to be
    \begin{equation*} \label{eqn defn ideal chart arbitrary i}
    \psi_{\mathbf{i}} : T^{\vee} \times \left(\mathbb{K}^*\right)^{R_+} \to Z\,, \quad
    \left(d, \left(m'_{\alpha^{\mathbf{i}}_1}, \ldots, m'_{\alpha^{\mathbf{i}}_N} \right)\right) \mapsto
    \mathbf{y}_{i_1}^{\vee}\left(\frac{1}{m'_{\alpha^{\mathbf{i}}_1}}\right) \cdots \mathbf{y}_{i_N}^{\vee}\left(\frac{1}{m'_{\alpha^{\mathbf{i}}_N}}\right) t_R(d,\boldsymbol{m}')
    \end{equation*}
where $t_R(d,\boldsymbol{m}')$ is the universal weight matrix.

\begin{prop}
The universal weight map is independent of the choice of reduced expression $\mathbf{i}$ for $w_0$.
\end{prop}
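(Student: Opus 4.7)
The plan is to prove that for every reduced expression $\mathbf{i}$, the composition $\mathrm{wt}\circ \psi_{\mathbf{i}}$ equals the universal weight map $t_R$ of Definition~\ref{defn universal weight}. By Matsumoto's theorem, any two reduced expressions for $w_0$ are connected by a sequence of braid moves (the commutation $s_i s_j = s_j s_i$ for $|i-j|>1$, and the $A_2$-braid $s_i s_{i+1} s_i = s_{i+1} s_i s_{i+1}$), so it is enough to establish the claim for $\mathbf{i}=\mathbf{i}_0$ and then check it is preserved under each braid move.

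For the base case $\mathbf{i}=\mathbf{i}_0$, a direct calculation using the explicit form of $\mathbf{i}_0$ together with (\ref{eqn ordering on R plus}) shows that the induced ordering on positive roots is the row-by-row enumeration of the triangular arrangement (\ref{eqn alpha ij arrangement}), namely $\alpha_{s_k+a}^{\mathbf{i}_0}=\alpha_{k,k+a}$. Under the resulting identification $m'_{\alpha_{k,k+a}}=m_{s_k+a}$, the numerator and denominator in Corollary~\ref{cor wt matrix in m ideal coords} translate term-for-term into $\prod_{l=1}^{j-1} m'_{\alpha_{lj}}$ and $\prod_{l=j}^{n-1} m'_{\alpha_{j,l+1}}$, matching Definition~\ref{defn universal weight} exactly.

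For the inductive step, commutation moves are trivial: the two commuting factors swap position, the associated positive roots are fixed because $s_i\alpha_j=\alpha_j$ and vice versa when $|i-j|>1$, and the coordinate values are unchanged, so the universal formula is unaffected. The substantive case is an $A_2$-braid at some positions $k,k+1,k+2$. Setting $w:=s_{i_1}\cdots s_{i_{k-1}}$, $\beta:=w\alpha_i$, $\gamma:=w\alpha_{i+1}$, both reduced expressions enumerate the same triple $\{\beta,\beta+\gamma,\gamma\}$ at these positions (in different orders). The Lusztig braid relation, rewritten in $\mathbf{y}^{\vee}$-form with parameters $1/m'$, yields the transformation
\[
\tilde m_{\gamma}=\frac{m'_{\beta+\gamma}(m'_{\beta}+m'_{\gamma})}{m'_{\beta}},\qquad \tilde m_{\beta+\gamma}=\frac{m'_{\beta}\,m'_{\gamma}}{m'_{\beta}+m'_{\gamma}},\qquad \tilde m_{\beta}=\frac{m'_{\beta+\gamma}(m'_{\beta}+m'_{\gamma})}{m'_{\gamma}}.
\]

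Writing $\beta=\alpha_{pq}$, $\gamma=\alpha_{qr}$, $\beta+\gamma=\alpha_{pr}$ with $p<q<r$, invariance of the universal formula under this substitution is then checked by case analysis on $j$. For $j\notin\{p,q,r\}$ none of $\beta,\gamma,\beta+\gamma$ has first or second index $j$, so the entry $(t_R)_{n-j+1,n-j+1}$ is automatically unchanged. The three remaining cases each reduce to a single monomial identity read off from the transformation rule above: $\tilde m_{\beta}\tilde m_{\beta+\gamma}=m'_{\beta}m'_{\beta+\gamma}$ (for $j=p$, both factors sitting in the denominator), $\tilde m_{\beta}/\tilde m_{\gamma}=m'_{\beta}/m'_{\gamma}$ (for $j=q$, one factor in the numerator and one in the denominator), and $\tilde m_{\beta+\gamma}\tilde m_{\gamma}=m'_{\beta+\gamma}m'_{\gamma}$ (for $j=r$, both in the numerator). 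The main obstacle is really only pinning down the Lusztig braid relation in $\mathbf{y}^{\vee}$-form with the $1/m'$-parametrisation; once that is in hand, each of these three identities is a two-line algebraic verification, and the proposition follows.
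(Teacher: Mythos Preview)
Your proof is correct and follows essentially the same approach as the paper: verify the base case $\mathbf{i}_0$ against Corollary~\ref{cor wt matrix in m ideal coords}, then check invariance under braid moves using the three monomial identities $\tilde m_{\beta}\tilde m_{\beta+\gamma}=m'_{\beta}m'_{\beta+\gamma}$, $\tilde m_{\gamma}\tilde m_{\beta+\gamma}=m'_{\gamma}m'_{\beta+\gamma}$, and $\tilde m_{\beta}/\tilde m_{\gamma}=m'_{\beta}/m'_{\gamma}$. The only cosmetic difference is that the paper phrases the case analysis in terms of which of $m'_{\beta},m'_{\gamma},m'_{\beta+\gamma}$ can appear in the numerator or denominator of a given diagonal entry (via the row/column structure of the root arrangement~(\ref{eqn alpha ij arrangement})), whereas you make the indices $p<q<r$ explicit and split on $j\in\{p,q,r\}$; the content is identical.
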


\begin{proof}
Consider the reduced expression $\mathbf{i}_0= (1,2, \ldots, n-1, 1,2 \ldots, n-2, \ldots, 1, 2, 1)$. We will begin by showing that the description of the weight matrix given in Corollary \ref{cor wt matrix in m ideal coords} is the same as the universal weight matrix under the identification $m_j=m'_{\alpha^{\mathbf{i}_0}_j}$.

It is well known that the ordering (\ref{eqn ordering on R plus}) on $R_+$ given by $\mathbf{i}_0$ is
    $$\alpha_{12}, \ \alpha_{13}, \ \ldots, \  \alpha_{1n}, \ \alpha_{23}, \ \alpha_{24}, \ \ldots, \alpha_{2n}, \ \ldots, \ \alpha_{n-2,n-1}, \ \alpha_{n-2,n}, \ \alpha_{n-1,n}.
    $$
In particular, we see that $\alpha_{ij}$ appears in the $(s_i+j-i)$-th place in this sequence, where we recall the definition
    $$s_i := \sum_{k=1}^{i-1}(n-k).
    $$
This also follows from Corollary \ref{cor nu'_i coords ideal filling for lambda} and the correspondence (\ref{eqn rel nij alpha ij}). Thus $\alpha^{\mathbf{i}_0}_{s_i+(j-i)} = \alpha_{ij}$, and so $m'_{\alpha_{ij}} = m'_{\alpha^{\mathbf{i}_0}_{s_i+(j-i)}} = m_{s_i+(j-i)}$. Then recalling the description of the weight matrix from Corollary \ref{cor wt matrix in m ideal coords}, for $j=1, \ldots, n$ we have
    $$\begin{aligned}
    \left(t_R(d,\boldsymbol{m})\right)_{n-j+1, n-j+1}
        &= \frac{d_j \prod\limits_{k=1, \ldots,j-1} m_{s_k+(j-k)}}{\prod\limits_{r=1, \ldots, n-j} m_{s_j+r}} & \text{where } r=k-j \\
        &= \frac{d_j \prod_{k=1}^{j-1} m_{s_k+(j-k)} }{\prod_{k=j}^{n-1} m_{s_j+(k+1-j)}} \\
        &= \frac{d_j \prod_{k=1}^{j-1} m'_{\alpha_{kj}} }{\prod_{k=j}^{n-1} m'_{\alpha_{j,k+1}}} \\
        &= \left(t_R(d,\boldsymbol{m}')\right)_{n-j+1,n-j+1}
    \end{aligned}
    $$
and so the two descriptions agree for $\mathbf{i}=\mathbf{i}_0$.

Next we recall that any two reduced expressions $\mathbf{i}$ and $\mathbf{i}'$ for $w_0$, are related by a sequence of transformations
    \begin{alignat}{2} \label{eqn iji jij swap}
    i,j,i &\leftrightarrow j,i,j \qquad & \text{if } |i-j|=1, \\
    i,j &\leftrightarrow j,i & \text{if } |i-j|\geq 2. \nonumber
    \end{alignat}
It suffices to show that the form of the universal weight matrix is invariant under one of the transformations of the first type, (\ref{eqn iji jij swap}).

Suppose $\mathbf{i}$ and $\mathbf{i}'$ are two reduced expressions for $w_0$ which are related by a single transformation (\ref{eqn iji jij swap}) in positions $k-1, k, k+1$. Then their respective sequences of positive roots are
    \begin{equation} \label{eqn pos root sequences iji jij}
    \begin{aligned}
    \left( \alpha^{\mathbf{i}}_j \right) &= \left( \alpha^{\mathbf{i}}_1, \ldots, \alpha^{\mathbf{i}}_{k-2}, \alpha, \alpha+ \beta, \beta, \alpha^{\mathbf{i}}_{k+2}, \ldots, \alpha^{\mathbf{i}}_N  \right), \\
    \left( \alpha^{\mathbf{i}'}_j \right) &= \left( \alpha^{\mathbf{i}}_1, \ldots, \alpha^{\mathbf{i}}_{k-2}, \beta, \alpha+ \beta, \alpha, \alpha^{\mathbf{i}}_{k+2}, \ldots, \alpha^{\mathbf{i}}_N  \right).
    \end{aligned}
    \end{equation}
If $\boldsymbol{m}'$ and $\boldsymbol{m}''$ are such that $m'_j = m'_{\alpha^{\mathbf{i}}_j}$ and $m''_j = m''_{\alpha^{\mathbf{i}'}_j}$ respectively, then as a consequence of (\ref{eqn pos root sequences iji jij}) we must necessarily have
    $$\mathbf{y}_{\mathbf{i}_{k-1}}^{\vee}\left(\frac{1}{m'_{\alpha}}\right) \mathbf{y}_{\mathbf{i}_{k}}^{\vee} \left(\frac{1}{m'_{\alpha+\beta}}\right) \mathbf{y}_{\mathbf{i}_{k+1}}^{\vee}\left(\frac{1}{m'_{\beta}} \right) = \mathbf{y}_{\mathbf{i}_{k-1}}^{\vee}\left(\frac{1}{m''_{\beta}}\right) \mathbf{y}_{\mathbf{i}_{k}}^{\vee} \left(\frac{1}{m''_{\alpha+\beta}}\right) \mathbf{y}_{\mathbf{i}_{k+1}}^{\vee}\left(\frac{1}{m''_{\alpha}} \right).
    $$
Written explicitly this gives
    $$
    \begin{pmatrix} 1 & & & & & & \\ & \ddots & & & & & \\ & & 1 & & & & \\ & & \frac{m'_{\alpha}+m'_{\beta}}{m'_{\alpha}m'_{\beta}} & 1 & & & \\ & & \frac{1}{m'_{\alpha+\beta}m'_{\beta}} & \frac{1}{m'_{\alpha+\beta}} & 1 & & \\ & & & & & \ddots & \\ & & & & & & 1 \end{pmatrix}
    =\begin{pmatrix} 1 & & & & & & \\ & \ddots & & & & & \\ & & 1 & & & & \\ & & \frac{1}{m''_{\alpha+\beta}} & 1 & & & \\ & & \frac{1}{m''_{\beta}m''_{\alpha+\beta}} & \frac{m''_{\beta}+m''_{\alpha}}{m''_{\beta}m''_{\alpha}} & 1 & & \\ & & & & & \ddots & \\ & & & & & & 1 \end{pmatrix}
    $$
which defines the coordinate change:
    \begin{equation} \label{eqn alpha beta coord change iji jij}
    m''_{\alpha} = \frac{m'_{\alpha+\beta}(m'_{\alpha}+m'_{\beta})}{m'_{\beta}}, \quad m''_{\alpha+\beta} = \frac{m'_{\alpha}m'_{\beta}}{m'_{\alpha}+m'_{\beta}}, \quad m''_{\beta} = \frac{m'_{\alpha+\beta}(m'_{\alpha}+m'_{\beta})}{m'_{\alpha}},
    \end{equation}
with $m''_{\alpha^{\mathbf{i}}_j}=m'_{\alpha^{\mathbf{i}}_j}$ for all other coordinates.
In particular we have
    \begin{equation} \label{eqn prod and quo cood change m' m''}
    m''_{\alpha}m''_{\alpha+\beta}=m'_{\alpha}m'_{\alpha+\beta}, \quad m''_{\beta}m''_{\alpha+\beta} = m'_{\beta}m'_{\alpha+\beta}, \quad \frac{m''_{\alpha}}{m''_{\beta}}=\frac{m'_{\alpha}}{m'_{\beta}}.
    \end{equation}

It remains to show that the form of the universal weight matrix is unaltered by this coordinate change. Recalling the definition of the universal weight matrix (\ref{defn universal weight}) we notice that the product in the numerator (resp. denominator) has exactly one term $m'_{\alpha_{ij}}$ for every $\alpha_{ij}$ from the $(j-1)$-th column (resp. $j$-th row) of the arrangement (\ref{eqn alpha ij arrangement}).
We recall also that the root $\alpha+\beta \in R_+$ must lie either to the right of $\alpha$ and above $\beta$, or to the right of $\beta$ and above $\alpha$ in the arrangement (\ref{eqn alpha ij arrangement}). It follows then that for every $\left(t_R(d,\boldsymbol{m}')\right)_{ii}$, at most one of $m'_{\alpha}$ or $m'_{\beta}$ can appear in each of the two products in the description of this matrix entry, and the coordinate $m'_{\alpha+\beta}$ can appear in at most one of the two products.
We note that it is impossible for $m'_{\alpha}$ or $m'_{\beta}$ to appear in one of the products in $\left(t_R(d,\boldsymbol{m}')\right)_{11}$ or $\left(t_R(d,\boldsymbol{m}')\right)_{nn}$ without $m'_{\alpha+\beta}$ also appearing.

Suppose $m'_{\alpha}$ and $m'_{\alpha+\beta}$ both appear in the numerator of $\left(t_R(d,\boldsymbol{m}')\right)_{n-j+1,n-j+1}$ (the proof starting with these terms in the denominator follows similarly). Then by definition $\alpha=\alpha_{lj}$ and $\alpha+\beta=\alpha_{kj}$ for some $k<l<j$. Since $\alpha+\beta$ is a positive root we must have $\beta=\alpha_{kl}$, and so we see that $m'_{\beta}$ cannot appear in this matrix entry. Consequently, by (\ref{eqn prod and quo cood change m' m''}), the form of this matrix entry is unaffected by the coordinate change.

Now suppose $m'_{\alpha}$ appears in the numerator of $\left(t_R(d,\boldsymbol{m}')\right)_{n-j+1,n-j+1}$, but $m'_{\alpha+\beta}$ does not (the proof starting with $m'_{\alpha}$ in the denominator follows similarly). Then by definition $\alpha=\alpha_{lj}$ for some $l$. Since $m'_{\alpha+\beta}$ does not appear in the numerator but $\alpha+\beta$ is a positive root, we must have $\alpha+\beta=\alpha_{lk}$ with $k> j$. Consequently $\beta=\alpha_{jk}$ and so $m'_{\beta}$ must appear in the denominator of this matrix entry. Thus by (\ref{eqn prod and quo cood change m' m''}), the form of this matrix entry is unaffected by the coordinate change.
\end{proof}

Since we wish to define ideal polytopes corresponding to different reduced expressions $\mathbf{i}$ for $w_0$, we use the toric chart $\psi_{\mathbf{i}}$ to generalise two of the maps given in Section \ref{subsec Constructing polytopes}, namely we take
    $$\begin{aligned}
    \phi^{\mathbf{i}}_{t^{\lambda},\boldsymbol{m}'} &: (\mathbf{K}_{>0})^{R_+} \to Z_{t^{\lambda}}(\mathbf{K}_{>0}), \\
    \mathcal{W}^{\mathbf{i}}_{t^{\lambda},\boldsymbol{m}'} &: (\mathbf{K}_{>0})^{R_+} \to \mathbf{K}_{>0}
    \end{aligned}
    $$
such that $\phi_{t^{\lambda},\boldsymbol{m}'} = \phi^{\mathbf{i}_0}_{t^{\lambda},\boldsymbol{m}'}$ and $\mathcal{W}_{t^{\lambda},\boldsymbol{m}'}=\mathcal{W}^{\mathbf{i}_0}_{t^{\lambda},\boldsymbol{m}'}$, where $\lambda$ is a dominant weight. With this notation we are ready to construct our new polytopes; given an arbitrary reduced expression $\mathbf{i}$ for $w_0$, and the associated superpotential $\mathcal{W}^{\mathbf{i}}_{t^{\lambda},\boldsymbol{m}'}$ for $GL_n/B$, we define
    $$\mathcal{P}^{\mathbf{i}}_{\lambda,\boldsymbol{\mu}'} = \left\{ \boldsymbol{\alpha} \in \mathbb{R}^N_{\boldsymbol{\mu}'} \ \big| \ \mathrm{Trop}\left(\mathcal{W}^{\mathbf{i}}_{t^{\lambda},\boldsymbol{m}'}\right)(\boldsymbol{\alpha}) \geq 0 \right\}.
    $$

For the particular reduced expression $\mathbf{i}_0$, we obtain the ideal polytope from Section \ref{subsec Constructing polytopes}, namely $\mathcal{P}^{\mathbf{i}_0}_{\lambda,\boldsymbol{\mu}'} = \mathcal{P}_{\lambda,\boldsymbol{\mu}}$. Moreover we have already seen that this polytope $\mathcal{P}^{\mathbf{i}_0}_{\lambda,\boldsymbol{\mu}'}$ is simply a linear transformation of the string polytope $\mathrm{String}_{\mathbf{i}_0}(\lambda)=\mathcal{P}_{\lambda,\boldsymbol{\zeta}}$. However in general the families of string and ideal polytopes diverge:
\begin{prop} \label{prop pos birat map between toric charts for i}
Given a reduced expression $\mathbf{i}$ for $w_0$, there is a positive birational map of tori transforming the ideal coordinate chart for $\mathbf{i}$ into the string coordinate chart for $\mathbf{i}$:
    $$\begin{tikzcd}[column sep=1.5em]
    T^{\vee}\times\left(\mathbb{K}^*\right)^{R_+} \arrow{dr}[swap]{\psi_{\mathbf{i}}} \arrow[dashed]{rr}{\vartheta} && T^{\vee}\times\left(\mathbb{K}^*\right)^{N} \arrow{dl}{\varphi_{\mathbf{i}}} \\
    & Z
    \end{tikzcd}
    $$
\end{prop}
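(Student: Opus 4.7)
The plan is to define $\vartheta$ by applying the Chamber Ansatz (Theorem~\ref{thm chamber ansatz}) to $b=\varphi_{\mathbf{i}}(d,\boldsymbol{z})$ with the reduced expression $\mathbf{i}$, directly generalising Lemma~\ref{lem form of u_1 and b factorisations} from $\mathbf{i}_0$ to arbitrary reduced expressions.

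First I would write $b=\varphi_{\mathbf{i}}(d,\boldsymbol{z})=u_1 d\bar{w}_0 u_2$, with $u_1=\iota(\eta^{w_0,e}(\mathbf{x}_{-\mathbf{i}}^{\vee}(\boldsymbol{z})))\in U_+^{\vee}$, so that $bB_+^{\vee}=u_1\bar{w}_0 B_+^{\vee}$. Applying the Chamber Ansatz to $u_1$ with respect to $\mathbf{i}$ produces positive rational functions $m_1,\ldots,m_N$ of $\boldsymbol{z}$, each a ratio of chamber minors of $u_1$, such that
\[
bB_+^{\vee}=\mathbf{y}_{i_1}^{\vee}(1/m_1)\cdots\mathbf{y}_{i_N}^{\vee}(1/m_N)B_+^{\vee}.
\]
Since $b\in B_-^{\vee}$ while each $\mathbf{y}_{i_k}^{\vee}(1/m_k)\in U_-^{\vee}$, cancelling $B_+^{\vee}$ on the right gives the unique factorisation $b=\mathbf{y}_{i_1}^{\vee}(1/m_1)\cdots\mathbf{y}_{i_N}^{\vee}(1/m_N)[b]_0$. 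Setting $m'_{\alpha_k^{\mathbf{i}}}:=m_k$ then defines the candidate map $\vartheta(d,\boldsymbol{z}):=(d,\boldsymbol{m}')$, and positivity of the chamber minors of $u_1$ in $\boldsymbol{z}$ ensures that $\vartheta$ is a positive rational map.

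The crux is then to verify the identity $[b]_0=t_R(d,\boldsymbol{m}')$, where the right-hand side is the universal weight matrix of Definition~\ref{defn universal weight}; this is exactly what upgrades the Chamber Ansatz factorisation to the equality $b=\psi_{\mathbf{i}}(d,\boldsymbol{m}')$ and makes the diagram commute. For $\mathbf{i}=\mathbf{i}_0$ this is Corollary~\ref{cor wt matrix in m ideal coords}, reinterpreted through the universal weight matrix via the proposition immediately preceding. For general $\mathbf{i}$ I would argue by reduction to $\mathbf{i}_0$: any reduced expression is connected to $\mathbf{i}_0$ by a sequence of braid moves $i,j,i\leftrightarrow j,i,j$ (for $|i-j|=1$) and commutations $i,j\leftrightarrow j,i$ (for $|i-j|\geq 2$). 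Since $[b]_0$ is intrinsic to $b$ and the universal weight matrix is $\mathbf{i}$-independent, invariance under a single braid move reduces to checking that the Chamber Ansatz coordinates transform under each move exactly as in~(\ref{eqn alpha beta coord change iji jij}) (and trivially under commutations). This is a rank-two computation, localised to the three positions where the braid move acts; alternatively, one could circumvent it by generalising Proposition~\ref{prop form of t_R using d,u} (whose key ingredient, Lemma~\ref{lem [(bar w_0u^T)^-1]_0=I}, is already stated for arbitrary $\mathbf{i}$) to obtain $t_R$ directly in string coordinates and then substituting.

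Birationality of $\vartheta$ follows by symmetry: given $(d,\boldsymbol{m}')$, one recovers $\boldsymbol{z}$ by factoring $[b]_-$ as $\mathbf{y}_{i_1}^{\vee}(1/m_1)\cdots\mathbf{y}_{i_N}^{\vee}(1/m_N)$ and then running the string factorisation backwards, each step being a positive rational operation (cf.\ Appendix~\ref{append Recovering our coordinates}). The main obstacle is the weight identification in the middle step; while the braid-move reduction is conceptually clean, a careful matching of chamber minors under each elementary move requires attention to the bookkeeping.
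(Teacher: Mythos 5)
Your approach is correct in outline but genuinely different from the paper's. The paper's proof of Proposition~\ref{prop pos birat map between toric charts for i} is purely compositional: it defines $\vartheta$ as a chain of four maps, going $(d,\boldsymbol{m}'_{\alpha^{\mathbf{i}}})\mapsto(d,\boldsymbol{m}')\mapsto(d,\boldsymbol{m})\mapsto(d,\boldsymbol{z})\mapsto(d,\boldsymbol{z}')$, where the outer two are reindexings, the middle is Theorem~\ref{thm coord change} (the already-proven ideal-to-string change for $\mathbf{i}_0$), and the mutation maps between $\mathbf{i}$ and $\mathbf{i}_0$ on the ideal and string sides are compositions of (\ref{eqn alpha beta coord change iji jij}) and its string analogue, whose positivity the paper simply cites as known. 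You instead apply the Chamber Ansatz directly to the arbitrary $\mathbf{i}$, which gives $m_k$ as ratios of chamber minors of $u_1$ (positive rational in $\boldsymbol{z}$ since $u_1$ is totally positive for positive $\boldsymbol{z}$), and then verify the torus-factor identity $[b]_0=t_R(d,\boldsymbol{m}')$. Your braid-move reduction for this last step is sound, but note that it is essentially already discharged by the material preceding the proposition: (\ref{eqn alpha beta coord change iji jij}) is \emph{defined} by requiring the $\mathbf{y}^{\vee}_{\mathbf{i}}$-factorisations of $[b]_-$ to match, and the preceding (unnumbered) proposition shows $t_R$ is invariant under precisely that transformation, so the rank-two check you anticipate is not new work. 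Your route is thus more self-contained — it produces the map from first principles rather than composing cited transformations — at the cost of re-deriving pieces the paper has in place. Two minor remarks: what you construct is $\vartheta^{-1}$ (string to ideal), not the $\vartheta$ drawn in the diagram, which is a labelling slip only; and Appendix~\ref{append Recovering our coordinates} recovers $\boldsymbol{m}$ from a matrix, not $\boldsymbol{z}$, so it does not quite cover the direction you cite it for in the birationality paragraph — positivity of $\varphi_{\mathbf{i}}^{-1}$ needs the same appeal to Chamber Ansatz/total positivity that the paper implicitly makes.
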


\begin{proof}
We recall the specific reduced expression
    $$\mathbf{i}_0 := (1,2, \ldots, n-1, 1,2 \ldots, n-2, \ldots, 1, 2, 1)
    $$
for $w_0$ used earlier, and define the map $\vartheta: T^{\vee}\times\left(\mathbb{K}^*\right)^{R_+}_{\mathbf{i}} \dashrightarrow T^{\vee}\times\left(\mathbb{K}^*\right)^{N}_{\mathbf{i}}$ to be the following composition:
    $$\begin{tikzcd}[row sep=0.3em]
    T^{\vee}\times\left(\mathbb{K}^*\right)^{R_+}_{\mathbf{i}} \arrow{r}{}
        & T^{\vee}\times\left(\mathbb{K}^*\right)^{N}_{\mathbf{i}} \arrow[dashed]{r}{}
        & T^{\vee}\times\left(\mathbb{K}^*\right)^{N}_{\mathbf{i}_0} \arrow{r}{}
        & T^{\vee}\times\left(\mathbb{K}^*\right)^{N}_{\mathbf{i}_0} \arrow[dashed]{r}{}
        & T^{\vee}\times\left(\mathbb{K}^*\right)^{N}_{\mathbf{i}} \\
    \left(d,\boldsymbol{m}'_{\alpha^{\mathbf{i}}}\right) \arrow[mapsto]{r}{}
        & \left(d,\boldsymbol{m}'\right) \arrow[mapsto]{r}{}
        & \left(d,\boldsymbol{m}\right) \arrow[mapsto]{r}{}
        & \left(d,\boldsymbol{z}\right) \arrow[mapsto]{r}{}
        & \left(d,\boldsymbol{z}'\right)
    \end{tikzcd}
    $$
The first map simply describes the identification between $T^{\vee}\times\left(\mathbb{K}^*\right)^{R_+}_{\mathbf{i}}$ and $T^{\vee}\times\left(\mathbb{K}^*\right)^{N}_{\mathbf{i}}$, given by the ordering (\ref{eqn ordering on R plus}) on the set of positive roots $R_+$ defined by $\mathbf{i}$, that is $m'_j=m'_{\alpha^{\mathbf{i}}_j}$. The third map is the coordinate change given in Theorem \ref{thm coord change} between the ideal and string coordinates for $\mathbf{i}_0$.

The second and fourth maps in the above composition are the necessary coordinate changes such that $\psi_{\mathbf{i}}(d, \boldsymbol{m}') = \psi(d, \boldsymbol{m})$ and $\varphi_{\mathbf{i}_0}(d, \boldsymbol{z}) = \varphi_{\mathbf{i}}(d, \boldsymbol{z}')$. The second map is given by compositions of coordinate changes similar to (\ref{eqn alpha beta coord change iji jij}) and the fourth map is defined analogously. Both are known to be positive rational maps, but in general, not isomorphisms of tori for arbitrary reduced expressions.
\end{proof}

\begin{ex}
We let $n=4$ and take $\mathbf{i}=(1,2,3,2,1,2)$, recalling that $\mathbf{i}_0=(1,2,3,1,2,1)$. This gives the ordering
    $$\alpha^{\mathbf{i}}_1=\alpha_{12} , \quad
    \alpha^{\mathbf{i}}_2=\alpha_{13} , \quad
    \alpha^{\mathbf{i}}_3=\alpha_{14} , \quad
    \alpha^{\mathbf{i}}_4=\alpha_{34} , \quad
    \alpha^{\mathbf{i}}_5=\alpha_{24} , \quad
    \alpha^{\mathbf{i}}_6=\alpha_{23}.
    $$

The coordinate changes we require, firstly between $\boldsymbol{m}'$ and $\boldsymbol{m}$, secondly between $\boldsymbol{m}$ and $\boldsymbol{z}$ (given by Theorem \ref{thm coord change}), and thirdly between $\boldsymbol{z}$ and $\boldsymbol{z}'$ are respectively as follows:
    $$\begin{aligned}
    m'_1 &= m_1 & m_1 &= z_6 \hspace{1.5cm}& z_1 &= z'_1 \\
    m'_2 &= m_2 & m_2 &= z_4 & z_2 &= z'_2 \\
    m'_3 &= m_3 & m_3 &= z_1 & z_3 &= z'_3 \\
    m'_4 &= \frac{m_5(m_4+m_6)}{m_4} \hspace{1.5cm} & m_4 &= \frac{z_5}{z_4} & z_4 &= \frac{z'_5z'_6}{z'_4z'_6+z'_5} \\
    m'_5 &= \frac{m_4m_5}{m_4+m_6} & m_5 &= \frac{z_2}{z_1} & z_5 &= z'_4z'_6 \\
    m'_6 &= \frac{m_5(m_4+m_6)}{m_6} & m_6 &= \frac{z_3}{z_2} & z_6 &= \frac{z'_4z'_6+z'_5}{z'_6} \\
    \end{aligned}
    $$
Combining the coordinate changes we have
    $$\begin{aligned}
    m'_{\alpha_{12}} & 
    = \frac{z'_4z'_6+z'_5}{z'_6} \hspace{1.5cm}&
        m'_{\alpha_{34}} & 
        = \frac{z'_2}{z'_1} + \frac{z'_3z'_5}{z'_1z'_4(z'_4z'_6+z'_5)} \\
    m'_{\alpha_{13}} & 
    = \frac{z'_5z'_6}{z'_4z'_6+z'_5} &
        m'_{\alpha_{24}} & 
        = \frac{z'_3z'_4(z'_4z'_6+z'_5)}{z'_2z'_4(z'_4z'_6+z'_5)+z'_3z'_5} \\
    m'_{\alpha_{14}} & 
    = z'_1 &
        m'_{\alpha_{23}} & 
        = \frac{z'_2z'_4z'_6}{z'_1z'_3} + \frac{z'_5z'_6}{z'_1(z'_4z'_6+z'_5)} \\
    \end{aligned}
    $$
\end{ex}

It follows from this example that, in general, given some dominant weight $\lambda$ and two reduced expressions $\mathbf{i}, \mathbf{i}'$ for $w_0$, the two polytopes $\mathcal{P}^{\mathbf{i}}_{\lambda,\boldsymbol{\mu}'}$ and $\mathcal{P}^{\mathbf{i}'}_{\lambda,\boldsymbol{\mu}''}$ are related by a piecewise-linear map. However in contrast, the respective tropical critical points $p_{\lambda, \boldsymbol{\mu}'}^{\mathrm{trop}}$ and $p_{\lambda, \boldsymbol{\mu}''}^{\mathrm{trop}}$ coincide:

\begin{prop} \label{prop trop crit pt independent of i}
For a given dominant weight $\lambda$, the tropical critical point $p_{\lambda, \boldsymbol{\mu}'}^{\mathrm{trop}}$ is independent of the choice of reduced expression $\mathbf{i}$ for $w_0$.
\end{prop}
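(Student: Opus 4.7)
The plan is to reduce to the case of a single braid or commutation move relating $\mathbf{i}$ to some other reduced expression $\mathbf{i}'$, and then argue inductively on the number of moves needed to connect any two reduced expressions for $w_0$.

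For a commutation move $i,j \leftrightarrow j,i$ with $|i-j|\geq 2$, the factors $\mathbf{y}^{\vee}_i(\cdot)$ and $\mathbf{y}^{\vee}_j(\cdot)$ commute, so the change of toric chart simply permutes the two coordinates indexed by the same pair of positive roots. Under the identification $\mathbb{R}^N \cong \mathbb{R}^{R_+}$ supplied by (\ref{eqn ordering on R plus}), this move has no effect on the tropical critical point. For a braid move $i,j,i \leftrightarrow j,i,j$ in positions $k-1,k,k+1$, by (\ref{eqn pos root sequences iji jij}) the three affected positive roots are $\alpha,\alpha+\beta,\beta$ in both orderings, and the coordinate change is given explicitly by (\ref{eqn alpha beta coord change iji jij}). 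Tropicalising this yields
\begin{align*}
\mu''_{\alpha} &= \mu'_{\alpha+\beta} + \min\{\mu'_\alpha,\mu'_\beta\} - \mu'_\beta, \\
\mu''_{\alpha+\beta} &= \mu'_\alpha + \mu'_\beta - \min\{\mu'_\alpha,\mu'_\beta\} \;=\; \max\{\mu'_\alpha,\mu'_\beta\}, \\
\mu''_{\beta} &= \mu'_{\alpha+\beta} + \min\{\mu'_\alpha,\mu'_\beta\} - \mu'_\alpha,
\end{align*}
while all other coordinates are unchanged. A direct substitution shows: if $\mu'_{\alpha+\beta}=\max\{\mu'_\alpha,\mu'_\beta\}$, then $\mu''_\gamma = \mu'_\gamma$ for each $\gamma\in\{\alpha,\alpha+\beta,\beta\}$.

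It therefore suffices to prove that, at the tropical critical point, the valuations $\mu'_\gamma$ satisfy $\mu'_{\alpha+\beta}=\max\{\mu'_\alpha,\mu'_\beta\}$ whenever $\alpha,\alpha+\beta,\beta$ is the triple of positive roots associated with a braid move in the current reduced expression. I would prove this by induction on the number of moves transforming $\mathbf{i}_0$ into $\mathbf{i}$. The base case $\mathbf{i}=\mathbf{i}_0$ is Corollary \ref{cor nu'_i coords ideal filling for lambda}: the tropical critical point in the $\mathbf{i}_0$-chart, reindexed by $R_+$ via $\mu_{\alpha_{ij}} = n_{ij}$, is precisely the ideal filling for $\lambda$, and any ideal filling satisfies $n_{kj}=\max_{k\leq m\leq j-1}\{n_{m,m+1}\}=\max\{n_{kl},n_{lj}\}$ for every $k<l<j$. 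Since the roots in any braid-move triple are necessarily of the form $\alpha=\alpha_{lj}$, $\beta=\alpha_{kl}$, $\alpha+\beta=\alpha_{kj}$ for some $k<l<j$, the required maximum condition holds. The inductive step is then immediate: the tropicalised coordinate change displayed above, combined with the inductive hypothesis, shows that the tropical critical point viewed in $\mathbb{R}^{R_+}$ is unchanged by a single move, hence still coincides with the ideal filling for $\lambda$, hence still satisfies the max condition for every subsequent braid move.

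The main obstacle is purely bookkeeping: aligning which positive-root triples arise in which braid moves, and verifying that the braid-move coordinate change is indeed the one given by (\ref{eqn alpha beta coord change iji jij}). Both points are already essentially worked out in the paper (via (\ref{eqn pos root sequences iji jij}) and the derivation of (\ref{eqn alpha beta coord change iji jij})), so the remaining work is the direct algebraic verification of the fixed-point identity for the tropical transformation, which is an elementary $\min$--$\max$ computation.
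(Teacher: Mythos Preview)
Your proposal is correct and follows essentially the same approach as the paper's proof. The paper also reduces to a single braid move, writes down the same tropicalised coordinate change (\ref{eqn trop coord change mu' mu''}), and verifies that the ideal filling condition $\mu'_{\alpha+\beta}=\max\{\mu'_\alpha,\mu'_\beta\}$ forces the transformation to be the identity; the only difference is that you make the induction on the number of moves from $\mathbf{i}_0$ explicit (and justify the max condition via $n_{kj}=\max\{n_{kl},n_{lj}\}$ for all $k<l<j$), whereas the paper leaves this implicit in the phrase ``it suffices to consider two reduced expressions related by a single transformation'' together with the citation of Corollary~\ref{cor nu'_i coords ideal filling for lambda}.
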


\begin{proof}
It suffices to consider two reduced expressions $\mathbf{i}, \mathbf{i}'$ for $w_0$ that are related by a single transformation (\ref{eqn iji jij swap})
in positions $k-1,k,k+1$. Then the respective sequences of positive roots are given by (\ref{eqn pos root sequences iji jij}) and the coordinate change is given by (\ref{eqn alpha beta coord change iji jij}). We see that the tropical coordinate change is given by
    \begin{equation} \label{eqn trop coord change mu' mu''}
    \begin{aligned}
    \mu''_{\alpha} &= \mu'_{\alpha+\beta} + \min\{ \mu'_{\alpha}, \mu'_{\beta}\} - \mu'_{\beta}, \\
    \mu''_{\alpha+\beta} &= \mu'_{\alpha}+\mu'_{\beta} - \min\{ \mu'_{\alpha}, \mu'_{\beta}\}, \\
    \mu''_{\beta} &= \mu'_{\alpha+\beta} + \min\{ \mu'_{\alpha}, \mu'_{\beta}\} - \mu'_{\alpha}. \\
    \end{aligned}
    \end{equation}

Recall that the positive root $\alpha+\beta$ must appear either to the right of $\alpha$ and above $\beta$, or to the right of $\beta$ and above $\alpha$ in the arrangement (\ref{eqn alpha ij arrangement}). Consequently $\mu'_{\alpha+\beta}$ must appear either to the right of $\mu'_{\alpha}$ and above $\mu'_{\beta}$, or to the right of $\mu'_{\beta}$ and above $\mu'_{\alpha}$ in the filling
    $$\left\{ n_{ij}=\mu'_{\alpha_{ij}}=\mathrm{Val}_{\mathbf{K}}\left(m'_{\alpha_{ij}}\right)\right\}_{1\leq i < j \leq n}
    $$
(c.f. Corollary \ref{cor nu'_i coords ideal filling for lambda}), and similarly for $\mu''_{\alpha+\beta}$ in the respective filling. If we suppose that our filling is an ideal filling (for $\lambda$), then it follows that
    $$\mu'_{\alpha+\beta} = \max\{ \mu'_{\alpha}, \mu'_{\beta} \}.
    $$
Thus, by considering the tropical coordinate change (\ref{eqn trop coord change mu' mu''}), we see that at a critical point
    $$\begin{aligned}
    \mu''_{\alpha} &= \mu'_{\alpha+\beta} + \min\{ \mu'_{\alpha}, \mu'_{\beta}\} - \mu'_{\beta} = \mu'_{\alpha} , \\
    \mu''_{\alpha+\beta} &= \mu'_{\alpha}+\mu'_{\beta} - \min\{ \mu'_{\alpha}, \mu'_{\beta}\} 
        = \mu'_{\alpha+\beta} , \\
    \mu''_{\beta} &= \mu'_{\alpha+\beta} + \min\{ \mu'_{\alpha}, \mu'_{\beta}\} - \mu'_{\alpha} = \mu'_{\beta}. \\
    \end{aligned}
    $$
It follows that if we index our coordinates by positive roots then the tropical critical point is independent of the choice of reduced expression for $w_0$.
\end{proof}


\newpage

\chapter*{Partial flag varieties}
\addcontentsline{toc}{chapter}{Partial flag varieties}
\fancyhead[R]{Partial flag varieties}

\section{Notation and definitions} \label{sec G/P Notation and definitions}
\fancyhead[L]{5 \ \ Notation and definitions}

In this section we build on Section \ref{subsec Notation and definitions}, introducing notation which will enable us to extend from the setting of full flag varieties to that of partial flag varieties.

We recall from Section \ref{subsec Notation and definitions} that, for $i \in I=\{1, \ldots, n-1\}$, the simple reflections in the Weyl group $s_i \in W$ have representatives in $N_G(T)$ given by $\bar{s}_i$, we denote their inverses by $\dot{s}_i$:
    $$\bar{s}_i = \mathbf{x}_i(-1)\mathbf{y}_i(1)\mathbf{x}_i(-1) = \phi_i\begin{pmatrix} 0 & -1 \\ 1 & 0 \end{pmatrix}, \quad
    \dot{s}_i =\bar{s}_i^{-1}= \mathbf{x}_i(1)\mathbf{y}_i(-1)\mathbf{x}_i(1) = \phi_i\begin{pmatrix} 0 & 1 \\ -1 & 0 \end{pmatrix}.
    $$

Let $P \supseteq B$ be a (fixed) parabolic subgroup of $G=GL_n(\mathbb{K})$ and take $I_P = \{ i \in I \ | \ \dot{s}_i \in P \}$. We denote the complement of $I_P$ in $I$ by
    $$I^P = \{n_1, \ldots, n_l\}, \quad \text{where } 1 \leq n_1 < \ldots < n_l.
    $$
For ease of notation we set $n_{l+1}:=n$ and $n_0:=0$.

Let $W_P$ denote the parabolic subgroup of $W$ associated to $P$, and write $W^P$ for the set of minimal length coset representatives in $W/W_P$:
    $$\begin{aligned}
    W_P &= \langle s_i \ | \ i \in I_P \rangle, \\
    W^P &= \left\{ w \in W \ | \ l(ws_i) > l(w) \text{ for all } i \in I_P \right\}.
    \end{aligned}
    $$
We denote the longest element of $W_P$ by $w_P$, for example $w_B=1$.

We recall from Section \ref{subsec Notation and definitions} that the roots and positive roots of $G$ are
    $$R = \{\alpha_{ij} \ \vert \ i \neq j\} \text{ and } R_+ = \{\alpha_{ij} \ \vert \ i < j\}
    $$
respectively, and the simple roots of $G$ are $\{ \alpha_i \ \vert \ i \in I\}$ where we write $\alpha_i=\alpha_{i, i+1}$ for $i \in I$.

Since from now on we will be interested in partial flag varieties $G/P$, the set of simple roots we need to consider is $\left\{ \alpha_i \ \vert \ i \in I^P \right\}$. The respective set of positive roots is
    $$R^P_+ = \bigcup_{n_r \in I^P} \left\{ \alpha_{ij} \ | \ 1 \leq i \leq n_r, n_{r}+1 \leq j \leq n_{r+1}  \right\}
    $$
and similarly for the set of positive coroots $\alpha_{ij}^{\vee}$, denoted ${R^P_+}^{\vee}$.

Let $\mathbf{i}=(i_1, \ldots, i_M)$ stand for an arbitrary reduced expression $s_{i_1} \cdots s_{i_M}$ for $w_Pw_0$, then we have an ordering on the set of positive roots $R^P_+$ given by
    $$\alpha^{\mathbf{i}}_j=\begin{cases}
    \alpha_{i_1, i_1+1} & \text{for } j=1, \\
    s_{i_1}\cdots s_{i_{j-1}}\alpha_{i_j, i_j+1} & \text{for } j=2, \ldots M.
    \end{cases}
    $$

Writing $T^{W^P}$ for the fixed part of $T$, we denote the set of dominant integral weights by
    $${X^*\left(T^{W^P}\right)}^+ = \left\{\lambda \in X^*\left(T^{W^P}\right) \ \big\vert \ \langle \lambda, \alpha_{ij}^{\vee} \rangle \geq 0 \ \forall \alpha_{ij}^{\vee} \in {R^P_+}^{\vee} \right\}.
    $$

\section{Landau--Ginzburg models} \label{sec G/P Landau-Ginzburg models}
\fancyhead[L]{6 \ \ Landau--Ginzburg models}

In this section we describe the mirror Landau--Ginzburg model to the partial flag variety $G/P$, generalising the one from the $G/B$ setting given in Section \ref{sec Mirror symmetry for G/B applied to representation theory}. We also define the so-called highest weight and weight maps in this more general case.

The Landau--Ginzburg model for $G/P$ \cite{Rietsch2008} consists of a pair $(Z_P,\mathcal{W}_P)$, where $Z_P \subset G^{\vee}$ is an affine variety and $\mathcal{W}_P : Z_P \to \mathbb{K}^*$ is a holomorphic function called the superpotential. The subvariety $Z_P$ is given by
    $$Z_P := B_-^{\vee} \cap U^{\vee} \left(T^{\vee}\right)^{W_P} \dot{w}_P \bar{w}_0 U^{\vee}.
    $$
The superpotential $\mathcal{W}_P$ is defined to be
    $$\mathcal{W}_P : Z_P \to \mathbb{K}^*, \quad u_Ld\dot{w}_P\bar{w}_0u_R \mapsto \chi(u_L)+ \chi(u_R)
    $$
where $u_L, u_R \in U^{\vee}$ and $d \in \left(T^{\vee}\right)^{W_P}$, and we recall from Section \ref{sec Mirror symmetry for G/B applied to representation theory} that $\chi:U^{\vee} \to \mathbb{K}$ is the map giving the sum of above-diagonal elements
    $$\chi(u) := \sum_{i=1}^{n-1} u_{i\, i+1}, \quad u=(u_{ij})\in U^{\vee}.
    $$

Similar to the $G/B$ setting, we equip $Z_P$ with highest weight and weight maps. The highest weight map recovers the original torus factor, $d$, as follows:
    $$\mathrm{hw}_P : Z_P \to 
    T^{\vee}, \quad u_Ld\dot{w}_P\bar{w}_0u_R \mapsto d.
    $$
For the weight map we first note that each element $b \in Z_P$ may be written as $b=[b]_-[b]_0$ with $[b]_- \in U_-^{\vee}$, $[b]_0\in T^{\vee}$. Then the weight map \cite{BerensteinKazhdan2007} 
is given by the projection
    $$
    \mathrm{wt}_P : Z_P \to T^{\vee}, \quad b \mapsto [b]_0.
    $$
Similar to the $G/B$ case, although $\mathrm{hw}_P$ is defined on all of $U^{\vee} \left(T^{\vee}\right)^{W_P} \dot{w}_P \bar{w}_0 U^{\vee}$ and $\mathrm{wt}_P$ is defined on all of $B_-^{\vee}$, these maps will only be of relevance to us as maps on $Z_P$.

\section{Quivers for partial flag varieties} \label{sec Quivers for partial flag varieties}
\fancyhead[L]{7 \ \ Quivers for partial flag varieties}


We wish to construct a coordinate system for the mirror to $G/P$ which is analogous to the ideal toric chart given in the $G/B$ case. Rather than using an analogous `string' toric chart as our starting point, we instead begin with a generalisation, originally due to Batyrev, Ciocan-Fontanine, Kim and van Straten (\cite{BatyrevEtAl2000}), of the Givental-type quivers used in the $G/B$ setting.

In this section we first detail the quiver construction in the $G/P$ case. Then, using this, we generalise the three tori defined in terms of the quiver, as well as the superpotential, highest weight and weight maps. We also extend the definition of the quiver toric chart to this setting and conclude with a further consideration of the weight matrix.

\subsection{Constructing quivers} \label{subsec Constructing quivers}
\fancyhead[L]{7.1 \ \ Constructing quivers}

%
Let $P \supseteq B$ be a (fixed) parabolic subgroup of $G=GL_n(\mathbb{K})$, with $I^P = \{n_1, \ldots, n_l\}$, $1 \leq n_1 < \ldots < n_l$. We will write $\mathcal{F}_{n_1, \ldots, n_l}(\mathbb{K}^n)$ for the partial flag variety $G/P$. This can be thought of as the variety of flags of subspaces of $\mathbb{K}^n$ of dimensions $n_1, \ldots, n_l$.
Let $k_1, k_2, \ldots, k_{l+1}$ be the sequence of positive integers such that $n_i= k_1 + \cdots + k_i$ for $i=1, \dots, l$ and $n= k_1 + \cdots + k_{l+1}$. Additionally, we recall $n_0:=0$ from Section \ref{sec G/P Notation and definitions}.

We wish to draw a quiver corresponding to $G/P$, denoted $Q_P$. To begin, we take an $n\times n$ square and for each $i=1, \ldots, l+1$ place a square of size $k_i \times k_i$, called $L_i$, in order down the leading diagonal. We fill the space below these $l+1$ squares with unit squares and leave the space above empty. For example see Figure \ref{fig squares no quiver for 2,5,6 n8}.

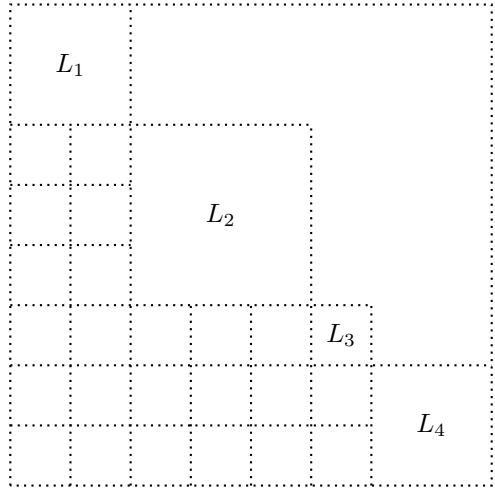
\begin{figure}[ht]
    \centering
\begin{tikzpicture}[scale=0.8]
        \draw[dotted, thick] (0.5,0.5) -- (0.5,8.5) -- (8.5,8.5) -- (8.5,0.5) -- cycle;
        \draw[dotted, thick] (1.5,0.5) -- (1.5,6.5);
        \draw[dotted, thick] (2.5,0.5) -- (2.5,8.5);
        \draw[dotted, thick] (3.5,0.5) -- (3.5,3.5);
        \draw[dotted, thick] (4.5,0.5) -- (4.5,3.5);
        \draw[dotted, thick] (5.5,0.5) -- (5.5,6.5);
        \draw[dotted, thick] (6.5,0.5) -- (6.5,3.5);

        \draw[dotted, thick] (0.5,6.5) -- (5.5,6.5);
        \draw[dotted, thick] (0.5,5.5) -- (2.5,5.5);
        \draw[dotted, thick] (0.5,4.5) -- (2.5,4.5);
        \draw[dotted, thick] (0.5,3.5) -- (6.5,3.5);
        \draw[dotted, thick] (0.5,2.5) -- (8.5,2.5);
        \draw[dotted, thick] (0.5,1.5) -- (6.5,1.5);

        \node at (1.5,7.5) {$L_1$};
        \node at (4,5) {$L_2$};
        \node at (6,3) {$L_3$};
        \node at (7.5,1.5) {$L_4$};
    \end{tikzpicture}
\caption{Diagram of squares for $G/P=\mathcal{F}_{2,5,6}(\mathbb{C}^8)$} \label{fig squares no quiver for 2,5,6 n8}
\end{figure}

We form the quiver $Q_P$ from this diagram as follows:
\begin{itemize}
    \item Place a star vertex at the $\left(\frac{1}{2} , \frac{1}{2} \right)$-shift of the lower left corner of each of the squares $L_i$ on the leading diagonal.
    \item Place a dot vertex at the centre of the remaining unit squares, that is, all unit squares strictly below the diagonal.
    \item Label the vertices by $v_{ij}$, similar to the respective entries in an $n\times n$ matrix.
    \item Connect adjacent dot and star vertices with arrows oriented either upwards or to the left.
\end{itemize}

Having constructed the quiver in this way, we then label $n-1$ rows on our diagram by $E_1, \ldots, E_{n-1}$, from top to bottom. Each row $E_i$ will intersect some square $L_j$ on the diagonal. The row $E_i$ contains the vertical arrows with target vertices $v_{ic}$, as well as, in the square $L_j$, exactly $i-n_{j-1}$ copies of $\dot{s}_i$
each written in a circle. For example see Figure \ref{fig ij quiver in squares for 2,5,6 n8}.

It will be helpful for us to consider an alternate labelling of the dot vertices, defined by their column number and position below the leading diagonal. This labelling is inspired by the $m_{s_k+a}$ indexing of ideal coordinates in the full flag case and is best described visually, so for a continuation of the previous examples see Figure \ref{fig k,a quiver in squares for 2,5,6 n8}. The dot vertices below square $L_i$ are labelled top to bottom and left to right by $v_{(k,a)}$ for $k=n_{i-1}+1, \ldots, n_i$ and $a=n_i-k+1, \ldots, n-k$.

\begin{rem} Although we use the squares $L_i$ to help us define this labelling, the second part of the $v_{(k,a)}$ labels are relative to the leading diagonal itself, not the squares $L_i$. Additionally, we will use parentheses for the second labelling, $v_{(k,a)}$, and not for the first, $v_{ij}$.
\end{rem}

\begin{figure}[ht]
\centering
\makebox[\textwidth][c]{
\begin{minipage}[t]{0.51\textwidth}
\centering
\begin{tikzpicture}[scale=0.8]
        \draw[dotted, thick, color=black!50] (0.5,0.5) -- (0.5,8.5) -- (8.5,8.5) -- (8.5,0.5) -- cycle;
        \draw[dotted, thick, color=black!50] (1.5,0.5) -- (1.5,6.5);
        \draw[dotted, thick, color=black!50] (2.5,0.5) -- (2.5,8.5);
        \draw[dotted, thick, color=black!50] (3.5,0.5) -- (3.5,3.5);
        \draw[dotted, thick, color=black!50] (4.5,0.5) -- (4.5,3.5);
        \draw[dotted, thick, color=black!50] (5.5,0.5) -- (5.5,6.5);
        \draw[dotted, thick, color=black!50] (6.5,0.5) -- (6.5,3.5);

        \draw[dotted, thick, color=black!50] (0.5,6.5) -- (5.5,6.5);
        \draw[dotted, thick, color=black!50] (0.5,5.5) -- (2.5,5.5);
        \draw[dotted, thick, color=black!50] (0.5,4.5) -- (2.5,4.5);
        \draw[dotted, thick, color=black!50] (0.5,3.5) -- (6.5,3.5);
        \draw[dotted, thick, color=black!50] (0.5,2.5) -- (8.5,2.5);
        \draw[dotted, thick, color=black!50] (0.5,1.5) -- (6.5,1.5);

        \node[color=black!50] at (2.85,8) {$L_1$};
        \node[color=black!50] at (5.85,6) {$L_2$};
        \node[color=black!50] at (6.85,3) {$L_3$};
        \node[color=black!50] at (8,2.85) {$L_4$};

        \node (21) at (1,7) {$\boldsymbol{*}$};
            \node at (1.36,7.1) {\scriptsize{$v_{21}$}};
        \node (31) at (1,6) {$\bullet$};
            \node at (1.26,6.2) {\scriptsize{$v_{31}$}};
        \node (41) at (1,5) {$\bullet$};
            \node at (1.26,5.2) {\scriptsize{$v_{41}$}};
        \node (51) at (1,4) {$\bullet$};
            \node at (1.26,4.2) {\scriptsize{$v_{51}$}};
        \node (61) at (1,3) {$\bullet$};
            \node at (1.26,3.2) {\scriptsize{$v_{61}$}};
        \node (71) at (1,2) {$\bullet$};
            \node at (1.26,2.2) {\scriptsize{$v_{71}$}};
        \node (81) at (1,1) {$\bullet$};
            \node at (1.26,1.2) {\scriptsize{$v_{81}$}};

        \node (32) at (2,6) {$\bullet$};
            \node at (2.26,6.2) {\scriptsize{$v_{32}$}};
        \node (42) at (2,5) {$\bullet$};
            \node at (2.26,5.2) {\scriptsize{$v_{42}$}};
        \node (52) at (2,4) {$\bullet$};
            \node at (2.26,4.2) {\scriptsize{$v_{52}$}};
        \node (62) at (2,3) {$\bullet$};
            \node at (2.26,3.2) {\scriptsize{$v_{62}$}};
        \node (72) at (2,2) {$\bullet$};
            \node at (2.26,2.2) {\scriptsize{$v_{72}$}};
        \node (82) at (2,1) {$\bullet$};
            \node at (2.26,1.2) {\scriptsize{$v_{82}$}};

        \node (53) at (3,4) {$\boldsymbol{*}$};
            \node at (3.36,4.) {\scriptsize{$v_{53}$}};
        \node (63) at (3,3) {$\bullet$};
            \node at (3.26,3.2) {\scriptsize{$v_{63}$}};
        \node (73) at (3,2) {$\bullet$};
            \node at (3.26,2.2) {\scriptsize{$v_{73}$}};
        \node (83) at (3,1) {$\bullet$};
            \node at (3.26,1.2) {\scriptsize{$v_{83}$}};

        \node (64) at (4,3) {$\bullet$};
            \node at (4.26,3.2) {\scriptsize{$v_{64}$}};
        \node (74) at (4,2) {$\bullet$};
            \node at (4.26,2.2) {\scriptsize{$v_{74}$}};
        \node (84) at (4,1) {$\bullet$};
            \node at (4.26,1.2) {\scriptsize{$v_{84}$}};

        \node (65) at (5,3) {$\bullet$};
            \node at (5.26,3.2) {\scriptsize{$v_{65}$}};
        \node (75) at (5,2) {$\bullet$};
            \node at (5.26,2.2) {\scriptsize{$v_{75}$}};
        \node (85) at (5,1) {$\bullet$};
            \node at (5.26,1.2) {\scriptsize{$v_{85}$}};

        \node (66) at (6,3) {$\boldsymbol{*}$};
            \node at (6.26,3.2) {\scriptsize{$v_{66}$}};
        \node (76) at (6,2) {$\bullet$};
            \node at (6.26,2.2) {\scriptsize{$v_{76}$}};
        \node (86) at (6,1) {$\bullet$};
            \node at (6.26,1.2) {\scriptsize{$v_{86}$}};

        \node (87) at (7,1) {$\boldsymbol{*}$};
            \node at (7.36,1.1) {\scriptsize{$v_{87}$}};


        \node at (-0.6,1.5) {\small{Row $E_7$}};
        \node at (-0.6,2.5) {\small{Row $E_6$}};
        \node at (-0.6,3.5) {\small{Row $E_5$}};
        \node at (-0.6,4.5) {\small{Row $E_4$}};
        \node at (-0.6,5.5) {\small{Row $E_3$}};
        \node at (-0.6,6.5) {\small{Row $E_2$}};
        \node at (-0.6,7.5) {\small{Row $E_1$}};

        \node[draw, circle, minimum size=4mm, inner sep=1pt] at (1,7.5) {\scriptsize{$\dot{s}_1$}};
        \node[draw, circle, minimum size=4mm, inner sep=1pt] at (3,5.5) {\scriptsize{$\dot{s}_3$}};
        \node[draw, circle, minimum size=4mm, inner sep=1pt] at (3,4.5) {\scriptsize{$\dot{s}_4$}};
        \node[draw, circle, minimum size=4mm, inner sep=1pt] at (4,4.5) {\scriptsize{$\dot{s}_4$}};
        \node[draw, circle, minimum size=4mm, inner sep=1pt] at (7,1.5) {\scriptsize{$\dot{s}_7$}};

        \draw[->] (81) -- (71);
        \draw[->] (71) -- (61);
        \draw[->] (61) -- (51);
        \draw[->] (51) -- (41);
        \draw[->] (41) -- (31);
        \draw[->] (31) -- (21);

        \draw[->] (82) -- (72);
        \draw[->] (72) -- (62);
        \draw[->] (62) -- (52);
        \draw[->] (52) -- (42);
        \draw[->] (42) -- (32);

        \draw[->] (83) -- (73);
        \draw[->] (73) -- (63);
        \draw[->] (63) -- (53);

        \draw[->] (84) -- (74);
        \draw[->] (74) -- (64);

        \draw[->] (85) -- (75);
        \draw[->] (75) -- (65);

        \draw[->] (86) -- (76);
        \draw[->] (76) -- (66);


        \draw[->] (32) -- (31);
        \draw[->] (42) -- (41);
        \draw[->] (52) -- (51);
        \draw[->] (62) -- (61);
        \draw[->] (72) -- (71);
        \draw[->] (82) -- (81);

        \draw[->] (53) -- (52);
        \draw[->] (63) -- (62);
        \draw[->] (73) -- (72);
        \draw[->] (83) -- (82);

        \draw[->] (64) -- (63);
        \draw[->] (74) -- (73);
        \draw[->] (84) -- (83);

        \draw[->] (65) -- (64);
        \draw[->] (75) -- (74);
        \draw[->] (85) -- (84);

        \draw[->] (66) -- (65);
        \draw[->] (76) -- (75);
        \draw[->] (86) -- (85);

        \draw[->] (87) -- (86);

\end{tikzpicture}
\caption{Quiver $Q_P$ for $\mathcal{F}_{2,5,6}(\mathbb{C}^8)$ with vertices labelled by $v_{ij}$} \label{fig ij quiver in squares for 2,5,6 n8}
\end{minipage} \hspace{0.5cm}
\begin{minipage}[t]{0.51\textwidth}
    \centering
\begin{tikzpicture}[scale=0.8]
        \draw[dotted, thick, color=black!50] (0.5,0.5) -- (0.5,8.5) -- (8.5,8.5) -- (8.5,0.5) -- cycle;
        \draw[dotted, thick, color=black!50] (1.5,0.5) -- (1.5,6.5);
        \draw[dotted, thick, color=black!50] (2.5,0.5) -- (2.5,8.5);
        \draw[dotted, thick, color=black!50] (3.5,0.5) -- (3.5,3.5);
        \draw[dotted, thick, color=black!50] (4.5,0.5) -- (4.5,3.5);
        \draw[dotted, thick, color=black!50] (5.5,0.5) -- (5.5,6.5);
        \draw[dotted, thick, color=black!50] (6.5,0.5) -- (6.5,3.5);

        \draw[dotted, thick, color=black!50] (0.5,6.5) -- (5.5,6.5);
        \draw[dotted, thick, color=black!50] (0.5,5.5) -- (2.5,5.5);
        \draw[dotted, thick, color=black!50] (0.5,4.5) -- (2.5,4.5);
        \draw[dotted, thick, color=black!50] (0.5,3.5) -- (6.5,3.5);
        \draw[dotted, thick, color=black!50] (0.5,2.5) -- (8.5,2.5);
        \draw[dotted, thick, color=black!50] (0.5,1.5) -- (6.5,1.5);

        \node[color=black!50] at (2.85,8) {$L_1$};
        \node[color=black!50] at (5.85,6) {$L_2$};
        \node[color=black!50] at (6.85,3) {$L_3$};
        \node[color=black!50] at (8,2.85) {$L_4$};

        \node (21) at (1,7) {$\boldsymbol{*}$};
        \node (31) at (1,6) {$\bullet$};
            \node at (1.42,6.2) {\scriptsize{$v_{(1,2)}$}};
        \node (41) at (1,5) {$\bullet$};
            \node at (1.42,5.2) {\scriptsize{$v_{(1,3)}$}};
        \node (51) at (1,4) {$\bullet$};
            \node at (1.42,4.2) {\scriptsize{$v_{(1,4)}$}};
        \node (61) at (1,3) {$\bullet$};
            \node at (1.42,3.2) {\scriptsize{$v_{(1,5)}$}};
        \node (71) at (1,2) {$\bullet$};
            \node at (1.42,2.2) {\scriptsize{$v_{(1,6)}$}};
        \node (81) at (1,1) {$\bullet$};
            \node at (1.42,1.2) {\scriptsize{$v_{(1,7)}$}};

        \node (32) at (2,6) {$\bullet$};
            \node at (2.42,6.2) {\scriptsize{$v_{(2,1)}$}};
        \node (42) at (2,5) {$\bullet$};
            \node at (2.42,5.2) {\scriptsize{$v_{(2,2)}$}};
        \node (52) at (2,4) {$\bullet$};
            \node at (2.42,4.2) {\scriptsize{$v_{(2,3)}$}};
        \node (62) at (2,3) {$\bullet$};
            \node at (2.42,3.2) {\scriptsize{$v_{(2,4)}$}};
        \node (72) at (2,2) {$\bullet$};
            \node at (2.42,2.2) {\scriptsize{$v_{(2,5)}$}};
        \node (82) at (2,1) {$\bullet$};
            \node at (2.42,1.2) {\scriptsize{$v_{(2,6)}$}};

        \node (53) at (3,4) {$\boldsymbol{*}$};
        \node (63) at (3,3) {$\bullet$};
            \node at (3.42,3.2) {\scriptsize{$v_{(3,3)}$}};
        \node (73) at (3,2) {$\bullet$};
            \node at (3.42,2.2) {\scriptsize{$v_{(3,4)}$}};
        \node (83) at (3,1) {$\bullet$};
            \node at (3.42,1.2) {\scriptsize{$v_{(3,5)}$}};

        \node (64) at (4,3) {$\bullet$};
            \node at (4.42,3.2) {\scriptsize{$v_{(4,2)}$}};
        \node (74) at (4,2) {$\bullet$};
            \node at (4.42,2.2) {\scriptsize{$v_{(4,3)}$}};
        \node (84) at (4,1) {$\bullet$};
            \node at (4.42,1.2) {\scriptsize{$v_{(4,4)}$}};

        \node (65) at (5,3) {$\bullet$};
            \node at (5.42,3.2) {\scriptsize{$v_{(5,1)}$}};
        \node (75) at (5,2) {$\bullet$};
            \node at (5.42,2.2) {\scriptsize{$v_{(5,2)}$}};
        \node (85) at (5,1) {$\bullet$};
            \node at (5.42,1.2) {\scriptsize{$v_{(5,3)}$}};

        \node (66) at (6,3) {$\boldsymbol{*}$};
        \node (76) at (6,2) {$\bullet$};
            \node at (6.42,2.2) {\scriptsize{$v_{(6,1)}$}};
        \node (86) at (6,1) {$\bullet$};
            \node at (6.42,1.2) {\scriptsize{$v_{(6,2)}$}};

        \node (87) at (7,1) {$\boldsymbol{*}$};


        \node at (-0.6,1.5) {\small{Row $E_7$}};
        \node at (-0.6,2.5) {\small{Row $E_6$}};
        \node at (-0.6,3.5) {\small{Row $E_5$}};
        \node at (-0.6,4.5) {\small{Row $E_4$}};
        \node at (-0.6,5.5) {\small{Row $E_3$}};
        \node at (-0.6,6.5) {\small{Row $E_2$}};
        \node at (-0.6,7.5) {\small{Row $E_1$}};

        \node[draw, circle, minimum size=4mm, inner sep=1pt] at (1,7.5) {\scriptsize{$\dot{s}_1$}};
        \node[draw, circle, minimum size=4mm, inner sep=1pt] at (3,5.5) {\scriptsize{$\dot{s}_3$}};
        \node[draw, circle, minimum size=4mm, inner sep=1pt] at (3,4.5) {\scriptsize{$\dot{s}_4$}};
        \node[draw, circle, minimum size=4mm, inner sep=1pt] at (4,4.5) {\scriptsize{$\dot{s}_4$}};
        \node[draw, circle, minimum size=4mm, inner sep=1pt] at (7,1.5) {\scriptsize{$\dot{s}_7$}};

        \draw[->] (81) -- (71);
        \draw[->] (71) -- (61);
        \draw[->] (61) -- (51);
        \draw[->] (51) -- (41);
        \draw[->] (41) -- (31);
        \draw[->] (31) -- (21);

        \draw[->] (82) -- (72);
        \draw[->] (72) -- (62);
        \draw[->] (62) -- (52);
        \draw[->] (52) -- (42);
        \draw[->] (42) -- (32);

        \draw[->] (83) -- (73);
        \draw[->] (73) -- (63);
        \draw[->] (63) -- (53);

        \draw[->] (84) -- (74);
        \draw[->] (74) -- (64);

        \draw[->] (85) -- (75);
        \draw[->] (75) -- (65);

        \draw[->] (86) -- (76);
        \draw[->] (76) -- (66);


        \draw[->] (32) -- (31);
        \draw[->] (42) -- (41);
        \draw[->] (52) -- (51);
        \draw[->] (62) -- (61);
        \draw[->] (72) -- (71);
        \draw[->] (82) -- (81);

        \draw[->] (53) -- (52);
        \draw[->] (63) -- (62);
        \draw[->] (73) -- (72);
        \draw[->] (83) -- (82);

        \draw[->] (64) -- (63);
        \draw[->] (74) -- (73);
        \draw[->] (84) -- (83);

        \draw[->] (65) -- (64);
        \draw[->] (75) -- (74);
        \draw[->] (85) -- (84);

        \draw[->] (66) -- (65);
        \draw[->] (76) -- (75);
        \draw[->] (86) -- (85);

        \draw[->] (87) -- (86);

\end{tikzpicture}
\caption{Quiver $Q_P$ for $\mathcal{F}_{2,5,6}(\mathbb{C}^8)$ with dot vertices labelled by $v_{(k,a)}$} \label{fig k,a quiver in squares for 2,5,6 n8}
\end{minipage}}
\end{figure}

For the dot vertices, we can change between these two sets of labellings by sending $v_{ij}$ to $v_{(j,i-j)}$. 
Additionally we may extend the $v_{(k,a)}$ labelling to star vertices in the natural way; if a star vertex $v_{ii}$ appears on the leading diagonal we label it $v_{(i,0)}$.

We let $\mathcal{V}_P=\mathcal{V}^*_P \cup \mathcal{V}^{\bullet}_P$ denote the vertices of the quiver $Q_P$ and denote the set of arrows of $Q_P$ by $\mathcal{A}_P=\mathcal{A}_{P,\mathrm{v}} \cup \mathcal{A}_{P,\mathrm{h}}$. Similar to the $G/B$ case, the vertical arrows $a_{ij}\in \mathcal{A}_{P,\mathrm{v}}$ are labelled such that $h(a_{ij})=v_{ij}$ where $h(a) \in \mathcal{V}_P$ denotes the head of the arrow $a$. The horizontal arrows $b_{ij} \in \mathcal{A}_{P,\mathrm{h}}$ are labelled such that $t(b_{ij})=v_{ij}$ where $t(a)\in \mathcal{V}_P$ denotes the tail of the arrow $a$. Similar to the vertices, we will also label arrows by $(k,a)$ pairs, as required.


\subsection{The superpotential, highest weight and weight maps} \label{subsec The superpotential, highest weight and weight maps}
\fancyhead[L]{7.2 \ \ The superpotential, highest weight and weight maps}

We begin this section by generalising two tori defined in terms of the quiver, the vertex and arrow tori, from the $G/B$ setting to the $G/P$ setting. When we worked with $G/B$ we also described a third torus, the quiver torus. We will extend this to the $G/P$ case in Section \ref{subsec G/P The quiver torus and toric chart}.
In the rest of this section we generalise the superpotential, highest weight and weight maps on the vertex torus, from the $G/B$ case to the $G/P$ case.

The first torus is the vertex torus $(\mathbb{K}^*)^{\mathcal{V}_P}$, with coordinates $x_v$ for $v \in \mathcal{V}_P$. The second torus corresponds to the arrows of the quiver so we call it the arrow torus, denoted $\bar{\mathcal{M}}_P \subset (\mathbb{K}^*)^{\mathcal{A}_P}$. It is given by
    $$ \bar{\mathcal{M}}_P:= \left\{ (r_a)_{a \in \mathcal{A}_P} \in (\mathbb{K}^*)^{\mathcal{A}_P} \ | \ r_{a_1}r_{a_2}=r_{a_3}r_{a_4} \text{ when } a_1,a_2,a_3,a_4 \text{ form a square as in Figure \ref{fig G/P quiver box relations}} \right\}.
    $$
\begin{figure}[ht]
    \centering
\begin{tikzpicture}
    \node (41) at (0,0) {};
    \node (31) at (0,1.5) {};
    \node (42) at (1.5,0) {};
    \node (32) at (1.5,1.5) {};

    \draw[->] (41) -- (31);
    \draw[->] (42) -- (32);

    \draw[->] (32) -- (31);
    \draw[->] (42) -- (41);

    \node at (-0.4,0.75) {$a_2$};
    \node at (1.9,0.75) {$a_3$};

    \node at (0.75,1.75) {$a_4$};
    \node at (0.75,-0.3) {$a_1$};
\end{tikzpicture}
\caption{Subquiver for box relations} \label{fig G/P quiver box relations}
\end{figure}
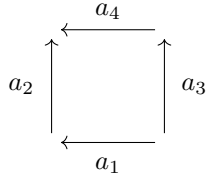

These two tori are related by the following surjection, given coordinate-wise: 
    \begin{equation*} 
    (\mathbb{K}^*)^{\mathcal{V}_P} \to \bar{\mathcal{M}}_P, \quad r_a = \frac{x_{h(a)}}{x_{t(a)}}.
    \end{equation*}


Having defined the vertex and arrow tori we are able to describe the three maps; firstly, the superpotential \cite{BatyrevEtAl2000} is given by
    $$\mathcal{F}_P : \left(\mathbb{K}^* \right)^{\mathcal{V}_P} \to \mathbb{K}^* , \quad \boldsymbol{x}_{\mathcal{V}_P} \mapsto \sum_{a \in \mathcal{A}} \frac{x_{h(a)}}{x_{t(a)}}.
    $$

Secondly, the highest weight map is
    \begin{equation} \label{eqn defn kappa hw map}
    \kappa_P : \left(\mathbb{K}^* \right)^{\mathcal{V}_P} \to \left(T^{\vee}\right)^{W_P} , \quad \boldsymbol{x}_{\mathcal{V}_P} \mapsto \left(\kappa_P\left(\boldsymbol{x}_{\mathcal{V}_P}\right)\right)_{jj}:=x_{v_{n_i, n_{i-1}+1}},
    \quad i=1, \ldots, l+1, \quad j=n_{i-1}+1, \ldots, n_{i}
    \end{equation}
where we note that $v_{n_i, n_{i-1}+1}$ is exactly the star vertex in the square $L_i$.

Finally we have the weight map which, similar to $G/B$ case, is defined in two steps; firstly, as in the $G/B$ case, for $i=1, \ldots,n$ we let $\mathcal{D}_i := \{ v_{i,1}, v_{i+1,2}, \ldots, v_{n, n-i+1} \}$ be the $i$-th diagonal. We note that in some diagonals $\mathcal{D}_i$, a number of the vertices from this set may not appear in the quiver $Q_P$. This will not pose a problem for us, however, as we simply use this an an indexing set: we define
    \begin{equation} \label{eqn G/P xi for wt map defn}
    \Xi_{P,i} := \left( \prod_{v\in \mathcal{D}_i \cap \mathcal{V}_P} x_v \right) \left( \prod_{\substack{v\in \mathcal{D}_i \cap L_j \\ v \notin \mathcal{V}_P}} x_{v_{n_j, n_{j-1}+1}} \right)
    \quad \text{with} \quad \Xi_{P,n+1}:=1,
    \end{equation}
where by the notation $\{ v\in \mathcal{D}_i \cap L_j \} \cap \{ v \notin \mathcal{V}_P \}$ we mean that the vertex $v$ does not appear in our quiver, but if it did it would lie in the intersection $\mathcal{D}_i \cap L_j$, that is, it would be of the form $v_{i+l-1,l} \in \mathcal{D}_i$ with $n_{j-1}+1 \leq l \leq n_j$. Again, $v_{n_j, n_{j-1}+1}$ is exactly the star vertex in the square $L_j$. 
The weight map is then given by
    \begin{equation} \label{eqn G/P defn gamma and t}
    \gamma_P :(\mathbb{K}^*)^{\mathcal{V}_P}\to T^{\vee}, \quad \boldsymbol{x}_{\mathcal{V}_P} \mapsto (t_{P,i})_{i=1, \ldots, n} \quad \text{where} \quad t_{P,i} := \frac{\Xi_{P,i}}{\Xi_{P,i+1}}.
    \end{equation}

\begin{rem}
All three of these maps, $\mathcal{F}_P$, $\kappa_P$ and $\gamma_P$, descend to the respective definitions in the setting of full flag varieties $G/B$, noting that in this case we have $\mathcal{V}_P=\mathcal{V}$. Moreover, we see that the critical point conditions in the quiver are analogous to those in the full flag case (see (\ref{eqn crit pt conditions}) in Section \ref{subsec Critical points on the superpotential}), namely
        \begin{equation}\label{eqn G/P crit pt conditions}
        \sum_{a:h(a)=v} r_a = \sum_{a:t(a)=v} r_a \quad \text{for} \ v \in \mathcal{V}_P^{\bullet}.
    \end{equation}
\end{rem}

\subsection{Constructing matrices from a given quiver decoration} \label{subsec G/P Constructing matrices from a given quiver decoration}
\fancyhead[L]{7.3 \ \ Constructing matrices from a given quiver decoration}

In this section, inspired by Marsh and Rietsch in \cite{MarshRietsch2020}, we construct two matrices from the decoration of the quiver $Q_P$.
These matrices will later form a fundamental part of the generalisation of the quiver toric chart, now on $Z_P$. In particular, they relate to the first term of the quiver chart, which was called $u_1$ in the $G/B$ setting (see Section \ref{subsec The quiver torus as another toric chart on Z}).

The two matrices in question may be thought of as maps on the vertex (or equivalently arrow) torus, we denote them by
    $$\begin{aligned}
    g_L &: \left(\mathbb{K}^*\right)^{\mathcal{V}_P} \to G^{\vee}, 
    \\
    u_L &: \left(\mathbb{K}^*\right)^{\mathcal{V}_P} \to U^{\vee}.
    \end{aligned}
    $$
To simplify notation, unless otherwise stated we will write $g_L$ for the matrix $g_L(\boldsymbol{x}_{\mathcal{V}_P}) \in G^{\vee}$, and similarly for $u_L$.

In order to construct these two maps we require the following definition:
\begin{defn} \label{defn 1 paths leaving}
A $1$-path is a path in $Q_P$ which contain exactly one vertical arrow. We say a $1$-path beginning (equiv. ending) at a given vertex is minimal if it is the shortest such path.

For each dot vertex $v \in \mathcal{V}_P^{\bullet}$ there is a unique minimal $1$-path which begins at this vertex, which we denote by $p(v)$.
\end{defn}

We are now ready to construct the map $g_L$ following Marsh and Rietsch, by considering minimal $1$-paths in the quiver $Q_P$:
\begin{itemize}
    \item Starting at the bottom-left vertex of $Q_P$ and working upwards, we list the minimal $1$-paths leaving each dot vertex in succession, followed by the circled $\dot{s}_i$'s which appear at the top of the column.
    \item We repeat this, working column by column to the right, until we have treated column $n_l$, in other words we exclude any contribution from within the square $L_{l+1}$. 
    \item Then, to the circled $\dot{s}_i$'s we associate the obvious factors $\dot{s}_i$. To any $1$-path which crosses row $E_i$, we associate the element $\mathbf{x}^{\vee}_i(x_{v_e}/x_{v_s}) \in U^{\vee}$, where $x_{v_s}$ is the vertex coordinate at the start of the path $x_{v_e}$ is the vertex coordinate at the end of the path. Note that the quotient $x_{v_e}/x_{v_s}$ is equal to the product of arrow coordinates of all the arrows in the respective $1$-path.
\end{itemize}
The matrix we obtain from taking the product of $\mathbf{x}^{\vee}_i$ and $\dot{s}_i$ factors in the order defined by the above list, is $g_L$.

\begin{rem} \label{rem G/P gL in G/B case gives u1}
Constructing $g_L$ from the quiver corresponding to $G/B$ gives the matrix $u_1$.
\end{rem}

Since we wish to generalise the quiver toric chart, we would like the first term to be a matrix in $U^{\vee}$. We have seen in Remark \ref{rem G/P gL in G/B case gives u1} that, using $g_L$, this would be the case in the $G/B$ setting, however in general $g_L \notin U^{\vee}$. Consequently we will use the quiver decoration to construct our second matrix, $u_L \in U^{\vee}$, which we will show is a factor of $g_L$. Both $u_L$ and $g_L$ will be helpful later.

To define $u_L$ we proceed in a similar way to the construction of $g_L$. However there are two differences when we construct $u_L$, the first being that when we list the minimal $1$-paths we do not include any $\dot{s}_i$'s. The second difference comes in the association of elements $\mathbf{x}^{\vee}_i(x_{v_e}/x_{v_s}) \in U^{\vee}$ to $1$-paths crossing row $E_i$. If a $1$-path in our list has length $1$ then we treat it as before, namely we assign the element $\mathbf{x}^{\vee}_i(x_{v_e}/x_{v_s}) \in U^{\vee}$. To each $1$-path of length $\alpha\geq2$ in our list, we associate the element
    $$X^{\vee}_{i,\alpha}(x_{v_e}/x_{v_s}):= I + \frac{x_{v_t}}{x_{v_s}}E_{i-\alpha+1,i+1}
    $$
where $I$ is the identity matrix and $E_{jk}$ is the elementary matrix with $1$ in position $(j,k)$ and $0$'s elsewhere. The resulting product of $\mathbf{x}^{\vee}_i$ and $X^{\vee}_{i,\alpha}$ factors defines the matrix $u_L$.

In order to succinctly describe the relation between $u_L$ and $g_L$ we introduce some notation: let $\dot{w}_{L_{[i,j]}}$ denote the product of $\dot{s}_k$'s given by the contributions in $g_L$ from within squares $L_i, \ldots, L_j$, with respect to the ordering in $g_L$. If we consider a single square $L_j$, we will write $\dot{w}_{L_j}$ instead of $\dot{w}_{L_{\{j\}}}$ to simplify notation.

More explicitly, for $j=1, \ldots, l+1$, $\dot{w}_{L_j}$ is a representative of the longest element of
    $$\left\langle s_i \ | \ i \in \{ n_{j-1}+1, \ldots, n_j-1 \} \right\rangle \subset W
    $$
given by the reduced expression
    $$(n_j-1, n_j-2, \ldots, n_{j-1}+1, \ldots, n_j-1, n_j-2, n_j-1)
    $$
recalling $n_0=0$ and $n_{l+1}=n$.
This follows from the row set in the quiver $Q_P$ which the interior of $L_j$ intersects, namely the set $\{ E_{n_{j-1}+1}, \ldots, E_{n_j-1} \}$, together with the following two facts. Firstly, $\dot{w}_{L_j}$ is given by a product of $\dot{s}_i$ factors, read from the square $L_j$ by starting at the bottom of the left-most column and working upwards, then proceeding column by column to the right. Secondly, by definition, the intersection of the square $L_j$ and a row $E_i$, contains $i-n_{j-1}$ copies of $\dot{s}_i$.

We see that $\dot{w}_{L_{[i,j]}} = \dot{w}_{L_i} \dot{w}_{L_{i+1}} \cdots \dot{w}_{L_j}$, where some of the $\dot{w}_{L_r}$ may be trivial. For ease of notation we also define $\dot{w}_L := \dot{w}_{L_{[1,l]}}$.

Now by definition, $\dot{w}_{P}$ is a representative of the longest element of
    $$\left\langle s_i \ | \ i \in \{1, \ldots, n-1\} \setminus \{ n_1, \ldots, n_l \} \right\rangle \subseteq W
    $$
and may be written as a product of representatives of the longest elements of
    $$\left\langle s_i \ | \ i \in \{ n_{j-1}+1, \ldots, n_j-1 \} \right\rangle, \quad j=1, \ldots, l+1,
    $$
recalling $n_0=0$ and $n_{l+1}=n$. Consequently we see that
    $$\dot{w}_{P} = \dot{w}_{L_1} \cdots \dot{w}_{L_{l+1}} 
    = \dot{w}_{L_{[1,l+1]}}
    $$
and we have the relation $\dot{w}_L = \dot{w}_P \dot{w}_{L_{l+1}}^{-1}$.

\begin{ex} \label{ex F2,5,6,C8 wLi}
In our running example of $\mathcal{F}_{2,5,6}(\mathbb{C}^8)$ we have
    $$\dot{w}_{L_1}=\dot{s}_1, \quad \dot{w}_{L_2}=\dot{s}_4 \dot{s}_3 \dot{s}_4, \quad \dot{w}_{L_3}=1, \quad \dot{w}_{L_4}=\dot{s}_7 \quad \Rightarrow \quad \dot{w}_P = \dot{s}_1 \dot{s}_4 \dot{s}_3 \dot{s}_4 \dot{s}_7
    $$
\end{ex}

\begin{lem} \label{lem gL in terms of uL and wL}
With the above $\dot{w}_{L_i}$ notation, the matrices $u_L$ and $g_L$ are related by $g_L \dot{w}_{L_{l+1}} = u_L \dot{w}_P$, or equivalently
    \begin{equation} \label{eqn gL in terms of uL and si}
    \quad g_L = u_L \dot{w}_L
    \end{equation}
since by definition $\dot{w}_P = \dot{w}_{L_{[1,l+1]}} = \dot{w}_{L_1} \dot{w}_{L_{2}} \cdots \dot{w}_{L_{l+1}}$ and $\dot{w}_L = \dot{w}_{L_{[1,l]}}= \dot{w}_P \dot{w}_{L_{l+1}}^{-1}$.
\end{lem}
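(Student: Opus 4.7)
The plan is to prove the equivalent statement $g_L = u_L\, \dot{w}_L$ by rewriting $g_L$ so that every Weyl group representative $\dot{s}_i$ circled in the quiver is pushed to the far right of the product, where collectively they form $\dot{w}_L$, and by checking that the remaining unipotent prefix is exactly $u_L$. First I would factor both sides column by column as
\[
g_L \;=\; (P_1 W_1)(P_2 W_2)\cdots(P_{n_l} W_{n_l}), \qquad u_L \;=\; U_1 U_2 \cdots U_{n_l},
\]
where $P_c$ is the product of the $\mathbf{x}^{\vee}$-factors coming from the minimal $1$-paths out of the dot vertices in column $c$, $W_c$ is the bottom-to-top product of the $\dot{s}_i$'s circled at the top of column $c$ within its containing square, and $U_c$ is the corresponding $X^{\vee}$-factor product in $u_L$. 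From the reading convention for $\dot{w}_{L_j}$ one has $\dot{w}_L = W_1 W_2 \cdots W_{n_l}$, so the lemma is equivalent to showing that pushing every $W_c$ past all later $P_{c'}, W_{c'}$ transforms each $P_{c'}$ into $U_{c'}$.

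The algebraic engine is a single commutation identity in $GL_n$, verified directly from $X^{\vee}_{j,\alpha}(z) = I + z\, E_{j-\alpha+1,\,j+1}$ and the matrix form of $\dot{s}_i$:
\[
\dot{s}_{i_0}\, X^{\vee}_{j,\alpha}(z) \;=\; X^{\vee}_{j,\alpha+1}(z)\, \dot{s}_{i_0} \qquad \text{when } i_0 = j-\alpha,
\]
while $\dot{s}_{i_0}$ commutes with $X^{\vee}_{j,\alpha}(z)$ whenever $i_0 \notin \{j-\alpha,\, j-\alpha+1,\, j,\, j+1\}$. Iterated application takes $\mathbf{x}^{\vee}_j(z) = X^{\vee}_{j,1}(z) \rightsquigarrow X^{\vee}_{j,2}(z) \rightsquigarrow \cdots \rightsquigarrow X^{\vee}_{j,\alpha}(z)$ after exactly $\alpha - 1$ compatible pushes, with the scalar $z$ preserved throughout.

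I would then perform the push from right to left in the column index and track its effect on each $P_{c'}$. The main obstacle is the combinatorial bookkeeping: for every dot vertex $v$ in column $c'$ whose minimal $1$-path $p(v)$ has length $\alpha_f\geq 2$, necessarily ending at the star vertex of some blocking square $L_{c_1}$ with $\alpha_f = c' - n_{c_1-1}$, one must show that the push produces exactly $\alpha_f - 1$ well-placed increments. Two structural facts make this work cleanly: (i) only $\dot{s}$'s from the blocking square $L_{c_1}$ itself can interact non-commutatively with the relevant $X^{\vee}_{n_{c_1},\alpha}$-factor, since $\dot{s}$'s from earlier squares $L_{c_0}$ (with $c_0 < c_1$) satisfy $i_0 \leq n_{c_0}-1 \leq n_{c_1-1}-1 < n_{c_1} - \alpha$, and $\dot{s}$'s from later squares never get pushed past columns in $L_{c_1}$; and (ii) within $L_{c_1}$, the explicit form $W_c = \dot{s}_{n_{c_1}-1}\dot{s}_{n_{c_1}-2}\cdots\dot{s}_c$ together with the right-to-left processing order ensures that each $W_c$ contributes exactly one increment, provided by the unique factor $\dot{s}_{n_{c_1}-(c'-c)}$ matching $j - \alpha_{\text{current}}$ at the moment it acts, while all its siblings fall strictly outside the non-commutation set for the current $\alpha$ and commute harmlessly. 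Coefficients $x_{v_e}/x_{v_s}$ are preserved throughout by the commutation identity, so after the push $g_L$ becomes $u_L\dot{w}_L$, as required.
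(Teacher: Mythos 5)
Your proof is correct and follows essentially the same strategy as the paper's: push all $\dot{s}_i$ factors to the right using the commutation $\dot{s}_{i_0}X^{\vee}_{j,\alpha}(z) = X^{\vee}_{j,\alpha+1}(z)\dot{s}_{i_0}$ for $i_0 = j-\alpha$ (the paper writes the $\alpha=1$ base case explicitly and iterates), and verify that only the $\dot{s}$'s from the blocking square increment each factor from $\mathbf{x}^{\vee}_{n_{c_1}}$ to $X^{\vee}_{n_{c_1},\alpha_f}$ while all others commute. Your column-by-column $(P_c,W_c)$ decomposition makes the bookkeeping somewhat more explicit, but the underlying argument is the same.
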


To aid our familiarity with $u_L$ and $g_L$ and motivate the proof of this lemma, we give an example.

\begin{ex} \label{ex gL temp quiver decoration} We will continue our running example of $\mathcal{F}_{2,5,6}(\mathbb{C}^8)$. For ease of notation, in this example we label a number of arrows in the $Q_P$ quiver as in Figure \ref{fig gL temp quiver decoration}, and write $a$ in place of the arrow coordinate $r_a$.

\begin{figure}[ht]
\centering
\begin{tikzpicture}[scale=0.8]
        \draw[dotted, thick, color=black!50] (0.5,0.5) -- (0.5,8.5) -- (8.5,8.5) -- (8.5,0.5) -- cycle;
        \draw[dotted, thick, color=black!50] (1.5,0.5) -- (1.5,6.5);
        \draw[dotted, thick, color=black!50] (2.5,0.5) -- (2.5,8.5);
        \draw[dotted, thick, color=black!50] (3.5,0.5) -- (3.5,3.5);
        \draw[dotted, thick, color=black!50] (4.5,0.5) -- (4.5,3.5);
        \draw[dotted, thick, color=black!50] (5.5,0.5) -- (5.5,6.5);
        \draw[dotted, thick, color=black!50] (6.5,0.5) -- (6.5,3.5);

        \draw[dotted, thick, color=black!50] (0.5,6.5) -- (5.5,6.5);
        \draw[dotted, thick, color=black!50] (0.5,5.5) -- (2.5,5.5);
        \draw[dotted, thick, color=black!50] (0.5,4.5) -- (2.5,4.5);
        \draw[dotted, thick, color=black!50] (0.5,3.5) -- (6.5,3.5);
        \draw[dotted, thick, color=black!50] (0.5,2.5) -- (8.5,2.5);
        \draw[dotted, thick, color=black!50] (0.5,1.5) -- (6.5,1.5);

        \node[color=black!50] at (2.85,8) {$L_1$};
        \node[color=black!50] at (5.85,6) {$L_2$};
        \node[color=black!50] at (6.85,3) {$L_3$};
        \node[color=black!50] at (8,2.85) {$L_4$};

        \node (21) at (1,7) {$\boldsymbol{*}$};
        \node (31) at (1,6) {$\bullet$};
            \node at (0.82,6.34) {\scriptsize{$f$}};
        \node (41) at (1,5) {$\bullet$};
            \node at (0.82,5.34) {\scriptsize{$e$}};
        \node (51) at (1,4) {$\bullet$};
            \node at (0.82,4.34) {\scriptsize{$d$}};
        \node (61) at (1,3) {$\bullet$};
            \node at (0.82,3.34) {\scriptsize{$c$}};
        \node (71) at (1,2) {$\bullet$};
            \node at (0.82,2.34) {\scriptsize{$b$}};
        \node (81) at (1,1) {$\bullet$};
            \node at (0.82,1.34) {\scriptsize{$a$}};

        \node (32) at (2,6) {$\bullet$};
            \node at (1.62,6.2) {\scriptsize{$l$}};
        \node (42) at (2,5) {$\bullet$};
            \node at (1.82,5.34) {\scriptsize{$k$}};
        \node (52) at (2,4) {$\bullet$};
            \node at (1.82,4.34) {\scriptsize{$j$}};
        \node (62) at (2,3) {$\bullet$};
            \node at (1.82,3.34) {\scriptsize{$i$}};
        \node (72) at (2,2) {$\bullet$};
            \node at (1.82,2.34) {\scriptsize{$h$}};
        \node (82) at (2,1) {$\bullet$};
            \node at (1.82,1.34) {\scriptsize{$g$}};

        \node (53) at (3,4) {$\boldsymbol{*}$};
        \node (63) at (3,3) {$\bullet$};
            \node at (2.82,3.34) {\scriptsize{$p$}};
        \node (73) at (3,2) {$\bullet$};
            \node at (2.82,2.34) {\scriptsize{$n$}};
        \node (83) at (3,1) {$\bullet$};
            \node at (2.82,1.34) {\scriptsize{$m$}};

        \node (64) at (4,3) {$\bullet$};
            \node at (3.62,3.2) {\scriptsize{$s$}};
        \node (74) at (4,2) {$\bullet$};
            \node at (3.82,2.34) {\scriptsize{$r$}};
        \node (84) at (4,1) {$\bullet$};
            \node at (3.82,1.34) {\scriptsize{$q$}};

        \node (65) at (5,3) {$\bullet$};
            \node at (4.62,3.2) {\scriptsize{$v$}};
        \node (75) at (5,2) {$\bullet$};
            \node at (4.82,2.34) {\scriptsize{$u$}};
        \node (85) at (5,1) {$\bullet$};
            \node at (4.82,1.34) {\scriptsize{$t$}};

        \node (66) at (6,3) {$\boldsymbol{*}$};
        \node (76) at (6,2) {$\bullet$};
            \node at (5.82,2.34) {\scriptsize{$x$}};
        \node (86) at (6,1) {$\bullet$};
            \node at (5.82,1.34) {\scriptsize{$w$}};

        \node (87) at (7,1) {$\boldsymbol{*}$};


        \node at (-0.6,1.5) {\small{Row $E_7$}};
        \node at (-0.6,2.5) {\small{Row $E_6$}};
        \node at (-0.6,3.5) {\small{Row $E_5$}};
        \node at (-0.6,4.5) {\small{Row $E_4$}};
        \node at (-0.6,5.5) {\small{Row $E_3$}};
        \node at (-0.6,6.5) {\small{Row $E_2$}};
        \node at (-0.6,7.5) {\small{Row $E_1$}};

        \node[draw, circle, minimum size=4mm, inner sep=1pt] at (1,7.5) {\scriptsize{$\dot{s}_1$}};
        \node[draw, circle, minimum size=4mm, inner sep=1pt] at (3,5.5) {\scriptsize{$\dot{s}_3$}};
        \node[draw, circle, minimum size=4mm, inner sep=1pt] at (3,4.5) {\scriptsize{$\dot{s}_4$}};
        \node[draw, circle, minimum size=4mm, inner sep=1pt] at (4,4.5) {\scriptsize{$\dot{s}_4$}};
        \node[draw, circle, minimum size=4mm, inner sep=1pt] at (7,1.5) {\scriptsize{$\dot{s}_7$}};

        \draw[->] (81) -- (71);
        \draw[->] (71) -- (61);
        \draw[->] (61) -- (51);
        \draw[->] (51) -- (41);
        \draw[->] (41) -- (31);
        \draw[->] (31) -- (21);

        \draw[->] (82) -- (72);
        \draw[->] (72) -- (62);
        \draw[->] (62) -- (52);
        \draw[->] (52) -- (42);
        \draw[->] (42) -- (32);

        \draw[->] (83) -- (73);
        \draw[->] (73) -- (63);
        \draw[->] (63) -- (53);

        \draw[->] (84) -- (74);
        \draw[->] (74) -- (64);

        \draw[->] (85) -- (75);
        \draw[->] (75) -- (65);

        \draw[->] (86) -- (76);
        \draw[->] (76) -- (66);


        \draw[->] (32) -- (31);
        \draw[->] (42) -- (41);
        \draw[->] (52) -- (51);
        \draw[->] (62) -- (61);
        \draw[->] (72) -- (71);
        \draw[->] (82) -- (81);

        \draw[->] (53) -- (52);
        \draw[->] (63) -- (62);
        \draw[->] (73) -- (72);
        \draw[->] (83) -- (82);

        \draw[->] (64) -- (63);
        \draw[->] (74) -- (73);
        \draw[->] (84) -- (83);

        \draw[->] (65) -- (64);
        \draw[->] (75) -- (74);
        \draw[->] (85) -- (84);

        \draw[->] (66) -- (65);
        \draw[->] (76) -- (75);
        \draw[->] (86) -- (85);

        \draw[->] (87) -- (86);

    \end{tikzpicture}
\caption{Temporary $Q_P$ quiver labelling for Example {\ref{ex gL temp quiver decoration}}} \label{fig gL temp quiver decoration}
\end{figure}
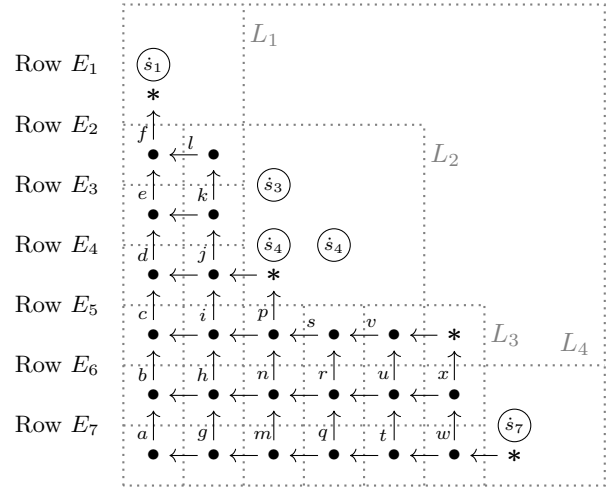

We will use the above construction to obtain $g_L$. We then give an alternate factorisation of this matrix with all $\dot{s}_i$ factors on the right, that is $u_L \dot{w}_L$. Most $\mathbf{x}^{\vee}_i$ factors appear in both factorisations with a similar pattern. For ease of comparison we have underlined the $\mathbf{x}^{\vee}_i$ and $X^{\vee}_{i,\alpha}$ factors which don't appear in both factorisations.

    $$\begin{aligned}
    g_L &=  \mathbf{x}^{\vee}_7(a) \mathbf{x}^{\vee}_6(b) \mathbf{x}^{\vee}_5(c) \mathbf{x}^{\vee}_4(d) \mathbf{x}^{\vee}_3(e) \mathbf{x}^{\vee}_2(f) \dot{s}_1 \\
        &\hspace{0.47cm} \mathbf{x}^{\vee}_7(g) \mathbf{x}^{\vee}_6(h) \mathbf{x}^{\vee}_5(i) \mathbf{x}^{\vee}_4(j) \mathbf{x}^{\vee}_3(k) \underline{\mathbf{x}^{\vee}_2(fl)} \\
        &\hspace{0.47cm} \mathbf{x}^{\vee}_7(m) \mathbf{x}^{\vee}_6(n) \mathbf{x}^{\vee}_5(p) \dot{s}_4 \dot{s}_3 \\
        &\hspace{0.47cm} \mathbf{x}^{\vee}_7(q) \mathbf{x}^{\vee}_6(r) \underline{\mathbf{x}^{\vee}_5(ps)} \dot{s}_4 \\
        &\hspace{0.47cm} \mathbf{x}^{\vee}_7(t) \mathbf{x}^{\vee}_6(u) \underline{\mathbf{x}^{\vee}_5(psv)} \\
        &\hspace{0.47cm} \mathbf{x}^{\vee}_7(w) \mathbf{x}^{\vee}_6(x) \\
    \end{aligned}
    $$
    $$\begin{aligned}
    g_L
    &= \begin{pmatrix}
        1 & 0 & & & & & & \\ & 1 & f & & & & & \\ & & 1 & e & & & & \\ & & & 1 & d & & & \\ & & & & 1 & c & & \\ & & & & & 1 & b & \\ & & & & & & 1 & a \\ & & & & & & & 1
    \end{pmatrix} \dot{s}_1
        \begin{pmatrix}
        1 & 0 & & & & & & \\ & 1 & fl & & & & & \\ & & 1 & k & & & & \\ & & & 1 & j & & & \\ & & & & 1 & i & & \\ & & & & & 1 & h & \\ & & & & & & 1 & g \\ & & & & & & & 1
    \end{pmatrix}
        \begin{pmatrix}
        1 & 0 & & & & & & \\ & 1 & 0 & & & & & \\ & & 1 & 0 & & & & \\ & & & 1 & 0 & & & \\ & & & & 1 & p & & \\ & & & & & 1 & n & \\ & & & & & & 1 & m \\ & & & & & & & 1
    \end{pmatrix} \dot{s}_4 \dot{s}_3 \\
    &\hspace{0.45cm} \times \begin{pmatrix}
        1 & 0 & & & & & & \\ & 1 & 0 & & & & & \\ & & 1 & 0 & & & & \\ & & & 1 & 0 & & & \\ & & & & 1 & ps & & \\ & & & & & 1 & r & \\ & & & & & & 1 & q \\ & & & & & & & 1
    \end{pmatrix} \dot{s}_4
        \begin{pmatrix}
        1 & 0 & & & & & & \\ & 1 & 0 & & & & & \\ & & 1 & 0 & & & & \\ & & & 1 & 0 & & & \\ & & & & 1 & psv & & \\ & & & & & 1 & u & \\ & & & & & & 1 & t \\ & & & & & & & 1
    \end{pmatrix}
        \begin{pmatrix}
        1 & 0 & & & & & & \\ & 1 & 0 & & & & & \\ & & 1 & 0 & & & & \\ & & & 1 & 0 & & & \\ & & & & 1 & 0 & & \\ & & & & & 1 & x & \\ & & & & & & 1 & w \\ & & & & & & & 1
    \end{pmatrix} \\
    \end{aligned}
    $$
Permuting all $\dot{s}_i$ factors to the right, we obtain:
    $$\begin{aligned}
    g_L
    &= \begin{pmatrix}
        1 & 0 & & & & & & \\ & 1 & f & & & & & \\ & & 1 & e & & & & \\ & & & 1 & d & & & \\ & & & & 1 & c & & \\ & & & & & 1 & b & \\ & & & & & & 1 & a \\ & & & & & & & 1
    \end{pmatrix}
        \begin{pmatrix}
        1 & 0 & fl & & & & & \\ & 1 & 0 & & & & & \\ & & 1 & k & & & & \\ & & & 1 & j & & & \\ & & & & 1 & i & & \\ & & & & & 1 & h & \\ & & & & & & 1 & g \\ & & & & & & & 1
    \end{pmatrix}
        \begin{pmatrix}
        1 & 0 & & & & & & \\ & 1 & 0 & & & & & \\ & & 1 & 0 & & & & \\ & & & 1 & 0 & & & \\ & & & & 1 & p & & \\ & & & & & 1 & n & \\ & & & & & & 1 & m \\ & & & & & & & 1
    \end{pmatrix} \\
    &\hspace{0.45cm} \times \begin{pmatrix}
        1 & 0 & & & & & & \\ & 1 & 0 & & & & & \\ & & 1 & 0 & & & & \\ & & & 1 & 0 & ps & & \\ & & & & 1 & 0 & & \\ & & & & & 1 & r & \\ & & & & & & 1 & q \\ & & & & & & & 1
    \end{pmatrix}
        \begin{pmatrix}
        1 & 0 & & & & & & \\ & 1 & 0 & & & & & \\ & & 1 & 0 & 0 & psv & & \\ & & & 1 & 0 & 0 & & \\ & & & & 1 & 0 & & \\ & & & & & 1 & u & \\ & & & & & & 1 & t \\ & & & & & & & 1
    \end{pmatrix}
        \begin{pmatrix}
        1 & 0 & & & & & & \\ & 1 & 0 & & & & & \\ & & 1 & 0 & & & & \\ & & & 1 & 0 & & & \\ & & & & 1 & 0 & & \\ & & & & & 1 & x & \\ & & & & & & 1 & w \\ & & & & & & & 1
    \end{pmatrix} \dot{s}_1 \dot{s}_4 \dot{s}_3 \dot{s}_4 \\
    \end{aligned}
    $$
    $$\begin{aligned}
    g_L
    &= \mathbf{x}^{\vee}_7(a) \mathbf{x}^{\vee}_6(b) \mathbf{x}^{\vee}_5(c) \mathbf{x}^{\vee}_4(d) \mathbf{x}^{\vee}_3(e) \mathbf{x}^{\vee}_2(f) \\
        &\hspace{0.47cm} \mathbf{x}^{\vee}_7(g) \mathbf{x}^{\vee}_6(h) \mathbf{x}^{\vee}_5(i) \mathbf{x}^{\vee}_4(j) \mathbf{x}^{\vee}_3(k) \underline{X^{\vee}_{2,2}(fl)} \\
        &\hspace{0.47cm} \mathbf{x}^{\vee}_7(m) \mathbf{x}^{\vee}_6(n) \mathbf{x}^{\vee}_5(p) \\
        &\hspace{0.47cm} \mathbf{x}^{\vee}_7(q) \mathbf{x}^{\vee}_6(r) \underline{X^{\vee}_{5,2}(ps)}  \\
        &\hspace{0.47cm} \mathbf{x}^{\vee}_7(t) \mathbf{x}^{\vee}_6(u) \underline{X^{\vee}_{5,3}(psv)} \\
        &\hspace{0.47cm} \mathbf{x}^{\vee}_7(w) \mathbf{x}^{\vee}_6(x) \dot{s}_1 \dot{s}_4 \dot{s}_3 \dot{s}_4. \\
    \end{aligned}
    $$
\end{ex}

\begin{proof}[Proof of Lemma {\ref{lem gL in terms of uL and wL}}]
To obtain the relation (\ref{eqn gL in terms of uL and si}), we start with the description of $g_L$ given above and permute all the $\dot{s}_i$ factors to the right. It follows that the resulting sequence of $\dot{s}_i$ factors may be read directly from the quiver $Q_P$ by starting in the left-most column and working upwards, only noting any $\dot{s}_i$'s we come across. Then proceeding column by column to the right in this way, and stopping just before the last square $L_{l+1}$. This product is exactly $\dot{w}_L := \dot{w}_{L_{[1,l]}} = \dot{w}_P \dot{w}_{L_{l+1}}^{-1}$.

It remains to show that, after this sequence of permutations, the remaining factor on the left of $\dot{w}_L$ is $u_L$.
To do this we begin by making some observations on the above permutations in $g_L$. Firstly, due to the constructions of the quiver $Q_P$ and the matrix $g_L$, we only have to permute the $\dot{s}_j$'s past $\mathbf{x}^{\vee}_i(x_{v_e}/x_{v_s})$'s for $j < i$. These matrices commute if $i-j \geq 2$, but if $j=i-1$ then
    $$\begin{gathered}
    \dot{s}_{i-1} \mathbf{x}^{\vee}_i\left(\frac{x_{v_e}}{x_{v_s}}\right)
        =\begin{pmatrix}
        1 & 0 & & \cdots & & & 0 \\
        & \ddots & \ddots & & & & \\
        & & 0 & 1 & \frac{x_{v_e}}{x_{v_s}} & & \\
        & & -1 & 0 & 0 & & \vdots \\
        & & 0 & 0 & 1 & \ddots & \\
        & & & & & \ddots & 0 \\
        & & & & & & 1
            \end{pmatrix}
        =\begin{pmatrix}
        1 & 0 & & \cdots & & & 0 \\
        & \ddots & \ddots & & & & \\
        & & 1 & 0 & \frac{x_{v_e}}{x_{v_s}} & & \\
        & & & 1 & 0 & & \vdots \\
        & & & & 1 & \ddots & \\
        & & & & & \ddots & 0 \\
        & & & & & & 1
            \end{pmatrix} \dot{s}_{i-1} \\
    \Rightarrow \quad \dot{s}_{i-1} \mathbf{x}^{\vee}_i\left(\frac{x_{v_e}}{x_{v_s}}\right)
        = \dot{s}_{i-1} \left(I + \frac{x_{v_e}}{x_{v_s}} E_{i,i+1}\right)
        = \left(I + \frac{x_{v_e}}{x_{v_s}} E_{i-1,i+1}\right) \dot{s}_{i-1}
        = X^{\vee}_{i,2}\left(\frac{x_{v_e}}{x_{v_s}}\right) \dot{s}_{i-1}
    \end{gathered}
    $$
where in both of the matrices written in full, the entry $x_{v_e}/x_{v_s}$ appears in position $(i-1,i+1)$.

This new matrix, $X^{\vee}_{i,2}\left(\frac{x_{v_e}}{x_{v_s}}\right) = \left(I + \frac{x_{v_e}}{x_{v_s}} E_{i-1,i+1}\right)$, will commute with all $\dot{s}_j$'s apart from $\dot{s}_{i-2}$. Permuting $\dot{s}_{i-2}$ past it has a similar effect, the entry $x_{v_e}/x_{v_s}$ will now be found in position $(i-2, i+1)$. This pattern repeats for all permutations.

It follows that we need to keep track of which factors $\mathbf{x}^{\vee}_i(x_{v_e}/x_{v_s})$ will be affected by our sequence of permutations. Thanks to the constructions of the quiver $Q_P$ and matrix $g_L$, we see that the only affected factors are those arising from $1$-paths of length at least $2$.

In particular, suppose we have a $1$-path of length $\alpha\geq 2$ crossing row $E_i$, with corresponding factor $\mathbf{x}^{\vee}_i(x_{v_e}/x_{v_s})$ of $g_L$. We will have to permute a product of $\dot{s}_j$'s past $\mathbf{x}^{\vee}_i(x_{v_e}/x_{v_s})$, however some of these $\dot{s}_j$'s will have no effect on the factor in question. Indeed by the above argument on permutations, together with the locations of the $\dot{s}_j$'s in the quiver $Q_L$, we see that $\mathbf{x}^{\vee}_i(x_{v_e}/x_{v_s})$ will only be affected by permuting the sub-product $\dot{s}_{i-\alpha+1} \cdots \dot{s}_{i-1}$ past it, giving
    $$\dot{s}_{i-\alpha+1} \cdots \dot{s}_{i-1} \mathbf{x}^{\vee}_i\left(\frac{x_{v_e}}{x_{v_s}}\right) = X^{\vee}_{i,\alpha}\left(\frac{x_{v_e}}{x_{v_s}}\right) \dot{s}_{i-\alpha+1} \cdots \dot{s}_{i-1}.
    $$
This sub-product is exactly the contribution to $g_L$ from those $\dot{s}_i$'s which are found both in the same square $L_j$ as the end vertex $v_e$ and also in the same diagonal $\mathcal{D}_k$ as the starting vertex $v_s$.

It follows that after all permutations, we will have expressed $g_L$ as $u_L$ multiplied on the right by $\dot{w}_L$, as desired.
\end{proof}

We will later need to compute minors of $u_L$ in an application of the Chamber Ansatz. As in the $G/B$ case, we do this via graphs which are defined by $\mathbf{x}^{\vee}_j$ factors. Thus it will be useful to be able to express $X^{\vee}_{i,\alpha}(x_{v_e}/x_{v_s})$ as a product of these $\mathbf{x}^{\vee}_j$'s. Using the quiver $Q_P$ we do this recursively as follows:

Consider the $1$-path of length $\alpha$ from $v_s$ to $v_t$ which crosses row $E_i$, as in Figure \ref{fig Subquiver showing 1-path of length alpha}.
\begin{figure}[ht]
\centering
\begin{tikzpicture}
        \draw[dotted, thick, color=black!50] (0.3,2.5) -- (0.5,2.5);
        \draw[dotted, thick, color=black!50] (0.3,1.5) -- (7.1,1.5);
        \draw[dotted, thick, color=black!50] (0.3,0.5) -- (7.1,0.5);

        \draw[dotted, thick, color=black!50] (0.5,0.3) -- (0.5,2.7);
        \draw[dotted, thick, color=black!50] (1.5,0.3) -- (1.5,1.5);
        \draw[dotted, thick, color=black!50] (2.5,0.3) -- (2.5,1.5);
        \draw[dotted, thick, color=black!50] (3.5,0.3) -- (3.5,1.5);
        \draw[dotted, thick, color=black!50] (4.88,0.3) -- (4.88,1.5);
        \draw[dotted, thick, color=black!50] (5.88,0.3) -- (5.88,1.5);
        \draw[dotted, thick, color=black!50] (6.88,0.3) -- (6.88,1.5);

        \node (11) at (1,2) {$\boldsymbol{*}$};
            \node at (1.2,2.2) {\scriptsize{$v_t$}};
        \node (21) at (1,1) {$\bullet$};
            \node at (0.82,1.38) {\scriptsize{$a_1$}};

        \node (22) at (2,1) {$\bullet$};
            \node at (1.5,1.18) {\scriptsize{$a_2$}};

        \node (23) at (3,1) {$\bullet$};
            \node at (2.5,1.18) {\scriptsize{$a_3$}};

        \node (24) at (4.19,1) {$\cdots$};
            \node at (3.5,1.18) {\scriptsize{$a_4$}};

        \node (25) at (5.38,1) {$\bullet$};
            \node at (4.88,1.16) {\scriptsize{$a_{\alpha-1}$}};

        \node (26) at (6.38,1) {$\bullet$};
            \node at (6.58,0.78) {\scriptsize{$v_s$}};
            \node at (5.88,1.16) {\scriptsize{$a_{\alpha}$}};

        \node (27) at (7.38,1) {\phantom{$\bullet$}};

        \node at (-0.8,1.5) {\small{Row $E_i$}};

        \draw[->] (21) -- (11);


        \draw[->] (22) -- (21);
        \draw[->] (23) -- (22);
        \draw[->] (24) -- (23);
        \draw[->] (25) -- (24);
        \draw[->] (26) -- (25);
        \draw[->] (27) -- (26);
    \end{tikzpicture}
\caption{Subquiver showing $1$-path of length $\alpha$} \label{fig Subquiver showing 1-path of length alpha}
\end{figure}
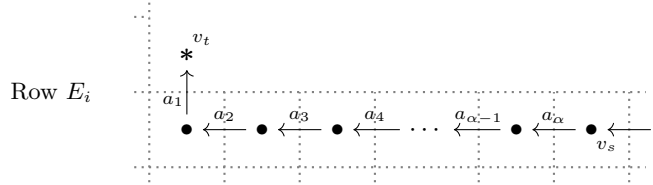
The matrix associated to this $1$-path $x_{v_e}/x_{v_s} = r_{a_1}r_{a_2}\cdots r_{a_{\alpha}}$ (where we have written the product of arrow coordinates in the same order as the composition of functions for ease of notation) is $\tilde{X}^{\vee}_{i,\alpha}(r_{a_1}, \ldots, r_{a_{\alpha}}) \in U^{\vee}$, defined recursively by
    \begin{equation} \label{eqn factor Xi into xi}
    \tilde{X}^{\vee}_{i,j}(r_{a_1}, \ldots, r_{a_{j}})  := \begin{cases}
        \mathbf{x}^{\vee}_{i-\alpha +1}(r_{a_1}) & \text{for} \ j=1, \\
        \begin{aligned}& \tilde{X}^{\vee}_{i,j -1}(r_{a_1}, \ldots, r_{a_{j -1}}) \mathbf{x}^{\vee}_{i-\alpha +j}(r_{a_{j}}) \\ & \qquad \times \tilde{X}^{\vee}_{i,j -1}(r_{a_1}, \ldots, r_{a_{j -2}}, -r_{a_{j -1}}) \mathbf{x}^{\vee}_{i-\alpha +j}(-r_{a_{j}}) \end{aligned} \quad& \text{for} \ j=2, \ldots, \alpha.
    \end{cases}
    \end{equation}
It follows by induction that we have the desired equality:
    $$ \tilde{X}^{\vee}_{i,\alpha}(r_{a_1}, \ldots, r_{a_{\alpha}}) = X^{\vee}_{i,\alpha}\left(\prod_{j=1}^{\alpha}r_{a_j}\right)
    := \begin{pmatrix}
        1 & 0 & & & \cdots & & & 0 \\
        & \ddots & \ddots & & & & & \\
        & & 1 & 0 & & \prod_{j=1}^{\alpha} r_{a_j} & & \\
        & & & & \ddots & & & \vdots \\
        & & & & \ddots & 0 & & \\
        & & & & & 1 & \ddots & \\
        & & & & & & \ddots & 0 \\
        & & & & & & & 1
    \end{pmatrix} \in U^{\vee}.
    $$
In particular, this allows us to write $X^{\vee}_{i,\alpha}(x_{v_e}/x_{v_s})=\tilde{X}^{\vee}_{i,\alpha}(r_{a_1}, \ldots, r_{a_{\alpha}})$ as a product of $\mathbf{x}^{\vee}_j$'s.

\begin{ex}
One example of a subquiver in the form of Figure \ref{fig Subquiver showing 1-path of length alpha} with $i=5$, $\alpha=3$, is given in Figure \ref{fig Subquiver showing 1-path of length 3}.
\begin{figure}[ht]
\centering
\begin{tikzpicture}
        \draw[dotted, thick, color=black!50] (0.3,2.5) -- (0.5,2.5);
        \draw[dotted, thick, color=black!50] (0.3,1.5) -- (3.7,1.5);
        \draw[dotted, thick, color=black!50] (0.3,0.5) -- (3.7,0.5);

        \draw[dotted, thick, color=black!50] (0.5,0.3) -- (0.5,2.7);
        \draw[dotted, thick, color=black!50] (1.5,0.3) -- (1.5,1.5);
        \draw[dotted, thick, color=black!50] (2.5,0.3) -- (2.5,1.5);
        \draw[dotted, thick, color=black!50] (3.5,0.3) -- (3.5,1.5);

        \node (11) at (1,2) {$\boldsymbol{*}$};
            \node at (1.2,2.2) {\scriptsize{$v_t$}};
        \node (21) at (1,1) {$\bullet$};
            \node at (0.82,1.38) {\scriptsize{$a_1$}};

        \node (22) at (2,1) {$\bullet$};
            \node at (1.5,1.18) {\scriptsize{$a_2$}};

        \node (23) at (3,1) {$\bullet$};
            \node at (2.5,1.18) {\scriptsize{$a_3$}};
            \node at (3.2,0.78) {\scriptsize{$v_s$}};

        \node (24) at (4,1) {\phantom{$\bullet$}};

        \node at (-0.8,1.5) {\small{Row $E_5$}};

        \draw[->] (21) -- (11);


        \draw[->] (22) -- (21);
        \draw[->] (23) -- (22);
        \draw[->] (24) -- (23);
    \end{tikzpicture}
\caption{Subquiver showing $1$-path of length $3$} \label{fig Subquiver showing 1-path of length 3}
\end{figure}
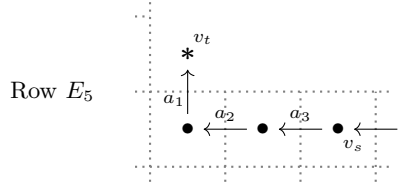
The matrix associated to the $1$-path $x_{v_e}/x_{v_s} = r_{a_1}r_{a_2}r_{a_3}$ is $\tilde{X}^{\vee}_{5,3}(r_{a_1}, r_{a_2}, r_{a_3}) \in U^{\vee}$, which is given in terms of $\mathbf{x}^{\vee}_j$'s by
    $$\begin{aligned}\tilde{X}^{\vee}_{5,3}(r_{a_1}, r_{a_2}, r_{a_3})
        &= \left[\mathbf{x}^{\vee}_{3}(r_{a_1})\mathbf{x}^{\vee}_{4}(r_{a_2})\mathbf{x}^{\vee}_{3}(-r_{a_1})\mathbf{x}^{\vee}_{4}(-r_{a_2})\right] \mathbf{x}^{\vee}_{5}(r_{a_3}) \\
        &  \qquad \times \left[\mathbf{x}^{\vee}_{3}(r_{a_1})\mathbf{x}^{\vee}_{4}(-r_{a_2})\mathbf{x}^{\vee}_{3}(-r_{a_1})\mathbf{x}^{\vee}_{4}(r_{a_2})\right] \mathbf{x}^{\vee}_{5}(-r_{a_3}) \end{aligned}
    $$
where we have used square braces to denote the products from the penultimate step, i.e. $j=\alpha-1=2$.
\end{ex}

\begin{ex} \label{ex uL temp quiver decoration cont}
Continuing Example \ref{ex gL temp quiver decoration}, we have
    $$\begin{aligned}
    u_L &= \mathbf{x}^{\vee}_7(a) \mathbf{x}^{\vee}_6(b) \mathbf{x}^{\vee}_5(c) \mathbf{x}^{\vee}_4(d) \mathbf{x}^{\vee}_3(e) \mathbf{x}^{\vee}_2(f) \\
        &\hspace{0.47cm} \mathbf{x}^{\vee}_7(g) \mathbf{x}^{\vee}_6(h) \mathbf{x}^{\vee}_5(i) \mathbf{x}^{\vee}_4(j) \mathbf{x}^{\vee}_3(k) \underline{\big[ \mathbf{x}^{\vee}_1(f) \mathbf{x}^{\vee}_2(l) \mathbf{x}^{\vee}_1(-f) \mathbf{x}^{\vee}_2(-l) \big]} \\
        &\hspace{0.47cm} \mathbf{x}^{\vee}_7(m) \mathbf{x}^{\vee}_6(n) \mathbf{x}^{\vee}_5(p) \\
        &\hspace{0.47cm} \mathbf{x}^{\vee}_7(q) \mathbf{x}^{\vee}_6(r) \underline{\big[ \mathbf{x}^{\vee}_4(p) \mathbf{x}^{\vee}_5(s) \mathbf{x}^{\vee}_4(-p) \mathbf{x}^{\vee}_5(-s) \big]}  \\
        &\hspace{0.47cm} \mathbf{x}^{\vee}_7(t) \mathbf{x}^{\vee}_6(u) \underline{\left[ \big[ \mathbf{x}^{\vee}_3(p) \mathbf{x}^{\vee}_4(s) \mathbf{x}^{\vee}_3(-p) \mathbf{x}^{\vee}_4(-s) \big] \mathbf{x}^{\vee}_5(v) \big[ \mathbf{x}^{\vee}_3(p) \mathbf{x}^{\vee}_4(-s) \mathbf{x}^{\vee}_3(-p) \mathbf{x}^{\vee}_4(s) \big] \mathbf{x}^{\vee}_5(-v) \right]} \\
        &\hspace{0.47cm} \mathbf{x}^{\vee}_7(w) \mathbf{x}^{\vee}_6(x). 
    \end{aligned}
    $$

\end{ex}

\newpage
\subsection{The quiver torus and toric chart} \label{subsec G/P The quiver torus and toric chart}
\fancyhead[L]{7.4 \ \ The quiver torus and toric chart}

We recall from Section \ref{subsec The superpotential, highest weight and weight maps} that the vertex and arrow tori are defined similarly in the $G/B$ and $G/P$ cases. To define our third torus however, the quiver torus, we will need to extend our previous definition (given in Section \ref{subsec The Givental superpotential}). In the $G/B$ case it was sufficient to work with the vertical arrow and star vertex coordinates. These, together with the relations in the quiver, uniquely determined the rest of the quiver decoration. Working now in the $G/P$ case, this is no longer sufficient as there are not enough star vertex coordinates to describe the full quiver decoration using only the coordinates of the vertical arrows and star vertices. To solve this issue we will also use some horizontal arrow coordinates. In particular, we will use the coordinates of exactly those horizontal arrows used in the definitions of $u_L$ and $g_L$ in Section \ref{subsec G/P Constructing matrices from a given quiver decoration}, i.e. for each $i=1,\ldots, l+1$ we use the coordinates of the horizontal arrows connecting two dot vertices directly below squares $L_i$.

We define an ordering on this set of arrows as follows: starting in the lower left corner of the quiver $Q_P$ we move up each column of arrows in succession, taking note of the arrow coordinates we pass. If we come to a star vertex at the end of a column of arrows, then we move on to the next column, from left to right. If the column of arrows ends with a dot vertex then, after reaching this top-most dot vertex, we also include coordinate of the single horizontal arrow leaving this vertex. We call the resulting sequence $\boldsymbol{r}_{\mathcal{A}_{P,\hat{\mathrm{v}}}}$.

\begin{rem}
In the $G/B$ case, the above procedure simply returns the vertical arrow coordinates, $\boldsymbol{r}_{\mathcal{A}_{\mathrm{v}}}$ (defined in Section \ref{subsec The quiver torus as another toric chart on Z}). We also note that since $\boldsymbol{r}_{\mathcal{A}_{P,\hat{\mathrm{v}}}}$ is exactly comprised of the arrow coordinates which were used in the definitions of $u_L$ and $g_L$ (see Section \ref{subsec G/P Constructing matrices from a given quiver decoration}), we may think of these maps as maps on the quiver torus.
\end{rem}

\begin{ex}
We recall the temporary labelling of the arrows given in Figure \ref{fig gL temp quiver decoration}. In order to write down $\boldsymbol{r}_{\mathcal{A}_{P,\hat{\mathrm{v}}}}$ in this example, we need the arrow coordinates which have been explicitly labelled. Moreover they should be taken in alphabetical order.
\end{ex}

In addition to $\boldsymbol{r}_{\mathcal{A}_{P,\hat{\mathrm{v}}}}$, we also require an ordering of the star vertices. We take
    $$ 
    \boldsymbol{x}_{\mathcal{V}_P^*}:=\left( x_{v_{n_1,1}}, \ x_{v_{n_2,n_1+1}}, \ \ldots, \ x_{v_{n_{l+1},n_l+1}} \right).
    $$
Again this descends to the star vertex coordinates, $\boldsymbol{x}_{\mathcal{V}^*}$, in the $G/B$ case.

With this notation the quiver torus is
    $$\mathcal{M}_P := \left\{ \left( \boldsymbol{x}_{\mathcal{V}_P^*} , \boldsymbol{r}_{\mathcal{A}_{P,\hat{\mathrm{v}}}}\right)
    \in (\mathbb{K}^*)^{\mathcal{V}_P^*} \times (\mathbb{K}^*)^{\mathcal{A}_{P,\mathrm{v}}} \right\}
    \hookrightarrow (\mathbb{K}^*)^{\mathcal{V}_P} \times \bar{\mathcal{M}}_P.
    $$

We are now ready to define the quiver toric chart on $Z_P$, which we recall is defined to be
    $$Z_P := B_-^{\vee}\cap U^{\vee}\left(T^{\vee}\right)^{W_P}\dot{w}_P\bar{w}_0U^{\vee}.
    $$

We define the quiver toric chart $\theta_P : \mathcal{M}_P \to \mathcal{Z}_P$ by
    $$\left( \boldsymbol{x}_{\mathcal{V}_P^*} , \boldsymbol{r}_{\mathcal{A}_{P,\hat{\mathrm{v}}}}\right)
    \mapsto
    u_L\left(\boldsymbol{r}_{\mathcal{A}_{P,\hat{\mathrm{v}}}}\right)
    \kappa_P\left(\boldsymbol{x}_{\mathcal{V}_P^*}\right)
    \dot{w}_P \bar{w}_0 u_R =:b_P
    $$
where $u_L\left(\boldsymbol{r}_{\mathcal{A}_{P,\hat{\mathrm{v}}}}\right)$ is the matrix defined in Section \ref{subsec G/P Constructing matrices from a given quiver decoration} and $u_R \in U^{\vee}$ is the unique element such that $b_P$ lies in $B_-^{\vee}$. The proof of the existence and uniqueness of $u_R$ is similar to that of $u_2$ in the $G/B$ case (see Section \ref{subsec The string coordinates}).


\subsection{The form of the weight matrix}
\fancyhead[L]{7.5 \ \ The form of the weight matrix}

We begin this section by making an observation on the definition of the weight map, namely that we can make an equivalent definition of the $t_{P,i}$ defining the weight map (see (\ref{eqn G/P defn gamma and t}) for the original definition), in terms of minimal $1$-paths. Then we prove that the weight map returns exactly the diagonal component of $b_P \in Z_P$, as desired.

Recall that each arrow coordinate $r_a$ is given in terms of vertex coordinates as $r_a = x_{v_{h(a)}}/x_{v_{t(a)}}$, where $h(a)=v_{h(a)}$ and $t(a)=v_{t(a)}$ denote the vertices at the head and tail of the arrow $a$ respectively. Additionally, writing $p(v_{jk})$ for the minimal $1$-path which begins at the vertex $v_{jk}$ (see Definition \ref{defn 1 paths leaving}) we have
    \begin{equation} \label{eqn 2nd G/P defn gamma and t}
    t_{P,i} = x_{v_{n,n-i+1}} \prod\limits_{v \in \mathcal{D}_{i+1} \cap \mathcal{V}_P^{\bullet}} \prod\limits_{a \in p(v)} \frac{x_{v_{h(a)}}}{x_{v_{t(a)}}}
        = x_{v_{n,n-i+1}} \prod\limits_{v \in \mathcal{D}_{i+1} \cap \mathcal{V}_P^{\bullet}} \prod\limits_{a \in p(v)} r_a
    \end{equation}
where we set $x_{v_{n,n-i+1}}=x_{v_{n,n_l+1}}$, the star vertex coordinate in the last square $L_{l+1}$, if $v_{n,n-i+1}$ is not present in the quiver (similar to the terms in second product in (\ref{eqn G/P xi for wt map defn})).

For an example computation of $\gamma_P$ in our running example of $\mathcal{F}_{2,5,6}(\mathbb{C}^8)$, see 
Appendix \ref{append Example of complete quiver labelling}.

To see that the two definitions of $t_{P,i}$ given in (\ref{eqn G/P defn gamma and t}) and (\ref{eqn 2nd G/P defn gamma and t}) are equivalent, we notice that
    \begin{equation} \label{eqn prod of arrows in 1-path}
    \prod\limits_{a \in p(v)} \frac{x_{v_{h(a)}}}{x_{v_{t(a)}}} = \frac{x_{v_e(p(v))}}{x_{v_s(p(v))}}
    \end{equation}
where $v_s(p(v))=v$ is the vertex at the start of the path $p(v)$ and $v_e(p(v))$ is the vertex at the end of the same path.
If the $1$-path $p(v)$ has length one then the end vertex $v_e(p(v))$ is directly above the starting vertex $v_s(p(v))=v$, that is, the end vertex lies in the diagonal $\Xi_{P,i}$ and the starting vertex lies directly beneath it in the diagonal $\Xi_{P,i+1}$.

Suppose, however, that the $1$-path $p(v)$ has length greater than one. Necessarily this path must end at a star vertex and so $v_e(p(v))=x_{v_{n_j, n_{j-1}+1}}$ for some $j$, since $v$ lies below some square $L_j$ and $v_{n_j, n_{j-1}+1}$ is exactly the star vertex in $L_j$. Moreover, we see the contribution to $t_{P,i}$ from the path $p(v)$, as described in (\ref{eqn 2nd G/P defn gamma and t}), is exactly equal to the contributions in (\ref{eqn G/P defn gamma and t}) from $\mathcal{D}_i\cap L_j$, $\mathcal{D}_{i+1} \cap L_j$ and the vertex $v$:
    $$\prod\limits_{a \in p(v)} \frac{x_{v_{h(a)}}}{x_{v_{t(a)}}}
        = \frac{x_{v_e(p(v))}}{x_{v_s(p(v))}}
        = \frac{x_{v_{n_j, n_{j-1}+1}}}{x_v}
        =\frac{\left( \prod\limits_{\substack{v\in \mathcal{D}_i \cap L_j \\ v \in \mathcal{V}_P}} x_v \right) \left( \prod\limits_{\substack{v\in \mathcal{D}_i \cap L_j \\ v \notin \mathcal{V}_P}} x_{v_{n_j, n_{j-1}+1}} \right)}{\left( \prod\limits_{\substack{v\in \mathcal{D}_{i+1} \cap L_j \\ v \in \mathcal{V}_P}} x_v \right) \left( \prod\limits_{\substack{v\in \mathcal{D}_{i+1} \cap L_j \\ v \notin \mathcal{V}_P}} x_{v_{n_j, n_{j-1}+1}} \right) x_v }.
    $$
Finally we note that any subsequent vertices from the indexing set $\mathcal{D}_{i+1} \cap \mathcal{V}_P^{\bullet}$ (taken in the order described in Section \ref{subsec The superpotential, highest weight and weight maps}) which lie below the same square $L_j$ as the vertex $v$, must give rise a minimal $1$-paths of length exactly one. Consequently we can be sure that all terms are considered but that there is no double counting.

\begin{lem}[Generalisation of {\cite[Lemma 9.3]{Rietsch2008}}, compare also Lemma {\ref{lem factor three maps through quiver torus}}] \label{lem b0 is gamma wt matrix}
If we factorise a general element of the quiver toric chart $\theta_P \left( \boldsymbol{x}_{\mathcal{V}_P^*} , \boldsymbol{r}_{\mathcal{A}_{P,\hat{\mathrm{v}}}}\right) =: b_P \in Z_P$ as $b_P=\left[b_P\right]_-\left[b_P\right]_0$, where $\left[b_P\right]_- \in U^{\vee}_-$ and $\left[b_P\right]_0 \in T^{\vee}$, then the diagonal component is exactly the weight matrix (defined in Section \ref{subsec The superpotential, highest weight and weight maps}):
    $$
    \left[b_P\right]_0 = \gamma_P\left(\boldsymbol{x}_{\mathcal{V}_P} \right).
    $$
\end{lem}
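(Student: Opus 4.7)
My plan is to reduce the identity $[b_P]_0=\gamma_P(\boldsymbol{x}_{\mathcal{V}_P})$ to a computation of leading principal minors. Since $b_P\in B^\vee_-$ factors as $b_P=[b_P]_-[b_P]_0$ with $[b_P]_-\in U^\vee_-$ unit lower triangular, one has $\Delta^{[1,k]}_{[1,k]}(b_P)=\prod_{j=1}^k([b_P]_0)_{jj}$ for every $k$, so the diagonal factor is determined by these minors. The lemma therefore reduces to the identities
$$\Delta^{[1,k]}_{[1,k]}(b_P)=\Xi_{P,1}/\Xi_{P,k+1} \qquad (k=1,\ldots,n),$$
and in the case $P=B$ this is exactly Lemma \ref{lem factor three maps through quiver torus}, which provides both a template and the base case.

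The first step is to peel off the factors $u_R$ and $\dot w_L$. Right multiplication by the upper unitriangular $u_R\in U^\vee$ preserves leading principal minors. Next, writing $u_L=g_L\dot w_L^{-1}$ via Lemma \ref{lem gL in terms of uL and wL}, and using that $\kappa_P\in(T^\vee)^{W_P}$ is fixed under conjugation by every $\dot s_i$ with $i\in I_P$ while $\dot w_L$ is a product of exactly such $\dot s_i$, we have $\dot w_L^{-1}\kappa_P=\kappa_P\dot w_L^{-1}$; combined with $\dot w_L^{-1}\dot w_P=\dot w_{L_{l+1}}$, this yields
$$\Delta^{[1,k]}_{[1,k]}(b_P)=\Delta^{[1,k]}_{[1,k]}\!\left(g_L\,\kappa_P\,\dot w_{L_{l+1}}\,\bar w_0\right).$$

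I would then evaluate the right-hand side as the matrix coefficient $\langle g_L\kappa_P\dot w_{L_{l+1}}\bar w_0\cdot v^+_{\omega_k},v^+_{\omega_k}\rangle$ in $V_{\omega_k}=\bigwedge^k\mathbb{K}^n$. Here $\bar w_0\cdot v^+_{\omega_k}=\pm v^-_{\omega_k}=\pm v_{n-k+1}\wedge\cdots\wedge v_n$, and $\dot w_{L_{l+1}}$ permutes basis vectors with indices in $\{n_l+1,\ldots,n\}$; the $W_P$-invariance of $\kappa_P$ then lets me read off its eigenvalue on $\dot w_{L_{l+1}}\cdot v^-_{\omega_k}$ immediately. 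What remains is a single minor of $g_L$, which can be computed by the planar graph technique of Section 2.2 applied to the explicit factorisation of $g_L$ from Section 7.3, together with the expansion (\ref{eqn factor Xi into xi}) of each $X^\vee_{i,\alpha}$. Matching this monomial against $\Xi_{P,1}/\Xi_{P,k+1}$ then uses the minimal 1-path formula (\ref{eqn 2nd G/P defn gamma and t}) together with (\ref{eqn prod of arrows in 1-path}): minimal 1-paths in the quiver correspond bijectively to vertex-disjoint paths in the graph of $g_L$, while $\kappa_P$ exactly absorbs the phantom star-vertex factors $x_{v_{n_j,n_{j-1}+1}}$ that appear in the second product of (\ref{eqn G/P xi for wt map defn}).

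The hard part will be the range $k<k_{l+1}$, in which $\{n-k+1,\ldots,n\}$ is not a union of $w_{L_{l+1}}$-orbits, so $\dot w_{L_{l+1}}\cdot v^-_{\omega_k}$ is a signed sum of wedge products rather than a single vector. The minor of $g_L$ then unfolds as a signed sum over several families of vertex-disjoint paths in the graph, and I expect to exploit that $g_L$ contains no further $\dot s_i$ factors beyond those already absorbed in $\dot w_{L_{l+1}}$ to reinterpret this sum as a Chamber Ansatz-type evaluation inside the ansatz arrangement for the last block $L_{l+1}$. Controlling the signs and showing that the sum collapses to the single monomial matching the desired ratio of $\Xi_P$'s is the key technical obstacle.
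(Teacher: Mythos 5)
Your high-level route matches the paper's (reduce to leading principal minors; rewrite $b_P=g_L\kappa_P\dot w_{L_{l+1}}\bar w_0u_R$ using Lemma \ref{lem gL in terms of uL and wL} together with the $W_P$-invariance of $\kappa_P$; evaluate $\langle b_P\cdot v^+_{\omega_k},v^+_{\omega_k}\rangle$). But the ``key technical obstacle'' you single out is not real. The matrix $\dot w_{L_{l+1}}$ is a product of $\dot s_i$'s, hence a signed permutation matrix, so it sends the decomposable standard wedge $v^-_{\omega_k}=v_{n-k+1}\wedge\cdots\wedge v_n$ to a \emph{single} signed decomposable standard wedge for every $k$, never to a sum of wedges. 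Whether $\{n-k+1,\dots,n\}$ is a union of $w_{L_{l+1}}$-orbits is beside the point: a permutation maps one $k$-subset of indices to another $k$-subset, not to several. There is therefore no ``signed sum over several families of vertex-disjoint paths'' and no Chamber-Ansatz-type rescue to perform; that paragraph of your plan dissolves.

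A genuine gap does appear elsewhere: you propose to compute the remaining coefficient as a ``single minor of $g_L$'' via ``the planar graph technique of Section \ref{subsec Chamber Ansatz minors}''. But $g_L$ still contains $\dot s_i$ factors --- those coming from the squares $L_1,\dots,L_l$; only the $\dot s_i$'s of $L_{l+1}$ were peeled off into $\dot w_{L_{l+1}}$ --- so $g_L$ is not upper unipotent, and the planar-graph minor formula (valid for products of $\mathbf x^\vee_i$'s and $\mathbf t^\vee_i$'s, or for the genuinely unipotent $u_L$ as in Section 9) does not apply to it. The paper's mechanism keeps $g_L\dot w_{L_{l+1}}$ together and uses the explicit column-indexed product (\ref{eqn gL wLl+1 written in terms of 1-paths}), whose factors have the form $\mathbf x^\vee_{n-1}(\cdot)\cdots\mathbf x^\vee_{n_r}(\cdot)\,\dot s_{n_r-1}\cdots\dot s_i$; the coefficient of $v^+_{\omega_k}$ in $g_L\dot w_{L_{l+1}}\cdot v^-_{\omega_k}$ is then obtained by selecting the forced summand in each factor with at least $k$ $\mathbf x^\vee$-factors, yielding a product of minimal-1-path weights that can be matched to $\Xi_{P,1}/\Xi_{P,k+1}$ via (\ref{eqn prod of arrows in 1-path}) and (\ref{eqn 2nd G/P defn gamma and t}). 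Carry out the evaluation at that level of granularity rather than extracting a bare minor of $g_L$.
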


\begin{proof}
Let $\{ v_1, \ldots, v_n \}$ be the standard basis of $\mathbb{C}^n$ and choose the standard highest weight vector $v_{\omega_k}^+:= v_1 \wedge \cdots \wedge v_k$ in $V(\omega_k)=\bigwedge^k \mathbb{C}^n$. Then we have the lowest weight vector
    $$v_{\omega_k}^-:= \bar{w}_0 \cdot (v_1 \wedge \cdots \wedge v_k) = v_{n-k+1} \wedge \cdots \wedge v_n.
    $$

We recall the element $b_P \in Z_P$ defined by
    $$b_P:= u_L \kappa_P \dot{w}_P \bar{w}_0 u_R
    \in Z_P = B_-^{\vee}\cap U^{\vee}T^{W_P}\dot{w}_P\bar{w}_0U^{\vee}
    $$
where $u_L$ is short for the matrix $u_L\left(\boldsymbol{r}_{\mathcal{A}_{P,\hat{\mathrm{v}}}}\right)$ defined in Section \ref{subsec G/P Constructing matrices from a given quiver decoration}, $\kappa_P$ is short for the matrix $\kappa_P\left(\boldsymbol{x}_{\mathcal{V}_P^*}\right)$ defined in Section \ref{subsec The superpotential, highest weight and weight maps}, and $u_R \in U^{\vee}$ is the unique element such that $b_P$ lies in $B_-^{\vee}$. In particular, recalling the matrix $g_L$ defined in Section \ref{subsec G/P Constructing matrices from a given quiver decoration}, we have
    $$u_L \kappa_P \dot{w}_P \bar{w}_0 u_R = g_L \dot{w}_{L_{l+1}} \kappa_P \bar{w}_0 u_R
    $$
by Lemma \ref{lem gL in terms of uL and wL}.
We will show that
    $$\langle b_P \cdot v_{\omega_k}^+ , v_{\omega_k}^+ \rangle = \prod_{i=1}^k t_{P,i}.
    $$
To begin with we will work with arrow coordinates for ease of notation and then change to vertex coordinates when necessary.

First we observe the following:
    \begin{equation}
    \begin{aligned} \label{eqn b dot v plus}
    b_P \cdot v_{\omega_k}^+ &= g_L \dot{w}_{L_{l+1}} \kappa_P
        \cdot (v_{n-k+1} \wedge \cdots \wedge v_n) \\
    &= \left( \prod_{j=n-k+1}^n (\kappa_P)_{jj} \right) g_L \dot{w}_{L_{l+1}} \cdot (v_{n-k+1} \wedge \cdots \wedge v_n).
    \end{aligned}
    \end{equation}

Now we note that, written out,
    \begin{equation} \label{eqn gL wLl+1 written in terms of 1-paths}
    g_L \dot{w}_{L_{l+1}} = \prod_{r=1}^{l+1} \prod_{i=n_{r-1}+1}^{n_r}
        \left( \mathbf{x}^{\vee}_{n-1}\left(\prod\limits_{a \in p(v_{n, i})} r_a \right) \mathbf{x}^{\vee}_{n-2}\left(\prod\limits_{a \in p(v_{n-1, i})} r_a \right)
        \cdots \mathbf{x}^{\vee}_{n_r}\left(\prod\limits_{a \in p(v_{n_r+1, i})} r_a \right) \dot{s}_{n_r-1} \cdots \dot{s}_{i} \right).
    \end{equation}
We also recall
    $$ \mathbf{x}^{\vee}_j (z) = \phi^{\vee}_i \begin{pmatrix} 1 & z \\ 0 & 1 \end{pmatrix}, \quad
    \dot{s}_i 
    = \phi^{\vee}_i \begin{pmatrix} 0 & 1 \\ -1 & 0 \end{pmatrix}.
    $$
Thus in order to get from the lowest to the highest weight space 
via $g_L \dot{w}_{L_{l+1}}$ we need to take the $e_j$-summand from the last $k$ factors of each term (in the product (\ref{eqn gL wLl+1 written in terms of 1-paths})) which is indexed by a pair $(r,i)$ such that $i\leq n-k$, that is, of each term with at least $k$ factors. This is non-trivial if $n_r-1<k$ (or equivalently if $v_{i+k,i}\in \mathcal{V}_P^{\bullet}$, in addition to the constraints we have already placed on $i$) in which case we obtain
    $$ \begin{aligned}
    \prod_{r=1}^{l+1} \prod_{\substack{i=n_{r-1}+1 \\ i\leq n-k \\ v_{i+k,i}\in \mathcal{V}_P^{\bullet}}}^{n_r} \left( \left(\prod\limits_{a \in p(v_{i+k, i})} r_a \right) \cdots \left(\prod\limits_{a \in p(v_{n_r+1, i})} r_a \right) \right)
        &= \prod_{r=1}^{l+1} \prod_{\substack{i=n_{r-1}+1 \\ i\leq n-k \\ v_{i+k,i} \in \mathcal{V}_P^{\bullet}}}^{n_r} \frac{x_{v_{n_r, n_{r-1}+1}}}{x_{v_{i+k,i}}} & \text{by (\ref{eqn prod of arrows in 1-path})\footnotemark} \\
        &= \prod_{r=1}^{l+1} \prod_{\substack{i=n_{r-1}+1 \\ i\leq n-k \\ v_{i+k,i} \in \mathcal{V}_P^{\bullet}}}^{n_r} \frac{(\kappa_P)_{ii}}{x_{v_{i+k,i}}}
            & \begin{aligned}[t] \text{using (\ref{eqn defn kappa hw map}) since} & \\
            v_{n_r,n_{r-1}+1} \text{ is the} & \\
            \text{star vertex in } 
            L_r & \end{aligned} \\
        &= \prod_{v_{i+k,i} \in \mathcal{D}_{k+1}\cap \mathcal{V}_P^{\bullet}} \frac{(\kappa_P)_{ii}}{x_{v_{i+k,i}}}
    \end{aligned}
    $$
\footnotetext{On the left hand side, within the outer parentheses, the $k$ minimal $1$-paths can be concatenated to form a path that starts at the vertex $v_{i+k,i}$. The first $k-1$ of these $1$-paths all have length $1$, the last $1$-path might be longer. Subsequently, the new path travels vertically upwards until it reaches the topmost dot vertex in the column $i$, and then completes the minimal $1$-path starting at this vertex; it will potentially travel left until it reaches the dot vertex directly below the star vertex in the square $L_r$, and finally travel upwards to end at this star vertex, $x_{v_{n_r, n_{r-1}+1}}$.

Here we have chosen to order our products of arrow coordinates to facilitate comparison with (\ref{eqn gL wLl+1 written in terms of 1-paths}) (rather than the ordering used on p.~\pageref{eqn factor Xi into xi}).}

Combining this with (\ref{eqn b dot v plus}) we have
    $$\begin{aligned}
    \langle b_P \cdot v_{\omega_k}^+ , v_{\omega_k}^+ \rangle
        &= \left( \prod_{j=n-k+1}^n (\kappa_P)_{jj} \right) \left( \prod_{v_{i+k,i} \in \mathcal{D}_{k+1}\cap \mathcal{V}_P^{\bullet}} \frac{(\kappa_P)_{ii}}{x_{v_{i+k,i}}} \right) \\
        &= \left( \prod_{j=n-k+1}^n (\kappa_P)_{jj} \right) \left( \prod_{v_{i+k,i} \in \mathcal{D}_{k+1}\cap \mathcal{V}_P^{\bullet}} \frac{(\kappa_P)_{ii}}{x_{v_{i+k,i}}} \right) \left( \prod_{\substack{v_{i+k,i} \in \mathcal{D}_{k+1} \\ v_{i+k,i} \notin \mathcal{V}_P^{\bullet}}} \frac{(\kappa_P)_{ii}}{(\kappa_P)_{ii}} \right) \\
        &= \left( \prod_{j=1}^n (\kappa_P)_{jj} \right) \left( \prod_{v_{i+k,i} \in \mathcal{D}_{k+1}\cap \mathcal{V}_P^{\bullet}} \frac{1}{x_{v_{i+k,i}}} \right) \left( \prod_{\substack{v_{i+k,i} \in \mathcal{D}_{k+1} \\ v_{i+k,i} \notin \mathcal{V}_P^{\bullet}}} \frac{1}{(\kappa_P)_{ii}} \right) \\
        &= \left( \prod_{j=1}^n (\kappa_P)_{jj} \right) \left( \prod_{v_{i+k,i} \in \mathcal{D}_{k+1}\cap \mathcal{V}_P} \frac{1}{x_{v_{i+k,i}}} \right) \left( \prod_{\substack{v_{i+k,i} \in \mathcal{D}_{k+1} \\ v_{i+k,i} \notin \mathcal{V}_P}} \frac{1}{(\kappa_P)_{ii}} \right) \\
        &= \frac{\Xi_{P,1}}{\Xi_{P,k+1}} = \prod_{i=1}^k \frac{\Xi_{P,i}}{\Xi_{P,i+1}} = \prod_{i=1}^k t_{P,i}
    \end{aligned}
    $$
\end{proof}

\section[A conjecture on the form of elements of \texorpdfstring{$Z_P$}{Z}]{A conjecture on the form of elements of {$Z_P$}} \label{sec A conjecture on the form of elements of ZP}
\fancyhead[L]{8 \ \ A conjecture on the form of elements of {$Z_P$}}

In this section we present a second quiver construction for $G/P$ and use it to construct matrices similar to $g_L$ and $u_L$ in Section \ref{subsec G/P Constructing matrices from a given quiver decoration}. Our goal in doing so is to describe the last factor in the quiver toric chart explicitly, without reference to the other four factors or $B_-^{\vee}$. We present this as a conjecture, together with supporting evidence.

\subsection[The decorated quiver \texorpdfstring{$Q_{P,R}$}{Q\_{P,R}}]{The decorated quiver $Q_{P,R}$} \label{subsec G/P The right-quiver}
\fancyhead[L]{8.1 \ \ The decorated quiver $Q_{P,R}$}

In this section we define a decorated quiver $Q_{P,R}$ from the quiver $Q_P$, with the intention of expressing the factor $u_R$ on the right hand side of $b_P\in Z_P$ in terms of arrow coordinates. To define $Q_{P,R}$ we begin with the vertices and arrows of the quiver $Q_P$ and reflect this through the anti-diagonal, that is the line `$x=y$'. The quiver we obtain looks the same as if we had started with squares $R_i$ of size $k_{l+2-i} \times k_{l+2-i}$ instead the squares $L_i$ (that is, $L_i$ is sent to $R_{l+2-i}$ under the reflection), and then oriented our arrows down and to the right.

Taking our new quiver we again label the $n-1$ rows from top to bottom by $E_1, \ldots, E_{n-1}$. Each row $E_i$ intersects some square $R_{l+2-j}$ on the diagonal. Similar to the quiver $Q_P$, the intersection of this row and square contains $i- \sum_{r=j+1}^{l+1} k_r =i-(n-n_{j})$ copies of $\dot{s}_i$, each written in a circle. For example see Figure \ref{fig right quiver in squares for 2,5,6 n8}.

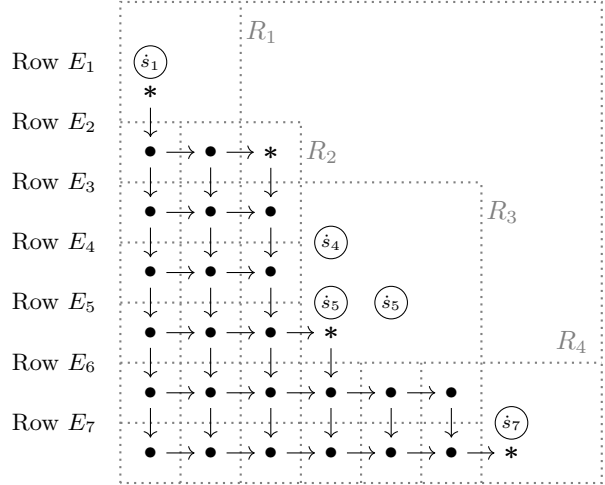
\begin{figure}[ht]
\centering
\begin{tikzpicture}[scale=0.8]
        \draw[dotted, thick, color=black!50] (0.5,0.5) -- (0.5,8.5) -- (8.5,8.5) -- (8.5,0.5) -- cycle;
        \draw[dotted, thick, color=black!50] (1.5,0.5) -- (1.5,6.5);
        \draw[dotted, thick, color=black!50] (2.5,0.5) -- (2.5,8.5);
        \draw[dotted, thick, color=black!50] (3.5,0.5) -- (3.5,6.5);
        \draw[dotted, thick, color=black!50] (4.5,0.5) -- (4.5,2.5);
        \draw[dotted, thick, color=black!50] (5.5,0.5) -- (5.5,2.5);
        \draw[dotted, thick, color=black!50] (6.5,0.5) -- (6.5,5.5);

        \draw[dotted, thick, color=black!50] (0.5,6.5) -- (3.5,6.5);
        \draw[dotted, thick, color=black!50] (0.5,5.5) -- (6.5,5.5);
        \draw[dotted, thick, color=black!50] (0.5,4.5) -- (3.5,4.5);
        \draw[dotted, thick, color=black!50] (0.5,3.5) -- (3.5,3.5);
        \draw[dotted, thick, color=black!50] (0.5,2.5) -- (8.5,2.5);
        \draw[dotted, thick, color=black!50] (0.5,1.5) -- (6.5,1.5);

        \node[color=black!50] at (2.85,8) {$R_1$};
        \node[color=black!50] at (3.85,6) {$R_2$};
        \node[color=black!50] at (6.85,5) {$R_3$};
        \node[color=black!50] at (8,2.85) {$R_4$};

        \node (21) at (1,7) {$\boldsymbol{*}$};
        \node (31) at (1,6) {$\bullet$};
        \node (41) at (1,5) {$\bullet$};
        \node (51) at (1,4) {$\bullet$};
        \node (61) at (1,3) {$\bullet$};
        \node (71) at (1,2) {$\bullet$};
        \node (81) at (1,1) {$\bullet$};

        \node (32) at (2,6) {$\bullet$};
        \node (42) at (2,5) {$\bullet$};
        \node (52) at (2,4) {$\bullet$};
        \node (62) at (2,3) {$\bullet$};
        \node (72) at (2,2) {$\bullet$};
        \node (82) at (2,1) {$\bullet$};

        \node (33) at (3,6) {$\boldsymbol{*}$};
        \node (43) at (3,5) {$\bullet$};
        \node (53) at (3,4) {$\bullet$};
        \node (63) at (3,3) {$\bullet$};
        \node (73) at (3,2) {$\bullet$};
        \node (83) at (3,1) {$\bullet$};

        \node (64) at (4,3) {$\boldsymbol{*}$};
        \node (74) at (4,2) {$\bullet$};
        \node (84) at (4,1) {$\bullet$};

        \node (75) at (5,2) {$\bullet$};
        \node (85) at (5,1) {$\bullet$};

        \node (76) at (6,2) {$\bullet$};
        \node (86) at (6,1) {$\bullet$};

        \node (87) at (7,1) {$\boldsymbol{*}$};


        \node at (-0.6,1.5) {\small{Row $E_7$}};
        \node at (-0.6,2.5) {\small{Row $E_6$}};
        \node at (-0.6,3.5) {\small{Row $E_5$}};
        \node at (-0.6,4.5) {\small{Row $E_4$}};
        \node at (-0.6,5.5) {\small{Row $E_3$}};
        \node at (-0.6,6.5) {\small{Row $E_2$}};
        \node at (-0.6,7.5) {\small{Row $E_1$}};

        \node[draw, circle, minimum size=4mm, inner sep=1pt] at (1,7.5) {\scriptsize{$\dot{s}_1$}};
        \node[draw, circle, minimum size=4mm, inner sep=1pt] at (4,4.5) {\scriptsize{$\dot{s}_4$}};
        \node[draw, circle, minimum size=4mm, inner sep=1pt] at (4,3.5) {\scriptsize{$\dot{s}_5$}};
        \node[draw, circle, minimum size=4mm, inner sep=1pt] at (5,3.5) {\scriptsize{$\dot{s}_5$}};
        \node[draw, circle, minimum size=4mm, inner sep=1pt] at (7,1.5) {\scriptsize{$\dot{s}_7$}};

        \draw[<-] (81) -- (71);
        \draw[<-] (71) -- (61);
        \draw[<-] (61) -- (51);
        \draw[<-] (51) -- (41);
        \draw[<-] (41) -- (31);
        \draw[<-] (31) -- (21);

        \draw[<-] (82) -- (72);
        \draw[<-] (72) -- (62);
        \draw[<-] (62) -- (52);
        \draw[<-] (52) -- (42);
        \draw[<-] (42) -- (32);

        \draw[<-] (83) -- (73);
        \draw[<-] (73) -- (63);
        \draw[<-] (63) -- (53);
        \draw[<-] (53) -- (43);
        \draw[<-] (43) -- (33);

        \draw[<-] (84) -- (74);
        \draw[<-] (74) -- (64);

        \draw[<-] (85) -- (75);

        \draw[<-] (86) -- (76);


        \draw[<-] (32) -- (31);
        \draw[<-] (42) -- (41);
        \draw[<-] (52) -- (51);
        \draw[<-] (62) -- (61);
        \draw[<-] (72) -- (71);
        \draw[<-] (82) -- (81);

        \draw[<-] (33) -- (32);
        \draw[<-] (43) -- (42);
        \draw[<-] (53) -- (52);
        \draw[<-] (63) -- (62);
        \draw[<-] (73) -- (72);
        \draw[<-] (83) -- (82);

        \draw[<-] (64) -- (63);
        \draw[<-] (74) -- (73);
        \draw[<-] (84) -- (83);

        \draw[<-] (75) -- (74);
        \draw[<-] (85) -- (84);

        \draw[<-] (76) -- (75);
        \draw[<-] (86) -- (85);

        \draw[<-] (87) -- (86);

    \end{tikzpicture}
\caption{Quiver $Q_{P,R}$ for $\mathcal{F}_{2,5,6}(\mathbb{C}^8)$} \label{fig right quiver in squares for 2,5,6 n8}
\end{figure}

\begin{rem}
During the above reflection, the arrows and vertices retain any associated decoration. For example, if the topmost upwards arrow in the first column of $Q_P$ has coordinate $r$, then the rightmost arrow directed to the right in the bottom row of $Q_{P,R}$ also has coordinate $r$.
\end{rem}

\subsection[Constructing matrices from the \texorpdfstring{$Q_{P,R}$}{Q\_{P,R}} quiver decoration]{Constructing matrices from the $Q_{P,R}$ quiver decoration} \label{subsec G/P Constructing matrices from the right-quiver decoration}
\fancyhead[L]{8.2 \ \ Constructing matrices from the $Q_{P,R}$ quiver decoration}

Similar to the construction of the matrices $g_L$ and $u_L$ in Section \ref{subsec G/P Constructing matrices from a given quiver decoration}, in this section we construct two matrices, $g_R$ and $\tilde{u}_R$, from the decoration of the quiver $Q_{P,R}$, again inspired by Marsh and Rietsch in \cite{MarshRietsch2020}.
We recall that $g_L$ and $u_L$ relate to the first term of the quiver toric chart on $Z_P$ (defined in Section \ref{subsec G/P The quiver torus and toric chart}), which was called $u_1$ in the $G/B$ setting. The last term in this chart, called $u_R$ (or $u_2$ in the $G/B$ setting) is the unique matrix in $U^{\vee}$ such that the whole product, $b_P$, lies in $Z_P$ (resp. $b \in Z$). The new matrices $g_R$ and $\tilde{u}_R$ will conjecturally allow us to describe this last term explicitly.

As with $g_L$ and $u_L$, the two new matrices may be thought of as maps on the vertex (or equivalently arrow) torus, we denote them by
    $$\begin{aligned}
    g_R &: \left(\mathbb{K}^*\right)^{\mathcal{V}_P} \to G^{\vee}, 
    \\
    \tilde{u}_R &: \left(\mathbb{K}^*\right)^{\mathcal{V}_P} \to U^{\vee}.
    \end{aligned}
    $$
Unless otherwise stated we will write $g_R$ for the matrix $g_R(\boldsymbol{x}_{\mathcal{V}_P}) \in G^{\vee}$ in order to simplify notation, and similarly for $\tilde{u}_R$.

We begin with $g_R$ and, similar to the construction of $g_L$, we consider $1$-paths, however now in the quiver $Q_{P,R}$. In particular we are interested in the minimal $1$-paths which end (rather than start, like in Definition \ref{defn 1 paths leaving}) at dot vertices. We will denote the unique minimal $1$-path which \emph{ends} at a dot vertex $v\in \mathcal{V}_P^{\bullet}$ by $\hat{p}(v)$. With this notation we are ready to construct $g_R$ as follows:
\begin{itemize}
    \item We start at the top-most dot vertex in the last column of dot vertices, namely the $(n-n_1)$-th column. Working downwards, we list the minimal $1$-paths ending at each dot vertex in succession.

    \item We repeat this column by column to the left, making note of any circled $\dot{s}_i$'s which appear at the top of a given column and then listing the minimal $1$-paths as before, until we have considered all columns containing dot vertices.
    Of note, there are no contributions from within the last square, $R_{l+1}$, just like there were no contributions from within the last square $L_{l+1}$ in the construction of $g_L$.

    \item As before, to the circled $\dot{s}_i$'s we associate the obvious factors $\dot{s}_i$. Similarly to any $1$-path which crosses row $E_i$, we associate the element $\mathbf{x}^{\vee}_i(x_{v_e}/x_{v_s}) \in U^{\vee}$, where $x_{v_s}$ is the vertex coordinate at the start of the path and $x_{v_e}$ is the vertex coordinate at the end of the path. Again we note that the quotient $x_{v_e}/x_{v_s}$ is equal to the product of arrow coordinates of all the arrows in the $1$-path.
\end{itemize}
The matrix we obtain from taking the product of $\mathbf{x}^{\vee}_i$ and $\dot{s}_i$ factors in the order defined by the above list, is $g_R$.

Now we note that the last term in our quiver toric chart, $u_R$ is an element of $U^{\vee}$, however in general $g_R \notin U^{\vee}$. Consequently we will use the quiver decoration to construct our second matrix, $\tilde{u}_R \in U_-^{\vee}$, which we will show is a factor of $g_R$. Both $\tilde{u}_R$ and $g_R$ will be helpful later.

Similar to the definition of $u_L$ in Section \ref{subsec G/P Constructing matrices from a given quiver decoration}, to define $\tilde{u}_R$ we proceed in a similar way to the construction of $g_R$. However there are two differences when we construct $\tilde{u}_R$, the first being that when we list the minimal $1$-paths we do not include any $\dot{s}_i$'s. The second difference comes in the association of elements $\mathbf{x}^{\vee}_i(x_{v_e}/x_{v_s}) \in U^{\vee}$ to $1$-paths crossing row $E_i$. If a $1$-path in our list has length $1$ then we treat it as before, namely we assign the element $\mathbf{x}^{\vee}_i(x_{v_e}/x_{v_s}) \in U^{\vee}$. To each $1$-path of length $\alpha\geq2$ in our list, we associate the element
    $$X^{\vee}_{i,\alpha}((-1)^{\alpha-1}x_{v_e}/x_{v_s}):= I + (-1)^{\alpha-1}\frac{x_{v_e}}{x_{v_s}}E_{i+\alpha+1,i+1}
    $$
where $I$ is the identity matrix and $E_{jk}$ is the elementary matrix with $1$ in position $(j,k)$ and $0$'s elsewhere. The resulting product of $\mathbf{x}^{\vee}_i$ and $X^{\vee}_{i,\alpha}$ factors defines the matrix $\tilde{u}_R$.

In order to succinctly describe the relation between $\tilde{u}_R$ and $g_R$ we introduce some notation. Recalling the definition of $\dot{w}_{L_{[i,j]}}$ from Section \ref{subsec G/P Constructing matrices from a given quiver decoration}, we analogously let $\dot{w}_{R_{[i,j]}}$ denote the sub-product of $\dot{s}_i$'s given by the contributions in $g_R$ from within squares $R_i, \ldots, R_j$, with respect to the ordering in $g_R$. If we consider a single square $R_i$, we will write $\dot{w}_{R_i}$ instead of $\dot{w}_{R_{\{i\}}}$ to simplify notation.

More explicitly, for $j=1, \ldots, l+1$, $\dot{w}_{R_{l+2-j}}$ (that is, the image of $L_j$ under the reflection described at the start of Section \ref{subsec G/P The right-quiver}) is a representative of the longest element of
    $$\left\langle s_i \ \bigg| \ i \in \left\{ \left(\sum_{r=j+1}^{l+1}k_r \right)+1, \ldots, \left(\sum_{r=j}^{l+1}k_r \right) -1 \right\} \right\rangle
        =\left\langle s_i \ | \ i \in \{ n-(n_j-1), \ldots, n-(n_{j-1}+1) \} \right\rangle \subset W
    $$
given by the reduced expression
    $$(n-(n_{j-1}+1), n-(n_{j-1}+2), n-(n_{j-1}+1), \ldots, n-(n_j-1) \ldots, n-(n_{j-1}+1))
    $$
recalling $n_0=0$ and $n_{l+1}=n$. As as the case of $\dot{w}_{L_j}$, this follows from the row set in the quiver $Q_{P,R}$ which the interior of $R_{l+2-j}$ intersects, namely the set $\{ E_{n-(n_j-1)}, \ldots, E_{n-(n_{j-1}+1)} \}$, together with two facts. Firstly, $\dot{w}_{R_{l+2-j}}$ is given by a product of $\dot{s}_i$ factors, read from the square $R_{l+2-j}$ by starting at the top of the right-most column and working downwards, then proceeding column by column to the left. Secondly, by definition, the intersection of the square $R_{l+2-j}$ and a row $E_i$, contains $i-\sum_{r=j+1}^{l+1}k_r =i-(n-n_j)$ copies of $\dot{s}_i$.

We see that $\dot{w}_{R_{[i,j]}} = \dot{w}_{R_j} \dot{w}_{R_{j-1}} \cdots \dot{w}_{R_i}$, where some of the $\dot{w}_{R_r}$ may be trivial.
For ease of notation we also define $\dot{w}_R:= \dot{w}_{R_{[1,l]}} = \dot{w}_{R_{l+1}}^{-1} \dot{w}_P$

\begin{lem} \label{lem gR in terms of uR and wR}
With the above $\dot{w}_{R_i}$ notation, the matrices $\tilde{u}_R$ and $g_R$ are related by $\dot{w}_{R_{l+1}} g_R =  \dot{w}_P \tilde{u}_R$, or equivalently
    \begin{equation} \label{eqn gR in terms of uR and si}
    g_R = \dot{w}_R \tilde{u}_R
    \end{equation}
since by definition $\dot{w}_P = \dot{w}_{R_{[1,l+1]}} = \dot{w}_{R_{l+1}} \dot{w}_{R_l} \cdots \dot{w}_{R_1}$ and $\dot{w}_R = \dot{w}_{R_{l+1}}^{-1} \dot{w}_P$.
\end{lem}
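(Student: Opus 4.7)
The argument is a mirror image of the proof of Lemma~\ref{lem gL in terms of uL and wL}: starting from the product expression for $g_R$ given by the construction in Section~\ref{subsec G/P Constructing matrices from the right-quiver decoration}, the plan is to systematically permute every $\dot{s}_i$ factor of $g_R$ to the \emph{left} past the $\mathbf{x}^{\vee}_j$ factors occurring before it in the product, so that after all permutations the accumulated $\dot{s}_i$'s on the left form $\dot{w}_R$ and the residual factor on the right is exactly $\tilde{u}_R$.

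The essential ingredients are the usual commutation rules $\mathbf{x}^{\vee}_j(z)\,\dot{s}_i=\dot{s}_i\,\mathbf{x}^{\vee}_j(z)$ whenever $|i-j|\ge 2$, together with the sign-producing identity
\[
\mathbf{x}^{\vee}_{i+1}(z)\,\dot{s}_i \;=\; \dot{s}_i\,X^{\vee}_{i+1,2}(-z),
\]
which is the dual (with an opposite sign) of the identity $\dot{s}_{i-1}\,\mathbf{x}^{\vee}_i(z)=X^{\vee}_{i,2}(z)\,\dot{s}_{i-1}$ used in the proof of Lemma~\ref{lem gL in terms of uL and wL}. Iterating this identity and dragging the appropriate $\dot{s}$-chain along shows, by induction on $\alpha$, that moving a factor $\mathbf{x}^{\vee}_i(z)$ coming from a $1$-path of length $\alpha$ past its corresponding $\dot{s}_j$-sub-chain produces exactly $X^{\vee}_{i,\alpha}\!\bigl((-1)^{\alpha-1}z\bigr)$, which accounts for the sign $(-1)^{\alpha-1}$ appearing in the definition of $\tilde{u}_R$.

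Once the commutation identities are in hand, the structural step is to identify, for each $\mathbf{x}^{\vee}_i$-factor of $g_R$ arising from a $1$-path of length $\alpha$ ending at a dot vertex $v_e$ with starting vertex $v_s$, the precise sub-chain of $\dot{s}_j$'s in the tail of the product that acts on it non-trivially. By the same analysis as in Lemma~\ref{lem gL in terms of uL and wL}, with the roles of `square' and `diagonal' now interpreted in $Q_{P,R}$ rather than $Q_P$, this sub-chain consists of exactly those $\dot{s}_j$'s that lie both in the same square $R_r$ as $v_e$ and in the same diagonal as $v_s$; all other $\dot{s}_j$'s commute past freely. Assembling everything, the factors on the right reproduce $\tilde{u}_R$ factor by factor, while the factors accumulated on the left, read in the order they emerge, can be read off from $Q_{P,R}$ by starting at the right-most column containing a circled $\dot{s}_i$ and proceeding column by column to the left (stopping before $R_{l+1}$); this is precisely $\dot{w}_R=\dot{w}_{R_{[1,l]}}$.

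The chief obstacle is the careful bookkeeping of signs and ordering: one must verify both that the sub-chain of $\dot{s}_j$'s acting on each $\mathbf{x}^{\vee}_i$-factor matches the convention $X^{\vee}_{i,\alpha}\!\bigl((-1)^{\alpha-1}z\bigr)$ in the definition of $\tilde{u}_R$, and that the sequence of accumulated $\dot{s}_j$'s reassembles into the correct representative $\dot{w}_R$. Both checks proceed exactly as in the $G/B$ case but with all directions reversed.
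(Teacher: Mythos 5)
Your proposal follows the same approach as the paper's proof: permute the $\dot{s}_i$ factors to the left using the sign-producing identity $\mathbf{x}^{\vee}_i(z)\dot{s}_{i-1}=\dot{s}_{i-1}X^{\vee}_{i,2}(-z)$, iterate to get $X^{\vee}_{i,\alpha}((-1)^{\alpha-1}z)$, and check that the accumulated $\dot{s}$'s assemble into $\dot{w}_R$ while the residue on the right is $\tilde u_R$. That is exactly what the paper does.

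There is, however, one concrete error in the structural step. You write that the $\dot{s}_j$-sub-chain acting non-trivially on a given $\mathbf{x}^{\vee}_i$-factor ``consists of exactly those $\dot{s}_j$'s that lie both in the same square $R_r$ as $v_e$ and in the same diagonal as $v_s$.'' This transcribes the $g_L$ version verbatim, but the roles of start and end are reversed between $Q_P$ and $Q_{P,R}$: for $g_L$ the minimal $1$-paths \emph{begin} at a dot vertex $v_s$ and (when $\alpha\ge 2$) \emph{end} at the star vertex $v_e$ inside a square $L_j$, so one pairs the square with $v_e$ and the diagonal with $v_s$; for $g_R$ the minimal $1$-paths \emph{end} at a dot vertex $v_e$ and (when $\alpha\ge 2$) \emph{start} at the star vertex $v_s$ inside a square $R_j$. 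The correct identification is therefore: same square $R_j$ as the \emph{starting} vertex $v_s$, same diagonal as the \emph{end} vertex $v_e$. As written, your statement asks for the square containing $v_e$, but $v_e$ is a dot vertex and does not lie in any $R_j$, so the description is not merely mis-ordered --- it does not parse in $Q_{P,R}$. Fixing this swap, the rest of the bookkeeping goes through exactly as you describe.
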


To aid our familiarity with $\tilde{u}_R$ and $g_R$ and motivate the proof of this lemma, we give an example.

\begin{ex} \label{ex gR temp quiver decoration}
We will continue our running example of $\mathcal{F}_{2,5,6}(\mathbb{C}^8)$. For ease of notation, in this example we label a number of arrows in the $Q_{P,R}$ quiver as in Figure \ref{fig gR temp quiver decoration}, and write $a$ in place of the arrow coordinate $r_a$.

\begin{figure}[ht]
\centering
\begin{tikzpicture}[scale=0.8]
        \draw[dotted, thick, color=black!50] (0.5,0.5) -- (0.5,8.5) -- (8.5,8.5) -- (8.5,0.5) -- cycle;
        \draw[dotted, thick, color=black!50] (1.5,0.5) -- (1.5,6.5);
        \draw[dotted, thick, color=black!50] (2.5,0.5) -- (2.5,8.5);
        \draw[dotted, thick, color=black!50] (3.5,0.5) -- (3.5,6.5);
        \draw[dotted, thick, color=black!50] (4.5,0.5) -- (4.5,2.5);
        \draw[dotted, thick, color=black!50] (5.5,0.5) -- (5.5,2.5);
        \draw[dotted, thick, color=black!50] (6.5,0.5) -- (6.5,5.5);

        \draw[dotted, thick, color=black!50] (0.5,6.5) -- (3.5,6.5);
        \draw[dotted, thick, color=black!50] (0.5,5.5) -- (6.5,5.5);
        \draw[dotted, thick, color=black!50] (0.5,4.5) -- (3.5,4.5);
        \draw[dotted, thick, color=black!50] (0.5,3.5) -- (3.5,3.5);
        \draw[dotted, thick, color=black!50] (0.5,2.5) -- (8.5,2.5);
        \draw[dotted, thick, color=black!50] (0.5,1.5) -- (6.5,1.5);

        \node[color=black!50] at (2.85,8) {$R_1$};
        \node[color=black!50] at (3.85,6) {$R_2$};
        \node[color=black!50] at (6.85,5) {$R_3$};
        \node[color=black!50] at (8,2.85) {$R_4$};

        \node (21) at (1,7) {$\boldsymbol{*}$};
        \node (31) at (1,6) {$\bullet$};
        \node (41) at (1,5) {$\bullet$};
        \node (51) at (1,4) {$\bullet$};
        \node (61) at (1,3) {$\bullet$};
        \node (71) at (1,2) {$\bullet$};
        \node (81) at (1,1) {$\bullet$};

        \node (32) at (2,6) {$\bullet$};
        \node (42) at (2,5) {$\bullet$};
        \node (52) at (2,4) {$\bullet$};
        \node (62) at (2,3) {$\bullet$};
        \node (72) at (2,2) {$\bullet$};
        \node (82) at (2,1) {$\bullet$};

        \node (33) at (3,6) {$\boldsymbol{*}$};
        \node (43) at (3,5) {$\bullet$};
        \node (53) at (3,4) {$\bullet$};
        \node (63) at (3,3) {$\bullet$};
        \node (73) at (3,2) {$\bullet$};
        \node (83) at (3,1) {$\bullet$};

        \node (64) at (4,3) {$\boldsymbol{*}$};
        \node (74) at (4,2) {$\bullet$};
        \node (84) at (4,1) {$\bullet$};

        \node (75) at (5,2) {$\bullet$};
        \node (85) at (5,1) {$\bullet$};

        \node (76) at (6,2) {$\bullet$};
        \node (86) at (6,1) {$\bullet$};

        \node (87) at (7,1) {$\boldsymbol{*}$};


            \node at (0.82,6.64) {\scriptsize{$s$}};
            \node at (0.82,5.64) {\scriptsize{$t$}};
            \node at (0.82,4.64) {\scriptsize{$u$}};
            \node at (0.82,3.64) {\scriptsize{$v$}};
            \node at (0.82,2.64) {\scriptsize{$w$}};
            \node at (0.82,1.64) {\scriptsize{$x$}};

            \node at (1.38,6.2) {\scriptsize{$l$}};
            \node at (1.82,5.64) {\scriptsize{$m$}};
            \node at (1.82,4.64) {\scriptsize{$n$}};
            \node at (1.82,3.64) {\scriptsize{$p$}};
            \node at (1.82,2.64) {\scriptsize{$q$}};
            \node at (1.82,1.64) {\scriptsize{$r$}};

            \node at (2.82,5.64) {\scriptsize{$g$}};
            \node at (2.82,4.64) {\scriptsize{$h$}};
            \node at (2.82,3.64) {\scriptsize{$i$}};
            \node at (2.82,2.64) {\scriptsize{$j$}};
            \node at (2.82,1.64) {\scriptsize{$k$}};

            \node at (3.82,2.64) {\scriptsize{$e$}};
            \node at (3.82,1.64) {\scriptsize{$f$}};

            \node at (4.38,2.2) {\scriptsize{$c$}};
            \node at (4.82,1.64) {\scriptsize{$d$}};

            \node at (5.38,2.2) {\scriptsize{$a$}};
            \node at (5.82,1.64) {\scriptsize{$b$}};

        \node at (-0.6,1.5) {\small{Row $E_7$}};
        \node at (-0.6,2.5) {\small{Row $E_6$}};
        \node at (-0.6,3.5) {\small{Row $E_5$}};
        \node at (-0.6,4.5) {\small{Row $E_4$}};
        \node at (-0.6,5.5) {\small{Row $E_3$}};
        \node at (-0.6,6.5) {\small{Row $E_2$}};
        \node at (-0.6,7.5) {\small{Row $E_1$}};

        \node[draw, circle, minimum size=4mm, inner sep=1pt] at (1,7.5) {\scriptsize{$\dot{s}_1$}};
        \node[draw, circle, minimum size=4mm, inner sep=1pt] at (4,4.5) {\scriptsize{$\dot{s}_4$}};
        \node[draw, circle, minimum size=4mm, inner sep=1pt] at (4,3.5) {\scriptsize{$\dot{s}_5$}};
        \node[draw, circle, minimum size=4mm, inner sep=1pt] at (5,3.5) {\scriptsize{$\dot{s}_5$}};
        \node[draw, circle, minimum size=4mm, inner sep=1pt] at (7,1.5) {\scriptsize{$\dot{s}_7$}};

        \draw[<-] (81) -- (71);
        \draw[<-] (71) -- (61);
        \draw[<-] (61) -- (51);
        \draw[<-] (51) -- (41);
        \draw[<-] (41) -- (31);
        \draw[<-] (31) -- (21);

        \draw[<-] (82) -- (72);
        \draw[<-] (72) -- (62);
        \draw[<-] (62) -- (52);
        \draw[<-] (52) -- (42);
        \draw[<-] (42) -- (32);

        \draw[<-] (83) -- (73);
        \draw[<-] (73) -- (63);
        \draw[<-] (63) -- (53);
        \draw[<-] (53) -- (43);
        \draw[<-] (43) -- (33);

        \draw[<-] (84) -- (74);
        \draw[<-] (74) -- (64);

        \draw[<-] (85) -- (75);

        \draw[<-] (86) -- (76);


        \draw[<-] (32) -- (31);
        \draw[<-] (42) -- (41);
        \draw[<-] (52) -- (51);
        \draw[<-] (62) -- (61);
        \draw[<-] (72) -- (71);
        \draw[<-] (82) -- (81);

        \draw[<-] (33) -- (32);
        \draw[<-] (43) -- (42);
        \draw[<-] (53) -- (52);
        \draw[<-] (63) -- (62);
        \draw[<-] (73) -- (72);
        \draw[<-] (83) -- (82);

        \draw[<-] (64) -- (63);
        \draw[<-] (74) -- (73);
        \draw[<-] (84) -- (83);

        \draw[<-] (75) -- (74);
        \draw[<-] (85) -- (84);

        \draw[<-] (76) -- (75);
        \draw[<-] (86) -- (85);

        \draw[<-] (87) -- (86);

    \end{tikzpicture}
\caption{Temporary $Q_{P,R}$ quiver labelling for Example {\ref{ex gR temp quiver decoration}}} \label{fig gR temp quiver decoration}
\end{figure}
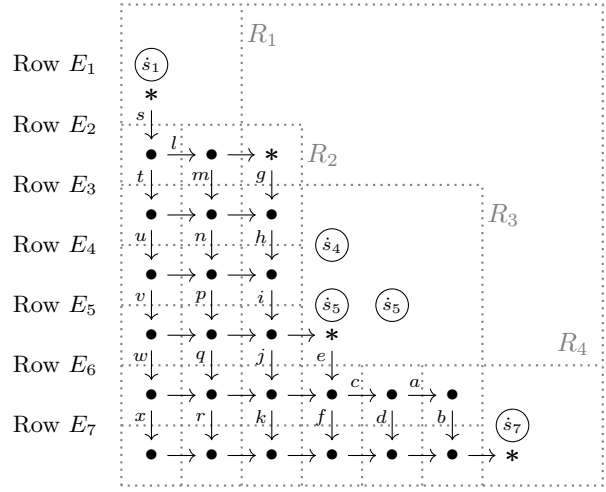

We will use the above construction to obtain $g_R$. We then give an alternate factorisation of this matrix with all $\dot{s}_i$ factors on the left, that is $\dot{w}_R\tilde{u}_R$. Most $\mathbf{x}^{\vee}_i$ factors appear in both factorisations with a similar pattern. For ease of comparison we have underlined the $\mathbf{x}^{\vee}_i$ and $X^{\vee}_{i,\alpha}$ factors that don't appear in both factorisations.

    $$ \begin{aligned}
    g_R &= \begin{aligned}[t] && \hspace{-0.32cm} \underline{\mathbf{x}^{\vee}_6(ace)} \mathbf{x}^{\vee}_7(b) \\
        && \dot{s}_5 \underline{\mathbf{x}^{\vee}_6(ce)} \mathbf{x}^{\vee}_7(d) \\
        && \dot{s}_4 \dot{s}_5 \mathbf{x}^{\vee}_6(e) \mathbf{x}^{\vee}_7(f) \\
        && \mathbf{x}^{\vee}_3(g) \mathbf{x}^{\vee}_4(h) \mathbf{x}^{\vee}_5(i) \mathbf{x}^{\vee}_6(j) \mathbf{x}^{\vee}_7(k) \\
        && \underline{\mathbf{x}^{\vee}_2(ls)} \mathbf{x}^{\vee}_3(m) \mathbf{x}^{\vee}_4(n) \mathbf{x}^{\vee}_5(p) \mathbf{x}^{\vee}_6(q) \mathbf{x}^{\vee}_7(r) \\
        && \dot{s}_1 \mathbf{x}^{\vee}_2(s) \mathbf{x}^{\vee}_3(t) \mathbf{x}^{\vee}_4(u) \mathbf{x}^{\vee}_5(v) \mathbf{x}^{\vee}_6(w) \mathbf{x}^{\vee}_7(x) \\
        \end{aligned} \\
    \end{aligned}
    $$
    $$
    \begin{aligned}
    g_R
    &= \begin{aligned}[t] && \dot{s}_5 \dot{s}_4 \dot{s}_5 \dot{s}_1 \underline{X_{6,3}(ace)} \mathbf{x}_7(b) \\
        && \underline{X_{6,2}(-ce)} \mathbf{x}_7(d) \\
        && \mathbf{x}_6(e) \mathbf{x}_7(f) \\
        && \mathbf{x}_3(g) \mathbf{x}_4(h) \mathbf{x}_5(i) \mathbf{x}_6(j) \mathbf{x}_7(k) \\
        && \underline{X_{2,2}(-ls)} \mathbf{x}_3(m) \mathbf{x}_4(n) \mathbf{x}_5(p) \mathbf{x}_6(q) \mathbf{x}_7(r) \\
        && \mathbf{x}_2(s) \mathbf{x}_3(t) \mathbf{x}_4(u) \mathbf{x}_5(v) \mathbf{x}_6(w) \mathbf{x}_7(x).
    \end{aligned}
    \end{aligned}
    $$
\end{ex}

\begin{proof}[Proof of Lemma {\ref{lem gR in terms of uR and wR}}]
To obtain the relation (\ref{eqn gR in terms of uR and si}), we start with the description of $g_R$ given above and permute all the $\dot{s}_i$ factors to the left. It follows that the resulting sequence of $\dot{s}_i$ factors may be read directly from the quiver $Q_{P,R}$ by starting at the top of the right-most column containing dot vertices and working downwards, only noting any $\dot{s}_i$'s we come across, and then proceeding column by column to the left. This product is exactly $\dot{w}_R:= \dot{w}_{R_{[1,l]}} = \dot{w}_{R_{l+1}}^{-1} \dot{w}_P$.

It remains to show that, after this sequence of permutations, the remaining factor on the right of $\dot{w}_R$ is $\tilde{u}_R$. To do this we again begin by making some observations on the above permutations in $g_R$. Firstly, due to the constructions of the quiver $Q_{P,R}$ and the matrix $g_R$, we only have to permute the $\dot{s}_j$'s past $\mathbf{x}^{\vee}_i(x_{v_e}/x_{v_s})$'s for $j < i$. These matrices commute if $i-j \geq 2$, but if $j=i-1$ then
    $$\begin{gathered}
    \begin{aligned}\mathbf{x}^{\vee}_i\left(\frac{x_{v_e}}{x_{v_s}}\right) \dot{s}_{i-1}
            =\begin{pmatrix}
            1 & 0 & & \cdots & & & 0 \\
            & \ddots & \ddots & & & & \\
            & & 0 & 1 & \frac{x_{v_e}}{x_{v_s}} & & \\
            & & -1 & 0 & 0 & & \vdots \\
            & & 0 & 0 & 1 & \ddots & \\
            & & & & & \ddots & 0 \\
            & & & & & & 1
                \end{pmatrix}
        &=\dot{s}_{i-1} \begin{pmatrix}
        1 & 0 & & \cdots & & & 0 \\
        & \ddots & \ddots & & & & \\
        & & 1 & 0 & -\frac{x_{v_e}}{x_{v_s}} & & \\
        & & & 1 & 0 & & \vdots \\
        & & & & 1 & \ddots & \\
        & & & & & \ddots & 0 \\
        & & & & & & 1
            \end{pmatrix} \\
    \end{aligned} \\
    \Rightarrow \quad \mathbf{x}^{\vee}_i\left(\frac{x_{v_e}}{x_{v_s}}\right) \dot{s}_{i-1}
        = \left(I + \frac{x_{v_e}}{x_{v_s}} E_{i,i+1}\right) \dot{s}_{i-1}
        = \dot{s}_{i-1} \left(I - \frac{x_{v_e}}{x_{v_s}} E_{i-1,i+1}\right)
        = \dot{s}_{i-1} X^{\vee}_{i,2}\left(-\frac{x_{v_e}}{x_{v_s}}\right)
    \end{gathered}
    $$
where in both of the matrices written in full, the entry $x_{v_e}/x_{v_s}$ appears in position $(i-1,i+1)$. This new matrix $X^{\vee}_{i,2}\left(-x_{v_e}/x_{v_s}\right)$ will commute with all $\dot{s}_j$'s apart from $\dot{s}_{i-2}$.
Permuting $\dot{s}_{i-2}$ past it has a similar effect, the entry $x_{v_e}/x_{v_s}$ will now be found in position $(i-2, i+1)$ with sign $(-1)^2$. This pattern repeats for all permutations.

It follows that we need to keep track of which factors $\mathbf{x}^{\vee}_i(x_{v_e}/x_{v_s})$ will be affected by our sequence of permutations. Similar to $u_L$, and thanks to the constructions of the quiver $Q_{P,R}$ and matrix $g_R$, we see that the only affected factors are those arising from $1$-paths of length at least $2$.

In particular, suppose we have a $1$-path of length $\alpha\geq 2$ crossing row $E_i$, with corresponding factor $\mathbf{x}^{\vee}_i(x_{v_e}/x_{v_s})$ of $g_R$. We will have to permute a product of $\dot{s}_j$'s past $\mathbf{x}_i(x_{v_e}/x_{v_s})$, however some of these $\dot{s}_j$'s will have no effect on the factor in question. Indeed by the above argument on permutations and the location of $\dot{s}_j$'s in the quiver $Q_{P,R}$, we see that $\mathbf{x}^{\vee}_i(x_{v_e}/x_{v_s})$ will only be affected by permuting the sub-product $\dot{s}_{i-1} \cdots \dot{s}_{i-\alpha+1}$ past it, giving
    $$\mathbf{x}^{\vee}_i\left(\frac{x_{v_e}}{x_{v_s}}\right) \dot{s}_{i-1} \cdots \dot{s}_{i-\alpha+1} = \dot{s}_{i-1} \cdots \dot{s}_{i-\alpha+1} X^{\vee}_{i,\alpha}\left((-1)^{\alpha-1} \frac{x_{v_e}}{x_{v_s}}\right).
    $$
This sub-product is exactly the contribution to $g_R$ from those $\dot{s}_i$'s which are found both in the same square $R_j$ as the starting vertex $v_s$ and also in the same diagonal $\mathcal{D}_k$ as the end vertex $v_e$.

It follows that after all permutations, we will have expressed $g_R$ as $\tilde{u}_R$ multiplied on the left by $\dot{w}_R$, as desired.
\end{proof}

\newpage
\subsection{The conjecture}
\fancyhead[L]{8.3 \ \ The conjecture}

\begin{conj} \label{conj b lies in Z}

The matrix $\tilde{u}_R \in U^{\vee}$ defined in Section \ref{subsec G/P Constructing matrices from the right-quiver decoration} is exactly the matrix $u_R \in U^{\vee}$ defined in Section \ref{subsec G/P The quiver torus and toric chart} as the unique matrix such that
    $$b_P:= u_L \kappa_P \dot{w}_P \bar{w}_0 u_R 
    $$
lies in $B_-^{\vee}$.

\end{conj}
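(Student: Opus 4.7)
The plan is to reduce the conjecture to showing that with this specific choice of right-hand factor, the product $b_P = u_L \kappa_P \dot{w}_P \bar{w}_0 \tilde{u}_R$ lies in $B_-^{\vee}$. Since $\tilde{u}_R \in U^{\vee}$ by construction (every factor $\mathbf{x}^{\vee}_j$ and $X^{\vee}_{i,\alpha}$ is upper unitriangular), once $b_P \in B_-^{\vee}$ is established, the uniqueness statement from Section \ref{subsec G/P The quiver torus and toric chart} forces $u_R = \tilde{u}_R$.

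First I would rewrite $b_P$ in a form that exposes the $g_L/g_R$ symmetry. Substituting $u_L = g_L \dot{w}_L^{-1}$ from Lemma \ref{lem gL in terms of uL and wL} and $\tilde{u}_R = \dot{w}_R^{-1} g_R$ from Lemma \ref{lem gR in terms of uR and wR}, and using that $\kappa_P \in (T^{\vee})^{W_P}$ commutes with every element of the lift of $W_P$, the intermediate $\dot{w}_P$'s telescope and one obtains
$$b_P = g_L \, \bigl(\dot{w}_{L_{l+1}} \kappa_P \bar{w}_0 \dot{w}_P^{-1} \dot{w}_{R_{l+1}}\bigr) \, g_R.$$
The bracketed middle factor represents a fixed torus-times-Weyl element: $\bar{w}_0 \dot{w}_P^{-1}$ is a lift of the minimal coset representative $w_0 w_P \in W^P$, and $\dot{w}_{L_{l+1}}, \dot{w}_{R_{l+1}}$ are the longest elements of the Weyl groups attached to the two extremal blocks $L_{l+1}$ and $R_{l+1}$. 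Their combined effect on a column index can be read off directly from the respective blocks in the diagrams of Sections \ref{subsec Constructing quivers} and \ref{subsec G/P The right-quiver}.

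The main technical step is to show the resulting product is lower triangular. My approach is to verify that every strictly-above-diagonal minor of $b_P$ vanishes by computing such minors as sums of weights of vertex-disjoint path families on the Fomin--Zelevinsky-type graphs attached to the $\mathbf{x}^{\vee}$-factorisations of $g_L$ and $g_R$, exactly as in Section \ref{subsec Chamber Ansatz minors}. The anti-diagonal reflection which converts $Q_P$ into $Q_{P,R}$ induces an involution between the two graphs, matching $1$-paths in $Q_P$ (those contributing to $u_L$) with $1$-paths in $Q_{P,R}$ (those contributing to $\tilde{u}_R$) that share the same arrow coordinates. I would argue that after this identification, the paths that could contribute to an above-diagonal minor of $g_L \,(\cdots)\, g_R$ come in cancelling pairs, the cancellation being orchestrated by the signs $(-1)^{\alpha-1}$ built into the $X^{\vee}_{i,\alpha}$-factors of $\tilde{u}_R$. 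The $G/B$ specialisation (where $\dot{w}_{L_{l+1}} = \dot{w}_{R_{l+1}} = 1$, $g_L = u_L = u_1$, and the $\tilde{u}_R$ construction should collapse to $u_2$) gives the base case, and an induction on $|I_P| = n - 1 - |I^P|$, peeling off one interior block at a time, should propagate the statement to general $P$.

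The hard part, and the reason this is stated as a conjecture, is controlling the $X^{\vee}_{i,\alpha}$-entries with $\alpha \geq 2$. Individually these entries contribute genuinely off-diagonal terms that do not vanish on their own; the cancellation is only visible after the full product with $g_L$, the central Weyl element, and $g_R$ is expanded. Verifying the signs work out will likely require either (i) an inductive argument that peels $\mathbf{x}^{\vee}_{n-1}$-factors off both the top-right of $u_L$ and the bottom-left of $\tilde{u}_R$ simultaneously, matching them against their shared arrow-coordinate avatars, or (ii) identifying the whole decorated quiver construction as the image of a generalised twist map $\eta^{w_0 w_P, e}$ analogous to the one in Section \ref{subsec The string coordinates}, from which lower-triangularity would follow formally. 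As a sanity check before tackling the general case I would verify the conjecture explicitly on $\mathcal{F}_{1,3}(\mathbb{C}^4)$ and $\mathcal{F}_{2,3}(\mathbb{C}^4)$, where the sign conventions in the definition of $\tilde{u}_R$ (particularly the $(-1)^{\alpha-1}$ in the $X^{\vee}_{i,\alpha}$) can be pinned down unambiguously against a direct LDU computation.
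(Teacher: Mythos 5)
What you are trying to prove is an open conjecture in the paper: the authors explicitly state that the conjecture \emph{remains open} and only offer supporting evidence, namely (i) the equivalent reformulation $b_P = g_L\,\kappa_P\,\bar{w}_0\,\tilde{g}_R$ obtained from Lemmas \ref{lem gL in terms of uL and wL}, \ref{lem gR in terms of uR and wR} and \ref{lem pass weyl group elt from square past w0}, and (ii) the fact that this form specialises exactly to the setting of Marsh--Rietsch for Grassmannians, where the statement is known. Your algebraic reduction is correct and is essentially the paper's own: substituting $u_L = g_L\dot{w}_L^{-1}$, $\tilde{u}_R = \dot{w}_R^{-1}g_R$, commuting $\kappa_P$ through the lift of $W_P$, and using $\dot{w}_{R_{l+1}}\dot{w}_R = \dot{w}_P$ collapses your middle factor $\dot{w}_{L_{l+1}}\kappa_P\bar{w}_0\dot{w}_P^{-1}\dot{w}_{R_{l+1}}$ to $\kappa_P\bar{w}_0\dot{w}_{R_1}$, recovering $g_L\kappa_P\bar{w}_0\tilde{g}_R$. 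So the structural core of your plan matches the paper.

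The genuine gap is exactly where you flag it. The lower-triangularity of $b_P$, via vanishing of above-diagonal minors organised by vertex-disjoint path families and the $(-1)^{\alpha-1}$ signs in the $X^{\vee}_{i,\alpha}$-factors, is precisely the step the paper reports attempting and finding ``much more complicated'' outside the Grassmannian case, and it is not carried out in your proposal. Two specific concerns with the sketch: first, the proposed induction on $|I_P|$ by ``peeling off one interior block at a time'' is not obviously well-posed, because changing $P$ reshapes both quivers $Q_P$ and $Q_{P,R}$ globally, not just the one block; a careful formulation of what is inherited from the smaller-$P$ case is needed before the induction can start. Second, Lemma \ref{lem pass weyl group elt from square past w0} only asserts that $\dot{w}_{L_j}\bar{w}_0$ and $\bar{w}_0\dot{w}_{R_{l+2-j}}$ represent the \emph{same Weyl group element}, not that they are equal as matrices in $N_{G^{\vee}}(T^{\vee})$; any proof via explicit minors would need to track the resulting signs, and these interact with the $(-1)^{\alpha-1}$ signs you are trying to exploit. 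Your alternative idea of realising the construction as a generalised twist map $\eta^{w_0 w_P, e}$ is attractive (it is how the $G/B$ chart in Section \ref{subsec The string coordinates} works) but is not pursued in the paper and would itself be new. As written, what you have is a correct reformulation plus a plausible but unverified roadmap, which is the paper's own state of knowledge.
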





The conjecture holds in the case of Grassmannians due to Marsh and Rietsch in \cite{MarshRietsch2020}. In order to see this we require a lemma:
\begin{lem} \label{lem pass weyl group elt from square past w0}
With the above notation (see Sections \ref{subsec G/P Constructing matrices from a given quiver decoration} and \ref{subsec G/P Constructing matrices from the right-quiver decoration}), $\dot{w}_{L_j} \bar{w}_0$ and $\bar{w}_0 \dot{w}_{R_{l+2-j}}$ are different representatives of the same element of the Weyl group.
\end{lem}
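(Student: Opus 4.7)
The plan is to prove the identity first at the level of abstract Weyl group elements, and then observe that the statement about representatives becomes essentially automatic. The key input is the standard fact that in type $A_{n-1}$, conjugation by the longest element acts on simple reflections by $w_0 s_i w_0^{-1} = s_{n-i}$; this is immediate from the permutation description of $w_0$ as $k \mapsto n+1-k$.

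I would first show that $w_{L_j} w_0 = w_0 w_{R_{l+2-j}}$ in $W$. By definition (Section 7.3), $w_{L_j}$ is the longest element of the standard parabolic subgroup $W_{L_j} := \langle s_i \mid n_{j-1}+1 \leq i \leq n_j - 1 \rangle$. Since conjugation by $w_0$ is a length-preserving automorphism of $W$, it sends $W_{L_j}$ to the standard parabolic subgroup generated by $\{s_{n-i} \mid n_{j-1}+1 \leq i \leq n_j - 1\} = \{s_i \mid n - n_j + 1 \leq i \leq n - n_{j-1} - 1\}$, and sends the longest element of $W_{L_j}$ to the longest element of this image. Comparing with the index set of simple reflections appearing in the reduced expression for $\dot{w}_{R_{l+2-j}}$ recorded in Section 8.2, and using the conventions $n_0 = 0$ and $n_{l+1} = n$, this image is precisely $W_{R_{l+2-j}}$. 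Hence $w_0 w_{L_j} w_0^{-1} = w_{R_{l+2-j}}$, which rearranges to the desired identity.

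To conclude, both $\dot{w}_{L_j} \bar{w}_0$ and $\bar{w}_0 \dot{w}_{R_{l+2-j}}$ lie in $N_{G^{\vee}}(T^{\vee})$, having been built from the fixed representatives $\bar{s}_i$ and $\dot{s}_i = \bar{s}_i^{-1}$ of the simple reflections introduced in Section 5. By the previous paragraph they project to the same element of $W = N_{G^{\vee}}(T^{\vee})/T^{\vee}$, so they agree up to multiplication by a (unique) element of $T^{\vee}$, which is exactly the content of the statement. The only step requiring any care is the index-set identification in the middle paragraph, which amounts to a direct substitution; I would not expect it to be a genuine obstacle. The two lifts will in general fail to coincide in $N_{G^{\vee}}(T^{\vee})$, so the word "different" in the statement is meaningful.
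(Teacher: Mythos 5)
Your proof is correct, and it reaches the same destination as the paper's argument via a slightly cleaner route. Both proofs hinge on the same core fact -- that conjugation by the longest element acts on simple reflections by $s_i \mapsto s_{n-i}$ -- but the paper works directly with representatives: it recalls the specific reduced expressions used to define $\dot{w}_{L_j}$ and $\dot{w}_{R_{l+2-j}}$ (in terms of which simple reflections occur), invokes the matrix-level identity $\dot{s}_i\bar{w}_0 = \bar{w}_0\dot{s}_{n-i}$ in $N_{G^\vee}(T^\vee)$, and pushes $\bar{w}_0$ past $\dot{w}_{L_j}$ term by term. You instead descend to the Weyl group, observe that conjugation by $w_0$ is a length-preserving automorphism sending the parabolic $W_{L_j}$ to the parabolic generated by $\{s_i : n-n_j+1 \le i \le n-n_{j-1}-1\}$ (which is the one underlying $\dot{w}_{R_{l+2-j}}$), hence maps longest element to longest element, and then lift the conclusion back to $N_{G^\vee}(T^\vee)/T^\vee$. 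This has two minor advantages: it avoids tracking specific reduced words, and it sidesteps the sign-sensitive identity on representatives, using only the weaker abstract identity $w_0 s_i w_0^{-1} = s_{n-i}$, which is all the lemma as stated actually requires. The paper's version, by working at the level of $N_{G^\vee}(T^\vee)$, implicitly establishes slightly more -- something closer to an equality of representatives up to a controlled discrepancy -- which is the form in which the lemma is later applied; but for the statement as literally written, your argument is complete.
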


\begin{proof}
In the quiver $Q_P$, suppose $L_j$ is a square of size $k_j \times k_j$ such that $k_j \geq 2$, since when $k_j=1$ the statement is trivial. We recall from Section \ref{subsec G/P Constructing matrices from a given quiver decoration} that $\dot{w}_{L_j}$ is a representative of the longest element of
    $$\left\langle s_i \ | \ i \in \{ n_{j-1}+1, \ldots, n_j-1 \} \right\rangle \subset W
    $$
given by the reduced expression
    $$(n_j-1, n_j-2, \ldots, n_{j-1}+1, \ldots, n_j-1, n_j-2, n_j-1).
    $$

Now we consider the quiver $Q_{P,R}$ and the square $R_{l+2-j}$, which is also of size $k_j \times k_j$ by construction. We recall from Section \ref{subsec G/P Constructing matrices from the right-quiver decoration} that $\dot{w}_{R_{l+2-j}}$ is a representative of the longest element of
    $$\left\langle s_i \ \bigg| \ i \in \left\{ \left(\sum_{r=j+1}^{l+1}k_r \right)+1, \ldots, \left(\sum_{r=j}^{l+1}k_r \right) -1 \right\} \right\rangle
        =\left\langle s_i \ | \ i \in \{ n-(n_j-1), \ldots, n-(n_{j-1}+1) \} \right\rangle \subset W
    $$
given by the reduced expression
    $$(n-(n_{j-1}+1), n-(n_{j-1}+2), n-(n_{j-1}+1), \ldots, n-(n_j-1) \ldots, n-(n_{j-1}+1)).
    $$

To complete the proof it remains to note that $\dot{s}_i\bar{w}_0 = \bar{w}_0\dot{s}_{n-i}$. Thus we see that $\dot{w}_{L_j} \bar{w}_0$ and $\bar{w}_0 \dot{w}_{R_{l+2-j}}$ are given by different products of $\dot{s}_i$'s and $\bar{s}_i$'s, but that they represent the same Weyl group element.
\end{proof}

Now if we define
    \begin{equation*} 
    \tilde{g}_R := \left(\dot{w}_{R_{[2,l]}}\right)^{-1} g_R = \left(\dot{w}_{R_{[2,l]}}\right)^{-1} \dot{w}_R \tilde{u}_R = \dot{w}_{R_1}\tilde{u}_R,
    \end{equation*}
then Conjecture \ref{conj b lies in Z} is equivalent to the statement
    \begin{equation} \label{eqn b is gL kP w0 gtildeR}
    b_P = g_L \kappa_P \bar{w}_0 \tilde{g}_R.
    \end{equation}
We see this as follows:
    $$\begin{aligned}
    g_L \kappa_P \bar{w}_0 \tilde{g}_R &= u_L \dot{w}_{L} \kappa_P \bar{w}_0 \dot{w}_{R_1}\tilde{u}_R & \text{by Lemma \ref{lem gL in terms of uL and wL}} \\
        &= u_L \kappa_P \dot{w}_{L} \dot{w}_{L_{l+1}} \bar{w}_0 \tilde{u}_R & \text{by Lemma \ref{lem pass weyl group elt from square past w0} and since } \kappa_P \text{, defined in (\ref{eqn defn kappa hw map}),} \\
        & & \text{permutes with } \dot{w}_{L} \text{ by construction} \\
        &= u_L \kappa_P \dot{w}_{P} \bar{w}_0 \tilde{u}_R  &
    \end{aligned}
    $$
where the last equality holds by definition of $\dot{w}_{P}$ (see Section \ref{subsec G/P Constructing matrices from a given quiver decoration});
    $$\dot{w}_{P} = \dot{w}_{L_{[1,l+1]}} = \dot{w}_{L_1} \cdots \dot{w}_{L_{l+1}} = \dot{w}_{L}\dot{w}_{L_{l+1}}.
    $$

In the Grassmannian case $\tilde{g}_R=g_R$, and so by stating Conjecture \ref{conj b lies in Z} in the form (\ref{eqn b is gL kP w0 gtildeR}) we are exactly in the case considered by Marsh and Rietsch (\cite[see the proof of Proposition 8.6]{MarshRietsch2020}). Namely, the conjecture holds in the Grassmannian case since our constructions of $g_L$ and $g_R$ descend to the respective constructions given by Marsh and Rietsch in this case. Their proof follows from a careful study of the matrix $b_P$ in terms of a concatenation of `chips' (corresponding to $\mathbf{x}^{\vee}_i$ factors) and wiring diagrams (corresponding to products of $\dot{s}_i$'s).
An example of the chips are found in the graphs used to compute Chamber Ansatz minors in Section \ref{subsec Chamber Ansatz minors} (for example Figures \ref{The graph for u_1 when n=4} and \ref{The graph for u^T when n=4})
with each chip describing a diagonal step upwards in one of these graphs.
The wiring diagrams are non-singular braid diagrams, for example non-singular versions of the ansatz arrangements introduced in Section \ref{subsec The Chamber Ansatz} (for example Figures \ref{fig ansatz arrangement i_0 dim 4} and \ref{fig ansatz arrangement i'_0^op dim 4}).

Due to the similarity between the constructions of $g_L$ and $g_R$ and those used by Marsh and Rietsch, we expect a similar method of proof to work for Conjecture \ref{conj b lies in Z}. However, in our attempts this appears to be much more complicated for partial flag varieties than for Grassmannians, and so at present the conjecture remains open.


\section{The ideal coordinates} \label{sec G/P The ideal coordinates}
\fancyhead[L]{9 \ \ The ideal coordinates}

In this section we define an analogue of the ideal coordinate system in the setting of partial flag varieties, and compare it to the quiver toric chart. In the $G/B$ case we were able to define the ideal coordinate chart directly, however to do this for $G/P$ we would need an explicit description of the weight matrix in terms of $m_i$ and $d_i$ coordinates, as we had in the $G/B$ setting (Corollary \ref{cor wt matrix in m ideal coords}). It is possible to compute the weight matrix in examples using the weight map $\gamma_P$ defined in Section \ref{subsec The superpotential, highest weight and weight maps}, however in general it is not straightforward to write down an explicit formula. Consequently, we define our ideal coordinate system via a particular choice of quiver decoration.

\subsection{Quiver decoration} \label{subsec G/P Quiver decoration}
\fancyhead[L]{9.1 \ \ Quiver decoration}

In this section we define a decoration of the quiver $Q_P$ which generalises the labelling of the $G/B$ quiver in terms of the original ideal coordinates $(d,\boldsymbol{m})$. In particular, since the critical point conditions in the quiver are purely combinatorial, we show that this labelling of $Q_P$ satisfies the same relation at critical points as the original $(d,\boldsymbol{m})$ labelling in the $G/B$ case (Proposition \ref{prop crit points, sum at vertex is nu_i}). This is the first of two results in this section which combine our quiver decoration and the critical point conditions. The second is an observation on the form of the weight matrix at critical points, extending Proposition \ref{prop weight at crit point non trop}. We complete the section by defining the ideal coordinate chart in the $G/P$ setting and stating a result comparing it to the quiver toric chart.

Recall that the vertical arrows of the quiver are labelled by $\{a_{(k,a)}\}$ such that $h(a_{(k,a)})=v_{(k,a)}$ and the horizontal arrows of the quiver are labelled by $\{b_{(k,a)}\}$ such that $t(b_{(k,a)})=v_{(k,a)}$. Additionally recall the definition
    $$s_k:= \sum_{j=1}^{k-1}(n-j)
    $$
and denote the numerator of $r_a$ by $n(r_a)$.

\begin{defn}[Decoration of vertical arrows]

We take the coordinate of the vertical arrow leaving $v_{(k,a)}$ to be
    \begin{equation} \label{eqn coord of vert arrow leaving vertex k,a}
    r_{a_{(k,a-1)}}:=\begin{cases}
    m_a & \text{if } k=1 \\
    r_{a_{(k-1,a)}} \frac{m_{s_{k}+a}}{m_{s_{k-1}+a}} & \text{if } k\geq 2.
    \end{cases}
    \end{equation}
\end{defn}

We note that this definitions is equivalent to the iterative description in the full flag case, wherever it makes sense (see (\ref{eqn rai,j+1 in terms of m's}) at the end of Section \ref{subsec The quiver torus as another toric chart on Z}):
    \begin{equation} \label{eqn coord of arrow leaving vertex k,a}
    r_{a_{(k,a-1)}}=\begin{cases}
    m_a & \text{if } k=1 \\
    m_{s_{k}+a} \frac{n(r_{a_{(k-1,a)}})}{n(r_{a_{(k-1,a-1)}})} & \text{if } k\geq 2.
    \end{cases}
    \end{equation}
where we recall $v_{ij}=v_{(j,i-j)}$ (and subsequently $r_{a_{ij}}=r_{a_{(j,i-j)}}$) from Section \ref{subsec Constructing quivers}.

To see the equivalence of these definitions, we first note that the numerators are clearly the same, so it remains to consider the denominators for $k\geq2$.
Writing (\ref{eqn coord of vert arrow leaving vertex k,a}) in terms of $m_i$'s we have:
    $$r_{a_{(k,a-1)}}= r_{a_{(k-1,a)}} \frac{m_{s_{k}+a}}{m_{s_{k-1}+a}}
        = r_{a_{(k-2,a+1)}} \frac{m_{s_{k-1}+a+1}}{m_{s_{k-2}+a+1}} \frac{m_{s_{k}+a}}{m_{s_{k-1}+a}}
        = \cdots
        = \frac{\prod_{i=1}^{k} m_{s_{k-i+1}+a+i-1}}{\prod_{i=1}^{k-1} m_{s_{k-i}+a+i-1}}.
    $$
Then we see the desired equality holds by recalling from (\ref{eqn rai,j+1 in terms of m's}), that
    $$n(r_{a_{(k-1,a-1)}}) = n(r_{a_{k+a-2,k-1}}) =\prod_{i=1}^{k-1} m_{s_{k-i}+a+i-1}.
    $$

\begin{rem} \label{rem need horizontal arrows in description}
The description (\ref{eqn coord of arrow leaving vertex k,a}) is enough to define the labelling of most vertical arrows, but not all. In particular it is not sufficient for arrows entering dot vertices directly below squares $L_i$, which are not below star vertices, for example the vertical arrows entering vertices $v_{\left(n_{i-1}+2,k_i-1\right)}, \ldots, v_{\left(n_{i},1\right)}$ in Figure \ref{fig to help arrow coord description}.

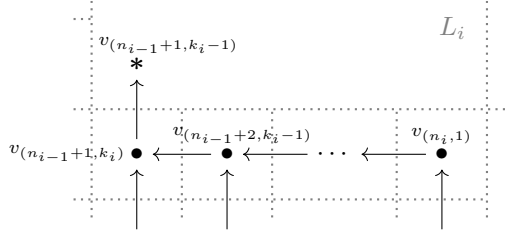
\begin{figure}[ht]
\centering
\begin{tikzpicture}[scale=1.2]
        \draw[dotted, thick, color=black!50] (0.3,2.5) -- (0.5,2.5);
        \draw[dotted, thick, color=black!50] (0.3,1.5) -- (5.1,1.5);
        \draw[dotted, thick, color=black!50] (0.3,0.5) -- (4.88,0.5);

        \draw[dotted, thick, color=black!50] (0.5,0.3) -- (0.5,2.7);
        \draw[dotted, thick, color=black!50] (1.5,0.3) -- (1.5,1.5);
        \draw[dotted, thick, color=black!50] (2.5,0.3) -- (2.5,1.5);
        \draw[dotted, thick, color=black!50] (3.88,0.3) -- (3.88,1.5);
        \draw[dotted, thick, color=black!50] (4.88,0.3) -- (4.88,2.7);

        \node (11) at (1,2) {$\boldsymbol{*}$};
            \node at (1.35,2.2) {\scriptsize{$v_{(n_{i-1}+1,k_i-1)}$}};
        \node (21) at (1,1) {$\bullet$};
            \node at (0.23,1) {\scriptsize{$v_{(n_{i-1}+1,k_i)}$}};
        \node (31) at (1,0) {\phantom{$\bullet$}};

        \node (22) at (2,1) {$\bullet$};
            \node at (2.17,1.2) {\scriptsize{$v_{(n_{i-1}+2,k_i-1)}$}};
        \node (32) at (2,0) {\phantom{$\bullet$}};

        \node (23) at (3.19,1) {$\cdots$};

        \node (24) at (4.38,1) {$\bullet$};
            \node at (4.38,1.2) {\scriptsize{$v_{(n_{i},1)}$}};
        \node (34) at (4.38,0) {\phantom{$\bullet$}};





        \node[color=black!50] at (4.5,2.4) {{$L_i$}};

        \draw[->] (21) -- (11);
        \draw[->] (31) -- (21);

        \draw[->] (32) -- (22);
        \draw[->] (34) -- (24);

        \draw[->] (22) -- (21);
        \draw[->] (23) -- (22);
        \draw[->] (24) -- (23);
\end{tikzpicture}
\caption{Subquiver to aid description of arrow coordinates} \label{fig to help arrow coord description}
\end{figure}
\end{rem}



We recall from Section \ref{subsec G/P The quiver torus and toric chart} that in the $G/P$ case it is no longer sufficient to only work with the vertical arrow and star vertex coordinates. Consequently we required the coordinates of exactly those horizontal arrows used in the definitions of $u_L$ and $g_L$ in Section \ref{subsec G/P Constructing matrices from a given quiver decoration}. It follows that we will need to specify the decoration of these horizontal arrow coordinates. Namely for each $i=1,\ldots, l+1$, we define the labelling of the horizontal arrows connecting dot vertices directly below squares $L_i$, for example the horizontal arrows in Figure \ref{fig to help arrow coord description}.

\begin{defn}[Decoration of star vertices and horizontal arrows] \label{defn decoration star vert and horiz arr}
We take the coordinate of the star vertex in the square $L_i$, for $i=1, \ldots, l+1$, to be $d_i$, similar to the $G/B$ case. Explicitly this is
    $$x_{v_{n_i, n_{i-1}+1}}= x_{v_{\left(n_{i-1}+1,k_i-1\right)}}:=d_i.
    $$

For $i=1, \ldots, l$, we take the coordinate of the horizontal arrow leaving the vertex
$v_{(k,a)}$ where $k\in \{ n_{i-1}+1, \ldots, n_i \}$, $a=n_i-k+1$, to be
    \begin{equation} \label{eqn coord of horiz arrow leaving vertex k,a}
    r_{b_{(k,a)}} :=\begin{cases}
    m_{s_k+a} & \text{if } i=1 \\
    r_{a_{\left(n_{i-1},a\right)}}\frac{m_{s_k+a}}{m_{s_{n_{i-1}}+a}} & \text{if } i\geq 2.
    \end{cases}
    \end{equation}
The rest of the quiver decoration may be constructed in the usual way.
\end{defn}

For the complete quiver labelling in our running example of $\mathcal{F}_{2,5,6}(\mathbb{C}^8)$, see Figure \ref{fig complete QP quiver decoration F2,5,6,C8} given in Appendix \ref{append Example of complete quiver labelling}.

Before defining the ideal toric chart we generalise two results from the $G/B$ case to the $G/P$ setting, both of which combine our quiver decoration and the critical point conditions. The first is a generalisation of Proposition \ref{prop crit points, sum at vertex is nu_i}:

\begin{prop} \label{prop G/P crit points, sum at vertex is nu_i}
With the above choice of $Q_P$ quiver decoration, if the critical point conditions hold at every dot vertex $v \in \mathcal{V}^{\bullet}_P$, then the sum of outgoing arrows at each dot vertex $v_{(k,a)}$ is given in terms of the $m_i$ coordinates by
    $$\varpi\left(v_{(k,a)}\right) := \sum_{a:t(a)=v_{(k,a)}} r_a  = m_{s_k+a}.$$
\end{prop}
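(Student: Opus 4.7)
The proof is by induction on the column index $k$, following the same overall strategy as Proposition \ref{prop crit points, sum at vertex is nu_i}. For the base case $k=1$, each dot vertex $v_{(1,a)}$ in the leftmost column has only the single outgoing vertical arrow $a_{(1,a-1)}$, whose coordinate is $m_a = m_{s_1+a}$ by the first branch of (\ref{eqn coord of vert arrow leaving vertex k,a}); hence $\varpi(v_{(1,a)}) = m_{s_1+a}$.

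For the inductive step, fix $a$, assume $\varpi(v_{(k,a)})=m_{s_k+a}$, and consider the unit-square subquiver of $Q_P$ with corners $v_{(k,a)}$ (top-left), $v_{(k+1,a-1)}$ (top-right), $v_{(k,a+1)}$ (bottom-left), $v_{(k+1,a)}$ (bottom-right). Three ingredients combine exactly as in the proof of Proposition \ref{prop crit points, sum at vertex is nu_i}: the critical point condition at the dot vertex $v_{(k,a)}$ equates the outgoing sum (which is $m_{s_k+a}$ by the inductive hypothesis) with the incoming sum, giving $r_{a_{(k,a)}}+r_{b_{(k+1,a-1)}}=m_{s_k+a}$; the box relation yields $r_{a_{(k,a)}}\,r_{b_{(k+1,a)}}=r_{a_{(k+1,a-1)}}\,r_{b_{(k+1,a-1)}}$; and the vertical-arrow recursion (\ref{eqn coord of vert arrow leaving vertex k,a}) produces $r_{a_{(k+1,a-1)}}/r_{a_{(k,a)}}=m_{s_{k+1}+a}/m_{s_k+a}$. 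The same algebraic manipulation then gives $\varpi(v_{(k+1,a)})=m_{s_{k+1}+a}$. Crucially, this generic argument continues to work when $v_{(k+1,a-1)}$ is a star vertex of some $L_i$: the box relation is a purely combinatorial identity on arrow coordinates, only the critical point condition at the dot vertex $v_{(k,a)}$ is used, and the recursive formula (\ref{eqn coord of vert arrow leaving vertex k,a}) still applies to the arrow whose head is the star.

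The main obstacle lies in the exceptional cases flagged in the remark following Definition \ref{defn decoration star vert and horiz arr}: dot vertices $v_{(k,a)}$ lying in the row immediately below some square $L_i$ (with $i\geq 2$) in columns $k\in\{n_{i-1}+2,\ldots,n_i\}$, i.e., not below the star of $L_i$. At such a vertex the position directly above is occupied by a $\dot{s}_j$ rather than a quiver vertex, so there is no vertical outgoing arrow, and the vertical-arrow coordinate referenced by the iterative description (\ref{eqn coord of arrow leaving vertex k,a}) is no longer determined by that recursion. In this regime the horizontal-arrow decoration (\ref{eqn coord of horiz arrow leaving vertex k,a}) from Definition \ref{defn decoration star vert and horiz arr}---whose formula explicitly incorporates the vertical-arrow coordinate $r_{a_{(n_{i-1},a)}}$ coming from the last column of $L_{i-1}$---must be combined with the box relations between consecutive columns of this exceptional row. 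A telescoping argument along the row then yields the identity $\varpi(v_{(k,a)})=m_{s_k+a}$ across the full row below $L_i$, after which the induction resumes unchanged in the rows further below. Performing this case analysis uniformly for each $i=2,\ldots,l+1$ completes the proof.
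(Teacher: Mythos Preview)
Your generic inductive step is correct and is exactly what the paper does: it defers to the proof of Proposition~\ref{prop crit points, sum at vertex is nu_i}, which uses the critical point condition at $v_{(k,a)}$, the box relation, and the recursion~(\ref{eqn coord of vert arrow leaving vertex k,a}) in the way you describe.

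Where your proposal diverges from the paper is in the exceptional case, and here your sketch is both vaguer and more complicated than necessary. You propose combining the horizontal-arrow decoration with box relations along the exceptional row and then telescoping. But no box relations or telescoping are needed at all. The point is that each exceptional vertex $v_{(k,a)}$ (with $k\in\{n_{i-1}+2,\ldots,n_i\}$, $a=n_i-k+1$) has a \emph{single} outgoing arrow, the horizontal one $b_{(k,a)}$, and its coordinate is \emph{directly defined} by~(\ref{eqn coord of horiz arrow leaving vertex k,a}):
\[
r_{b_{(k,a)}}=r_{a_{(n_{i-1},a)}}\,\frac{m_{s_k+a}}{m_{s_{n_{i-1}}+a}}.
\]
So $\varpi(v_{(k,a)})=m_{s_k+a}$ is equivalent to $r_{a_{(n_{i-1},a)}}=m_{s_{n_{i-1}}+a}$. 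The paper's clean observation is that the vertex $v_{(n_{i-1},a)}$, lying just to the left of $L_i$ and strictly above its star vertex, has a \emph{single incoming} arrow, namely $a_{(n_{i-1},a)}$. The critical point condition there gives $r_{a_{(n_{i-1},a)}}=\varpi(v_{(n_{i-1},a)})$, and since column $n_{i-1}$ has already been treated by the induction, $\varpi(v_{(n_{i-1},a)})=m_{s_{n_{i-1}}+a}$. That is the entire argument for the exceptional vertices; your telescoping suggestion is not needed and obscures this direct verification.
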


\begin{proof}
For all dot vertices for which (\ref{eqn coord of arrow leaving vertex k,a}) is sufficient to describe the coordinate of the arrow entering that vertex, the proof is the same as the respective proof in the full flag case (Proposition \ref{prop crit points, sum at vertex is nu_i}). It remains to consider those dot vertices directly below squares $L_i$, which are not below star vertices, that is vertices $v_{\left(n_{i-1}+2,k_i-1\right)}, \ldots, v_{\left(n_{i},1\right)}$ for each $i=1, \ldots, l$. For each such vertex there is only one outgoing arrow; this arrow is horizontal and so its coordinate is given by (\ref{eqn coord of horiz arrow leaving vertex k,a}), thus $\varpi\left(v_{(k,a)}\right) = r_{b_{(k,a)}}$.

If $i=1$ then clearly $\varpi\left(v_{(k,a)}\right) = m_{s_k+a}$, as desired. For $i=2, \ldots, l$, supposing that the critical point conditions hold, we wish to show that for $k = n_{i-1}+2, \ldots, n_i$, $a=n_i-k+1$, we have
    $$
    \varpi\left(v_{(k,a)}\right) = m_{s_k+a}
    \quad \Leftrightarrow \quad
    m_{s_k+a} = r_{b_{(k,a)}} = r_{a_{\left(n_{i-1},a\right)}}\frac{m_{s_k+a}}{m_{s_{n_{i-1}}+a}}
    \quad \Leftrightarrow \quad
    r_{a_{\left(n_{i-1},a\right)}}=m_{s_{n_{i-1}}+a}.
    $$

We note that for $i = 2, \ldots, l$ and $a=1, \ldots, k_i-1$, the vertices $v_{\left(n_{i-1},a\right)}$ are directly to the left of the square $L_i$, and strictly above the star vertex in $L_i$. So we see that there is only one incoming arrow at each vertex $v_{\left(n_{i-1},a\right)}$, namely $r_{a_{\left(n_{i-1},a\right)}}$.
In particular, the critical point condition at each of these vertices, $v_{\left(n_{i-1},a\right)}$, says that $\varpi\left(v_{\left(n_{i-1},a\right)}\right) = r_{a_{\left(n_{i-1},a\right)}}$.
However we have already shown that the statement holds at the vertex $v_{\left(n_{i-1},a\right)}$, that is, if the critical point conditions hold at every dot vertex, then $\varpi\left(v_{\left(n_{i-1},a\right)}\right) = m_{s_{n_{i-1}}+a}$. Thus we have $r_{a_{\left(n_{i-1},a\right)}} = m_{s_{n_{i-1}}+a}$, and so the proof is complete.
\end{proof}

The second of the two results mentioned above is an extension of Proposition \ref{prop weight at crit point non trop}:
\begin{prop} \label{prop G/P weight at crit point non trop}
At a critical point in the fibre over $d\in \left(T^{\vee}\right)^{W_P}$, the weight matrix is an $n\times n$ matrix $\mathrm{diag}(c, \ldots, c)$ where
    $$c^n = \prod_{i=1}^{l+1} d_i^{k_i}.
    $$
\end{prop}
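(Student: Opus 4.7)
Following the $G/B$ case (Proposition \ref{prop weight at crit point non trop}), Lemma \ref{lem b0 is gamma wt matrix} identifies the weight matrix at any point of the quiver toric chart with $\gamma_P(\boldsymbol{x}_{\mathcal{V}_P})=\mathrm{diag}(t_{P,1},\ldots,t_{P,n})$, where $t_{P,i}=\Xi_{P,i}/\Xi_{P,i+1}$. The statement therefore reduces to two claims: (a) all the $t_{P,i}$ coincide at a critical point, with common value $c$; (b) $c^n=\prod_{j=1}^{l+1}d_j^{k_j}$. Granting (a), claim (b) follows from the telescoping determinant
\[c^n=\prod_{i=1}^{n}t_{P,i}=\frac{\Xi_{P,1}}{\Xi_{P,n+1}}=\Xi_{P,1},\]
together with a direct computation from (\ref{eqn G/P xi for wt map defn}): for each $j$, the intersection $\mathcal{D}_1\cap L_j$ consists of the $k_j$ positions $v_{n_{j-1}+1,n_{j-1}+1},\ldots,v_{n_j,n_j}$, and each contributes the star coordinate $d_j$---either directly, when $k_j=1$ and the single such position is the star vertex of $L_j$, or via the second product of (\ref{eqn G/P xi for wt map defn}) when $k_j\geq 2$ and none of these positions lie in $\mathcal{V}_P$. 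Hence $\Xi_{P,1}=\prod_{j=1}^{l+1}d_j^{k_j}$.

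For (a), I would show $\Xi_{P,i-1}\Xi_{P,i+1}=\Xi_{P,i}^2$ at a critical point, for each $i\in\{2,\ldots,n\}$, by adapting the $G/B$ argument. Consider the subquiver of $Q_P$ formed by the arrows with head or tail on $\mathcal{D}_i$. In the $G/B$ case this was a single staircase to which Lemma \ref{lem Jamie's lemma 5.9} applied directly; in the $G/P$ setting the staircase is broken into connected pieces by two kinds of obstacles: star vertices lying on $\mathcal{D}_{i-1}$, $\mathcal{D}_i$ or $\mathcal{D}_{i+1}$ (where the critical point conditions do not hold), and empty interiors of the squares $L_j$ (where $\mathcal{D}_i$ passes through positions that are not quiver vertices at all). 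Each connected piece is itself a diagonal subquiver of the type in Figure \ref{fig Diagonal subquiver}, with its endpoint vertices $v_0,v_t$ being star vertices, boundary vertices, or vertices adjacent to an empty portion of some $L_j$, and with the critical point conditions holding at every interior dot vertex. Applying Lemma \ref{lem Jamie's lemma 5.9} to each piece then yields a local identity expressing $\prod_s O_s$ in terms of the endpoint vertex coordinates.

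The key bookkeeping observation is that the endpoint coordinates at the break points match precisely the factors appearing in (\ref{eqn G/P xi for wt map defn}) for $\Xi_{P,i-1},\Xi_{P,i},\Xi_{P,i+1}$: a break at a star vertex of some $L_j$ supplies the endpoint coordinate $d_j$, matching the first product in (\ref{eqn G/P xi for wt map defn}), while a break at the edge of an empty interior of $L_j$ supplies the star value $d_j$, which is exactly what the second product in (\ref{eqn G/P xi for wt map defn}) prescribes for the missing interior positions of $\mathcal{D}_{i\pm 1}\cap L_j$. Taking the product of the local identities over all pieces, the vertex-coordinate factors assemble, with the appropriate cancellations across each break, into the global identity $\Xi_{P,i-1}\Xi_{P,i+1}/\Xi_{P,i}^2=1$. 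The principal obstacle in completing the proof is this last combinatorial bookkeeping: verifying, in full generality and over all possible configurations of the $L_j$'s relative to $\mathcal{D}_i$ and all types of break points, that the local vertex-coordinate factors telescope correctly into $\Xi_{P,i-1}\Xi_{P,i+1}/\Xi_{P,i}^2$, with the star replacements in (\ref{eqn G/P xi for wt map defn}) exactly absorbing the contributions from the missing interior positions of the $L_j$. A manageable organization is a case analysis distinguishing pieces bounded by the quiver boundary, by star vertices, or by empty $L_j$ interiors, checking in each case that the local factors assemble into the required global identity.
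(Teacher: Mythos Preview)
Your computation of $c^n=\Xi_{P,1}=\prod_{j=1}^{l+1}d_j^{k_j}$ (claim (b)) is exactly the paper's argument.

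For claim (a), however, the paper takes a completely different and much shorter route: it simply invokes Rietsch's theorem \cite[non-T-equivariant case of Theorem 4.1]{Rietsch2008} that the critical points of the superpotential $\mathcal{W}_P$ are given by Toeplitz matrices in $Z_P$. Since a Toeplitz matrix is constant along each diagonal, in particular its main diagonal (which is the weight matrix $[b_P]_0$) is constant, and (a) follows in one line. This result is already used elsewhere in the thesis (it is the core of Theorem \ref{thm G/P Toeplitz mat and ideal fillings}), so invoking it here costs nothing.

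Your approach instead attempts to generalise the $G/B$ combinatorial argument via Lemma \ref{lem Jamie's lemma 5.9} to the broken staircases of $Q_P$. This would give a self-contained proof independent of the mirror-symmetry input from \cite{Rietsch2008}, which is a genuine advantage. However, as you yourself identify, the bookkeeping across the break points is not actually carried out: you describe a case analysis (pieces bounded by the quiver boundary, by star vertices, by empty $L_j$ interiors) but do not execute it, and the claim that the local endpoint factors ``assemble, with the appropriate cancellations across each break, into the global identity'' is asserted rather than verified. The difficulty is real --- at a break caused by an empty $L_j$ interior, the pieces on either side need not have a common endpoint vertex, and the matching with the second product in (\ref{eqn G/P xi for wt map defn}) requires care about exactly how many copies of $d_j$ appear in each of $\Xi_{P,i-1},\Xi_{P,i},\Xi_{P,i+1}$. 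So as written this is a plausible proof sketch with an acknowledged gap, whereas the paper sidesteps the combinatorics entirely.
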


\begin{proof}
By the theorem of Rietsch that the critical points of the superpotential are given by Toeplitz matrices \cite[non-T-equivariant case of Theorem 4.1]{Rietsch2008}, we see that at a critical point, the weight matrix $\mathrm{diag}(t_1, t_2, \ldots, t_n)$ is given by $\mathrm{diag}(c,c, \ldots, c)$ for some $c$. 

By taking the determinant we obtain
    $$\prod_{i=1}^n t_{P,i} = \prod_{i=1}^n \frac{\Xi_{P,i}}{\Xi_{P,i+1}} = \frac{\Xi_{P,1}}{\Xi_{P,n+1}} = \Xi_{P,1}
    $$
recalling $\Xi_{n+1} =1$ by definition. This gives the desired value of $c$ as follows:
    $$c^n = \prod_{i=1}^n t_{P,i} = \Xi_{P,1} = \left( \prod_{\substack{v\in \mathcal{D}_1 \\ v \in \mathcal{V}_P}} x_v \right) \left( \prod_{\substack{v\in \mathcal{D}_1 \cap L_i \\ v \notin \mathcal{V}_P}} d_i \right) = \prod_{i=1}^{l+1} d_i^{k_i}.
    $$
\end{proof}

We now describe the ideal coordinate chart, which is defined in terms of the quiver decoration as follows:
    \begin{equation} \label{eqn G/P ideal toric chart defn}
    \psi_P : \left(T^{\vee}\right)^{W_P} \times \left(\mathbb{K}^*\right)^{\mathcal{V}_P^{\bullet}} \to \mathcal{M}_P \to Z_P \,, \quad
    \left(d, \boldsymbol{m}\right)
        \mapsto \left( \prod_{k=1}^{n-1} \prod_{\substack{a=1 \\ v_{(k,a)} \in \mathcal{V}_P^{\bullet}}}^{n-k} \mathbf{y}^{\vee}_a\left( \frac{1}{m_{s_k+a}} \right)\right) \gamma_P\left(\boldsymbol{x}_{\mathcal{V}_P} \right)
    \end{equation}
where $\gamma_P$ is the weight map from Section \ref{subsec The superpotential, highest weight and weight maps}, and we have used the relation $r_a=x_{h(a)}/x_{t(a)}$ between vertex and arrow coordinates. 

\begin{rem}
The sequence of $\mathbf{y}^{\vee}_i$'s in (\ref{eqn G/P ideal toric chart defn}) may be more easily understood by looking at the $v_{(k,a)}$ labelling of the vertices of the quiver $Q_P$. In particular, starting at top dot vertex in the left-most column and working downwards, we take note of the `$a$'-component of the labelling $v_{(k,a)}$. Continuing column by column to the right until all dot vertices have been considered gives the sequence of subscripts in the above product. Moreover we see that in the $G/B$ case this returns the reduced expression $\mathbf{i}_0$ for $\bar{w}_0$, and so $\psi_P$ descends to the original ideal coordinate chart $\psi=\psi_{\mathbf{i}_0}$ on $Z$ (defined first in Section \ref{sec The ideal coordinates} and in full generality in Section \ref{subsec A family of ideal polytopes}).
\end{rem}

We wish to show that, with this quiver decoration, the quiver toric chart and the ideal toric chart return the same element of $Z_P$. We have already seen in Lemma \ref{lem b0 is gamma wt matrix} that
    $$\left[\theta_P \left( \boldsymbol{x}_{\mathcal{V}_P^*} , \boldsymbol{r}_{\mathcal{A}_{P,\hat{\mathrm{v}}}}\right) \right]_0 = \left[b_P\right]_0 = \gamma_P\left(\boldsymbol{x}_{\mathcal{V}_P} \right).
    $$
Since $\left[\psi_P(d,\boldsymbol{m})\right]_0 = \gamma_P\left(\boldsymbol{x}_{\mathcal{V}_P} \right)$ by definition, it remains to consider $\left[\psi_P(d,\boldsymbol{m})\right]_-$ and show that it is equal to $\left[b_P\right]_-$, which we do in the next theorem:

\begin{thm} \label{thm b- in terms of y_i}
We can factorise $b_P \in Z_P$ as $b_P=\left[b_P\right]_-\left[b_P\right]_0$ where
    $$\left[b_P\right]_- = \prod_{k=1}^{n-1} \prod_{\substack{a=1 \\ v_{(k,a)} \in \mathcal{V}_P^{\bullet}}}^{n-k} \mathbf{y}^{\vee}_a\left( \frac{1}{m_{s_k+a}} \right) 
    .$$
\end{thm}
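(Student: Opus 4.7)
The plan is to adapt the Chamber Ansatz strategy from the $G/B$ case (Lemma \ref{lem form of u_1 and b factorisations}) to this partial flag setting. By Lemma \ref{lem b0 is gamma wt matrix}, the diagonal factor $[b_P]_0$ is already pinned down, so it suffices to show that $[b_P]_-$ coincides with the stated product. Both elements live in $U^{\vee}_-$, and the composition $U^{\vee}_- \hookrightarrow G^{\vee} \twoheadrightarrow G^{\vee}/B^{\vee}_+$ is injective, so I would reduce the theorem to the identity of cosets
$$b_P \cdot B^{\vee}_+ \;=\; \left(\prod_{k=1}^{n-1} \prod_{\substack{a=1 \\ v_{(k,a)} \in \mathcal{V}_P^{\bullet}}}^{n-k} \mathbf{y}^{\vee}_a\!\left(\frac{1}{m_{s_k+a}}\right)\right) B^{\vee}_+.$$

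To unravel the left-hand side, first use that $u_R \in U^{\vee} \subseteq B^{\vee}_+$ and that $\kappa_P$ commutes with $\dot{w}_P$ (since $\kappa_P \in (T^{\vee})^{W_P}$) to rewrite
$$b_P \cdot B^{\vee}_+ \;=\; u_L \kappa_P \dot{w}_P \bar{w}_0 \cdot B^{\vee}_+ \;=\; g_L \kappa_P \dot{w}_{L_{l+1}} \bar{w}_0 \cdot B^{\vee}_+,$$
invoking Lemma \ref{lem gL in terms of uL and wL}. Then Lemma \ref{lem pass weyl group elt from square past w0} lets us absorb $\dot{w}_{L_{l+1}}\bar{w}_0$ into a single Weyl representative on the right. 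Next, apply the Chamber Ansatz (Theorem \ref{thm chamber ansatz}) with respect to the sequence of simple reflection indices obtained by reading off the second subscript $a$ from the dot-vertex labels $v_{(k,a)}$ column by column (left to right) and top to bottom inside each column. This sequence has length $|\mathcal{V}_P^{\bullet}| = l(w_0) - l(w_P)$ and, in the $G/B$ case, reduces to $\mathbf{i}_0$; verifying that it is a reduced expression for the appropriate Weyl element in general is a straightforward combinatorial check using the shape of the squares $L_i$. The Chamber Ansatz then produces a factorisation
$$b_P \cdot B^{\vee}_+ \;=\; \prod_{k,a} \mathbf{y}^{\vee}_a\!\left(\frac{1}{\widetilde{m}_{s_k+a}}\right) \cdot B^{\vee}_+,$$
with each $\widetilde{m}_{s_k+a}$ a quotient of flag minors of $g_L$.

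The remaining and most involved step is to identify $\widetilde{m}_{s_k+a} = m_{s_k+a}$. I would mimic the planar-network argument of Section \ref{subsec Chamber Ansatz minors}: build the acyclic graph whose weighted paths compute minors of $u_L$, using the factorisation (\ref{eqn factor Xi into xi}) of each $X^{\vee}_{i,\alpha}$ into $\mathbf{x}^{\vee}_j$'s. The relevant chamber labels are again intervals of the form $\{1,\ldots,k\} \cup \{k+a+1,\ldots,n\}$ (or their $G/P$ analogues suggested by the quiver layout), so one can show as in Lemma \ref{lem our Chamber Ansatz minors are monomial} that only one vertex-disjoint path family contributes, rendering the minors monomial. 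Then, in the spirit of Lemma \ref{lem coord change for b m_i in terms of p_i}, the Chamber Ansatz ratio telescopes thanks to the iterative definition (\ref{eqn coord of arrow leaving vertex k,a}), and the horizontal-arrow decoration (\ref{eqn coord of horiz arrow leaving vertex k,a}) ensures the telescoping still closes correctly through the `non-standard' vertices described in Remark \ref{rem need horizontal arrows in description}. A useful sanity check throughout will be Proposition \ref{prop G/P crit points, sum at vertex is nu_i}: the identification $\widetilde{m}_{s_k+a} = m_{s_k+a}$ predicts exactly the outgoing-arrow sums at critical points that that proposition asserts.

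The main obstacle is precisely this last step. The computation is more delicate than in the $G/B$ case because the presence of $\dot{s}_i$ circles inside each square $L_i$ forces $u_L$ to contain $X^{\vee}_{i,\alpha}$ factors of length $\geq 2$; in the planar network these produce longer diagonals that interact non-trivially with the telescoping. One must control, column by column, how the horizontal-arrow coordinates introduced in Definition \ref{defn decoration star vert and horiz arr} correct for the `missing' vertical arrows below the squares, so that the telescoped product still reduces to a single $m_{s_k+a}$. Once this combinatorial bookkeeping is carried out — most efficiently by induction on $k$, using Lemma \ref{lem gL in terms of uL and wL} to peel off one column of $g_L$ at a time — both sides of the coset identity agree, and the theorem follows.
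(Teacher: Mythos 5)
Your overall strategy is the paper's: reduce to a coset identity, use that reading off the second subscript of the $v_{(k,a)}$ labels gives a reduced expression for $\dot{w}_P\bar{w}_0$ (the paper's Lemma \ref{lem wPw0 description in dot vertices}), apply a Chamber Ansatz to $u_L\dot{w}_P\bar{w}_0 B^{\vee}_+$, prove monomiality of the relevant minors via planar networks, and telescope using the quiver decoration. However, several of your intermediate steps would not survive being carried out, and the step you defer is in fact the content of the theorem.

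First, the detour through $g_L$ is circular: by Lemma \ref{lem gL in terms of uL and wL} one has $g_L\dot{w}_{L_{l+1}}=u_L\dot{w}_P$, so $g_L\kappa_P\dot{w}_{L_{l+1}}\bar{w}_0 B^{\vee}_+$ is literally the same coset as $u_L\kappa_P\dot{w}_P\bar{w}_0 B^{\vee}_+$, and you gain nothing. Moreover Lemma \ref{lem pass weyl group elt from square past w0} only asserts equality \emph{in the Weyl group}, not as elements of $G^{\vee}$, so it does not let you ``absorb'' $\dot{w}_{L_{l+1}}\bar{w}_0$ into a single representative; and even if it did, the resulting representative $\bar{w}_0\dot{w}_{R_1}$ does not lie in $B^{\vee}_+$, so it would not disappear from the coset. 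Second, Theorem \ref{thm chamber ansatz} is stated only for $z\bar{w}_0 B^{\vee}_+\in\mathcal{R}^{\vee}_{e,w_0}$; here the Weyl element is $\dot{w}_P\bar{w}_0$, not $\bar{w}_0$, so you must invoke the \emph{generalised} Chamber Ansatz of Marsh--Rietsch (the paper's Lemma \ref{lem b- in terms of y_i} does so explicitly). The Chamber-Ansatz minors should also be taken of $u_L$, which is unipotent, rather than of $g_L$, which is not.

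Third and most importantly, the identification $\widetilde{m}_{s_k+a}=m_{s_k+a}$ is not ``combinatorial bookkeeping'' to be carried out later; it is the theorem. Your characterisation of Lemma \ref{lem wPw0 description in dot vertices} as a ``straightforward combinatorial check'' understates what is needed: producing that reduced expression from $\dot{w}_P\bar{w}_0$ requires multiple rounds of permutation and cancellation against the $\dot{w}_{L_j}$ factors (see the worked calculation in Example \ref{ex of proof of Lem wPw0 description in dot vertices}). You also need an explicit description of the chamber labels of the resulting ansatz arrangement (Lemma \ref{lem G/P chamber label form}) before you can even set up the planar-network computation. The monomiality argument (Lemma \ref{lem minors in CA are monomial}) is more delicate than its $G/B$ counterpart precisely because the $X^{\vee}_{n_j,\alpha}$ factors with $\alpha\geq 2$ produce ``dotted rectangles'' whose families of paths between non-extreme lines sum to zero, and the final telescoping in Section \ref{subsec G/P The coordinate change} needs a genuine case split depending on whether the relevant pair of paths passes through such a rectangle, together with an induction on the column sets $k_j$. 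Deferring all of this is deferring the proof.
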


We devote Sections \ref{subsec G/P Applying the Chamber Ansatz}-\ref{subsec G/P The coordinate change} to the proof of this theorem, which we do via a sequence of lemmas.

\subsection{Applying the Chamber Ansatz} \label{subsec G/P Applying the Chamber Ansatz}
\fancyhead[L]{9.2 \ \ Applying the Chamber Ansatz}

Similar to the structure of the the proof of Theorem \ref{thm coord change} in the $G/B$ setting, in order to prove Theorem \ref{thm b- in terms of y_i} we will first need to apply (a more general version of) the Chamber Ansatz (stated in the $G/B$ case as Theorem \ref{thm chamber ansatz}).
In particular, the analogue of the Chamber Ansatz that we will need (\cite[Theorem 7.1]{MarshRietsch2004}) may be thought of as replacing $\bar{w}_0$ with $\dot{w}_P\bar{w}_0$ in Theorem \ref{thm chamber ansatz}. With this in mind, we now state the first two lemmas we require for the proof of Theorem \ref{thm b- in terms of y_i}:

\begin{lem} \label{lem wPw0 description in dot vertices}
We can write $\dot{w}_P\bar{w}_0$ as the following product:
    $$\dot{w}_P\bar{w}_0 = \prod_{k=1}^{n-1} \prod_{\substack{a=1 \\ v_{(k,a)} \in \mathcal{V}_P^{\bullet}}}^{n-k} \bar{s}_a.
    $$
\end{lem}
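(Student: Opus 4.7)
My plan is to reduce the identity to proving the right-hand side word is a reduced expression for the Weyl group element $w_P w_0$, and then lift to $N_G(T)$.

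First, I would unpack the dot-vertex restriction. For a column $k$ with $n_{i-1} < k \le n_i$, the vertex $v_{(k,a)}$ is a dot vertex exactly for $a \in \{n_i-k+1, \ldots, n-k\}$ (the positions $a \le n_i-k$ lie inside the square $L_i$ at or above its star). Hence the right-hand side becomes
\[
P \;:=\; \prod_{i=1}^{l+1}\prod_{k=n_{i-1}+1}^{n_i} \bar{s}_{n_i-k+1}\,\bar{s}_{n_i-k+2}\cdots\bar{s}_{n-k},
\]
with the $i=l+1$ contribution empty. A short count gives $\sum_i k_i(n-n_i) = (n^2-\sum_i k_i^2)/2$, and since $l(w_P) = \sum_i \binom{k_i}{2}$ this equals $l(w_0) - l(w_P) = l(w_P w_0)$.

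Next, I would prove that as a Weyl group element $P = w_P w_0$ in $S_n$. My preferred route is induction on the number of blocks $l+1$; the base case $l+1 = 1$ is trivial. For the inductive step, peel off the outer factor
\[
P_1 \;=\; \prod_{k=1}^{n_1}\bar{s}_{n_1-k+1}\bar{s}_{n_1-k+2}\cdots\bar{s}_{n-k},
\]
which is a classical Grassmannian-type reduced expression realising the cyclic block shift $[1,\ldots,n]\mapsto[n_1+1,\ldots,n,1,\ldots,n_1]$ (easily checked column by column starting from the identity, as is standard in the Grassmannian case). After applying $P_1$ the remaining factors $Q$ involve only indices acting on the first $n-n_1$ positions, and match the analogous product built from the quiver for the sub-parabolic of $GL_{n-n_1}$ with block lengths $(k_2, \ldots, k_{l+1})$. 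By induction $Q$ realises $w_{P'} w_{0}'$ of this smaller problem, and composing the two yields precisely the block-reversal permutation sending the block $\{n_{i-1}+1,\ldots,n_i\}$ to $\{n-n_i+1,\ldots,n-n_{i-1}\}$ preserving internal order---which is exactly $w_P w_0$.

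Once $P = w_P w_0$ in $W$ is established, the length match forces the word to be a reduced expression for $w_P w_0$. The standard fact recalled in Section \ref{subsec Notation and definitions} (that $\bar{s}_{i_1}\cdots\bar{s}_{i_m}$ is independent of the chosen reduced expression) then yields $P = \overline{w_P w_0}$ in $N_G(T)$. Finally, $\overline{w_P w_0} = \dot{w}_P \bar{w}_0$: since $w_P \le w_0$ and $w_P$ is an involution, one has $l(w_0) = l(w_P) + l(w_P w_0)$, so $w_0 = w_P \cdot (w_P w_0)$ is a reduced factorisation, giving $\bar{w}_0 = \bar{w}_P \cdot \overline{w_P w_0}$, and hence $\overline{w_P w_0} = \bar{w}_P^{-1}\bar{w}_0 = \dot{w}_P\bar{w}_0$.

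The main obstacle is the inductive step: although $P_1$ is transparent, the book-keeping needed to recognise $Q$ as the product for the smaller parabolic must carefully match the re-indexing of columns and the relabelling of the $L_i$'s, while also tracking which positions are fixed by $P_1$. A cleaner, less inductive alternative would be to match $P$ against an explicit reduced expression for $w_P w_0$ from the literature (built from ``Grassmannian building blocks'' stacked according to the block structure) using only braid and commutation relations, though this requires identifying the right reference.
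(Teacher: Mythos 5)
Your proof is correct in substance, and it takes a genuinely different route from the paper's. The paper's proof stays entirely inside $N_G(T)$: it fixes the reduced expression $\mathbf{i}_0^{\mathrm{op}}$ for $\bar{w}_0$, writes $\dot{w}_P\bar{w}_0 = \dot{w}_{L_1}\cdots\dot{w}_{L_{l+1}}\prod_{j}\prod_i \bar{s}_{j-i+1}$, and then repeatedly permutes the $\dot{w}_{L_r}$ factors rightward using commutation relations, cancelling $\dot{s}_i$'s against $\bar{s}_i$'s (via $\bar{s}_i\cdots\bar{s}_1 = (\dot{s}_1\cdots\dot{s}_i)^{-1}$) until it reaches the target factorisation. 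Your approach instead splits the work: first reduce to the Weyl-group identity via the length count $\sum_i k_i(n-n_i) = l(w_Pw_0)$, prove $P = w_Pw_0$ in $S_n$ by induction on the number of blocks using the Grassmannian cyclic shift as a building block, and only at the end lift to $N_G(T)$ via well-definedness and additivity of the Tits lift. Your route is more modular and conceptually transparent, while the paper's is self-contained and avoids having to verify the Grassmannian permutation identity separately.

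Two small cautions. First, a convention point: with the paper's conventions ($s_i$ permutes $\epsilon_i$ and $\epsilon_{i+1}$; $W = N_G(T)/T$ acting on the left), what you describe as "sending $\{n_{i-1}+1,\ldots,n_i\}$ to $\{n-n_i+1,\ldots,n-n_{i-1}\}$ preserving internal order" is $w_0 w_P = (w_Pw_0)^{-1}$, not $w_Pw_0$. The element $w_P w_0$ sends $j \in \{n_{i-1}+1,\ldots,n_i\}$ to $w_P(n+1-j)$, which reverses internal order; these coincide only when the block partition is palindromic. This does not break your argument --- a direct check (e.g.\ $n=3$, $k_1=2,k_2=1$ gives $P = s_2s_1 = [3,1,2] = w_Pw_0$) confirms $P = w_Pw_0$ --- but the verbal target needs to be stated precisely so the base case and the composition step land on the right element. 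Second, the inductive step requires, as you note, matching $Q$ to the product for the smaller parabolic $(k_2,\ldots,k_{l+1})$ on $n-n_1$ positions, and then verifying that $P_1 \cdot Q = w_Pw_0$; the re-indexing $k \mapsto k - n_1$, $n_j \mapsto n_{j+1} - n_1$ does work out cleanly, but the composition check deserves to be written out since it is where the element being produced is actually identified. These are fillable, not fundamental, gaps.
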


\begin{lem} \label{lem b- in terms of y_i}
Let $R=l(\dot{w}_P\bar{w}_0) = N- l(w_P)$ denote the length of $\dot{w}_P\bar{w}_0$ and $\mathbf{w} = (w_{(0)}, w_{(1)}, \ldots, w_{(R)})$ be the sequence of partial products for $\dot{w}_P\bar{w}_0$ defined by its sequence of factors (as described by Lemma \ref{lem wPw0 description in dot vertices}):
    $$\left(w_{(1)}, w_{(1)}^{-1}w_{(2)}, \ldots, w_{(R-1)}^{-1}w_{(R)}\right) = (s_{i_1}, \ldots, s_{i_R}).
    $$
Then
    \begin{equation} \label{eqn G/P b- factored into y's CA application}
    \left[b_P\right]_- = \prod_{k=1}^{n-1} \prod_{\substack{a=1 \\ v_{(k,a)} \in \mathcal{V}_P^{\bullet}}}^{n-k} \mathbf{y}^{\vee}_a\left( h_r \right)
    \end{equation}
where $h_r$ is the respective coordinate given by the (generalised) Chamber Ansatz in terms of minors of $u_L$:
    $$h_r = \frac{\prod_{j\neq i_k} \Delta^{\omega^{\vee}_j}_{w_{(r)}\omega^{\vee}_j}(u_L)^{-a_{j,i_r}}}{\Delta^{\omega^{\vee}_{i_r}}_{w_{(r)}\omega^{\vee}_{i_r}}(u_L) \Delta^{\omega^{\vee}_{i_r}}_{w_{(r-1)}\omega^{\vee}_{i_r}}(u_L)}, \quad r=1, \ldots, R.
    $$
\end{lem}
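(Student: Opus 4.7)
The plan is to reduce Lemma \ref{lem b- in terms of y_i} to a single application of the generalised Chamber Ansatz of Marsh--Rietsch (\cite[Theorem 7.1]{MarshRietsch2004}), using Lemma \ref{lem wPw0 description in dot vertices} to supply the required reduced expression for $\dot{w}_P \bar{w}_0$.

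The first step will be to rewrite $[b_P]_- B^\vee_+$ in a form to which the Chamber Ansatz directly applies. Starting from $b_P = u_L\, \kappa_P\, \dot{w}_P\, \bar{w}_0\, u_R$ and using that both $u_R \in U^\vee$ and $[b_P]_0 \in T^\vee$ lie in $B^\vee_+$, we have $[b_P]_- B^\vee_+ = b_P B^\vee_+ = u_L\, \kappa_P\, \dot{w}_P\, \bar{w}_0\, B^\vee_+$. Since $\kappa_P \in (T^\vee)^{W_P}$, the element $\kappa_P$ commutes with $\dot{w}_P$, and conjugation by $\bar{w}_0$ keeps it in $T^\vee \subset B^\vee_+$; pushing $\kappa_P$ through $\dot{w}_P \bar{w}_0$ and absorbing it into $B^\vee_+$ will therefore yield
\[
[b_P]_- B^\vee_+ \;=\; u_L\, \dot{w}_P\, \bar{w}_0\, B^\vee_+,
\]
with $u_L \in U^\vee$.

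Granting Lemma \ref{lem wPw0 description in dot vertices}, the double product there is a reduced expression for $\dot{w}_P \bar{w}_0$ whose letters are indexed precisely by the dot vertices $v_{(k,a)} \in \mathcal{V}_P^\bullet$ traversed column-by-column, top-to-bottom. Applying the generalised Chamber Ansatz to the coset $u_L\, \dot{w}_P\, \bar{w}_0\, B^\vee_+$ with this reduced expression will then produce a unique element
\[
g \;=\; \prod_{k=1}^{n-1} \prod_{\substack{a=1 \\ v_{(k,a)} \in \mathcal{V}_P^\bullet}}^{n-k} \mathbf{y}^\vee_{a}(h_r) \;\in\; U^\vee_- \cap B^\vee_+\, \dot{w}_P \bar{w}_0\, B^\vee_+
\]
with $g B^\vee_+ = u_L\, \dot{w}_P\, \bar{w}_0\, B^\vee_+$, where each $h_r$ is exactly the quotient of chamber minors of $u_L$ stated in the lemma. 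Since both $[b_P]_-$ and $g$ lie in $U^\vee_-$ and represent the same coset in $G^\vee / B^\vee_+$, and $U^\vee_- \cap B^\vee_+ = \{e\}$, we conclude $[b_P]_- = g$, giving the factorisation \eqref{eqn G/P b- factored into y's CA application}.

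The main obstacle is the still-to-be-proved Lemma \ref{lem wPw0 description in dot vertices}. The length count is straightforward: for $k \in [n_{j-1}+1, n_j]$ column $k$ contributes $n-n_j$ factors, so the total is $\sum_{j} k_j(n-n_j) = N - \sum_j \binom{k_j}{2} = l(\dot{w}_P \bar{w}_0)$. To see the expression is reduced and represents $\dot{w}_P \bar{w}_0$, I would compare with the standard reduced expression $\mathbf{i}_0$ for $\bar{w}_0$: in each column $k \in [n_{j-1}+1, n_j]$, the contribution to $\mathbf{i}_0$ is $s_1 s_2 \cdots s_{n-k}$, which splits as a prefix $s_1 \cdots s_{n_j - k}$ recording the circled $\dot{s}_i$'s inside $L_j$ and a suffix $s_{n_j-k+1} \cdots s_{n-k}$ recording the dot vertices. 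Using braid and commutation relations to collect all circled prefixes (across columns) into a reduced expression for $\dot{w}_P$ on the left would isolate the claimed product on the right as a reduced expression for $w_P^{-1} w_0 = w_P w_0$. This combinatorial rearrangement is the delicate part and requires a careful case analysis of how letters from different columns interact.
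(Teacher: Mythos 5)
Your proposal is correct and follows essentially the same route as the paper: both reduce $[b_P]_- B^\vee_+ = b_P B^\vee_+$ to the coset $u_L\,\dot{w}_P\,\bar{w}_0\,B^\vee_+$ and then apply the generalised Chamber Ansatz of Marsh--Rietsch with the reduced expression supplied by Lemma~\ref{lem wPw0 description in dot vertices}. The paper's proof is more compressed (it states the coset equality and applies the Chamber Ansatz in one step), whereas you spell out the absorption of $\kappa_P$ into $B^\vee_+$ via $W_P$-invariance and the injectivity of $U^\vee_- \hookrightarrow G^\vee/B^\vee_+$ to get $[b_P]_- = g$; note also that Lemma~\ref{lem wPw0 description in dot vertices} is a prerequisite to the statement, so your sketch of its proof, while on the right track, is not needed here.
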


\begin{proof}
We will prove Lemma \ref{lem b- in terms of y_i} similar to the proof of the factorisation of $b$ from $u_1$ in Lemma \ref{lem form of u_1 and b factorisations} (see (\ref{eqn form of b}) and (\ref{eqn 1/m_i in terms of chamber minors})) in the full flag case.

To give the factorisation (\ref{eqn G/P b- factored into y's CA application}) we note that $b_P B^{\vee}_+ = u_L \dot{w}_P\bar{w}_0 B^{\vee}_+$ with $u_L \in U^{\vee}_+$, so we may apply the (generalised) Chamber Ansatz (\cite[Theorem 7.1]{MarshRietsch2004}).
We obtain $b_P B^{\vee}_+ = u_L \dot{w}_P\bar{w}_0 B^{\vee}_+ = \mathbf{y}_{i_1}^{\vee}(h_1) \cdots \mathbf{y}_{i_R}^{\vee}(h_R)B^{\vee}_+$ with $h_r$ given by
    $$h_r = \frac{\prod_{j\neq i_k} \Delta^{\omega^{\vee}_j}_{w_{(r)}\omega^{\vee}_j}(u_L)^{-a_{j,i_r}}}{\Delta^{\omega^{\vee}_{i_r}}_{w_{(r)}\omega^{\vee}_{i_r}}(u_L) \Delta^{\omega^{\vee}_{i_r}}_{w_{(r-1)}\omega^{\vee}_{i_r}}(u_L)}, \quad r=1, \ldots, R.
    $$
\end{proof}

When it is clear from the context which case we are working in, we will often simply write the `Chamber Ansatz' rather than the `generalised Chamber Ansatz'.

It remains to prove Lemma \ref{lem wPw0 description in dot vertices}. We first give an example to motivate our proof of this lemma:
\begin{ex} \label{ex of proof of Lem wPw0 description in dot vertices}
We will describe $\dot{w}_P\bar{w}_0$ for our running example of $\mathcal{F}_{2,5,6}(\mathbb{C}^8)$.
First, recall the reduced expression $\mathbf{i}_0$ and the superscript `$\mathrm{op}$' notation from Section \ref{subsec The Chamber Ansatz}:
    $$\begin{aligned}
    &\mathbf{i}_0 = (i_1, \ldots, i_{\binom{r}{2}}) := (1,2, \ldots, r-1, 1,2 \ldots, r-2, \ldots, 1, 2, 1) \\
    &\mathbf{i}_0^{\mathrm{op}} = (i_{\binom{r}{2}}, \ldots, i_1) := (1,2,1,3,2,1, \ldots, r-2, \ldots, 2, 1, r-1, \ldots, 2, 1) \\
    \end{aligned}
    $$

For $\bar{w}_0$ we take the reduced expression ${\mathbf{i}_0}^{\mathrm{op}}$ with $r=8$.
Considering the quiver $Q_P$ we recall the notation $\dot{w}_{L_i}$ defined in Section \ref{subsec G/P Constructing matrices from a given quiver decoration} and Example \ref{ex F2,5,6,C8 wLi}:
    $$\dot{w}_{L_1}=\dot{s}_1, \quad \dot{w}_{L_2}=\dot{s}_4 \dot{s}_3 \dot{s}_4, \quad \dot{w}_{L_3}=1, \quad \dot{w}_{L_4}=\dot{s}_7 . 
    $$
Using parentheses and square braces to highlight various sub-products, we obtain the desired result as follows:
    $$\begin{aligned} \dot{w}_P\bar{w}_0
    &= \dot{w}_{L_1}\dot{w}_{L_2}\dot{w}_{L_3}\dot{w}_{L_4} (\bar{s}_1) (\bar{s}_2\bar{s}_1) (\bar{s}_3\bar{s}_2\bar{s}_1) (\bar{s}_4\bar{s}_3\bar{s}_2\bar{s}_1) (\bar{s}_5\bar{s}_4\bar{s}_3\bar{s}_2\bar{s}_1) (\bar{s}_6\bar{s}_5\bar{s}_4\bar{s}_3\bar{s}_2\bar{s}_1) (\bar{s}_7\bar{s}_6\bar{s}_5\bar{s}_4\bar{s}_3\bar{s}_2\bar{s}_1) \\
    &= (\bar{s}_1) \dot{s}_1 (\bar{s}_2\bar{s}_1) (\bar{s}_3\bar{s}_2\bar{s}_1) (\bar{s}_4\bar{s}_3\bar{s}_2\bar{s}_1) \dot{s}_1\dot{s}_2\dot{s}_1 (\bar{s}_5\bar{s}_4\bar{s}_3\bar{s}_2\bar{s}_1) (\bar{s}_6\bar{s}_5\bar{s}_4\bar{s}_3\bar{s}_2\bar{s}_1) (\bar{s}_7\bar{s}_6\bar{s}_5\bar{s}_4\bar{s}_3\bar{s}_2\bar{s}_1) \dot{s}_1 \\
    & \omit \hfill \text{by permuting all non-trivial $w_{L_r}$, $r=4, \ldots, 1$, to the right} \\
    &= (\bar{s}_2\bar{s}_1) (\bar{s}_3\bar{s}_2\bar{s}_1) (\bar{s}_4\bar{s}_3) \dot{s}_1 (\bar{s}_5\bar{s}_4\bar{s}_3\bar{s}_2\bar{s}_1) (\bar{s}_6\bar{s}_5\bar{s}_4\bar{s}_3\bar{s}_2\bar{s}_1) (\bar{s}_7\bar{s}_6\bar{s}_5\bar{s}_4\bar{s}_3\bar{s}_2) \\
    & \omit \hfill \text{since $\bar{s}_i\cdots \bar{s}_1=(\dot{s}_1\cdots \dot{s}_i)^{-1}$ by definition} \\
    &= [\bar{s}_2\bar{s}_3\bar{s}_4\bar{s}_5\bar{s}_6\bar{s}_7]  (\bar{s}_1) (\bar{s}_2\bar{s}_1) (\bar{s}_3) \dot{s}_1 (\bar{s}_4\bar{s}_3\bar{s}_2\bar{s}_1) (\bar{s}_5\bar{s}_4\bar{s}_3\bar{s}_2\bar{s}_1) (\bar{s}_6\bar{s}_5\bar{s}_4\bar{s}_3\bar{s}_2) \\
    & \omit \hfill \text{by permuting the first term in each sub-product marked by parentheses to the left} \\
    &= [\bar{s}_2\bar{s}_3\bar{s}_4\bar{s}_5\bar{s}_6\bar{s}_7] [\bar{s}_1\bar{s}_2\bar{s}_3\bar{s}_4\bar{s}_5\bar{s}_6] (\bar{s}_1) \dot{s}_1 (\bar{s}_3\bar{s}_2\bar{s}_1) (\bar{s}_4\bar{s}_3\bar{s}_2\bar{s}_1) (\bar{s}_5\bar{s}_4\bar{s}_3\bar{s}_2) \\
    & \omit \hfill \text{since there was no possible cancellation we have again permuted the first term} \\
    & \omit \hfill \text{in each sub-product marked by parentheses to the left} \\
    &= [\bar{s}_2\bar{s}_3\bar{s}_4\bar{s}_5\bar{s}_6\bar{s}_7] [\bar{s}_1\bar{s}_2\bar{s}_3\bar{s}_4\bar{s}_5\bar{s}_6] (\bar{s}_3\bar{s}_2\bar{s}_1) (\bar{s}_4\bar{s}_3\bar{s}_2\bar{s}_1) (\bar{s}_5\bar{s}_4\bar{s}_3\bar{s}_2) \\
    & \omit \hfill \text{since $\bar{s}_1=\dot{s}_1^{-1}$} \\
    &= [\bar{s}_2\bar{s}_3\bar{s}_4\bar{s}_5\bar{s}_6\bar{s}_7] [\bar{s}_1\bar{s}_2\bar{s}_3\bar{s}_4\bar{s}_5\bar{s}_6] [\bar{s}_3\bar{s}_4\bar{s}_5] (\bar{s}_2\bar{s}_1) (\bar{s}_3\bar{s}_2\bar{s}_1) (\bar{s}_4\bar{s}_3\bar{s}_2) \\
    & \omit \hfill \text{by permuting the first term in each sub-product marked by parentheses to the left} \\
    & \hspace{0.18cm} \vdots \\
    &= [\bar{s}_2\bar{s}_3\bar{s}_4\bar{s}_5\bar{s}_6\bar{s}_7] [\bar{s}_1\bar{s}_2\bar{s}_3\bar{s}_4\bar{s}_5\bar{s}_6] [\bar{s}_3\bar{s}_4\bar{s}_5] [\bar{s}_2\bar{s}_3\bar{s}_4] [\bar{s}_1\bar{s}_2\bar{s}_3] [\bar{s}_1\bar{s}_2] \\
    \end{aligned}
    $$
\end{ex}

\begin{proof}[Proof of Lemma {\ref{lem wPw0 description in dot vertices}}]
We will take the reduced expression ${\mathbf{i}_0}^{\mathrm{op}}$ for $\bar{w}_0$. 
This means that
    $$\dot{w}_P\bar{w}_0 = \dot{w}_{L_1} \cdots \dot{w}_{L_{l+1}} \prod_{j=1}^{n-1} \prod_{i=1}^{j} \bar{s}_{j-i+1}
    $$
where some or all of the $\dot{w}_{L_r}$ may be trivial. Additionally we will write $\dot{w}_{0,r}$ for the representative in terms of $\dot{s}_i$'s of the longest element of
    $$\left\langle s_i \ | \ i \in \{ 1, \ldots, r \} \right\rangle \subset W
    $$
given by the reduced expression $\mathbf{i}_0$. 

Now for each $r\in \{1, \ldots, l+1\}$, due to choosing the reduced expression ${\mathbf{i}_0}^{\mathrm{op}}$ for $\bar{w}_0$ and since $\dot{w}_{L_r}$ is a representative of the longest element of
    $$\left\langle s_i \ | \ i \in \{ n_{r-1}+1, \ldots, n_r-1 \} \right\rangle,
    $$
we have the following:
    $$\begin{aligned} \dot{w}_{L_r} \left( \prod_{j=1}^{n_r-1} \prod_{i=1}^{j} \bar{s}_{j-i+1} \right)
        &= \left( \prod_{j=1}^{n_r-1} \prod_{i=1}^{j} \bar{s}_{j-i+1} \right) \dot{w}_{0,k_r-1}
            & \begin{aligned}[t] \text{noticing that } & \\
            n_{r-1}+1 = n_r-1 - (k_r-1) & \\
            \end{aligned} \\
        &= \left( \prod_{j=1}^{n_r-2} \prod_{i=1}^{j} \bar{s}_{j-i+1} \right) \left( \prod_{i=1}^{n_r-k_r} \bar{s}_{n_r-i} \right) \dot{w}_{0,k_r-2}.
    \end{aligned}
    $$
In particular, after the permutation in the first line above, the last $k_r-1$ $\bar{s}_i$ terms form the inverse of the first $k_r-1$ $\dot{s}_i$ terms
    $$\bar{s}_{k_r-1}\cdots \bar{s}_1=(\dot{s}_1\cdots \dot{s}_{k_r-1})^{-1}
    $$
and hence these terms no longer appear after the second equality.

We apply the above procedure to permute each $\dot{w}_{L_{r}}$ in $\dot{w}_P\bar{w}_0$ to the right, starting with $r=l+1$ and working until we have treated $\dot{w}_{L_{1}}$, and then we make the respective cancellations. We obtain
    \begin{align}
    \dot{w}_P\bar{w}_0 &= \left( \prod_{j=1}^{n_1-1} \prod_{i=1}^{j} \bar{s}_{j-i+1} \right) \dot{w}_{0,k_1-1}
        \left( \prod_{j=n_1}^{n_2-1} \prod_{i=1}^{j} \bar{s}_{j-i+1} \right) \dot{w}_{0,k_2-1} \cdots
        \left( \prod_{j=n_l}^{n_{l+1}-1} \prod_{i=1}^{j} \bar{s}_{j-i+1} \right) \dot{w}_{0,k_{l+1}-1} \notag \\
    &= \left( \prod_{j=1}^{n_1-2} \prod_{i=1}^{j} \bar{s}_{j-i+1} \right) \left( \prod_{i=1}^{n_1-k_1} \bar{s}_{n_1-i} \right) \dot{w}_{0,k_1-2} \cdots
        \left( \prod_{j=n_l}^{n_{l+1}-2} \prod_{i=1}^{j} \bar{s}_{j-i+1} \right) \left( \prod_{i=1}^{n_{l+1}-k_{l+1}} \bar{s}_{n_{l+1}-i} \right) \dot{w}_{0,k_{l+1}-2}. \label{eqn wPw0 after permuting wLr}
    \end{align}



Next we consider the products of the form
    $$\prod_{i=1}^{j} \bar{s}_{j-i+1} \quad \text{and} \quad \prod_{i=1}^{n_r-k_r} \bar{s}_{n_r-i}.
    $$
Starting with the left-most such product in (\ref{eqn wPw0 after permuting wLr}), then taking each subsequent product in succession, we permute the respective first terms as far left as possible using the relation
    \begin{equation} \label{eqn si sj = sj si relation}
    \bar{s}_i\bar{s}_j=\bar{s}_j\bar{s}_i \quad \text{when } |i-j|\geq 2.
    \end{equation}
We may do this since, by construction, all terms to the left of the $\bar{s}_j$ we are trying to permute to the beginning of the product (apart from in the first sub-product of $\bar{s}_i$'s with strictly increasing subscripts) have subscript at most $j-2$. After these permutations we see that $\dot{w}_P\bar{w}_0$ begins with the following product:
    $$ \prod_{i=k_1}^{n-1}\bar{s}_i.
    $$
Indeed, using square braces and parentheses in the same way as in Example \ref{ex of proof of Lem wPw0 description in dot vertices}, $\dot{w}_P\bar{w}_0$ is now given by the product
    $$\left[ \prod_{i=k_1}^{n-1}\bar{s}_i \right]
        \left( \prod_{j=1}^{n_1-2} \prod_{i=2}^{j} \bar{s}_{j-i+1} \right) \left( \prod_{i=2}^{n_1-k_1} \bar{s}_{n_1-i} \right) \dot{w}_{0,k_1-2} \cdots
        \left( \prod_{j=n_l}^{n_{l+1}-2} \prod_{i=2}^{j} \bar{s}_{j-i+1} \right) \left( \prod_{i=2}^{n_{l+1}-k_{l+1}} \bar{s}_{n_{l+1}-i} \right) \dot{w}_{0,k_{l+1}-2}
    $$
noting that some of these sub-products may be trivial.

If there exists some $k_r$ such that $k_r=n_r-1$, then in our new description of $\dot{w}_P\bar{w}_0$ we will be able to make a cancellation of the first $k_r-2=n_r-3$ $\dot{s}_i$ terms of $\dot{w}_{0,k_r-2}$ with the last $k_r-2$ $\bar{s}_i$ terms of the preceding product of $\bar{s}_i$'s. In Example \ref{ex of proof of Lem wPw0 description in dot vertices}, there was no possible cancellation after this first set of permutations to the left.




We repeat this procedure iteratively: if our products are now of the form
    $$\prod_{i=b}^{j} \bar{s}_{j-i+1} \quad \text{and} \quad \prod_{i=b}^{n_r-k_r} \bar{s}_{n_r-i}
    $$
then starting with the left-most product and taking each subsequent product in succession, we permute the respective first terms to the left using the relation (\ref{eqn si sj = sj si relation}) until we reach the sub-products of $\bar{s}_i$'s with strictly increasing subscripts which we formed earlier.

If there exists some $k_r$ such that $k_r=n_r-b$ then we will be able to make a cancellation of the first $k_r-b-1=n_r-b-2$ $\dot{s}_i$ terms of $\dot{w}_{0,k_r-b-1}$ with the last $k_r-b-1$ $\bar{s}_i$ terms of the preceding product of $\bar{s}_i$'s. In Example \ref{ex of proof of Lem wPw0 description in dot vertices}, we showed the cancellation which occurred after the second set of permutations to the left, that is, when $b=2$.

We see that $\dot{w}_P\bar{w}_0$ now begins with the following $b$ products of $\bar{s}_i$'s for some $r$: 
    $$ \left[ \prod_{i=k_1}^{n-1}\bar{s}_i \right] \left[ \prod_{i=k_1-1}^{n-2}\bar{s}_i \right] \cdots \left[ \prod_{i=1}^{n-n_1}\bar{s}_i \right] \cdots \left[ \prod_{i=k_r}^{n-n_{r-1}-1}\bar{s}_i \right] \cdots \left[ \prod_{i=k_r-n_{r-1}+b-1}^{n-b}\bar{s}_i \right]
    $$
In particular, repeating the above procedure yields the result in at most $n-1$ steps:
    \begin{align}
    \dot{w}_P\bar{w}_0 &= \prod_{r=1}^{l+1} \prod_{t=1}^{k_r}  \prod_{i=k_r-t+1}^{n-n_{r-1}-t} \bar{s}_i \label{eqn wPw0 increasing si} \\
        &= \prod_{k=1}^{n-1} \prod_{\substack{a=1 \\ v_{(k,a)} \in \mathcal{V}_L^{\bullet}}}^{n-k} \bar{s}_a. \notag
    \end{align}
\end{proof}


\newpage
\subsection{Chamber Ansatz minors}
\label{subsec G/P Chamber Ansatz minors}
\fancyhead[L]{9.3 \ \ Chamber Ansatz minors}

In order to more easily compute the Chamber Ansatz minors required for the proof of Theorem \ref{thm b- in terms of y_i}, we will need ansatz arrangements (see Section \ref{subsec The Chamber Ansatz}). We begin with an example.

\begin{ex}
In Figure \ref{fig ansatz arrangement wPw0 F2,5,6,C8} we give the ansatz arrangement for $\dot{w}_P\bar{w}_0$ in our running example of $\mathcal{F}_{2,5,6}(\mathbb{C}^8)$, using the description given in Lemma \ref{lem wPw0 description in dot vertices}.

\begin{figure}[ht]
\centering
\begin{tikzpicture}[scale=0.76]
        \draw (-0.3,8) -- (19.1,8);
        \draw (-0.3,7) -- (19.1,7);
        \draw (-0.3,6) -- (19.1,6);
        \draw (-0.3,5) -- (19.1,5);
        \draw (-0.3,4) -- (19.1,4);
        \draw (-0.3,3) -- (19.1,3);
        \draw (-0.3,2) -- (19.1,2);
        \draw (-0.3,1) -- (19.1,1);

        \draw[thick, color=white] (0.4,2) -- (0.8,2);
        \draw[thick, color=white] (0.4,3) -- (0.8,3);
        \draw[line cap=round] (0.4,2) -- (0.8,3);
        \draw[line cap=round] (0.4,3) -- (0.8,2);

        \draw[thick, color=white] (1.2,3) -- (1.6,3);
        \draw[thick, color=white] (1.2,4) -- (1.6,4);
        \draw[line cap=round] (1.2,3) -- (1.6,4);
        \draw[line cap=round] (1.2,4) -- (1.6,3);

        \draw[thick, color=white] (2,4) -- (2.4,4);
        \draw[thick, color=white] (2,5) -- (2.4,5);
        \draw[line cap=round] (2,4) -- (2.4,5);
        \draw[line cap=round] (2,5) -- (2.4,4);

        \draw[thick, color=white] (2.8,5) -- (3.2,5);
        \draw[thick, color=white] (2.8,6) -- (3.2,6);
        \draw[line cap=round] (2.8,5) -- (3.2,6);
        \draw[line cap=round] (2.8,6) -- (3.2,5);

        \draw[thick, color=white] (3.6,6) -- (4,6);
        \draw[thick, color=white] (3.6,7) -- (4,7);
        \draw[line cap=round] (3.6,6) -- (4,7);
        \draw[line cap=round] (3.6,7) -- (4,6);

        \draw[thick, color=white] (4.4,7) -- (4.8,7);
        \draw[thick, color=white] (4.4,8) -- (4.8,8);
        \draw[line cap=round] (4.4,7) -- (4.8,8);
        \draw[line cap=round] (4.4,8) -- (4.8,7);

        \draw[thick, color=white] (5.2,1) -- (5.6,1);
        \draw[thick, color=white] (5.2,2) -- (5.6,2);
        \draw[line cap=round] (5.2,1) -- (5.6,2);
        \draw[line cap=round] (5.2,2) -- (5.6,1);

        \draw[thick, color=white] (6,2) -- (6.4,2);
        \draw[thick, color=white] (6,3) -- (6.4,3);
        \draw[line cap=round] (6,2) -- (6.4,3);
        \draw[line cap=round] (6,3) -- (6.4,2);

        \draw[thick, color=white] (6.8,3) -- (7.2,3);
        \draw[thick, color=white] (6.8,4) -- (7.2,4);
        \draw[line cap=round] (6.8,3) -- (7.2,4);
        \draw[line cap=round] (6.8,4) -- (7.2,3);

        \draw[thick, color=white] (7.6,4) -- (8,4);
        \draw[thick, color=white] (7.6,5) -- (8,5);
        \draw[line cap=round] (7.6,4) -- (8,5);
        \draw[line cap=round] (7.6,5) -- (8,4);

        \draw[thick, color=white] (8.4,5) -- (8.8,5);
        \draw[thick, color=white] (8.4,6) -- (8.8,6);
        \draw[line cap=round] (8.4,5) -- (8.8,6);
        \draw[line cap=round] (8.4,6) -- (8.8,5);

        \draw[thick, color=white] (9.2,6) -- (9.6,6);
        \draw[thick, color=white] (9.2,7) -- (9.6,7);
        \draw[line cap=round] (9.2,6) -- (9.6,7);
        \draw[line cap=round] (9.2,7) -- (9.6,6);

        \draw[thick, color=white] (10,3) -- (10.4,3);
        \draw[thick, color=white] (10,4) -- (10.4,4);
        \draw[line cap=round] (10,3) -- (10.4,4);
        \draw[line cap=round] (10,4) -- (10.4,3);

        \draw[thick, color=white] (10.8,4) -- (11.2,4);
        \draw[thick, color=white] (10.8,5) -- (11.2,5);
        \draw[line cap=round] (10.8,4) -- (11.2,5);
        \draw[line cap=round] (10.8,5) -- (11.2,4);

        \draw[thick, color=white] (11.6,5) -- (12,5);
        \draw[thick, color=white] (11.6,6) -- (12,6);
        \draw[line cap=round] (11.6,5) -- (12,6);
        \draw[line cap=round] (11.6,6) -- (12,5);

        \draw[thick, color=white] (12.4,2) -- (12.8,2);
        \draw[thick, color=white] (12.4,3) -- (12.8,3);
        \draw[line cap=round] (12.4,2) -- (12.8,3);
        \draw[line cap=round] (12.4,3) -- (12.8,2);

        \draw[thick, color=white] (13.2,3) -- (13.6,3);
        \draw[thick, color=white] (13.2,4) -- (13.6,4);
        \draw[line cap=round] (13.2,3) -- (13.6,4);
        \draw[line cap=round] (13.2,4) -- (13.6,3);

        \draw[thick, color=white] (14,4) -- (14.4,4);
        \draw[thick, color=white] (14,5) -- (14.4,5);
        \draw[line cap=round] (14,4) -- (14.4,5);
        \draw[line cap=round] (14,5) -- (14.4,4);

        \draw[thick, color=white] (14.8,1) -- (15.2,1);
        \draw[thick, color=white] (14.8,2) -- (15.2,2);
        \draw[line cap=round] (14.8,1) -- (15.2,2);
        \draw[line cap=round] (14.8,2) -- (15.2,1);

        \draw[thick, color=white] (15.6,2) -- (16,2);
        \draw[thick, color=white] (15.6,3) -- (16,3);
        \draw[line cap=round] (15.6,2) -- (16,3);
        \draw[line cap=round] (15.6,3) -- (16,2);

        \draw[thick, color=white] (16.4,3) -- (16.8,3);
        \draw[thick, color=white] (16.4,4) -- (16.8,4);
        \draw[line cap=round] (16.4,3) -- (16.8,4);
        \draw[line cap=round] (16.4,4) -- (16.8,3);

        \draw[thick, color=white] (17.2,1) -- (17.6,1);
        \draw[thick, color=white] (17.2,2) -- (17.6,2);
        \draw[line cap=round] (17.2,1) -- (17.6,2);
        \draw[line cap=round] (17.2,2) -- (17.6,1);

        \draw[thick, color=white] (18,2) -- (18.4,2);
        \draw[thick, color=white] (18,3) -- (18.4,3);
        \draw[line cap=round] (18,2) -- (18.4,3);
        \draw[line cap=round] (18,3) -- (18.4,2);

        \node at (0.6,2.5) {$\bullet$};
        \node at (1.4,3.5) {$\bullet$};
        \node at (2.2,4.5) {$\bullet$};
        \node at (3,5.5) {$\bullet$};
        \node at (3.8,6.5) {$\bullet$};
        \node at (4.6,7.5) {$\bullet$};

        \node at (5.4,1.5) {$\bullet$};
        \node at (6.2,2.5) {$\bullet$};
        \node at (7,3.5) {$\bullet$};
        \node at (7.8,4.5) {$\bullet$};
        \node at (8.6,5.5) {$\bullet$};
        \node at (9.4,6.5) {$\bullet$};

        \node at (10.2,3.5) {$\bullet$};
        \node at (11,4.5) {$\bullet$};
        \node at (11.8,5.5) {$\bullet$};

        \node at (12.6,2.5) {$\bullet$};
        \node at (13.4,3.5) {$\bullet$};
        \node at (14.2,4.5) {$\bullet$};

        \node at (15,1.5) {$\bullet$};
        \node at (15.8,2.5) {$\bullet$};
        \node at (16.6,3.5) {$\bullet$};

        \node at (17.4,1.5) {$\bullet$};
        \node at (18.2,2.5) {$\bullet$};

        \node at (-0.6,8) {$8$};
        \node at (-0.6,7) {$7$};
        \node at (-0.6,6) {$6$};
        \node at (-0.6,5) {$5$};
        \node at (-0.6,4) {$4$};
        \node at (-0.6,3) {$3$};
        \node at (-0.6,2) {$2$};
        \node at (-0.6,1) {$1$};

        \node at (19.4,8) {$2$};
        \node at (19.4,7) {$1$};
        \node at (19.4,6) {$5$};
        \node at (19.4,5) {$4$};
        \node at (19.4,4) {$3$};
        \node at (19.4,3) {$6$};
        \node at (19.4,2) {$8$};
        \node at (19.4,1) {$7$};

        \node at (2.2,7.5) {$1234567$};
        \node at (11.9,7.5) {$1345678$};

        \node at (1.8,6.5) {$123456$};
        \node at (6.6,6.5) {$134567$};
        \node at (14.3,6.5) {$345678$};

        \node at (1.4,5.5) {$12345$};
        \node at (5.8,5.5) {$13456$};
        \node at (10.2,5.5) {$34567$};
        \node at (15.5,5.5) {$34678$};

        \node at (1,4.5) {$1234$};
        \node at (5,4.5) {$1345$};
        \node at (9.4,4.5) {$3456$};
        \node at (12.6,4.5) {$3467$};
        \node at (16.7,4.5) {$3678$};

        \node at (0.6,3.5) {$123$};
        \node at (4.2,3.5) {$134$};
        \node at (8.6,3.5) {$345$};
        \node at (11.8,3.5) {$346$};
        \node at (15,3.5) {$367$};
        \node at (17.9,3.5) {$678$};

        \node at (0.1,2.5) {$12$};
        \node at (3.4,2.5) {$13$};
        \node at (9.4,2.5) {$34$};
        \node at (14.2,2.5) {$36$};
        \node at (17,2.5) {$67$};
        \node at (18.7,2.5) {$78$};

        \node at (2.6,1.5) {$1$};
        \node at (10.2,1.5) {$3$};
        \node at (16.2,1.5) {$6$};
        \node at (18.3,1.5) {$7$};
\end{tikzpicture}
\caption{The ansatz arrangement for $\dot{w}_P\bar{w}_0$ in the example of $\mathcal{F}_{2,5,6}(\mathbb{C}^8)$} \label{fig ansatz arrangement wPw0 F2,5,6,C8}
\end{figure}
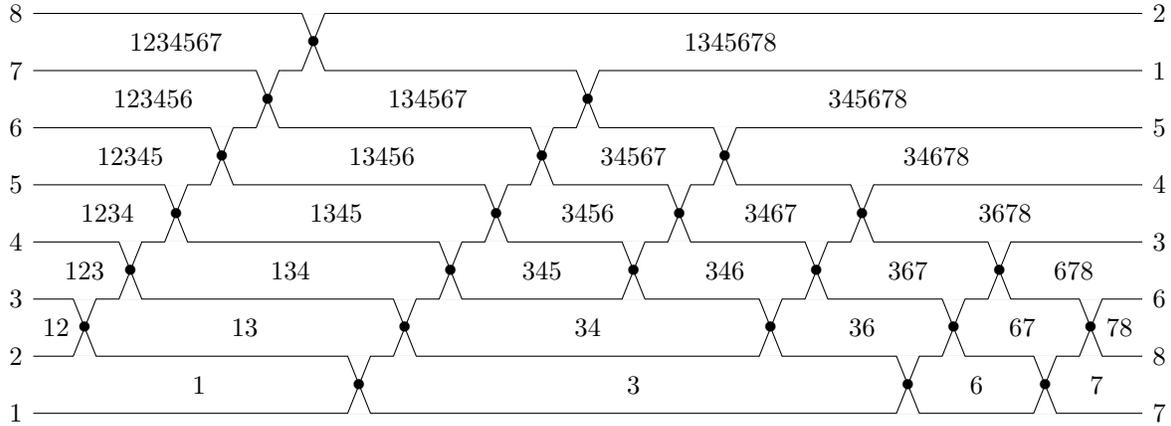
\end{ex}

We say a chamber is at height $b$ if it is found between the $b$-th and $(b+1)$-th strings when counting from the bottom of the ansatz arrangement. For example, the left-most chamber at height $b$ is between the strings with labels $b$, $b+1$ by definition.

\begin{lem} \label{lem G/P chamber label form}
We consider the ansatz arrangement corresponding to the description of $\dot{w}_P\bar{w}_0$ given in Lemma \ref{lem wPw0 description in dot vertices}. For $j=1, \ldots, l+1$ and $t=1, \ldots, k_j$, the chamber labels of this ansatz arrangement are of the form $\{1, 2, \ldots, b\}$ or
    \begin{equation} \label{eqn chamber label below string}
    \{n_{j-1}+1, \ldots, n_{j}-t\} \cup \{n_{j}+1, \ldots, n_{j-1}+b+t\}
    \end{equation}
where $b=1, \ldots, n-n_{j-1}-t$ is the height of the chamber.
\end{lem}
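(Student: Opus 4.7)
The plan is to induct on the sub-products $(j, t)$ in the decomposition (\ref{eqn wPw0 increasing si}), taken in lexicographic order. Each sub-product $(j_0, t_0)$ is a staircase $\bar{s}_{k_{j_0}-t_0+1} \bar{s}_{k_{j_0}-t_0+2} \cdots \bar{s}_{n-n_{j_0-1}-t_0}$ of consecutive simple reflections with strictly increasing indices, contributing $n-n_{j_0}$ crossings. In the ansatz arrangement, each crossing $\bar{s}_i$ contributes exactly one new chamber to its right at height $i$ (the chambers at all other heights are unchanged across that crossing), so every chamber is either one of the leftmost chambers (automatically of the form $\{1, \ldots, b\}$) or is created by a unique crossing, and it suffices to identify the label of each new chamber.

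First I would establish, by induction on $(j_0, t_0)$, the following string-position invariant describing the permutation just before sub-product $(j_0, t_0)$: heights $1, \ldots, k_{j_0}-t_0+1$ carry strings $n_{j_0-1}+1, \ldots, n_{j_0}-t_0+1$ in order; heights $k_{j_0}-t_0+2, \ldots, n-n_{j_0-1}-t_0+1$ carry strings $n_{j_0}+1, \ldots, n$ in order; and the top heights $n-n_{j_0-1}-t_0+2, \ldots, n$ are filled by the strings moved up in earlier sub-products in the nested pattern coming from prior $j$-blocks. This invariant follows because each sub-product $(j, t)$ performs a single \emph{pull-up}: the string currently at height $k_j-t+1$ rises to height $n-n_{j-1}-t+1$, shifting the intermediate strings down by one, and the transitions between successive sub-products (including $t_0 = 1$ with $j_0 \geq 2$) merely rewrite the same configuration under new indexing.

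With the invariant in hand, the inductive step is a short computation. During the staircase of $(j_0, t_0)$, the $r$-th crossing is $\bar{s}_{k_{j_0}-t_0+r}$; setting $b := k_{j_0}-t_0+r$ and tracing the effect of the first $r$ crossings on the state just before the sub-product, one finds that the strings at heights $1, \ldots, b$ immediately to the right of this crossing are
$$\{n_{j_0-1}+1, \ldots, n_{j_0}-t_0\} \cup \{n_{j_0}+1, \ldots, n_{j_0}+r\}.$$
Since $n_{j_0}+r = n_{j_0-1}+b+t_0$, this matches (\ref{eqn chamber label below string}) with $(j,t) = (j_0, t_0)$, completing the inductive step. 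The main technical obstacle is formulating and propagating the string-position invariant cleanly, in particular handling the transitions between $j$-blocks and the degenerate cases where $k_j = 1$ or where a sub-product is empty; once this bookkeeping is in place, the chamber-label computation collapses into the single displayed line above.
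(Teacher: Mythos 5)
Your proposal is correct and follows essentially the same approach as the paper. The paper tracks the same ``pull-up'' mechanism and establishes, via a two-part Claim proved by induction on $j$, the string positions before and after each block of sub-products (Claim~2) and the identity of the string about to be lifted (Claim~1); your ``string-position invariant'' before each sub-product $(j_0,t_0)$ is a mild repackaging of that same information, from which the chamber labels drop out by exactly the computation you display.
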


Of note, the only description of $\dot{w}_P\bar{w}_0$ we will use is the one given in Lemma \ref{lem wPw0 description in dot vertices}, thus we will often simply write `the ansatz arrangement for $\dot{w}_P\bar{w}_0$' as it is clear that we are referring to this particular arrangement.

\begin{proof}
Considering the description (\ref{eqn wPw0 increasing si}) for $\dot{w}_P\bar{w}_0$, each sub-product
    \begin{equation} \label{eqn subproduct 1 of si for ansatz arr proof}
    \prod_{i=k_r-t+1}^{n-n_{r-1}-t} \bar{s}_i, \quad t=1, \ldots, k_r,
    \end{equation}
in $\dot{w}_P\bar{w}_0$ corresponds to a sequence of crossings at heights $k_r-t+1, \ldots, n-n_{r-1}-t$ in succession. Since the heights of these crossings increase by $1$ sequentially, we can think of this as taking the $(k_r-t+1)$-th string counting from the bottom of the arrangement and moving it upwards until it crosses the $(n-n_{r-1}-t+1)$-th string (again counting from the bottom) but no further. The other strings remain disjoint during this procedure. For example, see Figure \ref{fig Crossings in the ansatz arrangement for wPw0}, ignoring the shading for now.

\begin{figure}[ht]
\centering
\begin{tikzpicture}[scale=0.76]
        \filldraw[color=black!10] (1.8,-0.28) -- (1.8,3) -- (2,3) -- (2.4,4) -- (3.6,4) -- (4,5) -- (4.8,5) -- (6.03,6) -- (6.8,6) -- (7.2,7) -- (8.4,7) -- (8.8,8) -- (9,8) -- (9,-0.28) -- cycle;
        \shade[left color=white, right color=black!10] (0.8,-0.28) -- (0.8,3) -- (2,3) -- (2,-0.28) -- cycle;
        \shade[left color=black!10, right color=white] (9,-0.28) -- (9,8) -- (10,8) -- (10,-0.28) -- cycle;

        \draw (0.3,9) -- (10.5,9);
        \draw (0.3,8) -- (10.5,8);
        \draw (0.3,7) -- (10.5,7);
        \draw (0.3,6) -- (10.5,6);
        \draw (0.3,5) -- (10.5,5);
        \draw (0.3,4) -- (10.5,4);
        \draw (0.3,3) -- (10.5,3);
        \draw (0.3,2) -- (10.5,2);
        \draw (0.3,0.72) -- (10.5,0.72);
        \draw (0.3,-0.28) -- (10.5,-0.28);
        \node at (1, 1.5) {$\vdots$};
        \node at (9.8, 1.5) {$\vdots$};

        \draw[thick, color=black!10] (2,3) -- (2.4,3);
        \draw[thick, color=white] (2,4) -- (2.4,4);
        \draw[line cap=round] (2,3) -- (2.4,4);
        \draw[line cap=round] (2,4) -- (2.4,3);

        \draw[thick, color=black!10] (3.6,4) -- (4,4);
        \draw[thick, color=white] (3.6,5) -- (4,5);
        \draw[line cap=round] (3.6,4) -- (4,5);
        \draw[line cap=round] (3.6,5) -- (4,4);

        \draw[thick, color=black!10] (4.8,4.995) -- (6,4.995);
        \draw[thick, color=white] (4.8,6.005) -- (6,6.005);

        \draw[thick, color=black!10] (6.8,6) -- (7.2,6);
        \draw[thick, color=white] (6.8,7) -- (7.2,7);
        \draw[line cap=round] (6.8,6) -- (7.2,7);
        \draw[line cap=round] (6.8,7) -- (7.2,6);

        \draw[thick, color=black!10] (8.4,7) -- (8.8,7);
        \draw[thick, color=white] (8.4,8) -- (8.8,8);
        \draw[line cap=round] (8.4,7) -- (8.8,8);
        \draw[line cap=round] (8.4,8) -- (8.8,7);

        \node at (2.2,3.5) {$\bullet$};
        \node at (3.8,4.5) {$\bullet$};
        \node at (5.4,5.65) {$\iddots$};
        \node at (7,6.5) {$\bullet$};
        \node at (8.6,7.5) {$\bullet$};

        \node at (-0.05,8.99) {$\cdots$};
        \node at (-0.05,7.99) {$\cdots$};
        \node at (-0.05,6.99) {$\cdots$};
        \node at (-0.05,5.99) {$\cdots$};
        \node at (-0.05,4.99) {$\cdots$};
        \node at (-0.05,3.99) {$\cdots$};
        \node at (-0.05,2.99) {$\cdots$};
        \node at (-0.05,1.99) {$\cdots$};
        \node at (-0.05,0.71) {$\cdots$};
        \node at (-0.05,-0.29) {$\cdots$};
        \node at (-2.75,8.5) {height $n-n_r-t+1$};
        \node at (-2,6.14) {$\vdots$};
        \node at (-2.3,3.5) {height $k_r-t+1$};
        \node at (-1.92,2.5) {height $k_r-t$};
        \node at (-1.43,0.22) {height $1$};

        \node at (10.93,8.99) {$\cdots$};
        \node at (10.93,7.99) {$\cdots$};
        \node at (10.93,6.99) {$\cdots$};
        \node at (10.93,5.99) {$\cdots$};
        \node at (10.93,4.99) {$\cdots$};
        \node at (10.93,3.99) {$\cdots$};
        \node at (10.93,2.99) {$\cdots$};
        \node at (10.93,1.99) {$\cdots$};
        \node at (10.93,0.71) {$\cdots$};
        \node at (10.93,-0.29) {$\cdots$};

        \node at (2.2,-1) {$\bar{s}_{k_r-t+1}$};
        \node at (5.4,-1) {$\cdots$};
        \node at (8.6,-1) {$\bar{s}_{n-n_r-t+1}$};
\end{tikzpicture}
\caption{Crossings in the ansatz arrangement for $\dot{w}_P\bar{w}_0$} \label{fig Crossings in the ansatz arrangement for wPw0}
\end{figure}
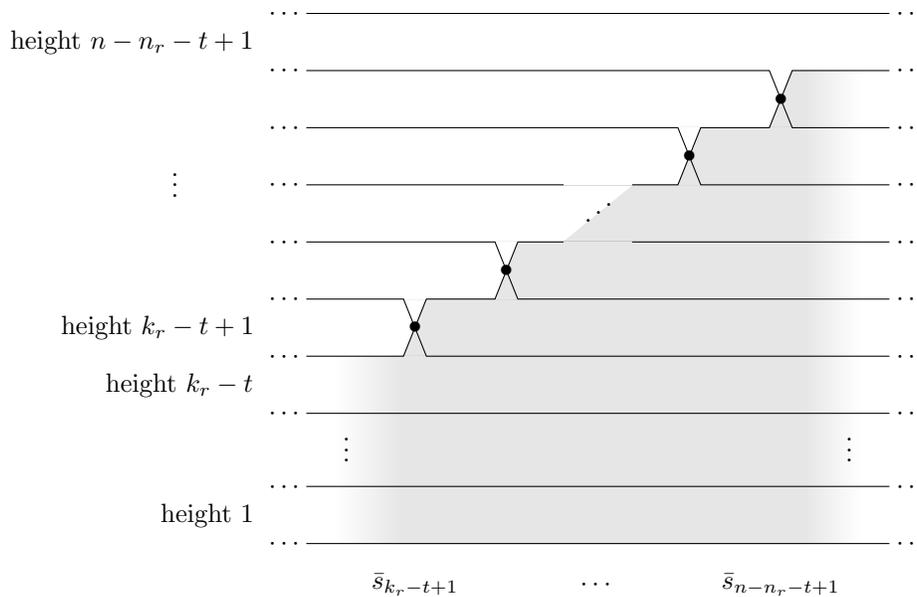
In particular, the last crossing from one of these sub-products (\ref{eqn subproduct 1 of si for ansatz arr proof}), when considered in succession in the description (\ref{eqn wPw0 increasing si}) for $\dot{w}_P\bar{w}_0$, is always with string $n$.
This holds by induction using the following four facts:
\begin{enumerate}
    \item The subscripts of the terms in the sub-products (\ref{eqn subproduct 1 of si for ansatz arr proof}) increase by $1$ with each subsequent term. 
    \label{fact subscripts of si subprod incr}
    \item The subscript of the last term of each of these sub-products decreases by $1$ when considered in succession in $\dot{w}_P\bar{w}_0$, see (\ref{eqn wPw0 increasing si}). \label{fact last term of si subprod decr by 1}
    \item The first such `last term' is $\bar{s}_{n-1}$, which gives the crossing of the $n$-th and $(n-1)$-th strings counting from the bottom of the ansatz arrangement.
    \item String $n$ is the $n$-th string at the left hand side of the ansatz arrangement, again counting from the bottom.
\end{enumerate}

We note that since the description (\ref{eqn wPw0 increasing si}) for $\dot{w}_P\bar{w}_0$ is factored into the sub-products (\ref{eqn subproduct 1 of si for ansatz arr proof}), our subsequent arguments will involve this idea of taking a string and moving it upwards past string $n$, keeping the other strings disjoint.

Starting from the left side of the ansatz arrangement, due to the pseudoline labelling the first chamber at each height $b$ has label $\{1, \ldots, b\}$. After a given string $i$ has been moved upwards in the diagram past string $n$, we may consider the collection of chambers in the ansatz arrangement given by the first chamber at each height $b$ after this set of crossings and below the string $i$. We will call these the \emph{first chambers below string $i$}. For example, see the shaded region of Figure \ref{fig Crossings in the ansatz arrangement for wPw0}. Of note, we always omit the $\{0\}$ label.

Now, for each $j \in \{1, \ldots, l+1\}$ we consider sub-products of $\dot{w}_P\bar{w}_0$ of the following form:
    $$
    \prod_{t=1}^{k_r}  \prod_{i=k_r-t+1}^{n-n_{r-1}-t} \bar{s}_i, \quad r=1, \ldots, j,
    $$
\begin{claim} For each $j \in \{1, \ldots, l+1\}$ we have the following:
\begin{enumerate}
    \item
        After considering the crossings corresponding to
            $$\left(\prod_{r=1}^{j-1} \prod_{t=1}^{k_r}  \prod_{i=k_r-t+1}^{n-n_{r-1}-t} \bar{s}_i\right) \left(\prod_{t=1}^{t'-1}  \prod_{i=k_j-t+1}^{n-n_{r-1}-t} \bar{s}_i \right), \quad t' = 1, \ldots k_j,
            $$
        the next string we will take and move upwards until it crosses string $n$, is string $n_j-t'+1$. That is at this point in the ansatz arrangement, the $(k_j-t'+1)$-th string counting from the bottom of the arrangement, is string $n_j-t'+1$.
        \label{claim 1 ansatz arr for wPw0}

    \item After considering the crossings corresponding to
            \begin{equation} \label{eqn subproduct 2 of si for ansatz arr proof} \prod_{r=1}^{j} \prod_{t=1}^{k_r}  \prod_{i=k_r-t+1}^{n-n_{r-1}-t} \bar{s}_i
            \end{equation}
        the strings in the ansatz arrangement are
            $$n_j+1, \ldots, n, n_{j-1}+1, \ldots, n_j, n_{j-2}+1, \ldots, n_{j-1}, \ldots, n_0+1, \ldots, n_1
            $$
        read from bottom to top.
        \label{claim 2 ansatz arr for wPw0}
\end{enumerate}
\end{claim}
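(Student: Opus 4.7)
The plan is to prove parts (1) and (2) simultaneously by induction on $j \in \{0, 1, \ldots, l+1\}$, treating $j = 0$ as a trivial base case (the initial arrangement has strings $1, 2, \ldots, n$ from bottom to top, which matches part (2) with $j = 0$). For the inductive step I will assume part (2) for $j-1$ and then prove part (1) for $j$ by a nested induction on $t'$, finally using the $t' = k_j$ case of part (1) to deduce part (2) for $j$.

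The key observation driving the nested induction is that the sub-product $\prod_{i = k_j - t + 1}^{n - n_{j-1} - t} \bar{s}_i$ is a sequence of consecutive adjacent transpositions which realises a single `bubble-up' of the string at position $k_j - t + 1$: it moves that string upward through the positions $k_j - t + 2, k_j - t + 3, \ldots$ and deposits it at position $n - n_{j-1} - t + 1$, shifting every intermediate string down by one slot while leaving the remaining strings fixed. Granted this, by part (2) for $j-1$ the lower $n - n_{j-1}$ positions of the arrangement entering phase $r = j$ read $n_{j-1}+1, n_{j-1}+2, \ldots, n$ from bottom to top, so the $k_j$-th string from the bottom is $n_{j-1} + k_j = n_j$, which is exactly part (1) for $t' = 1$.

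An induction on $t'$ then shows that immediately before the $t'$-th iteration of phase $j$, the lower portion of the arrangement reads, from bottom to top,
    $$n_{j-1}+1, \ n_{j-1}+2, \ \ldots, \ n_j - t' + 1, \ n_j+1, \ n_j+2, \ \ldots, \ n, \ n_j - t' + 2, \ \ldots, \ n_j,$$
with the upper portion (above position $n - n_{j-1}$) unchanged since the end of phase $j-1$. The first block of this description gives part (1) for the parameter $t'$, namely that the $(k_j - t' + 1)$-th string from the bottom is $n_j - t' + 1$. Applying the final ($t = k_j$) sub-product to this configuration with $t' = k_j$ moves the string $n_{j-1}+1$ from position $1$ up to position $n - n_j + 1$, so that afterwards the bottom $n - n_{j-1}$ positions hold $n_j+1, \ldots, n, n_{j-1}+1, n_{j-1}+2, \ldots, n_j$ in that order. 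Splicing this with the unchanged upper portion inherited from the inductive hypothesis for $j - 1$ yields exactly the sequence claimed in part (2) for $j$.

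The main obstacle is entirely bookkeeping: one must verify with care that the three-block shape of the lower portion is preserved from one iteration to the next within phase $j$, and that the `bubble-up' description of each sub-product is accurate (in particular that the strings $n_j+1, \ldots, n$ sit immediately above the rising string at every intermediate stage, so that the subsequent crossings continue to correspond to adjacent transpositions of the expected positions). The conceptual content is minimal once one sees that each sub-product moves exactly one string past a consecutive block without otherwise disturbing the relative order of the remaining strings.
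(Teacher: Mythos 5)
Your proposal is correct and follows essentially the same strategy as the paper: an outer induction on $j$, using part (2) at level $j-1$ to read off the bottom $k_j$ positions and hence derive part (1), followed by tracking the effect of the $k_j$ sub-products to establish part (2) at level $j$. The main difference is one of presentation rather than substance: you make the bookkeeping fully explicit by writing down the three-block closed form of the lower portion of the arrangement immediately before the $t'$-th iteration of phase $j$ and running a nested induction on $t'$, whereas the paper argues more abstractly via its Facts~1 and~2 (that the subscripts in each sub-product increase by one and that a string which has already crossed string $n$ never crosses again), which together imply that the bottom $k_j$ strings of phase $j$ never cross one another, giving all of part (1) at once without spelling out the intermediate configurations. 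Your explicit formula is arguably easier to audit, at the cost of slightly more notation; the paper's version is shorter but relies on the reader accepting the non-crossing consequence of the Facts.
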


We will see that step by step, the proof of this claim will provide enough information to prove the lemma.
\begin{proof}[Proof of Claim]
We proceed by induction.

On the left hand side of the ansatz arrangement the strings, from bottom to top, are labelled $n_0+1=1, \ldots, n$. Thus we see that Claim \ref{claim 1 ansatz arr for wPw0} holds immediately when $j=1$ since $k_1=n_1$ and, by Fact \ref{fact subscripts of si subprod incr} above, when we take a particular string we only ever move it upwards leaving all other strings disjoint.

Now we observe by the above Fact \ref{fact last term of si subprod decr by 1}, that once a string has crossed string $n$ it will not be involved in any crossings in the remainder of the ansatz arrangement. 
In particular we may combine this with the consequence of Fact \ref{fact subscripts of si subprod incr} from the previous paragraph. The result is that both during and after the crossings corresponding to (\ref{eqn subproduct 2 of si for ansatz arr proof}) for $j=1$, not only do the strings labelled $k_1+1 = n_1+1, \ldots, n$ remain disjoint, but the strings labelled $1, \ldots, k_1=n_1$ also do not cross each other. Thus Claim \ref{claim 2 ansatz arr for wPw0} holds when $j=1$.

As a consequence it is clear that the first chambers below string $n_0+1=1$ have labels of the form
    $$\{n_1+1, \ldots, n_1+b\}, \quad b=1, \ldots, n-n_1
    $$
where $b$ is the height of the chamber. In fact, by Claim \ref{claim 1 ansatz arr for wPw0} when $j=1$, we see inductively that for strings $1, \ldots, k_1$ (after moving a given string $n_1-t+1$, $t\in \{1, \ldots, k_1\}$, upwards past string $n$) the first chambers below string $n_1-t+1$ have labels of the form
    $$\{1, \ldots, n_1-t\} \cup \{n_1+1, \ldots, b+t\}, \
    $$
where $b=1, \ldots, n-t$ is the height of the chamber. We recall that we always omit the $\{0\}$ label. Of note, we choose to write the string labels $1, \ldots, k_1$ in the form $n_1-t+1$ for consistency of notation.

Now suppose that Claim \ref{claim 2 ansatz arr for wPw0} holds for $j=1, \ldots, j'$. Then after the crossings corresponding to (\ref{eqn subproduct 2 of si for ansatz arr proof}) for $j=j'$, the strings in the ansatz arrangement are given from bottom to top as follows:
    $$n_{j'}+1, \ldots, n, n_{j'-1}+1, \ldots, n_{j'}, n_{j'-2}+1, \ldots, n_{j'-1}, \ldots, n_0+1, \ldots, n_1
    $$
In particular, the $(k_{j'+1})$-th string counting from the bottom of the arrangement, is the desired string $n_{j'}+k_{j'+1} = n_{j'+1}$, satisfying Claim \ref{claim 1 ansatz arr for wPw0} with $j=j'+1$ and $t'=1$. Moreover by the consequence of Fact \ref{fact subscripts of si subprod incr} mentioned above, the strings which are currently the bottom $k_{j'+1}$ strings, will never cross. Thus we see that Claim \ref{claim 1 ansatz arr for wPw0} holds for $j=j'+1$, that is for each $t'=1, \ldots, k_{j'+1}$, and the induction is complete for Claim \ref{claim 1 ansatz arr for wPw0}.

After all of these crossings we are again in the case of Claim \ref{claim 2 ansatz arr for wPw0}, but now with $j=j'+1$. We have assumed Claim \ref{claim 2 ansatz arr for wPw0} holds when $j=j'$, we see that the case $j=j'+1$ holds by a similar argument to the $j=1$ case, but now applied the strings $n_{j'}+1, \ldots, n_{j'}+k_{j'+1} = n_{j'+1}$.

For the chamber labels, again by a similar argument to the $j=1$ case, we see inductively that for strings $n_{j'}+1, \ldots, n_{j'+1}$ (after moving string $n_{j'+1}-t+1$, $t\in \{1, \ldots, k_{j'+1}\}$, upwards past string $n$) the first chambers below string $n_{j'+1}-t+1$ have labels of the form
    $$
    \{n_{j'}+1, \ldots, n_{j'+1}-t\} \cup \{n_{j'+1}+1, \ldots, n_{j'}+b+t\}
    $$
where $b=1, \ldots, n-n_{j'}-t$ is the height of the chamber.
\end{proof}
We have now considered the labels of all remaining chambers in the ansatz arrangement for $\dot{w}_P\bar{w}_0$ and so the proof of the lemma is complete.
\end{proof}

In order to compute the minors of $u_L$ corresponding to the chamber labels we make use of graphs, like in the full flag case (see Section \ref{subsec Chamber Ansatz minors}). The graphs are constructed in exactly the same way as in the $G/B$ case, however we add some additional decoration to help us use them.

Firstly, we recall from Section \ref{subsec G/P Constructing matrices from a given quiver decoration} that to construct the matrix $u_L$ from the quiver $Q_P$, we work column by column, considering the $1$-paths leaving each dot vertex in succession. In the corresponding graph we will use dashed lines to distinguish the contributions from these columns and we will call these columns of the graph for ease. Moreover, we will group these columns in the graph by the square $L_i$ that the respective quiver columns intersect. We label these groupings by $k_i$ since this gives the respective number of columns.

The last addition to the graph is that we will draw dotted rectangles around the contribution from the last $1$-path in each column. In particular, each dotted rectangle corresponds to a factor $X^{\vee}_{i,\alpha}$ in $u_L$, noting that $X^{\vee}_{i,1}=\mathbf{x}^{\vee}_i$. By the factorisation of $X^{\vee}_{i,\alpha}(x_{v_t}/x_{v_s})=\tilde{X}^{\vee}_{i,\alpha}(r_{a_1}, \ldots, r_{a_{\alpha}})$ in terms of $\mathbf{x}^{\vee}_i$'s (see (\ref{eqn factor Xi into xi})), we see that the corresponding rectangle in the graph spans the horizontal lines
    $$i+1-\alpha, \ldots, i+1.
    $$
The factors $X^{\vee}_{i,\alpha}$ only arise when the minimal $1$-paths in $Q_P$ cross row $E_i=E_{n_j}$ for the respective values of $j$, before terminating at star vertices. Thus the dotted rectangles always have their top edges on the horizontal lines $n_j$.
It follows that for $j =1, \ldots, l$ and $\alpha=1, \ldots, k_j$, each column $k=n_{j-1}+\alpha$ in the graph contains the series of steps corresponding to the product
    $$\mathbf{x}^{\vee}_{n-1}(-) \mathbf{x}^{\vee}_{n-2}(-) \cdots \mathbf{x}^{\vee}_{n_j+1}(-) X^{\vee}_{n_j,\alpha}(-).
    $$

\begin{ex}
In our running example of $\mathcal{F}_{2,5,6}(\mathbb{C}^8)$, we begin with the factorisation of $u_L$ given in Examples \ref{ex gL temp quiver decoration} and \ref{ex uL temp quiver decoration cont}, where we have underlined the $X^{\vee}_{i,\alpha}$ factors expressed in terms of $\mathbf{x}^{\vee}_i$'s (see (\ref{eqn factor Xi into xi}) for the definition). Continuing with temporary labelling from these examples (see Figure \ref{fig gL temp quiver decoration}), we then give the corresponding graph in Figure \ref{fig temp labelling of graph for uL for F2,5,6,C8}.
    $$\begin{aligned}
    u_L &= \mathbf{x}^{\vee}_7(a) \mathbf{x}^{\vee}_6(b) \mathbf{x}^{\vee}_5(c) \mathbf{x}^{\vee}_4(d) \mathbf{x}^{\vee}_3(e) \mathbf{x}^{\vee}_2(f) \\
        &\hspace{0.47cm} \mathbf{x}^{\vee}_7(g) \mathbf{x}^{\vee}_6(h) \mathbf{x}^{\vee}_5(i) \mathbf{x}^{\vee}_4(j) \mathbf{x}^{\vee}_3(k) \underline{\big[ \mathbf{x}^{\vee}_1(f) \mathbf{x}^{\vee}_2(l) \mathbf{x}^{\vee}_1(-f) \mathbf{x}^{\vee}_2(-l) \big]} \\
        &\hspace{0.47cm} \mathbf{x}^{\vee}_7(m) \mathbf{x}^{\vee}_6(n) \mathbf{x}^{\vee}_5(p) \\
        &\hspace{0.47cm} \mathbf{x}^{\vee}_7(q) \mathbf{x}^{\vee}_6(r) \underline{\big[ \mathbf{x}^{\vee}_4(p) \mathbf{x}^{\vee}_5(s) \mathbf{x}^{\vee}_4(-p) \mathbf{x}^{\vee}_5(-s) \big]}  \\
        &\hspace{0.47cm} \mathbf{x}^{\vee}_7(t) \mathbf{x}^{\vee}_6(u) \underline{\left[ \big[ \mathbf{x}^{\vee}_3(p) \mathbf{x}^{\vee}_4(s) \mathbf{x}^{\vee}_3(-p) \mathbf{x}^{\vee}_4(-s) \big] \mathbf{x}^{\vee}_5(v) \big[ \mathbf{x}^{\vee}_3(p) \mathbf{x}^{\vee}_4(-s) \mathbf{x}^{\vee}_3(-p) \mathbf{x}^{\vee}_4(s) \big] \mathbf{x}^{\vee}_5(-v) \right]} \\
        &\hspace{0.47cm} \mathbf{x}^{\vee}_7(w) \mathbf{x}^{\vee}_6(x)
    \end{aligned}
    $$

\begin{figure}[ht]
\centering
\begin{tikzpicture}[scale=0.7]
        \draw[thick] (0,8) -- (20,8);
        \draw (0,7) -- (20,7);
        \draw[thick] (0,6) -- (20,6);
        \draw[thick] (0,5) -- (20,5);
        \draw (0,4) -- (20,4);
        \draw (0,3) -- (20,3);
        \draw[thick] (0,2) -- (20,2);
        \draw (0,1) -- (20,1);
            \draw (0.5,7) -- (0.8,8);
            \draw (1,6) -- (1.3,7);
            \draw (1.5,5) -- (1.8,6);
            \draw (2,4) -- (2.3,5);
            \draw (2.5,3) -- (2.8,4);
            \draw (3,2) -- (3.3,3);
        \draw[dashed, color=black!60] (3.4,0.5) -- (3.4,8.5);
        \draw[densely dotted, color=black!90] (2.9,1.9) -- (2.9,3.1) -- (3.4,3.1) -- (3.4,1.9) -- cycle;
            \draw (3.5,7) -- (3.8,8);
            \draw (4,6) -- (4.3,7);
            \draw (4.5,5) -- (4.8,6);
            \draw (5,4) -- (5.3,5);
            \draw (5.5,3) -- (5.8,4);

            \draw (6,1) -- (6.3,2);
            \draw (6.5,2) -- (6.8,3);
            \draw (7,1) -- (7.3,2);
            \draw (7.5,2) -- (7.8,3);
        \draw[dashed, color=black!60] (7.9,0.5) -- (7.9,8.5);
        \draw[densely dotted, color=black!90] (5.9,0.9) -- (5.9,3.1) -- (7.9,3.1) -- (7.9,0.9) -- cycle;
            \draw (8,7) -- (8.3,8);
            \draw (8.5,6) -- (8.8,7);
            \draw (9,5) -- (9.3,6);
        \draw[dashed, color=black!60] (9.4,0.5) -- (9.4,8.5);
        \draw[densely dotted, color=black!90] (8.9,4.9) -- (8.9,6.1) -- (9.4,6.1) -- (9.4,4.9) -- cycle;
            \draw (9.5,7) -- (9.8,8);
            \draw (10,6) -- (10.3,7);

            \draw (10.5,4) -- (10.8,5);
            \draw (11,5) -- (11.3,6);
            \draw (11.5,4) -- (11.8,5);
            \draw (12,5) -- (12.3,6);
        \draw[dashed, color=black!60] (12.4,0.5) -- (12.4,8.5);
        \draw[densely dotted, color=black!90] (10.4,3.9) -- (10.4,6.1) -- (12.4,6.1) -- (12.4,3.9) -- cycle;
            \draw (12.5,7) -- (12.8,8);
            \draw (13,6) -- (13.3,7);

            \draw (13.5,3) -- (13.8,4);
            \draw (14,4) -- (14.3,5);
            \draw (14.5,3) -- (14.8,4);
            \draw (15,4) -- (15.3,5);
            \draw (15.5,5) -- (15.8,6);
            \draw (16,3) -- (16.3,4);
            \draw (16.5,4) -- (16.8,5);
            \draw (17,3) -- (17.3,4);
            \draw (17.5,4) -- (17.8,5);
            \draw (18,5) -- (18.3,6);
        \draw[dashed, color=black!60] (18.4,0.5) -- (18.4,8.5);
        \draw[densely dotted, color=black!90] (13.4,2.9) -- (13.4,6.1) -- (18.4,6.1) -- (18.4,2.9) -- cycle;
            \draw (18.5,7) -- (18.8,8);
            \draw (19,6) -- (19.3,7);
        \draw[densely dotted, color=black!90] (18.9,5.9) -- (18.9,7.1) -- (19.4,7.1) -- (19.4,5.9) -- cycle;

        \draw[decorate, decoration={brace, mirror}, color=black!60] (0.48,0.35) -- (7.82,0.35) node[midway, yshift=-0.4cm]{$k_1$};
        \draw[decorate, decoration={brace, mirror}, color=black!60] (7.98,0.35) -- (18.32,0.35) node[midway, yshift=-0.4cm]{$k_2$};
        \draw[decorate, decoration={brace, mirror}, color=black!60] (18.48,0.35) -- (19.32,0.35) node[midway, yshift=-0.4cm]{$k_3$};

        \node at (0.5,7) {\scriptsize{$\bullet$}};
        \node at (1,6) {\scriptsize{$\bullet$}};
        \node at (1.5,5) {\scriptsize{$\bullet$}};
        \node at (2,4) {\scriptsize{$\bullet$}};
        \node at (2.5,3) {\scriptsize{$\bullet$}};
        \node at (3,2) {\scriptsize{$\bullet$}};

        \node at (0.8,8) {\scriptsize{$\bullet$}};
        \node at (1.3,7) {\scriptsize{$\bullet$}};
        \node at (1.8,6) {\scriptsize{$\bullet$}};
        \node at (2.3,5) {\scriptsize{$\bullet$}};
        \node at (2.8,4) {\scriptsize{$\bullet$}};
        \node at (3.3,3) {\scriptsize{$\bullet$}};
        \node at (3.5,7) {\scriptsize{$\bullet$}};
        \node at (4,6) {\scriptsize{$\bullet$}};
        \node at (4.5,5) {\scriptsize{$\bullet$}};
        \node at (5,4) {\scriptsize{$\bullet$}};
        \node at (5.5,3) {\scriptsize{$\bullet$}};
        \node at (6,1) {\scriptsize{$\bullet$}};
        \node at (6.5,2) {\scriptsize{$\bullet$}};
        \node at (7,1) {\scriptsize{$\bullet$}};
        \node at (7.5,2) {\scriptsize{$\bullet$}};

        \node at (3.8,8) {\scriptsize{$\bullet$}};
        \node at (4.3,7) {\scriptsize{$\bullet$}};
        \node at (4.8,6) {\scriptsize{$\bullet$}};
        \node at (5.3,5) {\scriptsize{$\bullet$}};
        \node at (5.8,4) {\scriptsize{$\bullet$}};
        \node at (6.3,2) {\scriptsize{$\bullet$}};
        \node at (6.8,3) {\scriptsize{$\bullet$}};
        \node at (7.3,2) {\scriptsize{$\bullet$}};
        \node at (7.8,3) {\scriptsize{$\bullet$}};
        \node at (8,7) {\scriptsize{$\bullet$}};
        \node at (8.5,6) {\scriptsize{$\bullet$}};
        \node at (9,5) {\scriptsize{$\bullet$}};

        \node at (8.3,8) {\scriptsize{$\bullet$}};
        \node at (8.8,7) {\scriptsize{$\bullet$}};
        \node at (9.3,6) {\scriptsize{$\bullet$}};
        \node at (9.5,7) {\scriptsize{$\bullet$}};
        \node at (10,6) {\scriptsize{$\bullet$}};
        \node at (10.5,4) {\scriptsize{$\bullet$}};
        \node at (11,5) {\scriptsize{$\bullet$}};
        \node at (11.5,4) {\scriptsize{$\bullet$}};
        \node at (12,5) {\scriptsize{$\bullet$}};

        \node at (9.8,8) {\scriptsize{$\bullet$}};
        \node at (10.3,7) {\scriptsize{$\bullet$}};
        \node at (10.8,5) {\scriptsize{$\bullet$}};
        \node at (11.3,6) {\scriptsize{$\bullet$}};
        \node at (11.8,5) {\scriptsize{$\bullet$}};
        \node at (12.3,6) {\scriptsize{$\bullet$}};
        \node at (12.5,7) {\scriptsize{$\bullet$}};
        \node at (13,6) {\scriptsize{$\bullet$}};
        \node at (13.5,3) {\scriptsize{$\bullet$}};
        \node at (14,4) {\scriptsize{$\bullet$}};
        \node at (14.5,3) {\scriptsize{$\bullet$}};
        \node at (15,4) {\scriptsize{$\bullet$}};
        \node at (15.5,5) {\scriptsize{$\bullet$}};
        \node at (16,3) {\scriptsize{$\bullet$}};
        \node at (16.5,4) {\scriptsize{$\bullet$}};
        \node at (17,3) {\scriptsize{$\bullet$}};
        \node at (17.5,4) {\scriptsize{$\bullet$}};
        \node at (18,5) {\scriptsize{$\bullet$}};

        \node at (12.8,8) {\scriptsize{$\bullet$}};
        \node at (13.3,7) {\scriptsize{$\bullet$}};
        \node at (13.8,4) {\scriptsize{$\bullet$}};
        \node at (14.3,5) {\scriptsize{$\bullet$}};
        \node at (14.8,4) {\scriptsize{$\bullet$}};
        \node at (15.3,5) {\scriptsize{$\bullet$}};
        \node at (15.8,6) {\scriptsize{$\bullet$}};
        \node at (16.3,4) {\scriptsize{$\bullet$}};
        \node at (16.8,5) {\scriptsize{$\bullet$}};
        \node at (17.3,4) {\scriptsize{$\bullet$}};
        \node at (17.8,5) {\scriptsize{$\bullet$}};
        \node at (18.3,6) {\scriptsize{$\bullet$}};
        \node at (18.5,7) {\scriptsize{$\bullet$}};
        \node at (19,6) {\scriptsize{$\bullet$}};

        \node at (18.8,8) {\scriptsize{$\bullet$}};
        \node at (19.3,7) {\scriptsize{$\bullet$}};

        \node at (0,1) {\scriptsize{$\bullet$}};
        \node at (0,2) {\scriptsize{$\bullet$}};
        \node at (0,3) {\scriptsize{$\bullet$}};
        \node at (0,4) {\scriptsize{$\bullet$}};
        \node at (0,5) {\scriptsize{$\bullet$}};
        \node at (0,6) {\scriptsize{$\bullet$}};
        \node at (0,7) {\scriptsize{$\bullet$}};
        \node at (0,8) {\scriptsize{$\bullet$}};

        \node at (20,1) {\scriptsize{$\bullet$}};
        \node at (20,2) {\scriptsize{$\bullet$}};
        \node at (20,3) {\scriptsize{$\bullet$}};
        \node at (20,4) {\scriptsize{$\bullet$}};
        \node at (20,5) {\scriptsize{$\bullet$}};
        \node at (20,6) {\scriptsize{$\bullet$}};
        \node at (20,7) {\scriptsize{$\bullet$}};
        \node at (20,8) {\scriptsize{$\bullet$}};

        \node at (-0.3,8) {$8$};
        \node at (-0.3,7) {$7$};
        \node at (-0.3,6) {$6$};
        \node at (-0.3,5) {$5$};
        \node at (-0.3,4) {$4$};
        \node at (-0.3,3) {$3$};
        \node at (-0.3,2) {$2$};
        \node at (-0.3,1) {$1$};

        \node at (20.3,8) {$8$};
        \node at (20.3,7) {$7$};
        \node at (20.3,6) {$6$};
        \node at (20.3,5) {$5$};
        \node at (20.3,4) {$4$};
        \node at (20.3,3) {$3$};
        \node at (20.3,2) {$2$};
        \node at (20.3,1) {$1$};

        \node at (0.85,7.5) {$a$}; 
        \node at (1.35,6.5) {$b$};
        \node at (1.85,5.5) {$c$};
        \node at (2.35,4.5) {$d$};
        \node at (2.85,3.5) {$e$};
        \node at (3.35,2.5) {$f$};
        \node at (3.85,7.5) {$g$};
        \node at (4.35,6.5) {$h$};
        \node at (4.85,5.5) {$i$};
        \node at (5.35,4.5) {$j$};
        \node at (5.85,3.5) {$k$};

        \node at (6.35,1.5) {$f$};
        \node at (6.85,2.5) {$l$};
        \node at (7.54,1.5) {$-f$};
        \node at (7.95,2.5) {$-l$};
        \node at (8.42,7.5) {$m$};
        \node at (8.85,6.5) {$n$};
        \node at (9.35,5.5) {$p$};
        \node at (9.85,7.5) {$q$};
        \node at (10.35,6.5) {$r$};

        \node at (10.85,4.5) {$p$};
        \node at (11.35,5.5) {$s$};
        \node at (12,4.5) {$-p$};
        \node at (12.5,5.5) {$-s$};
        \node at (12.85,7.5) {$t$};
        \node at (13.35,6.5) {$u$};

        \node at (13.85,3.5) {$p$};
        \node at (14.35,4.5) {$s$};
        \node at (15,3.5) {$-p$};
        \node at (15.5,4.5) {$-s$};
        \node at (15.85,5.5) {$v$};
        \node at (16.35,3.5) {$p$};
        \node at (17,4.5) {$-s$};
        \node at (17.5,3.5) {$-p$};
        \node at (17.85,4.5) {$s$};
        \node at (18.5,5.5) {$-v$};
        \node at (18.9,7.5) {$w$};
        \node at (19.35,6.5) {$x$};
\end{tikzpicture}
\caption{Graph (with temporary labelling) for computing minors of $u_L$ for $\mathcal{F}_{2,5,6}(\mathbb{C}^8)$} \label{fig temp labelling of graph for uL for F2,5,6,C8}
\end{figure}
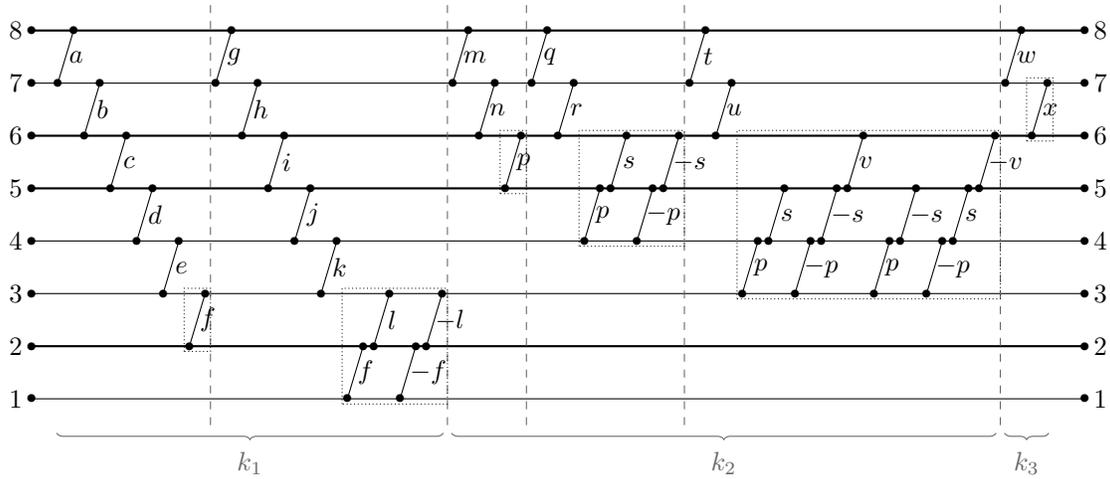

\end{ex}

\begin{lem} \label{lem minors in CA are monomial}
All minors in the application of the Chamber Ansatz (in the proof of Lemma \ref{lem b- in terms of y_i}) are monomial and consequently the resulting coordinate change is monomial.
\end{lem}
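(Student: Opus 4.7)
The plan is to follow the same strategy as in the $G/B$ case (Lemma \ref{lem our Chamber Ansatz minors are monomial}), but now accounting for the more intricate structure of the graph for $u_L$ coming from the $X^{\vee}_{i,\alpha}$ factors. By Lemma \ref{lem G/P chamber label form}, the chamber labels fall into two classes: those of the form $\{1,\ldots,b\}$, and those of the form $\{n_{j-1}+1, \ldots, n_j-t\} \cup \{n_j+1, \ldots, n_{j-1}+b+t\}$. For the first class, since $u_L \in U^{\vee}$, the corresponding flag minors equal $1$ via a family of horizontal paths in the graph, so these are trivially monomial.

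For the second class, I would apply Theorem \ref{thm minors from paths in graph} to the graph for $u_L$ and argue that the sum collapses to a single monomial. The key preliminary step is to show that each dotted rectangle (corresponding to a factor $X^{\vee}_{n_j,\alpha}$ arising from a $1$-path of length $\alpha$ ending at a star vertex) behaves, for the purpose of minor computations, as a single ``long'' diagonal edge from row $n_j - \alpha + 1$ to row $n_j + 1$ carrying the weight $x_{v_e}/x_{v_s}$. This reflects the fact that $X^{\vee}_{n_j,\alpha} = I + (x_{v_e}/x_{v_s})\, E_{n_j-\alpha+1,\, n_j+1}$ is itself a unipotent matrix with a single off-diagonal entry: inside the rectangle the signed $\mathbf{x}^{\vee}_{j}$ contributions cancel for every configuration of paths except the one that goes straight from the bottom-left to the top-right corner of the rectangle.

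Once each rectangle is collapsed in this way, the graph acquires the same rigid column-by-column structure as in the $G/B$ case: within the group of $k_j$ columns attached to square $L_j$, each column provides a unique diagonal traversal from its bottom horizontal line up to its top, and no two columns share a diagonal edge at the same height. Given the admissible source/sink sets coming from our chamber labels, I would then repeat the inductive uniqueness argument from the proof of Lemma \ref{lem our Chamber Ansatz minors are monomial}: the path to the topmost required sink is forced (since there is only one way to reach the final row through the sequence of available columns), and each subsequent path is then forced by the vertex-disjointness requirement. This gives a single surviving family, and hence a monomial minor.

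The main obstacle will be rigorously establishing the cancellation inside the dotted rectangles in the context of vertex-disjoint path sums with signs. A clean way to bypass the combinatorial bookkeeping is to observe that $u_L$ can be written directly as the product of $\mathbf{x}^{\vee}_j(-)$ factors (for the length-$1$ paths) and the matrices $X^{\vee}_{n_j,\alpha}(-)$ (for the longer $1$-paths), and then to construct a \emph{simplified} graph in which each $X^{\vee}_{n_j,\alpha}$ factor is drawn as a single diagonal edge of length $\alpha$ with weight $x_{v_e}/x_{v_s}$. By direct expansion this simplified graph computes the same minors of $u_L$ as the original one, and on it the G/B-style argument applies verbatim, yielding the monomiality claim and hence the monomiality of the resulting coordinate change.
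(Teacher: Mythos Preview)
Your approach is essentially the paper's. The paper's central ``Claim'' is exactly your rectangle-collapsing observation: since $X^{\vee}_{n_j,\alpha}$ has a single nontrivial minor $\Delta^{n_j-\alpha+1}_{n_j+1}$, the only nonzero contribution through a dotted rectangle is the bottom-left-to-top-right traversal, and any partial traversal sums to zero. Your ``simplified graph'' is a clean way to package this.

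Where you should be more careful is the claim that the $G/B$ argument then applies \emph{verbatim}. Two things change. First, your assertion that ``no two columns share a diagonal edge at the same height'' is false already in $G/B$ (and here): within the $k_j$-group, every column has single-step edges at all heights $n_j+1,\ldots,n-1$. Second, the chamber labels are now unions of two intervals $\{n_{j-1}+1,\ldots,n_j-t\}\cup\{n_j+1,\ldots,n_{j-1}+b+t\}$ rather than single intervals, and the simplified graph has long edges that jump several levels. The paper therefore does not reuse the $G/B$ argument directly but runs a new induction over the column groups $k_j$: paths entering the $k_j$-group on a line $\leq n_{j-1}$ are forced to stay horizontal; then one argues vertex-by-vertex that each path from $n_{j-1}+1, n_{j-1}+2,\ldots$ either stays horizontal or is forced (by disjointness from the previous path and the shape of the chamber label) to rise through the appropriate rectangle and then up the single steps. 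Your outline is correct in spirit, but this is the part that actually requires a fresh argument rather than a citation.
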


\begin{proof}
In the application of the Chamber Ansatz in the proof of Lemma \ref{lem b- in terms of y_i}, the relevant minors are those flag minors of $u_L$ with column sets given by the chamber labels of the respective ansatz arrangement for $\dot{w}_P\bar{w}_0$.

Firstly, the minors corresponding to the chamber labels $\{1, \ldots, b\}$ always take the value $1$ since $u_L \in U^{\vee}_+$. We can also see this from the graph using the result of Fomin and Zelevinsky which describes matrix minors in terms of weights of paths (\cite[Proposition 4.2]{FominZelevinsky1999}, given in our notation as Theorem \ref{thm minors from paths in graph}). Namely, the paths must be horizontal and the lack of non-trivial torus factors means that all horizontal edges have weight $1$, thus any horizontal path in this graph necessarily has weight $1$.

In order to use the graph to show that the remaining minors are monomial, we need to consider the dotted rectangles and the columns separated by dashed lines.

\begin{claim}
In each dotted rectangle, the sum of the weights of the family of paths from the bottom left hand corner to the top right hand corner is equal to the weight of the first possible such path. Moreover, the sum of the weights of the family of paths passing through dotted rectangles between any two distinct horizontal lines other than the bottom and top lines, is zero. 
\end{claim}

\begin{proof}[Proof of Claim]
We begin by recalling that each dotted rectangle corresponds to a factor $X^{\vee}_{i,\alpha}$ in $u_L$.
The only minor of the matrix $X^{\vee}_{i,\alpha}(x_{v_t}/x_{v_s})=\tilde{X}^{\vee}_{i,\alpha}(r_{a_1}, \ldots, r_{a_{\alpha}})$ which has the possibility to take a value other than $0$ or $1$ is $\Delta^{i-\alpha+1}_{i+1}$. We see this because any submatrix other than the $1\times 1$ submatrix given by the entry in position $(i+1-\alpha, i+1)$, either has a zero row or column, or is triangular with $1$'s on the leading diagonal. This special minor is given by
    \begin{equation} \label{eqn minor re dotted rectangle i,alpha}
    \Delta^{i-\alpha+1}_{i+1}\left( \tilde{X}^{\vee}_{i,\alpha}(r_{a_1}, \ldots, r_{a_{\alpha}}) \right) = \prod_{j=1}^{\alpha} r_{a_j}.
    \end{equation}

Now we consider the graph corresponding to the matrix $X^{\vee}_{i,\alpha}(x_{v_t}/x_{v_s})=\tilde{X}^{\vee}_{i,\alpha}(r_{a_1}, \ldots, r_{a_{\alpha}})$, namely the subgraph within the respective dotted rectangle. By the result of Fomin and Zelevinsky (\cite[Proposition 4.2]{FominZelevinsky1999}, given in our notation as Theorem \ref{thm minors from paths in graph}) we see that the sum of the weights of the families of paths (in this dotted rectangle) between any two horizontal lines $l_1 \leq l_2 \in \{i+1-\alpha, \ldots, i+1\}$ is equal to the minor
    $$\Delta^{l_1}_{l_2} = \begin{cases}
    \prod_{j=1}^{\alpha} r_{a_j} & \text{if } (l_1, l_2) = (i+1-\alpha, i+1), \\
    1 & \text{if } l_1=l_2, \\
    0 & \text{otherwise}.
    \end{cases}
    $$
Moreover, for $j=1, \ldots, \alpha$ the first diagonal step from line $i+1-j$ to $i+1-j+1$ has weight $r_{a_{\alpha-j+1}}$ by construction. Thus the weight of the path in the dotted rectangle which travels upwards at the first possible opportunity is exactly equal to the minor (\ref{eqn minor re dotted rectangle i,alpha}). See Figure \ref{fig Subgraph describing paths in doted rectangles} for an example of such a path with $\alpha=3$.
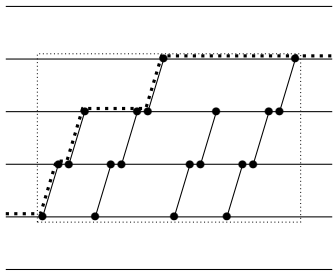
\begin{figure}[ht!]
\centering
\begin{tikzpicture}[scale=0.7]
        \draw (12.8,7) -- (19,7);
        \draw (12.8,6) -- (19,6);
        \draw (12.8,5) -- (19,5);
        \draw (12.8,4) -- (19,4);
        \draw (12.8,3) -- (19,3);
        \draw (12.8,2) -- (19,2);
            \draw (13.5,3) -- (13.8,4);
            \draw (14,4) -- (14.3,5);
            \draw (14.5,3) -- (14.8,4);
            \draw (15,4) -- (15.3,5);
            \draw (15.5,5) -- (15.8,6);
            \draw (16,3) -- (16.3,4);
            \draw (16.5,4) -- (16.8,5);
            \draw (17,3) -- (17.3,4);
            \draw (17.5,4) -- (17.8,5);
            \draw (18,5) -- (18.3,6);
        \draw[densely dotted, color=black!90] (13.4,2.9) -- (13.4,6.1) -- (18.4,6.1) -- (18.4,2.9) -- cycle;

        \draw[very thick, dotted] (12.8,3.06) -- (13.46,3.06) -- (13.76,4.06) -- (13.96,4.06) -- (14.26,5.06) -- (15.46,5.06) -- (15.76,6.06) -- (19,6.06);

        \node at (13.5,3) {\scriptsize{$\bullet$}};
        \node at (14,4) {\scriptsize{$\bullet$}};
        \node at (14.5,3) {\scriptsize{$\bullet$}};
        \node at (15,4) {\scriptsize{$\bullet$}};
        \node at (15.5,5) {\scriptsize{$\bullet$}};
        \node at (16,3) {\scriptsize{$\bullet$}};
        \node at (16.5,4) {\scriptsize{$\bullet$}};
        \node at (17,3) {\scriptsize{$\bullet$}};
        \node at (17.5,4) {\scriptsize{$\bullet$}};
        \node at (18,5) {\scriptsize{$\bullet$}};

        \node at (13.8,4) {\scriptsize{$\bullet$}};
        \node at (14.3,5) {\scriptsize{$\bullet$}};
        \node at (14.8,4) {\scriptsize{$\bullet$}};
        \node at (15.3,5) {\scriptsize{$\bullet$}};
        \node at (15.8,6) {\scriptsize{$\bullet$}};
        \node at (16.3,4) {\scriptsize{$\bullet$}};
        \node at (16.8,5) {\scriptsize{$\bullet$}};
        \node at (17.3,4) {\scriptsize{$\bullet$}};
        \node at (17.8,5) {\scriptsize{$\bullet$}};
        \node at (18.3,6) {\scriptsize{$\bullet$}};
\end{tikzpicture}
\caption{Subgraph describing paths in doted rectangles} \label{fig Subgraph describing paths in doted rectangles}
\end{figure}

\end{proof}

A corollary of the claim is that if we are tracing a path and come to a dotted rectangle then we only need to consider two options; either we travel from the bottom left corner to the top right corner taking every opportunity to travel upwards, or the path stays horizontal through the rectangle. That is, the sum of all other contributions to the computation of the respective minor will be zero. Consequently we will often say \emph{the} path upwards through a rectangle, since we lose no information by not considering the full family of paths between the bottom and top lines.

Now by construction of $u_L$, for each $j=1, \ldots, l$ and $\alpha=1, \ldots, k_j$, we have a single dotted rectangle corresponding to $X^{\vee}_{n_j,\alpha}$, appearing in this order in the graph, namely
    $$X^{\vee}_{n_1,1}, \ X^{\vee}_{n_1,2}, \ \ldots, \ X^{\vee}_{n_1,k_1}, \ X^{\vee}_{n_2,1}, \ \ldots, \ X^{\vee}_{n_2,k_2}, \ \ldots, \ X^{\vee}_{n_l,1}, \ \ldots, \ X^{\vee}_{n_l,k_l}.
    $$
Thus we have the following three facts:
    \begin{enumerate}
        \item There is exactly one dotted rectangle beginning on each horizontal line $n_j+1-\alpha$ for $j=1, \ldots, l$ and $\alpha=1, \ldots, k_j$.
        \item If we travel along a given horizontal line, after passing horizontally through a dotted rectangle (unless on the top line) the only possible steps upwards are also within (other) dotted rectangles, but by fact 1, these rectangles must be between different pairs of lines.
        \item After the set of columns labelled $k_j$, (that is, after the first $n_j$ columns) there are no steps between the horizontal lines $n_j+1-\alpha$ and $n_j+2-\alpha$ for $\alpha=1, \ldots, k_j$.
    \end{enumerate}

For reference, we recall from Lemma \ref{lem G/P chamber label form} that for $j=1, \ldots, l+1$ and $t=1, \ldots, k_j$, we are considering the flag minors with column sets given by chamber labels of the form
    $$
    \{n_{j-1}+1, \ldots, n_{j}-t\} \cup \{n_{j}+1, \ldots, n_{j-1}+b+t\}
    $$
where $b=1, \ldots, n-n_{j-1}-t$ is the height of the chamber. We will use induction to prove that these minors are monomial by considering paths within the set of columns labelled $k_j$, that is, within the columns $n_{j-1}+1, \ldots, n_j$.

In the rest of this proof we will write `the path from $i$' to mean `the path on the horizontal line $i$ at the beginning of the $(n_{j-1}+1)$-th column', reserving the terms source and sink for the beginning and end of horizontal lines in the complete graph.

Firstly, taking $k_0:=0$, any paths from $i \leq n_{j-1}$ must remain horizontal through the columns $n_{j-1}+1, \ldots, n_j$ by the third fact above.

Paths from $n_{j-1}+1$ are either horizontal in which case the minor takes value $1$. Alternatively the paths may travel upwards in column $n_j$ through the dotted rectangle corresponding to $X^{\vee}_{n_j,k_j}$. By the claim, the sum of the weights of the family of paths between these lines is monomial, if non-zero.
Now we recall that the minors we need to consider are (unions of) sets of increasing integers beginning with $n_i+1$ for some $i$. Consequently, since all paths from $i \leq n_{j-1}$ must remain horizontal through the column set $k_j$, we see that the paths from $n_{j-1}+1$ we need to consider either stay horizontal, or travel upwards to the line $n_j+1$, that is travelling upwards at every opportunity through the rectangle. For example, see Figure \ref{fig minor 367 for F2,5,6,C8}.

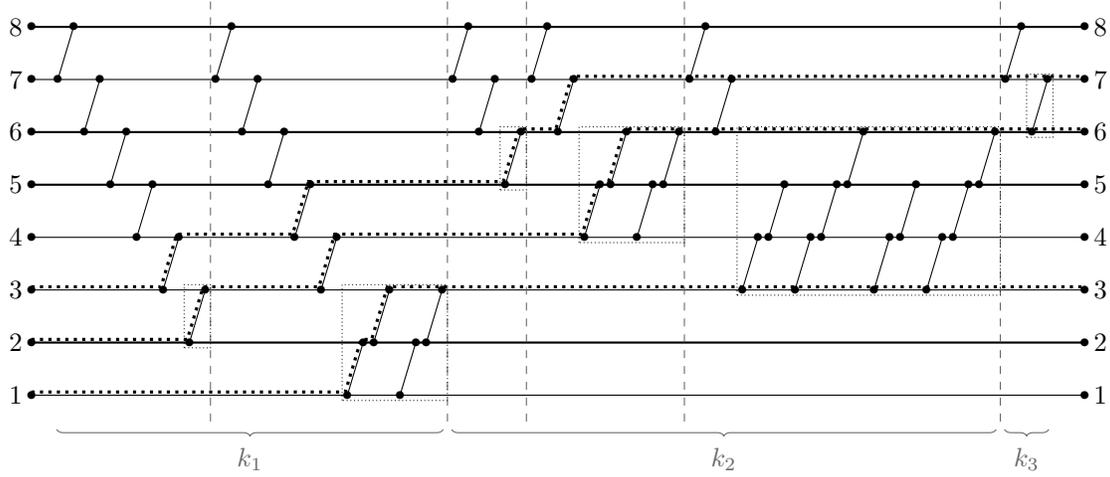
\begin{figure}[ht]
\centering
\begin{tikzpicture}[scale=0.7]
        \draw[thick] (0,8) -- (20,8);
        \draw (0,7) -- (20,7);
        \draw[thick] (0,6) -- (20,6);
        \draw[thick] (0,5) -- (20,5);
        \draw (0,4) -- (20,4);
        \draw (0,3) -- (20,3);
        \draw[thick] (0,2) -- (20,2);
        \draw (0,1) -- (20,1);
            \draw (0.5,7) -- (0.8,8);
            \draw (1,6) -- (1.3,7);
            \draw (1.5,5) -- (1.8,6);
            \draw (2,4) -- (2.3,5);
            \draw (2.5,3) -- (2.8,4);
            \draw (3,2) -- (3.3,3);
        \draw[dashed, color=black!60] (3.4,0.5) -- (3.4,8.5);
        \draw[densely dotted, color=black!90] (2.9,1.9) -- (2.9,3.1) -- (3.4,3.1) -- (3.4,1.9) -- cycle;

            \draw (3.5,7) -- (3.8,8);
            \draw (4,6) -- (4.3,7);
            \draw (4.5,5) -- (4.8,6);
            \draw (5,4) -- (5.3,5);
            \draw (5.5,3) -- (5.8,4);

            \draw (6,1) -- (6.3,2);
            \draw (6.5,2) -- (6.8,3);
            \draw (7,1) -- (7.3,2);
            \draw (7.5,2) -- (7.8,3);
        \draw[dashed, color=black!60] (7.9,0.5) -- (7.9,8.5);
        \draw[densely dotted, color=black!90] (5.9,0.9) -- (5.9,3.1) -- (7.9,3.1) -- (7.9,0.9) -- cycle;
            \draw (8,7) -- (8.3,8);
            \draw (8.5,6) -- (8.8,7);
            \draw (9,5) -- (9.3,6);
        \draw[dashed, color=black!60] (9.4,0.5) -- (9.4,8.5);
        \draw[densely dotted, color=black!90] (8.9,4.9) -- (8.9,6.1) -- (9.4,6.1) -- (9.4,4.9) -- cycle;
            \draw (9.5,7) -- (9.8,8);
            \draw (10,6) -- (10.3,7);

            \draw (10.5,4) -- (10.8,5);
            \draw (11,5) -- (11.3,6);
            \draw (11.5,4) -- (11.8,5);
            \draw (12,5) -- (12.3,6);
        \draw[dashed, color=black!60] (12.4,0.5) -- (12.4,8.5);
        \draw[densely dotted, color=black!90] (10.4,3.9) -- (10.4,6.1) -- (12.4,6.1) -- (12.4,3.9) -- cycle;
            \draw (12.5,7) -- (12.8,8);
            \draw (13,6) -- (13.3,7);

            \draw (13.5,3) -- (13.8,4);
            \draw (14,4) -- (14.3,5);
            \draw (14.5,3) -- (14.8,4);
            \draw (15,4) -- (15.3,5);
            \draw (15.5,5) -- (15.8,6);
            \draw (16,3) -- (16.3,4);
            \draw (16.5,4) -- (16.8,5);
            \draw (17,3) -- (17.3,4);
            \draw (17.5,4) -- (17.8,5);
            \draw (18,5) -- (18.3,6);
        \draw[dashed, color=black!60] (18.4,0.5) -- (18.4,8.5);
        \draw[densely dotted, color=black!90] (13.4,2.9) -- (13.4,6.1) -- (18.4,6.1) -- (18.4,2.9) -- cycle;
            \draw (18.5,7) -- (18.8,8);
            \draw (19,6) -- (19.3,7);
        \draw[densely dotted, color=black!90] (18.9,5.9) -- (18.9,7.1) -- (19.4,7.1) -- (19.4,5.9) -- cycle;

        \draw[very thick, dotted] (0,3.06) -- (2.46,3.06) -- (2.76,4.06) -- (4.96,4.06) -- (5.26,5.06) -- (8.96,5.06) -- (9.26,6.06) -- (9.96,6.06) -- (10.26,7.06) -- (20,7.06);
        \draw[very thick, dotted] (0,2.06) -- (2.96,2.06) -- (3.26,3.06) -- (5.46,3.06) -- (5.76,4.06) -- (10.46,4.06) -- (10.76,5.06) (10.96,5.06) -- (11.26,6.06) -- (20,6.06);
        \draw[very thick, dotted] (0,1.06) -- (5.96,1.06) -- (6.26,2.06) -- (6.46,2.06) -- (6.76,3.06) -- (20,3.06);

        \draw[decorate, decoration={brace, mirror}, color=black!60] (0.48,0.35) -- (7.82,0.35) node[midway, yshift=-0.4cm]{$k_1$};
        \draw[decorate, decoration={brace, mirror}, color=black!60] (7.98,0.35) -- (18.32,0.35) node[midway, yshift=-0.4cm]{$k_2$};
        \draw[decorate, decoration={brace, mirror}, color=black!60] (18.48,0.35) -- (19.32,0.35) node[midway, yshift=-0.4cm]{$k_3$};

        \node at (0.5,7) {\scriptsize{$\bullet$}};
        \node at (1,6) {\scriptsize{$\bullet$}};
        \node at (1.5,5) {\scriptsize{$\bullet$}};
        \node at (2,4) {\scriptsize{$\bullet$}};
        \node at (2.5,3) {\scriptsize{$\bullet$}};
        \node at (3,2) {\scriptsize{$\bullet$}};

        \node at (0.8,8) {\scriptsize{$\bullet$}};
        \node at (1.3,7) {\scriptsize{$\bullet$}};
        \node at (1.8,6) {\scriptsize{$\bullet$}};
        \node at (2.3,5) {\scriptsize{$\bullet$}};
        \node at (2.8,4) {\scriptsize{$\bullet$}};
        \node at (3.3,3) {\scriptsize{$\bullet$}};
        \node at (3.5,7) {\scriptsize{$\bullet$}};
        \node at (4,6) {\scriptsize{$\bullet$}};
        \node at (4.5,5) {\scriptsize{$\bullet$}};
        \node at (5,4) {\scriptsize{$\bullet$}};
        \node at (5.5,3) {\scriptsize{$\bullet$}};
        \node at (6,1) {\scriptsize{$\bullet$}};
        \node at (6.5,2) {\scriptsize{$\bullet$}};
        \node at (7,1) {\scriptsize{$\bullet$}};
        \node at (7.5,2) {\scriptsize{$\bullet$}};

        \node at (3.8,8) {\scriptsize{$\bullet$}};
        \node at (4.3,7) {\scriptsize{$\bullet$}};
        \node at (4.8,6) {\scriptsize{$\bullet$}};
        \node at (5.3,5) {\scriptsize{$\bullet$}};
        \node at (5.8,4) {\scriptsize{$\bullet$}};
        \node at (6.3,2) {\scriptsize{$\bullet$}};
        \node at (6.8,3) {\scriptsize{$\bullet$}};
        \node at (7.3,2) {\scriptsize{$\bullet$}};
        \node at (7.8,3) {\scriptsize{$\bullet$}};
        \node at (8,7) {\scriptsize{$\bullet$}};
        \node at (8.5,6) {\scriptsize{$\bullet$}};
        \node at (9,5) {\scriptsize{$\bullet$}};

        \node at (8.3,8) {\scriptsize{$\bullet$}};
        \node at (8.8,7) {\scriptsize{$\bullet$}};
        \node at (9.3,6) {\scriptsize{$\bullet$}};
        \node at (9.5,7) {\scriptsize{$\bullet$}};
        \node at (10,6) {\scriptsize{$\bullet$}};
        \node at (10.5,4) {\scriptsize{$\bullet$}};
        \node at (11,5) {\scriptsize{$\bullet$}};
        \node at (11.5,4) {\scriptsize{$\bullet$}};
        \node at (12,5) {\scriptsize{$\bullet$}};

        \node at (9.8,8) {\scriptsize{$\bullet$}};
        \node at (10.3,7) {\scriptsize{$\bullet$}};
        \node at (10.8,5) {\scriptsize{$\bullet$}};
        \node at (11.3,6) {\scriptsize{$\bullet$}};
        \node at (11.8,5) {\scriptsize{$\bullet$}};
        \node at (12.3,6) {\scriptsize{$\bullet$}};
        \node at (12.5,7) {\scriptsize{$\bullet$}};
        \node at (13,6) {\scriptsize{$\bullet$}};
        \node at (13.5,3) {\scriptsize{$\bullet$}};
        \node at (14,4) {\scriptsize{$\bullet$}};
        \node at (14.5,3) {\scriptsize{$\bullet$}};
        \node at (15,4) {\scriptsize{$\bullet$}};
        \node at (15.5,5) {\scriptsize{$\bullet$}};
        \node at (16,3) {\scriptsize{$\bullet$}};
        \node at (16.5,4) {\scriptsize{$\bullet$}};
        \node at (17,3) {\scriptsize{$\bullet$}};
        \node at (17.5,4) {\scriptsize{$\bullet$}};
        \node at (18,5) {\scriptsize{$\bullet$}};

        \node at (12.8,8) {\scriptsize{$\bullet$}};
        \node at (13.3,7) {\scriptsize{$\bullet$}};
        \node at (13.8,4) {\scriptsize{$\bullet$}};
        \node at (14.3,5) {\scriptsize{$\bullet$}};
        \node at (14.8,4) {\scriptsize{$\bullet$}};
        \node at (15.3,5) {\scriptsize{$\bullet$}};
        \node at (15.8,6) {\scriptsize{$\bullet$}};
        \node at (16.3,4) {\scriptsize{$\bullet$}};
        \node at (16.8,5) {\scriptsize{$\bullet$}};
        \node at (17.3,4) {\scriptsize{$\bullet$}};
        \node at (17.8,5) {\scriptsize{$\bullet$}};
        \node at (18.3,6) {\scriptsize{$\bullet$}};
        \node at (18.5,7) {\scriptsize{$\bullet$}};
        \node at (19,6) {\scriptsize{$\bullet$}};

        \node at (18.8,8) {\scriptsize{$\bullet$}};
        \node at (19.3,7) {\scriptsize{$\bullet$}};

        \node at (0,1) {\scriptsize{$\bullet$}};
        \node at (0,2) {\scriptsize{$\bullet$}};
        \node at (0,3) {\scriptsize{$\bullet$}};
        \node at (0,4) {\scriptsize{$\bullet$}};
        \node at (0,5) {\scriptsize{$\bullet$}};
        \node at (0,6) {\scriptsize{$\bullet$}};
        \node at (0,7) {\scriptsize{$\bullet$}};
        \node at (0,8) {\scriptsize{$\bullet$}};

        \node at (20,1) {\scriptsize{$\bullet$}};
        \node at (20,2) {\scriptsize{$\bullet$}};
        \node at (20,3) {\scriptsize{$\bullet$}};
        \node at (20,4) {\scriptsize{$\bullet$}};
        \node at (20,5) {\scriptsize{$\bullet$}};
        \node at (20,6) {\scriptsize{$\bullet$}};
        \node at (20,7) {\scriptsize{$\bullet$}};
        \node at (20,8) {\scriptsize{$\bullet$}};

        \node at (-0.3,8) {$8$};
        \node at (-0.3,7) {$7$};
        \node at (-0.3,6) {$6$};
        \node at (-0.3,5) {$5$};
        \node at (-0.3,4) {$4$};
        \node at (-0.3,3) {$3$};
        \node at (-0.3,2) {$2$};
        \node at (-0.3,1) {$1$};

        \node at (20.3,8) {$8$};
        \node at (20.3,7) {$7$};
        \node at (20.3,6) {$6$};
        \node at (20.3,5) {$5$};
        \node at (20.3,4) {$4$};
        \node at (20.3,3) {$3$};
        \node at (20.3,2) {$2$};
        \node at (20.3,1) {$1$};
\end{tikzpicture}
\caption{Graph describing the minor $\Delta^{\{1,2,3\}}_{\{3\}\cup\{6,7\}}(u_L)$ for $\mathcal{F}_{2,5,6}(\mathbb{C}^8)$} \label{fig minor 367 for F2,5,6,C8}
\end{figure}

Now suppose we have chosen some path from $n_{j-1}+1$, either horizontal or the one travelling upwards through the rectangle corresponding to $X^{\vee}_{n_j,k_j}$. We will consider the possible paths from $n_{j-1}+2$. If our path from $n_{j-1}+1$ travels upwards, then in order for our paths to remain vertex disjoint, any path from $n_{j-1}+2$ must rise before the path from $n_{j-1}+1$ does. Thus it must first rise to line $n_j+1$ (if $k_j\geq 2$) by travelling through the dotted rectangle in the $(n_j-1)$-th column, corresponding to $X^{\vee}_{n_j,k_j-1}$, and then up the unique step between lines $n_j+1$, $n_j+2$ in the $n_j$-th column. By the claim we see that the corresponding contribution to the minor is monomial.

If the path from $n_{j-1}+1$ is horizontal then (if $k_j\geq 2$) any path from $n_{j-1}+2$ may either stay horizontal or rise through the dotted rectangle corresponding to $X^{\vee}_{n_j,k_j-1}$. If it stays horizontal, then by the second fact above it will come to another dotted rectangle, in particular the one corresponding to $X^{\vee}_{n_j,k_j}$. By the claim, we see that in this case the sum of the weights of the family of paths from $n_{j-1}+2$, is non-zero only if the paths in the family stay horizontal during this second rectangle.

If instead the path from $n_{j-1}+2$ rises through the first rectangle, then, by the claim, the sum of the weights of the family of paths between these lines is monomial, if non-zero. Thus we may assume that we rise to the top of the rectangle, that is, to line $n_j+1$. From this point there is a unique step from line $n_j+1$ to $n_j+2$ in column $n_j$ however the chamber labels we are considering force us to remain horizontal. We see this from the third fact above (since the path from $n_{j-1}+1$, if horizontal through the set of columns labelled $k_j$, will have no opportunity to rise at a later point in the graph) combined with the form of the minors we need to compute, noting that we are in the case of a label given by a union of two disjoint sets. Additionally, since the line from $n_{j-1}+1$ being horizontal through the set of columns labelled $k_j$ corresponds to the appearance of $n_{j-1}+1$ in the chamber label, and the line from $n_{j-1}+2$ rises, the first term of the second set in the chamber label must be $n_j+1$. Thus once the path from $n_{j-1}+2$ has risen to the horizontal line $n_j+1$, it must remain on this line for the remainder of the graph

We may repeat a similar argument for any paths from $n_{j-1}+3, \ldots, n$. The result is the following; within the column set labelled $k_j$, either
    \begin{enumerate}
        \item each path is horizontal and so the corresponding contribution to the minor is equal to $1$, or
        \item each path takes every opportunity to rise, either from bottom to top through rectangles (giving a monomial contribution to the respective minor by the claim) or up a sequence of `single' steps, until it reaches either the horizontal line for the desired sink of the path or the end of the column set labelled $k_j$, and so the corresponding contribution to the minor is monomial.
    \end{enumerate}

We see that all the minors in our application of the Chamber Ansatz are monomial from three facts. Firstly that if a path is on a horizontal line $i\leq k_{j-1}$ at the start of the $(n_{j-1}+1)$-th column then it must remain horizontal, secondly that our minors are flag minors (so we always begin paths at sources $1, \ldots, i$) and thirdly that there is a monomial (if non-zero) contribution to the minor from the family of paths between any two distinct lines in each column set $k_j$, $j=1, \ldots, l$.

Putting these facts together we see that for a given minor, taking the following paths for $i=1, \ldots, n$ evaluates the minor from the graph and that it must be monomial: from source $i$, we travel upwards at every opportunity through each column set $k_j$ in succession, until we reach the horizontal line which leads into the desired sink, defined by the $i$-th integer in the respective chamber label.

\end{proof}

Let $j \in \{1, \ldots, l+1\}$ and $n_{j-1}\leq k \leq n_j$, then by the proof of Lemma \ref{lem G/P chamber label form} the label of the chamber to the \emph{right} of the crossing at height $a$ from the $k$-th sub-product of the form (\ref{eqn subproduct 1 of si for ansatz arr proof}) (in the description of $\dot{w}_P\bar{w}_0$ given in (\ref{eqn wPw0 increasing si})) is given by
    \begin{equation} \label{eqn orginial chamber label in terms of k,a}
    \{ n_{j-1}+1, \ldots, n_{j-1}+n_j-k \} \cup \{ n_j+1, \ldots, k+a \}.
    \end{equation}
We see this is exactly (\ref{eqn chamber label below string}) from the statement of Lemma \ref{lem G/P chamber label form}, by setting $a=b$, $k=n_{j-1}+t$.

It is then natural to assign the pair $(k,a)$ to the chamber on the right of this crossing, similar to the proof of Lemma \ref{lem coord change for b m_i in terms of p_i}. We will call these \emph{chamber pairs} to distinguish them from the chamber labels used previously, for example in Lemma \ref{lem G/P chamber label form}. The leftmost chambers are labelled consistently, taking $k=0$, and we leave the chambers above and below the pseudoline arrangement unlabelled. In Figure \ref{fig chamber pair labels wPw0 ansatz arrangement} we show the chamber pairs $(k,a)$ surrounding the crossing at height $a$ from the $k$-th sub-product of the form (\ref{eqn subproduct 1 of si for ansatz arr proof}).

\begin{figure}[ht]
\centering
\begin{tikzpicture}
        \node at (1,0.5) {$\bullet$};
        \draw[<-, >=stealth, densely dashed, rounded corners=39] (0.92,0.52) -- (-0.4, 1.25) -- (-2.9,1.75);
        \draw (-0.3,1) -- (0.75,1) -- (1.25,0) -- (2.3,0);
        \draw (-0.3,0) -- (0.75,0) -- (1.25,1) -- (2.3,1);
        \node at (1,-0.3) {{$(k, a-1)$}};
        \node at (-0.2,0.5) {{$(k-1, a)$}};
        \node at (1.9,0.5) {{$(k, a)$}};
        \node at (1,1.3) {{$(k-1, a+1)$}};
        \node at (1,2) 
            {crossing at height $a$ from the $k$-th sub-product in $\dot{w}_P\bar{w}_0$ of the form (\ref{eqn subproduct 1 of si for ansatz arr proof})};
\end{tikzpicture}
\caption{Labelling of chamber pairs $(k,a)$ in ansatz arrangement for $\dot{w}_P\bar{w}_0$} \label{fig chamber pair labels wPw0 ansatz arrangement}
\end{figure}
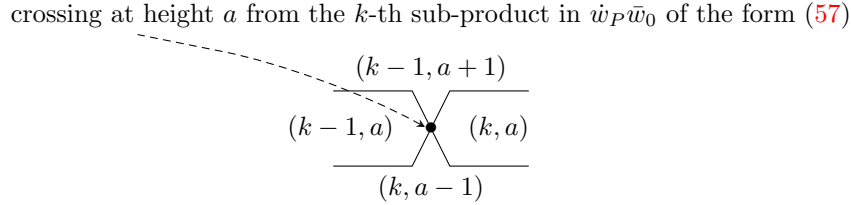

\begin{rem}
In the $G/B$ case, the crossing in Figure \ref{fig chamber pair labels wPw0 ansatz arrangement} is exactly the $(s_k+a)$-th crossing in the ansatz arrangement for $\bar{w}_0$ given by the reduced expression $\mathbf{i}_0$. Moreover, both the new chamber pairs $(k,a)$ and the original labels they correspond to, are the same as their respective labels in the full flag case, defined in the proof of Lemma \ref{lem coord change for b m_i in terms of p_i}.
\end{rem}

\begin{ex} \label{ex chamber labels k,a pairs}
In Figure \ref{fig k,a labels ansatz arrangement wPw0 F2,5,6,C8} we give a new labelling of the ansatz arrangement for $\dot{w}_P\bar{w}_0$ in our running example of $\mathcal{F}_{2,5,6}(\mathbb{C}^8)$. For the pairs $(k,a)$, if $k=n_j$ for some $j$ then we have written $k$ in bold. Following the labelling given above, the chambers with two crossings directly above them have two labels.

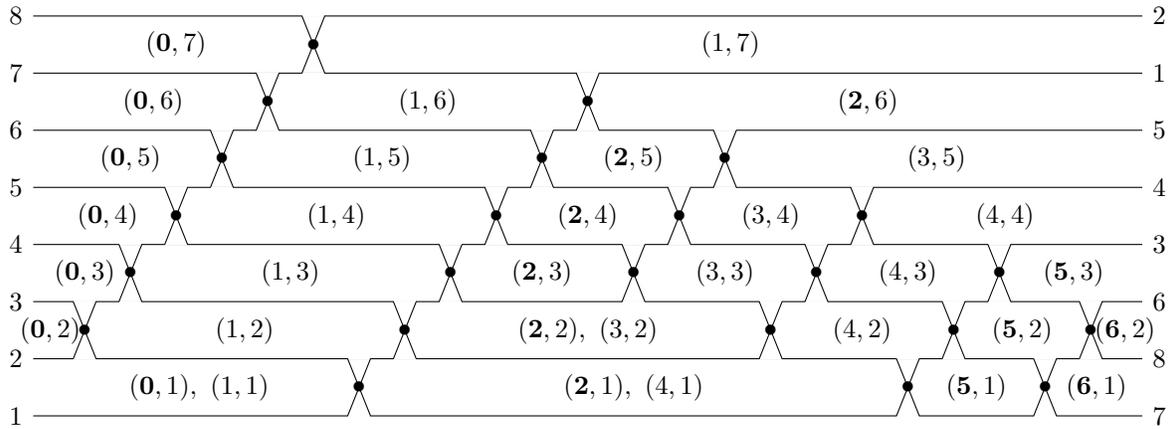
\begin{figure}[ht]
\centering
\begin{tikzpicture}[scale=0.76]
        \draw (-0.3,8) -- (19.1,8);
        \draw (-0.3,7) -- (19.1,7);
        \draw (-0.3,6) -- (19.1,6);
        \draw (-0.3,5) -- (19.1,5);
        \draw (-0.3,4) -- (19.1,4);
        \draw (-0.3,3) -- (19.1,3);
        \draw (-0.3,2) -- (19.1,2);
        \draw (-0.3,1) -- (19.1,1);

        \draw[thick, color=white] (0.4,2) -- (0.8,2);
        \draw[thick, color=white] (0.4,3) -- (0.8,3);
        \draw[line cap=round] (0.4,2) -- (0.8,3);
        \draw[line cap=round] (0.4,3) -- (0.8,2);

        \draw[thick, color=white] (1.2,3) -- (1.6,3);
        \draw[thick, color=white] (1.2,4) -- (1.6,4);
        \draw[line cap=round] (1.2,3) -- (1.6,4);
        \draw[line cap=round] (1.2,4) -- (1.6,3);

        \draw[thick, color=white] (2,4) -- (2.4,4);
        \draw[thick, color=white] (2,5) -- (2.4,5);
        \draw[line cap=round] (2,4) -- (2.4,5);
        \draw[line cap=round] (2,5) -- (2.4,4);

        \draw[thick, color=white] (2.8,5) -- (3.2,5);
        \draw[thick, color=white] (2.8,6) -- (3.2,6);
        \draw[line cap=round] (2.8,5) -- (3.2,6);
        \draw[line cap=round] (2.8,6) -- (3.2,5);

        \draw[thick, color=white] (3.6,6) -- (4,6);
        \draw[thick, color=white] (3.6,7) -- (4,7);
        \draw[line cap=round] (3.6,6) -- (4,7);
        \draw[line cap=round] (3.6,7) -- (4,6);

        \draw[thick, color=white] (4.4,7) -- (4.8,7);
        \draw[thick, color=white] (4.4,8) -- (4.8,8);
        \draw[line cap=round] (4.4,7) -- (4.8,8);
        \draw[line cap=round] (4.4,8) -- (4.8,7);

        \draw[thick, color=white] (5.2,1) -- (5.6,1);
        \draw[thick, color=white] (5.2,2) -- (5.6,2);
        \draw[line cap=round] (5.2,1) -- (5.6,2);
        \draw[line cap=round] (5.2,2) -- (5.6,1);

        \draw[thick, color=white] (6,2) -- (6.4,2);
        \draw[thick, color=white] (6,3) -- (6.4,3);
        \draw[line cap=round] (6,2) -- (6.4,3);
        \draw[line cap=round] (6,3) -- (6.4,2);

        \draw[thick, color=white] (6.8,3) -- (7.2,3);
        \draw[thick, color=white] (6.8,4) -- (7.2,4);
        \draw[line cap=round] (6.8,3) -- (7.2,4);
        \draw[line cap=round] (6.8,4) -- (7.2,3);

        \draw[thick, color=white] (7.6,4) -- (8,4);
        \draw[thick, color=white] (7.6,5) -- (8,5);
        \draw[line cap=round] (7.6,4) -- (8,5);
        \draw[line cap=round] (7.6,5) -- (8,4);

        \draw[thick, color=white] (8.4,5) -- (8.8,5);
        \draw[thick, color=white] (8.4,6) -- (8.8,6);
        \draw[line cap=round] (8.4,5) -- (8.8,6);
        \draw[line cap=round] (8.4,6) -- (8.8,5);

        \draw[thick, color=white] (9.2,6) -- (9.6,6);
        \draw[thick, color=white] (9.2,7) -- (9.6,7);
        \draw[line cap=round] (9.2,6) -- (9.6,7);
        \draw[line cap=round] (9.2,7) -- (9.6,6);

        \draw[thick, color=white] (10,3) -- (10.4,3);
        \draw[thick, color=white] (10,4) -- (10.4,4);
        \draw[line cap=round] (10,3) -- (10.4,4);
        \draw[line cap=round] (10,4) -- (10.4,3);

        \draw[thick, color=white] (10.8,4) -- (11.2,4);
        \draw[thick, color=white] (10.8,5) -- (11.2,5);
        \draw[line cap=round] (10.8,4) -- (11.2,5);
        \draw[line cap=round] (10.8,5) -- (11.2,4);

        \draw[thick, color=white] (11.6,5) -- (12,5);
        \draw[thick, color=white] (11.6,6) -- (12,6);
        \draw[line cap=round] (11.6,5) -- (12,6);
        \draw[line cap=round] (11.6,6) -- (12,5);

        \draw[thick, color=white] (12.4,2) -- (12.8,2);
        \draw[thick, color=white] (12.4,3) -- (12.8,3);
        \draw[line cap=round] (12.4,2) -- (12.8,3);
        \draw[line cap=round] (12.4,3) -- (12.8,2);

        \draw[thick, color=white] (13.2,3) -- (13.6,3);
        \draw[thick, color=white] (13.2,4) -- (13.6,4);
        \draw[line cap=round] (13.2,3) -- (13.6,4);
        \draw[line cap=round] (13.2,4) -- (13.6,3);

        \draw[thick, color=white] (14,4) -- (14.4,4);
        \draw[thick, color=white] (14,5) -- (14.4,5);
        \draw[line cap=round] (14,4) -- (14.4,5);
        \draw[line cap=round] (14,5) -- (14.4,4);

        \draw[thick, color=white] (14.8,1) -- (15.2,1);
        \draw[thick, color=white] (14.8,2) -- (15.2,2);
        \draw[line cap=round] (14.8,1) -- (15.2,2);
        \draw[line cap=round] (14.8,2) -- (15.2,1);

        \draw[thick, color=white] (15.6,2) -- (16,2);
        \draw[thick, color=white] (15.6,3) -- (16,3);
        \draw[line cap=round] (15.6,2) -- (16,3);
        \draw[line cap=round] (15.6,3) -- (16,2);

        \draw[thick, color=white] (16.4,3) -- (16.8,3);
        \draw[thick, color=white] (16.4,4) -- (16.8,4);
        \draw[line cap=round] (16.4,3) -- (16.8,4);
        \draw[line cap=round] (16.4,4) -- (16.8,3);

        \draw[thick, color=white] (17.2,1) -- (17.6,1);
        \draw[thick, color=white] (17.2,2) -- (17.6,2);
        \draw[line cap=round] (17.2,1) -- (17.6,2);
        \draw[line cap=round] (17.2,2) -- (17.6,1);

        \draw[thick, color=white] (18,2) -- (18.4,2);
        \draw[thick, color=white] (18,3) -- (18.4,3);
        \draw[line cap=round] (18,2) -- (18.4,3);
        \draw[line cap=round] (18,3) -- (18.4,2);

        \node at (0.6,2.5) {$\bullet$};
        \node at (1.4,3.5) {$\bullet$};
        \node at (2.2,4.5) {$\bullet$};
        \node at (3,5.5) {$\bullet$};
        \node at (3.8,6.5) {$\bullet$};
        \node at (4.6,7.5) {$\bullet$};

        \node at (5.4,1.5) {$\bullet$};
        \node at (6.2,2.5) {$\bullet$};
        \node at (7,3.5) {$\bullet$};
        \node at (7.8,4.5) {$\bullet$};
        \node at (8.6,5.5) {$\bullet$};
        \node at (9.4,6.5) {$\bullet$};

        \node at (10.2,3.5) {$\bullet$};
        \node at (11,4.5) {$\bullet$};
        \node at (11.8,5.5) {$\bullet$};

        \node
            at (12.6,2.5) {$\bullet$};
        \node at (13.4,3.5) {$\bullet$};
        \node at (14.2,4.5) {$\bullet$};

        \node
            at (15,1.5) {$\bullet$};
        \node
            at (15.8,2.5) {$\bullet$};
        \node at (16.6,3.5) {$\bullet$};

        \node
            at (17.4,1.5) {$\bullet$};
        \node
            at (18.2,2.5) {$\bullet$};

        \node at (-0.6,8) {$8$};
        \node at (-0.6,7) {$7$};
        \node at (-0.6,6) {$6$};
        \node at (-0.6,5) {$5$};
        \node at (-0.6,4) {$4$};
        \node at (-0.6,3) {$3$};
        \node at (-0.6,2) {$2$};
        \node at (-0.6,1) {$1$};

        \node at (19.4,8) {$2$};
        \node at (19.4,7) {$1$};
        \node at (19.4,6) {$5$};
        \node at (19.4,5) {$4$};
        \node at (19.4,4) {$3$};
        \node at (19.4,3) {$6$};
        \node at (19.4,2) {$8$};
        \node at (19.4,1) {$7$};

        \node at (2.2,7.5) {$(\boldsymbol{0},7)$};
        \node at (11.9,7.5) {$(1,7)$};

        \node at (1.8,6.5) {$(\boldsymbol{0},6)$};
        \node at (6.6,6.5) {$(1,6)$};
        \node at (14.3,6.5) {$(\boldsymbol{2},6)$};

        \node at (1.4,5.5) {$(\boldsymbol{0},5)$};
        \node at (5.8,5.5) {$(1,5)$};
        \node at (10.2,5.5) {$(\boldsymbol{2},5)$};
        \node at (15.5,5.5) {$(3,5)$};

        \node at (1,4.5) {$(\boldsymbol{0},4)$};
        \node at (5,4.5) {$(1,4)$};
        \node at (9.4,4.5) {$(\boldsymbol{2},4)$};
        \node at (12.6,4.5) {$(3,4)$};
        \node at (16.7,4.5) {$(4,4)$};

        \node at (0.6,3.5) {$(\boldsymbol{0},3)$};
        \node at (4.2,3.5) {$(1,3)$};
        \node at (8.6,3.5) {$(\boldsymbol{2},3)$};
        \node at (11.8,3.5) {$(3,3)$};
        \node at (15,3.5) {$(4,3)$};
        \node at (17.9,3.5) {$(\boldsymbol{5},3)$};

        \node at (0,2.5) {$(\boldsymbol{0},2)$};
        \node at (3.4,2.5) {$(1,2)$};
        \node at (9.4,2.5) {$(\boldsymbol{2},2), \ (3,2)$};
        \node at (14.2,2.5) {$(4,2)$};
        \node at (17,2.5) {$(\boldsymbol{5},2)$};
        \node at (18.8,2.5) {$(\boldsymbol{6},2)$};

        \node at (2.6,1.5) {$(\boldsymbol{0},1), \ (1,1)$};
        \node at (10.2,1.5) {$(\boldsymbol{2},1), \ (4,1)$};
        \node at (16.2,1.5) {$(\boldsymbol{5},1)$};
        \node at (18.3,1.5) {$(\boldsymbol{6},1)$};
\end{tikzpicture}
\caption{Alternate labelling of ansatz arrangement for $\dot{w}_P\bar{w}_0$ in the example of $\mathcal{F}_{2,5,6}(\mathbb{C}^8)$} \label{fig k,a labels ansatz arrangement wPw0 F2,5,6,C8}
\end{figure}
\end{ex}

As mentioned in Example \ref{ex chamber labels k,a pairs}, the chambers with two crossings directly above them are labelled by two pairs $(k,a)$. By the proof of Lemma \ref{lem G/P chamber label form} (see Claim \ref{claim 1 ansatz arr for wPw0}), the maximum number of $(k,a)$ pairs a chamber can have is two, and the first of these has $k=n_{j-1}$ for some $j \in \{1, \ldots, l+1\}$. Moreover, if the height of the chamber is $a$, then the second of the two pairs has $k=n_j-a$. This follows from the fact that the second crossing directly above this chamber is, for some $t$, the $(k_j-t+1)$-th string (counting from the bottom) crossing the string above it. Thus the crossing is at height $a+1=k_j-t+1$ and is part of the set of crossings from the $(n_{j-1}+t)$-th sub-product of the form (\ref{eqn subproduct 1 of si for ansatz arr proof}) in the description (\ref{eqn wPw0 increasing si}) for $\dot{w}_P\bar{w}_0$. 

Let $j \in \{1, \ldots, l+1\}$ and suppose a chamber is labelled by the two pairs $(n_{j-1},a)$ and $(n_j-a,a)$. We will show that these two pairs are equivalent, namely they both correspond to the same original chamber label as given in Lemma \ref{lem G/P chamber label form}, or equivalently in (\ref{eqn orginial chamber label in terms of k,a}).

Using the chamber label description (\ref{eqn orginial chamber label in terms of k,a}) given in terms of $(k,a)$ pairs, the chamber pair $(n_{j-1},a)$ corresponds to the original label
    $$\{ n_{j-1}+1, \ldots, n_j \} \cup \{ n_j+1, \ldots, n_{j-1}+a \}=\{ n_{j-1}+1, \ldots, n_{j-1}+a \}.
    $$
Similarly the chamber pair $(n_j-a,a)$ corresponds to the original label
    $$\{ n_{j-1}+1, \ldots, n_{j-1}+a \} \cup \{ n_j+1, \ldots, n_j \}=\{ n_{j-1}+1, \ldots, n_{j-1}+a \}
    $$
as required, since the second set on the left hand side is empty. 


\subsection{The coordinate change} \label{subsec G/P The coordinate change}
\fancyhead[L]{9.4 \ \ The coordinate change}

We now ready to prove Theorem \ref{thm b- in terms of y_i}, the statement of which we recall here:
\begin{thm*}
We can factorise $b_P$ as $b_P = \left[b_P\right]_- \left[b_P\right]_0$ where
    $$\left[b_P\right]_- = \prod_{k=1}^{n-1} \prod_{\substack{a=1 \\ v_{(k,a)} \in \mathcal{V}_P^{\bullet}}}^{n-k} \mathbf{y}^{\vee}_a\left( \frac{1}{m_{s_k+a}} \right) 
    .$$
\end{thm*}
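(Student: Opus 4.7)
The plan is to bootstrap on the framework already assembled in Sections \ref{subsec G/P Applying the Chamber Ansatz}--\ref{subsec G/P Chamber Ansatz minors}, matching the structure of the $G/B$ proof of Theorem \ref{thm coord change}. By Lemma \ref{lem b0 is gamma wt matrix} we already know that $[b_P]_0 = \gamma_P(\boldsymbol{x}_{\mathcal{V}_P})$, so it remains to identify $[b_P]_-$. Lemma \ref{lem b- in terms of y_i} (obtained from the generalised Chamber Ansatz applied to $b_P B_+^\vee = u_L \dot{w}_P \bar{w}_0 B_+^\vee$) gives
\[
[b_P]_- \;=\; \prod_{k=1}^{n-1} \prod_{\substack{a=1 \\ v_{(k,a)}\in \mathcal{V}_P^\bullet}}^{n-k} \mathbf{y}^\vee_a(h_{(k,a)}),
\]
where, after relabelling the Chamber Ansatz index $r$ by the chamber pair $(k,a)$ from Section \ref{subsec G/P Chamber Ansatz minors}, the coefficient $h_{(k,a)}$ is a product of chamber minors of $u_L$. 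Thus the whole problem reduces to showing $h_{(k,a)} = 1/m_{s_k+a}$.

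First I would reduce the Chamber Ansatz formula to a clean quotient of four flag minors of $u_L$: those corresponding to the chambers immediately surrounding the relevant crossing, labelled in Figure \ref{fig chamber pair labels wPw0 ansatz arrangement} by $(k-1,a+1)$, $(k-1,a)$, $(k,a-1)$ and $(k,a)$. Exactly as in the $G/B$ case (Lemma \ref{lem coord change for b m_i in terms of p_i}), the weights $-a_{j,i_r}$ coming from the Cartan matrix collapse the product into the shape
\[
h_{(k,a)} \;=\; \frac{\Delta_{(k-1,a+1)}(u_L)\,\Delta_{(k,a-1)}(u_L)}{\Delta_{(k-1,a)}(u_L)\,\Delta_{(k,a)}(u_L)},
\]
where $\Delta_{(k',a')}(u_L)$ denotes the flag minor associated to the chamber label (\ref{eqn orginial chamber label in terms of k,a}). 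The minors for leftmost chambers and for chambers above the arrangement equal $1$ since $u_L \in U_+^\vee$.

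Next I would compute each $\Delta_{(k',a')}(u_L)$ using the graph of Section \ref{subsec G/P Chamber Ansatz minors}. By Lemma \ref{lem minors in CA are monomial} and its proof, for each source $i$ the unique relevant path goes horizontally through each column block and rises at the first available opportunity, passing through dotted rectangles either entirely horizontally or from bottom-left to top-right. The weight of each traversal-through-a-rectangle is the monomial (\ref{eqn minor re dotted rectangle i,alpha}), i.e.\ a product of vertical arrow coordinates $r_{a_{(k',j)}}$ along the minimal $1$-path. Using the relation $r_a = x_{h(a)}/x_{t(a)}$ and collecting contributions column by column, $\Delta_{(k',a')}(u_L)$ is a monomial in the $r_{a_{(\cdot,\cdot)}}$ which telescopes to a simple product of star and dot vertex coordinates; the quotient defining $h_{(k,a)}$ then collapses (entirely analogously to the telescoping calculation in the proof of Lemma \ref{lem coord change for b m_i in terms of p_i}) to a single reciprocal monomial in the vertical arrow coordinates around the vertex $v_{(k,a)}$.

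The last step is to recognise this reciprocal monomial as $1/m_{s_k+a}$. Here the quiver decoration of Section \ref{subsec G/P Quiver decoration} does the work: the recursion (\ref{eqn coord of vert arrow leaving vertex k,a}) together with (\ref{eqn coord of horiz arrow leaving vertex k,a}) was designed precisely so that telescoping products of vertical arrow coordinates around $v_{(k,a)}$ reproduce $m_{s_k+a}$. The chambers straddling a square $L_j$ --- those whose pair $(k,a)$ satisfies $k=n_{j-1}$ or $k+a=n_j$ --- require extra care, because the paths now must pass through one of the dotted rectangles. This is where I expect the main obstacle to lie: I would handle these boundary cases by invoking the double labelling of chambers observed at the end of Section \ref{subsec G/P Chamber Ansatz minors} (a chamber labelled by both $(n_{j-1},a)$ and $(n_j-a,a)$ gives the same minor) and by appealing directly to Definition \ref{defn decoration star vert and horiz arr}, which defines the horizontal arrow $r_{b_{(k,a)}}$ at exactly such boundary vertices in terms of $m_{s_k+a}$. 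Once these cases are verified the identification $h_{(k,a)} = 1/m_{s_k+a}$ is complete for all $v_{(k,a)} \in \mathcal{V}_P^\bullet$, and combining with Lemma \ref{lem b0 is gamma wt matrix} gives the theorem.
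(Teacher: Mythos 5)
Your proposal is correct and follows essentially the same approach as the paper's proof: reduce to the Chamber Ansatz quotient of four chamber minors (equivalently, a quotient of weights of two paths in the graph for $u_L$), use the monomiality established in Lemma~\ref{lem minors in CA are monomial} and the telescoping built into the quiver decoration (\ref{eqn coord of vert arrow leaving vertex k,a}), then handle the boundary chambers straddling a square $L_j$ separately. The only place your sketch stays thinner than the paper is in the boundary case: the paper makes this precise with a claim that one is never required to stop partway through a dotted rectangle (justified by the very double-labelling of chambers you cite), the explicit identity (\ref{eqn step and dotted rectangle contributions to t(k,a)}) for the weight quotient across a rectangle, and an induction on the column blocks $k_j$.
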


\begin{proof}[Proof (of Theorem {\ref{thm b- in terms of y_i}})]
By Lemma \ref{lem b- in terms of y_i} we know that $\left[b_P\right]_-$ may be factored into a product of $\mathbf{y}^{\vee}_a$'s with the desired sequence of subscripts. It remains to show that the arguments of these terms, given by the Chamber Ansatz in Lemma \ref{lem b- in terms of y_i} as
    $$h_r = \frac{\prod_{j\neq i_k} \Delta^{\omega^{\vee}_j}_{w_{(r)}\omega^{\vee}_j}(u_L)^{-a_{j,i_r}}}{\Delta^{\omega^{\vee}_{i_r}}_{w_{(r)}\omega^{\vee}_{i_r}}(u_L) \Delta^{\omega^{\vee}_{i_r}}_{w_{(r-1)}\omega^{\vee}_{i_r}}(u_L)}, \quad r=1, \ldots, R,
    $$
are exactly the respective inverted coordinates $1/m_{s_k+a}$. Since this expression is quite unpleasant, we will instead work diagrammatically, using the ansatz arrangement for $\dot{w}\bar{w}_0$ and the graph for $u_L$ to evaluate the necessary quotients of minors.

We recall from Section \ref{subsec The Chamber Ansatz} that if $A_k$, $B_k$, $C_k$ and $D_k$ are the minors corresponding to the chambers surrounding the $k$-th singular point in an ansatz arrangement, with $A_k$ and $D_k$ above and below it and $B_k$ and $C_k$ to the left and right, then the Chamber Ansatz gives
    $$
\begin{tikzpicture}[baseline=0.43cm]
    \node at (1,0.5) {$\bullet$};

    \draw (0,1) -- (0.75,1) -- (1.25,0) -- (2,0);
    \draw (0,0) -- (0.75,0) -- (1.25,1) -- (2,1);
    \node at (1,-0.25) {\scriptsize{$D_k$}};
    \node at (0.3,0.5) {\scriptsize{$B_k$}};
    \node at (1.7,0.5) {\scriptsize{$C_k$}};
    \node at (1,1.25) {\scriptsize{$A_k$}};
\end{tikzpicture}
    \qquad \quad
    t_k = \frac{A_k D_k}{B_k C_k}.
    $$

We will write $t_{(k,a)}$ for the coordinate given by the Chamber Ansatz which corresponds to the crossing in Figure \ref{fig chamber pair labels wPw0 ansatz arrangement}, that is, the crossing at height $a$ from the $k$-th sub-product in $\dot{w}_P\bar{w}_0$ of the form (\ref{eqn subproduct 1 of si for ansatz arr proof}). The chamber label to the right of this crossing is given either by (\ref{eqn orginial chamber label in terms of k,a}) or by the chamber pair $(k,a)$, where $n_{j-1}\leq k \leq n_j$ for some $j$. Thus to simplify notation we will write $\Delta^{\{1,\ldots, a\}}_{(k,a)}(u_L)$ in place of
    $$ \Delta^{\{1,\ldots, a\}}_{\{ n_{j-1}+1, \ldots, n_{j-1}+n_j-k \} \cup \{ n_j+1, \ldots, k+a \}}(u_L).
    $$
With this notation, we use Figure \ref{fig chamber pair labels wPw0 ansatz arrangement} to obtain the coordinate $t_{(k,a)}$ given by the Chamber Ansatz:
    $$t_{(k,a)} = \frac{\Delta^{\{1,\ldots, a+1\}}_{(k-1,a+1)}(u_L) \Delta^{\{1,\ldots, a-1\}}_{(k,a-1)}(u_L)}{\Delta^{\{1,\ldots, a\}}_{(k-1,a)}(u_L) \Delta^{\{1,\ldots, a\}}_{(k,a)}(u_L)}.
    $$

Now viewing minors in terms of paths in the respective graphs, we see by the proof of Lemma \ref{lem minors in CA are monomial} that in the quotient of minors
    $$\frac{\Delta^{\{1,\ldots, a\}}_{(k,a)}(u_L)}{\Delta^{\{1,\ldots, a-1\}}_{(k,a-1)}(u_L)}
    $$
the contributions from most paths in the graph for $u_L$ cancel. Indeed this minor is equal to weight of a single path, namely the path which begins at source $a$ and travels upwards at every opportunity until it reaches the horizontal line for the sink $k+a$. Thus
    \begin{equation} \label{eqn t(k,a) in terms of two paths}
    t_{(k,a)} = \frac{\left(\begin{gathered}\text{weight of the path which begins at source $a+1$ and travels upwards} \\ \text{at every opportunity until it reaches the horizontal line for the sink $k+a$}\end{gathered}\right)}{\left(\begin{gathered}\text{weight of the path which begins at source $a$ and travels upwards} \\ \text{at every opportunity until it reaches the horizontal line for the sink $k+a$}\end{gathered}\right)}.
    \end{equation}
It remains to prove that $t_{(k,a)}=\frac{1}{m_{s_k+a}}$, which we will split into two cases. The first case will be when the two paths in (\ref{eqn t(k,a) in terms of two paths}) defining $t_{(k,a)}$ either pass through no dotted rectangles or through dotted rectangles where $\alpha=1$ (that is, spanning exactly two lines, with a single diagonal step). The second case will be when at least one of the paths passes through a dotted rectangle which has $\alpha\geq2$.
Of note, since the crossing in the ansatz arrangement giving the coordinate $t_{(k,a)}$ is directly to the left of the chamber labelled by the chamber pair $(k,a)$, we see that we do not need to consider $k=0$.

Before treating these two cases we recall that the weights of the diagonal line segments in the graph are given by arrow coordinates from the quiver $Q_P$. In particular, by construction of the graph we see that for a step in the $k$-th column between lines $b$, $b+1$ which is not contained in a dotted rectangle, the weight of this step is given by the arrow coordinate $r_{a_{(k,b-k)}}$. For the steps within a dotted square, we only need to consider the label of the first step between each pair of consecutive horizontal lines, by the claim in the proof of Lemma \ref{lem minors in CA are monomial}. In particular, in the dotted square corresponding to some $X_{n_j,\alpha}$ factor (that is, spanning the horizontal lines $n_j+1-\alpha, \ldots, n_j+1$) we see that the first step between lines $b$ and $b+1$ has weight
    $$\begin{cases}
    r_{a_{(n_{j-1}+1, k_j-1)}} & \text{if } b=n_j+1-\alpha \text{ (that is, the first step at the lowest height)} \\
    r_{b_{(n_{j-1}+1+c, k_j-c)}} & \text{for } b=n_j+1-\alpha+c \text{ where } c=1,\ldots, \alpha-1.
    \end{cases}
    $$
For example, see Figure \ref{fig graph for computing minors of uL for F2,5,6,C8} for this labelling in our running example.

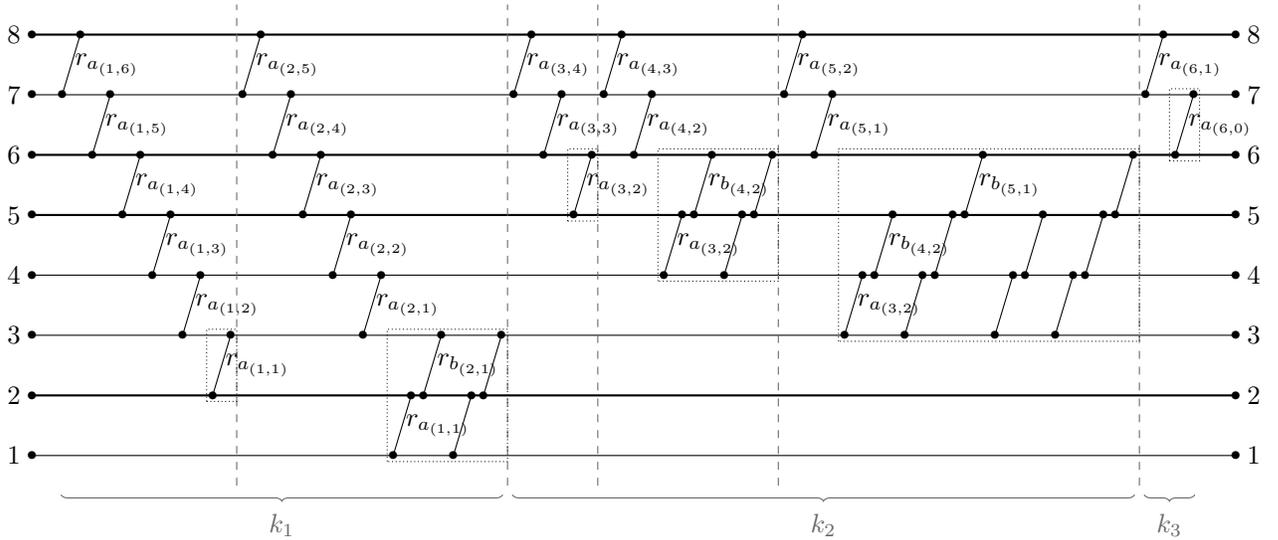
\begin{figure}[ht]
\centering
\begin{tikzpicture}[scale=0.8]
        \draw[thick] (0,8) -- (20,8);
        \draw (0,7) -- (20,7);
        \draw[thick] (0,6) -- (20,6);
        \draw[thick] (0,5) -- (20,5);
        \draw (0,4) -- (20,4);
        \draw (0,3) -- (20,3);
        \draw[thick] (0,2) -- (20,2);
        \draw (0,1) -- (20,1);
            \draw (0.5,7) -- (0.8,8);
            \draw (1,6) -- (1.3,7);
            \draw (1.5,5) -- (1.8,6);
            \draw (2,4) -- (2.3,5);
            \draw (2.5,3) -- (2.8,4);
            \draw (3,2) -- (3.3,3);
        \draw[dashed, color=black!60] (3.4,0.5) -- (3.4,8.5);
        \draw[densely dotted, color=black!90] (2.9,1.9) -- (2.9,3.1) -- (3.4,3.1) -- (3.4,1.9) -- cycle;
            \draw (3.5,7) -- (3.8,8);
            \draw (4,6) -- (4.3,7);
            \draw (4.5,5) -- (4.8,6);
            \draw (5,4) -- (5.3,5);
            \draw (5.5,3) -- (5.8,4);

            \draw (6,1) -- (6.3,2);
            \draw (6.5,2) -- (6.8,3);
            \draw (7,1) -- (7.3,2);
            \draw (7.5,2) -- (7.8,3);
        \draw[dashed, color=black!60] (7.9,0.5) -- (7.9,8.5);
        \draw[densely dotted, color=black!90] (5.9,0.9) -- (5.9,3.1) -- (7.9,3.1) -- (7.9,0.9) -- cycle;
            \draw (8,7) -- (8.3,8);
            \draw (8.5,6) -- (8.8,7);
            \draw (9,5) -- (9.3,6);
        \draw[dashed, color=black!60] (9.4,0.5) -- (9.4,8.5);
        \draw[densely dotted, color=black!90] (8.9,4.9) -- (8.9,6.1) -- (9.4,6.1) -- (9.4,4.9) -- cycle;
            \draw (9.5,7) -- (9.8,8);
            \draw (10,6) -- (10.3,7);

            \draw (10.5,4) -- (10.8,5);
            \draw (11,5) -- (11.3,6);
            \draw (11.5,4) -- (11.8,5);
            \draw (12,5) -- (12.3,6);
        \draw[dashed, color=black!60] (12.4,0.5) -- (12.4,8.5);
        \draw[densely dotted, color=black!90] (10.4,3.9) -- (10.4,6.1) -- (12.4,6.1) -- (12.4,3.9) -- cycle;
            \draw (12.5,7) -- (12.8,8);
            \draw (13,6) -- (13.3,7);

            \draw (13.5,3) -- (13.8,4);
            \draw (14,4) -- (14.3,5);
            \draw (14.5,3) -- (14.8,4);
            \draw (15,4) -- (15.3,5);
            \draw (15.5,5) -- (15.8,6);
            \draw (16,3) -- (16.3,4);
            \draw (16.5,4) -- (16.8,5);
            \draw (17,3) -- (17.3,4);
            \draw (17.5,4) -- (17.8,5);
            \draw (18,5) -- (18.3,6);
        \draw[dashed, color=black!60] (18.4,0.5) -- (18.4,8.5);
        \draw[densely dotted, color=black!90] (13.4,2.9) -- (13.4,6.1) -- (18.4,6.1) -- (18.4,2.9) -- cycle;
            \draw (18.5,7) -- (18.8,8);
            \draw (19,6) -- (19.3,7);
        \draw[densely dotted, color=black!90] (18.9,5.9) -- (18.9,7.1) -- (19.4,7.1) -- (19.4,5.9) -- cycle;

        \draw[decorate, decoration={brace, mirror}, color=black!60] (0.48,0.35) -- (7.82,0.35) node[midway, yshift=-0.4cm]{$k_1$};
        \draw[decorate, decoration={brace, mirror}, color=black!60] (7.98,0.35) -- (18.32,0.35) node[midway, yshift=-0.4cm]{$k_2$};
        \draw[decorate, decoration={brace, mirror}, color=black!60] (18.48,0.35) -- (19.32,0.35) node[midway, yshift=-0.4cm]{$k_3$};

        \node at (0.5,7) {\scriptsize{$\bullet$}};
        \node at (1,6) {\scriptsize{$\bullet$}};
        \node at (1.5,5) {\scriptsize{$\bullet$}};
        \node at (2,4) {\scriptsize{$\bullet$}};
        \node at (2.5,3) {\scriptsize{$\bullet$}};
        \node at (3,2) {\scriptsize{$\bullet$}};

        \node at (0.8,8) {\scriptsize{$\bullet$}};
        \node at (1.3,7) {\scriptsize{$\bullet$}};
        \node at (1.8,6) {\scriptsize{$\bullet$}};
        \node at (2.3,5) {\scriptsize{$\bullet$}};
        \node at (2.8,4) {\scriptsize{$\bullet$}};
        \node at (3.3,3) {\scriptsize{$\bullet$}};
        \node at (3.5,7) {\scriptsize{$\bullet$}};
        \node at (4,6) {\scriptsize{$\bullet$}};
        \node at (4.5,5) {\scriptsize{$\bullet$}};
        \node at (5,4) {\scriptsize{$\bullet$}};
        \node at (5.5,3) {\scriptsize{$\bullet$}};
        \node at (6,1) {\scriptsize{$\bullet$}};
        \node at (6.5,2) {\scriptsize{$\bullet$}};
        \node at (7,1) {\scriptsize{$\bullet$}};
        \node at (7.5,2) {\scriptsize{$\bullet$}};

        \node at (3.8,8) {\scriptsize{$\bullet$}};
        \node at (4.3,7) {\scriptsize{$\bullet$}};
        \node at (4.8,6) {\scriptsize{$\bullet$}};
        \node at (5.3,5) {\scriptsize{$\bullet$}};
        \node at (5.8,4) {\scriptsize{$\bullet$}};
        \node at (6.3,2) {\scriptsize{$\bullet$}};
        \node at (6.8,3) {\scriptsize{$\bullet$}};
        \node at (7.3,2) {\scriptsize{$\bullet$}};
        \node at (7.8,3) {\scriptsize{$\bullet$}};
        \node at (8,7) {\scriptsize{$\bullet$}};
        \node at (8.5,6) {\scriptsize{$\bullet$}};
        \node at (9,5) {\scriptsize{$\bullet$}};

        \node at (8.3,8) {\scriptsize{$\bullet$}};
        \node at (8.8,7) {\scriptsize{$\bullet$}};
        \node at (9.3,6) {\scriptsize{$\bullet$}};
        \node at (9.5,7) {\scriptsize{$\bullet$}};
        \node at (10,6) {\scriptsize{$\bullet$}};
        \node at (10.5,4) {\scriptsize{$\bullet$}};
        \node at (11,5) {\scriptsize{$\bullet$}};
        \node at (11.5,4) {\scriptsize{$\bullet$}};
        \node at (12,5) {\scriptsize{$\bullet$}};

        \node at (9.8,8) {\scriptsize{$\bullet$}};
        \node at (10.3,7) {\scriptsize{$\bullet$}};
        \node at (10.8,5) {\scriptsize{$\bullet$}};
        \node at (11.3,6) {\scriptsize{$\bullet$}};
        \node at (11.8,5) {\scriptsize{$\bullet$}};
        \node at (12.3,6) {\scriptsize{$\bullet$}};
        \node at (12.5,7) {\scriptsize{$\bullet$}};
        \node at (13,6) {\scriptsize{$\bullet$}};
        \node at (13.5,3) {\scriptsize{$\bullet$}};
        \node at (14,4) {\scriptsize{$\bullet$}};
        \node at (14.5,3) {\scriptsize{$\bullet$}};
        \node at (15,4) {\scriptsize{$\bullet$}};
        \node at (15.5,5) {\scriptsize{$\bullet$}};
        \node at (16,3) {\scriptsize{$\bullet$}};
        \node at (16.5,4) {\scriptsize{$\bullet$}};
        \node at (17,3) {\scriptsize{$\bullet$}};
        \node at (17.5,4) {\scriptsize{$\bullet$}};
        \node at (18,5) {\scriptsize{$\bullet$}};

        \node at (12.8,8) {\scriptsize{$\bullet$}};
        \node at (13.3,7) {\scriptsize{$\bullet$}};
        \node at (13.8,4) {\scriptsize{$\bullet$}};
        \node at (14.3,5) {\scriptsize{$\bullet$}};
        \node at (14.8,4) {\scriptsize{$\bullet$}};
        \node at (15.3,5) {\scriptsize{$\bullet$}};
        \node at (15.8,6) {\scriptsize{$\bullet$}};
        \node at (16.3,4) {\scriptsize{$\bullet$}};
        \node at (16.8,5) {\scriptsize{$\bullet$}};
        \node at (17.3,4) {\scriptsize{$\bullet$}};
        \node at (17.8,5) {\scriptsize{$\bullet$}};
        \node at (18.3,6) {\scriptsize{$\bullet$}};
        \node at (18.5,7) {\scriptsize{$\bullet$}};
        \node at (19,6) {\scriptsize{$\bullet$}};

        \node at (18.8,8) {\scriptsize{$\bullet$}};
        \node at (19.3,7) {\scriptsize{$\bullet$}};

        \node at (0,1) {\scriptsize{$\bullet$}};
        \node at (0,2) {\scriptsize{$\bullet$}};
        \node at (0,3) {\scriptsize{$\bullet$}};
        \node at (0,4) {\scriptsize{$\bullet$}};
        \node at (0,5) {\scriptsize{$\bullet$}};
        \node at (0,6) {\scriptsize{$\bullet$}};
        \node at (0,7) {\scriptsize{$\bullet$}};
        \node at (0,8) {\scriptsize{$\bullet$}};

        \node at (20,1) {\scriptsize{$\bullet$}};
        \node at (20,2) {\scriptsize{$\bullet$}};
        \node at (20,3) {\scriptsize{$\bullet$}};
        \node at (20,4) {\scriptsize{$\bullet$}};
        \node at (20,5) {\scriptsize{$\bullet$}};
        \node at (20,6) {\scriptsize{$\bullet$}};
        \node at (20,7) {\scriptsize{$\bullet$}};
        \node at (20,8) {\scriptsize{$\bullet$}};

        \node at (-0.3,8) {$8$};
        \node at (-0.3,7) {$7$};
        \node at (-0.3,6) {$6$};
        \node at (-0.3,5) {$5$};
        \node at (-0.3,4) {$4$};
        \node at (-0.3,3) {$3$};
        \node at (-0.3,2) {$2$};
        \node at (-0.3,1) {$1$};

        \node at (20.3,8) {$8$};
        \node at (20.3,7) {$7$};
        \node at (20.3,6) {$6$};
        \node at (20.3,5) {$5$};
        \node at (20.3,4) {$4$};
        \node at (20.3,3) {$3$};
        \node at (20.3,2) {$2$};
        \node at (20.3,1) {$1$};

        \node at (1.25,7.5) {$r_{a_{(1,6)}}$}; 
        \node at (1.75,6.5) {$r_{a_{(1,5)}}$}; 
        \node at (2.25,5.5) {$r_{a_{(1,4)}}$}; 
        \node at (2.75,4.5) {$r_{a_{(1,3)}}$}; 
        \node at (3.25,3.5) {$r_{a_{(1,2)}}$}; 
        \node at (3.75,2.5) {$r_{a_{(1,1)}}$}; 
        \node at (4.25,7.5) {$r_{a_{(2,5)}}$}; 
        \node at (4.75,6.5) {$r_{a_{(2,4)}}$}; 
        \node at (5.25,5.5) {$r_{a_{(2,3)}}$}; 
        \node at (5.75,4.5) {$r_{a_{(2,2)}}$}; 
        \node at (6.25,3.5) {$r_{a_{(2,1)}}$}; 

        \node at (6.75,1.5) {$r_{a_{(1,1)}}$}; 
        \node at (7.25,2.5) {$r_{b_{(2,1)}}$}; 
        \node at (8.75,7.5) {$r_{a_{(3,4)}}$}; 
        \node at (9.25,6.5) {$r_{a_{(3,3)}}$}; 
        \node at (9.75,5.5) {$r_{a_{(3,2)}}$}; 
        \node at (10.25,7.5) {$r_{a_{(4,3)}}$}; 
        \node at (10.75,6.5) {$r_{a_{(4,2)}}$}; 

        \node at (11.25,4.5) {$r_{a_{(3,2)}}$}; 
        \node at (11.75,5.5) {$r_{b_{(4,2)}}$}; 
        \node at (13.25,7.5) {$r_{a_{(5,2)}}$}; 
        \node at (13.75,6.5) {$r_{a_{(5,1)}}$}; 

        \node at (14.25,3.5) {$r_{a_{(3,2)}}$}; 
        \node at (14.75,4.5) {$r_{b_{(4,2)}}$}; 
        \node at (16.25,5.5) {$r_{b_{(5,1)}}$}; 
        \node at (19.25,7.5) {$r_{a_{(6,1)}}$}; 
        \node at (19.75,6.5) {$r_{a_{(6,0)}}$}; 
\end{tikzpicture}
\caption{Graph for computing minors of $u_L$ for $\mathcal{F}_{2,5,6}(\mathbb{C}^8)$} \label{fig graph for computing minors of uL for F2,5,6,C8}
\end{figure}

We also recall the quiver decoration for reference. The coordinates of the vertical arrows are given in (\ref{eqn coord of vert arrow leaving vertex k,a}) by
    \begin{equation*}
    r_{a_{(k,a-1)}}=\begin{cases}
        m_a & \text{if } k=1 \\
        r_{a_{(k-1,a)}} \frac{m_{s_{k}+a}}{m_{s_{k-1}+a}} & \text{if } k\geq 2.
        \end{cases}
    \end{equation*}
In particular we have the following relation:
    \begin{equation} \label{eqn vert arrow coord relation k,a and k+1,a-1}
    \frac{r_{a_{(k,a)}}}{r_{a_{(k+1,a-1)}}} = \frac{r_{a_{(k,a)}}}{r_{a_{(k,a)}} \left(\frac{m_{s_{k+1}+a}}{m_{s_k+a}}\right)}
        = \frac{m_{s_k+a}}{m_{s_{k+1}+a}}.
    \end{equation}
The coordinates of the horizontal arrows $r_{b_{(k,a)}}$, where $k\in \{ n_{r-1}+1, \ldots, n_r \}$ and $a=n_r-k+1$, are given in (\ref{eqn coord of horiz arrow leaving vertex k,a}) by
    \begin{equation*}
        r_{b_{(k,a)}} =\begin{cases}
        m_{s_k+a} & \text{if } r=1 \\
        r_{a_{(n_{r-1},a)}}\frac{m_{s_k+a}}{m_{s_{n_{r-1}}+a}} & \text{if } r\geq 2.
        \end{cases}
    \end{equation*}

We are now ready to prove that $t_{(k,a)}=\frac{1}{m_{s_k+a}}$. In the first case, when the two paths in (\ref{eqn t(k,a) in terms of two paths}) defining $t_{(k,a)}$ either pass through no dotted rectangles or through dotted rectangles where $\alpha=1$, we have
    $$\begin{aligned}
    t_{(k,a)} &= \frac{\prod_{i=1}^{k-1}r_{a_{(i,a)}}}{\prod_{i=1}^{k}r_{a_{(i,a-1)}}} & \text{by 
            the graph labelling}\\
        &= \frac{1}{r_{a_{(1,a-1)}}}\prod_{i=1}^{k-1}\frac{r_{a_{(i,a)}}}{r_{a_{(i+1,a-1)}}} \\
        &= \frac{1}{m_{s_1+a}}\prod_{i=1}^{k-1}\frac{m_{s_i+a}}{m_{s_{i+1}+a}} & \text{by (\ref{eqn vert arrow coord relation k,a and k+1,a-1})} \\
        &= \frac{1}{m_{s_k+a}} & \text{since the product is telescopic.}
    \end{aligned}
    $$

The second case is when at least one of the paths in (\ref{eqn t(k,a) in terms of two paths}) defining $t_{(k,a)}$ passes through a dotted rectangle with $\alpha\geq2$. We note that if only one of these paths passes through a dotted rectangle with $\alpha\geq 2$, then it will be the path from source $a$, since both paths must end at the same sink $k+a$. We also see that our two paths may be considered in segments where they do or do not pass through dotted rectangles with $\alpha\geq2$.

If for some $j$ and $\alpha\geq 2$, the path from source $a$ passes through the dotted rectangle corresponding to $X^{\vee}_{n_j,\alpha}$ (that is, spanning the horizontal lines $n_j+1-\alpha, \ldots, n_j+1$), then the path from source $a+1$ necessarily passes through the dotted rectangle corresponding to $X^{\vee}_{n_j,\alpha-1}$. This follows from the proof of Lemma \ref{lem minors in CA are monomial}. In particular, the quotient of the contributions from the paths which travel upwards at every opportunity within these two rectangles is
    \begin{align}
    \frac{r_{a_{(n_{j-1}+1, k_j-1)}}\prod_{c=1}^{\alpha-2}r_{b_{(n_{j-1}+1+c, k_j-c)}}}{r_{a_{(n_{j-1}+1, k_j-1)}}\prod_{c=1}^{\alpha-1}r_{b_{(n_{j-1}+1+c, k_j-c)}}}
        &= \frac{1}{r_{b_{(n_{j-1}+\alpha, k_j-\alpha+1)}}} \notag \\
        &=\begin{cases}
        \frac{1}{m_{s_{n_{j-1}+\alpha}+k_j-\alpha+1}} & \text{if } j=1 \\
        \frac{1}{r_{a_{(n_{j-1},k_j-\alpha+1)}}}\frac{m_{s_{n_{j-1}}+k_j-\alpha+1}}{m_{s_{n_{j-1}+\alpha}+k_j-\alpha+1}} & \text{if } j\geq 2
        \end{cases} \notag \\
        &=\begin{cases}
        \frac{1}{m_{s_{\alpha}+k_1-\alpha+1}} & \text{if } j=1 \\
        \frac{1}{r_{a_{(n_{j-1},k_j-\alpha+1)}}}\frac{m_{s_{n_{j-1}}+k_j-\alpha+1}}{m_{s_{n_{j-1}+\alpha}+k_j-\alpha+1}} & \text{if } j\geq 2.
        \end{cases} \label{eqn dotted rectangle contributions to t(k,a)}
    \end{align}

Additionally, we note that before these two dotted rectangles, the last step upwards from line $n_j+1-\alpha$ to $n_j+2-\alpha$ has weight $r_{a_{(n_{j-1},k_j-\alpha+1)}}$ if $j\geq2$, and if $j=1$ then no such step exists. In particular, since we are only considering paths which travel upwards at every opportunity, if $j\geq2$ then the path from source $a+1$ must have travelled up this step. Thus the quotient of the contributions from the two paths between the lines $n_j+1-\alpha$ and $n_j+1$ is
    \begin{equation} \label{eqn step and dotted rectangle contributions to t(k,a)}
    \frac{r_{a_{(n_{j-1},k_j-\alpha+1)}}}{r_{b_{(n_{j-1}+\alpha, k_j-\alpha+1)}}}
        = \frac{m_{s_{n_{j-1}}+k_j-\alpha+1}}{m_{s_{n_{j-1}+\alpha}+k_j-\alpha+1}}.
    \end{equation}

\begin{claim}
We are never required to only travel part way through a dotted rectangle.
\end{claim}

\begin{proof}[Proof of Claim]
We first observing the form of the chamber pairs $(k,a)$ such that we \emph{would} need to stop part way through one of these rectangles on a path which begins at source $a$, travels upwards at every opportunity and ends at sink $k+a$. These chamber pairs $(k,a)$ must satisfy the following conditions:
    $$n_{j-1} + 1 < k+a < n_j +1 \ \text{for some} \ j=1, \ldots, l, \ \text{and} \ n_{j-1} < k.
    $$
Explicitly, by the second condition we see that by the end of the $n_{j-1}$-th column, the path will have taken $n_{j-1}$ diagonal steps upwards, however by the first condition the path will have not yet reached the line $k+a>n_{j-1}+1$ (which it will have done by the end of the $n_j$-th column).

These chamber pairs only appear the second pairs for chambers that have two (if they appear at all), that is, the chamber pair with the larger value of $k$ defined relative to the crossing on the right of the chamber (see Figure \ref{fig chamber pair labels wPw0 ansatz arrangement}). Finally we note that we never need to consider the paths from source $a$ to sink $k+a$ for the chamber pairs $(k,a)$ above. This is because the crossing in the ansatz arrangement giving the coordinate $t_{(k,a)}$, is directly to the left of the chamber labelled by the chamber pair $(k,a)$, that is, we only consider the first chamber pair if a given chamber has two.
\end{proof}

To finish proving the second case of the form of $t_{(k,a)}$, we consider chamber pairs $(k,a)$ such that $n_{j-1}< k\leq n_j$ for $j=1, \dots, l$, and proceed by induction on $j$, making use of the columns in the graph.

We recall that in the first case we treated the situation when the two paths in (\ref{eqn t(k,a) in terms of two paths}) defining $t_{(k,a)}$ either pass through no dotted rectangles or through dotted rectangles with $\alpha=1$. Thus in order to complete the foundation for the induction, we only need to consider chamber pairs $(k,a)$ with $1\leq k \leq n_1$, such that the path from source $a$ passes through a dotted rectangle corresponding to $X^{\vee}_{n_1, \alpha}$, with $\alpha\geq 2$. In particular, this restricts the possible values of $a$ to $\{1, \ldots, n_1\}$.
Of note, if $n_1=1$ then this case doesn't exist and so the base case of the induction is already complete. By the claim, we are required to always travel to the top of dotted rectangles, so we see that the only possibility is to have $k+a = n_1+1$. Moreover, the bottom of the rectangle lies on the line $a=n_1+1-\alpha$. Thus $\alpha=k$, and by (\ref{eqn dotted rectangle contributions to t(k,a)}) with $j=1$ we have
    $$t_{(k,a)}=\frac{1}{m_{s_{n_1-a+1}+k_1-n_1+a}} = \frac{1}{m_{s_{k}+a}}.
    $$

For the inductive step, let $j\in\{1, \ldots, l\}$ and suppose we have shown that
    $$t_{(k,a)}=\frac{1}{m_{s_k+a}}
    $$
for all chamber pairs $(k,a)$ with $k = n_{j-1}$, by using (\ref{eqn t(k,a) in terms of two paths}) and thus the paths from sources $a$ and $a+1$ which travel upwards at every opportunity until they reach the horizontal line for the sink $k+a$. We need to prove that
    $$t_{(k+c,a)}=\frac{1}{m_{s_{k+c}+a}}
    $$
for $1\leq c \leq k_j$ such that $(k+c,a)$ appears as the first or only chamber pair of its respective chamber.

In order to compute $t_{(k+c,a)}$, the desired paths from the sources $a$ and $a+1$ to the sink $k+c+a$, must begin with the steps upwards which are found in the paths needed to compute $t_{(k,a)}$. Thanks to the claim we will assume that $(k+c,a)$ appears as the first or only chamber pair of its respective chamber, since we may ignore this chamber pair if not. We split our consideration into two cases:
\begin{enumerate}
    \item If $k+a \geq n_j$ then the paths from sources $a$ and $a+1$ will not travel through any dotted rectangles with $\alpha\geq 2$ in the set of columns labelled $k_j$. In particular we have
        $$\begin{aligned}
        t_{(k+c,a)} &= \frac{1}{m_{s_k+a}} \frac{\prod_{i=k}^{k+c-1}r_{a_{(i,a)}}}{\prod_{i=k+1}^{k+c}r_{a_{(i,a-1)}}} & \text{by the inductive hypothesis and the graph labelling} \\
            &= \frac{1}{m_{s_k+a}} \prod_{i=k}^{k+c-1}\frac{r_{a_{(i,a)}}}{r_{a_{(i+1,a-1)}}} \\
            &= \frac{1}{m_{s_k+a}} \prod_{i=k}^{k+c-1}\frac{m_{s_i+a}}{m_{s_{i+1}+a}} & \text{by (\ref{eqn vert arrow coord relation k,a and k+1,a-1})} \\
            &= \frac{1}{m_{s_{k+c}+a}} & \text{since the product is telescopic.}
        \end{aligned}
        $$
    \item If $n_{j-1}+1 \leq k+a< n_j-1$ then, since $c\geq 1$ and we must travel all the way to the top of dotted rectangles, we see that we must have $k+c+a=n_j+1$. Thus the path from source $a$ must pass through a dotted rectangle corresponding to $X^{\vee}_{n_j,\alpha}$ with $\alpha\geq 2$. Since the bottom of this dotted rectangle is on the line $k+a=n_j+1-\alpha$ and $k=n_{j-1}$, we see that $\alpha = k_j-a+1$ and so we have
        $$t_{(k+c,a)} = t_{(n_j-a+1,a)}
        = \frac{1}{m_{s_{n_{j-1}}+a}} \frac{m_{s_{n_{j-1}}+a}}{m_{s_{n_{j-1}+k_j-a+1}+a}}
        = \frac{1}{m_{s_{n_j-a+1}+a}}
        = \frac{1}{m_{s_{k+c}+a}}
        $$
    where the second equality is by the inductive hypothesis and (\ref{eqn step and dotted rectangle contributions to t(k,a)}) with $\alpha =k_{j+1}-a+1$.

\end{enumerate}
%
\end{proof}


\section{The tropical viewpoint} \label{sec G/P The tropical viewpoint}
\fancyhead[L]{10 \ \ The tropical viewpoint}

In this section we generalise the definition of superpotential polytopes associated to a given highest weight (given in Section \ref{subsec Constructing polytopes} using tropicalisation) to the setting of partial flag varieties. These polytopes will depend on the choice of positive toric chart, for which we focus our attention on the ideal coordinates. In the $G/B$ case, for each choice of highest weight, the associated critical point of the superpotential gives rise to a point inside the respective polytope, which is Judd's tropical critical point \cite{Judd2018}. By work of Judd and Rietsch \cite{JuddRietsch2019} an analogous statement holds in the $G/P$ setting. It is natural to also wish for an analogous statement of the description of the tropical critical point in terms ideal fillings. Indeed we generalise the notion of ideal fillings and show that it may be used to describe tropical critical points in the $G/P$ case. We conclude with an interpretation of these more general ideal fillings using Toeplitz matrices over generalised Puiseux series.


\newpage
\subsection{Constructing polytopes} \label{subsec G/P Constructing polytopes}
\fancyhead[L]{10.1 \ \ Constructing polytopes}

In this section we generalise Section \ref{subsec Constructing polytopes} to the $G/P$ setting. Similar to the $G/B$ case, we begin with $Z_P$, and consider it over the field of generalised Puiseux series, $\mathbf{K}$. Again we have a well-defined notion of the totally positive part of $Z_P(\mathbf{K})$, denoted by $Z_P(\mathbf{K}_{>0})$. It is defined, for a given torus chart on $Z_P(\mathbf{K})$, by the subset where the characters take values in $\mathbf{K}_{>0}$. Moreover, both the quiver and ideal torus charts defined in Sections \ref{subsec G/P The quiver torus and toric chart} and \ref{subsec G/P Quiver decoration} respectively, give isomorphisms
    \begin{equation} \label{eqn G/P toric chart isom}
    T^{\vee}(\mathbf{K}_{>0})^{W_P} \times (\mathbf{K}_{>0})^{\mathcal{V}_P^{\bullet}} \xrightarrow{\sim} Z_P(\mathbf{K}_{>0})
    \end{equation}
where we consider $T^{\vee}(\mathbf{K}_{>0})^{W_P}$ to be the highest weight torus.

We will now restrict our attention to a fibre of the highest weight map $\mathrm{hw}_P$ (see Section \ref{sec G/P Landau-Ginzburg models}). To do so, we first define $t^{\lambda} \in \left(T^{\vee}(\mathbf{K}_{>0})\right)^{W_P}$ via the condition $\chi(t^{\lambda}) = t^{\langle \chi, \lambda\rangle}$ for $\chi \in X^*\left(\left(T^{\vee}\right)^{W_P}\right)$. This allows us to define
    $$Z_{P,t^\lambda}(\mathbf{K}):= \left \{b_P \in Z_P(\mathbf{K}) \ | \ \mathrm{hw}_P\left(b_P\right)=t^\lambda \right\}.
    $$
We denote the restriction of the superpotential to this fibre by
    $$\mathcal{W}_{P,t^{\lambda}}: Z_{P,t^\lambda}(\mathbf{K}) \to \mathbf{K}.
    $$

For a fixed element $t^{\lambda} \in T^{\vee}(\mathbf{K}_{>0})^{W_P}$ of the highest weight torus, the isomorphism (\ref{eqn G/P toric chart isom}) for the ideal toric chart restricts to
    $$\phi_{P,t^{\lambda},\boldsymbol{m}} : (\mathbf{K}_{>0})^{\mathcal{V}_P^{\bullet}} \to Z_{P,t^{\lambda}}(\mathbf{K}_{>0}),
    $$
with $m_i$ coordinates.
This toric chart may be considered as defining a positive atlas for $Z_{P,t^{\lambda}}(\mathbf{K}_{>0})$.
We denote the composition of $\phi_{P,t^{\lambda},\boldsymbol{m}}$ with the superpotential $\mathcal{W}_{P,t^{\lambda}}$, by
    $$\mathcal{W}_{P,t^{\lambda},\boldsymbol{m}} : (\mathbf{K}_{>0})^{\mathcal{V}_P^{\bullet}} \to \mathbf{K}_{>0}
    $$
and observe that it is a positive rational map. We denote its tropicalisation by
    $$\mathrm{Trop}\left(\mathcal{W}_{P,t^{\lambda},\boldsymbol{m}}\right) : \mathbb{R}^{\mathcal{V}_P^{\bullet}}_{\boldsymbol{\mu}} \to \mathbb{R}.
    $$

We may associate a convex polytope to our tropical superpotential, defined as follows:
    $$\mathcal{P}_{P,\lambda,\boldsymbol{\mu}} := \left\{ \boldsymbol{\alpha} \in \mathbb{R}^{\mathcal{V}_P^{\bullet}}_{\boldsymbol{\mu}} \ \big\vert \ \mathrm{Trop}\left(\mathcal{W}_{P,t^{\lambda},\boldsymbol{m}}\right)(\boldsymbol{\alpha}) \geq 0 \right\}.
    $$


\newpage
\subsection{Tropical critical points and the weight map} \label{subsec G/P Tropical critical points and the weight map}
\fancyhead[L]{10.2 \ \ Tropical critical points and the weight map}

We recall from Section \ref{subsec Tropical critical points and the weight map} that in the $G/B$ case there is a unique critical point of $\mathcal{W}$ that lies in $Z_{t^{\lambda}}(\mathbf{K}_{>0})$, which we called the positive critical point of $\mathcal{W}_{t^{\lambda}}$, denoted $p_{\lambda}$. This was a consequence of work by Judd in \cite[Section 5]{Judd2018}, and also follows from the more general result of Judd and Rietsch in \cite{JuddRietsch2019}. In the $G/P$ setting, again working over $\mathbf{K}_{>0}$, we use the same result of Judd and Rietsch to see that $\mathcal{W}_P$ has a unique critical point in each fibre $Z_{P,t^\lambda}(\mathbf{K}_{>0})$, which we call the positive critical point of $\mathcal{W}_{P,t^{\lambda}}$, denoted $p_{P,\lambda}$.

As before, this critical point $p_{P,\lambda} \in Z_{P,t^{\lambda}}(\mathbf{K}_{>0})$ defines a point $p_{P,\lambda}^{\mathrm{trop}} \in \mathrm{Trop}(Z_{P,t^{\lambda}})$, called the tropical critical point of $\mathcal{W}_{P,t^{\lambda}}$. Explicitly, using a positive chart (such as $\phi_{P,t^{\lambda},\boldsymbol{m}}$) we apply the valuation $\mathrm{Val}_{\mathbf{K}}$ to every coordinate of $p_{P,\lambda}$. This gives rise to the corresponding point $p_{P,\lambda, \boldsymbol{\mu}}^{\mathrm{trop}}$ in the associated tropical chart $\mathrm{Trop}(Z_{P,t^{\lambda}}) \to \mathbb{R}^{\mathcal{V}_P^{\bullet}}$.
Moreover, for a choice of positive chart the tropical critical point lies in the interior of the respective superpotential polytope, for example, $p_{P,\lambda, \boldsymbol{\mu}}^{\mathrm{trop}} \in \mathcal{P}_{P,\lambda,\boldsymbol{\mu}}$. We recall that an explicit statement in full generality is given by Judd and Rietsch in \cite[Theorem 1.2]{JuddRietsch2019}.

In the $G/B$ case we showed that the image of the tropical critical point under the weight projection, $\mathrm{Trop}(\mathrm{wt})\left(p_{\lambda}^{\mathrm{trop}}\right)$, is the centre of mass of the weight polytope (Corollary \ref{cor weight matrix at trop crit point}). In order to give the analogous result in the $G/P$ case we make the following notational convention:
    $$\left\langle \lambda, \epsilon_k^{\vee} \right\rangle = \lambda_r \quad \text{where } n_{r-1}+1 \leq k \leq n_r \text{ for some } r \in \{1\ldots, l+1\}.
    $$
For example if $G/P=\mathcal{F}_{2,5,6}(\mathbb{C}^8)$ then we write $\lambda = \left( \lambda_1, \lambda_1, \lambda_2, \lambda_2, \lambda_2, \lambda_3, \lambda_4, \lambda_4 \right)$ for some $\lambda_1, \ldots, \lambda_4$.

With this notation we give a new definition of $\ell$ as follows:
    \begin{equation} \label{eqn defn G/P ell}
    \ell := \frac1n \sum_{i=1}^{l+1} k_i\lambda_i
    \end{equation}
observing that this descends to the original definition in the $G/B$ case. We are now ready to generalise Corollary \ref{cor weight matrix at trop crit point} to the $G/P$ case:

\begin{cor}[Corollary of Proposition {\ref{prop G/P weight at crit point non trop}}] \label{cor G/P weight matrix at trop crit point}
Given $\lambda=(\lambda_1\geq \lambda_2 \geq \ldots \geq \lambda_n)$, the weight matrix at the critical point in the fibre over $t^{\lambda} \in \left(T^{\vee}(\mathbf{K}_{>0})\right)^{W_P}$ is an $n\times n$ matrix $\mathrm{diag}\left(t^{\ell}, \ldots, t^{\ell}\right)$ where
    $$\ell = \frac{1}{n}\sum_{i=1}^{l+1} k_i \lambda_i.
    $$
\end{cor}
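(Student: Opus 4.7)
The plan is to directly specialise Proposition~\ref{prop G/P weight at crit point non trop} to the highest weight $d = t^{\lambda} \in (T^{\vee}(\mathbf{K}_{>0}))^{W_P}$. First, I would unpack the convention introduced just above the statement: writing $\langle \lambda, \epsilon_k^{\vee} \rangle = \lambda_r$ for $n_{r-1}+1 \leq k \leq n_r$, the diagonal entries of $t^{\lambda}$ are $t^{\lambda_1}, \ldots, t^{\lambda_1}, t^{\lambda_2}, \ldots, t^{\lambda_{l+1}}$, with $t^{\lambda_i}$ repeated $k_i$ times. Under the star-vertex labelling from Definition~\ref{defn decoration star vert and horiz arr}, where $d_i$ denotes the coordinate attached to the star vertex of the square $L_i$, this forces $d_i = t^{\lambda_i}$ for each $i = 1, \ldots, l+1$.

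Applying Proposition~\ref{prop G/P weight at crit point non trop} at this point, the weight matrix at the critical point in the fibre $Z_{P,t^{\lambda}}(\mathbf{K}_{>0})$ equals $\mathrm{diag}(c, \ldots, c)$ with
$$c^n \;=\; \prod_{i=1}^{l+1} d_i^{k_i} \;=\; \prod_{i=1}^{l+1} \left(t^{\lambda_i}\right)^{k_i} \;=\; t^{\sum_{i=1}^{l+1} k_i \lambda_i} \;=\; t^{n\ell},$$
where the last equality uses the definition of $\ell$ in~(\ref{eqn defn G/P ell}). The only remaining subtlety is the extraction of $c$: because we are evaluating at the unique positive critical point $p_{P,\lambda} \in Z_{P,t^{\lambda}}(\mathbf{K}_{>0})$ discussed at the start of Section~\ref{subsec G/P Tropical critical points and the weight map}, the scalar $c$ lies in $\mathbf{K}_{>0}$, so it is the unique positive $n$-th root of $t^{n\ell}$, namely $t^{\ell}$. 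This yields the claimed form $\mathrm{diag}(t^{\ell}, \ldots, t^{\ell})$ of the weight matrix, and no further calculation is required.
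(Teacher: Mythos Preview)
Your proposal is correct and follows exactly the route the paper intends: the paper records this corollary with just a \qedsymbol, treating it as the immediate specialisation of Proposition~\ref{prop G/P weight at crit point non trop} to $d = t^{\lambda}$, and your write-up merely makes explicit the step $c^n = t^{n\ell}$ together with the positivity argument that pins down $c = t^{\ell}$.
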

\hfill\qedsymbol


\newpage
\subsection{Ideal fillings} \label{subsec G/P Ideal fillings}
\fancyhead[L]{10.3 \ \ Ideal fillings}

In order to generalise our description of ideal fillings from the full flag case to that of partial flag varieties, we first recall that fillings in the $G/B$ case (see Section \ref{subsec Ideal fillings}) are described by the assignment of non-negative real numbers to boxes in upper triangular form, with a one-to-one correspondence between these boxes and the dot vertices in the respective quiver.  It follows that in the $G/P$ case, the assignment will be to boxes in block upper triangular form, with an approach analogous to the construction of the $Q_P$ quiver given in Section \ref{subsec Constructing quivers}.

We begin, as in Section \ref{subsec Constructing quivers}, with an $n \times n$ square containing the $k_i \times k_i$ squares, $L_i$, on the leading diagonal. The boxes for our filling are given by the unit squares strictly above the $L_i$ squares. These are filled with numbers $n_{ij}$, with the labelling similar to the entries of an $n\times n$ matrix. For example see Figure \ref{fig filling for 2,5,6 n8}. The filling is called integral if all the $n_{ij}$ are integral.

The filling is said to be ideal if $n_{ij}=\max\{n_{i+1 , j}, n_{i , j-1} \} $ for $j-i\geq 2$. Note that if $n_{i+1 , j}$ doesn't appear in the filling, then the condition degenerates to $n_{ij}=n_{i \; j-1} $, and similarly if $n_{i , j-1}$ doesn't appear. For example see Figure \ref{fig shaded filling for 2,5,6 n8}, where we have shaded neighbouring boxes identically if they automatically take the same value due to the degeneration of ideal filling conditions.

\begin{figure}[ht]
\centering
\begin{minipage}[t]{0.42\textwidth}
\centering
\begin{tikzpicture}[scale=0.8]
        \draw[dotted, thick, color=black!50] (0.5,0.5) -- (0.5,8.5) -- (8.5,8.5) -- (8.5,0.5) -- cycle;
        \draw[dotted, thick, color=black!50] (2.5,3.5) -- (2.5,8.5);
        \draw[dotted, thick, color=black!50] (5.5,2.5) -- (5.5,6.5);
        \draw[dotted, thick, color=black!50] (6.5,0.5) -- (6.5,3.5);
        \draw[dotted, thick, color=black!50] (0.5,6.5) -- (5.5,6.5);
        \draw[dotted, thick, color=black!50] (2.5,3.5) -- (6.5,3.5);
        \draw[dotted, thick, color=black!50] (5.5,2.5) -- (8.5,2.5);

        \draw (2.5,8.5) -- (8.5,8.5) -- (8.5,2.5) -- (6.5,2.5) -- (6.5,3.5) -- (5.5,3.5) -- (5.5,6.5) -- (2.5,6.5) -- cycle;
        \draw (3.5,8.5) -- (3.5,6.5);
        \draw (4.5,8.5) -- (4.5,6.5);
        \draw (5.5,8.5) -- (5.5,6.5);
        \draw (6.5,8.5) -- (6.5,3.5);
        \draw (7.5,8.5) -- (7.5,2.5);

        \draw (2.5,7.5) -- (8.5,7.5);
        \draw (5.5,6.5) -- (8.5,6.5);
        \draw (5.5,5.5) -- (8.5,5.5);
        \draw (5.5,4.5) -- (8.5,4.5);
        \draw (6.5,3.5) -- (8.5,3.5);

        \node[color=black!50] at (1.5,7.5) {$L_1$};
        \node[color=black!50] at (4,5) {$L_2$};
        \node[color=black!50] at (6,3) {$L_3$};
        \node[color=black!50] at (7.5,1.5) {$L_4$};

        \node at (3,8) {$n_{13}$};
        \node at (4,8) {$n_{14}$};
        \node at (5,8) {$n_{15}$};
        \node at (6,8) {$n_{16}$};
        \node at (7,8) {$n_{17}$};
        \node at (8,8) {$n_{18}$};

        \node at (3,7) {$n_{23}$};
        \node at (4,7) {$n_{24}$};
        \node at (5,7) {$n_{25}$};
        \node at (6,7) {$n_{26}$};
        \node at (7,7) {$n_{27}$};
        \node at (8,7) {$n_{28}$};

        \node at (6,6) {$n_{36}$};
        \node at (7,6) {$n_{37}$};
        \node at (8,6) {$n_{38}$};

        \node at (6,5) {$n_{46}$};
        \node at (7,5) {$n_{47}$};
        \node at (8,5) {$n_{48}$};

        \node at (6,4) {$n_{56}$};
        \node at (7,4) {$n_{57}$};
        \node at (8,4) {$n_{58}$};

        \node at (7,3) {$n_{67}$};
        \node at (8,3) {$n_{68}$};
    \end{tikzpicture}
\caption{Filling for $G/P=\mathcal{F}_{2,5,6}(\mathbb{C}^8)$} \label{fig filling for 2,5,6 n8}
\end{minipage}
\hspace{1.4cm}
\begin{minipage}[t]{0.42\textwidth}
\centering
\begin{tikzpicture}[scale=0.8]
        \draw[dotted, thick, color=black!50] (0.5,0.5) -- (0.5,8.5) -- (8.5,8.5) -- (8.5,0.5) -- cycle;
        \draw[dotted, thick, color=black!50] (2.5,3.5) -- (2.5,8.5);
        \draw[dotted, thick, color=black!50] (5.5,2.5) -- (5.5,6.5);
        \draw[dotted, thick, color=black!50] (6.5,0.5) -- (6.5,3.5);
        \draw[dotted, thick, color=black!50] (0.5,6.5) -- (5.5,6.5);
        \draw[dotted, thick, color=black!50] (2.5,3.5) -- (6.5,3.5);
        \draw[dotted, thick, color=black!50] (5.5,2.5) -- (8.5,2.5);

        \fill[black!5] (2.5,8.5) -- (5.5,8.5) -- (5.5,6.5) -- (2.5, 6.5) -- cycle;
        \fill[black!15] (5.5,6.5) -- (6.5,6.5) -- (6.5,3.5) -- (5.5, 3.5) -- cycle;
        \fill[black!25] (6.5,3.5) -- (8.5,3.5) -- (8.5,2.5) -- (6.5, 2.5) -- cycle;

        \fill[black!35] (5.5,8.5) -- (6.5,8.5)-- (6.5,6.5) -- (5.5,6.5)  -- cycle;

        \fill[black!45] (6.5,6.5) -- (8.5,6.5)-- (8.5,3.5) -- (6.5,3.5)  -- cycle;
        \fill[black!55] (6.5,8.5) -- (8.5,8.5)-- (8.5,6.5) -- (6.5,6.5)  -- cycle;

        \draw (2.5,8.5) -- (8.5,8.5) -- (8.5,2.5) -- (6.5,2.5) -- (6.5,3.5) -- (5.5,3.5) -- (5.5,6.5) -- (2.5,6.5) -- cycle;
        \draw (3.5,8.5) -- (3.5,6.5);
        \draw (4.5,8.5) -- (4.5,6.5);
        \draw (5.5,8.5) -- (5.5,6.5);
        \draw (6.5,8.5) -- (6.5,3.5);
        \draw (7.5,8.5) -- (7.5,2.5);

        \draw (2.5,7.5) -- (8.5,7.5);
        \draw (5.5,6.5) -- (8.5,6.5);
        \draw (5.5,5.5) -- (8.5,5.5);
        \draw (5.5,4.5) -- (8.5,4.5);
        \draw (6.5,3.5) -- (8.5,3.5);

        \node[color=black!50] at (1.5,7.5) {$L_1$};
        \node[color=black!50] at (4,5) {$L_2$};
        \node[color=black!50] at (6,3) {$L_3$};
        \node[color=black!50] at (7.5,1.5) {$L_4$};

        \node at (3,8) {$n_{13}$};
        \node at (4,8) {$n_{14}$};
        \node at (5,8) {$n_{15}$};
        \node at (6,8) {$n_{16}$};
        \node at (7,8) {$n_{17}$};
        \node at (8,8) {$n_{18}$};

        \node at (3,7) {$n_{23}$};
        \node at (4,7) {$n_{24}$};
        \node at (5,7) {$n_{25}$};
        \node at (6,7) {$n_{26}$};
        \node at (7,7) {$n_{27}$};
        \node at (8,7) {$n_{28}$};

        \node at (6,6) {$n_{36}$};
        \node at (7,6) {$n_{37}$};
        \node at (8,6) {$n_{38}$};

        \node at (6,5) {$n_{46}$};
        \node at (7,5) {$n_{47}$};
        \node at (8,5) {$n_{48}$};

        \node at (6,4) {$n_{56}$};
        \node at (7,4) {$n_{57}$};
        \node at (8,4) {$n_{58}$};

        \node at (7,3) {$n_{67}$};
        \node at (8,3) {$n_{68}$};
    \end{tikzpicture}
\caption{Filling for $G/P=\mathcal{F}_{2,5,6}(\mathbb{C}^8)$ (with shading)} \label{fig shaded filling for 2,5,6 n8}
\end{minipage}
\end{figure}

\begin{defn}\label{def ideal filling for lambda G/P version}
We say that an ideal filling $\{n_{ij}\}$ is an ideal filling for $\lambda$ if
    $$\sum_{(i,j) \ : \ v_{ji}\in \mathcal{V}_P^{\bullet}} n_{ij}\alpha_{ij} + \ell \sum \epsilon_i  = \lambda,
    $$
where we recall the definition
    \begin{equation*}
    \ell := \frac1n \sum_{i=1}^{l+1} k_i\lambda_i
    \end{equation*}
given in (\ref{eqn defn G/P ell}).
\end{defn}

Of note, this definition descends to Definition \ref{def ideal filling for lambda GLn version} in the $G/B$ case.

With the above definition in mind, we generalise Proposition \ref{prop GLn version of Jamie's 6.2} to the $G/P$ setting:
\begin{prop}
\label{prop G/P version of Jamie's 6.2}
For any solution $(\rho_a)$ to the tropical critical point conditions for $\lambda$ in the quiver $Q_P$, (that is, (\ref{eqn trop crit point conds}) taken over $\mathcal{V}^{\bullet}_P$), the formula
    $$n_{ij}= \min_{a:h(a)=v_{ji}}\{\rho_a\} = \min_{a:t(a)=v_{ji}}\{\rho_a\}
    $$
defines an ideal filling for $\lambda$ (see Definition \ref{def ideal filling for lambda G/P version}), and every ideal filling arises in this way. In particular we see that for a given $\lambda$, the ideal filling for $\lambda$ exists and is unique.
\end{prop}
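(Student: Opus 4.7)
The plan is to follow the two-way construction used in the proof of Proposition \ref{prop GLn version of Jamie's 6.2}, adapting each step to the block structure of $Q_P$. I will set up mutually inverse maps between solutions $(\rho_a)$ to the tropical critical point conditions and ideal fillings for $\lambda$, which together will give uniqueness as a consequence of the existence-and-uniqueness of the tropical critical point in $\mathcal{P}_{P,\lambda,\boldsymbol{\mu}}$.

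First I would construct the map from ideal fillings to tropical solutions. Given an ideal filling $\{n_{ij}\}$ for $\lambda$, define, for each pair $(i,j)$ with $v_{ji} \in \mathcal{V}_P$,
\[
H^h_{ij} := \sum_{\substack{l>j \\ v_{li}\in \mathcal{V}_P^{\bullet}}} n_{il}, \qquad H^v_{ij} := \sum_{\substack{l<i \\ v_{jl}\in \mathcal{V}_P^{\bullet}}} n_{lj},
\]
so that only boxes actually present contribute. Then set $\delta_{v_{ji}} := H^h_{ij}-H^v_{ij}+\ell$ for dot vertices, and $\delta_{v_{ii}}:=\lambda_r$ for the star vertex in $L_r$. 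I would check, first, that the induced tropical arrow coordinates are non-negative (i.e.\ they lie in $\mathcal{P}_{P,\lambda,\boldsymbol{\mu}}$) using $n_{ij}\ge n_{i+1,j}, n_{i,j-1}$ whenever the neighbour exists, and, second, that the highest weight map evaluates to $\lambda$ by pairing against $\epsilon_k^{\vee}$, using Definition \ref{def ideal filling for lambda G/P version}.

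Next I would verify the tropical critical point conditions. This is where an adapted version of Judd's Lemma \ref{lem Hh is Hh or Hv is Hv} is required: for each dot vertex $v_{ji}$ sitting in the interior of the quiver, the two outgoing (respectively incoming) arrow contributions differ by $\min\{\bar H^v_{i,j+1}-\bar H^v_{ij},\,\bar H^h_{i,j+1}-\bar H^h_{i+1,j+1}\}=0$, and this extends verbatim when both neighbours exist. For vertices on the walls of $Q_P$, or immediately below the square $L_r$ where one of the candidate neighbours in the filling is missing, the degenerate form of the ideal filling condition ($n_{ij}=n_{i,j-1}$ if $n_{i+1,j}$ is absent, and vice versa) makes one of the two differences vanish automatically, so the tropical balance at $v_{ji}$ reduces to a single-arrow identity that is immediate from the definition of $\delta$. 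The analogous check at vertices lying directly below $L_r$ which are not below a star vertex must use the relation built into Definition \ref{defn decoration star vert and horiz arr} between the horizontal arrow coordinate and the adjacent vertical arrow entering the column.

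For the reverse map, set $\pi(v):=\min_{a:h(a)=v}\{\rho_a\}=\min_{a:t(a)=v}\{\rho_a\}$ for $v\in \mathcal{V}_P^{\bullet}$ and declare $n_{ij}:=\pi(v_{ji})$. The key lemma to prove (the analogue of Lemma \ref{lem trop crit pt is an ideal filling}) is that $\pi(v)=\max\{\pi(u),\pi(w)\}$ whenever $u,v,w$ form the lower-left corner of a box in $Q_P$; the inductive proof of that lemma from the $G/B$ case transports verbatim, starting from the bottom-and-left walls of $Q_P$ and propagating via the box relations, with the additional base cases supplied by the dot vertices on the boundaries of each block $L_r$. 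Degenerate ideal filling conditions where a neighbouring box is absent simply correspond to boundary vertices, where the maximum reduces to the unique available value.

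Finally, to see that the filling produced is an ideal filling \emph{for} $\lambda$ I would prove a generalisation of Lemma \ref{lem Generalisation of Judd's Lemma 6.9} expressing $\delta_v=\sum_{w\in\mathrm{bel}(v)}\pi(w)-\sum_{w\in\mathrm{lef}(v)}\pi(w)+\ell$; pairing this identity with $\epsilon_k^{\vee}$ at the star vertices $v_{(n_{r-1}+1,k_r-1)}$ gives $\lambda_r=\delta_{v_{n_r,n_{r-1}+1}}$, which rearranges to the defining relation $\sum n_{ij}\alpha_{ij}+\ell\sum\epsilon_i=\lambda$. Here the boundary case of the induction uses Corollary \ref{cor G/P weight matrix at trop crit point}, which pins down the bottom-left vertex's value as $\ell$. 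The main obstacle, I expect, is precisely this inductive identity: in the $G/B$ case one can walk freely around any box, but in $Q_P$ the path between two vertices in the same diagonal may have to skirt around a block $L_r$, so the telescoping argument underlying Judd's Lemma \ref{lem Generalisation of Judd's Lemma 6.9} must be run both through and around each square, using in an essential way the degenerate ideal filling conditions together with the horizontal arrow conventions from Definition \ref{defn decoration star vert and horiz arr} to ensure that the intermediate cancellations go through. Once this is in place, the two maps are visibly inverse to each other by construction, establishing the bijective correspondence and hence uniqueness.
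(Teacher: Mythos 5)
Your proposal follows the paper's proof step for step: the same restricted $H^h_{ij}$, $H^v_{ij}$, the same $\delta_{v_{ji}} = H^h_{ij} - H^v_{ij} + \ell$ with the degenerate ideal-filling conditions handling the walls and block boundaries, the same $\pi(v)=\max$ lemma transported from the $G/B$ case, and the same generalisation of Judd's Lemma \ref{lem Generalisation of Judd's Lemma 6.9} anchored by Corollary \ref{cor G/P weight matrix at trop crit point}. The obstacle you flag for that last lemma dissolves once you notice that in $Q_P$ every leftward neighbour of a vertex is itself a vertex of $Q_P$ and the columns beneath the two endpoints of a horizontal arrow both extend all the way to the bottom wall, so the $G/B$ induction transports verbatim and there is nothing to ``skirt around.''
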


\begin{proof} The proof is similar to the $G/B$ case, in which we give a pair of maps between two sets which are inverse to each other. In this setting we will form a bijective correspondence:
    $$\begin{Bmatrix} \text{solutions to the tropical critical}\\ \text{conditions in } Q_P \text{ with highest weight } \lambda \end{Bmatrix} \leftrightarrow \begin{Bmatrix} \text{ideal fillings } \{n_{ij}\}_{(i,j) \, : \, v_{ji} \in \mathcal{V}^{\bullet}_P} \text{ for }\lambda, \\ \text{i.e. such that } \sum_{(i,j) \, : \, v_{ji}\in \mathcal{V}_P^{\bullet}} n_{ij}\alpha_{ij} + \ell \sum \epsilon_i  = \lambda \end{Bmatrix}
    $$
It suffices to simply give an outline of the proof in the $G/P$ setting, which highlights the necessary generalisations.

\medskip
\noindent
\textbf{Map from ideal fillings for $\lambda$ to solutions to the tropical critical conditions.}

Let $\{n_{ij}\}$ be an ideal filling for $\lambda$. We note that for $n_{ji}$ to appear in the filling, the pair $(i,j)$ must satisfy $1\leq i<j\leq n$ as before, but we also now require that $v_{ji} \in \mathcal{V}_P^{\bullet}$.

For each pair $(i,j)$ such that $1\leq i\leq j\leq n$, we define (as in the $G/B$ case) two sums of entries of the ideal filling; those $n_{il}$ strictly to the right of $n_{ij}$ and those $n_{lj}$ strictly above $n_{ij}$ respectively:
    \begin{equation} \label{eqn defn Hh and Hv partial flag case}
    H^h_{ij}:= \sum_{\substack{j<l\leq n \\ v_{li} \in \mathcal{V}_P^{\bullet} }} n_{il}, \quad
    H^v_{ij}:= \sum_{\substack{1\leq l<i \\ v_{jl} \in \mathcal{V}_P^{\bullet} }} n_{lj}.
    \end{equation}
We observe that although this definition is very similar to the one in the $G/B$ case, some of the $H^h_{ij}$ or $H^v_{ij}$ will automatically be equal due to the structure of ideal fillings and quivers for partial flag varieties; for example, if $k_1\geq 2$ then $H^h_{11} = H^h_{12} = H^h_{22}$. In general we see that for $r \in \{1\ldots, l+1\}$, all $H^h_{ij}$ with $n_{r-1}+1 \leq i \leq j \leq n_r$ are equal, and similarly for $H^v_{ij}$.

Taking our new definition of $\ell$, that is $\ell := \frac1n \sum_{i=1}^{l+1} k_i\lambda_i$, we again define a map from ideal fillings for $\lambda$ to tropical vertex coordinates of the quiver as follows:
    $$\delta_{v_{ji}}:=H^h_{ij}-H^v_{ij}+\ell.
    $$
We need to show that this defines a solution to the tropical critical conditions for $\lambda$.

As before, the addition of $\ell$ in the above definition doesn't affect the tropical arrow coordinates. Indeed, the vertical arrow coordinates for $1\leq i \leq j < n$, and the horizontal arrow coordinates for $1\leq i < j \leq n$, are respectively
    $$\delta_{v_{ji}}-\delta_{v_{j+1,i}}=H^v_{i+1,j+1} - H^v_{ij}
    \quad \text{and} \quad
    \delta_{v_{ji}}-\delta_{v_{j,i+1}}=H^h_{i,j-1} - H^h_{i+1,j}.
    $$
Both of these are $\geq 0$, so it follows that the point lies in $\left\{\mathrm{Trop}(\mathcal{W}_{t^{\lambda}})\geq 0\right\}$. Additionally, we see it will lie in the fibre over $\lambda$ as follows: for $\epsilon_k^{\vee} \in X^*(T^{\vee})$ with $n_{r-1}+1 \leq k \leq n_r$ for some $r \in \{1\ldots, l+1\}$, we have
    $$\begin{aligned}
    \lambda_r = \left\langle \lambda, \epsilon_k^{\vee} \right\rangle
    &= \bigg\langle \sum_{\substack{1\leq i < j \leq n \\ v_{ji} \in \mathcal{V}_P^{\bullet} }} n_{ij}(\epsilon_i-\epsilon_j) + \ell \sum_{1\leq i \leq n}\epsilon_i , \  \epsilon_k^{\vee} \ \bigg\rangle & \text{since } \{n_{ij}\} \text{ is an ideal filling for } \lambda \\
    &= \sum_{\substack{k < j \leq n \\ v_{jk} \in \mathcal{V}_P^{\bullet} }} n_{kj} - \sum_{\substack{1\leq i < k \\ v_{ki} \in \mathcal{V}_P^{\bullet} }} n_{ik} + \ell \\
    &= H^h_{kk}-H^v_{kk}+\ell \\
    &= H^h_{n_{r-1}+1, n_r}-H^v_{n_{r-1}+1, n_r}+\ell = \delta_{v_{n_r, n_{r-1}+1}}
    \end{aligned}
    $$
and we note that $v_{n_r, n_{r-1}+1}$ is exactly the star vertex in the square $L_r$, as required. 

It remains to show that the point we have defined satisfies the tropical critical point conditions. As in the $G/B$ case, we require a lemma:
\begin{lem}
For $1 \leq i < j \leq n$, write $\bar{H}^h_{ij}:= H^h_{ij}+n_{ij}$, $\bar{H}^v_{ij}:= H^v_{ij} + n_{ij}$.
Then if $j-i\geq 1$, either
    $$\bar{H}^v_{i,j+1}=\bar{H}^v_{ij} \quad \text{or} \quad \bar{H}^h_{i,j+1}=\bar{H}^h_{i+1,j+1}
    $$
or both are true. Hence we have $\min\left\{ \bar{H}^v_{i,j+1} - \bar{H}^v_{ij}, \bar{H}^h_{i,j+1}-\bar{H}^h_{i+1,j+1} \right\} = 0$.
\end{lem}
This is the same lemma as in the previous setting with its statement unaffected by the alteration to our definitions of $H^h_{ij}$ and $H^v_{ij}$ in (\ref{eqn defn Hh and Hv partial flag case}). However, in this more general setting we need to give a sight adjustment of Judd's proof:

\begin{proof}
By the ideal filling conditions we have $\bar{H}^v_{ij} \leq \bar{H}^v_{i,j+1}$ and $\bar{H}^h_{i+1,j+1} \leq \bar{H}^h_{i,j+1}$. If $n_{r-1}+1 \leq i \leq n_r$ and $n_r+1 \leq j \leq n_r-1$, for $r \in \{ 1, \ldots, l \}$, then the ideal filling conditions become degenerate and we have $\bar{H}^v_{ij} = \bar{H}^v_{i,j+1}$. Similarly if $n_{r-1}+1 \leq i \leq n_r-1$ and $n_r+1 \leq j \leq n_r$, for $r \in \{ 1, \ldots, l \}$, then automatically $\bar{H}^h_{i+1,j+1} = \bar{H}^h_{i,j+1}$.

It remains to consider when $\bar{H}^v_{ij} < \bar{H}^v_{i,j+1}$. For this to be true there must exist some $l$ such that $1\leq l\leq i$ and $n_{lj} < n_{l,j+1}$. Hence we see that
    $$\max_{\substack{l\leq k \leq j-1 \\ v_{k+1,k \in \mathcal{V}^{\bullet}}}}\{ n_{k,k+1} \} < \max_{\substack{l\leq k \leq j \\ v_{k+1,k \in \mathcal{V}^{\bullet}}}}\{ n_{k,k+1} \} = n_{j,j+1}.
    $$
In particular, since $l\leq i \leq j-1$ we must have $n_{i,i+1} < n_{j,j+1} $. Then for $r\geq j+1$ we have
    $$n_{ir} = \max_{\substack{i\leq k \leq r-1 \\ v_{k+1,k \in \mathcal{V}^{\bullet}}}}\{ n_{k,k+1} \} = \max_{\substack{i+1\leq k \leq r-1 \\ v_{k+1,k \in \mathcal{V}^{\bullet}}}}\{ n_{k,k+1} \} = n_{i+1,r}.
    $$
It follows that $\bar{H}^h_{i+1,j+1} = \bar{H}^h_{i,j+1}$, and so the proof is complete.
\end{proof}

We first consider the minimum over incoming arrow coordinates at dot vertices. Let $v_{ji} \in \mathcal{V}_P^{\bullet}$ be a dot vertex with two incoming arrows. Then the minimum over incoming arrow coordinates at $v_{ji}$ is
    $$\min\left\{ H^v_{i+1,j+1} - H^v_{ij}, H^h_{i,j-1}-H^h_{i+1,j} \right\} = n_{ij} + \min\left\{ \bar{H}^v_{i,j+1} - \bar{H}^v_{ij}, \bar{H}^h_{i,j+1}-\bar{H}^h_{i+1,j+1} \right\} = n_{ij}.
    $$
There are two cases where a dot vertex $v_{ij}$ has only one incoming arrow. The first is when $v_{ji}$ lies on the bottom wall of the quiver, with the incoming arrow coordinate given by $H^h_{i,n-1}-H^h_{i+1,n}=n_{in}$ as desired. Secondly, if $v_{ji}$ lies directly to the left of a square $L_r$, in a row without a star vertex, that is $n_{r-1}+1 \leq j \leq n_r -1 $ and $i=n_{r-1}$, then the single incoming arrow has coordinate
    $$H^v_{i+1,j+1} - H^v_{ij} = n_{r-1}n_{ij} -(n_{r-1}-1)n_{ij} = n_{ij}
    $$
where the first equality is a consequence of degenerate ideal filling conditions.

Now considering outgoing arrows, let $v_{ji} \in \mathcal{V}_P^{\bullet}$ be a dot vertex with two outgoing arrows. Then the minimum over outgoing arrow coordinates at $v_{ji}$ is
    $$\min\left\{ H^v_{i+1,j} - H^v_{i,j-1}, H^h_{i-1,j-1}-H^h_{ij} \right\} = n_{ij} + \min\left\{ \bar{H}^v_{i-1,j} - \bar{H}^v_{i-1,j-1}, \bar{H}^h_{i-1,j}-\bar{H}^h_{ij} \right\} = n_{ij}.
    $$
There are similarly two cases where a dot vertex $v_{ij}$ has only one outgoing arrow. The first is when $v_{ji}$ lies on the left wall of the quiver, with the outgoing arrow coordinate given by $H^v_{2,j}-H^v_{1,j-1}=n_{1j}$ as desired. Secondly, if $v_{ji}$ lies directly below a square $L_r$, in a column without a star vertex, that is $j=n_r+1$ and $n_{r-1}+2 \leq i \leq n_r$, then the single outgoing arrow has coordinate
    $$H^h_{i-1,j-1} - H^h_{ij} = (n-n_r)n_{ij} -(n-n_r-1)n_{ij} = n_{ij}
    $$
where the first equality is a consequence of degenerate ideal filling conditions.

Thus the tropical critical point conditions are satisfied at all dot vertices and so our point is indeed a tropical critical point for $\lambda$, as required.

\medskip

\noindent
\textbf{Map from solutions to the tropical critical conditions to ideal fillings.}

Suppose $(\rho_a)_{\ \in \mathcal{A}}$ is a solution to the tropical critical conditions for $\lambda$. Then for $v \in \mathcal{V}_P^{\bullet}$ we consider the map
    $$\pi : \mathcal{V}_P^{\bullet} \to \mathbb{R}, \quad \pi(v):=\min_{a:h(a)=v}\{\rho_a\}
    .
    $$
We also recall a lemma from the $G/B$ case:
\begin{lem} 
At a tropical critical point, the filling $\{n_{ij} = \pi(v_{ji})\}$ is an ideal filling. That is, if we have the following sub-diagram
\begin{center}
\begin{tikzpicture}
    \node (31) at (0,0) {$\bullet$};
        \node at (-0.2,-0.2) {\scriptsize{$v$}};
    \node (21) at (0,1) {$\bullet$};
        \node at (-0.2,1.2) {\scriptsize{$w$}};
    \node (32) at (1,0) {$\bullet$};
        \node at (1.2,-0.2) {\scriptsize{$u$}};

    \draw[->] (31) -- (21);
    \draw[->] (32) -- (31);

    \node at (-0.2,0.5) {\scriptsize{$a$}};
    \node at (0.5,-0.2) {\scriptsize{$b$}};
\end{tikzpicture}
\end{center}
then we must have $\pi(v)=\max\{ \pi(u), \pi(w)\}$.
\end{lem}
This lemma and its proof hold in both the $G/B$ and $G/P$ cases so we omit the proof here.

We will show that $\{ n_{ij}=\pi(v_{ij}) \}$ is an ideal filling for $\lambda$. To do this, we need the vertex coordinates of the quiver at the tropical critical point, which we denote by $(\delta_v)_{v \in \mathcal{V}}$. In particular we notice that at the bottom left vertex we have
    \begin{align*}
    \delta_{v_{n1}}
    &= \mathrm{Val}_{\mathbf{K}}\left(\frac{\Xi_{P,n}}{\Xi_{P,n+1}}\right) &\text{by definition of } \Xi_{P,i} \text{ given in (\ref{eqn G/P xi for wt map defn})
    , and noting } \Xi_{P,n+1}=1 \\
    &= \mathrm{Val}_{\mathbf{K}}(t_{P,n}) & \text{recalling the $t_{P,i}$ defined in (\ref{eqn G/P defn gamma and t})}
    \\
    &= \ell & \text{by Corollary \ref{cor G/P weight matrix at trop crit point}.}
    \end{align*}
We recall another lemma from the $G/B$ case:
\begin{lem}
For $v \in \mathcal{V}_P$ we write $\mathrm{bel}(v)$ and $\mathrm{lef}(v)$ for the sets of vertices directly below and directly to the left of $v$ respectively. Then at a tropical critical point we have
    $$\delta_v=\sum_{w \in \mathrm{bel}(v)} \pi(w) - \sum_{w \in \mathrm{lef}(v)} \pi(w) +\ell.
    $$
\end{lem}
The proof of this lemma given in the $G/B$ case also holds in the $G/P$ case.

Using this lemma we see that at a tropical critical point, the ideal filling $\{ n_{ij}=\pi(v_{ij}) \}$ is an ideal filling for $\lambda$ as follows: for $\epsilon_k^{\vee} \in X^*(T^{\vee})$ with $n_{r-1}+1 \leq k \leq n_r$ for some $r \in \{1\ldots, l+1\}$, we have
    $$\begin{aligned}
    \bigg\langle \sum_{\substack{ 1\leq i < j \leq n \\ v_{ji} \in \mathcal{V}_P^{\bullet} }} n_{ij}\alpha_{ij} +\ell \sum_{1\leq i \leq n}\epsilon_i, \ \epsilon^{\vee}_k \bigg\rangle
    &=  \sum_{\substack{ k < j \leq n \\ v_{jk} \in \mathcal{V}_P^{\bullet} }} n_{kj} - \sum_{\substack{ 1\leq i < k \\ v_{ki} \in \mathcal{V}_P^{\bullet} }} n_{ik} +\ell \\
    &= \sum_{\substack{ k < j \leq n \\ v_{jk} \in \mathcal{V}_P^{\bullet} }} \pi(v_{jk}) - \sum_{\substack{ 1\leq i < k \\ v_{ki} \in \mathcal{V}_P^{\bullet} }} \pi(v_{ki}) + \ell \\
    &= \sum_{w \in \mathrm{bel}(v_{n_r, n_{r-1}+1})} \pi(w) - \sum_{w \in \mathrm{lef}(v_{n_r, n_{r-1}+1})} \pi(w) + \ell \\
    &= \delta_{v_{n_r, n_{r-1}+1}} = \lambda_r
    \end{aligned}$$
with the last equality a consequence of the fact that $v_{n_r, n_{r-1}+1}$ is the star vertex in the square $L_r$.

To complete the proof of Proposition \ref{prop G/P version of Jamie's 6.2}, we note that the maps defined above are inverse to each other by construction. 
\end{proof}

The following corollary is the $G/P$ analogue of Corollary \ref{cor nu'_i coords ideal filling for lambda}:

\begin{cor} \label{cor G/P nu'_i coords ideal filling for lambda}
Let the positive critical point $p_{P,\lambda} \in Z_{P,t^{\lambda}}(\mathbf{K}_{>0})$ of $\mathcal{W}_{P,t^{\lambda}}$ be written in the ideal coordinates $\boldsymbol{m}$. Then the valuations $\mu_k=\mathrm{Val}_{\mathbf{K}}(m_k)$ defining the tropical critical point, $p_{P,\lambda, \boldsymbol{\mu}}^{\mathrm{trop}}$, give rise to an ideal filling $\left\{n_{ij}= \mu_{s_i+j-i} \right\}_{(i,j) \, : \, v_{ji} \in \mathcal{V}^{\bullet}_P}$ for $\lambda$ (where we recall the definition of $s_i$ given in Section \ref{sec G/P The ideal coordinates}).
\end{cor}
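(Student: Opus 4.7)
The plan is to mirror the proof of Corollary \ref{cor nu'_i coords ideal filling for lambda} from the full flag case, combining two ingredients developed earlier in this section. The first is the critical-point identity from Proposition \ref{prop G/P crit points, sum at vertex is nu_i}, which says that at any critical point of the superpotential the sum of outgoing arrow coordinates at each dot vertex $v_{(k,a)}$ equals $m_{s_k+a}$. The second is the bijective correspondence from Proposition \ref{prop G/P version of Jamie's 6.2}, which sends any solution $(\rho_a)$ of the tropical critical point conditions to an ideal filling for $\lambda$ via the rule $n_{ij} = \pi(v_{ji}) := \min_{a:h(a)=v_{ji}}\{\rho_a\} = \min_{a:t(a)=v_{ji}}\{\rho_a\}$.

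The essential translation step is the dictionary between the two vertex labellings introduced in Section \ref{subsec Constructing quivers}, namely $v_{ji} = v_{(i,\,j-i)}$. Evaluating Proposition \ref{prop G/P crit points, sum at vertex is nu_i} at the positive critical point $p_{P,\lambda}$ then gives
$$\varpi(v_{ji}) = \sum_{a:\,t(a)=v_{ji}} r_a = m_{s_i + j - i}.$$
Applying the valuation $\mathrm{Val}_{\mathbf{K}}$ to both sides and using the fact that the arrow coordinates $r_a$ all take values in $\mathbf{K}_{>0}$ (so their leading coefficients are strictly positive and no cancellation occurs among leading terms), I can exchange the valuation of the sum for the minimum of the valuations:
$$\pi(v_{ji}) = \min_{a:\,t(a)=v_{ji}} \mathrm{Val}_{\mathbf{K}}(r_a) = \mathrm{Val}_{\mathbf{K}}(m_{s_i+j-i}) = \mu_{s_i+j-i}.$$
Proposition \ref{prop G/P version of Jamie's 6.2} then immediately gives that $\{n_{ij} = \mu_{s_i+j-i}\}$ is the (unique) ideal filling for $\lambda$.

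There is no genuine obstacle here, since all of the real work has already been carried out in Propositions \ref{prop G/P crit points, sum at vertex is nu_i} and \ref{prop G/P version of Jamie's 6.2}; the only point requiring care is verifying that the index conversion $v_{ji} \leftrightarrow v_{(i,j-i)}$ correctly matches $s_i + j - i$ with the subscript appearing in the labelling of the $\boldsymbol{m}$ coordinates from Section \ref{subsec G/P Quiver decoration}, and that the indexing set $\{(i,j) : v_{ji} \in \mathcal{V}_P^{\bullet}\}$ in the statement of the corollary is exactly the set of dot vertices over which Proposition \ref{prop G/P crit points, sum at vertex is nu_i} produces the formula $m_{s_k+a}$.
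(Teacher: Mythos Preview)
Your proof is correct and follows essentially the same approach as the paper: apply Proposition \ref{prop G/P crit points, sum at vertex is nu_i} to identify $\varpi(v_{ji}) = m_{s_i+j-i}$ at the critical point, take valuations, and invoke Proposition \ref{prop G/P version of Jamie's 6.2}. Your version is slightly more detailed in justifying the step $\mathrm{Val}_{\mathbf{K}}(\sum r_a) = \min_a \mathrm{Val}_{\mathbf{K}}(r_a)$ via positivity of the leading coefficients, which the paper leaves implicit.
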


\begin{proof}
By Proposition \ref{prop G/P crit points, sum at vertex is nu_i}, at a critical point we have
    $$ \varpi(v_{ji}) = \sum_{a:t(a)=v_{ji}} r_a = m_{s_i +j-i}.$$
Thus by Proposition \ref{prop G/P version of Jamie's 6.2} we see that
    $$n_{ij} = \pi(v_{ji}) =\mathrm{Val}_{\mathbf{K}}(\varpi(v_{ji})) = \mathrm{Val}_{\mathbf{K}}(m_{s_i +j-i})
    $$
defines an ideal filling for $\lambda$.
\end{proof}

The next result generalises Proposition \ref{prop trop crit pt independent of i} from the $G/B$ case, with the same proof:
\begin{prop} \label{prop G/P ideal filling indep of red expr}
For a given highest weight $\lambda$, the ideal filling for $\lambda$ 
is independent of the choice of reduced expression $\mathbf{i}$ for $w_Pw_0$.
\end{prop}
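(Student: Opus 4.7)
The plan is to mirror the proof of Proposition \ref{prop trop crit pt independent of i} from the full flag setting, now working with reduced expressions for $w_P w_0$ and the positive root subset $R^P_+ \subset R_+$. The key is that the coordinate changes between ideal toric charts indexed by positive roots are the same braid identities regardless of whether we are in the $G/B$ or $G/P$ case; only the combinatorics of which roots appear changes.

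First, I would extend the definition of the ideal toric chart $\psi_{\mathbf{i}}$ of Section \ref{subsec A family of ideal polytopes} to arbitrary reduced expressions $\mathbf{i}$ for $w_P w_0$, so that the coordinates $m'_{\alpha}$ are indexed by $\alpha \in R^P_+$ under the standard ordering (\ref{eqn ordering on R plus}). Any two such reduced expressions are related by a sequence of commutation moves $(i,j) \leftrightarrow (j,i)$ for $|i-j|\geq 2$ and braid moves $(i,j,i) \leftrightarrow (j,i,j)$ for $|i-j|=1$; it suffices to treat one such move at a time. Commutation moves leave all coordinates unchanged since the corresponding $\mathbf{y}^\vee_i, \mathbf{y}^\vee_j$ factors commute.

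For a braid move in positions $k-1,k,k+1$, the three affected positive roots form a triple $\alpha, \alpha+\beta, \beta \in R^P_+$ (in the order given by $\mathbf{i}$, and $\beta, \alpha+\beta, \alpha$ in the order given by $\mathbf{i}'$), and the coordinate change is the same rank-two braid identity as in the $G/B$ case:
\begin{equation*}
m''_{\alpha} = \frac{m'_{\alpha+\beta}(m'_{\alpha}+m'_{\beta})}{m'_{\beta}}, \quad m''_{\alpha+\beta} = \frac{m'_{\alpha}m'_{\beta}}{m'_{\alpha}+m'_{\beta}}, \quad m''_{\beta} = \frac{m'_{\alpha+\beta}(m'_{\alpha}+m'_{\beta})}{m'_{\alpha}}.
\end{equation*}
Tropicalising yields the min-relations (\ref{eqn trop coord change mu' mu''}).

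Now by Corollary \ref{cor G/P nu'_i coords ideal filling for lambda}, the valuations $\mu'_\gamma = \mathrm{Val}_{\mathbf{K}}(m'_\gamma)$ at the positive critical point form an ideal filling for $\lambda$ in the sense of Section \ref{subsec G/P Ideal fillings}. As in the $G/B$ case, the box corresponding to $\alpha+\beta$ in the filling lies either to the right of $\alpha$ and above $\beta$, or the reverse; this remains true in the $G/P$ setting since $R^P_+$ is obtained from $R_+$ by deleting entire rectangular blocks above the diagonal of squares $L_i$. Therefore the ideal max relation forces $\mu'_{\alpha+\beta} = \max\{\mu'_\alpha, \mu'_\beta\}$, and substituting into the tropical coordinate change gives $\mu''_\alpha = \mu'_\alpha$, $\mu''_{\alpha+\beta} = \mu'_{\alpha+\beta}$, $\mu''_\beta = \mu'_\beta$. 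Combined with Proposition \ref{prop G/P version of Jamie's 6.2} (uniqueness of the ideal filling for $\lambda$), this shows the ideal filling is independent of the reduced expression.

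The main thing to verify carefully — and the only point where the $G/P$ case genuinely differs from the $G/B$ case — is the geometric claim that $\alpha+\beta$ lies ``between'' $\alpha$ and $\beta$ in the filling arrangement. In the $G/P$ setting one must check that neither $\alpha+\beta$ nor either of $\alpha,\beta$ can land in a deleted block $L_i$, which would invalidate the max relation. This follows because if $\alpha, \beta \in R^P_+$ with $\alpha+\beta \in R_+$, then writing $\alpha = \alpha_{ij}, \beta = \alpha_{jk}$, the indices $i,j,k$ necessarily straddle the block decomposition in a way compatible with $R^P_+$; this is straightforward but deserves to be spelled out explicitly.
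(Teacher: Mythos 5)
Your proof is correct and is precisely what the paper has in mind: the paper gives no explicit proof of this proposition, instead saying it follows ``with the same proof'' as Proposition \ref{prop trop crit pt independent of i}, and you have spelled out that argument adapted to $R^P_+$. The one point you flag as needing careful verification --- that $\alpha$, $\alpha+\beta$, $\beta$ cannot land in a deleted block --- is actually automatic, since in a braid move on a reduced expression for $w_Pw_0$ the three affected roots belong to the sequence $(\alpha^{\mathbf{i}}_j)$, which by construction enumerates exactly $R^P_+$; the genuine fact you use is that for $\alpha_{ij},\alpha_{jk},\alpha_{ik}\in R^P_+$ a $G/P$ ideal filling satisfies $n_{ik}=\max\{n_{ij},n_{jk}\}$, which follows from the formula $n_{ik}=\max_{i\le m<k,\, v_{m+1,m}\in\mathcal{V}^\bullet_P}\{n_{m,m+1}\}$ appearing in the proof of Proposition \ref{prop G/P version of Jamie's 6.2}.
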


We conclude with the following theorem which generalises the example 
from the introduction. It gives an interpretation of ideal fillings using Toeplitz matrices over generalised Puiseux series.
\begin{thm} \label{thm G/P Toeplitz mat and ideal fillings}
Let $\phi_i: SL_2 \to GL_n^{\vee}$ be the homomorphism corresponding to the $i$-th simple root of $GL_n^{\vee}$ and take
    $$\mathbf{y}^{\vee}_i(z) = \phi_i\begin{pmatrix} 1 & 0 \\ z & 1 \end{pmatrix}.
    $$
Let $\mathbf{i}=(i_1, \ldots, i_M)$ stand for an arbitrary reduced expression $s_{i_1} \cdots s_{i_M}$ for $w_Pw_0$. Then we have an ordering on the set of positive roots $R^P_+$ 
given by
    $$\alpha^{\mathbf{i}}_j=\begin{cases}
    \alpha_{i_1, i_1+1} & \text{for } j=1, \\
    s_{i_1}\cdots s_{i_{j-1}}\alpha_{i_j, i_j+1} & \text{for } j=2, \ldots, M.
    \end{cases}
    $$
Now take $m_{\alpha^{\mathbf{i}}_1},\ldots, m_{\alpha^{\mathbf{i}}_M}$ to be generalised Puiseux series with positive leading coefficients and non-negative valuations $\mu_{\alpha}=\mathrm{Val}_{\mathbf{K}}(m_{\alpha})$ (defined in Section \ref{subsec The basics of tropicalisation}). If the product
$\mathbf{y}^{\vee}_{i_1}\left(m_{\alpha^{\mathbf{i}}_1}^{-1}\right)\cdots  \mathbf{y}^{\vee}_{i_M}\left(m_{\alpha^{\mathbf{i}}_M}^{-1}\right)$
is a Toeplitz matrix, then the valuations $\mu_{\alpha}$ form an ideal filling:
\begin{center}
\begin{tikzpicture}[scale=1.3]
        \draw (0.5,8.5) -- (1.6,8.5);
        \draw (2,8.5) -- (4.2,8.5);
        \draw (4.6,8.5) -- (5.8,8.5);
        \draw (6.2,8.5) -- (7.3,8.5);
        \draw (0.5,7.5) -- (1.6,7.5);
        \draw (2,7.5) -- (4.2,7.5);
        \draw (4.6,7.5) -- (5.8,7.5);
        \draw (6.2,7.5) -- (7.3,7.5);
            \node at (1,7.3) {\scriptsize{$\vdots$}};
            \node at (2.6,7.3) {\scriptsize{$\vdots$}};
            \node at (3.6,7.3) {\scriptsize{$\vdots$}};
            \node at (5.2,7.3) {\scriptsize{$\vdots$}};
            \node at (6.8,7.3) {\scriptsize{$\vdots$}};
        \draw (0.5,6.9) -- (1.6,6.9);
        \draw (2,6.9) -- (4.2,6.9);
        \draw (4.6,6.9) -- (5.8,6.9);
        \draw (6.2,6.9) -- (7.3,6.9);
        \draw (0.5,5.9) -- (1.6,5.9);
        \draw (2,5.9) -- (4.2,5.9);
        \draw (4.6,5.9) -- (5.8,5.9);
        \draw (6.2,5.9) -- (7.3,5.9);
            \node at (3.6,5.7) {\scriptsize{$\vdots$}};
            \node at (5.2,5.7) {\scriptsize{$\vdots$}};
            \node at (6.8,5.7) {\scriptsize{$\vdots$}};
        \draw (3.1,5.3) -- (4.2,5.3);
        \draw (4.6,5.3) -- (5.8,5.3);
        \draw (6.2,5.3) -- (7.3,5.3);
        \draw (3.1,4.3) -- (4.2,4.3);
        \draw (4.6,4.3) -- (5.8,4.3);
        \draw (6.2,4.3) -- (7.3,4.3);
            \node at (6.8,4.1) {\scriptsize{$\vdots$}};
        \draw (6.2,3.7) -- (7.3,3.7);
        \draw (6.3,2.7) -- (7.3,2.7);

        \draw (0.5,8.5) -- (0.5,7.4);
        \draw (0.5,7) -- (0.5,5.9);
        \draw (1.5,8.5) -- (1.5,7.4);
        \draw (1.5,7) -- (1.5,5.9);
            \node at (1.8,8) {\scriptsize{$\cdots$}};
            \node at (1.8,6.4) {\scriptsize{$\cdots$}};
        \draw (2.1,8.5) -- (2.1,7.4);
        \draw (2.1,7) -- (2.1,5.9);
        \draw (3.1,8.5) -- (3.1,7.4);
        \draw (3.1,7) -- (3.1,5.8);
        \draw (3.1,5.4) -- (3.1,4.3);
        \draw (4.1,8.5) -- (4.1,7.4);
        \draw (4.1,7) -- (4.1,5.8);
        \draw (4.1,5.4) -- (4.1,4.3);
            \node at (4.4,8) {\scriptsize{$\cdots$}};
            \node at (4.4,6.4) {\scriptsize{$\cdots$}};
            \node at (4.4,4.8) {\scriptsize{$\cdots$}};
        \draw (4.7,8.5) -- (4.7,7.4);
        \draw (4.7,7) -- (4.7,5.8);
        \draw (4.7,5.4) -- (4.7,4.3);
        \draw (5.7,8.5) -- (5.7,7.4);
        \draw (5.7,7) -- (5.7,5.8);
        \draw (5.7,5.4) -- (5.7,4.2);
            \node at (6,8) {\scriptsize{$\cdots$}};
            \node at (6,6.4) {\scriptsize{$\cdots$}};
            \node at (6,4.8) {\scriptsize{$\cdots$}};
            \node at (6,4.1) {\scriptsize{$\ddots$}};
        \draw (6.3,8.5) -- (6.3,7.4);
        \draw (6.3,7) -- (6.3,5.8);
        \draw (6.3,5.4) -- (6.3,4.2);
        \draw (6.3,3.8) -- (6.3,2.7);
        \draw (7.3,8.5) -- (7.3,7.4);
        \draw (7.3,7) -- (7.3,5.8);
        \draw (7.3,5.4) -- (7.3,4.2);
        \draw (7.3,3.8) -- (7.3,2.7);

        \node at (1.02,8) {\scriptsize{$\mu_{\alpha_{1,n_1+1}}$}};
        \node at (2.62,8) {\scriptsize{$\mu_{\alpha_{1,n_2}}$}};
        \node at (3.62,8) {\scriptsize{$\mu_{\alpha_{1,n_2+1}}$}};
        \node at (5.22,8) {\scriptsize{$\mu_{\alpha_{1,n_3}}$}};
        \node at (6.82,8) {\scriptsize{$\mu_{\alpha_{1,n}}$}};

        \node at (1.02,6.4) {\scriptsize{$\mu_{\alpha_{n_1,n_1+1}}$}};
        \node at (2.62,6.4) {\scriptsize{$\mu_{\alpha_{n_1,n_2}}$}};
        \node at (3.62,6.4) {\scriptsize{$\mu_{\alpha_{n_1,n_2+1}}$}};
        \node at (5.22,6.4) {\scriptsize{$\mu_{\alpha_{n_1,n_3}}$}};
        \node at (6.82,6.4) {\scriptsize{$\mu_{\alpha_{n_1,n}}$}};

        \node at (3.62,4.8) {\scriptsize{$\mu_{\alpha_{n_2,n_2+1}}$}};
        \node at (5.22,4.8) {\scriptsize{$\mu_{\alpha_{n_2,n_3}}$}};
        \node at (6.82,4.8) {\scriptsize{$\mu_{\alpha_{n_2,n}}$}};

        \node at (6.82,3.2) {\scriptsize{$\mu_{\alpha_{n_l,n}}$}};
    \end{tikzpicture}
\end{center}
Moreover every ideal filling arises in this way.
\end{thm}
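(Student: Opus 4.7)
The plan is to combine three ingredients already established in the thesis: Theorem \ref{thm b- in terms of y_i} identifying the factored product with $[b_P]_-$, Rietsch's theorem from \cite{Rietsch2008} (used in the proof of Proposition \ref{prop G/P weight at crit point non trop}) that critical points of the superpotential are Toeplitz matrices, and Corollary \ref{cor G/P nu'_i coords ideal filling for lambda} translating the valuations of the ideal coordinates at a tropical critical point into an ideal filling. First I would treat the canonical case where $\mathbf{i}$ is the reduced expression for $w_Pw_0$ coming from the $v_{(k,a)}$ ordering used in Section \ref{sec G/P The ideal coordinates}. For this ordering, Theorem \ref{thm b- in terms of y_i} says the given product of $\mathbf{y}^{\vee}_{i_k}$ factors is precisely $[b_P]_-$ in the ideal coordinates $(d,\boldsymbol{m})$. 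Requiring this lower-triangular factor to be Toeplitz is exactly the content of Rietsch's critical-point-Toeplitz correspondence for the superpotential $\mathcal{W}_P$, so the Puiseux series $m_\alpha$ describe a positive critical point of $\mathcal{W}_{P,t^\lambda}$ for some dominant $\lambda$ (which can be read off from the diagonal Toeplitz value via Corollary \ref{cor G/P weight matrix at trop crit point}). Applying $\mathrm{Val}_{\mathbf{K}}$ and invoking Corollary \ref{cor G/P nu'_i coords ideal filling for lambda} then gives that the $\mu_\alpha$ form an ideal filling for $\lambda$, as required.

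Next I would upgrade from this specific reduced expression to an arbitrary reduced expression $\mathbf{i}$ for $w_P w_0$. The key point is that any two reduced expressions are connected by a sequence of braid moves $s_is_js_i = s_js_is_j$ (and commuting moves $s_is_j=s_js_i$), and the corresponding positive-root orderings differ by local swaps as in (\ref{eqn pos root sequences iji jij}). Under such a braid move the product of $\mathbf{y}^{\vee}$ factors transforms by the same rational substitution (\ref{eqn alpha beta coord change iji jij}) that appeared in the $G/B$ case — applied on the lower-triangular side — and this transformation is positive. Moreover, the Toeplitz condition on the product is intrinsic, so passing from one reduced expression to another preserves the hypothesis. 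On the tropical side, the analogous argument from the proof of Proposition \ref{prop trop crit pt independent of i} (which only uses the max/min local relation of an ideal filling) shows that if the $\mu_\alpha$ for one reduced expression form an ideal filling, so do the $\mu_\alpha$ for any other. Combining these two observations reduces the general statement to the canonical case already handled.

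For the converse direction I would start with an arbitrary ideal filling $\{n_{ij}\}$ and read off the associated weight $\lambda = \sum n_{ij}\alpha_{ij}+\ell\sum\epsilon_i$ from Definition \ref{def ideal filling for lambda G/P version}. By Proposition \ref{prop G/P version of Jamie's 6.2} this ideal filling is the unique ideal filling for $\lambda$, and it coincides with the tropical critical point of $\mathcal{W}_{P,t^\lambda}$ via Corollary \ref{cor G/P nu'_i coords ideal filling for lambda}. The positive critical point $p_{P,\lambda} \in Z_{P,t^\lambda}(\mathbf{K}_{>0})$, whose existence and uniqueness was recorded in Section \ref{subsec G/P Tropical critical points and the weight map}, has ideal coordinates $m_\alpha$ with positive leading coefficients and $\mathrm{Val}_{\mathbf{K}}(m_\alpha)=\mu_\alpha = n_{ij}$. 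By Rietsch's theorem $[p_{P,\lambda}]_-$ is Toeplitz, and by Theorem \ref{thm b- in terms of y_i} (or its extension to arbitrary $\mathbf{i}$ as in the previous paragraph) this is exactly the product appearing in the statement. Hence every ideal filling is realised.

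The main technical obstacle is the generalisation step in the second paragraph: verifying that under a braid move on $\mathbf{i}$, the factored lower-triangular product in the statement transforms by exactly the subtraction-free substitution (\ref{eqn alpha beta coord change iji jij}), with no unwanted torus or sign contributions creeping in from the embedding $\phi_i^{\vee}$. This is the partial-flag-variety analogue of the reasoning behind Proposition \ref{prop pos birat map between toric charts for i} and Proposition \ref{prop trop crit pt independent of i}, and although it is essentially a local $SL_3$-type calculation inside $GL_n^{\vee}$, it needs to be performed carefully so that the braid-invariance of the Toeplitz condition genuinely transfers to the $\mathbf{y}^{\vee}$ side. Once this is in place, everything else follows directly from results already proved in the thesis.
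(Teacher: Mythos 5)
Your proposal is correct and follows essentially the same route as the paper's proof: pick the canonical reduced expression attached to the $v_{(k,a)}$ ordering, use Theorem~\ref{thm b- in terms of y_i} to recognise the product as $[b_P]_-$ in ideal coordinates, invoke the non-$T$-equivariant case of Rietsch's Theorem~4.1 in \cite{Rietsch2008} to conclude that a totally-positive Toeplitz factorisation corresponds to a positive critical point of $\mathcal{W}_{P,t^\lambda}$, read off the ideal filling for $\lambda$ via Proposition~\ref{prop G/P version of Jamie's 6.2} and Corollary~\ref{cor G/P nu'_i coords ideal filling for lambda}, and finally transfer to an arbitrary reduced expression.

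The one place you go slightly off course is your second paragraph, where you flag the braid-move transfer as the "main technical obstacle." That step is already done in the paper: the subtraction-free local substitution you need is exactly~(\ref{eqn alpha beta coord change iji jij}), computed explicitly for the $\mathbf{y}^{\vee}_i$ one-parameter subgroups of $GL_n^{\vee}$ (so there are no hidden torus or sign contributions to worry about), and the fact that the resulting ideal filling is unchanged under a change of reduced expression for $w_Pw_0$ is precisely Proposition~\ref{prop G/P ideal filling indep of red expr}, stated in the paper with the same proof as Proposition~\ref{prop trop crit pt independent of i}. Once you cite that result rather than proposing to re-derive it, your proof matches the paper's line for line; the converse direction in your third paragraph is a reasonable explicitation of what the paper leaves implicit via the bijective correspondence in Proposition~\ref{prop G/P version of Jamie's 6.2}.
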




\begin{proof}
We begin by taking the reduced expression for $w_Pw_0$ given in Lemma \ref{lem wPw0 description in dot vertices}.
If the product
$\mathbf{y}^{\vee}_{i_1}\left(m_{\alpha^{\mathbf{i}}_1}^{-1}\right)\cdots  \mathbf{y}^{\vee}_{i_M}\left(m_{\alpha^{\mathbf{i}}_M}^{-1}\right)$
is a Toeplitz matrix, then, by \cite[non-T-equivariant case of Theorem 4.1]{Rietsch2008} and the conditions on the $m_{\alpha}$, the coordinates $m_{\alpha}$ are the coordinates of a positive critical point of the superpotential for some $\lambda$.
Thus the valuations of the coordinates of this point define a tropical critical point, which corresponds to an ideal filling by Proposition \ref{prop G/P version of Jamie's 6.2}. Moreover, we see that the entries of the ideal filling are exactly the valuations of the $m_{\alpha}$ coordinates (in the desired ordering thanks to the choice of reduced expression). Finally, since the ideal filling is independent of the choice of reduced expression by Proposition \ref{prop G/P ideal filling indep of red expr}, the result holds for all choices of reduced expression.
\end{proof}



\newpage

\appendix
\chapter{Recovering our coordinates} \label{append Recovering our coordinates}
\fancyhead[L]{A \ \ Recovering our coordinates}
\fancyhead[R]{Full flag varieties}

We recall the matrix $\mathbf{y}_{\mathbf{i}_0}^{\vee}\left(\frac{1}{m_1}, \frac{1}{m_2}, \ldots, \frac{1}{m_N} \right)=:Y$ from the $G/B$ setting, where
    $$\mathbf{i}_0 = (i_1, \ldots, i_N) :=
    (1,2, \ldots, n-1, 1,2 \ldots, n-2, \ldots, 1, 2, 1).
    $$
In this section we will use minors of $Y$ to recover the $m_i$ coordinates.

\begin{prop4} \label{prop recover nu' coords from Y}
Recall that $\Delta^{J}_K$ denotes the minor with row set defined by $J$ and column set given by $K$. Then writing $m_{jk}:=m_{s_k+j-k}$, for $1\leq i<j \leq n$ we have
    \begin{equation} \label{eqn recover nu' coords from Y}
    m_{ji}= \begin{cases}
        \frac{\Delta^{\{n\}}_{\{n-i+1\}}(Y)}{\Delta^{\{n\}}_{\{n-i\}}(Y)} & \text{for } j=n \vspace{0.2cm}\\
        \frac{\Delta^{[j+1,n]}_{[j-i+1,n-i]}(Y) \Delta^{[j,n]}_{[j-i+1,n-i+1]}(Y)}{\Delta^{[j+1,n]}_{[j-i+2,n-i+1]}(Y) \Delta^{[j,n]}_{[j-i,n-i]}(Y)} & \text{for } j<n
        \end{cases}
    \end{equation}
\end{prop4}

The following Corollary is immediate: 
\begin{cor4} \label{cor ideal filling from Y}
The filling $\left\{n_{ij}=\mathrm{Val}_{\mathbf{K}}(m_{ji})\right\}_{1\leq i<j \leq n}$ given in terms of matrix minors by ({\ref{eqn recover nu' coords from Y}}), is an ideal filling for $\lambda$ when Y is a Toeplitz matrix.
\end{cor4}
\hfill\qedsymbol

\begin{proof}[Proof of Proposition {\ref{prop recover nu' coords from Y}}]

The proof of this proposition will rely on the proof of Lemma \ref{lem coord change for b m_i in terms of p_i}.

Firstly we recall the planar graphs which we used to compute Chamber Ansatz minors. These were introduced in Section \ref{subsec Chamber Ansatz minors}) using the rule in Figure \ref{Labelled line segments in graphs for computing Chamber Ansatz minors}. To compute minors of $Y$ we require labelled line segments corresponding to $\mathbf{y}_{i_k}^{\vee}(z_k)$ factors, which are described in Figure \ref{fig Labelled line segments in graphs for computing CA minors y_i factors}. These are again found at height $i_k$ and oriented left to right.
\begin{figure}[ht!]
\centering
    \begin{tikzpicture}[scale=0.85]
        \node at (0.75,2) {$\bullet$};
        \node at (1.25,1) {$\bullet$};
        \draw (0,3) -- (2,3);
        \draw (0,2) -- (2,2);
        \draw (0,1) -- (2,1);
        \draw (0,0) -- (2,0);
        \draw (0.75,2) -- (1.25,1);
        \node at (1.35,1.5) {$z_k$};
        \node at (1,-0.6) {For factors $\mathbf{y}^{\vee}_{i_k}(z_k)$};
    \end{tikzpicture}
\caption{Labelled line segments in graphs for computing Chamber Ansatz minors, $\mathbf{y}_{i}^{\vee}$ factors} \label{fig Labelled line segments in graphs for computing CA minors y_i factors}
\end{figure}

Now we notice the relation between the following two reduced expressions for $w_0$:
    $$\begin{gathered}
    \mathbf{i}_0 = (i_1, \ldots, i_N) := (1,2, \ldots, n-1, 1,2 \ldots, n-2, \ldots, 1, 2, 1), \\
    \mathbf{i}'_0 = (i'_1, \ldots, i'_N) := (n-1, n-2, \ldots, 1, n-1, n-2, \ldots, 2, \ldots, n-1, n-2, n-1).
    \end{gathered}
    $$
In particular we see that the graph for $Y = \mathbf{y}_{\mathbf{i}_0}^{\vee}\left(\frac{1}{m_1}, \frac{1}{m_2}, \ldots, \frac{1}{m_N} \right)$ is simply a reflection in the horizontal axis of the graph for $X:= \mathbf{x}_{\mathbf{i}'_0}^{\vee}\left(\frac{1}{m_1}, \frac{1}{m_2}, \ldots, \frac{1}{m_N} \right)$, where we fix the origin at the centre of the graph. Of note, during this reflection we keep the diagonal line segments labelled as before, but change the horizontal line labellings to reflect their new heights in the graph. For example see Figure \ref{fig The graphs for X and Y when n=4}.

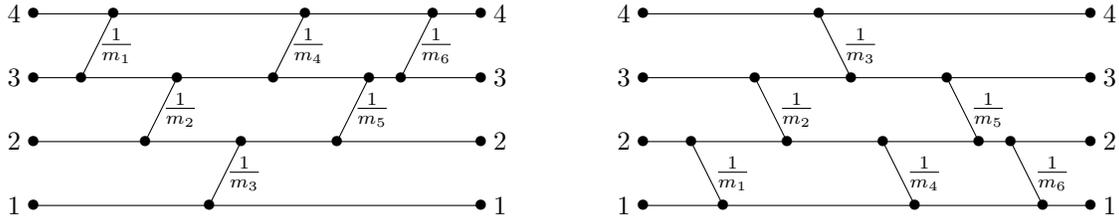
\begin{figure}[hb!]
\centering
\begin{minipage}[b]{0.48\linewidth}
    \centering
    \begin{tikzpicture}[scale=0.85]
        \draw (0,4) -- (7,4);
        \draw (0,3) -- (7,3);
        \draw (0,2) -- (7,2);
        \draw (0,1) -- (7,1);

        \draw (0.75,3) -- (1.25,4);
        \draw (1.75,2) -- (2.25,3);
        \draw (2.75,1) -- (3.25,2);
        \draw (3.75,3) -- (4.25,4);
        \draw (4.75,2) -- (5.25,3);
        \draw (5.75,3) -- (6.25,4);
        \node at (0,1) {$\bullet$};
        \node at (0,2) {$\bullet$};
        \node at (0,3) {$\bullet$};
        \node at (0,4) {$\bullet$};
        \node at (7,1) {$\bullet$};
        \node at (7,2) {$\bullet$};
        \node at (7,3) {$\bullet$};
        \node at (7,4) {$\bullet$};

        \node at (0.75,3) {$\bullet$};
        \node at (1.25,4) {$\bullet$};
        \node at (1.75,2) {$\bullet$};
        \node at (2.25,3) {$\bullet$};
        \node at (2.75,1) {$\bullet$};
        \node at (3.25,2) {$\bullet$};

        \node at (3.75,3) {$\bullet$};
        \node at (4.25,4) {$\bullet$};
        \node at (4.75,2) {$\bullet$};
        \node at (5.25,3) {$\bullet$};
        \node at (5.75,3) {$\bullet$};
        \node at (6.25,4) {$\bullet$};

        \node at (-0.3,4) {$4$};
        \node at (-0.3,3) {$3$};
        \node at (-0.3,2) {$2$};
        \node at (-0.3,1) {$1$};

        \node at (7.3,4) {$4$};
        \node at (7.3,3) {$3$};
        \node at (7.3,2) {$2$};
        \node at (7.3,1) {$1$};

        \node at (1.3,3.5) {$\frac{1}{m_1}$};
        \node at (2.3,2.5) {$\frac{1}{m_2}$};
        \node at (3.3,1.5) {$\frac{1}{m_3}$};
        \node at (4.3,3.5) {$\frac{1}{m_4}$};
        \node at (5.3,2.5) {$\frac{1}{m_5}$};
        \node at (6.3,3.5) {$\frac{1}{m_6}$};
    \end{tikzpicture}
\end{minipage}
\hfill
\begin{minipage}[b]{0.48\linewidth}
    \centering
    \begin{tikzpicture}[scale=0.85]
        \draw (0,4) -- (7,4);
        \draw (0,3) -- (7,3);
        \draw (0,2) -- (7,2);
        \draw (0,1) -- (7,1);

        \draw (0.75,2) -- (1.25,1);
        \draw (1.75,3) -- (2.25,2);
        \draw (2.75,4) -- (3.25,3);
        \draw (3.75,2) -- (4.25,1);
        \draw (4.75,3) -- (5.25,2);
        \draw (5.75,2) -- (6.25,1);
        \node at (0,1) {$\bullet$};
        \node at (0,2) {$\bullet$};
        \node at (0,3) {$\bullet$};
        \node at (0,4) {$\bullet$};
        \node at (7,1) {$\bullet$};
        \node at (7,2) {$\bullet$};
        \node at (7,3) {$\bullet$};
        \node at (7,4) {$\bullet$};

        \node at (0.75,2) {$\bullet$};
        \node at (1.25,1) {$\bullet$};
        \node at (1.75,3) {$\bullet$};
        \node at (2.25,2) {$\bullet$};
        \node at (2.75,4) {$\bullet$};
        \node at (3.25,3) {$\bullet$};

        \node at (3.75,2) {$\bullet$};
        \node at (4.25,1) {$\bullet$};
        \node at (4.75,3) {$\bullet$};
        \node at (5.25,2) {$\bullet$};
        \node at (5.75,2) {$\bullet$};
        \node at (6.25,1) {$\bullet$};

        \node at (-0.3,4) {$4$};
        \node at (-0.3,3) {$3$};
        \node at (-0.3,2) {$2$};
        \node at (-0.3,1) {$1$};

        \node at (7.3,4) {$4$};
        \node at (7.3,3) {$3$};
        \node at (7.3,2) {$2$};
        \node at (7.3,1) {$1$};

        \node at (1.4,1.5) {$\frac{1}{m_1}$};
        \node at (2.4,2.5) {$\frac{1}{m_2}$};
        \node at (3.4,3.5) {$\frac{1}{m_3}$};
        \node at (4.4,1.5) {$\frac{1}{m_4}$};
        \node at (5.4,2.5) {$\frac{1}{m_5}$};
        \node at (6.4,1.5) {$\frac{1}{m_6}$};
    \end{tikzpicture}
\end{minipage}
\caption{The respective graphs for $X$ and $Y$ when $n=4$} \label{fig The graphs for X and Y when n=4}
\end{figure}

In terms of matrices we see that $\mathbf{x}_i^{\vee}(z)^T=\mathbf{y}_i^{\vee}(z)$, and changing between the reduced expressions $\mathbf{i}_0$ and $\mathbf{i}_0'$ corresponds to taking the anti-transpose (that is, transposing over the anti-diagonal). Thus taking the anti-transpose of $X^T$ we get $Y$, which we could have equivalently have obtained by sending row $r$ in $X$ to row $n-r+1$ and then column $s$ to $n-s+1$. In particular, exchanging the rows and then the columns in this way is exactly the reflection of the graph for $X$ that we have just done. This gives the following relation between matrix minors:
    $$\Delta^{[a,b]}_{[c,d]}(Y) = \Delta^{[n-b+1,n-a+1]}_{[n-d+1,n-c+1]}(X).
    $$
In particular, the quotients of minors we wish to compute become
    \begin{equation} \label{eqn appendix minors of Y in terms of X}
    \frac{\Delta^{\{n\}}_{\{n-i+1\}}(Y)}{\Delta^{\{n\}}_{\{n-i\}}(Y)}
        = \frac{\Delta^{\{1\}}_{\{i\}}(X)}{\Delta^{\{1\}}_{\{i+1\}}(X)} \ , \quad
    \frac{\Delta^{[j+1,n]}_{[j-i+1,n-i]}(Y) \Delta^{[j,n]}_{[j-i+1,n-i+1]}(Y)}{\Delta^{[j+1,n]}_{[j-i+2,n-i+1]}(Y) \Delta^{[j,n]}_{[j-i,n-i]}(Y)}
        = \frac{\Delta^{[1,n-j]}_{[i+1,n-j+i]}(X) \Delta^{[1,n-j+1]}_{[i,n-j+i]}(X)}{\Delta^{[1,n-j]}_{[i,n-j+i-1]}(X) \Delta^{[1,n-j+1]}_{[i+1,n-j+i+1]}(X)}.
    \end{equation}

Now in the proof of Lemma \ref{lem coord change for b m_i in terms of p_i} we carefully studied the graph of $u_1 = \mathbf{x}_{i'_1}^{\vee}(p_1) \cdots \mathbf{x}_{i'_N}^{\vee}(p_N)$. Moreover we used it to compute the following minors:
    $$\Delta^{\{1, \ldots, a \}}_{\{k+1, \ldots, k+a\}}(u_1)
        = \prod_{\substack{r=1,\ldots, k \\ b=1, \ldots, a}} p_{s_{r+1}-b+1}, \quad \text{for} \quad k=1,\ldots, n-1, \quad a=1,\ldots, n-k.
    $$
We see that $u_1$ is exactly the matrix $X$ if we take $p_i=\frac{1}{m_i}$, and indeed all the minors of $X$ in (\ref{eqn appendix minors of Y in terms of X}) take the form
    $$\Delta^{\{1, \ldots, a \}}_{\{k+1, \ldots, k+a\}}(X) \quad \text{for} \quad k \in \{1,\ldots, n-1\}, \quad a \in \{1,\ldots, n-k\}.
    $$
Consequently we may use the computations of minors from the proof of Lemma \ref{lem coord change for b m_i in terms of p_i} to compute the necessary minors of $X$, and therefore of $Y$.

In the first case we obtain the desired result as follows:
    $$\frac{\Delta^{\{n\}}_{\{n-i+1\}}(Y)}{\Delta^{\{n\}}_{\{n-i\}}(Y)}
        = \frac{\Delta^{\{1\}}_{\{i\}}(X)}{\Delta^{\{1\}}_{\{i+1\}}(X)}
        = \frac{ \prod_{r=1,\ldots, i-1} \frac{1}{m_{s_{r+1}}} }{ \prod_{r=1,\ldots, i} \frac{1}{m_{s_{r+1}}} }
        = m_{s_{i+1}} = m_{s_i+n-i} = m_{ni}
    $$
where we have used the definitions of $s_k = \sum_{j=1}^{k-1} (n-j)$ (from Section \ref{sec The ideal coordinates}) and $m_{jk}=m_{s_k+j-k}$ for the last two equalities respectively.
In the second case we have
    $$\begin{aligned}
    \frac{\Delta^{[j+1,n]}_{[j-i+1,n-i]}(Y) \Delta^{[j,n]}_{[j-i+1,n-i+1]}(Y)}{\Delta^{[j+1,n]}_{[j-i+2,n-i+1]}(Y) \Delta^{[j,n]}_{[j-i,n-i]}(Y)}
        &= \frac{\Delta^{[1,n-j]}_{[i+1,n-j+i]}(X) \Delta^{[1,n-j+1]}_{[i,n-j+i]}(X)}{\Delta^{[1,n-j]}_{[i,n-j+i-1]}(X) \Delta^{[1,n-j+1]}_{[i+1,n-j+i+1]}(X)} \\
        &= \frac{ \prod\limits_{\substack{r=1,\ldots, i \\ b=1, \ldots, n-j}} \frac{1}{m_{s_{r+1}-b+1}} \prod\limits_{\substack{r=1,\ldots, i-1 \\ b=1, \ldots, n-j+1}} \frac{1}{m_{s_{r+1}-b+1}} }{ \prod\limits_{\substack{r=1,\ldots, i-1 \\ b=1, \ldots, n-j}} \frac{1}{m_{s_{r+1}-b+1}} \prod\limits_{\substack{r=1,\ldots, i \\ b=1, \ldots, n-j+1}} \frac{1}{m_{s_{r+1}-b+1}} } \\
        &= \frac{ \prod\limits_{r=1,\ldots, i-1 } \frac{1}{m_{s_{r+1}-(n-j)}} }{ \prod\limits_{r=1,\ldots, i} \frac{1}{m_{s_{r+1}-(n-j)}} } \\
        &= m_{s_{i+1}-(n-j)} \\
        &= m_{s_i+n-i-(n-j)} \\
        &= m_{s_i+j-i} = m_{ji}
    \end{aligned}
    $$
as desired.
\end{proof}

\chapter{Complete quiver labelling example} \label{append Example of complete quiver labelling}
\fancyhead[L]{B \ \ Example of complete quiver labelling}
\fancyhead[R]{Partial flag varieties}

\begin{ex4}[Weight matrix for {$\mathcal{F}_{2,5,6}(\mathbb{C}^8)$}] \label{ex wt matrix in full F2,5,6,C8}

We recall the definition of the weight matrix elements given in (\ref{eqn 2nd G/P defn gamma and t}) as
    $$t_{P,i} = x_{v_{n,n-i+1}} \prod\limits_{v \in \mathcal{D}_{i+1} \cap \mathcal{V}^{\bullet}} \prod\limits_{a \in p(v)} r_a
    $$
where we set $x_{v_{n,n-i+1}}=x_{v_{n,n_l+1}}$, the star vertex coordinate in the last square $L_{l+1}$, if $v_{n,n-i+1}$ is not present in the quiver.

We use the quiver decoration in Figure \ref{fig complete QP quiver decoration F2,5,6,C8} to compute these $t_{P,i}$ in the case of $\mathcal{F}_{2,5,6}(\mathbb{C}^8)$.

Marking the paths with parentheses, each arrow coordinate within a given path by a new fraction, and the vertex coordinate $x_{v_{n,n-i+1}}$ with square braces, we have
    $$\begin{aligned}
    t_{P,1} &= x_{v_{8,7}} \prod\limits_{v \in \mathcal{D}_2 \cap \mathcal{V}^{\bullet}} \prod\limits_{a \in p(v)} r_a \\
        &= \left[d'_4\right] \left(m_8 m_2 \right) \left( \frac{m_5 m_{11} m_{23}}{m_4 m_{10}} \frac{m_4 m_{10} m_{20}}{m_3 m_{9}} \frac{m_5 m_{11} m_{16}}{m_4 m_{10}} \right) \left(\frac{m_6 m_{12} m_{17} m_{21} m_{24} m_{26}}{m_5 m_{11} m_{16} m_{20} m_{23}}  \right) \\
        &= d'_4 \frac{m_2 m_5 m_6 m_8 m_{11} m_{12} m_{17} m_{21} m_{24} m_{26}}{m_3 m_4 m_9 m_{10}} \\
    t_{P,2} &= x_{v_{8,7}} \prod\limits_{v \in \mathcal{D}_3 \cap \mathcal{V}^{\bullet}} \prod\limits_{a \in p(v)} r_a \\
        &= \left[d'_4\right]
            \left( m_2 \right) \left( \frac{m_3 m_{9}}{m_2} \right) \left( \frac{m_4 m_{10} m_{20}}{m_3 m_{9}} \frac{m_5 m_{11} m_{16}}{m_4 m_{10}} \right) \left(\frac{m_6 m_{12} m_{17} m_{21} m_{24}}{m_5 m_{11} m_{16} m_{20}} \right) \left(\frac{m_7 m_{13} m_{18} m_{22} m_{25} m_{27}}{m_6 m_{12} m_{17} m_{21} m_{24}}  \right) \\
        &= d'_4 m_7 m_{13} m_{18} m_{22} m_{25} m_{27}
    \end{aligned}
    $$
and so on for $t_{P,3}, \ldots, t_{P,8}$.

\newpage
\begin{figure}[!hb]
\centering
\makebox[\textwidth][c]{
\begin{tikzpicture}[scale=2.4]
        \draw[dotted, thick, color=black!50] (0.5,0.5) -- (0.5,8.5) -- (8.5,8.5) -- (8.5,0.5) -- cycle;
        \draw[dotted, thick, color=black!50] (1.5,0.5) -- (1.5,6.5);
        \draw[dotted, thick, color=black!50] (2.5,0.5) -- (2.5,8.5);
        \draw[dotted, thick, color=black!50] (3.5,0.5) -- (3.5,3.5);
        \draw[dotted, thick, color=black!50] (4.5,0.5) -- (4.5,3.5);
        \draw[dotted, thick, color=black!50] (5.5,0.5) -- (5.5,6.5);
        \draw[dotted, thick, color=black!50] (6.5,0.5) -- (6.5,3.5);

        \draw[dotted, thick, color=black!50] (0.5,6.5) -- (5.5,6.5);
        \draw[dotted, thick, color=black!50] (0.5,5.5) -- (2.5,5.5);
        \draw[dotted, thick, color=black!50] (0.5,4.5) -- (2.5,4.5);
        \draw[dotted, thick, color=black!50] (0.5,3.5) -- (6.5,3.5);
        \draw[dotted, thick, color=black!50] (0.5,2.5) -- (8.5,2.5);
        \draw[dotted, thick, color=black!50] (0.5,1.5) -- (6.5,1.5);

        \node[color=black!50] at (2.7,8.2) {\small{$L_1$}};
        \node[color=black!50] at (5.7,6.2) {\small{$L_2$}};
        \node[color=black!50] at (6.7,3.2) {\small{$L_3$}};
        \node[color=black!50] at (8.2,2.7) {\small{$L_4$}};

        \node (21) at (1,7) {$\boldsymbol{*}$};
            \node at (1.15,7.05) {\tiny{$d_1'$}};
        \node (31) at (1,6) {$\bullet$};
            \node at (0.88,6.7) {\tiny{$m_2$}}; 
        \node (41) at (1,5) {$\bullet$};
            \node at (0.88,5.7) {\tiny{$m_3$}}; 
        \node (51) at (1,4) {$\bullet$};
            \node at (0.88,4.7) {\tiny{$m_4$}}; 
        \node (61) at (1,3) {$\bullet$};
            \node at (0.88,3.7) {\tiny{$m_5$}}; 
        \node (71) at (1,2) {$\bullet$};
            \node at (0.88,2.7) {\tiny{$m_6$}}; 
        \node (81) at (1,1) {$\bullet$};
            \node at (0.88,1.7) {\tiny{$m_7$}}; 

        \node (32) at (2,6) {$\bullet$};
            \node at (1.55,6.1) {\tiny{$m_8$}}; 
        \node (42) at (2,5) {$\bullet$};
            \node at (1.8,5.7) {\tiny{$\frac{m_3m_9}{m_2}$}}; 
            \node at (1.5,5.1) {\tiny{$\frac{m_8m_9}{m_2}$}};
        \node (52) at (2,4) {$\bullet$};
            \node at (1.8,4.7) {\tiny{$\frac{m_4m_{10}}{m_3}$}}; 
            \node at (1.5,4.1) {\tiny{$\frac{m_8m_9m_{10}}{m_2m_3}$}};
        \node (62) at (2,3) {$\bullet$};
            \node at (1.8,3.7) {\tiny{$\frac{m_5m_{11}}{m_4}$}}; 
            \node at (1.5,3.1) {\tiny{$\frac{m_8m_9m_{10}m_{11}}{m_2m_3m_4}$}};
        \node (72) at (2,2) {$\bullet$};
            \node at (1.8,2.7) {\tiny{$\frac{m_6m_{12}}{m_5}$}}; 
            \node at (1.5,2.1) {\tiny{$\frac{m_8m_9m_{10}m_{11}m_{12}}{m_2m_3m_4m_5}$}};
        \node (82) at (2,1) {$\bullet$};
            \node at (1.8,1.7) {\tiny{$\frac{m_7m_{13}}{m_6}$}}; 
            \node at (1.47,1.1) {\tiny{$\frac{m_8m_9m_{10}m_{11}m_{12}m_{13}}{m_2m_3m_4m_5m_6}$}};

        \node (53) at (3,4) {$\boldsymbol{*}$};
            \node at (3.15,4.05) {\tiny{$d_2'$}};
            \node at (2.5,4.15) {\tiny{$\frac{d'_1}{d'_2}\frac{1}{m_4m_8m_9m_{10}}$}};
        \node (63) at (3,3) {$\bullet$};
            \node at (2.7,3.7) {\tiny{$\frac{m_5m_{11}m_{16}}{m_4m_{10}}$}}; 
            \node at (2.5,3.15) {\tiny{$\frac{d'_1}{d'_2}\frac{m_{16}}{m_4m_8m_9m_{10}^2}$}};
        \node (73) at (3,2) {$\bullet$};
            \node at (2.7,2.7) {\tiny{$\frac{m_6m_{12}m_{17}}{m_5m_{11}}$}}; 
            \node at (2.5,2.15) {\tiny{$\frac{d'_1}{d'_2}\frac{m_{16}m_{17}}{m_4m_8m_9m_{10}^2m_{11}}$}};
        \node (83) at (3,1) {$\bullet$};
            \node at (2.7,1.7) {\tiny{$\frac{m_7m_{13}m_{18}}{m_6m_{12}}$}}; 
            \node at (2.5,0.85) {\tiny{$\frac{d'_1}{d'_2}\frac{m_{16}m_{17}m_{18}}{m_4m_8m_9m_{10}^2m_{11}m_{12}}$}};

        \node (64) at (4,3) {$\bullet$};
            \node at (3.5,3.1) {\tiny{$\frac{m_4m_{10}m_{20}}{m_3m_9}$}}; 
        \node (74) at (4,2) {$\bullet$};
            \node at (3.62,2.7) {\tiny{$\frac{m_6m_{12}m_{17}m_{21}}{m_5m_{11}m_{16}}$}}; 
            \node at (3.5,2.1) {\tiny{$\frac{m_4m_{10}m_{20}m_{21}}{m_3m_9m_{16}}$}};
        \node (84) at (4,1) {$\bullet$};
            \node at (3.62,1.7) {\tiny{$\frac{m_7m_{13}m_{18}m_{22}}{m_6m_{12}m_{17}}$}}; 
            \node at (3.5,1.1) {\tiny{$\frac{m_4m_{10}m_{20}m_{21}m_{22}}{m_3m_9m_{16}m_{17}}$}};

        \node (65) at (5,3) {$\bullet$};
            \node at (4.5,3.1) {\tiny{$\frac{m_5m_{11}m_{23}}{m_4m_{10}}$}}; 
        \node (75) at (5,2) {$\bullet$};
            \node at (4.52,2.7) {\tiny{$\frac{m_6m_{12}m_{17}m_{21}m_{24}}{m_5m_{11}m_{16}m_{20}}$}}; 
            \node at (4.5,2.1) {\tiny{$\frac{m_5m_{11}m_{23}m_{24}}{m_4m_{10}m_{20}}$}};
        \node (85) at (5,1) {$\bullet$};
            \node at (4.52,1.7) {\tiny{$\frac{m_7m_{13}m_{18}m_{22}m_{25}}{m_6m_{12}m_{17}m_{21}}$}}; 
            \node at (4.5,0.85) {\tiny{$\frac{m_5m_{11}m_{23}m_{24}m_{25}}{m_4m_{10}m_{20}m_{21}}$}};

        \node (66) at (6,3) {$\boldsymbol{*}$};
            \node at (6.15,3.05) {\tiny{$d_3'$}};
            \node at (5.5,3.15) {\tiny{$\frac{d'_2}{d'_3}\frac{m_3m_4m_9m_{10}}{m_5^2m_{11}^2m_{16}m_{20}m_{23}}$}};
        \node (76) at (6,2) {$\bullet$};
            \node at (6.58,2.7) {\tiny{$\frac{m_6m_{12}m_{17}m_{21}m_{24}m_{26}}{m_5m_{11}m_{16}m_{20}m_{23}}$}}; 
            \node at (5.5,2.15) {\tiny{$\frac{d'_2}{d'_3}\frac{m_3m_4m_9m_{10}m_{26}}{m_5^2m_{11}^2m_{16}m_{20}m_{23}^2}$}};
        \node (86) at (6,1) {$\bullet$};
            \node at (6.58,1.7) {\tiny{$\frac{m_7m_{13}m_{18}m_{22}m_{25}m_{27}}{m_6m_{12}m_{17}m_{21}m_{24}}$}}; 
            \node at (5.59,1.15) {\tiny{$\frac{d'_2}{d'_3}\frac{m_3m_4m_9m_{10}m_{26}m_{27}}{m_5^2m_{11}^2m_{16}m_{20}m_{23}^2m_{24}}$}};

        \node (87) at (7,1) {$\boldsymbol{*}$};
            \node at (7.15,1.05) {\tiny{$d_4'$}};
            \node at (6.6,0.8) {\tiny{$\frac{d'_3}{d'_4}\frac{m_5m_{11}m_{16}m_{20}m_{23}}{m_7m_{13}m_{18}m_{22}m_{25}m_{26}m_{27}}$}};



        \node[draw, circle, minimum size=4mm, inner sep=1pt] at (1,7.5) {\tiny{$\dot{s}_1$}};
        \node[draw, circle, minimum size=4mm, inner sep=1pt] at (3,5.5) {\tiny{$\dot{s}_3$}};
        \node[draw, circle, minimum size=4mm, inner sep=1pt] at (3,4.5) {\tiny{$\dot{s}_4$}};
        \node[draw, circle, minimum size=4mm, inner sep=1pt] at (4,4.5) {\tiny{$\dot{s}_4$}};
        \node[draw, circle, minimum size=4mm, inner sep=1pt] at (7,1.5) {\tiny{$\dot{s}_7$}};

        \draw[->] (81) -- (71);
        \draw[->] (71) -- (61);
        \draw[->] (61) -- (51);
        \draw[->] (51) -- (41);
        \draw[->] (41) -- (31);
        \draw[->] (31) -- (21);

        \draw[->] (82) -- (72);
        \draw[->] (72) -- (62);
        \draw[->] (62) -- (52);
        \draw[->] (52) -- (42);
        \draw[->] (42) -- (32);

        \draw[->] (83) -- (73);
        \draw[->] (73) -- (63);
        \draw[->] (63) -- (53);

        \draw[->] (84) -- (74);
        \draw[->] (74) -- (64);

        \draw[->] (85) -- (75);
        \draw[->] (75) -- (65);

        \draw[->] (86) -- (76);
        \draw[->] (76) -- (66);


        \draw[->] (32) -- (31);
        \draw[->] (42) -- (41);
        \draw[->] (52) -- (51);
        \draw[->] (62) -- (61);
        \draw[->] (72) -- (71);
        \draw[->] (82) -- (81);

        \draw[->] (53) -- (52);
        \draw[->] (63) -- (62);
        \draw[->] (73) -- (72);
        \draw[->] (83) -- (82);

        \draw[->] (64) -- (63);
        \draw[->] (74) -- (73);
        \draw[->] (84) -- (83);

        \draw[->] (65) -- (64);
        \draw[->] (75) -- (74);
        \draw[->] (85) -- (84);

        \draw[->] (66) -- (65);
        \draw[->] (76) -- (75);
        \draw[->] (86) -- (85);

        \draw[->] (87) -- (86);

    \end{tikzpicture}}
\caption{Complete $Q_P$ quiver decoration for $\mathcal{F}_{2,5,6}(\mathbb{C}^8)$} \label{fig complete QP quiver decoration F2,5,6,C8}
\end{figure}

The weight matrix $\gamma_P \in T^{\vee}$ we obtain is
\renewcommand{\arraystretch}{2}
    $$
    \begingroup 
    \setlength\arraycolsep{-15pt}
    \begin{pmatrix}
    d'_4 \frac{m_2 m_5 m_6 m_8 m_{11} m_{12} m_{17} m_{21} m_{24} m_{26}}{m_3 m_4 m_9 m_{10}} & & & & & & & \\
    & d'_4 m_7 m_{13} m_{18} m_{22} m_{25} m_{27} & & & & & & \\
    & & d'_3 \frac{m_5 m_{11} m_{16} m_{20} m_{23}}{m_{26} m_{27}} & & & & & \\
    & & & d'_2 \frac{m_3 m_{4} m_{9} m_{10}}{m_5 m_{11} m_{23} m_{24} m_{25}} & & & & \\
    & & & & d'_2 \frac{m_3 m_{9}}{m_{20} m_{21} m_{22}} & & & \\
    & & & & & d'_2 \frac{m_4 m_{10}}{m_{16} m_{17} m_{18}} & & \\
    & & & & & & d'_1 \frac{1}{m_8 m_9 m_{10} m_{11} m_{12}} & \\
    & & & & & & & d'_1 \frac{1}{m_2 m_3 m_4 m_5 m_6 m_7}
    \end{pmatrix}
    \endgroup
    $$
\renewcommand{\arraystretch}{1}

\end{ex4}

\newpage

\fancyhead[L]{}
\fancyhead[R]{References}

\renewcommand\bibname{References}
\addcontentsline{toc}{chapter}{References}
\bibliographystyle{plain}
\bibliography{thesis_teresa_with_corrections}

\end{document}